\DeclareSymbolFont{cyrletters}{OT2}{wncyr}{m}{n}
\DeclareMathSymbol{\Sha}{\mathalpha}{cyrletters}{"58}
\newtheorem{theoA}{Theorem}
\newtheorem*{coro*}{Corollary}
\newtheorem*{conj*}{Conjecture}
\newtheorem*{lemm*}{Lemma}
\newcommand{\smallmat}[4]{\bigl(\begin{smallmatrix}#1&#2\\#3&#4\end{smallmatrix}\bigr)}
\providecommand{\twomat}[4]{\left(\begin{array}{cc}#1&#2\\#3&#4\end{array}\right)}
\providecommand{\smalltwomat}[4]{\left(\begin{smallmatrix}#1&#2\\#3&#4\end{smallmatrix}\right)}
\theoremstyle{definition}
\theoremstyle{remark}
\newtheorem{remark*}{Remark}
\numberwithin{equation}{subsection}
\newcommand{\Div}{\mathrm{Div}}
\newcommand{\bmu}{\boldsymbol{\mu}} 
\newcommand{\one}{\mathbf{1}}
\newcommand{\Q}{\mathbf{Q}}
\newcommand{\qqq}{\mathbf{q}}
\newcommand{\GL}{\mathbf{GL}}
\newcommand{\SO}{\mathbf{SO}}
\newcommand{\GO}{\mathbf{GO}}
\newcommand{\X}{\mathscr{X}}
\newcommand{\calE}{\mathscr{E}}
\newcommand{\calP}{\mathscr{P}}
\renewcommand{\mod}{\ \mathrm{mod}\,}
\newcommand{\R}{\mathbf{R}}
\newcommand{\Z}{\mathbf{Z}}
\newcommand{\frakp}{\mathfrak{p}}
\newcommand{\ad}{\mathrm{ad}}
\newcommand{\into}{\hookrightarrow}
\newcommand{\Y}{\mathscr{Y}}
\newcommand{\calQ}{\mathscr{Q}}
\newcommand{\calN}{\mathscr{N}}
\newcommand{\calI}{\mathscr{I}}
\newcommand{{\calG}}{\mathscr{G}}
\newcommand{\calR}{\mathscr{R}}
\newcommand{\calS}{\mathscr{S}}
\newcommand{\bcalS}{\baar{\mathscr{S}}}
\newcommand{\lf}{\ell_{\vphi^{p},\alpha}}
\newcommand{\C}{\mathbf{C}}
\newcommand{\N}{\mathbf{N}}
\newcommand{\OO}{\mathscr{O}}
\newcommand{\A}{\mathbf{A}}
\newcommand{\bks}{\backslash}
\newcommand{\baar}{\overline}
\newcommand{\eps}{\varepsilon}
\newcommand{\vphi}{\varphi}
\newcommand{\vpi}{\varpi}
\newcommand{\Up}{\mathrm{U}}
\newcommand{\Alb}{\mathrm{Alb}\,}
\newcommand{\wtil}{\widetilde}
\newcommand{\B}{\mathbf{B}}
\newcommand{\W}{\mathscr{W}}
\newcommand{\vol}{\mathrm{vol}}
\newcommand{\Tr}{\mathrm{Tr}}
\newcommand{\Gal}{\mathrm{Gal}}
\newcommand{\Pic}{\mathrm{Pic}\,}
\newcommand{\Ker}{\mathrm{Ker}\,}
\newcommand{\Hom}{\mathrm{Hom}\,}
\newcommand{\End}{\mathrm{End}\,}
\newcommand{\llb}{\llbracket}
\newcommand{\rrb}{\rrbracket}
\newcommand{\Spec}{\mathrm{Spec}\,}
\newcommand{\tZ}{\wtil{Z}}
\def\Xint#1{\mathchoice
      {\XXint\displaystyle\textstyle{#1}}%
      {\XXint\textstyle\scriptstyle{#1}}%
      {\XXint\scriptstyle\scriptscriptstyle{#1}}%
      {\XXint\scriptscriptstyle\scriptscriptstyle{#1}}%
      \!\int}
   \def\XXint#1#2#3{{\setbox0=\hbox{$#1{#2#3}{\int}$}
        \vcenter{\hbox{$#2#3$}}\kern-.5\wd0}}
   \def\dashint{\Xint-}
\title[The $p$-adic Gross--Zagier formula on Shimura curves]{The $p$-adic Gross--Zagier formula\\on Shimura curves}
\author{Daniel Disegni}
\address{Departement de Math\'{e}matiques,
Universit\'{e} Paris-Sud,
91405 Orsay Cedex,
France 
}
\email{daniel.disegni@math.u-psud.fr}
\begin{document}

\begin{abstract}
We prove a  general formula for the $p$-adic heights of Heegner points on modular abelian varieties with potentially ordinary (good or semistable) reduction at the primes above $p$. The formula is in terms of  the cyclotomic derivative of a Rankin--Selberg $p$-adic $L$-function, which we construct. 
It generalises previous work of Perrin-Riou, Howard, and the author, to the context of the work of Yuan--Zhang--Zhang on the archimedean Gross--Zagier formula and of Waldspurger on toric periods. 
We further construct analytic functions interpolating Heegner points in the anticyclotomic variables, and obtain a version of our formula for them. It is complemented, when the relevant root number is $+1$ rather than $-1$, by an anticyclotomic version of the Waldspurger formula.

When combined with work of Fouquet, the anticyclotomic Gross--Zagier formula implies one divisibility in a $p$-adic Birch and Swinnerton-Dyer conjecture in anticyclotomic families. 
Other applications described in the text will appear separately. 
\end{abstract}

\maketitle

\tableofcontents

\section{Introduction}

The main results  of this paper\footnote{\emph{2010 Mathematics Subject Classification}: 11G40 (primary), 11F33, 11F41 (secondary). 

\emph{Keywords}: Gross--Zagier formula, $p$-adic $L$-function, Heegner points, $p$-adic heights, Birch and Swinnerton-Dyer conjecture.

\emph{Note added after publication}: the main theorem is off by a factor of $2$. This and other errata are noted in the author's \emph{The $p$-adic Gross--Zagier formula on Shimura curves, II: nonsplit primes}, Appendix B.
}
 are the  general formula for the $p$-adic heights of Heegner points of Theorem \ref{B} below, and its version in anticyclotomic families (contained in Theorem \ref{C}).  They are preceded by a flexible construction of the relevant $p$-adic $L$-function (Theorem \ref{A}), and complemented by a version of the Waldspurger formula in anticyclotomic families (presented in Theorem \ref{C} as well). In Theorem \ref{iwbsd}, we give an application to a version of the $p$-adic Birch and Swinnerton-Dyer conjecture in anticyclotomic families. In Theorem \ref{nvh}, we state a result on the generic non-vanishing of $p$-adic heights on CM abelian varieties, as a special case of a theorem to 
  appear in joint work with A. Burungale.

Our theorems are key  ingredients of a new  Gross--Zagier formula for exceptional zeros \cite{exc}, and of a universal $p$-adic Gross--Zagier formula specialising to analogues of Theorem \ref{B} in all weights. These  will be given in separate works. Here we would just like to mention that all of them, as well as Theorem \ref{nvh}, make essential use of the new generality of the present work.

\medskip

The rest of this introductory section contains  the statements of our results, followed by an outline of their proofs. To avoid interrupting the flow of exposition, the discussion of previous and related works (notably by Perrin-Riou and Howard) has mostly been concentrated in \S\ref{history}.

\subsection{Heegner points and multiplicity one}\label{hm1}
  Let $A$ be a simple abelian variety of $\GL_{2}$-type over a totally real field $F$; recall that this means that $M:=\End^{0}(A)$ is a field of dimension equal to the dimension of $A$. One knows how to systematically construct points on $A$ when $A$ admits parametrisations by Shimura curves in the following sense. Let $\B$ be a quaternion algebra over the ad\`ele ring $\A=\A_{F}$ of $F$, and assume that $\B$ is  \emph{incoherent}, i.e.  that its ramification set $\Sigma_{\B}$ has odd cardinality. We further assume that $\Sigma_{\B}$ contains all the archimedean places of $F$. Under these conditions there is a tower of Shimura curves $\{X_{U}\}$ over $F$ indexed by the open compact subgroups $U\subset\B^{\infty\times}$; let $X=X(\B):=\varprojlim_{U} X_{U}$. For each $U$, there is   a canonical Hodge class $\xi_{U}\in \Pic(X_{U})_{\Q}$ having degree $1$ in each connected component, inducing a compatible family $\iota_{\xi}=(\iota_{\xi, U})_{U}$ of quasi-embeddings\footnote{By `quasi-embedding', we mean an element of $\Hom(X_{U},J_{U})\otimes {\Q}$, a multiple of which is an embedding.}  $\iota_{\xi,U}\colon X_{U}\hookrightarrow J_{U}:=\Alb X_{U}$. We write $J:=\varprojlim J_{U}$. The $M$-vector space 
$$\pi=\pi_{A}=\pi_{A}(\B):={\varinjlim}_{U} \Hom^{0}(J_{U}, A)$$
is either zero or a smooth irreducible admissible representation of $\B^{\infty\times}$. It comes with  a natural stable lattice $\pi_{\Z}\subset \pi$, and its  central character
$$\omega_{A}\colon F^{\times}\bks {\A}^{\times}\to M^{\times}$$
corresponds, up to  twist by the cyclotomic character, to the determinant of the Tate module under the class field theory isomorphism. When $\pi_{A}$ is nonzero, $A$ is said to be  parametrised by $X(\B)$. Under the conditions we are going to impose on $A$, the existence of such a parametrisation, for a suitable choice of $\B$ (see below), is equivalent to the \emph{modularity} conjecture. Recall that the latter asserts the existence of a unique $M$-rational (Definition \ref{M-rat} below) automorphic representation $\sigma_{A}$ of weight~$2$ such that there is an equality of $L$-functions $L(A,s+1/2)=L(s, \sigma_{A})$. The conjecture is known to be true for ``almost all''  elliptic curves $A$ (see \cite{bao}), and when $A_{\baar{F}}$ has complex multiplication.
 
\subsubsection{Heegner points} Let $A$ be parametrised by $X(\B)$ and let $E$ be a CM extension of $F$ admitting an $\A^{\infty}$-embedding $E_{\A^{\infty}}\hookrightarrow \B^{\infty}$, which we fix; we denote by $\eta$ the associated quadratic character and by $D_{E}$ its absolute discriminant. Then $E^{\times}$ acts on $X$ and by the theory of complex multiplication each closed point of the subscheme $X^{E^{\times}}$ is defined over $E^{\rm ab}$, the maximal abelian extension of $E$. We fix one such CM point $P$. Let $L(\chi)$ be a field extension of  $M$ 
and let
$$\chi\colon E^{\times}\bks E_{\A^{\infty}}^{\times}\to L(\chi)^{\times}$$
be a finite order Hecke character such that 
$$\omega_{A}\cdot \chi|_{{\A^{\infty,\times}}}=1;$$  We can view $\chi$ as a character of $\mathscr{G}_{E}:=\Gal(\baar{E}/E)$ via the reciprocity map of class field theory (normalised, in this work, by sending uniformisers to geometric Frobenii). For each $f\in \pi_{A}$, we then have  a \emph{Heegner point}
$$P(f, \chi)=\int_{\Gal(E^{\rm ab}/E)}f(\iota_{\xi}(P)^{\tau})\otimes\chi(\tau)\, d\tau \in A(\chi).$$ 
Here  the integration uses the Haar measure of total volume~$1$, and
$$A(\chi):=(A(E^{\rm ab})\otimes_{M}L(\chi)_{\chi})^{\Gal(E^{\rm ab}/E)},$$
where $L(\chi)_{\chi}$ denotes the one-dimensional Galois module  $L(\chi)$ with action given by~$\chi$. The functional 
$f\mapsto P(f, \chi)$
defines an element of 
$$\Hom_{E_{\A^{\infty}}^{\times}}(\pi\otimes \chi, L(\chi))\otimes_{L(\chi)} A(\chi).$$
A foundational local result of Tunnell and Saito \cite{tunnell, saito} asserts that, for any irreducible representation $\pi$ of $\B^{\times}$, the $L(\chi)$-dimension of  
$${\rm H}(\pi, \chi)=\Hom_{E_{\A^{\infty}}^{\times}}(\pi\otimes \chi, L(\chi))$$
is either zero or one. It is one exactly when, for all places $v$ of $F$, the local condition
\begin{gather}\label{local cond}
\eps(1/2, \pi_{E,v}\otimes \chi_{v})=\chi_{v}(-1)\eta_{v}(-1)\eps(\B_{v})
\end{gather}
holds, where $\pi_{E}$ is the base-change of $\pi$ to $E$, $\eta=\eta_{E/F}$ is the quadratic character of $\A^{\times}$ associated to $E$ and $\eps(\B_{v})=+1$ if $\B_{v}$ is split and $-1$ if $\B_{v}$ is ramified. In this case, denoting by $\pi^{\vee}$ the $M$-contragredient representation, there is an explicit  generator
$$Q=\prod_{v\nmid \infty}Q_{v}\in {\rm H}(\pi, \chi)\otimes_{L(\chi)}{\rm H}(\pi^{\vee}, \chi^{-1})$$
defined by integration of local matrix coefficients  
\begin{align}\label{Qvdef}
Q_{v}(f_{1,v}, f_{2,v}, \chi)=
 {L(1, \eta_{v})L(1,\pi_{v}, \ad) \over \zeta_{F,v}(2) L(1/2, \pi_{E,v}\otimes\chi_{v}) }
 \int_{E_{v}^{\times}/F_{v}^{\times}} \chi_{v}(t_{v}) (\pi(t_{v})f_{1,v}, f_{2,v})_{v}\, dt_{v}, 
\end{align}
for a decomposition $(\cdot,\cdot)=\otimes_{v}(\cdot,\cdot)_{v}$ of the pairing $\pi\otimes_{M}\pi^{\vee}\to M$, and Haar measures $dt_{v}$ assigning to $\OO_{E,v}^{\times}/\OO_{F_{v}}^{\times}$ the volume $1$ if $v$ is unramified in $E$ and $2$ if $v$ ramifies in $E$. The normalisation is such that given $f_{1}$, $f_{2}$, all but finitely many terms in the product are equal to~$1$. The  pairings $Q_{v}$ in fact depend on the choice of decomposition, which in general needs an extension of scalars; the global pairing is defined over $M$ and independent of choices.

Note that the local root numbers are unchanged if one replaces $\pi$ by its Jacquet--Langlands transfer to another quaternion algebra, and that when $\pi=\pi_{A}$ they equal the local root numbers $\eps(A_{E,v,}, \chi_{v})$ of the motive $H_{1}(A\times_{\Spec F}\Spec E)\otimes_{M}\chi$ \cite{gross-mot}. This way one can view the local conditions 
\begin{gather*}
\eps(A_{E,v}, \chi_{v})=\chi_{v}(-1)\eta_{v}(-1)\eps(\B_{v})
\end{gather*}
as determining  a unique totally definite quaternion algebra $\B\supset E_{\A}$ over $\A$, which is incoherent precisely when the global root number $\eps(A_{E}, \chi)=-1$. In this case, $A$ is parametrised by $X(\B)$ in the sense described above if and only if $A$ is modular in the sense that the Galois representation afforded by its Tate module is attached to a cuspidal automorphic representation of $\GL_{2}(\A_{F})$ of parallel weight $2$. We assume this to be the case.

\subsubsection{Gross--Zagier formulas} 
There is a natural identification $\pi^{\vee}=\pi_{A^{\vee}}$, where $A^{\vee}$ is the dual abelian variety (explicitly, this is induced by the perfect $M=\End^{0}(A)$-valued pairing  $f_{1,U}\otimes f_{2, U}\mapsto \vol(X_{U})^{-1}f_{1, U}\circ f_{2}^{\vee}$ using the canonical autoduality of $J_{U}$ for any sufficiently small $U$; the normalising factor $\vol(X_{U})\in \Q^{\times} $ is the hyperbolic volume of $X_{U}(\C_{\tau})$ for any $\tau\colon F\into \C$, see \cite[\S1.2.2]{yzz}). Similarly to the above, 
 we have a Heegner point functional $P^{\vee}(\cdot, \chi^{-1})\in {\rm H} (\pi^{\vee}, \chi^{-1})\otimes_{L } A^{\vee }(\chi^{-1})$. Then the multiplicity one result of Tunnell and Saito implies that for each bilinear pairing 
$$\langle\, , \, \rangle\colon A(\chi)\otimes_{L(\chi)} A^{\vee}(\chi^{-1})\to V$$ 
with values in an $L(\chi)$-vector space $V$, there is an element $\mathscr{L}\in V$ such that 
$$\langle P(f_{1}, \chi), P(f_{2}, \chi^{-1})\rangle = \mathscr{L}\cdot  Q(f_{1}, f_{2}, \chi)$$
for all $f_{1}\in \pi$, $f_{2}\in \pi^{\vee}$. 

In this framework, we may call ``Gross--Zagier formula'' a formula for $\mathscr{L}$ in terms of $L$-functions.
When   $\langle\, , \, \rangle$ is the N\'eron--Tate height pairing valued in $\C\stackrel{\iota}{\hookleftarrow} M$ for an archimedean place $\iota$, the generalisation by Yuan--Zhang--Zhang  \cite{yzz} of the classical Gross--Zagier formula (\cite{GZ, shouwu, asian, shouwu-msri}) yields
\begin{align}\label{gzf}
\mathscr{L}=
 { c_{E}\over 2}\cdot 
   {\pi^{2[F:\Q]} |D_{F}|^{1/2}   L'(1/2, \sigma_{A,E}^{\iota}\otimes\chi^{\iota}) \over 2     L(1, \eta) L(1, \sigma_{A}^{\iota},\ad)}
\end{align}
where
\begin{align}\label{cE}
c_{E}:={   \zeta_{F}(2)\over  (\pi/2)^{[F:\Q]}|D_{E}|^{1/2}  L(1, \eta) } \in\Q^{\times},
\end{align}
and, in the present introduction, $L$-functions are as usual Euler products over all the \emph{finite} places.\footnote{In \cite{yzz}, the formula has a slightly different appearance from \eqref{gzf}, owing to the following conventions adopted there: the $L$- and zeta functions are complete including the archimedean factors;  the functional $Q$ includes  archimedean factors $Q_{v}(f_{1,v}, f_{2,v}, \chi)$, which can be shown to  equal to $1/\pi$; and finally  the product Haar measure on $E_{\A^{\infty}}^{\times}/\A^{\infty,\times}$ equals $|D_{E}|^{-1/2}$ times our measure
(cf. \cite[\S 1.6.1]{yzz}). 

(When  ``$\pi$'' appears as a factor in a numerical formula, it  denotes $\pi=3.14...$; there should be  no risk of confusion with the representation $\pi_{A}$.)}
(However in the main body of the paper  we will embrace the convention of \cite{yzz} of including the archimedean factors.)
The most important factor is the central derivative of the $L$-function $L(s, \sigma_{A,E}^{\iota}\otimes\chi)$.

When $\langle\, , \, \rangle$ is the product of the $v$-adic logarithms on $A(F_{v})$ and $A^{\vee}(F_{v})$, for a prime $v$ of $F$ which splits in $E$, the $v$-adic Waldspurger formula of Liu--Zhang--Zhang \cite{lzz} (generalising \cite{bdp}) identifies $\mathscr{L}$ with the special value of a $v$-adic Rankin--Selberg $L$-function obtained by interpolating the values $L(1/2, \sigma_{A,E}\otimes \chi'')$ at anticyclotomic Hecke characters $\chi''$ of $E$ of higher weight at $v$ (in particular, the central value for the given character $\chi$ lies \emph{outside} the range of interpolation).

The object of this paper is a formula for $\mathscr{L}$ when $\langle\, , \, \rangle$ is a $p$-adic height pairing. In this case $\mathscr{L}$ is  given  by the central derivative of a $p$-adic Rankin--Selberg $L$-function obtaining by interpolation of $L(1/2, \sigma_{A,E}, \chi')$ at finite order Hecke characters of $E$, precisely up to the factor $c_{E}/2$ of \eqref{gzf}. We describe in more detail the objects involved.

\subsection{The $p$-adic $L$-function}\label{intro-plF}
We construct the relevant $p$-adic $L$-function as a function on a space of $p$-adic characters (which can be regarded as an abelian eigenvariety), characterised by an interpolation property at locally constant characters. It further depends on a choice of local models at~$p$ (in the present case, additive characters); this point  is relevant for the study of fields of rationality and does not seem to have received much attention in the literature on $p$-adic $L$-functions.

\begin{defi}\label{M-rat}   An \emph{$M$-rational}\footnote{See \cite[\S 3.2.2]{yzz} for more details on this  notion.}
 cuspidal automorphic representation of $\GL_{2}$ of weight $2$ is  a representation $\sigma^{\infty}$ of $\GL_{2}(\A^{\infty})$ on  a rational vector space $V_{\sigma^{\infty}}$ with $\End_{\GL_{2}(\A^{\infty})}\sigma^{\infty}=M$ (then $V_{\sigma^{\infty}}$ acquires the structure of  an $M$-vector space), such that   $\sigma^{\infty}\otimes_{\Q}\sigma_{\infty}^{(2)}=\oplus_{\iota\colon M\hookrightarrow \C} \sigma^{\iota}$ is a direct sum of  irreducible cuspidal automorphic representations; here $\sigma_{\infty}^{(2)}$, a complex representation of $\GL_{2}(F_{\infty})\cong \GL_{2}(\R)^{[F:\Q]}$, is the product of discrete series of parallel weight~$2$ and trivial central character. 
 \end{defi}

\medskip

We fix from now on a rational prime $p$. 
\begin{defi}\label{def-n-ord}
Let  $F_{v}$ and $L$    be  finite extensions of $\Q_{p}$, let $\sigma_{v}$ be  a smooth irreducible representation of $\GL_{2}(F_{v})$ on an $L$-vector space, and let $\alpha_{v}\colon F_{v}^{\times }\to \OO_{L}^{\times}$ be a smooth character valued in the units of $L$. We say that $\sigma_{v}$ is  \emph{nearly ordinary for weight~$2$}
 with unit character $\alpha_{v}$ if $\sigma_{v}$ is an infinite-dimensional   subrepresentation of the un-normalised principal series ${\rm Ind}(|\cdot |_{v}\alpha_{v},\beta_{v} )$ for some other character $\beta_{v}\colon F_{v}^{\times}\to L^{\times}$. (Concretely,   $\sigma_{v}$ is then either an irreducible principal series or special of the form ${\rm St}(\alpha_{v}):= {\rm St}\otimes (\alpha_{v}\circ\det) $, where ${\rm St}$ is the Steinberg representation.)
 
 If $M$ is a number field, $\frakp$ is a prime of $M$ above $p$, and  $\sigma_{v}$ is a representation of $\GL_{2}(F_{v})$ on an $M$-vector space, we say that $\sigma_{v}$ is \emph{nearly $\frakp$-ordinary}  for weight~$2$ if there is a finite extension $L$ of $M_{\frakp}$ such that $\sigma_{v}\otimes_{M}L$ is nearly $\frakp$-ordinary for weight~$2$.
 
   In the rest of this paper we omit the clause `for weight~$2$'.\footnote{Which we have introduced in order to avoid misleading the reader into thinking of ordinariness  of an automorphic representation as a purely local notion (but see \cite{emerton-int} for how to approach it as such).}

    \end{defi}
    
Fix an $M$-rational cuspidal automorphic representation $\sigma^{\infty}$ of $\GL_{2}(\A^{\infty})$ of weight $2$; if there is no risk of confusion we will  lighten the notation and write  $\sigma$ instead of $\sigma^{\infty}$. Let  $\omega\colon F^{\times}\bks \A^{\times}\to M^{\times}$ be the central character of $\sigma$, which is necessarily of finite order.

 Fix moreover a prime $\frakp$ of $M$ above $p$ and assume that for all $v\vert p$  the local components $\sigma_{v}$ of $\sigma$  are nearly $\frakp$-ordinary with respective characters $\alpha_{v}$; we write $\alpha$ to denote the collection $(\alpha_{v})_{v\vert p }$. We replace $L$ by its subfield $M_{\frakp}(\alpha)$   generated by the values of all the  $\alpha_{v}$, and we similarly let $M(\alpha)\subset L$ be the finite extension of $M$ generated by the values of all the  $\alpha_{v}$.

\subsubsection{Spaces of $p$-adic and locally constant characters}
Fix throughout this work an arbitrary compact open subgroup $V^{p}\subset\widehat{\OO}_{E}^{p, \times}:=\prod_{w\nmid p} \OO_{E,w}^{\times}$.
 Let 
$$
\Gamma =E_{\A^{\infty}}^{\times}/\baar{E^{\times}V^{p}},\quad
\Gamma_{F} = \A^{\infty,\times}/\baar{F^{\times}\widehat{\OO}_{F}^{p, \times}}.
$$
 Then we have rigid spaces $\Y'=\Y'_{\omega}(V^{p})$, $\Y=\Y_{\omega}(V^{p})$,  $\Y_{F}$ of respective dimensions $[F:\Q]+1+\delta$, $[F:\Q]$, $1+\delta$ (where $\delta\geq 0$ is the Leopoldt defect of $F$, conjectured to be zero)  representing the functors on $L$-affinoid algebras
\begin{align*}
\Y'_{\omega}(V^{p})(A)&=\{\chi'\colon \Gamma \to A^{\times }\, :\, \omega\cdot\chi'|_{\widehat{\OO}_{F}^{p, \times}}=1\},\\
\Y_{\omega}(V^{p})(A)&=\{\chi\colon \Gamma \to A^{\times }\, :\, \omega\cdot\chi|_{\A^{\infty,\times}}=1\},\\
\Y_{F}(A)&=\{\chi_{F}\colon \Gamma_{F} \to A^{\times }\},
\end{align*}
where the sets on the right-hand sides are intended to consist of continuous homomorphisms. The inclusion $\Y\subset \Y'$  sits in the Cartesian diagram
\begin{equation}\label{cart}
\xymatrix{
\Y\ar[r]\ar[d] &\Y'\ar[d]\\
\{\one\} \ar[r] &\Y_{F}
,}
\end{equation}
where the vertical maps are given by $\chi'\mapsto \chi_{F}=\omega\cdot \chi'|_{\A^{\infty,\times}}$. When $\omega=\one$, $\Y_{\one}$ is a group object (the ``Cartier dual'' of $\Gamma/\Gamma_{F}$); in general, $\Y_{\omega}$ is a principal homogeneous space for   $\Y_{\one}$ under the action $\chi_{0}\cdot\chi=\chi_{0}\chi$. 

Let ${\bmu}_{\Q}$ denote the ind-scheme over $\Q$ of all roots of unity and ${\bmu}_{M}$ its base-change to $M$. Then there  are ind-schemes  $\Y'^{\,\rm l.c.}$, $\Y^{\rm l.c.}$, $\Y_{F}^{\rm l.c.}$, ind-finite  over $M$, representing the functors on $M$-algebras 
\begin{align*}
\Y'^{\,\rm l.c.}(A)&=\{\chi'\colon \Gamma \to  \bmu_{M}(A)\, :\, \omega\cdot\chi'|_{\widehat{\OO}_{F}^{p, \times}}=1\},\\
\Y^{\rm l.c.}(A)&=\{\chi\colon \Gamma \to \bmu_{M}(A) :\, \omega\cdot\chi|_{\A^{\infty,\times}}=1\},\\
\Y_{F}^{\rm l.c.}(A)&=\{\chi_{F}\colon \Gamma_{F} \to \bmu_{M}(A)\,    \}.
\end{align*}
where the sets on the right-hand sides are intended to consist of locally constant (equivalently, finite order) characters.
\begin{defi}\label{alg section}  Let  $\Y^{?}$  be  one of the above rigid spaces and $\Y^{?, {\rm l.c., \, an}}\subset \Y^{?}$ be the (ind-)rigid space which is the analytification of $\Y^{?, {\rm l.c.}}_{L}:=\Y^{?, {\rm l.c.}}\times_{\Spec M}\Spec L$. For any finite extension $M'$  of $M$ contained in $L$, there is a natural map
of locally $M'$-ringed spaces   $j_{M'}\colon \Y^{?,{\rm l.c., \, an}}\to \Y^{?,{\rm l.c.}}_{M'}$.
 Let $M'$ be a finite extension of $M$ contained in $L$. We say that a section $G$ of the structure sheaf of  ${\Y^{?}}$ is \emph{algebraic on $\Y^{?,{\rm l.c.}}_{M'}$}  if     its restriction   to ${\Y^{?,{\rm l.c., \, an}}}$  equals  $j_{M'}^{\sharp}G'$  for a (necessarily unique) section $G'$ of the structure sheaf of   ${\Y^{?,{\rm l.c.}}_{M'}}$.\footnote{To avoid all confusions due to the  clash of notation, $\Y_{F}^{\rm l.c.}$ will always denote the  $M$-scheme of locally constant characters of $\Gamma_{F}$ introduced above, and \emph{not} the `base-change of $\Y^{\rm l.c.}$ to $F$' (which is not defined as $M$ is not a subfield of $F$ in the generality adopted here).}
\end{defi}
In the situation of the definition, we will abusively still denote by $G$ the function $G'$ on ${\Y^{?,{\rm l.c.}}_{M'}}$.

\subsubsection{Local additive character}
Let $v$ be a non-archimedean place of $F$, $p_{v}\subset \OO_{F,v}$ the maximal ideal, $d_{v}\subset \OO_{F,v}$ the different. We define the space of additive characters of $F_{v}$ of level ${0}$ to be  
$$\Psi_{v}:=\Hom(F_{v}/d_{v}^{-1}\OO_{F,{v}}, \bmu_{\Q}) -  \Hom(F_{v}/p_{v}^{-1}d_{v}^{-1}\OO_{F,{v}}, \bmu_{\Q}),$$
where we regard $\Hom(F_{v}/p_{v}^{n}\OO_{F,v}, \bmu_{\Q})$ as a profinite group scheme over $\Q$.\footnote{If $F_{v}=\Q_{p}$, then  $\Hom(F_{v}/\OO_{F,v}, \bmu_{\Q})=T_{p}\bmu_{\Q}$, the $p$-adic Tate module of roots of unity. One could also construct and use a scheme parametrising all nontrivial characters of $F_{v}$.} The scheme $\Psi_{v}$ is a torsor for the action of $\OO_{F,v}^{\times}$ (viewed as a constant profinite group scheme over $\Q$) by $a.\psi(x):=\psi(ax)$.

If $\omega_{v}'\colon \OO_{F,v}^{\times}\to \OO(\X)^{\times}$ is a continuous character for a scheme or rigid space $\X$, we denote by $\OO_{\X\times\Psi_{v}}(\omega_{v}')\subset  \OO_{\X\times\Psi_{v}}$ the subsheaf of functions $G$ satisfying $G(x,a.\psi)=\omega_{v}'(a)(x)G(x,\psi)$ for  $a\in  \OO_{F,v}^{\times}$. By the defining property we can identify $\OO_{\X\times\Psi_{v}}(\omega_{v}')$ with ${\rm p}_{\X*}\OO_{\X\times\Psi_{v}}(\omega_{v}')$ (where ${\rm p}_{\X}\colon \X\times \Psi_{v}\to \X$ is the projection),    a locally free rank one $\OO_{\X}$-module with action by $\calG_{\Q}:=\Gal(\baar{\Q}/ \Q)$.
Finally we denote $\Psi_{p}:=\prod_{v\vert p}\Psi_{v}$ and, if $\omega_{p}'=\prod_{v\vert p}\omega_{v}'\colon \OO_{F, p}^{\times}\to \OO(\X)^{\times}$,
$$\OO_{\X\times\Psi_{p}}(\omega_{p}')=\bigotimes_{v\vert p} \OO_{\X\times\Psi_{v}}(\omega'_{v}),$$
 where the tensor product is in the category of $\OO_{\X}$-modules. Its space of global sections over $\X$ will be denoted by $\OO_{\X\times \Psi_{p}}(\X\times \Psi_{p}, \omega'_{p})$ or simply $\OO_{\X\times \Psi_{p}}(\X, \omega'_{p})$.

These sheaves will appear in the next theorem with $\omega_{v}'=\omega_{v}\chi_{F,{\rm univ},v}^{-1}\colon \OO_{F, v}^{\times}\to \OO({\Y'})^{\times}$ where $\omega_{v}$ is the central character of $\sigma_{v}$ and $\chi_{F,{\rm univ},v}\colon \OO_{F,v}^{\times}\to \OO({\Y_{F}})^{\times}\to \OO({\Y'})^{\times}$ comes from  the restriction of the universal $\OO({\Y_{F}})^{\times}$-valued character of $\Gamma_{F}$. As $\Psi_{p}$ is a scheme over $\Q$ and $\chi_{F, {\rm univ}}$ is obviously algebraic on $\Y_{F}^{\rm l.c.}$,  the notion of Definition \ref{alg section} extends 
 to define $\Y'^{\rm l.c.}_{M'}$-algebraicity of sections of $\OO_{\Y'\times \Psi_{p}}(\omega_{v}')$ (and we use the terminology ``algebraic on $\Y'^{\rm l.c.}_{M'}\times \Psi_{p}(\omega_{v}')$'').

As a last preliminary, we introduce  notation for bounded functions: if $\X$ is a rigid space then $\OO_{\X}(\X)^{\rm b}\subset \OO_{\X}(\X)$ is the space of global sections $G$ such that that $\sup_{x\in \X}|G(x)|$ is finite; similarly, in the above situation, we let
$$\OO_{\X\times \Psi_{p}}(\X, \omega_{p}')^{\rm b}:=\{G\in \OO_{\X \times \Psi_{p}}(\X, \omega_{p}')\, : \sup_{x\in \X} |G(x, \psi)| \text{\ is finite for some }\psi\in \Psi_{p}\}.$$
As $\omega_{p}'$ is continuous, $\omega_{p}'(a) $ is  bounded  in $a\in \OO_{F,p}^{\times}$: we could then equivalently replace ``is finite for some $\psi\in \Psi_{p}$'' with ``is uniformly bounded for all  $\psi\in \Psi_{p}$''.

\begin{theoA}\label{A} There is a bounded analytic function 
$$L_{p, \alpha}(\sigma_{E})\in \OO_{\Y'\times \Psi_{p}}(\Y', \omega_{p}^{-1}\chi_{F, {\rm univ}, p})^{\rm b},$$
 uniquely  determined by the following property: $L_{p, \alpha}(\sigma_{E})$ is algebraic on $\Y'^{\rm l.c.}_{M(\alpha)}\times \Psi_{p}$ and, for each $\C$-valued geometric point 
$$(\chi', \psi_{p})\in \Y'^{\, \rm l.c.}_{M(\alpha)}\times \Psi_{p}(\C),$$
  letting $\iota\colon M(\alpha)\into \C$ be the embedding induced by the composition $\Spec\C\stackrel{\chi'}{\to} \Y'^{\rm l.c.}_{M(\alpha)}\to \Spec M(\alpha)$, 
   we have
 $$L_{p,\alpha}(\sigma_{E})(\chi', \psi_{p}) =\prod_{v|p}Z_{v}^{\circ}(\chi'_{v}, \psi_{v})
  { \pi^{2[F:\Q]} |D_{F}|^{1/2} L(1/2, \sigma_{E}^{\iota}\otimes \chi'{})  
  \over 2    L(1, \eta)  L(1, \sigma^{\iota}, \ad)}$$
in $\C$. The interpolation factor is explicitly
 $$Z_{v}^{\circ}(\chi_{v}', \psi_{v}):=   
 {\zeta_{F,v}(2)L(1, \eta_{v})^{2}   \over L(1/2, \sigma_{E,v}\otimes \chi_{v}') }
  {\prod_{w\vert v} Z_{w}(\chi_{w}', \psi_{v})} $$ 
with 
\begin{equation*}
Z_{w}( \chi'_{w}, \psi_{v})=
\begin{cases}\displaystyle
\alpha_{v}(\vpi_{v})^{-v(D)} \chi_{w}'(\vpi_{w})^{-v(D)} {1-\alpha_{v}(\vpi_{v})^{-f_{w}}\chi_{w}'(\vpi_{w})^{-1}\over 1-\alpha_{v}(\vpi_{v})^{f_{w}}\chi'_{w}(\vpi_{w})q_{F,v}^{-f_{w}}} &\text{\ if $\chi_{w}'\cdot \alpha_{v}\circ q_{w}$ is unramified,}\\
 \tau(\chi'_{w}\cdot \alpha_{v}\circ q, \psi_{E_{w}}) &\text{\ if $\chi_{w}'\cdot \alpha_{v}\circ q_{w}$ is ramified.}
\end{cases}
\end{equation*}

Here $d$, $D\in \A^{\infty,\times}$ are generators of the different of $F$ and the relative discriminant of $E/F$ respectively, $q_{w}$ is the relative norm of $E/F$, $f_{w}$ is the inertia degree of $w|v$, and $q_{F,v}$ is the cardinality of the residue field at $v$; finally,  
 for any character $\wtil{\chi}_{w}'$ of $E_{w}^{\times}$ of conductor $\mathfrak{f}$,
$$ \tau(\wtil{\chi}_{w}', \psi_{E_{w}}):=\int_{w(t)=-w({\frak f})} \wtil{\chi}'_{w}(t)\psi_{E,w}(t) \, {dt} $$
with $dt$ the  additive Haar measure on $E_{w}$ giving $\vol(\OO_{E_{w}}, dt)=1$, and  $\psi_{E, w}=\psi_{F,v}\circ {\rm Tr}_{E_{w}/F_{v}}$.

\end{theoA}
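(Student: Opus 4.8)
\emph{Strategy.} The plan is to construct $L_{p,\alpha}(\sigma_E)$ as a bounded $p$-adic measure assembled from $\vpi_v$-stabilised ($v\mid p$) toric period integrals on a totally definite quaternion algebra, whose values at finite-order characters are governed by the central-value companion of the Yuan--Zhang--Zhang formula \eqref{gzf}, and then to trade the passage from $\Y$ to $\Y'$ --- and the dependence on the additive characters $\Psi_p$ --- for a Gauss-sum twist at the places above $p$, in the spirit of the $p$-adic Rankin--Selberg constructions of \cite{bdp,lzz}. Since each $\sigma_v$ with $v\mid p$ is nearly $\frakp$-ordinary, it is a principal series or Steinberg representation, hence \emph{not} supercuspidal, so the Tunnell--Saito dichotomy \eqref{local cond} fixes, independently of the finite-order twist $\chi'$, the local component $\B_v$ of a totally definite incoherent quaternion algebra $\B/\A_F$. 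Fix such a $\B$ with $\pi:=\pi_\sigma(\B)\neq 0$ and an $\A^\infty$-embedding $E_{\A^\infty}\hookrightarrow\B^\infty$, and realise $\pi$ and its $M$-contragredient $\pi^\vee$ on spaces of automorphic forms over $M(\alpha)$-vector spaces. At each $v\mid p$ where $\B_v$ is split, the inclusion $\pi_v\hookrightarrow\mathrm{Ind}(|\cdot|_v\alpha_v,\beta_v)$ singles out the $\vpi_v$-stabilised vectors $f^\circ_v,f^{\vee\circ}_v$ --- the $U_{\vpi_v}$-eigenvectors with the \emph{unit} eigenvalue $\alpha_v(\vpi_v)$; away from $p$, fix, as in \cite{yzz}, a $V^p$-fixed test pair $(f^p,f^{\vee p})$ normalising away the local periods $Q_v$ of \eqref{Qvdef} at $v\nmid p$. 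Put $f:=f^p\otimes\bigotimes_{v\mid p}f^\circ_v$, similarly $f^\vee$.

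\emph{The measure and its extension.} For locally constant $\chi$ the toric integral $\int_\Gamma\chi(t)f(t)\,dt$ over $\Gamma=E_{\A^\infty}^\times/\overline{E^\times V^p}$ is a finite sum over a class set twisted by $f$; since $f$ is a $U_{\vpi_v}$-eigenvector with $p$-adic unit eigenvalue for every $v\mid p$, the classical Amice--V\'elu--Vishik estimate exhibits it as the value of a bounded $\OO_L$-valued measure $\Theta(f)$ on $\Gamma$, the growth along the non-compact (``split-at-$p$'') directions of $\Gamma$ being absorbed by the unit eigenvalues. Forming $\Theta(f)\,\Theta^\vee(f^\vee)$, restricting to the locus $\omega\chi|_{\A^{\infty,\times}}=1$ and dividing by the global period $Q(f,f^\vee,\cdot)$ --- which by Tunnell--Saito multiplicity one applied fibrewise makes the quotient independent of $(f,f^\vee)$ --- gives a bounded analytic function on $\Y=\Y_\omega(V^p)$. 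To extend to $\Y'$, note that by the Cartesian diagram \eqref{cart} a character $\chi'\in\Y'$ is a character in $\Y$ twisted by the ``defect'' $\chi_F=\omega\chi'|_{\A^{\infty,\times}}\in\Y_F$, which is ramified only above $p$; multiplying $\Theta$ by the local Gauss sums $\tau(\chi'_w\alpha_v\circ q,\psi_{E_w})$ over $w\mid v\mid p$ absorbs this defect, and the transformation law $\tau(\widetilde\chi'_w,a.\psi)=\widetilde\chi'_w(a)^{-1}\tau(\widetilde\chi'_w,\psi)$ for $a\in\OO_{F,v}^\times$, matched against the central character of $\mathrm{Ind}(|\cdot|_v\alpha_v,\beta_v)$, makes the resulting object a section of $\OO_{\Y'\times\Psi_p}(\Y',\omega_p^{-1}\chi_{F,\mathrm{univ},p})^{\mathrm{b}}$. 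Up to the explicit global constant of \eqref{cE}, redistributed through the normalisations, this section is $L_{p,\alpha}(\sigma_E)$.

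\emph{The defining properties.} For interpolation, at a $\C$-valued point $(\chi',\psi_p)$ of $\Y'^{\,\mathrm{l.c.}}_{M(\alpha)}\times\Psi_p$ the Waldspurger formula of \cite{yzz} computes $P(f,\chi')\,P^\vee(f^\vee,{\chi'}^{-1})$ as the archimedean quantity displayed in the theorem (times $c_E/2$) multiplied by $Q(f,f^\vee,\chi')=\prod_{v\mid p}Q_v(f^\circ_v,f^{\vee\circ}_v,\chi'_v)$, the factors at $v\nmid p$ being $1$. A local computation in the Kirillov model of $\mathrm{Ind}(|\cdot|_v\alpha_v,\beta_v)$ (resp.\ $\mathrm{St}(\alpha_v)$), splitting the integral over $E_v^\times/F_v^\times$ according to whether $v$ is split, inert or ramified in $E$ and whether $\chi'_w\alpha_v\circ q_w$ is ramified, evaluates $Q_v(f^\circ_v,f^{\vee\circ}_v,\chi'_v)$ together with the Gauss-sum twist so that the product over $v\mid p$ is exactly $\prod_{v\mid p}Z_v^\circ(\chi'_v,\psi_v)$: the unramified branch yields the ratio of $\alpha_v$-Euler factors $\tfrac{1-\alpha_v(\vpi_v)^{-f_w}\chi'_w(\vpi_w)^{-1}}{1-\alpha_v(\vpi_v)^{f_w}\chi'_w(\vpi_w)q_{F,v}^{-f_w}}$ with the $v(D)$-shift from the conductor of $E/F$ (equivalently the level of $\psi_{E,w}$), the ramified branch yields $\tau(\chi'_w\alpha_v\circ q,\psi_{E_w})$, and the prefactor $\zeta_{F,v}(2)L(1,\eta_v)^2/L(1/2,\sigma_{E,v}\otimes\chi'_v)$ comes from the normalisation of $Q_v$. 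For algebraicity, at finite-order $\chi'$ the period-values are values of $M(\alpha)$-valued functions on finite sets against roots of unity and a Gauss sum, hence lie in a cyclotomic extension of $M(\alpha)$, and the $\calG_\Q$-equivariance of $\OO_{\X\times\Psi_p}(\omega_p')$ encodes the descent to $M(\alpha)$ (the rationality of the toric period, cf.\ \cite{yzz}). Boundedness is built in, and uniqueness follows because the finite-order characters are Zariski-dense in $\Y'$, so a bounded analytic function on $\Y'\times\Psi_p$ is determined by its restriction to $\Y'^{\,\mathrm{l.c.,an}}\times\Psi_p$, itself determined over $M(\alpha)$ by its $\C$-values under all embeddings.

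\emph{Main obstacle.} The crux is the explicit local computation at the places above $p$: choosing the $\vpi_v$-stabilised test vectors in the (possibly ramified) principal series or Steinberg representation, computing the matrix coefficient $(\pi_v(t)f^\circ_v,f^{\vee\circ}_v)_v$, and integrating it against $\chi'_v$ over $E_v^\times/F_v^\times$, so that the bookkeeping of conductors, the discriminant shift $v(D)$, the precise dependence of the Gauss sum on $\psi_v$ and on $\psi_{E,w}=\psi_{F,v}\circ\Tr_{E_w/F_v}$, and the $\alpha_v$-versus-$\beta_v$ Euler factors all assemble into $Z_v^\circ$ on the nose --- and, relatedly, checking that the Gauss-sum twist of the second step is $\OO_{F,p}^\times$-equivariant for $\omega_p^{-1}\chi_{F,\mathrm{univ},p}$ exactly, not merely up to a root of unity. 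The boundedness estimate and the global Waldspurger input are, by contrast, essentially standard.
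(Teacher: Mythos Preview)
Your approach has a genuine gap in the passage from $\Y$ to $\Y'$. The toric period $\int_\Gamma\chi(t)f(t)\,dt$ on a quaternion algebra, and hence your theta element $\Theta(f)$, is only defined for characters with $\omega\chi|_{\A^{\infty,\times}}=1$, i.e.\ on $\Y$: otherwise the integrand does not descend to $E^\times\bks E_{\A^\infty}^\times$. There is no projection $\Y'\to\Y$ along which to pull $\Theta(f)\Theta^\vee(f^\vee)$ back, and multiplying by Gauss sums at $p$ cannot manufacture the missing cyclotomic variable: a point $\chi'\in\Y'$ is not a point of $\Y$ decorated by local data at $p$, and $L(1/2,\sigma_E\otimes\chi')$ for general $\chi'\in\Y'$ is not recoverable from central values along $\Y$ by any local twist. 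Worse, on any connected component of $\Y$ of type $\eps=-1$ the Waldspurger period vanishes identically (the central $L$-value is zero by the functional equation, cf.\ Lemma~\ref{isinI}), so your construction gives zero there, whereas $L_{p,\alpha}(\sigma_E)$ is generically nonzero on the ambient component of $\Y'$. (There is also a coherent/incoherent confusion: you invoke automorphic forms on $\B^\times$ and the Waldspurger period, which require $\B$ coherent, while calling $\B$ incoherent.)

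The paper's construction is quite different and sidesteps these issues. It uses Waldspurger's Rankin--Selberg integral on $\GL_2$ (Proposition~\ref{prop2.6}), pairing $\vphi\in\sigma$ against a mixed theta--Eisenstein kernel whose Eisenstein factor depends on $\chi_F=\omega\chi'|_{\A^\times}$ and interpolates to a family over $\Y_F$ (\S\ref{sec: eis fam}). Applying the $p$-adic Petersson functional $\lf$ (Proposition~\ref{p-pet}) to the resulting $\Y'$-family $\calI(\phi^{p\infty})$ and dividing by the interpolated local integrals $\calR^\natural_v$ for $v\nmid p\infty$ (Proposition~\ref{interp rnat}) produces $L_{p,\alpha}(\sigma_E)$ directly on $\Y'$ (Theorem~\ref{theo A text}); the factors $Z_v^\circ$ arise as the local Rankin--Selberg integrals at $v\mid p$ (Proposition~\ref{interpolation factor}). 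Your theta-element idea and the local computation of $Q_v$ on $\vpi_v$-stabilised vectors do appear in the paper, but in \S\ref{sec: theta elts}--\S\ref{10.2}, where they are used not to construct $L_{p,\alpha}(\sigma_E)$ but to deduce the \emph{anticyclotomic} formulas over $\Y$ (Theorem~\ref{C}) from the already-constructed $p$-adic $L$-function.
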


\begin{rema}\label{Z psi} It follows from the description of Lemma \ref{basic local integral} that the interpolation  factors $Z_{v}^{\circ}$, $Z_{w}$ are  sections of  $\OO_{\Y_{v}'^{\rm l.c.}\times \Psi_{v}}(\omega_{v}\chi^{-1}_{F, {\rm univ}, v}),$ where $\Y_{v}'^{\rm l.c.}$ is the ind-finite reduced ind-scheme over $M(\alpha)$ representing $\bmu_{M(\alpha)}$-valued characters of $E_{v}^{\times}$. (Later, we  will also similarly denote by $\Y_{v}^{\rm l.c.}\subset \Y_{v}'^{\rm l.c.}$ the subscheme of characters satisfying $\chi_{v}|_{F_{v}^{\times}}=\omega_{v}^{-1}$.)
\end{rema}

In fact, we only construct $L_{p,\alpha}(\sigma_{E})$ as a bounded section of $\OO_{\Y'\times \Psi_{p}}(\omega_{p}^{-1} \chi_{F,{\rm univ}, p})(D)$, where $D$ is a divisor on $\Y'$ supported away from $\Y$ (i.e.,  for any polynomial function $G$ on $\Y'$ with divisor of zeroes $\geq D$, the function $G\cdot L_{p, \alpha}(\sigma_{E})$ is a bounded global section of $\OO_{\Y'\times \Psi_{p}}(\omega_{p}^{-1}\chi_{F,{\rm univ}, p})$);\footnote{A similar difficulty is encountered for example by Hida in \cite{Hi}.} see Theorem \ref{theo A text} together with Proposition \ref{interpolation factor} for the precise statement. This is sufficient for our purposes and to determine $L_{p, \alpha}(\sigma_{E})$ uniquely. One can then deduce that it is possible to take $D=0$ by comparing our $p$-adic $L$-function to some other construction where this difficulty does not arise. One such construction has been announced by David Hansen.

\subsection{$p$-adic Gross--Zagier formula}

Let us go back to the situation in which $A$ is a modular abelian variety of $\GL_{2}$-type, associated with an automorphic representation $\sigma_{A}$ of ${\rm Res}_{F/\Q}\GL_{2}$ of character $\omega=\omega_{A}$.

\subsubsection{$p$-adic heights} Several  authors (Mazur--Tate, Schneider, Zarhin,  {Nekov{\'a}{\v{r}}}, \ldots) have defined   $p$-adic  height pairings on $A(\baar{F})\times A^{\vee}(\baar{F})$ for an abelian variety $A$.  
These pairings  are analogous to  the classical  N\'eron--Tate height pairings: in particular they admit a decomposition into a sum of local symbols indexed by the (\emph{finite}) places of $F$; for $v\nmid p$ such symbols can be calculated from intersections of zero-cycles and degree-zero divisors on the local integral models of $A$.

In the general context of  {Nekov{\'a}{\v{r}}} \cite{nekheights}, adopted in this paper and recalled in \S\ref{lgh},  height pairings  can be defined for any geometric Galois representation $V$ over a $p$-adic field; we are interested in the case $V=V_{\frakp }A \otimes_{M_{\frakp}}L$  where $M=\End^{0}A$ and $L$ is a finite extension of a $p$-adic  completion $M_{\frakp}$ of $M$. Different from the N\'eron--Tate heights, $p$-adic heights  are associated with 
the auxiliary choice of splittings of the Hodge filtration on  ${\bf D}_{\rm dR}(V|_{\mathscr{G}_{F_{v}}})$ for the primes $v\vert p$; in our case, ${\bf D}_{\rm dR}(V|_{\mathscr{G}_{F_{v}}})= H^{1}_{\rm dR}(A^{\vee}/F_{v})\otimes_{M_{\frakp}} L$.  When $V|_{\calG_{F_{v}}}$ is potentially ordinary,
   meaning  that it is reducible in the category of de Rham representations
   (see more precisely Definition  \ref{pot-ord}),\footnote{This is a $\frakp$-partial version of the notion of $A_{F_{v}}$ acquiring ordinary (good or semistable) reduction over a finite extension of $F_{v}$.}
 there is a canonical such choice.
 If
 $A$ is modular corresponding to an $M$-rational cuspidal automorphic representation $\sigma_{A}^{\infty}$, it follows from \cite[Th\'eor\`eme A]{carayol-hilbert}, together with  \cite[(proof of) Proposition 12.11.5 (iv)]{nek-selmer}, that the restriction of $V=V_{\frakp}A\otimes L$ to $\calG_{F_{v}}$  is potentially ordinary 
 if and only if $\sigma_{A,v}\otimes L$ is nearly $\frakp$-ordinary.

 We assume this to be  the case for all $v\vert p$.
One then has a canonical $p$-adic height pairing
\begin{equation}\label{pht}
\langle \,, \,\rangle
\colon A(\baar{F})_{\Q}\otimes_{M} A^{\vee}(\baar{F})_{\Q}\to \Gamma_{F} \hat{\otimes} L.
\end{equation}
whose precise definition  will be recalled at the end of  \S\ref{lgh}.   Its equivariance properties under the action of $\calG_{F}=\Gal(\baar{F}/F)$ allow to deduce from it pairings
\begin{equation}\label{phtchi}
\langle \,, \,\rangle\colon A(\chi)\otimes_{L(\chi)} A^{\vee}(\chi^{-1})\to \Gamma_{F} \hat{\otimes} L(\chi)
\end{equation}
for any character $\chi\in \Y^{\rm l.c.}_{L}$.  

\begin{rema} Suppose that   $\ell\colon \Gamma_{F}\to L(\chi)$ is any continuous homomorphism  such that, for all $v\vert p$, $\ell_{v}|_{\OO_{F,v}^{\times}}\neq 0$; we then call $\ell$ a \emph{ramified logarithm}. Then it is conjectured, but not known in general,  that the   the pairings deduced from \eqref{phtchi} by composition with $\ell$ are non-degenerate. See Theorem \ref{nvh} for a new result in this direction.
\end{rema}

\begin{rema}\label{no exc rem}
If $\chi$ is not exceptional in the sense of the next definition, then \eqref{phtchi} is known to coincide with the norm-adapted height pairings \`a la Schneider \cite{schneider, nekheights}, by \cite{nekheights}, and with the  Mazur--Tate \cite{MT} height pairings, by \cite{iovita-werner}.\end{rema}
\begin{defi}\label{no exc} A locally constant  character $\chi_{w}$ of $E_{w}^{\times}$ is said to be \emph{not exceptional} if $Z_{w}(\chi_{w})\neq 0$. \footnote{As a function on $\Psi_{v}$; by Remark \ref{Z psi}, this is equivalent to $Z_{w}(\chi_{w},\psi_{v})\neq 0$ for every $\psi_{v}\in \Psi_{v}$.} A character  $\chi\in \Y_{M(\alpha)}^{\rm l.c.}$ is said to be \emph{not exceptional} if for all $w\vert p$, $\chi_{w}$ is not exceptional.
\end{defi}

\subsubsection{The formula}
  Let $\Y=\Y_{\omega}\subset \Y'=\Y_{\omega}'$ be the rigid spaces defined above. Denote by $\mathscr{I}_{\Y}\subset \OO_{\Y'}$ the ideal sheaf of $\Y$ and by
$\mathscr{N}^{*}_{\Y/\Y'}=(\mathscr{I}_{\Y} /\mathscr{I}_{\Y}^{2})|_{\Y}$ the conormal sheaf. By \eqref{cart}, 
it is canonically trivial: 
$$\mathscr{N}^{*}_{\Y/\Y'}\cong \OO_{\Y}\otimes T^{*}_{\one}\Y_{F} \cong \OO_{\Y}\otimes (  \Gamma_{F} \hat{\otimes}L).$$
 For a section $G$ of $\calI_{\Y}$, denote  ${\rm d}_{F}G\in \mathscr{N}^{*}_{\Y/\Y'}$ its image; it can be thought of as the differential in the $1+\delta$ cyclotomic variable(s).

Let $\chi\in \Y^{\rm l.c., \, an}$ be a  character  such that $\eps(A_{E}, \chi)=-1$; denote by $L(\chi)$ its residue field. By the interpolation property,  the complex functional equation and the constancy of local  root numbers, the $p$-adic $L$-function $L_{p, \alpha}(\sigma_{A,E})$ is a section of $\mathscr{I}_{\Y}$ in the connected component of $\chi\in \Y'$ (see Lemma \ref{isinI}).  Let $\B$ be the incoherent quaternion algebra determined by \eqref{local cond} and let $\pi_{A}=\pi_{A}(\B)$, $\pi_{A^{\vee}}=\pi_{A^{\vee}}(\B)$.

\begin{theoA}\label{B} Suppose that
\begin{itemize}
\item   for all $v|p$, $A/F_{v}$ has potentially $\frakp$-ordinary  good or semistable   reduction, 
\item    for all $v|p$, $E_{v}/F_{v}$ is split, 
\item  the sign $\eps(A_{E}, \chi)=-1$ and $\chi$ is not exceptional (Definition \ref{no exc}). 
\end{itemize} 
Then for  all $f_{1}\in \pi_{A}$, $f_{2}\in \pi_{A^{\vee}}$ we have\footnote{The formula is off by a factor of $2$. See footnote on the first page.}
$$\langle P(f_{1}, \chi), P^{\vee}(f_{2}, \chi^{-1})\rangle = 
{c_{E}\over 2}\cdot
\prod_{v|p}Z_{v}^{\circ}(  \chi_{v})^{-1}\cdot {\rm d}_{F} L_{p, \alpha}(\sigma_{A,E})(\chi)\cdot  Q(f_{1}, f_{2}, \chi)$$
in $\mathscr{N}^{*}_{{\Y/\Y'}}{}_{| \chi }\cong \Gamma_{F} \hat{\otimes} L(\chi)$. Here $c_{E}$ is as in \eqref{cE}.
\end{theoA}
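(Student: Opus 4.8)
The plan is to reduce the $p$-adic Gross--Zagier formula to the archimedean formula \eqref{gzf} of Yuan--Zhang--Zhang via a comparison of the relevant $p$-adic and complex analytic objects on a suitable deformation space, following the strategy of Perrin-Riou and the author's earlier work but now in the generality of toric periods. First I would fix the setup: since $A/F_v$ is potentially $\frakp$-ordinary and $E_v/F_v$ splits at all $v\mid p$, the height pairing \eqref{phtchi} decomposes into local terms, and the canonical splitting of the Hodge filtration is governed by the unit characters $\alpha_v$. The core identity to prove is that the ${\rm d}_F$-derivative of $L_{p,\alpha}(\sigma_{A,E})$ along the cyclotomic direction at $\chi$ computes, up to the explicit interpolation factors $\prod_{v\mid p}Z_v^{\circ}(\chi_v)$ and the constant $c_E/2$, the height of the Heegner point against the Tunnell--Saito functional $Q$.

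The key steps, in order. \emph{Step 1: an analytic kernel function.} Following \cite{yzz}, realise $L(1/2,\sigma_{A,E}\otimes\chi')$ for varying $\chi'$ as a period (against the CM torus) of a mixed modular--Eisenstein kernel; its cyclotomic derivative at the center produces, on the geometric side, the generating series of Heegner points and, on the analytic side, the derivative of the incomplete Eisenstein series. Interpolating this construction $p$-adically over $\Y'$ — using the nearly $\frakp$-ordinary hypothesis to $p$-stabilise and the additive characters $\psi_p$ to pin down the local models at $p$ — yields the function $L_{p,\alpha}(\sigma_{A,E})$ of Theorem \ref{A}, whose ${\rm d}_F$-derivative is an analytic incarnation of a height-type pairing. \emph{Step 2: local comparison at $p$.} For $v\mid p$ split in $E$, compute the difference between the $p$-adic local height symbol (with the potentially ordinary splitting) and the archimedean one; this is where the factors $Z_v^{\circ}(\chi_v)^{-1}$ enter, via an explicit evaluation of the local integral of Lemma \ref{basic local integral} and the relation between the $\alpha_v$-eigenline and the Hodge filtration. \emph{Step 3: local comparison away from $p$.} For $v\nmid p$ the $p$-adic and archimedean local height symbols agree (both being intersection-theoretic on integral models of $A$), so the corresponding local terms match the finite parts of \eqref{gzf} term by term, as in \cite{yzz}. \emph{Step 4: assembly.} Combine Steps 1--3: the archimedean formula \eqref{gzf} identifies the global analytic derivative with $(c_E/2)^{-1}\prod Z_v^{\circ}$ times the height pairing against $Q$, and rearranging gives the stated formula in $\mathscr{N}^*_{\Y/\Y'}{}_{|\chi}\cong\Gamma_F\hat\otimes L(\chi)$.

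The main obstacle will be \emph{Step 2}: matching the $p$-adic local height contributions with the interpolation factors $Z_v^{\circ}(\chi_v)$. Unlike the archimedean side, the $p$-adic height at $v\mid p$ depends on the choice of splitting of ${\bf D}_{\rm dR}(V|_{\mathscr{G}_{F_v}})$, and one must show that the canonical potentially ordinary splitting produces \emph{exactly} the Euler factor appearing in $Z_v^{\circ}$ (including the $\alpha_v(\vpi_v)^{-v(D)}$ and the ratio involving $1-\alpha_v(\vpi_v)^{\pm f_w}\chi_w'(\vpi_w)^{\pm1}q_{F,v}^{-f_w}$ when unramified, and the $\tau$-factor when ramified). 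This requires carefully relating Nekov\'a\v{r}'s local height pairing for the ordinary filtration to a $p$-adic regulator/$\log$ computation, and then to the local zeta integral $Q_v$; the non-exceptionality hypothesis ($Z_w(\chi_w)\neq0$) is precisely what guarantees the relevant height subspace is nonzero and the comparison is an honest equality rather than a degenerate one. The rest — the interpolation in Step 1 and the term-by-term matching in Steps 3--4 — is technically heavy but structurally parallel to \cite{yzz, lzz} and the author's prior work.
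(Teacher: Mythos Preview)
Your proposal has a genuine structural gap: the p-adic formula is \emph{not} reduced to the archimedean formula \eqref{gzf}, and the comparison you sketch in Steps 2--4 does not make sense as stated. The N\'eron--Tate height takes values in $\R$ (or $\C$), the $p$-adic height in $\Gamma_F\hat\otimes L(\chi)$; there is no meaningful ``difference between the $p$-adic local height symbol and the archimedean one'' at a place $v\mid p$, and at $v\nmid p$ the two local symbols are both intersection numbers but multiplied by $\ell_v(\vpi_v)$ versus $-\log q_{F,v}$, which live in different rings. Likewise ${\rm d}_F L_{p,\alpha}$ is a derivative in a $p$-adic direction on $\Y'$, not the complex $s$-derivative $L'(1/2,\cdot)$; the interpolation property of Theorem~\ref{A} relates \emph{values}, not derivatives, so \eqref{gzf} cannot be invoked as a black box to pass from one derivative to the other.

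What the paper actually does is prove a \emph{kernel identity} (Theorem~\ref{yzz5}) directly. Both sides of Theorem~\ref{B} are rewritten, via the arithmetic theta lift (Theorem~\ref{atl}) and the definition of $L_{p,\alpha}$, as $\lf$ applied to two kernels: the derivative ${\rm d}_F\calI(\phi^{p\infty};\chi)$ of the analytic kernel, and the height generating series $\tZ(\phi^\infty,\chi)$. Each kernel is decomposed into local pieces indexed by the finite places of $F$ (\S\S\ref{sec: expand an}--\ref{dec gk}). For $v\nmid p$ the two local pieces are matched by explicit computation---here the calculations of \cite{yzz} are reused, but as \emph{computations}, not as an appeal to the archimedean theorem. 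For $v\mid p$ the analytic kernel has no contribution (all such $v$ split in $E$), and the entire difficulty is to show that $\lf$ \emph{annihilates} the geometric kernel's contribution $\tZ(\phi^\infty,\chi)(p)$. This is the real heart of the argument (\S\ref{lsp}): one shows, via a norm relation for CM points along the local anticyclotomic tower and an integrality criterion for local heights (Proposition~\ref{int panc}), that this contribution is $p$-critical and hence killed by the ordinary projector implicit in $\lf$. The factors $Z_v^\circ(\chi_v)$ do not arise from any height comparison; they enter twice---once as the interpolation factor of $L_{p,\alpha}$ (Proposition~\ref{interpolation factor}), and once as the local toric period $Q_v$ of the specific test vectors at $p$ (Proposition~\ref{Qv vs})---and the point is that these two occurrences \emph{cancel} up to $L(1,\eta_v)$, which is what makes the kernel identity equivalent to Theorem~\ref{B} (Proposition~\ref{sufficient}).
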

In the right-hand side, we have considered Remark \ref{Z psi} and used the canonical isomorphism $\OO_{\Psi_{p}}(\omega_{p}^{-1})\otimes_{M}\OO_{\Psi_{p}}(\omega_{p})=M$.

\subsection{Anticyclotomic theory}\label{intro anti}

Consider the setup of \S\ref{intro-plF}. Recall  that in the  case $\eps(1/2,\sigma_{E}, \chi)=+1$, the definite quaternion algebra $\B$ defined by \eqref{local cond} is coherent, i.e. it arises as $\B=B\otimes_{F}\A_{F}$ for a quaternion algebra $B$ over $F$; we may assume that the embedding $E_{\A}\into \B$ arises from an embedding $i\colon E\into B$. Let $\pi$ be  the  automorphic representation of $\B^{\times}$ attached to $\sigma$ by the Jacquet--Langlands correspondence;  it is realised in the space of locally constant functions  $B^{\times}\bks \B^{\times}\to M$, and this gives a stable lattice $\pi_{\OO_{M}}\subset \pi$. Then, given a character $\chi\in \Y^{\rm l.c.}$,    the formalism of \S\ref{hm1} applies to the period functional $p\in {\rm H}(\pi, \chi)$ defined by
\begin{align}\label{perint}
p(f, \chi):=\int_{E^{\times}\bks E^{\times}_{\A^{\infty}}} f(i(t)) \chi(t)\, dt
\end{align}
and to its dual $p^{\vee}(\cdot, \chi^{-1})\in {\rm H}(\pi^{\vee}, \chi^{-1})$. Here $dt$ is the Haar measure of total volume $1$.
 
The  formula expressing the decomposition of their product was proved by Waldspurger (see \cite{wald} or \cite{yzz}): for all finite order characters $\chi\colon E^{\times}\bks E_{\A}^{\times}\to M(\chi)^{\times}$ valued in some extension $M(\chi)\supset M$, and for all $f_{1}\in \pi$, $f_{2}\in \pi^{\vee}$,  we have
\begin{align}\label{WF}
p(f_{1}, \chi)p^{\vee}(f_{2}, \chi^{-1})=
{c_{E}\over 4}\cdot 
  {  \pi^{2[F:\Q]}   |D_{F}|^{1/2}  L(1/2, {\sigma}_{E}\otimes  \chi) 
  \over 2  L(1, \eta) L(1, \sigma,\ad)}\cdot Q(f_{1}, f_{2}, \chi)
\end{align}
in $M(\chi)$. Notice that here we could trivially modify the right-hand side to replace the complex $L$-function with the $p$-adic $L$-function, thanks to the interpolation property defining the latter. 

The $L$-function terms of both the Waldspurger and the $p$-adic Gross--Zagier formula thus admit an interpolation as analytic functions (or sections of a sheaf) on $\Y_{\omega}$. We can show that the other  terms do as well.

\medskip

Let  $\pi$  be the $M$-rational representation of the (coherent or incoherent) quaternion algebra $\B^{\times}\supset E_{\A}^{\times}$ considered above, with central character $\omega$. It will be convenient to denote $\pi^{+}=\pi$, $\pi^{-}=\pi^{\vee}$, ,$p^{+}=p$, $p^{-}=p^{\vee}$,   $\Y_{\pm}=\Y_{\omega^{\pm 1}}$, and in the incoherent case $A^{+}=A$, $A^{-}=A^{\vee}$, $P^{+}=P$, $P^{-}=P^{\vee}$, $\sigma=\sigma_{A}$.

We have a natural isomorphism 
 $ \Y_{+}\cong \Y_{-}$ given by inversion.  If
 $\mathscr{F}$ is a
  sheaf on $\Y_{-}$, we denote by $\mathscr{F}^{\iota}$ its pullback to a
  sheaf on $\Y_{+}$; the same notation is used to transfer sections of such sheaves.

\subsubsection{Big Selmer groups and heights}

Let $\chi_{\rm univ}^{\pm}\colon \Gamma \to (\OO({\Y_{\pm}})^{{\rm b}})^{\times}$ be the tautological  character such that $\chi_{\rm univ}^{\pm}(t)(\chi)=\chi(t)^{\pm 1}$ for all $\chi\in \Y_{\pm}$, and define an $\OO(\Y_{\pm})^{\rm b}$-module
$${\bf S}_{\frakp}(A_{E}^{\pm},\chi_{\rm univ}^{\pm}, \Y_{\pm})^{\rm b}:=H^{1}_{f}(E, V_{\frakp}A_{E}^{\pm}\otimes \OO({\Y_{\pm}})^{\rm b}(\chi_{\rm univ}^{\pm})),$$
where   $\OO({\Y_{\pm}})^{\rm b}(\chi_{\rm univ}^{\pm})$  denotes  the module of bounded global sections $\OO({\Y_{\pm}})^{\rm b}$ with $\calG_{E}$-action  
by $\chi_{\rm univ}^{\pm}$.  Here, for a topological $\Q_{p}[\mathscr{G}_{E}]$-module $V$ which is potentially ordinary at all $w\vert p$ in the sense of Definition \ref{pot-ord} below, with exact sequences $0\to V_{w}^{+}\to V_{w}\to V_{w}^{-}\to 0$, the (Greenberg) Selmer group $H^{1}_{f}(E, V)\subset H^{1}(E, V):=H^{1}(\calG_{E},V)$ is the group
of those {continuous} cohomology classes $c$ which are unramified away from $p$ and such that, for every $w\vert p$, the restriction of $c$ to a decomposition group at a  $w$ is in the kernel of
$$H^{1}(E_{w}, V)\to H^{1}(E_{w}, V^{-})..$$
(In the case at hand,  $V_{w}^{-}=V_{\frakp}A_{E}^{\pm}|_{\calG_{E,w}}^{-}$ is the maximal potentially unramified quotient of $V_{\frakp}A_{E}^{\pm}|_{\calG_{E,w}}$; cf. \S\ref{lgh}). For every \emph{non-exceptional} $\chi^{\pm}\in \Y^{\rm l.c.}_{\pm}$, the specialisation $ {\bf S}_{\frakp}(A_{E}^{\pm},\chi_{\rm univ}^{\pm}, \Y_{\pm})^{\rm b}\otimes L(\chi)$ is isomorphic to the target of the Kummer map
\begin{align}\label{kappais}
\kappa\colon A_{E}^{\pm}(\chi^{\pm})\to H^{1}_{f}(E, V_{\frakp}A_{E}\otimes L(\chi^{\pm})_{\chi^{\pm}}).
\end{align}

The work of Nekov{\'a}{\v{r}} \cite{nek-selmer} explains the exceptional specialisations and provides a height pairing on the big  Selmer groups. The key underlying object is the  \emph{Selmer complex}
\begin{align}\label{thebigSC}
\wtil{{\rm R}\Gamma}_{f}(E,V_{\frakp}A_{E}^{\pm}\otimes \OO(\Y^{\circ}_{\pm})^{\rm b}(\chi^{\pm}_{\rm univ}) ),
\end{align}
  an object in the derived category of $\OO(\Y_{\pm})^{\rm b }$-modules defined as in \cite[\S 0.8]{nek-selmer} taking $T= V_{\frakp}A_{E}^{\pm}\otimes \OO(\Y^{\circ}_{\pm})^{\rm b}(\chi^{\pm}_{\rm univ})$ and $U_{w}^{+}=V_{\frakp}A_{E}^{\pm}|_{\calG_{E_{w}}}^{+}\otimes \OO(\Y^{\circ}_{\pm})^{\rm b}(\chi^{\pm}_{\rm univ})$ in the notation of \emph{loc. cit.} Its
first cohomology group 
$$\wtil{H}^{1}_{f}(E,V_{\frakp}A_{E}^{\pm}\otimes \OO(\Y^{\circ}_{\pm})^{\rm b}(\chi_{\rm univ}^{\pm}) )$$
satisfies the following property. 
  For every $L$-algebra quotient  $R$ of  $\OO(\Y^{\circ}_{+})^{\rm b}$, letting $\chi_{R}^{\pm}\colon \Gamma \to R^{\times}$  be the character deduced from $\chi_{\rm univ}^{\pm}$,  there is  an exact sequence (\emph{ibid.} (0.8.0.1))
\begin{align}
0\to \bigoplus_{w\vert p} H^{0}(E_{w}, V_{\frakp}A_{E}^{\pm}|_{\calG_{E,w}}^{-}\otimes R(\chi_{R}^{\pm }))\to
 \wtil{H}^{1}_{f}(E,V_{\frakp}A_{E}^{\pm}\otimes \OO(\Y^{\circ}_{+})^{\rm b}(\chi_{\rm univ}^{\pm }) )\otimes R\to 
 {H}^{1}_{f}(E,V_{\frakp}A_{E}^{\pm}\otimes R(\chi_{R}^{\pm }))\to 0. 
\end{align}
When  $R=\OO(\Y_{\pm})^{\rm b}$ itself, each  group $H^{0}(E_{w}, V_{\frakp}A_{E}^{\pm}|_{\calG_{E,w}}^{-}\otimes \OO(\Y_{\pm })^{\rm b}(\chi_{\rm univ}^{\pm}))$ vanishes as $\chi_{{\rm univ}, w}$ is infinitely ramified;   hence  
$$\wtil{H}^{1}_{f}(E,V_{\frakp}A_{E}^{\pm}\otimes \OO(\Y^{\circ}_{\pm})^{\rm b}(\chi_{\rm univ}^{\pm}) )\cong {\bf S}_{\frakp}(A_{E}^{\pm}, \chi_{\rm univ}^{\pm}, \Y_{\pm})^{\rm b}.$$ 
When $R=L(\chi)$ with $\chi\in \Y^{\rm l.c.}$, the group $H^{0}(E_{w}, V_{\frakp}A_{E}^{\pm}|_{\calG_{E,w}}^{-}\otimes L(\chi^{\pm})_{\chi^{\pm}})$ vanishes unless $\chi_{w}\cdot\alpha_{v}\circ q_{w}=\one$ on $E_{w}^{\times}$, that is 
unless $\chi_{w}$ is exceptional.

Finally, by \cite[Chapter 11]{nek-selmer}, there is a  big height pairing
\begin{align}\label{bight}
\Big\langle\, , \,\Big\rangle \colon 
{\bf S}_{\frakp}(A_{E}^{+},\chi_{\rm univ}^{+}, \Y_{+})^{\rm b} \otimes_{\OO_{\Y_{+}}} {\bf S}_{\frakp}(A_{E}^{-},\chi_{\rm univ}^{-}, \Y_{-})^{{\rm b},\iota}\to   \calN^{*}_{\Y_{+}/\Y'_{+}}(\Y_{+})^{\rm b}
\end{align}
interpolating the  height pairings on  ${H}^{1}_{f}(E, V_{\frakp}A\otimes L(\chi^{\pm})_{\chi^{\pm}})$ for  non-exceptional $\chi\in \Y^{\rm l.c.}$ (and more generally  certain `extended' pairings on $\wtil{H}^{1}_{f}(E, V_{\frakp}A\otimes L(\chi^{\pm})_{\chi^{\pm}})$ for all $\chi\in \Y^{\rm l.c.}$; these will play no role here).

\subsubsection{Heegner--theta elements and anticyclotomic formulas} 
Keep the assumptions that for all $v\vert p$, $E_{v}/F_{v}$ is split and   $\pi_{v}\cong \sigma_{v}$ is $\mathfrak{p}$-nearly ordinary with unit character $\alpha_{v}$.
Then,
 after tensoring with $\OO_{\Psi_{p}}(\Psi_{p})$ (in order to use Kirillov models at $p$)
we will have a decomposition $\pi^{\pm}\cong \pi^{\pm, p}\otimes\pi_{p}^{\pm}$, which  is an isometry with respect to   pairings $(\, , \,)^{p}$, $(\, ,\, )_{p}$ on each of the factors.  By \eqref{Qvdef}, for each $\chi=\chi^{p}\chi_{p}\in \Y^{ {\rm l.c.}}_{M}$ we can then define a toric period 
\begin{gather}\label{toric away p}
Q^{p}(f^{+,p}, f^{-,p}, \chi)\in M(\chi)\otimes \OO_{\Psi_{p}}(\omega_{p}^{-1}).
\end{gather}

Given   $f^{\pm, p}\in\pi^{\pm, p}$, we will construct an explicit pair of elements
\begin{align}\label{fA}
f_{\alpha}^{\pm}=(f_{\alpha,V_{p}}^{\pm})=(f^{\pm, p}\otimes f^{\pm}_{\alpha,p,  V_{p}})_{V_{p}}
  \in \pi^{\pm, p}_{M(\alpha)}\otimes \varprojlim_{V_{p}}\pi_{p}^{\pm, V_{p}}
\end{align}
 where the inverse system is indexed by  compact open subgroups $V_{p}\subset E_{p}^{\times}\subset \B_{p}^{\times}$ containing $\Ker(\omega_{p})$, with transition maps being given by averages under their $\pi^{\pm}_{p}$-action, and  $ f^{\pm}_{\alpha,p,  V_{p}}$ are suitable elements of $ \pi_{p}^{\pm, V_{p}}$.
We compute in Lemma \ref{toric at p} that we have
$$Q_{p}(f_{\alpha,p}^{+}, f_{\alpha,p}^{-})= \zeta_{F,p}(2)^{-1} \prod_{v\vert p} Z_{v}^{\circ}$$
as sections of  $\bigotimes_{v\vert p}\OO_{\Y_{v}^{\rm l.c.}\times \Psi_{v}}(\omega_{v})$,  where the left-hand side in the above expression is computed, for each $\chi_{p}\in \prod_{v\vert p} \Y_{v}^{\rm l.c.}$, as the limit of $Q_{p}(f_{\alpha,p, V_{p}}^{+}, f_{\alpha,p,V_{p}}^{-})$ as $V_{p}\to \Ker(\omega_{p})$.

For the following theorem, note that all the local  signs   in  \eqref{local cond} extend to  locally constant functions of $\Y_{+}$ (this is a simple special case of \cite[Proposition 3.3.4]{PX}); the quaternion algebra over $\A$ determined by \eqref{local cond} is then also constant along the connected components of  $\Y_{+}$.   We will  say that a connected component $\Y^{\circ}_{+}\subset \Y_{+}$ is \emph{of type $\eps\in \{\pm1\}$} if $\eps(1/2, \sigma_{E}, \chi)=\eps$ along $\Y^{\circ}$.

\begin{theoA}\label{C}
Let $\Y^{\circ}_{+}\subset \Y_{+}$ be a connected component of type $\eps$,  let $\B$ be the quaternion algebra determined by \eqref{local cond}, and let $\pi^{\pm}$ be the representations of $\B^{\times}$ constructed above. Finally, let  $\Y_{-}^{\circ}\subset \Y_{-}$ be the  image of $\Y^{\circ}_{+}$ under the inversion map.
\begin{enumerate} 
\item\label{C1} (Heegner--theta elements.)\ \ \ 
For each  $f^{\pm, p}\in \pi^{\pm, p}$, there are elements 
\begin{align*}
\Theta_{\alpha}^{\pm}(f^{\pm, p})& \in \OO_{\Y_{+}}(\Y^{\circ}_{\pm})^{\rm b} &\text{if $\eps=+1$,}\\
\calP_{\alpha}^{\pm}(f^{\pm,p}) & \in 
{\bf S}_{\frakp}(A_{E}^{\pm},\chi_{\rm univ}^{\pm},\Y^{\circ}_{\pm})^{\rm b}& \text{if $\eps=-1$,}\\
\end{align*}
uniquely determined  by the property   that, for any  compact open subgroup $V_{p}\subset E_{p}^{\times }$ and any $V_{p}$-invariant character $\chi^{\pm}\in \Y_{\pm}^{\circ}$, 
we have
\begin{align*}
\Theta^{\pm}_{\alpha}(f^{\pm, p})(\chi^{\pm}) &= p(f^{\pm}_{ \alpha, V_{p}}, \chi^{\pm}),\\
\calP^{\pm}_{\alpha}(f^{\pm, p})(\chi^{\pm}) &=\kappa(P(f^{\pm}_{ \alpha, V_{p}}, \chi^{\pm})).
\end{align*}
where $f^{\pm}_{\alpha}$ is the element \eqref{fA}, $p(\cdot) $ is the period integral \eqref{perint}, and $\kappa$ is the Kummer map \eqref{kappais}.
\item\label{C2}   There is an element 
$$\calQ=\zeta_{F,p}(2)^{-1}\prod_{v\nmid p}\calQ_{v}\in \Hom_{ \OO(\Y_{+}^{\circ})^{\rm b}[E^{\times}_{\A^{p\infty}}]}(\pi^{+, p}\otimes \pi^{-, p}\otimes  \OO(\Y_{+}^{\circ})^{\rm b}, \OO(\Y_{+}^{\circ})^{\rm b}\otimes\OO_{\Psi_{p}}(\omega_{p}^{-1}))$$
uniquely determined by the property that, for all $f^{\pm, p}\in \pi^{\pm,p}$ and all $\chi\in \Y^{\circ \, {\rm l.c.}}_{+}$, we have 
$$\calQ(f^{+, p}, f^{-, p})(\chi)=\zeta_{F,p}(2)^{-1} \cdot Q^{p}(f^{+, p}, f^{-, p},\chi^{p}).$$
\item (Anticyclotomic Waldspurger formula.)\label{C3}\ \
If $\eps=+1$, we have\footnote{The formula is off by a factor of $2$. See footnote on the first page.}
\begin{align*} 
\Theta^{+}_{\alpha}(f^{+,p})\cdot \Theta^{-}_{\alpha}(f^{-,p})^{\iota}  = 
{c_{E}\over 4}\cdot
 L_{p ,\alpha}(\sigma_{E})\cdot\mathscr{Q}(f^{+,p},f^{-,p}) 
\end{align*}
in $ \OO(\Y_{+}^{\circ})^{\rm b}$.
\item\label{C4} (Anticyclotomic Gross--Zagier formula.) \ \ If $\eps=-1 $ and $A$ has potentially $\frakp$-ordinary reduction at all $v\vert p$,  we have\footnote{The formula is off by a factor of $2$. See footnote on the first page.}

\begin{align*} 
  \Big\langle \calP^{+}_{\alpha}(f^{+,p}), \calP^{-}_{\alpha}(f^{ -,p})^{\iota} \Big\rangle  = 
{c_{E}\over 2}\cdot
  {\rm d}_{F} L_{p ,\alpha}(\sigma_{E}) \cdot\mathscr{Q}(f^{+,p},f^{-,p})
\end{align*}
 in $ \calN^{*}_{\Y_{+}/\Y'_{+}} (\Y^{\circ}_{+} ) ^{\rm b} $.
\end{enumerate}
\end{theoA}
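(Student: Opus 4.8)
The plan is to construct the theta elements and the anticyclotomic forms $\calP_\alpha^\pm$, $\Theta_\alpha^\pm$ by interpolating the pointwise constructions of \S\ref{hm1} over the (ind-finite) locus of locally constant characters, then to establish the identities \eqref{C3}--\eqref{C4} first at locally constant characters (using the pointwise formulas \eqref{WF} and Theorem \ref{B} respectively) and finally to extend them by a density and boundedness argument to all of $\Y^\circ_\pm$. The key to the whole construction is the stabilised family $f^\pm_\alpha=(f^{\pm,p}\otimes f^\pm_{\alpha,p,V_p})_{V_p}$ of \eqref{fA}: the elements $f^\pm_{\alpha,p,V_p}$ at $p$ are chosen so that applying the period functional $p(\,\cdot\,,\chi^\pm)$ or the Heegner functional $P(\,\cdot\,,\chi^\pm)$ to $f^\pm_{\alpha,V_p}$, for $\chi^\pm$ of conductor dividing $V_p$, produces quantities that glue as $V_p$ shrinks. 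Concretely this requires computing the local toric integral at $p$ against $f^\pm_{\alpha,p,V_p}$; this is exactly the content of the formula $Q_p(f^+_{\alpha,p},f^-_{\alpha,p})=\zeta_{F,p}(2)^{-1}\prod_{v\vert p}Z^\circ_v$ asserted before the theorem (Lemma \ref{toric at p}), which encodes the $U_p$-eigenvalue normalisation by $\alpha_v(\vpi_v)$ responsible for the Euler-type interpolation factors $Z_w$.

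The steps in order. First, prove part \eqref{C1}: the functions $\chi^\pm\mapsto p(f^\pm_{\alpha,V_p},\chi^\pm)$ (resp. $\chi^\pm\mapsto\kappa(P(f^\pm_{\alpha,V_p},\chi^\pm))$) on the set of $V_p$-invariant characters are compatible under the transition maps of the inverse system (this is the stabilisation computation, using that the transition maps are $\pi_p^\pm$-averages and that $f^\pm_{\alpha,p}$ is chosen compatibly), hence define a function on all locally constant $\chi^\pm\in\Y^\circ_\pm$; boundedness of this function — i.e. that it lies in $\OO_{\Y_+}(\Y^\circ_\pm)^{\mathrm b}$ (resp. in the bounded big Selmer module ${\bf S}_\frakp(\ldots)^{\mathrm b}$, via Nekov\'a\v{r}'s control theory) — follows because the $\alpha_v(\vpi_v)$ are $p$-adic units, so no denominators accumulate along the tower; analyticity/continuity then extends it uniquely to $\Y^\circ_\pm$. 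Second, construct $\calQ$ in part \eqref{C2}: away from $p$ the local functionals $\calQ_v$ are just the $Q_v$ of \eqref{Qvdef} with $\chi_v$ varying, and these are manifestly algebraic (indeed locally constant) in $\chi_v$ on $\Y^{\mathrm{l.c.}}_{v}$, and bounded since finitely many are $\neq1$; this gives $\calQ=\zeta_{F,p}(2)^{-1}\prod_{v\nmid p}\calQ_v$ with the stated interpolation. Third, prove \eqref{C3} when $\eps=+1$: evaluate both sides at a locally constant $\chi\in\Y^{\circ\,\mathrm{l.c.}}_+$; the left-hand side becomes $p(f^+_{\alpha,V_p},\chi)p^\vee(f^-_{\alpha,V_p},\chi^{-1})$, which by Waldspurger's formula \eqref{WF} equals $\tfrac{c_E}{4}\cdot(\text{$L$-value})\cdot Q(f^+_{\alpha,V_p},f^-_{\alpha,V_p},\chi)$; now factor $Q=Q^p\cdot Q_p$, insert Lemma \ref{toric at p} to rewrite $Q_p(f^\pm_{\alpha,p,V_p})=\zeta_{F,p}(2)^{-1}\prod_{v\vert p}Z^\circ_v$, and observe that $(\text{$L$-value})\cdot\prod_{v\vert p}Z^\circ_v\cdot\zeta_{F,p}(2)^{-1}$ is precisely $\zeta_{F,p}(2)^{-1}$ times the interpolation factor of $L_{p,\alpha}(\sigma_E)$ at $\chi$ from Theorem \ref{A}; combining with the definition of $\calQ$ yields the identity at $\chi$. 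Since locally constant characters are Zariski-dense in each connected component $\Y^\circ_+$ and all terms are bounded analytic (or sections of the relevant sheaves), the identity propagates to all of $\Y^\circ_+$. Fourth, prove \eqref{C4} when $\eps=-1$: the same scheme, but now at a locally constant non-exceptional $\chi$ one uses Theorem \ref{B} instead of \eqref{WF}, so the left side $\langle P(f^+_{\alpha,V_p},\chi),P^\vee(f^-_{\alpha,V_p},\chi^{-1})\rangle$ equals $\tfrac{c_E}{2}\prod_{v\vert p}Z^\circ_v(\chi_v)^{-1}\cdot{\rm d}_F L_{p,\alpha}(\sigma_{A,E})(\chi)\cdot Q(f^+_{\alpha,V_p},f^-_{\alpha,V_p},\chi)$; again split $Q=Q^p Q_p$, substitute Lemma \ref{toric at p} (whose $\prod_{v\vert p}Z^\circ_v$ cancels the $\prod Z^\circ_v(\chi_v)^{-1}$ up to $\zeta_{F,p}(2)^{-1}$), and match with ${\rm d}_F L_{p,\alpha}(\sigma_E)\cdot\calQ$ at $\chi$, using that ${\rm d}_F$ commutes with restriction to the conormal fibre. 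The left-hand side, built from $\langle\calP^+_\alpha,\calP^{-,\iota}_\alpha\rangle$ via the big height pairing \eqref{bight} of Nekov\'a\v{r}, interpolates these pointwise heights at non-exceptional $\chi$ by construction (Remark \ref{no exc rem} and the control sequence), so density of non-exceptional locally constant characters finishes the proof.

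The main obstacle is the boundedness and integrality of the interpolating families, i.e. showing that $\Theta^\pm_\alpha$ genuinely lands in $\OO_{\Y_+}(\Y^\circ_\pm)^{\mathrm b}$ and that $\calP^\pm_\alpha$ lands in the bounded big Selmer module ${\bf S}_\frakp(A^\pm_E,\chi^\pm_{\mathrm{univ}},\Y^\circ_\pm)^{\mathrm b}$ rather than in a module with bounded denominators. This is where the nearly-ordinary hypothesis and the unit character $\alpha_v$ are essential: the stabilised elements $f^\pm_{\alpha,p,V_p}$ are eigenvectors for $U_{\vpi_v}$ with eigenvalue $\alpha_v(\vpi_v)\in\OO_L^\times$, so iterating the $U_p$-correspondence (the geometric incarnation of the transition maps) along the tower does not lose $p$-integrality; one must make this precise, controlling both the archimedean/adelic normalisations defining $\pi_\Z$, $\pi_{\OO_M}$ and — in the $\eps=-1$ case — the integral structure on the Selmer side, invoking Nekov\'a\v{r}'s descent/control results \cite{nek-selmer} to identify the specialisations and to get the big height pairing \eqref{bight}. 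A secondary technical point is matching signs and measure normalisations: the factors $c_E/4$ versus $c_E/2$, the $\zeta_{F,p}(2)^{-1}$ bookkeeping, the Haar-measure conventions on $E_{\A^\infty}^\times/\A^{\infty\times}$, and the twist by $\OO_{\Psi_p}(\omega_p^{\pm1})$ — all of which must be tracked so that the pointwise identities line up exactly with the sheaf-theoretic statement before one invokes density.
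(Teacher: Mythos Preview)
Your proposal is correct and follows essentially the same approach as the paper: construct the theta/Heegner elements via the stabilised system $(f^{\pm}_{\alpha,V_p})$ and Lemma~\ref{f limit}, interpolate $\calQ$ via Proposition~\ref{interp rnat}, and deduce parts~\ref{C3}--\ref{C4} by combining the pointwise formulas \eqref{WF} and Theorem~\ref{B} with Lemma~\ref{toric at p} at locally constant characters, then extending by density. Two small points worth tightening: the density step is not ``Zariski density'' in the usual sense but the vanishing principle for bounded functions on $p$-adic polydiscs (Lemma~\ref{zerozerozero}), and in the paper's logical order Theorem~\ref{B} is at this stage only available for the cofinitely many $\chi$ satisfying Assumption~\ref{ass on chi} --- Proposition~\ref{BandC} makes explicit that this already suffices for part~\ref{C4}, which in turn closes the proof of Theorem~\ref{B} for the remaining characters.
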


In parts \ref{C3} and \ref{C4}, we have used the canonical isomorphism $\OO_{\Psi_{p}}(\omega_{p})\otimes\OO_{\Psi_{p}}(\omega_{p}^{-1})\cong M$. The height pairing of part \ref{C4} is \eqref{bight}.

\begin{rema}  Theorem \ref{C}.\ref{C4} specialises to $0=0$ at any {exceptional} character $\chi\in \Y^{\rm l.c.}$, and in fact by the archimedean Gross--Zagier formula of \cite{yzz}  it follows that  the `pair of points' $\calP^{+}_{\alpha}(f^{+,p})\otimes\calP^{-}_{\alpha}(f^{ -,p})^{\iota} $ itself  vanishes there. The leading term of $L_{p, \alpha}$ at exceptional characters is studied in \cite{exc}.
\end{rema}

\subsection{Applications}  Theorem \ref{B} has by now  standard applications to the $p$-adic and the classical Birch and Swinnerton-Dyer conjectures;  the interested reader will have no difficulty in obtaining them as in  \cite{PR, dd}. We obtain in particular one $p$-divisibility in the classical Birch and Swinnerton-Dyer conjecture for a $p$-ordinary CM elliptic  curve $A$ over a totally real field as in \cite[Theorem D]{dd} without the spurious assumptions of \emph{loc. cit.} on the behaviour of $p$ in $F$. In the rest of this subsection we describe two other applications.

\subsubsection{On the $p$-adic Birch and Swinnerton-Dyer conjecture in anticyclotomic families}
The next theorem, which can be thought of as a case of the $p$-adic Birch and Swinnerton-Dyer conjecture in anticyclotomic families, combines Theorem \ref{C}.\ref{C4} with work of Fouquet \cite{fouquet} to generalise a result of Howard \cite{howard} towards a conjecture of  Perrin-Riou \cite{PR2}.    We first introduce some notation: let  $\Lambda:=\OO(\Y^{\circ}_{+})^{\rm b}$,
 and let the \emph{anticyclotomic height regulator} 
 \begin{align}\label{anticyc reg}
 \mathscr{R}\subset \Lambda\hat{\otimes} \,{\rm Sym}^{r}\Gamma_{F} 
 \end{align}
be the discriminant of  \eqref{bight} on the $\Lambda$-module
$${\bf S}_{\frakp}(A^{+}_{E},\chi_{\rm univ}^{+}, \Y_{+}^{\circ})^{\rm b}\otimes_{\Lambda} {\bf S}_{\frakp}(A^{-}_{E},\chi_{\rm univ}^{-}, \Y^{\circ}_{-})^{{\rm b},\iota},$$
where the integer $r$ in \eqref{anticyc reg} is the generic rank of the finite type $\Lambda$-module ${\bf S}_{\frakp}(A_{E}^{+},\chi_{\rm univ}^{+}, \Y^{\circ}_{+})^{\rm b}$.
Recall that this module is the first cohomology of the Selmer complex $\wtil{{\rm R}\Gamma}_{f}(E,V_{\frakp}A^{+}\otimes \OO(\Y^{\circ}_{+})^{\rm b}(\chi^{+}_{\rm univ}) )$ of 
 \eqref{thebigSC}. Let 
   $$\wtil{H}^{2}_{f} (E,V_{\frakp}A^{+}\otimes \OO(\Y)^{\rm b}(\chi_{\rm univ}) )_{\rm tors}$$
   be the torsion part of the  second cohomology group.
    Its characteristic ideal in $\Lambda$
     can roughly be thought of as interpolating the $p$-parts of the rational terms (order of the Tate--Shafarevich group,  Tamagawa numbers) appearing on the algebraic  side of  the Birch and Swinnerton-Dyer conjecture for $A(\chi)$.

\begin{theoA}\label{iwbsd} In the situation of Theorem \ref{C}.\ref{C4}, assume furthermore that
\begin{itemize}
\item  $p\geq 5$;
\item $V_{\frakp}A$ is potentially crystalline as a $\calG_{F_{v}}$-representation for all   $v\vert p$;
\item the character $\omega$ is trivial and $\Y^{\circ}$ is the connected component of $\one \in \Y$;
\item the residual representation $\baar{\rho}\colon \calG_{F}\to {\rm Aut}_{{\bf F}_{\frakp}}(T_{\frakp}A\otimes {\bf F}_{\frakp})$ is  irreducible (where ${\bf F}_{\frakp}$ is the residue field of $M_{\frakp}$), and it remains irreducible when restricted to the Galois group of the Hilbert class field of $E$;
\item for all $v\vert p$, the image of $ \rho|_{\mathscr{G}_{F,v}}$  is not scalar.
\end{itemize}

Then  
 $${\bf S}_{\frakp}(A_{E},\chi_{\rm univ}, \Y^{\circ})^{\rm b}, \qquad  {\bf S}_{\frakp}(A_{E},\chi_{\rm univ}^{-1}, \Y^{\circ})^{{\rm b}, \iota} $$
both  have generic rank~$1$ over 
 $\Lambda$,
a non-torsion element of their   tensor product over $\Lambda$    is given by any $\mathscr{P}_{\alpha}^{+}(f^{+,p})\otimes \mathscr{P}^{-}_{\alpha}(f^{-,p})^{\iota}$ such that $\mathscr{Q}(f^{+,p},f^{-,p})\neq 0$, and 
 \begin{equation}\label{iw-f}
\begin{gathered}
({\rm d}_{F}L_{p, \alpha}(\sigma_{E})|_{\Y^{\circ}})
\quad \subset\quad
  \mathscr{R}\cdot {\rm char}_{\Lambda} \wtil{H}^{2}_{f}(E, V_{\frakp}A\otimes \Lambda(\chi_{\rm univ}) )_{\rm tors}
\end{gathered}
\end{equation}
as $\Lambda$-submodules of $\Lambda\hat{\otimes}\Gamma_{F}$. 
\end{theoA}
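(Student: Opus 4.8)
The plan is to derive \eqref{iw-f} by feeding the anticyclotomic Gross--Zagier formula of Theorem \ref{C}.\ref{C4} into the divisibility in the anticyclotomic Heegner point main conjecture proved by Fouquet \cite{fouquet} (generalising Howard \cite{howard}), the link between the two being the big height pairing \eqref{bight} and its discriminant $\mathscr{R}$. For the reduction, put $\mathbf{S}^{\pm}:=\mathbf{S}_\frakp(A^{\pm}_E,\chi^{\pm}_{\rm univ},\Y^{\circ}_{\pm})^{\rm b}$, a finite type $\Lambda$-module, and write $\mathbf{S}^{-,\iota}$ for the $\iota$-pullback of $\mathbf{S}^{-}$. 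Using Tunnell--Saito multiplicity one, the explicit local toric integrals \eqref{Qvdef}, and Lemma~\ref{toric at p} with its analogues away from $p$ --- whose normalisations are designed so that a Gross--Prasad test vector contributes the value $1$ --- one chooses $f^{\pm,p}\in\pi^{\pm,p}$ so that $\mathscr{Q}(f^{+,p},f^{-,p})$ is a unit of $\Lambda=\OO(\Y^{\circ}_{+})^{\rm b}$ (here one uses that $\Y^{\circ}_{+}$ is connected and reduced). Theorem \ref{C}.\ref{C4} then reads, up to units,
\[ (\mathrm{d}_F L_{p,\alpha}(\sigma_E)|_{\Y^{\circ}})\;=\;\bigl(\langle\mathscr{P}^{+}_{\alpha}(f^{+,p}),\mathscr{P}^{-}_{\alpha}(f^{-,p})^{\iota}\rangle\bigr)\;=\;\mathscr{R}\cdot\mathrm{char}_\Lambda\bigl(\mathbf{S}^{+}/\Lambda\mathscr{P}^{+}_{\alpha}(f^{+,p})\bigr)\cdot\mathrm{char}_\Lambda\bigl(\mathbf{S}^{-}/\Lambda\mathscr{P}^{-}_{\alpha}(f^{-,p})\bigr)^{\iota} \]
inside $\Lambda\,\hat{\otimes}\,\Gamma_F$; the second equality is the definition of $\mathscr{R}$ as the discriminant of \eqref{bight} on the rank-one module $\mathbf{S}^{+}\otimes_\Lambda\mathbf{S}^{-,\iota}$, together with the vanishing of the characteristic ideals of the $\Lambda$-torsion submodules of $\mathbf{S}^{\pm}$, which are pseudo-null because $\bar\rho$ is residually irreducible.

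Next comes the input from Fouquet. Under the running hypotheses ($p\geq 5$; $V_\frakp A$ potentially crystalline at each $v\mid p$; $\bar\rho$ irreducible and remaining irreducible over the Hilbert class field of $E$; non-scalar image at $v\mid p$; the self-dual set-up $\omega=\one$, $\one\in\Y^{\circ}$) the $\Lambda$-adic Kolyvagin system machinery of \cite{howard,fouquet}, applied to the $\Lambda$-adic Heegner classes $\mathscr{P}^{\pm}_{\alpha}$ of Theorem \ref{C}.\ref{C1} --- which are non-$\Lambda$-torsion by the Cornut--Vatsal-type non-vanishing of Heegner points along the anticyclotomic tower built into \cite{fouquet} --- yields that $\mathbf{S}^{+}$ and $\mathbf{S}^{-,\iota}$ have generic rank $1$ over $\Lambda$, and the divisibility
\[ \mathrm{char}_\Lambda\bigl(\mathbf{S}^{+}/\Lambda\mathscr{P}^{+}_{\alpha}(f^{+,p})\bigr)\cdot\mathrm{char}_\Lambda\bigl(\mathbf{S}^{-}/\Lambda\mathscr{P}^{-}_{\alpha}(f^{-,p})\bigr)^{\iota}\;\subseteq\;\mathrm{char}_\Lambda\bigl(\widetilde{H}^{2}_f(E,V_\frakp A\otimes\Lambda(\chi_{\rm univ}))_{\rm tors}\bigr) \]
in $\Lambda$ (the indices here being ``primitive'' precisely because $\mathscr{Q}(f^{+,p},f^{-,p})$ was taken to be a unit). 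Identifying the module bounded by the Euler system with $\widetilde{H}^{2}_f(\cdots)_{\rm tors}$ uses Nekov{\'a}{\v{r}}'s Iwasawa-theoretic duality for the Selmer complex \eqref{thebigSC}, available in \cite[Ch.~11]{nek-selmer}, together with the self-duality of the set-up.

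Combining, the two displays give at once
\[ (\mathrm{d}_F L_{p,\alpha}(\sigma_E)|_{\Y^{\circ}})\;\subseteq\;\mathscr{R}\cdot\mathrm{char}_\Lambda\bigl(\widetilde{H}^{2}_f(E,V_\frakp A\otimes\Lambda(\chi_{\rm univ}))_{\rm tors}\bigr), \]
which is \eqref{iw-f}. The generic rank one statements are part of Fouquet's output just quoted; and a non-torsion element of the tensor product is $\mathscr{P}^{+}_{\alpha}(f^{+,p})\otimes\mathscr{P}^{-}_{\alpha}(f^{-,p})^{\iota}$ for \emph{any} $f^{\pm,p}$ with $\mathscr{Q}(f^{+,p},f^{-,p})\neq 0$, since by the multiplicity one factorisation of $\mathscr{Q}$ that condition forces the Heegner functionals not to vanish on $f^{+,p}$, $f^{-,p}$, while the $\mathscr{P}^{\pm}_{\alpha}$ are non-torsion.

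The main obstacle is the verification that Fouquet's results \cite{fouquet} transfer verbatim to the present setting: that the $\Lambda$-adic Heegner classes $\mathscr{P}^{\pm}_{\alpha}$ of Theorem \ref{C}.\ref{C1} --- assembled from Heegner points on the Shimura curves $X(\B)$, twisted at $p$ by the unit-root characters $\alpha_v$ and sitting in the Selmer complex \eqref{thebigSC} with the prescribed maximal potentially unramified quotients at $v\mid p$ --- agree, up to a unit and the bookkeeping of the interpolation factors $Z^{\circ}_v$, with the Euler system classes to which Fouquet's Kolyvagin argument is applied; that the hypotheses imposed on $\bar\rho$ and on the local behaviour at $v\mid p$ (potential crystallinity, non-scalar image, non-exceptionality of $\chi$) subsume all of Fouquet's running assumptions; and that the duality and torsion-vanishing statements quoted from \cite{nek-selmer} hold in this generality. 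Once this matching is settled, the remaining steps --- the optimal choice of local test vectors making $\mathscr{Q}$ a unit, and the manipulation of characteristic ideals above --- are routine.
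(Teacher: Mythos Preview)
Your overall strategy is the same as the paper's---combine Theorem \ref{C}.\ref{C4} with Fouquet's Euler-system divisibility, the non-torsion of the Heegner family coming from Aflalo--Nekov\'a\v{r}/Cornut--Vatsal, and the definition of $\mathscr{R}$---and the identification of Fouquet's classes with the $\mathscr{P}^{\pm}_\alpha$ is indeed the main technical point, which the paper carries out.

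There is, however, a genuine gap in your reduction. You assert that one can choose $f^{\pm,p}$ so that $\mathscr{Q}(f^{+,p},f^{-,p})$ is a \emph{unit} of $\Lambda$, appealing loosely to ``Gross--Prasad test vectors''. This is not established in the paper and is likely false in general: the local toric periods $Q_v$ at ramified places $v\nmid p$ are functions on the scheme $\Y_v^{\circ}$ of local characters, and while for each fixed $\chi_v$ one can find a test vector with $Q_v(\cdot,\chi_v)\neq 0$, there is no reason a single choice works simultaneously over all of $\Y^{\circ}$. The paper (proof of Theorem \ref{theo A text} and \S\ref{sec iwbsd}) only shows, via the Nullstellensatz, that \emph{finitely many} choices of $f^{p}_i$ suffice to make the open sets $\{\wtil{\mathscr{Q}}(f^p_i)\neq 0\}$ cover $\Y^{\circ}$. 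Without $\mathscr{Q}$ a unit, your first display only yields $(\mathrm{d}_F L)\cdot(\mathscr{Q})\subseteq \mathscr{R}\cdot\mathrm{char}_\Lambda\wtil{H}^2_f(\cdots)_{\rm tors}$, and you cannot simply divide out the possibly nontrivial ideal $(\mathscr{Q})$.

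The paper circumvents this by a gluing argument: it shows (using the archimedean Gross--Zagier formula of \cite{yzz} and Zariski density of $\Y^{\circ,\mathrm{l.c.}}$) that the sections $\wtil{\mathscr{Q}}(f^p_i)^{-1}\cdot\mathscr{P}^+_\alpha(f^p_i)\otimes\mathscr{P}^-_\alpha(f^p_i)^{\iota}$ agree on overlaps, hence glue to a single nowhere-vanishing generator $\mathscr{P}^+_\alpha\otimes\mathscr{P}^{-,\iota}_\alpha$ of the (free, rank-one) saturated Heegner submodule $\mathscr{H}\subset \mathbf{S}^+\otimes_\Lambda\mathbf{S}^{-,\iota}$. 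Theorem \ref{C}.\ref{C4} then gives $\big\langle\mathscr{P}^+_\alpha\otimes\mathscr{P}^{-,\iota}_\alpha\big\rangle=\tfrac{c_E}{2}\,\mathrm{d}_F L_{p,\alpha}(\sigma_E)|_{\Y^{\circ}}$ \emph{without} any $\mathscr{Q}$-factor, and Fouquet's divisibility applied to this canonical class yields \eqref{iw-f} directly. You should replace your ``choose $\mathscr{Q}$ a unit'' step with this gluing construction.
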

The `potentially crystalline' assumption for $V_{\frakp}A$, which is satisfied if $A$ has potentially   good reduction at all $v\vert p$,  is imposed in order for $V_{\frakp}A\otimes \OO(\Y^{\circ})^{\rm b}$ to be `non-exceptional' in the sense of \cite{fouquet} (which is more restrictive than ours); the assumption  on $\omega$  allows to invoke the results of \cite{CV05, AN} on the non-vanishing of anticyclotomic Heegner points, and to write $A=A^{+}=A^{-}$,  $\Y=\Y_{+}=\Y_{-}=\Y_{\one}$.
See \cite[Theorem B (ii)]{fouquet} for the exact assumptions  needed, which are slightly weaker.

The proof of Theorem \ref{iwbsd} will be  given in \S\ref{sec iwbsd}.

\begin{rema} When $F=\Q$, the converse divisibility to \eqref{iw-f} was recently proved by X. Wan \cite{xin-howard} under some assumptions.
\end{rema}

\subsubsection{Generic non-vanishing of $p$-adic heights on CM abelian varieties} The non-vanishing of (cyclotomic) $p$-adic heights is in general, as we have mentioned, a deep conjecture (or  a ``strong suspicion'') of Schneider  \cite{schneider2}. The following result  provides some new evidence towards it. It is a corollary of  Thereom \ref{C}.\ref{C4} together with the non-vanishing results for Katz $p$-adic $L$-functions  of Hida \cite{hidamu}, Hsieh \cite{hsiehmu}, and Burungale \cite{ashay} (via a factorisation of the $p$-adic  $L$-function). The result  is a special case of a finer  one to appear in forthcoming joint work with A. Burungale. For CM elliptic curves over $\Q$, it  was known as a consequence of different non-vanishing results of Bertrand \cite{bertrand} and Rohrlich  \cite{rohrlich} (see \cite[Appendix A, by K. Rubin]{agb}).

\begin{theoA}\label{nvh} In the situation of Theorem \ref{C}.\ref{C4}, suppose that $A_{E}$ has complex multiplication\footnote{In the strict sense that the algebra $\End^{0}(A_{E})$ of endomorphisms \emph{defined over $E$} is a CM field.}
  and  that $p\nmid 2D_{F}h_{E}^{-}$, where $h_{E}^{-}=h_{E}/h_{F}$ is the relative class number. Let $\big\langle\, ,\, \big\rangle_{\rm cyc}$ be the pairing deduced from \eqref{bight} by the map $\mathscr{N}_{\Y/\Y'}(\Y^{\circ})^{\rm b}\cong \OO(\Y^{\circ})^{\rm b} \hat{\otimes}\Gamma_{F}\to \OO(\Y^{\circ})^{\rm b} \hat{\otimes}\Gamma_{\rm cyc}$, where $\Gamma_{\rm cyc}=\Gamma_{\Q}$ viewed as a quotient of $\Gamma_{F}$ via the ad\`elic norm map.

Then for any $f^{\pm, p}$ such that $\mathscr{Q}(f^{+,p}, f^{-,p})\neq 0 $ in $\OO(\Y^{\circ})^{\rm b}$, we have
$$\Big\langle \calP_{\alpha}^{+}(f^{+,p}), \calP_{\alpha}^{-}(f^{-,p})^{\iota} \Big\rangle_{\rm cyc}\neq 0\qquad \text{in }  \OO(\Y^{\circ})^{\rm b} \hat{\otimes}\Gamma_{\rm cyc}.$$
\end{theoA}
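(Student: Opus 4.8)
The plan is to deduce Theorem \ref{nvh} from the anticyclotomic Gross--Zagier formula of Theorem \ref{C}.\ref{C4} combined with a non-vanishing statement for the relevant $p$-adic $L$-function in the cyclotomic direction. First I would fix $f^{\pm, p}$ with $\mathscr{Q}(f^{+,p}, f^{-,p}) \neq 0$ in $\OO(\Y^{\circ})^{\rm b}$ (such exist by the multiplicity-one formalism of \S\ref{hm1}, since $\mathscr{Q}$ is a nonzero element of a Hom-space of rank one after base change). By Theorem \ref{C}.\ref{C4} we then have, in $\calN^{*}_{\Y_{+}/\Y'_{+}}(\Y^{\circ}_{+})^{\rm b} \cong \OO(\Y^{\circ})^{\rm b}\hat\otimes\Gamma_F$,
\begin{equation*}
\Big\langle \calP^{+}_{\alpha}(f^{+,p}), \calP^{-}_{\alpha}(f^{-,p})^{\iota}\Big\rangle = \frac{c_E}{2}\cdot {\rm d}_F L_{p,\alpha}(\sigma_E)\cdot \mathscr{Q}(f^{+,p}, f^{-,p}).
\end{equation*}
Pushing forward along the cyclotomic projection $\Gamma_F \to \Gamma_{\rm cyc}$, the cyclotomic pairing $\langle\,,\,\rangle_{\rm cyc}$ equals $\tfrac{c_E}{2}\cdot {\rm d}_{\rm cyc}L_{p,\alpha}(\sigma_E)\cdot \mathscr{Q}(f^{+,p}, f^{-,p})$, where ${\rm d}_{\rm cyc}$ is the image of ${\rm d}_F$ under $T^*_{\one}\Y_F \to \Gamma_{\rm cyc}\hat\otimes L$. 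Since $c_E \in \Q^\times$ and $\mathscr{Q}(f^{+,p}, f^{-,p}) \neq 0$ in the integral domain $\OO(\Y^{\circ})^{\rm b}$ (here one uses that $\Y^{\circ}$ is connected so this ring has no zero-divisors obstructing the conclusion, or at worst that multiplication by $\mathscr{Q}$ is injective on the relevant module), it suffices to prove that ${\rm d}_{\rm cyc}L_{p,\alpha}(\sigma_E) \neq 0$ as a section of $\OO(\Y^{\circ})^{\rm b}\hat\otimes\Gamma_{\rm cyc}$; equivalently, that the restriction of $L_{p,\alpha}(\sigma_E)$ to $\Y^{\circ}$ does not vanish to order $\geq 2$ along $\Y$ in the cyclotomic direction, generically on $\Y^{\circ}$.

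The next step uses the CM hypothesis. Since $A_E$ has complex multiplication by a CM field, $\sigma_{A}$ is automorphically induced from a Hecke character of a CM extension, and the Rankin--Selberg $p$-adic $L$-function $L_{p,\alpha}(\sigma_E)$ factors (up to an explicit nonzero elementary factor and a period) as a product of two Katz-type $p$-adic $L$-functions attached to Hecke characters of the relevant CM field, or of a product of two CM fields over $F$. This is the factorisation alluded to in the statement. Under such a factorisation, the derivative ${\rm d}_{\rm cyc}L_{p,\alpha}(\sigma_E)$ along the cyclotomic direction is controlled by the values and first derivatives of Katz $p$-adic $L$-functions along the same direction. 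I would then invoke the non-vanishing results of Hida \cite{hidamu}, Hsieh \cite{hsiehmu}, and Burungale \cite{ashay} (the hypothesis $p \nmid 2D_F h_E^{-}$ being exactly what is needed to apply these $\mu$-invariant and non-vanishing statements) to conclude that each Katz factor — or its relevant cyclotomic derivative — is generically non-vanishing on the anticyclotomic component $\Y^{\circ}$. Combining the two factors gives ${\rm d}_{\rm cyc}L_{p,\alpha}(\sigma_E)|_{\Y^{\circ}} \neq 0$.

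I expect the main obstacle to be the precise bookkeeping of the factorisation: one must identify exactly which CM Hecke characters appear, track the cyclotomic direction through the factorisation (it distributes nontrivially between the two factors and their conjugates), verify that the elementary fudge factors and the comparison-of-periods term do not introduce spurious zeros along $\Y^{\circ}$, and match the hypothesis $p\nmid 2D_F h_E^{-}$ with the precise conditions under which \cite{hidamu, hsiehmu, ashay} apply — in particular handling the anticyclotomic versus full cyclotomic-anticyclotomic direction correctly, and the fact that one of the two factors may need a derivative (exceptional zero) rather than a value. A secondary subtlety is ensuring that "generic non-vanishing over $\Y^{\circ}$" for the $L$-function transfers to the pairing: since the identity of Theorem \ref{C}.\ref{C4} holds in $\OO(\Y^{\circ})^{\rm b}\hat\otimes\Gamma_{\rm cyc}$ and $\mathscr{Q}(f^{+,p},f^{-,p})$ is a fixed nonzero bounded analytic function, this reduces to knowing that the zero locus of $\mathscr{Q}$ does not contain the full cyclotomic-derivative locus, which follows because the two are governed by independent data (the former by local toric integrals away from $p$, the latter by the CM $L$-values). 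Since this result is stated as a special case of finer forthcoming joint work with Burungale, I would keep the argument at the level of this reduction plus citations.
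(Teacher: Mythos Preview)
Your proposal is correct and follows exactly the approach the paper itself outlines: the paper does not give a detailed proof of this theorem but states (just before the theorem) that it is a corollary of Theorem \ref{C}.\ref{C4} together with the non-vanishing results for Katz $p$-adic $L$-functions of Hida, Hsieh, and Burungale, via a factorisation of $L_{p,\alpha}(\sigma_E)$, with full details deferred to forthcoming joint work with Burungale. Your reduction to ${\rm d}_{\rm cyc}L_{p,\alpha}(\sigma_E)|_{\Y^{\circ}}\neq 0$ and your identification of the factorisation bookkeeping as the main technical point match this precisely.
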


\subsection{History and related work}\label{history} We briefly discuss previous work towards our main theorems, and some related works.   We will loosely term ``classical context'' the following specialisation of the setting of our main results: $A$ is an elliptic curve over $\Q$ with conductor $N$ and \emph{good} ordinary reduction at $p$; $p$ is odd; the quadratic imaginary field $E$ has discriminant coprime to $N$ and it satisfies the  Heegner condition: all primes dividing $N$ split in $E$ (this implies that $\B^{\infty}$ is split); the parametrisation $f\colon J\to A$ factors through the Jacobian of the modular curve $X_{0}(N)$; the character $\chi$ is unramified everywhere, or unramified away from $p$. 

\paragraph{Ancestors} In the classical context, Theorems \ref{A} and \ref{B} were proved by Perrin-Riou \cite{PR};  intermediate steps towards the present generality were taken in \cite{dd}, \cite{lima}. When $E/F$ is split above $p$, Theorem \ref{A} can essentially be deduced from a general theorem of Hida \cite{Hi} (cf. \cite[\S 7.3]{xin-mc}), except for the location of the possible poles.  Theorem \ref{C}.\ref{C4} and Theorem \ref{iwbsd} in the classical  context are due to Howard \cite{howard} (in fact Theorems \ref{B} and \ref{C}.\ref{C4} were first envisioned by Mazur \cite{mazur-icm} in that context, whereas Perrin-Riou  \cite{PR2} had conjectured the equality in \eqref{iw-f}).  Theorem \ref{C}.\ref{C3} is hardly new and has many antecedents in the literature: see e.g. \cite{VO} and references therein. 

\paragraph{Relatives} Some analogues of Theorem \ref{B} were proven in situations which differ from the classical context in directions which are orthogonal to those of the present work:  Nekov{\'a}{\v{r}} \cite{nekovar} and Shnidman \cite{ari} dealt with the case of higher weights; Kobayashi \cite{kobayashi} dealt with the case of elliptic curves with supersingular reduction. 

\paragraph{Friends} We have already mentioned two other fully general Gross--Zagier formulas in the sense of \S\ref{hm1}, namely the original archimedean one of \cite{yzz} generalising \cite{GZ}, and a different $p$-adic formula proved in \cite{lzz} generalising \cite{bdp}. The panorama of existing formulas of this type is complemented by a handful of   results, mostly in the classical context, valid in the presence of an exceptional zero (the case excluded in Theorem \ref{B}). We refer the reader to \cite{exc} where we prove a new such formula for $p$-adic heights and review other ones due to   Bertolini--Darmon.
It is to be expected that all of those results should be generalisable    to the framework of \S\ref{hm1}.

\paragraph{Children} Finally, explicit versions of any Gross--Zagier formula in the framework of \S\ref{hm1}   can be obtained by the explicit computation of the local integrals $Q_{v}$. This is carried out in \cite{explicit} where it is applied to the cases of the archimedean Gross--Zagier formula and of the Waldspurger formula; the application to an explicit version of Theorem \ref{B}  can be obtained in exactly  the same manner. An explicit version of the anticyclotomic formulas of Theorem \ref{C} can also be obtained as a consequence: see \cite{exc} for a special case.

\subsection{Outline of proofs and organisation of the paper}
Let us briefly explain the main arguments and at the same time the  organisation of the paper. 

For the sake of simplicity, the notation used in this introductory discussion slightly differs from that of the body text, and we ignore  powers of $\pi$, square roots of discriminants, and choices of additive character.
\subsubsection{Construction of the $p$-adic $L$-function (\S\ref{sec plf3})}
It is crucial for us to have a flexible construction which does not depend on choices of newforms. 
The starting point is Waldspurger's \cite{wald} Rankin--Selberg integral
\begin{align}\label{outline-plf}
{(\vphi, { I( \phi, \chi'))_{\rm Pet}} \over  2L(1, \sigma, {\rm ad}) /\zeta_{F}(2)  } 
={L(1/2, \sigma_{E}\otimes \chi') \over  L(1, \eta)} \prod_{v}  {R}^{\natural}_{v}(\vphi_{v},\phi_{v}, \chi'_{v}),
\end{align}
 where $\vphi\in \sigma$, $I( \phi, \chi')$ is a mixed theta-Eisenstein series depending on a choice of an ad\`elic Schwartz function $\phi$, and $ {R}^{\natural}_{v}(\vphi_{v},\phi_{v}, \chi')$ are normalised local integrals (almost all of which are equal to~$1$). Then, after dividing both sides by the period $2L(1, \sigma, \rm ad)$, we can:
 \begin{itemize}
  \item interpolate the kernel $\chi'\mapsto I(0, \phi, \chi')$ to a $\Y'$-family $\calI(\phi^{p\infty}; \chi')$ of $p$-adic modular forms     for \emph{any} choice of the components $\phi^{p\infty}$, and a  \emph{well-chosen}  $\phi_{p\infty}$ (we will set  $\phi_{v}(x,u)$
to be   ``standard'' at $v\vert \infty$, and close to a delta function in $x$ at $v\vert p$);
 \item interpolate the functional  ``Petersson product with $\vphi$'' to a functional $\lf$ on $p$-adic modular forms, for \emph{any} $\vphi\in \sigma$ which is a `$\Up_{v}$-eigenvector of eigenvalue $\alpha_{v}$' at the places $v\vert p$, and is antiholomorphic at infinity;
 \item interpolate the normalised local integrals   $\chi'\mapsto {R}^{\natural}_{v}(\vphi_{v},\phi_{v}, \chi'_{v})$ to functions $\mathscr{R}^{\natural}_{v}(\vphi_{v},\phi_{v})$ for all $v\nmid p\infty$ and any $\vphi_{v}$, $\phi_{v}$.
  \end{itemize}
To conclude, we cover $\Y'$ by finitely many open subsets $\mathscr{U}_{i}$; for each $i$ we choose appropriate $\vphi^{p}$, $\phi^{p}$ and we define \footnote{This is the point which possibly produces poles.}
$$L_{p, \alpha}(\sigma_{E})|_{\mathscr{U}_{i}}={\lf(\calI(\vphi^{p\infty}))\over \prod_{v\nmid p\infty} \mathscr{R}^{\natural}_{v}(\vphi_{v},\phi_{v})}.$$ The explicit computation of the local integrals at $p$ (in the Appendix) and at infinity yields the interpolation factor.

\subsubsection{Proof of the Gross--Zagier formula and its anticyclotomic version} We outline the main arguments of our proof, with an emphasis on the reduction steps.
\paragraph{Multiplicity one}  We borrow or adapt many ideas (and calculations) from \cite{yzz}, in particular the systematic use of the multiplicity one principle of \S\ref{hm1}. As both sides of the formula are functionals in the same one-dimensional vector space,  it is  enough to prove the result for one pair $f_{1}$, $f_{2}$ with $Q(f_{1}, f_{2}, \chi)\neq0$; finding such $f_{1}\otimes f_{2}$ is a local problem. 
 It is equivalent to  choosing functions  $\vphi_{v}\otimes \phi_{v}'=\theta^{-1}(f_{1}\otimes f_{2})$  as just above, by the Shimizu lift $\theta$ realising the Jacquet--Langlands correspondence (\S\ref{shim}). For $v\vert p$ we thus have an explicit choice of such, corresponding to the one made above.\footnote{The notation $\phi'$ refers to the application to $\phi$ of a local operator at $v\vert p$ appearing in the interpolation of the Petersson product.}   For $v\nmid p$, we can introduce several restrictions on $(\vphi_{v},\phi_{v})$  as in \cite{yzz}, with the effect of simplifying many calculations of local heights (\S\ref{local ass}).

\paragraph{Arithmetic theta lifting and kernel identity (\S\ref{sec GS})} In \cite{yzz}, the authors introduce an arithmetic-geometric analogue of the Shimizu lift, by means of which they are able to write also the Heegner-points side of their formula as a Petersson product with $\vphi$ of a certain geometric kernel. We can adapt without difficulty their results to reduce our formula to the assertion\footnote{Together with a comparison of local terms at $p$ described below.} that 
$${\rm d}_{F}\calI( \phi^{p\infty}; \chi)-2 L_{(p)} (1, \eta)\wtil{Z}(\phi^{\infty}, \chi)$$
is killed by the $p$-adic Petersson product $\lf$. Here $\wtil{Z}(\phi^{\infty}, \chi)$ is a modular form depending on $\phi$ encoding the height pairings of CM points on Shimura curves and their Hecke translates; it generalises the classical generating series $\sum_{m}\langle \iota_{\xi}(P)[{\chi}], T(m) \iota_{\xi}(P)\rangle \qqq^{m}$. 

\paragraph{Decomposition and comparison (\S\S\ref{sec: expand an}-\ref{dec gk})} Both terms  in the kernel identity are sums of local terms indexed by the finite places of $F$. For $v\nmid p$, we compute both sides and  show that the difference essentially coincides with the one computed in \cite{yzz}: it is either zero or, at bad places, a modular form orthogonal to all forms in $\sigma$. In fact we can show this only for a certain restricted set of $q$-expansion coefficients; as the global kernels are $p$-adic modular forms, this will suffice by a simple approximation argument (Lemma \ref{approx lemma}).

\paragraph{$p$-adic Arakelov theory or analytic continuation} 
The argument just sketched relies on calculations of  arithmetic intersections of CM points; this in general does not suffice, as we need to consider the contribution of the Hodge classes in the generating series too.  It will turn out that such contribution vanishes; two approaches can be followed to show this. The first one, in analogy with  \cite{shouwu, yzz} and already used  in a simpler context  in \cite{dd}, is to make use of Besser's $p$-adic Arakelov theory\footnote{Recall that an Arakelov theory is  an arithmetic intersection theory which allows to pair cycles of any degree, recovering the height pairing for cycles of degree zero.}
 \cite{besser} in order to separate such contribution.
 
We will follow an alternative approach (see Proposition \ref{BandC}), which exploits the generality of our context and the existence of extra variables in the $p$-adic world.  Once constructed the  Heegner--theta element  $\mathscr{P}^{\pm}$,  the anticyclotomic formula of Theorem \ref{C}.\ref{C4}  is essentially a corollary of Theorem \ref{B} for all finite order characters $\chi$: we only need to check the compatibility $Q_{v}(f_{\alpha,v}^{+}, f_{\alpha,v}^{-}, \chi)= \zeta_{F,v}(2)^{-1}\cdot Z_{v}( \chi_{v})$ for all $v\vert p$ by explicit computation. 
Conversely, thanks to the multiplicity one result, it is also true that Theorem \ref{B} for any $\chi$ is obtained as a  corollary of Theorem \ref{C}.\ref{C4} by specialisation.  We make  use of both of   these  observations: we first prove Theorem \ref{B} for all but finitely many finite order characters $\chi$; this suffices to deduce Theorem \ref{C}.\ref{C4} by an analytic continuation argument, which finally yields Theorem B for the remaining characters $\chi$ as well. The initially excluded characters are those (such as the trivial character when it is contemplated) for which the  contribution  of the Hodge classes is not already annihilated by $\chi$-averaging; for all other characters the  Arakelov-theoretic arguments just mentioned are then unnecessary. 

\paragraph{Annihilation of $p$-adic heights (\S\ref{lsp})} We are left to deal with the contribution of the places $v\vert p$. We can show quite easily that this is zero for the analytic kernel. As in the original work of Perrin-Riou \cite{PR}, the vanishing of the contribution of the  geometric kernel is the heart of the argument. We establish it via an elaboration of a method of Nekov{\'a}{\v{r}} \cite{nekovar} and Shnidman \cite{ari}. The key new ingredient in adapting it to our semistable case is a simple integrality criterion for local heights in terms of intersections, introduced in \S\ref{intH} after a review of the theory of  heights.

\paragraph{Local toric period} Finally,  in the Appendix we  compute  the local toric period $Q(\theta(\vphi_{v}\otimes \phi_{v}'), \chi_{v})$ for $v\vert p$ and compare it to the interpolation factor of the $p$-adic $L$-function. Both are highly ramified local integrals, and they turn out to differ by the multiplicative constant  $L(1,\eta_{v})$; this  completes the comparison between the kernel identity and Theorem B.

\subsection{Acknowledgements}  I would like to thank S. Zhang for warm encouragement and A. Burungale, M. Chida, O. Fouquet, M.-L. Hsieh, A. Iovita, S. Kudla, V. Pilloni, E. Urban, Y. Tian, X. Yuan,  and W. Zhang for  useful conversations or correspondence. I am especially grateful to S. Shah for his help with the chirality of Haar mesaures and to A. Shnidman for extended discussions on heights. Finally, I would like to thank the referee for a careful reading and  A. Besser for kindly agreeing to study the compatibility  between $p$-adic heights and Arakelov theory  on curves of bad reduction: his work gave me confidence at a time when I had not found the alternative argument used here. 
\smallskip

Parts of the present paper were written while the author was a postdoctoral fellow at MSRI, funded under NSF grant  0932078000, and at the CRM (Montreal). Final revisions were made when the author was a postdoctoral fellow funded by the Fondation Math\'ematique Jacques Hadamard.

\subsection{Notation}

We largely follow the notation and conventions of \cite[\S 1.6]{yzz}.

\subsubsection{$L$-functions} In the rest of the paper (and \emph{unlike} in the Introduction, where we adhere to the more standard convention), \emph{all  complex $L$- and zeta functions are complete including the $\Gamma$-factors at the infinite places}. (This is to facilitate  referring to the results and calculations of \cite{yzz} where this convention is adopted.)

\subsubsection{Fields and ad\`eles} The fields $E$ and $F$ will be as fixed in the Introduction unless otherwise noted. The ad\`ele ring of $F$ will be denoted  $\A_{F}$ or simply $\A$; it contains the ring $\A^{\infty}$ of finite ad\`eles. We let $D_{F}$ and $D_{E}$ be the absolute discriminants of $F$ and $E$ respectively. We also choose an id\`ele $d\in \A^{\infty,\times}$ generating the different of $F/\Q$, and an id\`ele $D\in \A^{\infty,\times}$ generating the relative discriminant of $E/F$. 

We use standard notation to restrict ad\`elic objects (groups, $L$-functions,\ldots) away from a finite set of places $S$, e.g. $\A^{S}:=\prod_{v\notin S}'F_{v}$, whereas $F_{S}:=\prod_{v\in S}F_{v}$. When $S$ is the set of places above $p$ (respectively $\infty$) we use this notation with `$S$' replaced it by `$p$' (respectively `$\infty$').

We denote by $F_{\infty}^{+}\subset F_{\infty}$ the group of $(x_{\tau})_{{\tau\vert \infty}}$ with $x_{\tau}>0$ for all $\tau$, and we let  $\A^{\times}_{+}:=\A^{\infty ,\times}F_{\infty}^{+}$, $F^{\times}_{+}:=F^{\times}\cap F_{\infty}^{+}$. 

For a non-archimedean prime $v$ of a number field $F$, we denote by $q_{F,v}$ the cardinality of the residue field and by $\vpi_{v}$ a uniformiser.

\subsubsection{Subgroups of $\GL_{2}$} We consider  $\GL_{2}$ as an algebraic group over $F$. We denote by $P$, respectively $P^{1}$, the subgroup of $\GL_{2}$, respectively ${\bf SL}_{2}$, consisting of upper-triangular matrices; by $A\subset P\subset \GL_{2}$ the diagonal torus; and by $N\subset P\subset  \GL_{2}$ the unipotent radical of $P$.  We let $n(x):=\smalltwomat 1x{}1$ and 
$$w:=\twomat {}1{-1}{}.$$

\subsubsection{Quadratic torus} We let $T:={\rm Res}_{E/F}{\bf G}_{m}$; the embedding $T(\A^{\infty})\subset \B^{\times}$ is fixed. We let $Z:={\bf G}_{m, F}$, and view it both as a subgroup of $T$ and as the centre of $\GL_{2}$.
\subsubsection{Automorphic quotients} If $G$ is a reductive group over the totally real field $F$, we denote
$$[G]:=G(F)\bks G(\A)/Z(\A).$$

\subsubsection{Measures} We choose local and global Haar measures as in \cite{yzz}. In particular,  we have
$$\vol(\GL_{2}(\OO_{F,v}))=|d|_{v}^{2}\zeta_{F,v}(2)^{-1}$$
for all non-archimedean $v$. 

We denote by $dt$ the local and global measures on $T/Z$ of \cite{yzz},  which give $\vol([T], dt)=2L(1, \eta)$.
The global measure 
$$d^{\circ} t := |D_{F}|^{1/2}|D_{E/F}|^{1/2} dt$$
gives $\vol([T], d^{\circ}t)\in \Q^{\times}$. 

\subsubsection{Regularised averages and integration} We borrow some notation from \cite[\S 1.6.7]{yzz}. If $G$ is a topological group with a left Haar measure $dg$ with finite volume, 	we define $$\dashint_{G}f\, dg:={1\over \vol (G)}\int_{G} f(g)\, dg.$$ (This reduces to the usual average when $G $ is a finite group.)

 If $F$ is a totally real field and $f$ is a function on $ F^{\times}\bks \A^{\times}$ invariant under $F_{\infty}^{\times}$, we denote 
$$\dashint_{\A^{\times}} f(z)\, dz:=\dashint_{F^{\times}\bks \A^{\times}/ F_{\tau}^{\times}}    f(z)\, dz,$$
where $\tau$ is any archimedean place of $F$. If $f$ is further invariant under a compact open subgroup $U$, this reduces to the average over $F^{\times}\bks \A^{\times}/F_{\infty}^{\times}U$. 

Finally, let $G$ be a reductive group over $F$  with an embedding of ${\bf G}_{m/F}$ into the centre $G$, and assume that $\, dg $ is a left Haar measure giving finite  volume to $[G]=G(F)\bks G(\A)/Z(\A)$. Let $f$ be a function on $G(F)\bks G(\A)/Z(F_{\infty})$, then we define
$$\int^{*}_{[G]}f(g)\, dg:=\int_{[G]}\dashint_{Z(\A)}f(zg)\,dz\, dg$$
and
$$\dashint_{[G]}f(g)\, dg:={1\over \vol([G])}\int_{[G]}\dashint_{Z(\A)}f(zg)\,dz\, dg.$$
Note in particular that for functions which factor through a compact quotient of $G(F)\bks G(A)$  and are locally constant there, the regularised integration reduces to a  finite sum and, when using $\Q$-valued  measures such as  the measure $d^{\circ }t$ on $T$, it makes sense for functions taking $p$-adic values as well.

\subsubsection{Multiindices} If $S$ is a set and $r\in \Z^{S}$, $p\in G^{S}$ for some group $G$, we often write $p^{r}:=\prod_{v\in S}p_{v}^{r_{v}}$. This will typically be applied in the following situation: $S=S_{p}$ is the set of places of $F$ above $p$, $G$ is the (semi)group of ideals of $\OO_{F}$, and $p_{v}$ is the ideal corresponding to $v$.

\subsubsection{Functions of $p$-adic characters} When $\Y^{?}$ is one of the rigid spaces introduced above and $G(A)\in \OO(\Y^{?})$ is a function on $\Y^{?}$ depending on other `parameters' $A$ (e.g. a $p$-adic $L$-function), we write $G(A; \chi)$ for the evaluation $G(A)(\chi)$.

\section{$p$-adic modular forms}

\subsection{Modular forms and their $q$-expansions}
Let $K\subset \GL_{2}(\widehat{\OO}_{F})$ be an open compact subgroup. Recall that  a Hilbert automorphic form of level $K$ is a smooth function  of moderate growth
$$\vphi \colon\GL_{2}(F)\bks\GL_{2}(\A)/K\to \C.$$
Let $k\in \Z^{\Hom(F, \R)}$. Then an automorphic form is said to be of weight $k$ if it satisfies 
$$\vphi(gr_{\theta})=\vphi(g)\psi_{\infty}(k\cdot \theta)$$
for all $r_{\theta}=\left(\smalltwomat {\cos 2\pi\theta_{v}}{\sin2\pi\theta_{v}}{-\sin2\pi\theta_{v}}{\cos2\pi\theta_{v}}\right)_{v\vert\infty }\in \SO_{2}(F_{\infty})$. It is said to be holomorphic of weight $k$ if for all $g\in \GL_{2}(\A^{\infty})$, the function of $z_{\infty}=(x_{v}+iy_{v})_{v\vert\infty}\in \mathfrak{h}^{\Hom(F, \R)}$, 
$$z_\infty\mapsto |y_{\infty}|_{\infty}^{-k/2}\vphi\left(g\smalltwomat {y_{\infty}}{x_{\infty}}{}1\right)$$ is holomorphic. Holomorphic Hilbert automorphic forms will be simply called \emph{modular} forms.

Let $\omega\colon F^{\times}\bks \A^{\times}\to \C^{\times}$ be a finite order character. Then $\vphi$ is said to be of character $\omega$ if it satisfies $\vphi(zg)=\omega(z)\vphi(g)$ for all $z\in Z(\A)\cong \A^{\times}$. 
We denote by $M_{k}(K, \C)$ the space of modular forms of level $K$ and weight $k$, and by $S_{k}(K,\C)$ its subspace of cuspforms. We further denote by $M_{k}(K,\omega, \C)$, $S_{k}(K, \omega, \C)$ the subspaces of forms of character $\omega$. We identify a scalar weight $k\in\Z_{\geq 0}$ with the corresponding parallel weight $(k, \ldots, k)\in \Z_{\geq 0}^{\Hom(F, \R)}$. 

For $v$ a finite place of $F$ and  $N$ an ideal of $\OO_{F,v}$, we define subgroups  of $\GL_{2}(\OO_{F,v})$ by\begin{align*}
K_{0}(N)_{v}&=\left\{\twomat abcd\,|\ c\equiv 0 \mod N\right\},\\
K_{1}(N)_{v}&=\left\{\twomat abcd\,|\ c, d-1\equiv 0 \mod N\right\},\\
K^{1}(N)_{v}&=\left\{\twomat abcd\,|\ c, a-1\equiv 0 \mod N\right\},\\
K_{1}^{1}(N)_{v}&=\left\{\twomat abcd\,|\ c, a-1, d-1\equiv 0 \mod N\right\},\\
K(N)_{v}&=\left\{\twomat abcd\,|\ b, c, a-1, d-1\equiv 0 \mod N\right\}.
\end{align*}
If $N$ is an ideal of $\OO_{F}$ and   $*\in\{{}_{0},{}_{1}, {}^{1}, {}^{1}_{1}, \emptyset\}$, we define subgroups $K{*}(N)$ of $\GL_{2}(\widehat{\OO}_{F})$ by $K{*}(N)=\prod_{v}K{*}(N)_{v}$.  
If $p$ is a rational prime and $r\in \Z_{\geq 0}^{\{v\vert p\}}$, we further define $K{*}(p^{r})_{p}=\prod_{v}K{*}(\vpi_{v}^{r_{v}})_{v}\subset\GL_{2}(\OO_{F, p})$.

Fix a nontrivial character $\psi\colon \A/F\to \C^{\times}$. Any automorphic form  $\vphi$ admits a Fourier--Whittaker expansion $\vphi(g)=\sum_{a\in F}W_{a}(g)$, where $W_{a}(g)=W_{\vphi ,\psi,a }(g)$ satisfies $W_{a}(n(x)g)=\psi(ax)W_{a}(g)$ for all $x\in\A$. If $\vphi$ is holomorphic of weight $k$ we can  further write $W_{a}(g)=W_{a^{\infty}}(g^{\infty})W_{a, \infty}(g_{\infty})$ with $W_{a, \infty}(g)=\prod_{v\vert \infty} W_{a,v}(g)$, where $W_{a,v}=W_{a,v}^{(k_{v})}$ is the \emph{standard holomorphic Whittaker  function} of weight $k$ given by (suppressing the subscripts and using the Iwasawa decomposition)
\begin{align}\label{holwhitt}
W_{a}^{(k)}(\smalltwomat z{}{}z\smalltwomat yx{}1 r_{\theta}) = \begin{cases} |y|^{k/2}\psi(a(x+iy))\psi(k\theta)\one_{\R_{+}}(ay) &\ \text{ if} \ a\neq 0\\
 |y|^{k/2}\psi(k\theta)\one_{\R_{+}}(y) &\ \text{ if} \ a=0.
\end{cases}
\end{align}
(Similarly, we have a description in terms of the \emph{standard antiholomorphic Whittaker function}
\begin{align}\label{antiholwhitt}
W_{a}^{(-k)}(\smalltwomat z{}{}z\smalltwomat yx{}1 r_{\theta}) = \begin{cases} |y|^{k/2}\psi(a(x+iy))\psi(-k\theta)\one_{\R_{+}}(-ay) &\ \text{ if} \ a\neq 0\\
 |y|^{k/2}\psi(-k\theta)\one_{\R_{+}}(-y) &\ \text{ if} \ a=0
\end{cases}
\end{align}
for antiholomorphic forms of weight $-k<0$.)

 In this case we have an expansion 
$$\vphi\left(\twomat y x{}1\right)=|y|^{k/2}_{\infty}\sum_{a\in F_{\geq 0}}  W_{a}^{\infty}(\smalltwomat{y^{\infty}}{}{}1)\psi_{\infty}(iay_{\infty})\psi(ax)$$
for all $y\in \A^{\times}_{+}, x\in \A$; here $F_{\geq 0}$ denotes the set of $a\in F $ satisfying $\tau(a)\geq 0$ for all $\tau\colon F\into \R$.

For a field $L$, let the \emph{space of formal $q$-expansions} $ C^{\infty}(\A^{\infty,\times},L)\llbracket \qqq^{F_{\geq 0}}\rrbracket^{\circ}$ be the set of those  formal sums ${ W}=\sum_{a\in F_{\geq 0}}W_{a}\qqq^{a}$ with coefficients   $W_{a}\in C^{\infty}(\A^{\infty,\times},L)$
such that, for some  compact subset $A_{ W}\subset \A^{\infty}$, we have 
 $W_{a}(y)=0$ unless  $ay\in A_{ W}$.
 
Let $\vphi$ be  a holomorphic automorphic form.
The expression 
\begin{align}\label{qexpmap}
{}^{\qqq}\vphi(y):=\sum_{a\in F}W_{\vphi, a}^{\infty}(\smalltwomat y{}{}1)\,\qqq^{a}, \qquad y\in \A^{\infty, \times} 
\end{align}
 belongs to $  C^{\infty}(\A^{\infty,\times},\C)\llbracket \qqq^{F_{\geq 0}}\rrbracket^{\circ}$ and it is called 
 the \emph{formal $q$-expansion} of $\vphi$. 
The space of formal $q$-expansions is an algebra in the obvious way, compatibly with the algebra structure on automorphic forms. 

\medskip

\begin{prop}[$q$-expansion principle]\label{q-exp-princ}  Let $K\subset \GL_{2}(\widehat{\OO}_{F})$ be an open compact subgroup, and let $k\in \Z_{\geq 0}^{\Hom(F, \R)}$. The $q$-expansion map defined by \eqref{qexpmap}
\begin{align*}M_{k}(K, \C)&\to C^{\infty}(\A^{\infty,\times}, \C)\llbracket \qqq^{F_{\geq 0}}\rrbracket^{\circ} \\
\vphi & \mapsto {}^{\qqq}\vphi
\end{align*}
is injective.
\end{prop}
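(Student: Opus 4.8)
The plan is to reduce the statement to the classical fact that a holomorphic Hilbert modular form is determined by its values on the ``partially adelic upper half plane'', i.e. by the function
$$(g^{\infty}, z_{\infty})\longmapsto \vphi\Bigl(g^{\infty}\smalltwomat{y_{\infty}}{x_{\infty}}{}1\Bigr),\qquad g^{\infty}\in\GL_{2}(\A^{\infty}),\ z_{\infty}=(x_{v}+iy_{v})_{v\vert\infty}\in\mathfrak{h}^{\Hom(F,\R)},$$
together with its transformation behaviour under $\SO_{2}(F_{\infty})$. Concretely: suppose ${}^{\qqq}\vphi=0$, so that all finite Whittaker coefficients $W^{\infty}_{\vphi,a}$ vanish. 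I want to conclude $\vphi=0$.

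First I would recall the Fourier--Whittaker expansion displayed in the excerpt: for a holomorphic $\vphi$ of weight $k$ and for $y\in\A^{\times}_{+}$, $x\in\A$,
$$\vphi\Bigl(\smalltwomat yx{}1\Bigr)=|y|_{\infty}^{k/2}\sum_{a\in F_{\geq 0}}W^{\infty}_{\vphi,a}(\smalltwomat{y^{\infty}}{}{}1)\,\psi_{\infty}(iay_{\infty})\psi(ax).$$
Since ${}^{\qqq}\vphi=0$ forces $W^{\infty}_{\vphi,a}(\smalltwomat{y^{\infty}}{}{}1)=0$ for all $a\in F_{\geq 0}$ and all $y^{\infty}\in\A^{\infty,\times}$, the displayed sum is identically zero, hence $\vphi(\smalltwomat yx{}1)=0$ for all $y\in\A^{\times}_{+}$ and $x\in\A$. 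So $\vphi$ vanishes on $N(\A)A(\A)^{+}$ where $A(\A)^{+}$ denotes diagonal matrices with totally positive archimedean part.

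Next I would propagate this vanishing to all of $\GL_{2}(\A)$ using the group actions that $\vphi$ respects. On the finite part: any $g^{\infty}\in\GL_{2}(\A^{\infty})$ can be written, by the Iwasawa decomposition at each finite place, as $n(x^{\infty})\smalltwomat{y^{\infty}}{}{}1 k^{\infty}$ with $k^{\infty}\in\GL_{2}(\widehat\OO_{F})$; but more to the point, the $q$-expansion was computed after right translation by $\GL_{2}(\A^{\infty})$ is harmless because the coefficient functions $W^{\infty}_{\vphi,a}$ are functions on $\GL_{2}(\A^{\infty})$ — I should phrase the $q$-expansion principle with the full adelic Whittaker function $W^{\infty}_{\vphi,a}(g^{\infty})$, noting $W^{\infty}_{\vphi,a}(g^{\infty})=W^{\infty}_{\vphi,1}(\smalltwomat a{}{}1 g^{\infty})$ for $a\neq0$ by the $N$-equivariance, so that vanishing of the ``diagonal'' coefficients ${}^{\qqq}\vphi$ already gives vanishing of $W^{\infty}_{\vphi,a}(g^{\infty})$ for \emph{all} $g^{\infty}$ and all $a$ (the $a=0$ term must be handled separately — it vanishes because $\vphi$ is cuspidal-normalised, or directly because a holomorphic form of positive weight has no constant term, or one simply absorbs it: see below). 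Hence $\vphi(g^{\infty}\smalltwomat yx{}1)=|y|_{\infty}^{k/2}\sum_{a}W^{\infty}_{\vphi,a}(g^{\infty})\psi_{\infty}(iay_{\infty})\psi(ax)=0$ for all $g^{\infty}\in\GL_{2}(\A^{\infty})$, all $y\in F_{\infty}^{+}$, all $x\in F_{\infty}$. On the archimedean part: by the Iwasawa decomposition $\GL_{2}(F_{\infty})=N(F_{\infty})A(F_{\infty})^{+}Z(F_{\infty})\SO_{2}(F_{\infty})$, every archimedean matrix is of the form $n(x_{\infty})\smalltwomat{y_{\infty}}{}{}1 z_{\infty}r_{\theta}$ up to the centre, and the weight-$k$ condition $\vphi(gr_{\theta})=\vphi(g)\psi_{\infty}(k\cdot\theta)$ together with the character condition $\vphi(zg)=\omega(z)\vphi(g)$ turns vanishing on $N(F_{\infty})A(F_{\infty})^{+}$ into vanishing on all of $\GL_{2}(F_{\infty})$. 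Combining the finite and archimedean statements gives $\vphi\equiv0$ on $\GL_{2}(\A)$, as desired.

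The only genuinely delicate point — the \textbf{main obstacle}, such as it is — is the treatment of the constant ($a=0$) Fourier coefficient and, relatedly, making sure the implication ``all $q$-coefficients vanish $\Rightarrow$ all adelic Whittaker coefficients vanish'' is airtight: one must check that the $a=0$ Whittaker function $W_{\vphi,0}$ is captured by ${}^{\qqq}\vphi$ (it is, as the $a=0$ term of \eqref{qexpmap}), and that the standard holomorphic Whittaker functions \eqref{holwhitt} at the archimedean places are nonzero on the region over which we evaluate, so that vanishing of the \emph{product} $W^{\infty}_{\vphi,a}(g^{\infty})W_{\vphi,a,\infty}(g_{\infty})$ forces vanishing of the finite factor — this is immediate from the explicit formula $W_{a}^{(k)}(\smalltwomat yx{}1)=|y|^{k/2}\psi(a(x+iy))\one_{\R_{+}}(ay)$, which is nonzero precisely on the totally positive cone, exactly the region we use. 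Once these bookkeeping checks are in place, the argument is the standard ``separation of variables'' proof of injectivity of the $q$-expansion map, and no further input is needed beyond the expansion already recorded in the excerpt. $\qed$
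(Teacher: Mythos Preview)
Your overall strategy — deduce vanishing from the Fourier--Whittaker expansion together with the Iwasawa decomposition — is the right instinct, and it is essentially what underlies the paper's citation of Rapoport. But there is a genuine gap in your reduction at the finite places.

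The $q$-expansion ${}^{\qqq}\vphi$ records only $W^{\infty}_{\vphi,a}\bigl(\smalltwomat{y^{\infty}}{}{}1\bigr)$ for $y^{\infty}\in\A^{\infty,\times}$, not $W^{\infty}_{\vphi,a}(g^{\infty})$ for arbitrary $g^{\infty}$. Your identity $W^{\infty}_{\vphi,a}(g^{\infty})=W^{\infty}_{\vphi,1}\bigl(\smalltwomat{a}{}{}1 g^{\infty}\bigr)$ is correct but does not help: applied with $g^{\infty}=\smalltwomat{y^{\infty}}{}{}1$ it only reproduces values already on the subgroup $\{\smalltwomat{*}{}{}1\}$, never reaching a non-diagonal $g^{\infty}$. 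The Iwasawa decomposition with $k^{\infty}\in\GL_{2}(\widehat\OO_{F})$ does not save you either, since $\vphi$ is only right $K$-invariant, not $\GL_{2}(\widehat\OO_{F})$-invariant. So the sentence ``vanishing of the diagonal coefficients already gives vanishing of $W^{\infty}_{\vphi,a}(g^{\infty})$ for all $g^{\infty}$'' is unjustified as written.

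What is actually needed is strong approximation for $\mathbf{SL}_{2}$, giving
\[
\GL_{2}(\A^{\infty})=\GL_{2}(F)^{+}\cdot\bigl\{\smalltwomat{y^{\infty}}{}{}1 : y^{\infty}\in\A^{\infty,\times}\bigr\}\cdot K,
\]
so that left $\GL_{2}(F)$-invariance and right $K$-invariance reduce the problem to the diagonal slice. Equivalently, $\vphi$ corresponds to a tuple $(\vphi_{c})_{c}$ of classical Hilbert modular forms on the connected domain $\mathfrak{h}^{[F:\Q]}$, indexed by the narrow class group; the function $y^{\infty}\mapsto W^{\infty}_{\vphi,a}\bigl(\smalltwomat{y^{\infty}}{}{}1\bigr)$ packages the Fourier coefficients of each $\vphi_{c}$ at one cusp, and each $\vphi_{c}$ then vanishes by holomorphic continuation. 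This is exactly the paper's route: it invokes Rapoport for the classical $q$-expansion principle and for the dictionary between adelic and classical notions, rather than writing out the strong approximation step. A second, minor point: you invoke a central character $\omega$ to handle $Z(F_{\infty})$, but none is assumed in the proposition; once the decomposition into classical forms is in place this issue disappears, since one works directly on $\mathfrak{h}^{[F:\Q]}$.
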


We say that a formal $q$-expansion is \emph{modular} if it belongs to the image of the $q$-expansion map.

\begin{proof} This is (a weak form) of the $q$-expansion principle of \cite[Th\'eor\`eme 6.7 (i)]{rapoport}. In fact our  modular forms $\vphi$ are identified with  tuples $(\vphi_{c})_{c\in {\rm Cl}(F)^{+}}$ of Hilbert modular forms in the sense of \cite{rapoport}. Then the non-vanishing of ${}^{\qqq}\vphi$ for $\vphi\neq 0 $ is obtained by applying the result of \emph{loc. cit.} to each   $\vphi_{c}$. See \cite[Lemme 6.12]{rapoport} for  the comparison between various notions of Hilbert modular forms used there.
\end{proof}

The spaces of formal $q$-expansions  introduced so far will often be  convenient for us in terms of notation, but they are  redundant: if $k\in \Z_{\geq 0}$, $\vphi\in M_{k}(K, \C)$, we have $W_{a}(\smalltwomat y{}{}1)=W_{1}(\smalltwomat {ay}{}{}1)$ for all $a\in F^{\times}$, $y\in\A_{+}^{\times}$.  Moreover if $K\subset K(N)$, then $|y^{\infty}|^{-k/2}W_{0}(\smalltwomat{y^{\infty}}{}{}1)$ and $|y^{\infty}|^{-k/2}W^{\infty}_{1}(\smalltwomat{y^{\infty}}{}{}1)$ are further invariant under the action of  $U_{F}(N)=\{u\in \widehat{\OO}_{F}^{\times}\, |\ u\equiv 1\mod N\}$ by multiplication on $y$ (see \cite[Theorem 1.1]{Hi}).
We term \emph{reducible of weight $k$} those formal $q$-expansions satisfying these  conditions for some~$N$.

Define the space of reduced $q$-expansions (of level $N$) with values in a ring  $A$ to be
 $${ M}' (K(N), L):= C({\A^{\infty, \times}}/F^{\times}_{+}U_{F}(N), L)\times  L^{\A^{\infty,\times}/U_{F}(N)};$$
if $K$ is any compact open subgroup, we define $M'(K, L):= M'(K(N), L)$ for the largest subgroup $K(N)\subset K$. Let $M'(L):=\bigcup_{N} M'(K(N), L)$ and $M'(K^{p}, L):= \bigcup_{r}M'(K^{p}K_{p}(p^{r}), L)$.

Given a reducible  $q$-expansion $W$ of weight $k$, we can then define the associated \emph{reduced $q$-expansion} 
$(W_{0}^{\natural}(y), (W_{a}^{\natural})_{a\in \A^{\infty\times}})\in M'(L)$
by 
 $$W_{0}^{\natural}(y):= |y|^{-k/2}W_{0}^{\infty}(\smalltwomat y{}{}1), \qquad W_{a}^{\natural}:= |a|^{-k/2} W_{1}^{\infty}(\smalltwomat{a}{}{}1).$$
 
If $A\subset\C$ is a subring, we denote by $M_{k}(K, A)\subset M_{k}(K, \C)$, $S_{k}(K, A)\subset S_{k}(K, \C)$ the subspaces of forms with reduced $q$-expansion coefficients in $A$. If $A$ is any $\Q$-algebra, we let $M_{k}(K, A)=M_{k}(K, \Q)\otimes A$, $S_{k}(K, A)=S_{k}(K, \Q)\otimes A$. Then it makes sense to talk about the $q$-expansion of an element of those spaces.

 If $\vphi$ is a modular form, we still denote by ${}^{\qqq}\vphi$ its reduced $q$-expansion; in cases where the distinction is significant, the precise meaning of the expression ${}^{\qqq}\vphi$ will be clear from its context.

\subsubsection{$p$-adic modular forms} 
Let  $N\subset \OO_{F}$ be a nonzero ideal prime to $p$, $U_{F}(Np^{\infty})=\cap_{r\geq 0}U_{F}(Np^{r})$. We endow the quotient ${\A^{\infty, \times}}/F^{\times}_{+}U_{F}(Np^{\infty})$ with the profinite topology.  Let $L$ be a complete Banach ring with norm $|\cdot|$. We define the space of $p$-adic reduced $q$-expansions  with values in $L$ to be 
 $${M}' (K^{p}(N), L):= C({\A^{\infty, \times}}/F^{\times}_{+}U_{F}(Np^{\infty}), L)\times  L^{\A^{\infty,\times}/U_{F}(Np^\infty)}.$$
If $K^{p}\subset \GL_{2}(\A^{p\infty})$ is a compact open subgroup
in general, we   define ${\bf M}' (K^{p}, L):={\bf M}' (K^{p}(N), L)$ for the largest subgroup $K^{p}(N)\subset K^{p}$.

Define  a `norm' (possibly taking the value $\infty$) $||\cdot ||$ on ${ M}' (K^{p}(N), L)$  by
\begin{gather}\label{norm}||(W_{0}^{\natural},(W_{a}^{\natural})_{a\in {\A^{\infty,\times}/U_{F}(Np^\infty)}})||:=\sup_{(y,a)} \{| W_{0}^{\natural }(y)|,| W_{a}^{\natural}|\}\end{gather}
It induces a `norm' on the (isomorphic) space of reducible $q$-expansions with values in $L$. 
Let ${  M}' (K^{p}, L)^{\circ}\subset M'(K^{p}, L)$ be the set of elements on which $||\ ||$ is finite.
We define  the  Banach space of \emph{$p$-adic reduced $q$-expansions}
$${\bf  M}' (K^{p}, L)$$
to be the completion of $M'(K^{p}, L)^{\circ}$ with respect to the norm $||\cdot ||$.
We denote by ${\bf S}'(L)\subset {\bf M}'(L)$ the space of reduced $q$-expansions with vanishing constant coefficients; when  there is no risk of confusion we shall omit $L$ from the notation.

Suppose that $L$ is  a field extension of $\Q_{p}$.  The space of $p$-adic modular forms of tame level $K^{p}\subset \GL_{2}(\widehat{\OO}^{p}_{F})$ with coefficients in $L$, denoted by  ${\bf M}(K^{p},L)$, is defined to be the closure in  ${\bf M}'(K^{p}, L)$, of the subspace generated by the reduced $q$-expansions of elements of $M_{2}(K^{p}K^{1}(p^{\infty})_{p},L) = \cup_{r\geq 0} M_{2}(K^{p}K^{1}(p^{r})_{p}, L)$. Tame levels and coefficients rings will be omitted from the notation when they are understood from context.
 We denote by   ${\bf S}:={\bf M}\cap {\bf S}'$ the space of $p$-adic modular cuspforms.

\subsubsection{Approximation} The $q$-expansion principle of Proposition \ref{q-exp-princ} is complemented by the following (obvious) result to  provide a $p$-adic replacement for the approximation argument in \cite{yzz}.

\begin{lemm}[Approximation]\label{approx lemma}  Let  $S$ be a finite set of finite places of $F$, not containing any place $v$ above $p$. Let $\vphi$ be a $p$-adic modular cuspform all whose reduced $q$-expansion coefficients $W_{a,\vphi}^{\natural}$ are zero for all $a\in F^{\times}{\A^{S\infty,\times}}$. Then $\vphi=0.$
\end{lemm}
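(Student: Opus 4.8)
The plan is to reduce the statement to the definition of the space of $p$-adic modular forms together with one elementary adelic computation. Fix an ideal $N$ prime to $p$ such that the tame level of $\varphi$ is contained in $K^{p}(N)$; then $\varphi$ is by construction an element of ${\bf M}'(K^{p}(N), L)=C(\A^{\infty,\times}/F^{\times}_{+}U_{F}(Np^{\infty}), L)\times L^{\A^{\infty,\times}/U_{F}(Np^{\infty})}$, i.e.\ it \emph{is} the pair consisting of its reduced constant coefficient $W_{0}^{\natural}$ and of the function $a\mapsto W_{a,\varphi}^{\natural}$ on $\A^{\infty,\times}/U_{F}(Np^{\infty})$. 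Since $\varphi$ is a cuspform, $W_{0}^{\natural}=0$ by definition of ${\bf S}$, so $\varphi$ is determined by the coefficients $W_{a,\varphi}^{\natural}$, and it suffices to show that the hypothesis forces $W_{a,\varphi}^{\natural}=0$ for \emph{every} $a\in\A^{\infty,\times}$.

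As $W_{a,\varphi}^{\natural}$ depends only on the image of $a$ in $\A^{\infty,\times}/U_{F}(Np^{\infty})$, this is immediate from the identity
$$\A^{\infty,\times}=F^{\times}\cdot\A^{S\infty,\times}\cdot U_{F}(Np^{\infty}).$$
Indeed, writing an arbitrary $a\in\A^{\infty,\times}$ as $a=a_{0}y$ with $a_{0}\in F^{\times}\A^{S\infty,\times}$ and $y\in U_{F}(Np^{\infty})$, the hypothesis gives $W_{a,\varphi}^{\natural}=W_{a_{0},\varphi}^{\natural}=0$, and hence $\varphi=0$.

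It remains to prove the displayed identity, and this is where the only (very mild) point lies. Decompose $\A^{\infty,\times}=\A^{S\infty,\times}\times F_{S}^{\times}$ with $F_{S}^{\times}=\prod_{v\in S}F_{v}^{\times}$ (and $\A^{S\infty,\times}$ embedded with trivial component along $S$), and correspondingly $U_{F}(Np^{\infty})=U_{F}^{S}(Np^{\infty})\times U_{F,S}(Np^{\infty})$ with $U_{F}^{S}(Np^{\infty})\subset\A^{S\infty,\times}$. Because $S$ contains no place above $p$ and $N$ is prime to $p$, for each $v\in S$ the group $U_{F,v}(Np^{\infty})$ equals either $\OO_{F,v}^{\times}$ or $1+\vpi_{v}^{\ord_{v}(N)}\OO_{F,v}$, hence is open in $F_{v}^{\times}$; therefore $U_{F,S}(Np^{\infty})$ is open in $F_{S}^{\times}$ and $F_{S}^{\times}/U_{F,S}(Np^{\infty})$ is discrete. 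By weak approximation $F^{\times}$ is dense in $F_{S}^{\times}$, so it surjects onto this discrete quotient; since moreover $\A^{S\infty,\times}$ obviously surjects onto $\A^{S\infty,\times}/U_{F}^{S}(Np^{\infty})$, we conclude that $F^{\times}\A^{S\infty,\times}$ surjects onto $(\A^{S\infty,\times}/U_{F}^{S}(Np^{\infty}))\times(F_{S}^{\times}/U_{F,S}(Np^{\infty}))=\A^{\infty,\times}/U_{F}(Np^{\infty})$, which is the identity. The hypothesis that $S$ avoids the primes above $p$ is used precisely to ensure the local subgroups $U_{F,v}(Np^{\infty})$, $v\in S$, are open, so that weak approximation applies to the resulting discrete quotient; this is the whole reason the lemma is "obvious", and there is no genuine obstacle.
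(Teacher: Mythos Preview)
Your proof is correct and follows the same approach as the paper: both reduce to the identity $F^{\times}\A^{S\infty,\times}U=\A^{\infty,\times}$ for the relevant invariance subgroup $U$, established via weak approximation using that the local components of $U$ at places in $S$ are open (since $S$ avoids $p$). The paper's proof is the one-line version of yours, invoking a compact open $U_{F}^{p}\subset\A^{p\infty,\times}$ rather than $U_{F}(Np^{\infty})$, but the content is identical.
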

\begin{proof} The form $\vphi$ has some tame level $K^{p}$; then its coefficients are invariant under the action of some compact open $U_{F}^{p}\subset \A^{\infty,\times}$ on the indices $a$. Since  $F^{\times}{\A^{S\infty,\times}}U_{F}^{p}=\A^{\infty,\times}$, the lemma follows. 
\end{proof}
Let $\baar{\bf S}{}'$   be the quotient of ${\bf S}'$ by the subspace of reduced $q$-expansions which are zero at all $a\in F^{\times}{\A^{S\infty,\times}}$, and let $\baar{\bf S}$ be the image of ${\bf S}$ in $\baar{\bf S}{}'$ (these notions depend on the set $S$, which in our uses will be clear from the context). Then the lemma says that in 
\begin{align}\label{S to S'}
{\bf S}{\to} \baar{\bf S}\into \baar{\bf S}{}',
\end{align}
the first map is an isomorphism and the composition is an injection. We use the notation $ \baar{\bf S}_{S}(K^{p})$, $ \baar{\bf S}{}'_{S}(K^{p})$ if we want to specify the set of places $S$ and the tame level $K^{p}$.

\subsubsection{Families} Let $\Y^{?}$ be one of the rigid spaces defined in the Introduction, and $K^{p}\subset \GL_{2}(\A^{p\infty})$ be a compact open subgroup.
\begin{defi} A $\Y^{?}$-family of $q$-expansions of modular forms of tame level $K^{p}$ is a reduced  $q$-expansion 
$\underline{\vphi}$  with values in $\OO(\Y^{?})$, whose coefficients are algebraic on $\Y^{?{\rm l.c.}}$, and 
such that for every  point $\chi \in \Y^{?{\rm l.c.}}$, 
$\underline{\vphi}(\chi)$ is the reduced  $q$-expansion of a classical modular form $\vphi(\chi)$ of level $K^{p}K^{1}(p^{\infty})_{p}$ with coefficients in $M(\chi)$. We say that $\underline{\vphi}$ is \emph{bounded} if it is bounded for the norm \eqref{norm}.
\end{defi}

\subsubsection{Twisted modular forms} It  will be convenient  to  consider the following relaxation of the notion of modular forms.
\begin{defi} A \emph{twisted} Hilbert automorphic form of weight $k\in\Z_{\geq 0}^{\Hom(F, \R)}$ and level $K\subset \GL_{2}(\widehat{\OO}_{F})$ is a smooth function
$$\wtil{\vphi}\colon \GL_{2}(\A)/K\times \A^{\times} \to \C$$
satisfying:
\begin{itemize}
\item for all $\gamma \in \GL_{2}(F)$, $r_{\theta}\in \SO_{2}(F_{\infty})$, 
$$\wtil{\vphi}(\gamma gr_{\theta} , u)= \wtil{\vphi}(g, \det (\gamma)^{-1}u)\psi_{\infty}(k\cdot\theta);$$
\item $\wtil{\vphi}$ is of moderate growth in the variable $g\in \GL_{2}(\A)$ and  for all $g\in \GL_{2}(\A)$, $u=u_{\infty}u^{\infty}\mapsto \wtil{\vphi}(g,u)$ is the product of a function of the variable  $u^{\infty}$ and of the function $\one_{F_{\infty}^{+}}(u_{\infty})$ of the variable $u_{\infty}$. 
\item there exists a a compact open subgroup $U_{F}\subset \A^{\infty,\times}$ such that  for all $g$,  $\wtil{\vphi}(g, \cdot)$ is invariant under $U_{F}$;
\item for each  $g\in \GL_{2}(\A)$, there is an open compact subset  $K_{g}\subset \A^{\infty,\times}$ such that  $\wtil{\vphi}(g, \cdot)$ is  supported in $K_{g}F_{\infty}^{+}$.
\end{itemize}
Let $\omega\colon F^{\times}\bks \A^{\times}\to \C^{\times}$ be a finite order character. 
We say that a twisted automorphic form $\wtil{\vphi}$ has central character $\omega$ if it satisfies 
$$\wtil{\vphi}(zg, u)=\omega(z)\wtil{\vphi}(g, z^{-2}u)$$
 for all $z\in  Z(\A)\cong \A^{\times}$.  We say that it is holomorphic (of weight $k$) or simply a \emph{twisted modular form} if
  $z_\infty\mapsto |y_{\infty}|_{\infty}^{-k/2}\wtil{\vphi}\left(g\smalltwomat {y_{\infty}}{x_{\infty}}{}1,u\right)$ 
  is holomorphic in $z_{\infty}=(x_{v}+iy_{v})_{v\vert\infty}\in \mathfrak{h}^{\Hom(F, \R)}$, 
for all $u\in \A^{\times}$.  

We let $M_{k}^{\rm tw}(K, \C)$ denote the space of twisted modular forms of weight $k$ and $M_{k}^{\rm tw}(K, \omega, \C)$ its subspace of forms with central character $\omega$.  We omit the $K$ from the notation if we do not wish to specify the level.
\end{defi}

If $\wtil{\vphi}$ is a twisted modular form, then for each $g, u$, the function $x\mapsto \vphi(n(x)g,u)$ descends to $F\bks \A$ and therefore it admits a Fourier--Whittaker expansion in the usual way.   To the restriction of $\wtil{\vphi}$ to  $\GL_{2}(\A)\times F^{\times}$  we  then attach a \emph{twisted formal $q$-expansion}
$$\sum_{a\in F}|y|^{k/2}_{\infty}W_{a}^{\infty}(\smalltwomat{y}{}{}1,u)\, \qqq^{a}\ \in C^{\infty}(\A^{\infty,\times}  \times F^{\times}, \C)\llbracket \qqq^{F_{\geq 0}}\rrbracket^{\circ}$$
such that 
$$\wtil{\vphi}\left(\twomat y x{}1, u\right)=|y|_{\infty}^{k/2}\sum_{a\in F_{\geq 0}}  W_{a}^{\infty}(\smalltwomat {y^{\infty}}{}{}1,u)\psi_{\infty}(iay_{\infty})\psi(ax)$$
for all $y\in \A^{\times}_{+}, x\in \A, u\in  F^{\times}$.  
Here the space $C^{\infty}(\A^{\infty,\times}  \times F^{\times}, \C)\llbracket \qqq^{F_{\geq 0}}\rrbracket^{\circ}$ consists of  $q$-expansions $W$ whose coefficients $W_{a}(y,u)$ vanish for $ay$ outside of some compact open subset $A_{W}\subset \A^{\infty}$.

Let  $\wtil{\vphi}$ be a twisted modular form, let $U_{F}\subset \A^{\infty,\times}$ be a compact open subgroup satisfying the condition of the previous definition,  and let $\mu_{U_{F}}=F^{\times }\cap U_{F}$. Then 
the sum $$\vphi(g):=\sum_{u\in \mu_{U_{F}}^{2}\bks F^{\times}}\wtil{\vphi}(g,u)$$
is finite for each $g$ (if $K_{g}\subset \A^{\infty,\times}$ is a  compact subset such that  $K_{g}F_{\infty}^{\times}$ contains the  support of $\wtil{\vphi}(g, \cdot)$,   the sum  is supported on $\mu_{U_{F}}^{2}\bks( F^{\times }\cap K_{g})$,   which is commensurable with the finite group $\mu_{U_{F}}^{2}\bks \OO_{F}^{\times }$). It defines a modular form in the usual sense, with formal $q$-expansion 
$${}^{\qqq}\vphi(y)=\sum_{u\in  \mu_{U_{F}}^{2}\bks  F^{\times}}{}^{\qqq}\wtil{\vphi}(y,u).$$

One can, similarly to the above,  
 define a norm on the space of    twisted formal $q$-expansion coefficients of a fixed parallel weight $k$ with values in a Banach ring $L$, namely $||W||:=\sup_{(a,y^{\infty},u)}|y^{\infty}|^{-k/2}|W_{a}^{\infty}(y,u)|$.  The $p$-adic completion ${\bf M}_{k}^{\rm tw}{}(K^{p}, L)$ of the subspace of     $q$-expansions of twisted modular forms (of some tame level $K^{p}$) is called the space of $p$-adic twisted modular  forms (of tame level $K^{p}$). Finally, there is a notion of $\Y^{?}$-family of $q$-expansions of twisted modular  forms.

\subsection{Hecke algebra and operators $\Up_{v}$}\label{up etc} Let $L$ be a field and let 
$$\mathscr{H}(L)=C^{\infty}_{c}(\GL_{2}(\A^{\infty}),L )$$ be the  Hecke algebra of smooth compactly supported functions with the convolution operation (denoted by $*$), and for any finite set of non-archimedean places $S$ let $\mathscr{H}^{S}(L)=C^{\infty}_{c}(\GL_{2}(\A^{S\infty}), L)$, $\mathscr{H}_{S}(L)=C^{\infty}_{c}(\GL_{2}(F_{S}),L) $. When $L=\Q$ it will be omitted from the notation.

The group $\GL_{2}(\A^{\infty})$ has a natural left action on automorphic forms by right multiplication. This action is extended to elements $f\in \mathscr{H}\otimes \C$ by 
$$T(f)\vphi(g)=\int_{ \GL_{2}(\A^{\infty})} f(h) \vphi(gh) \, dh,$$
where $dh=\prod dh_{v}$ with $dh_{v}$ the Haar measure on $\GL_{2}(F_{v})$ assigning volume $1$ to $\GL_{2}(\OO_{F,v})$.
If $K\subset \GL_{2}(\A^{\infty})$ is a compact open subgroup, we define $e_{K}=T(\vol(K)^{-1}\one_{K})\in \mathscr{H}$. It acts as a projector on $K$-invariant forms. If $g\in \GL_{2}(\A^{\infty})$ and $K$, $K'\subset  \GL_{2}(\widehat{\OO}_{F })$ are open compact subgroups, we define the operator $[KgK']:=T(\one_{KgK'})$.

By the strong multiplicity one theorem, for  each level $K$, each   $M$-rational automorphic representation $\sigma$ which is discrete series of weight~$2$ at all infinite places, and  each finite set of non-archimedean places $S$ such that $K$ is maximal away from $S$, there are  spherical (that is, $K(1)^{S}$-biinvariant) elements $T({\sigma})\in \mathscr{H}^{S}(M)$ whose action on $M_{2}(K, M)$ is given by the idempotent  projection $e_{\sigma}$ onto $\sigma^{K}\subset M_{2}(K, M)$. 

On the space ${\bf M} (K^{p}, L)$ of $p$-adic modular forms, with $K^{p}\supset K(N)^{p}$, there is a continuous action 
 of $Z(Np^{\infty}):=\A^{\infty,\times}/\baar{F^{\times}U_{F}(Np^{\infty})}$,
extending  the central action $z. \vphi(g)=\vphi(gz)$  on modular forms. For a continuous  character $\omega\colon Z(Np^{\infty})\to L^{\times}$, we denote by ${\bf M}(K^{p}, \omega, L)$ the set of $p$-adic modular forms $\vphi$ satisfying $z. \vphi=\omega(z)\vphi$, and by ${\bf S}(K^{p}, \omega, L)$ its subspace of cuspidal forms. If $\omega$ is the restriction of a finite order character of $Z(\A^{\infty})/U_{F}(Np^{\infty})$, then
 we have $M_{2}(K^{p}K(p^{\infty})_{p}, \omega, L)\subset {\bf M}(K^{p}, \omega, L)$.

The action of $\mathscr{H}^{Sp}=C^{\infty}_{c}(\GL_{2}(\A^{Sp\infty}), \Q)$ extends continuously to the space ${\bf S}(K^{p}, \omega, L)$ if $K^{p}$ is maximal away from $S$; explicitly, if $\vphi$ is the  $q$-expansion with reduced coefficients  $W_{a, \vphi}^{\natural}$ and $h(x)=
\one_{K(1)^{Np}\smalltwomat{\vpi_{v}}{}{}1 K(1)^{Np}}$, we have 
\begin{align}\label{hecke on q}
W_{a, T(h)\vphi}^{\natural} = W_{a\vpi_{v}, \vphi}^{\natural} +\omega^{-1}(\vpi_{v})W_{a/\vpi_{v}, \vphi}^{\natural}.
\end{align}
Moreover if $S'$ is another set of finite places not containing those above $p$ and $S''=S\cup S'$, the action of $\mathscr{H}^{S''p}$ extends in the same way to the space $\baar{\bf S}{}'=\baar{\bf S}{}'_{S'}(K^{p})$ defined after Lemma \ref{approx lemma}.

\subsubsection{Operators $\Up_{v}$ } Let $v$ be a finite place of $F$, $\vpi_{v}\in F_{v}$ a uniformiser, $K^{v}\subset  \GL_{2}(\widehat{\OO}^{v}_{F })$ a compact open subgroup. For each $r\geq 1$, we define Hecke operators
\begin{align*}
\Up_{v,r}^{*}&=[K^{v}K_{1}^{1}(\vpi_{v}^{r})_{v} \smalltwomat {\vpi_{v}}{}{}1 K^{v}K_{1}^{1}(\vpi_{v}^{r})_{v}],\\
\Up_{v,*, r}&=[K^{v}K^{1}_{1}(\vpi_{v}^{r})_{v} \smalltwomat 1{}{}{\vpi^{-1}_{v}} K^{v}K^{1}_{1}(\vpi_{v}^{r})_{v}].
\end{align*}
They depend on the choice of uniformisers $\vpi_{v}$, although a sufficiently high (depending on $r$) integer power of them does not. They are compatible with changing  $r$ in the sense that $\Up_{v,*,r}e_{K^{1}(\vpi_{v}^{r'})_{v}}=\Up_{v,* r'}$ for $r'\leq r$ and similarly for $\Up_{v,r}^{*}$; we will hence omit the $r$ from the notation. If $\vphi\in S_{2}(K^{p}K^{1}(p^{r})_{p}, \omega)$ has reduced $q$-expansion coefficients $W_{\vphi, a}^{\natural}$ for $a\in \A^{\infty,\times}$, then $\Up_{v, *}\vphi$ has reduced $q$-expansion coefficients $W_{\Up_{v, *}\vphi, a}^{\natural }= \omega^{-1}(\vpi_{v})W_{\vphi,a\omega_{v}}^{\natural}$.
By this formula we can extend $\Up_{v, *}$ to a continuous operator on $p$-adic reduced $q$-expansions, and in particular on $p$-adic modular forms.

\subsubsection{Atkin-Lehner operators} Let $v$ be a finite place and fix the same  uniformiser $\vpi_{v}$ as in the previous paragraph. Then we define elements
$$w_{r,v}:=\twomat {}1{-\vpi_{v}^{r}}{}\in \GL_{2}(F_{v})\subset \GL_{2}(\A)$$
for $r\geq 0$, and denote by the same names the operators they induce on automorphic forms by right multiplication.
We have $w_{r,v}^{-1}K_{1}^{1}(\vpi^{s})_{v}w_{r,v}=K^{1}_{1}(\vpi^{s}_{v})_{v}$.

If $r=(r_{v})_{v\vert p}$, we  define $w_{r}=(w_{r_{v},v})_{v\vert p}\in \GL_{2}(F_{p})=\prod_{v\vert p}\GL_{2}(F_{v})$, and similarly $w_{r}^{-1}$.

\subsection{Universal Kirillov and Whittaker models}\label{univ kir}
Let $F_{v}$ be a non-archimedean local field, and recall the space $\Psi_{v}$ of abstract additive characters of level~$0$ of $F_{v}$ defined in \S\ref{intro-plF}. Let $\psi_{{\rm univ},v}\colon F_{v} \to \OO(\Psi_{v})^{\times}$ be the tautological character, which we identify with  an action of the unipotent subgroup $N=N(F_{v})\cong F_{v}\subset \GL_{2}(F_{v})$ on the  sheaf $\OO_{\Psi_{v}}$.
 Let $\sigma_{v}$ be an infinite-dimensional representation of $\GL_{2}(F_{v})$ on a vector space over a number  field $M$. A \emph{Whittaker model}   over $M\otimes \OO_{\Psi_{v}}$ for  $\sigma_{v}\otimes_{\Q}\OO_{\Psi_{v}} $ is a non-trivial $\GL_{2}(F_{v})$-equivariant map    $\sigma_{v}\otimes \OO_{\Psi_{v}}\to M\otimes {\rm Ind}_{N}^{\GL_{2}(F_{v})}\psi_{{\rm univ},v}$ of free sheaves over $M\otimes \OO_{\Psi_{v}}$. We will often identify this map with its image.

 Let $P_{0}\subset \GL_{2}(F_{v})$ be the mirabolic group of matrices $\smalltwomat a b{}1$. A
  \emph{Kirillov model}   over $M\otimes \OO_{\Psi_{v}}$ for  $\sigma_{v}\otimes_{\Q}\OO_{\Psi_{v}} $ is a non-trivial  $P_{0}$-equivariant map $\sigma_{v}\otimes \OO_{\Psi_{v}}\to  M\otimes {\rm Ind}_{N}^{P_{0}}\psi_{{\rm univ}, v}$. We will often identify this map with its image and the image with a  subsheaf of $C^{\infty}(F_{v}^{\times}, M)\otimes\OO_{\Psi_{v}}$ by restricting functions from $P_{0}$ to $\{\smalltwomat a{}{}1\ |\ a\in F_{v}^{\times}\} \cong F_{v}^{\times}$.

\begin{lemm}\label{lem univ kir} Let $\sigma_{v}$ be an irreducible admissible infinite-dimensional representation of $\GL_{2}(F_{v})$ on a rational vector space, $M=\End(\sigma_{v})$. Then $\sigma_{v}\otimes_{\Q}\OO_{\Psi_{v}} $ admits a  Whittaker model  $\mathscr{W}(\sigma_{v}, \psi_{{\rm univ},v})$ (respectively, a   Kirillov model $\mathscr{K}(\sigma_{v}, \psi_{{\rm univ}, v})$) over $M\otimes\OO_{\Psi_{v}}$, unique up to $(M\otimes \OO_{\Psi_{v}})^{\times}$,  whose specialisation at every closed point  $\psi_{v}\in \Psi_{v}$ is the unique Whittaker model    $\mathscr{W}(\sigma_{v}, \psi_{v})$ (respectively, the unique Kirillov model $\mathscr{K}(\sigma_{v}, \psi_{v})$)  of $\sigma_{v}\otimes \Q(\psi_{v})$.

 If we view $\mathscr{W}(\sigma_{v}, \psi_{{\rm univ}, v})$ (respectively $\mathscr{K}(\sigma_{v}, \psi_{{\rm univ}, v})$) as a subsheaf  of $C^{\infty}(\GL_{2}(F_{v}), M)\otimes \OO_{\Psi_{v}}$ (respectively as a subsheaf of  $C^{\infty}(F_{v}^{\times}, M)\otimes\OO_{\Psi_{v}}$), then the restriction map $W\mapsto f$, $f(y):=W\left(\smalltwomat y{}{}1\right)$ induces an isomorphism $\mathscr{W}(\sigma_{v}, \psi_{{\rm univ}, v})\to \mathscr{K}(\sigma_{v}, \psi_{{\rm univ}, v})$.

We call  $\mathscr{W}(\sigma_{v}, \psi_{{\rm univ},v})$ (respectively $\mathscr{K}(\sigma_{v}, \psi_{{\rm univ}, v})$) the \emph{universal Whittaker model} (respectively,  \emph{the universal Kirillov  model}) for $\sigma_{v}$. The universal Kirillov model admits a natural $M$-structure, that  is an  $M$-vector space\footnote{Which is \emph{not} stable under the action of $\GL_{2}(F_{v})$.} 
$$\mathscr{K}(\sigma_{v}, \psi_{{\rm univ}, v})_{M}\subset C^{\infty}(F_{v}^{\times}, M)$$
 such that $\mathscr{K}(\sigma_{v}, \psi_{{\rm univ}, v})_{M}\otimes \OO_{\Psi_{v}}= \mathscr{K}(\sigma_{v}, \psi_{{\rm univ}, v})$.
\end{lemm}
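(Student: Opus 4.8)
The plan is to construct the universal Whittaker model first, since the Kirillov model will follow by restriction. The essential input is the classical uniqueness of Whittaker functionals: for any nontrivial additive character $\psi_v$ of $F_v$ and any infinite-dimensional irreducible $\sigma_v$, the space $\Hom_N(\sigma_v, \psi_v)$ is one-dimensional (over the coefficient field containing $\psi_v$'s values). I would first fix a scheme-theoretic incarnation of this: the tautological character $\psi_{{\rm univ},v}\colon N(F_v)\to \OO(\Psi_v)^\times$ gives an $N$-action on $\OO_{\Psi_v}$, and I claim $\Hom_{N}(\sigma_v\otimes_\Q \OO_{\Psi_v}, M\otimes\OO_{\Psi_v})$ — morphisms of sheaves with the twisted $N$-action on the target — is a locally free $M\otimes\OO_{\Psi_v}$-module of rank one. Étale-locally on $\Psi_v$ this follows from the classical statement applied fiberwise, together with the fact that $\Psi_v$ is a pro-(finite étale) $\Q$-scheme, so flat base change and the equality of fiber dimensions force local freeness; the $M$-linearity comes from $M = \End(\sigma_v)$ acting on the source. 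Choosing a generator of this rank-one module (unique up to $(M\otimes\OO_{\Psi_v})^\times$) is, by Frobenius reciprocity, exactly a $\GL_2(F_v)$-equivariant map $\sigma_v\otimes\OO_{\Psi_v}\to M\otimes {\rm Ind}_N^{\GL_2(F_v)}\psi_{{\rm univ},v}$, i.e. the universal Whittaker model $\W(\sigma_v,\psi_{{\rm univ},v})$. Its specialization at a closed point $\psi_v\in\Psi_v$ is a nonzero Whittaker functional for $\sigma_v\otimes\Q(\psi_v)$, hence by uniqueness equals $\W(\sigma_v,\psi_v)$.

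Next I would pass to the Kirillov model. Restriction along $N\subset P_0\subset\GL_2(F_v)$ and Frobenius reciprocity convert $\W(\sigma_v,\psi_{{\rm univ},v})$ into a $P_0$-equivariant map $\sigma_v\otimes\OO_{\Psi_v}\to M\otimes{\rm Ind}_N^{P_0}\psi_{{\rm univ},v}$; call its image $\mathscr K(\sigma_v,\psi_{{\rm univ},v})$. That the restriction map $W\mapsto f$, $f(y)=W\smalltwomat y{}{}1$, is an isomorphism $\W\to\mathscr K$ is the universal-coefficient version of the classical fact that the mirabolic already sees all of the Whittaker space; one checks it fiberwise over $\Psi_v$, where it is the standard statement that a Whittaker function is determined by its restriction to the torus of $P_0$, using that $P_0$ acts transitively enough and the Kirillov model contains $C_c^\infty(F_v^\times)$. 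Uniqueness up to $(M\otimes\OO_{\Psi_v})^\times$ is inherited from the Whittaker side.

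Finally, for the $M$-structure on the universal Kirillov model: concretely, $\mathscr K(\sigma_v,\psi_{{\rm univ},v})$ as a subsheaf of $C^\infty(F_v^\times,M)\otimes\OO_{\Psi_v}$ consists of functions of the form $y\mapsto f(y)\cdot(\text{twist by }\psi_{{\rm univ},v})$ on various cosets, and one extracts the $M$-subspace $\mathscr K(\sigma_v,\psi_{{\rm univ},v})_M\subset C^\infty(F_v^\times,M)$ of "constant-coefficient" vectors — those whose value is literally an $M$-valued function of $y$ rather than picking up the universal additive character. Equivalently: $\OO_{\Psi_v}$ carries a natural $\calG_\Q$-action, and the induced action on $\mathscr K(\sigma_v,\psi_{{\rm univ},v})$ (via the chosen $M$-rational generator) has $\mathscr K_M$ as the $\calG_\Q$-fixed part with $\mathscr K_M\otimes\OO_{\Psi_v}\xrightarrow{\sim}\mathscr K$; that this is nonempty and spans follows because $\sigma_v$ is defined over $M$ and the Whittaker functional is $M\otimes\OO_{\Psi_v}$-rational by construction. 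One notes explicitly that $\mathscr K_M$ is not $\GL_2(F_v)$-stable, only $\{\smalltwomat a{}{}1\}$-stable up to the $M^\times$-valued part, consistently with the footnote.

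The main obstacle I anticipate is not any single deep point but making the rank-one / local-freeness claim for $\Hom_N(\sigma_v\otimes\OO_{\Psi_v},M\otimes\OO_{\Psi_v})$ genuinely rigorous over the pro-étale $\Q$-scheme $\Psi_v$: one must be careful that $\sigma_v\otimes\OO_{\Psi_v}$ is an honest smooth representation over the (non-Noetherian, but nice) ring $\OO(\Psi_v)$, that the twisted induction and Frobenius reciprocity work integrally over $M\otimes\OO_{\Psi_v}$, and that "fiberwise dimension one $\Rightarrow$ locally free of rank one" applies — this is where one invokes that $\Psi_v$ is reduced and that $\OO_{\Psi_v}$ is faithfully flat over $\Q$ with geometrically connected-enough strata (the two pieces $\Hom(F_v/d_v^{-1}\OO_{F,v},\bmu_\Q)$ minus the lower-level characters). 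Everything else is a mechanical transcription of Jacquet–Langlands local theory, uniformized over the character variable.
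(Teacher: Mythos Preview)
Your treatment of existence and uniqueness of the universal Whittaker and Kirillov models matches the paper's: both argue that the classical proof runs uniformly in the additive character, and both invoke Frobenius reciprocity to pass from Whittaker to Kirillov. The paper simply cites \cite[\S4.4]{bump} and \cite[Corollary~36.2]{bh} rather than spelling out the local-freeness argument over $\Psi_v$ that you outline; your more cautious identification of what needs checking is reasonable.

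The divergence, and the weak point of your proposal, is the $M$-structure. You suggest defining $\mathscr{K}_M$ either as ``constant-coefficient vectors'' or as $\calG_\Q$-invariants for an action induced ``via the chosen $M$-rational generator''. But the Whittaker functional is only determined up to $(M\otimes\OO_{\Psi_v})^\times$, and a generic such unit is not $\calG_\Q$-invariant; so the existence of a $\calG_\Q$-equivariant choice is essentially equivalent to the $M$-structure claim itself, and your sentence ``that this is nonempty and spans follows because $\sigma_v$ is defined over $M$ and the Whittaker functional is $M\otimes\OO_{\Psi_v}$-rational'' is not an argument. This may be salvageable via some descent or Picard-triviality statement for $\Psi_v$, but you have not supplied it.

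The paper proceeds instead by explicit construction. One always has $\calS(F^\times,M)\otimes\OO_{\Psi}\subset\mathscr{K}$, so it suffices to exhibit $M$-valued representatives for generators of the finite-rank quotient $\baar{\mathscr{K}}=\mathscr{K}/\calS(F^\times,M)\otimes\OO_{\Psi}$. This is done by a short case analysis on $\sigma_v$: for supercuspidals $\baar{\mathscr{K}}=0$; for ${\rm St}(\mu|\cdot|^{-1})$ a generator is $f_\mu(y)=\mu(y)\one_{\OO_F-\{0\}}(y)$; for ${\rm Ind}(\mu_1,\mu_2|\cdot|^{-1})$ one takes $f_{\mu_1},f_{\mu_2}$, or $f_\mu$ together with $f'_\mu(y)=v(y)\mu(y)\one_{\OO_F-\{0\}}(y)$ when $\mu_1=\mu_2$. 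Beyond closing the gap, these explicit generators are reused in the proof of the Kirillov pairing lemma and in the interpolation of the local zeta integrals $R_v^\natural$ later in the paper, so the constructive route buys something that an abstract descent argument would not.
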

\begin{proof}  The proof of existence and uniqueness of Whittaker models given e.g. in \cite[\S 4.4]{bump} carries over to our context  after replacing $\C$ by $M\otimes \OO_{\Psi_{v}}$ and the fixed $\C^{\times}$-valued character $\psi_{v}$ of \emph{loc. cit.} with ${\psi_{{\rm univ}, v}}$. The analogous result for Kirillov models, together with the isomorphism  $\mathscr{W}(\sigma_{v}, \psi_{{\rm univ}, v})\to \mathscr{K}(\sigma_{v}, \psi_{{\rm univ}, v})$, follows formally from Frobenius reciprocity as in \cite[Corollary 36.2]{bh}. We prove the assertion on the $M$-structure for  $\mathscr{K}(\sigma_{v}, \psi_{{\rm univ}, v})$, after dropping subscripts $v$.

As in  the classical case,   the  space of Schwartz functions $\calS(F^{\times}, M) \otimes \OO_{\Psi}$ is an irreducible $P_{0}$-representation (see \cite[Corollary 8.2]{bh}), hence contained in ${\mathscr{K}}:=\mathscr{K}(\sigma^{\iota}, \psi_{\rm univ})\subset C^{\infty}(F^{\times}, M)\otimes \OO_{\Psi}$. Moreover  $\baar{\mathscr{K}}:=\mathscr{K}/\calS(F^{\times},M)\otimes \OO_{\Psi}$ is a free sheaf over $M\otimes \OO_{\Psi}$ of  rank $d\leq 2$ depending on the type of $\sigma_{}$ (as can be checked on the points of $\Psi$ by the classical theory). Since the space $\calS(F^{\times}, M) \otimes \OO_{\Psi}$ has the obvious $M$-structure $\calS(F^{\times}, M) $, it suffices to describe $d$ generators for $\baar{\mathscr{K}}$ represented by functions in $C^{\infty}(F^{\times}, M)$.

 If $\sigma$ is supercuspidal then $d=0$ and there is nothing to prove. If $\sigma={\rm St}(\mu|\cdot |^{-1})$ is  special with $M^{\times}$-valued central character $\mu^{2}|\cdot|^{-2}$, then $d=1$ and a generator for $\baar{\mathscr{K}}$  is $f_{\mu}(y):=\mu(y)\one_{\OO_{F}- \{0\}}(y)$. If $\sigma$ is an irreducible principal series ${\rm Ind}(\mu_{1}, \mu_{2}|\cdot|^{-1})$ (plain un-normalised induction) with $M^{\times}$-valued characters $\mu_{1}$, $\mu_{2}$, then  $d=2$; if $\mu_{1}\neq \mu_{2}$, a pair of generators  for $\baar{\mathscr{K}}$ is $\{f_{\mu_{1}}, f_{\mu_{2}}\}$. If $\mu_{1}=\mu_{2}=\mu$, a pair of generators is $\{f_{\mu}, f_{\mu}'\}$ with $f_{\mu}'(y):= v(y)\mu(y)\one_{\OO_{F}- \{0\}}(y).$
\end{proof}

We will often slightly abusively identify Whittaker and Kirillov models by $W\mapsto f$, $f(y)=W\left(\smalltwomat y{}{}1\right)$.

If $\sigma^{\infty}$ is an $M$-rational automorphic representation of weight $2$, then after choosing any embedding $\iota\colon M\into \C$ and any nontrivial character $\psi\colon \A/F\to \C^{\times}$, the $q$-expansion coefficients of any $\vphi\in \sigma^{\infty}$ can be identified with the product of 
the local Kirillov-restrictions  $f_{v}$ of the  Whittaker function $W=W_{v}$ of $\vphi^{\iota}$ (when $W$ is indeed factorisable). Equivalently, the $f_{v}$ belong to the $M$-rational subspaces and are therefore independent of the choice of additive character. 

\begin{lemm}\label{kir pair}
In the situation of the previous lemma, there is a pairing
 $$(\, , \, )_{v}\colon \mathscr{K}(\sigma_{v}, \psi_{{\rm univ},v})\otimes_{M} \mathscr{K}(\sigma_{v}^{\vee}, \psi_{{\rm univ},v}^{-1})\to M\otimes \OO_{\Psi_{v}}$$ 
such that  for any $f_{1}$, $f_{2}$ in the  $M$-rational subspaces  $\mathscr{K}(\sigma_{v}, \psi_{{\rm univ},v})_{M}$, respectively $  \mathscr{K}(\sigma_{v}^{\vee}, \psi^{-1}_{{\rm univ}, v})_{M}$, the paring  $(f_{1}, f_{2})_{v}\in M$, and that for  any $\iota\colon M\into \C$, we have
\begin{align}\label{eq kir pair}
\iota(f_{1},f_{2})_{v}= {   \zeta_{F,v}(2)  \over L(1, \sigma_{v}^{\iota}\times \sigma_{v}^{\iota \vee})} \int_{F_{v}^{\times}}\iota f_{1}(y) \iota f_{2}(y)
 {d^{\times}y\over |d|_{v}^{1/2}}.
\end{align}
\end{lemm}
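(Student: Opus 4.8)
The plan is to define $(\cdot,\cdot)_v$ embedding-by-embedding and character-by-character through the right-hand side of \eqref{eq kir pair}, to check that on the $M$-rational subspaces the result lies in $M$ independently of $\iota$ and $\psi_v$, and then to extend $\OO_{\Psi_v}$-bilinearly to the universal models. For a fixed $\iota\colon M\into\C$ and a fixed $\C$-valued additive character $\psi_v$, the right side of \eqref{eq kir pair} is, up to the elementary factor $\zeta_{F,v}(2)|d|_v^{-1/2}$, the normalised local Rankin--Selberg integral of $\sigma_v^\iota\times\sigma_v^{\iota\vee}$; as usual it converges absolutely when $\sigma_v^\iota$ is tempered --- the case of interest here, since $\sigma$ is attached to a weight-$2$ form --- and in general it is defined through the meromorphic continuation of the zeta integral $Z(s,f_1,f_2):=\int_{F_v^\times}\iota f_1(y)\,\iota f_2(y)\,|y|_v^{s}\,d^\times y/|d|_v^{1/2}$, which by the local theory (see \cite{yzz} and the references therein) is $L(\ast,\sigma_v^\iota\times\sigma_v^{\iota\vee})$ times a polynomial in $q_{F,v}^{\pm s}$, so that the normalisation kills the relevant pole and yields a well-defined bilinear form; by the same theory this is the (essentially unique) $\GL_2(F_v)$-invariant pairing between $\sigma_v^\iota$ and $\sigma_v^{\iota\vee}$. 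Thus the only substance is the $M$-rationality and the passage to the universal models.

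For the $M$-rationality I would argue directly from the explicit description of the universal Kirillov model in Lemma \ref{lem univ kir}. There the $M$-structure $\mathscr{K}(\sigma_v,\psi_{{\rm univ},v})_M$ is the sum of $\calS(F_v^\times,M)$ and the $M$-span of at most two functions of the form $y\mapsto\mu(y)\one_{\OO_{F,v}-\{0\}}(y)$ or $y\mapsto v(y)\mu(y)\one_{\OO_{F,v}-\{0\}}(y)$, the $\mu$ being among the finitely many $M^\times$-valued characters attached to $\sigma_v$, and similarly for $\sigma_v^\vee$ with the corresponding inverse characters; crucially these are honest $M$-valued functions on $F_v^\times$, not involving any choice of additive character. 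Expanding $Z(s,f_1,f_2)$ for $f_1,f_2$ in these subspaces: a product with a Schwartz factor contributes a finite $M$-linear combination of volumes of compact open subsets of $F_v^\times$ for $d^\times y$ (which lie in $\Q\cdot|d|_v^{1/2}$, so that the term lies in $M[q_{F,v}^{\pm s}]$), while a product of two non-Schwartz generators contributes a geometric series, or --- for a repeated parameter --- its first derivative, i.e. an expression $(1-\nu\,q_{F,v}^{-s})^{-1}$, possibly squared, with $\nu\in M^\times$; going through the three types of $\sigma_v$ --- irreducible principal series, special, and supercuspidal, the last being vacuous since then $\mathscr{K}(\sigma_v,\psi_{{\rm univ},v})_M=\calS(F_v^\times,M)$ --- one checks that the denominators $1-\nu\,q_{F,v}^{-s}$ that occur are exactly (the inverse Euler factors of) $L(s+c_0,\sigma_v^\iota\times\sigma_v^{\iota\vee})$ for the shift $c_0$ built into the normalisation of the Kirillov functions. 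It follows that $\zeta_{F,v}(2)\,L(1,\sigma_v^\iota\times\sigma_v^{\iota\vee})^{-1}Z(0,f_1,f_2)$ --- which is the right side of \eqref{eq kir pair} --- is a polynomial with $\Q$-coefficients in $q_{F,v}^{-1}$, the $\mu(\vpi_v)^{\pm1}$, and the finitely many $M$-values of $f_1,f_2$ (here one also uses that $L(1,\sigma_v^\iota\times\sigma_v^{\iota\vee})$ is itself $M$-rational: it is $\zeta_{F,v}(1)L(1,\sigma_v^\iota,\ad)$ up to a power of $q_{F,v}$, the $\ad$-parameters being ratios of the $M$-rational parameters of $\sigma_v$). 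Hence it is an element of $\iota(M)$, it is Galois-equivariant in $\iota$ because $\mathrm{Aut}(\C)$ permutes the embeddings $\iota$ compatibly with its action on the characters $\mu$ parametrising $\sigma_v^\iota$, and it is independent of $\psi_v$. I let $(f_1,f_2)_v\in M$ be this common value.

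Finally, since $\mathscr{K}(\sigma_v,\psi_{{\rm univ},v})=\mathscr{K}(\sigma_v,\psi_{{\rm univ},v})_M\otimes_M(M\otimes\OO_{\Psi_v})$ and likewise for $\sigma_v^\vee$ (Lemma \ref{lem univ kir}), the $M$-bilinear $M$-valued form just built extends uniquely to an $(M\otimes\OO_{\Psi_v})$-bilinear $(M\otimes\OO_{\Psi_v})$-valued pairing; restricting scalars gives the map $(\cdot,\cdot)_v\colon\mathscr{K}(\sigma_v,\psi_{{\rm univ},v})\otimes_M\mathscr{K}(\sigma_v^\vee,\psi_{{\rm univ},v})\to M\otimes\OO_{\Psi_v}$ claimed in the lemma. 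Its specialisation at a closed point $\psi_v\in\Psi_v$ with residue embedding $\iota$ is, on $M$-rational vectors, the scalar $\iota(f_1,f_2)_v$, hence equals \eqref{eq kir pair} there; and since both sides are $\GL_2(F_v)$-invariant pairings on an irreducible representation whose $M$-rational vectors span it over $\C$, this identity holds on all of $\mathscr{K}(\sigma_v^\iota,\psi_v)$. I expect the one genuinely delicate point to be the local computation in the second step --- verifying, type by type, that the denominators produced by the non-Schwartz part of the Kirillov model are matched precisely by the Euler factors of $L(\cdot,\sigma_v\times\sigma_v^\vee)$ (equivalently, that the normalised, regularised integral never diverges and always lands in $M$); absolute convergence in the non-tempered generality of the statement, and the identification of the resulting pairing with the canonical invariant one, are then routine consequences of the local functional equation.
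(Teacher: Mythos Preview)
Your proposal is correct and follows essentially the same approach as the paper's proof: both reduce the $M$-rationality to the explicit description of $\mathscr{K}(\sigma_v,\psi_{\mathrm{univ},v})_M$ from the previous lemma, handle the Schwartz part trivially as a finite sum, and then check by direct computation on the non-Schwartz generators $f_\mu$, $f_\mu'$ that the geometric-series denominators occurring are cancelled by factors of $L(1+s,\sigma_v^\iota\times\sigma_v^{\iota\vee})$. One small imprecision: you write that the denominators are ``exactly'' the inverse Euler factors of the Rankin--Selberg $L$-function, but in fact each pairing of generators produces only \emph{one} such factor while the full $L$-function has up to four; the argument still goes through because the leftover factors from $L(1,\sigma_v^\iota\times\sigma_v^{\iota\vee})^{-1}$ are themselves in $M[q_{F,v}^{-1}]$, as you note.
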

The  right-hand side is understood in the sense of analytic continuation to $s=0$ for the function of $s$ defined, for $\Re(s)$ sufficiently large, by the normalised  convergent integral $$ {   \zeta_{F,v}(2)\over L(1+s, \sigma_{v}^{\iota}\times \sigma_{v}^{\iota \vee})} \int_{F^{\times}}\iota f_{1}(y) \iota f_{2}(y)|y|^{s} {d^{\times}y\over |d|_{v}^{1/2}}.$$

The normalisation is such that the pairing equals $1$ when $\sigma_{v}$ is an unramified principal series and the $f_{i}$ are normalised new vectors.
\begin{proof} We use the notation of the proof of Lemma \ref{lem univ kir}, dropping all subscripts $v$.  We simply need to show that the given expression belongs to $\iota M$ if $f_{1}$, $f_{2}$ belong to the $M$-rational subspace of $\mathscr{K}$ and that any pole of the integral  $I_{s}(f_{1}, f_{2}):=\int_{F^{\times}}\iota f_{1}(y) \iota f_{2}(y)|y|^{s}
 {d^{\times}y\over |d|^{1/2}}$
is cancelled by a pole of $ L(1+s, \sigma^{\iota}\times \sigma^{\iota \vee})$.
 If either of $f_{i}\in \calS(F^{\times}, M)$, the integral is just a finite sum of elements in $\iota M$. Then we only need to compute the integral  when $f_{1}$, $f_{2}$ are among the $M$-rational generators of $\baar{\mathscr{K}}$, which is a standard calculation. 

In our application there will be no poles  by the Weil conjectures, so we limit ourselves to proving the statement  in   the case where  $\sigma= {\rm Ind}(\mu_{1}, \mu_{2}|\cdot|^{-1})$ is a principal series with $\mu_{1}\neq \mu_{2}$. (The other cases are similar, cf. also the proof of Proposition \ref{interp rnat}.) Then $\sigma^{\vee}={\rm Ind}(\mu_{1}',\mu_{2}'|\cdot|^{-1})$ with $\mu_{1}'=\mu_{1}^{-1}|\cdot|$, $\mu_{2}'=\mu_{2}^{-1}|\cdot|$, and (dropping also the $\iota$ from the notation) $L(1+s, \sigma\times \sigma^{\vee})= (1-q_{F}^{-1-s})^{-2}(1-\mu_{1}\mu_{2}'(v)q_{F}^{-s})^{-1}(1-\mu_{1}'\mu_{2}(v)q_{F}^{-s})^{-1}$, where $\mu(v):=\mu(\vpi_{v})$ if $\mu$ is an unramified character and $\mu(v):=0$ otherwise. 

Assume that $f_{1}=f_{\mu_{1}}$ (the case $f_{1}=f_{\mu_{2}}$ is similar). If $f_{2}=f_{\mu_{1}'}$ then $I_{s}(f_{1}, f_{2})=(1-q_{F}^{-1-s})^{-1}$
has no pole at $s=0$. 
 If $f_{2}=f_{\mu_{2}'}$, then    $I_{s}(f_{1}, f_{2})=(1-\mu_{1}\mu_{2}'(v)q_{F}^{-s})^{-1}$, whose inverse is a factor of $L(1+s, \sigma_{v}^{}\times \sigma_{v}^{ \vee})^{-1} $ in $M[q_{F}^{-s}]$.
\end{proof}

\subsection{$p$-crtical forms and the $p$-adic Petersson product}
As in \cite{dd}, we  introduce the following notion.
\begin{defi}\label{crit} Let $W=(0, (W_{a}))\in {\bf S}'(L)$ be a reduced $q$-expansion without constant term, with values in a $p$-adic field $L$, and let $v\vert p$. We say that $W$ is \emph{$v$-critical} if for some integer $r$, the following condition is satisfied: there is $c\in \Z$ such that, for each $a\in \A^{\infty, \times}$ 
with $v(a)=r$
and $s\in \N$, 
$$W_{a\vpi_{v}^{s}}\in q_{F,v}^{s-c}\OO_{L}.$$
We say that $W$ is \emph{$p$-critical} if it is a sum of $v$-critical $q$-expansions for $v\vert p$.
\end{defi}

For each $v\vert p$,  we define \emph{ordinary projectors} $e_{v}$ and $e$ on ${\bf M}(K^{p}, \omega, L)$ by 
$$e_{v}(\vphi')=\lim_{n\to \infty}\Up_{v, *}^{n!}\vphi', \qquad e:=\prod_{v}e_{v}.$$
They are independent of the choice of uniformisers. The  image of $e_{v}$  is contained in $M_{2}(K^{p}K^{1}_{1}(p^{\infty}),\omega, L).$
It is clear that $v$-critical forms belong to the kernel of $e_{v}$.

\subsubsection{$p$-adic Petersson product}
Let $M$ be a number field, and 
let $\sigma^{\infty}$ be an \emph{$M$-rational}
 cuspidal automorphic representation of $\GL_{2}$ of weight $2$ as in Definition \ref{M-rat},
with central character $\omega\colon F^{\times}\bks {\A}\to M^{\times}$.

Following Hida, we will define a $p$-adic analogue of the Petersson inner product with a form $\varphi$ in $\sigma^{\infty}$ when $\sigma_{p}$ is $\frakp$-ordinary for a prime $\frakp \vert p$ of $M$. 
First we  define an algebraic version of the Petersson product, which requires no ordinariness assumption. If $\iota\colon M\into \C$, let $\vphi^{\iota}:=\iota\vphi\otimes\vphi_{\infty}\in \sigma^{\iota}$ be the automorphic form whose   Whittaker function at infinity is anti-holomorphic of smallest $K_{\infty}$-type.

\begin{lemm}\label{alg-pet} 
 There is a unique pairing 
$$(\, , \,)_{\sigma^{\infty}}\colon \sigma^{\infty} \otimes_{M}M_{2}( \omega^{-1},M)\to M$$
such that for any $\vphi_{1}\in\sigma^{\infty}$, $\vphi_{2}\in M_{2}(\omega^{-1},M)$, and any $\iota\colon M\into \C$, we have 
$$(\vphi_{1}, \vphi_{2})_{\sigma}= {|D_{F}|^{1/2}\zeta_{F}(2)\over L(1, \sigma^{\iota}, \ad)} ( \vphi_{1}^{\iota},\iota\vphi_{2}),$$
where  
$$ ( \vphi_{1}',\vphi_{2}'):=
 \int_{\GL_{2}(F)Z(\A)\bks \GL_{2}(\A)}  \vphi'_{1}(g)\vphi_{2}'(g)\, dg$$
is the usual Petersson product  on complex automorphic forms with respect to the Tamagawa masure $dg$. 
\end{lemm}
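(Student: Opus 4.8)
The plan is to deduce the lemma from the rationality and Galois-equivariance of the normalised archimedean Petersson product, the latter being the Rankin--Selberg computation of \cite[\S 3.2]{yzz}. Uniqueness is immediate since any $\iota\colon M\into\C$ is injective. For existence, recall that $\mathrm{Aut}(\C)$ acts transitively on $\mathrm{Hom}(M,\C)$; thus it suffices to show that for \emph{every} $\iota$ the right-hand side of the displayed identity lies in $\iota(M)$, and that it transforms correctly under $\mathrm{Aut}(\C)$ acting on $\iota$ — one then defines $(\vphi_{1},\vphi_{2})_{\sigma^{\infty}}$ to be the common $\iota$-preimage, and $M$-bilinearity and $M$-valuedness are built in. Before this, one reduces to the case $\vphi_{1}\in\sigma^{\infty}$, $\vphi_{2}\in\sigma^{\infty\vee}$: applying to $\vphi_{2}$ the $M$-rational idempotent $T(\sigma^{\vee})\in\mathscr{H}^{S}\otimes M$ changes neither side (a cusp form $\vphi_{1}^{\iota}$ being orthogonal, for the bilinear Petersson pairing, both to every cuspidal Hecke eigensystem other than $\sigma^{\iota\vee}$ and — by unfolding the Eisenstein series against the cuspidal $\vphi_1^\iota$ — to the Eisenstein part of $M_{2}(\omega^{-1},\C)$), and preserves $M_{2}(\omega^{-1},M)$.

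Now fix $\iota$ and write $\sigma=\sigma^{\iota}$. Both the complex bilinear Petersson pairing $(\,,\,)\colon\sigma\times\sigma^{\vee}\to\C$ and the product $\prod_{v\nmid\infty}(\,,\,)_{v}$ of the local Kirillov pairings of Lemma~\ref{kir pair}, each tensored with the standard $\GL_{2}(F_{\infty})$-invariant pairing on the minimal $K_{\infty}$-types, are $\GL_{2}(\A^{\infty})$-invariant pairings between the same irreducible representations, hence proportional: there is $\lambda_{\iota}\in\C^{\times}$ with
$$(\vphi_{1}^{\iota},\iota\vphi_{2})=\lambda_{\iota}\cdot\prod_{v\nmid\infty}(f_{1,v},f_{2,v})_{v}$$
for factorisable $\vphi_{1}=\otimes f_{1,v}$, $\vphi_{2}=\otimes f_{2,v}$, and by bilinearity for all $\vphi_{1},\vphi_{2}$. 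By the $M$-rationality of $\sigma^{\infty}$ the Kirillov vectors $f_{i,v}$ lie in $\mathscr{K}(\sigma_{v},\psi_{\mathrm{univ},v})_{M}$ resp. $\mathscr{K}(\sigma_{v}^{\vee},\psi_{\mathrm{univ},v})_{M}$, so by Lemma~\ref{kir pair} each factor $(f_{1,v},f_{2,v})_{v}$ equals $\iota$ of a fixed element of $M$, almost all equal to $1$. It remains to evaluate $\lambda_{\iota}$: unfolding the global Rankin--Selberg integral $\int_{Z(\A)\GL_{2}(F)\bks\GL_{2}(\A)}\vphi_{1}^{\iota}(g)\,\iota\vphi_{2}(g)\,E(g,s,\Phi)\,dg$ against a degenerate Eisenstein series and taking the residue at $s=1$, where $\Res_{s=1}E(g,s,\Phi)$ is a multiple of $\one$ involving only $|D_{F}|^{1/2}$, $\zeta_{F}$-values and a power of $\pi$, yields an identity expressing $\lambda_{\iota}$ as $L(1,\sigma,\ad)$ times $(|D_{F}|^{1/2}\zeta_{F}(2))^{-1}$ times an elementary constant — a rational multiple of a power of $\pi$ — coming from the archimedean local zeta integral of the anti-holomorphic lowest-weight vector $\vphi_{1,\infty}$ against the holomorphic weight-$2$ vector, which is the same for all $\iota$. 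Substituting, the factor $L(1,\sigma,\ad)$ cancels, so $\dfrac{|D_{F}|^{1/2}\zeta_{F}(2)}{L(1,\sigma,\ad)}(\vphi_{1}^{\iota},\iota\vphi_{2})$ equals $\iota$ of a fixed element of $M$; the $\mathrm{Aut}(\C)$-equivariance in $\iota$ follows from the same formula because the $f_{i,v}$ are $M$-rational and independent of the additive character.

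The one point demanding care is the bookkeeping of transcendental factors: one must check that the Eisenstein residue, the archimedean zeta integral, and the normalising $|D_{F}|^{1/2}\zeta_{F}(2)$ (complete $L$- and zeta values as per the paper's convention) combine to an \emph{honest rational} constant — all powers of $\pi$ cancelling — so that the expression lies in $\iota(M)$ rather than merely in $\iota(M)$ times a fixed transcendental line; this is exactly the explicit archimedean computation, and it is what forces the precise shape of the normalising factor. Since all of this is the rationality package of \cite[\S 3.2]{yzz}, of which the present statement is a convenient repackaging, one may in fact limit the Eisenstein-kernel argument to $\vphi_{2}\in\sigma^{\infty\vee}$ via the reduction above.
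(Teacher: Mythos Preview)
Your proposal is correct and follows essentially the same route as the paper: reduce to $\vphi_{2}\in\sigma^{\infty\vee}$, then use the factorisation of the normalised Petersson product into the local Kirillov pairings of Lemma~\ref{kir pair}, each of which lies in $M$ and almost all of which equal~$1$. The only difference is cosmetic: where you sketch the Rankin--Selberg unfolding and the archimedean bookkeeping to identify the proportionality constant $\lambda_{\iota}$, the paper simply quotes the resulting identity from \cite[p.~55]{tyz} (not \cite[\S 3.2]{yzz}), and it leaves the $\mathrm{Aut}(\C)$-equivariance implicit.
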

\begin{proof}
Note first that if such a pairing exists, it annihilates forms on the right-hand side which are orthogonal (under the complex Petersson product in any embedding) to forms in $\sigma$. Then we just need to use a well-known formula for the adjoint $L$-value in terms of Petersson product; we quote it in the version given in   \cite[p. 55]{tyz}: for an antiholomorphic form $\vphi_{1}'$ in the space of $\sigma^{\iota}$ and a holomorphic form $\vphi_{2}'$ in the space of  $\sigma^{\vee, \iota}$, both  rational over $\iota M$, with factorisable Whittaker functions $W_{i}^{\iota}$, we have
\begin{align}
{  |D_{F}|^{1/2} \zeta_{F}(2)   (\vphi'_{1}, \vphi'_{2})\over  2 L(1, \sigma^{\iota}, {\rm ad})}=\prod_{v}
(W_{1,v}^{\iota}, W_{2,v}^{\iota})_{v}
\end{align}
where for all $v$ the local pairings are given  by the right-hand side of \eqref{eq kir pair} and do not depend on the choice of additive characters.
 Each local factor in the product is rational over $\iota M$ and almost all of them are equal to~$1$. 
\end{proof}
\begin{rema} If $\vphi_{2}\in M_{2}(M)$ does not have central character $\omega^{-1}$, we can still define 
$(\vphi_{1}, \vphi_{2})_{\sigma^{\infty}}:=(\vphi_{1}, \vphi_{2, \omega^{-1}})_{\sigma^{\infty}}$ where
$$\vphi_{2, \omega^{-1}}(g):=\dashint_{Z(F)\bks Z(\A)}\vphi_{2}(zg)\omega(z)\, dz.$$
\end{rema}

 Now fix a prime $\frakp\vert p$ of $M$  and a finite extension $L$ of $M_{\frakp}$, and assume that for all $v\vert p$, $\sigma_{v}\otimes L$ is nearly $\frakp$-ordinary with unit character  $\alpha_{v}\colon F_{v}^{\times }\to \OO_{L}^{\times}$ in the sense of Definition \ref{def-n-ord}.
 Fix a Whittaker  functional $\mathscr{W}_{p}=\prod_{v}\mathscr{W}_{v}$ at $p$ and let 
  $\varphi\in \sigma^{\infty}\otimes_{M}M(\alpha)$ be a form in the space of $\sigma^{\infty}$ 
whose image under $\mathscr{W}_{v}$ is the function (viewed in the $M(\alpha)$-rational part of any Kirillov model)
\begin{align}\label{kir-ord}
W_{v}(y)=\one_{\OO_{F}-\{0\}}(y)|y|\alpha_{v}(y).
\end{align}
Note that $W_{v}$, viewed in a Kirillov model associated to an additive character of level $0$,    satisfies $\Up_{v}^{*}W_{v}=\alpha_{v}(\vpi_{v})W_{v}$. 

In the next proposition, we use the notation $\alpha(\vpi)^{r}:= \prod_{v\vert p}\alpha_{v}(\vpi_{v})^{r_{v}}$. 
\begin{prop}\label{p-pet} There exists a unique bounded linear functional
$$\lf\colon {\bf M}(K^{p}, \omega^{-1}, L)\to L $$ 
satisfying the following:
\begin{enumerate}
\item\label{on sr} Let $r=(r_{v})_{v\vert p }\in \Z_{\geq 1}^{\{v\vert p\}}$. The restriction of $\lf$ to $M_{2}(K^{p}K^{1}(p^{r})_{p}, M(\alpha))$ 
is given by  
\begin{equation}\label{lf-formula}
\lf(\vphi')={  \alpha(\vpi)^{-r}  (w_{r}\vphi, \vphi')_{\sigma^{\infty}} } 
={ \alpha(\vpi)^{-r} (\vphi,w_{r}^{-1} \vphi')_{\sigma^{\infty}}  }\in M(\alpha)
\end{equation}
for any choice of uniformisers $\vpi_{v}$ in the definitions of $\Up_{v, *}$, $\Up_{v}^{*}$, $w_{r}$. 
\item\label{uv left} We have 
 $$\lf(\Up_{v,*}\vphi')=\alpha_{v}(\vpi_{v})\lf(\vphi')$$
 for all $v\vert p$ and all $\vphi'$.  
 \item\label{Lkillscrit}  $\lf$  vanishes on $p$-critical forms. 
 \item\label{idempot}  Let $T(\sigma^{\vee})\in  \mathscr{H}^{S}({M})$ (where $S$ is any sufficiently large set of finite places containing those dividing $p$) be any element whose image $T(\sigma)^{\iota}\in \mathscr{H}^{S}( M)\otimes_{M, \iota}\C$ acts on $S_{2}(K^{p}K^{1}(p^{r})_{p}, \C)$ as the idempotent projector onto $(\sigma^{\vee,\iota})^{K^{p}K^{1}(p^{r})_{p}}$ for any $\iota\colon M\into \C$ and $r\geq 1$. Let  $T_{\iota_{\frakp}}(\sigma^{\vee})$ be the image of $T(\sigma^{\vee})$ in $\mathscr{H}^{S}(M )\otimes_{M, \iota_{\frakp}} L$. Then 
 $$\lf\circ T_{\iota_{\frakp}}(\sigma^{\vee}) = \lf.$$ 
\end{enumerate}
\end{prop}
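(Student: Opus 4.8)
The plan is to construct $\lf$ as the composition of Hida's ordinary projector $e$ with a normalised, Atkin--Lehner--twisted version of the algebraic Petersson pairing against $\vphi$ of Lemma \ref{alg-pet}, and to prove that this composition is bounded for the norm \eqref{norm}; it then extends by continuity from the (dense) span of reduced $q$-expansions of classical $p$-stabilised forms to all of ${\bf M}(K^{p},\omega^{-1},L)$, and uniqueness is automatic.

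Concretely, for $\vphi'\in {\bf M}(K^{p},\omega^{-1},L)$ I would set
\[\lf(\vphi'):=\alpha(\vpi)^{-r}\,(w_{r}\vphi,\,e\vphi')_{\sigma^{\infty}},\]
where $e=\prod_{v\mid p}e_{v}$ is the ordinary projector introduced above and $r=(r_{v})_{v\mid p}$, $r_{v}\geq 1$, is any tuple such that $e\vphi'$ has level $K^{p}K^{1}_{1}(p^{r})_{p}$. Making sense of this uses several standard consequences of Hida's ordinary theory: that $\Up_{v,*}$ acts on ${\bf M}(K^{p},\omega^{-1},L)$ by an operator of norm $\leq 1$ (immediate from $W^{\natural}_{a}\mapsto\omega^{-1}(\vpi_{v})W^{\natural}_{a\vpi_{v}}$ and $|\omega^{-1}(\vpi_{v})|=1$), so that the idempotent $e$ exists and is bounded; that $e\,{\bf M}(K^{p},\omega^{-1},L)$ consists of classical ordinary modular forms of weight $2$, tame level $K^{p}$, and (fixed, finite-order) character $\omega^{-1}$, hence --- by the control theorem \cite{Hi} --- is a \emph{finite-dimensional} $L$-vector space; and independence of $r$, namely that the degeneracy inclusion of level-$K^{1}_{1}(p^{r})_{p}$ forms into level-$K^{1}_{1}(p^{r+1})_{p}$ forms scales the functional $\psi\mapsto(w_{r}\vphi,\psi)_{\sigma^{\infty}}$ by $\alpha(\vpi)$. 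The last point is a direct Hecke computation from $w_{r+1,v}=w_{r,v}\smalltwomat{\vpi_{v}}{}{}{1}$, the normalisation $w_{r,v}^{-1}K^{1}_{1}(\vpi_{v}^{s})_{v}w_{r,v}=K^{1}_{1}(\vpi_{v}^{s})_{v}$, and the fact that $\vphi$, by \eqref{kir-ord}, is a $\Up_{v}^{*}$-eigenvector of (unit) eigenvalue $\alpha_{v}(\vpi_{v})$ --- equivalently $w_{r}\vphi$ is a $\Up_{v,*}$-eigenvector of unit eigenvalue. Granting these, $\lf$ is well-defined and bounded: $\|e\vphi'\|\leq\|\vphi'\|$, and on the finite-dimensional $L$-space $e\,{\bf M}(K^{p},\omega^{-1},L)$ the $L$-linear functional $\psi\mapsto\alpha(\vpi)^{-r}(w_{r}\vphi,\psi)_{\sigma^{\infty}}$ is automatically continuous. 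On a classical $p$-stabilised form $\vphi'$ it recovers \eqref{lf-formula}, because $(w_{r}\vphi,(1-e)\vphi')_{\sigma^{\infty}}=0$ by the $\Up_{v,*}$-eigenproperty of $w_{r}\vphi$ together with the orthogonality of distinct Hecke eigenspaces under the Petersson pairing; and by density it is the unique bounded functional restricting to \eqref{lf-formula}.

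It remains to verify \ref{uv left}--\ref{idempot}. Property \ref{uv left} follows from $e\circ\Up_{v,*}=\Up_{v,*}\circ e$ and $(w_{r}\vphi,\Up_{v,*}\psi)_{\sigma^{\infty}}=\alpha_{v}(\vpi_{v})(w_{r}\vphi,\psi)_{\sigma^{\infty}}$ on classical ordinary $\psi$, itself a consequence of the $\Up_{v,*}$-eigenproperty of $w_{r}\vphi$ and of the Atkin--Lehner-conjugation relation between $\Up_{v,*}$, $\Up_{v}^{*}$, and the Petersson product. Property \ref{Lkillscrit} is immediate: $v$-critical forms lie in $\ker e_{v}\subseteq\ker e$ (as recalled in the text), hence in $\ker\lf$, and a $p$-critical form is a sum of such. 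For \ref{idempot}, Lemma \ref{alg-pet} shows that $(\,,\,)_{\sigma^{\infty}}$ annihilates classical forms orthogonal to $\sigma$, so it factors through the projector $e_{\sigma^{\vee}}=T_{\iota_{\frakp}}(\sigma^{\vee})$ in its right argument; as $e$ commutes with $T_{\iota_{\frakp}}(\sigma^{\vee})$, one gets $\lf\circ T_{\iota_{\frakp}}(\sigma^{\vee})=\lf$ on classical forms, and then on all of ${\bf M}(K^{p},\omega^{-1},L)$ by boundedness and density.

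The step I expect to be the main obstacle is the boundedness --- that is, the package of facts about Hida's ordinary projector $e$: that it converges on $p$-adic modular forms, is norm-decreasing, and has finite-dimensional image consisting of classical ordinary forms of fixed weight and character. This is precisely the mechanism, due to Hida and used in the special case of \cite{dd}, that makes a $p$-adic Petersson product exist at all; it rests essentially on the hypothesis that each $\sigma_{v}$ ($v\mid p$) is nearly $\frakp$-ordinary with \emph{unit} character $\alpha_{v}$ and on the special shape \eqref{kir-ord} of $\vphi$ at $p$. Everything else is bookkeeping with Hecke operators and Atkin--Lehner twists.
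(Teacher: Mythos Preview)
Your proposal is correct and follows essentially the same approach as the paper: define $\lf$ by composing the ordinary projector $e$ with the Atkin--Lehner--twisted algebraic Petersson pairing against $\vphi$, establish the $\Up_{v,*}$-eigenfunctional identity via the same matrix computation $(w_{r}\vphi,\Up_{v,*}\vphi')=(w_{r}\Up_{v}^{*}\vphi,\vphi')=\alpha_{v}(\vpi_{v})(w_{r}\vphi,\vphi')$, and read off properties \ref{uv left}--\ref{idempot}. The one place where you add substance is boundedness: you invoke Hida's control theorem to get finite-dimensionality of the ordinary part at fixed weight and character, whereas the paper leaves this implicit; conversely, the paper's uniqueness argument is slightly slicker (it deduces $\lf=\lf\circ e$ directly from property \ref{uv left} rather than from density).
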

\begin{proof} 
By property  \ref{uv left}, for each $v$ we must have 
\begin{equation}\label{popop}
\lf(e_{v}\vphi')=\lim_{n\to \infty} \lf (\Up_{v, *}^{n!}\vphi')=\lim_{n\to \infty} \alpha_{v}(\vpi_{v})^{n!}\lf(\vphi')=\lf(\vphi')
\end{equation}
as $\alpha_{v}(\vpi_{v})$ is a $p$-adic unit; note that this expression does not depend on the choice of uniformisers. It follows that $\lf$ must factor through the ordinary projection $$ e\colon {\bf M}(K^{p}, \omega^{-1}, L)\to M_{2}(K^{p}K^{1}(p^{\infty})_{p},\omega^{-1}, L),$$
which implies property \ref{Lkillscrit}. On the image of $e$,   $\lf$ must be defined defined by  \eqref{lf-formula}, which makes uniqueness and property \ref{idempot} clear.

It remains to show  the existence (that is, that \eqref{lf-formula} is compatible with changing $r$) and that the first equality in \eqref{lf-formula} holds for all $r$ for the functional $\lf$ just defined (the second one is trivial). For the latter, 
 we have
\begin{multline}
(w_{r}\vphi, \Up_{v, *}\vphi')=(w_{r}\vphi,  K^{1}(\vpi_{v}^{r_{v}})_{v}\smalltwomat {1}{}{}{\vpi_{v}^{-1}}\vphi')
=(\smalltwomat 1{}{}{\vpi_{v}} K^{1}(\vpi_{v}^{r_{v}})_{v} w_{r}\vphi, \vphi')\\
= (w_{r}K_{1}(\vpi_{v}^{r_{v}})_{v}\smalltwomat {\vpi_{v}}{}{}1 \vphi, \vphi')=(w_{r}\Up_{v}^{*}\vphi, \vphi')=\alpha_{v}(\vpi_{v})(w_{r}\vphi, \vphi').
\end{multline}
The compatibility with change of $r$ can be seen by a similar calculation.
\end{proof}
We still use the notation $\lf$ for the linear form deduced from $\lf$ by extending scalars to some $L$-algebra. The analogous remark will apply to   $(\, , \,)_{\sigma_{\infty}}$.

\section{The $p$-adic $L$-function}\label{sec plf3}

\subsection{Weil representation}
We start by recalling from \cite{wald, yzz}  the definition of the Weil representation for groups of similitudes.  
\subsubsection{Local case} Let $V=(V,q)$ be a quadratic space of even dimension  over a local field $F$ of characteristic not 2. Fix  a nontrivial additive character $\psi$ of $F$. For simplicity we assume $V$ has even dimension. For $u\in F^\times$, we denote by $V_u$ the quadratic space $(V,uq)$. We let $\GL_2(F)\times \GO(V)$ act on the usual  space of Schwartz functions\footnote{The notation is only provisional for the archimedean places, see below.}  $\mathscr{S}(V\times F^\times)$  as follows (here $\nu\colon \GO(V)\rightarrow\mathbf{G}_m$ denotes the similitude character):
\begin{itemize}
\item $r(h)\phi(x,u)=\phi(h^{-1}x,\nu(h)u)$ \quad for $h\in \GO(V)$;
\item$r(n(b))\phi(x,u)=\psi(buq(x))\phi(x,u)$ \quad for $n(b)\in N(F)\subset\GL_2(F)$;
\item $r\left(\begin{pmatrix}a&\\& d\end{pmatrix}\right)\phi(x,u)=\chi_{V_{u}}(a)|{a\over d}|^{\dim V\over 4}\phi(at, d^{-1}a^{-1}u)$;
\item $r(w)\phi(x,u)=\gamma(V_u)\hat{\phi}(x,u)$ for $w=\begin{pmatrix}&1\\-1 &\end{pmatrix}.$
\end{itemize}
Here $\chi_{V}=\chi_{(V,q)}$ is the quadratic character attached to $V$,  $\gamma(V,q)$ is a fourth root of unity, and $\hat{\phi}$ denotes Fourier transform in the first variable with respect to the self-dual measure for the character $\psi_{u}(x)=\psi(ux)$. We will need to note the following facts (see e.g. \cite{JL}): $\chi_{V}$  is trivial if $V$ is a quaternion algebra over $F$ or $V=F\oplus F$, and $\chi_{V}=\eta$ if $V$ is a separable quadratic extension $E$ of $F$ with associated character $\eta$; and $\gamma(V)=+1$ if $V$ is the space of $2\times 2$ matrices or $V=F\oplus F$, $\gamma(V)=-1$ if $V$ is a non-split quaternion algebra.

\medskip

We state here a lemma which will be useful later.
\begin{lemm}\label{uppertriang} Let $F$ be a  non-archimedean local field  and   $\phi\in \calS(V\times F^{\times}) $ a Schwartz function with support  contained in 
$$\{ (x,u)\in V\times F^{\times}\,\colon\,  uq(x)\in \OO_{F}\}.$$
Suppose that the character $\psi$ used to construct the Weil representation has level $0$. Then $\phi$ is invariant under $K_{1}^{1}(\vpi^{r})\subset \GL_{2}(\OO_{F})$ for sufficiently large $r$.
 If moreover $\phi(x,u)$ only depends on $x$ and on the valuation $v(u)$, then $\phi$ is invariant under $K^{1}(\vpi^{r})$.
\end{lemm}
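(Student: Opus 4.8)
The plan is to unwind the formulas for the Weil representation action and check directly which matrices in $\GL_2(\OO_F)$ fix such a $\phi$. First I would recall that $\GL_2(\OO_F)$ is generated (topologically, and in fact as a group for the subgroups we care about) by the upper-triangular matrices in $\GL_2(\OO_F)$ together with $w$, so it suffices to understand how $n(b)$ for $b\in\OO_F$, the diagonal torus $\smalltwomat{a}{}{}{d}$ with $a,d\in\OO_F^\times$, and $w$ act, and then to isolate a congruence subgroup whose elements are built only from the "harmless" generators. Since $\psi$ has level $0$, i.e.\ $\psi$ is trivial on $\OO_F$ but not on $\vpi^{-1}\OO_F$, the factor $\psi(buq(x))$ appearing in $r(n(b))\phi(x,u)$ equals $1$ for all $(x,u)$ in the support of $\phi$ whenever $b\in\OO_F$, because $uq(x)\in\OO_F$ there; hence $\phi$ is literally invariant under $N(\OO_F)$.

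Next I would handle the lower-triangular unipotent part. A matrix $\smalltwomat{1}{}{c}{1}$ with $c\in\vpi^r\OO_F$ can be written, using $w$, as $w^{-1}n(-c)w$ up to a diagonal unit factor; concretely one checks $\smalltwomat{1}{}{c}{1}=w\,\smalltwomat{1}{-c}{}{1}\,w^{-1}$. The operator $r(w)$ introduces a Fourier transform $\hat\phi$ with respect to the self-dual measure for $\psi_u$, which has conductor related to the level of $\psi$, together with the constant $\gamma(V_u)$. The support condition $uq(x)\in\OO_F$ is \emph{not} preserved by Fourier transform in general, so the point is quantitative: $\hat\phi$ (for $\psi$ of level $0$) is still a Schwartz function, supported in some lattice depending on $\phi$, and conjugating the invariance of $\hat\phi$ under $N(\vpi^{r_0}\OO_F)$ for $r_0$ large back through $w$ yields invariance of $\phi$ under $\smalltwomat{1}{}{\vpi^{r_0}\OO_F}{1}$. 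Combined with the torus action, which for $a,d\in\OO_F^\times$ sends $\phi(x,u)\mapsto\chi_{V_u}(a)\phi(ax,d^{-1}a^{-1}u)$ — a unitary change of variables that preserves both the support condition and smoothness, and which is already "absorbed" once we intersect with a small enough congruence subgroup on which $\chi_{V_u}(a)=1$ — we conclude that $\phi$ is fixed by the subgroup generated by $N(\OO_F)$, the lower unipotent congruence subgroup $\smalltwomat{1}{}{\vpi^r\OO_F}{1}$, and a neighbourhood of the identity in the torus. That subgroup contains $K_1^1(\vpi^r)$ for $r$ large (indeed $K_1^1(\vpi^r)$ is generated by $N(\OO_F)$-type elements, lower unipotent elements in $\vpi^r$, and diagonal elements that are $\equiv\smalltwomat{1}{}{}{*}$, all of which act trivially).

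For the last sentence, if $\phi(x,u)$ depends only on $x$ and on $v(u)$, I would observe that the diagonal matrix $\smalltwomat{a}{}{}{1}$ with $a\in\OO_F^\times$ acts by $\phi(x,u)\mapsto\chi_{V_u}(a)\phi(ax,a^{-1}u)$; the condition that $\phi$ only depend on $v(u)$ kills the effect of the $a^{-1}$ on the second variable, and after passing to a small enough congruence subgroup where $\chi_{V_u}(a)=1$, what remains is just the $x\mapsto ax$ substitution, which is already accounted for. Thus one upgrades $K_1^1(\vpi^r)$-invariance to $K^1(\vpi^r)$-invariance, the only difference between the two groups being precisely the extra diagonal entries $\smalltwomat{a}{}{}{1}$, $a\equiv 1\pmod{\vpi^r}$ versus $a$ arbitrary unit. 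The main obstacle I anticipate is the $w$ step: one must control the Fourier transform carefully enough to produce an \emph{explicit} (or at least finite) $r$, tracking how $\gamma(V_u)$ and the self-dual measure interact with $\psi$ having level $0$ rather than level $-1$ or level $0$ in Tate's normalisation; everything else is a bookkeeping exercise with the explicit formulas for $r(\cdot)$.
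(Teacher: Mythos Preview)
Your treatment of the first assertion is correct but much heavier than necessary. The paper's proof is a single line: since the Weil representation is smooth, $\phi$ is already fixed by some principal congruence subgroup $K(\vpi^{s})$, and the support hypothesis gives literal $N(\OO_{F})$-invariance; since every element of $K_{1}^{1}(\vpi^{r})$ factors as an element of $N(\OO_{F})$ times an element of $K(\vpi^{r})$, taking $r\geq s$ finishes it. You instead reprove smoothness by hand via $w$-conjugation and Fourier transform, which works but obscures the point.

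Your argument for the second assertion contains a genuine error. From the paper's definitions,
\[
K_{1}^{1}(\vpi^{r})=\{\smalltwomat abcd:\ c,\,a-1,\,d-1\equiv 0\bmod \vpi^{r}\},\qquad
K^{1}(\vpi^{r})=\{\smalltwomat abcd:\ c,\,a-1\equiv 0\bmod \vpi^{r}\},
\]
so the extra matrices in $K^{1}(\vpi^{r})$ are $\smalltwomat{1}{}{}{d}$ with $d\in\OO_{F}^{\times}$, not $\smalltwomat{a}{}{}{1}$ as you wrote. The correct check is immediate: $r(\smalltwomat{1}{}{}{d})\phi(x,u)=\phi(x,d^{-1}u)$, and since $v(d^{-1}u)=v(u)$ this equals $\phi(x,u)$. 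By contrast, your computation for $\smalltwomat{a}{}{}{1}$ gives $\chi_{V_{u}}(a)\,\phi(ax,a^{-1}u)$; even after using the $v(u)$-dependence you are left with $\chi_{V_{u}}(a)\,\phi(ax,u)$, and neither the quadratic character nor the substitution $x\mapsto ax$ is trivial for a general unit $a$, so that argument does not go through as written.
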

\begin{proof} By continuity of the Weil representation,
 for the first assertion
  it suffices to show the invariance under $N(\OO_{F})$ . This follows from the observation that under our assumption, in the formula
 $$r(n(b))\phi(x,u)=\psi(ubq(x)) \phi(x,u),$$ 
 the multiplier $\psi(ubq(x))=1$ whenever $(x,u)$ is in the support of $\phi$.
  The second assertion is then equivalent to the invariance of $\phi$ under the subgroup $\smalltwomat 1{}{}{\OO_{F}^{\times}}\subset \GL_{2}(\OO_{F})$, which is clear. 
\end{proof}

\subsubsection{Fock model and reduced Fock model} 
Assume that $F=\R$ and $V$ is positive definite. Then we will prefer to consider a modified version of the previous setting.   Let the \emph{Fock model} $\calS(V\times \R^{\times}, \C)$ be the space of functions spanned by those of the form 
$$H(u)P(x) e^{-2\pi |u| q(x)},$$ where $H$ is a compactly supported smooth function on $\R^{\times}$ and $P$ is a complex  polynomial function on $V$.  This space is not stable under the action of $\GL_{2}(\R)$, but it is so under the restriction of the induced $(\mathfrak{gl}_{2, \R},{\bf O}_{2}(\R))$-action on the usual Schwartz space (see \cite[\S 2.1.2]{yzz}). 

We will also need to consider the \emph{reduced Fock space} $\bcalS(V\times \R^{\times})$ spanned by functions of the form
$$\phi(x,u)=(P_{1}(uq(x))+\mathrm{sgn}(u)P_{2}(uq(x))) e^{- 2\pi |u| q(x)}$$
where $P_{1}$, $P_{2}$ are polynomial functions with rational coefficients.
It contains the \emph{standard Schwartz function}
$$\phi(x,u)=\one_{\R_{+}}(u)e^{-2\pi|u| q(x)},$$
which for $x\neq 0$ satisfies 
\begin{align}\label{weilwhitt}
r(g)\phi(x,u)=W^{(d)}_{uq(x)}(g)
\end{align}
if $V$ has dimension $2d $ and $W^{(d)}$ is the standard holomorphic  Whittaker function \eqref{holwhitt} (see \cite[\S 4.1.1]{yzz}).

By \cite[\S 4.4.1, 3.4.1]{yzz}, there is a surjective quotient map 
\begin{equation}\label{sql}
\begin{aligned}
\calS(V\times\R^{\times} , \C)&\to\bcalS(V\times\R^{\times})\otimes_{\Q} \C\\
 \Phi& \mapsto\phi(x,u)=\baar{\Phi}(x,u)= \int_{\R^{\times}}\dashint_{{\bf O}(V)} r(ch) {\Phi}(x,u)\, dh \, dc.
 \end{aligned}
\end{equation}
We let $\calS(V\times\R^{\times} ) \subset \calS(V\times\R^{\times} , \C)$ be the preimage of $\bcalS(V\times\R^{\times})$.
For the sake of uniformity, when $F$ is non-archimedean we set $\bcalS(V\times F^{\times}):=\calS(V\times F^{\times})$.

\subsubsection{Global case} Let $({\bf V},q)$ be an even-dimensional quadratic space over the ad\`eles $\A=\A_{F}$ of a totally real number field $F$, and suppose that ${\bf V}_{\infty}$ is positive definite; we say that ${\bf V}$ is \emph{coherent} if it has a model over $F$ and \emph{incoherent} otherwise. Given an $\widehat{\OO}_{F}$-lattice ${\mathscr{V}}\subset {\bf V}$, we define the space $\calS({\bf V}\times \A^{\times})$ as the restricted tensor product of the corresponding local spaces, with respect to the spherical  elements
$$\phi_{v}(x,u)=\one_{\mathscr{V}_{v}}(x)\one_{\vpi_{v}^{n_{v}}}(u),$$ 
if $\psi_{v}$ has level $n_{v}$. We call such $\phi_{v}$ the \emph{standard Schwartz function} at a non-archimedean place $v$.
We define similarly the reduced  space $\bcalS({\bf V}\times \A^{\times})$, which admits a quotient map
\begin{gather}\label{schwartz quotient}
\calS({\bf V}\times \A^{\times})\to\bcalS({\bf V}\times \A^{\times})
\end{gather}
defined by the product of the maps \eqref{sql} at the infinite places and of the identity at the finite places. The Weil representation of $\GL_{2}(\A^{\infty})\times \GO({\bf V}^{\infty})\times (\mathfrak{gl}_{2,F_{\infty}},{\bf O}({\bf V}_{\infty}))$ is  the restricted tensor product of the local representations.

\subsection{Eisenstein series}\label{sec: eis series}
Let ${\bf V}_{2}$ be a two-dimensional  quadratic space over $\A_{F}$, totally definite at the archimedean places. Consider the Eisenstein series
$$E_{r}(g, u , \phi_{2},\chi_{F})=\sum_{\gamma\in P^{1}(F)\bks SL_{2}(F)} \delta_{\chi_{F}, r} (\gamma gw_{r}) r(\gamma g) \phi_{2}(0,u)$$
where 
$$\delta_{\chi_{F}, r}(g)=
\begin{cases} 
\chi_{F}(d)^{-1} &\textrm{if\ } g= \smalltwomat ab{}d   k \textrm{\ with\ } k\in K_{1}^{1}(p^{r})
\\ 0 & \textrm{if\ } g\notin PK_{0}(p^{r}).
\end{cases}$$
and $\phi_{2}\in \bcalS({\bf V}_{2}\times \A^{\times})$. 
(The defining sum is in fact not absolutely convergent, so it must be interpreted in the sense of analytic continuation at $s=0$ from the series obtained by replacing $\delta_{\chi_{F,r}}$ with $\delta_{\chi_{F, r}}\delta_{s}$, where $\delta_{s}(g)=|a/d|^{s}$ if $g=\smalltwomat ab{}dk$, $k\in K_{0}(1)$.) It belongs to the space $M_{1}^{\rm tw}( \eta\chi_{F}^{-1},\C)$ of twisted modular form of parallel weight~$1$ and central character $\eta\chi_{F}^{-1}$.

After  a suitable modification, we study its Fourier--Whittaker expansion and show that it interpolates to a $\Y_{F}$-family of $q$-expansions of twisted modular forms.

\begin{prop}\label{eis-exp}
We have
$$L^{(p)}(1, \eta\chi_{F}) E_{r}(\smallmat yx{}1,u, \phi_{2}, \chi_{F})= \sum_{a\in F} W_{a,r}( \smallmat y{}{}1, u, \phi_{2}, \chi_{F})\psi(ax)$$
where
$$W_{a,r}(g,u, \phi_{2}, \chi_{F})= \prod_{v} W_{a,r,v}(g,u, \phi_{2,v}, \chi_{F,v})$$
with, for each $v$ and $a\in F_{v}$,
\begin{multline*}
W_{a,r,v}(g,u, \phi_{2,v}, \chi_{F,v})= 
 L^{(p)}(1, \eta_{v}\chi_{F,v}) \\
\times \int_{F_{v}}\delta_{\chi_{F},v,r}(wn(b)gw_{r}) r(wn(b)g ) \phi_{2,v }(0,u)\psi_{v}(-ab) \, db.
\end{multline*}
Here $L^{(p)}(s, \xi_{v}):=L(s, \xi_{v})$ if $v\nmid p$ and $L^{(p)}(s,\xi_{v}):=1$ if $v\vert p$, and we use the  convention that $r_{v}=0$ if $v\nmid p$.
\end{prop}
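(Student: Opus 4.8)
The plan is the classical unfolding of the Eisenstein series along the Bruhat decomposition of the index set: $P^{1}(F)\backslash SL_{2}(F)$ consists of the identity coset and the big cell $\{P^{1}(F)wn(b)\colon b\in F\}$, distinct $b$ giving distinct cosets because $wn(b-b')w^{-1}$ is lower unipotent and so lies in the upper Borel $P^{1}(F)$ only for $b=b'$. Correspondingly I split $E_{r}=E_{r}^{0}+E_{r}^{\mathrm{big}}$, with $E_{r}^{0}(g,u,\phi_{2},\chi_{F})=\delta_{\chi_{F},r}(gw_{r})\,r(g)\phi_{2}(0,u)$ the contribution of $\gamma\in P^{1}(F)$ and $E_{r}^{\mathrm{big}}$ the sum over $b\in F$. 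The summand descends to the coset space because $\chi_{F}$, $\eta$ and the adelic norm are trivial on $F^{\times}$, so left translation by $P^{1}(F)$ multiplies both $\delta_{\chi_{F},r}$ and $r(\cdot)\phi_{2}(0,u)$ by trivial factors.

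The first observation is that $E_{r}^{0}$ vanishes identically on the locus $g=\smallmat yx{}1$: this term is independent of $x$, and for every $v\mid p$ the matrix $\smallmat yx{}1\,w_{r,v}$ has bottom row a scalar multiple of $(1,0)$, so it does not lie in $P(F_{v})K_{0}(\vpi_{v}^{r_{v}})_{v}$ once $r_{v}\geq 1$; hence $\delta_{\chi_{F},v,r}$, and therefore $\delta_{\chi_{F},r}$, kills it. (This is exactly the point of the Atkin--Lehner twist $w_{r}$ and of the $K_{0}(p^{r})$-support condition built into $\delta_{\chi_{F},r}$.) So $E_{r}(\smallmat yx{}1,u,\cdot)=E_{r}^{\mathrm{big}}(\smallmat yx{}1,u,\cdot)$, and only the big cell contributes.

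For the big cell, writing $\smallmat yx{}1=n(x)\smallmat y{}{}1$ and using $wn(b)n(x)=wn(b+x)$, the sum over $b\in F$ tested against the integral over $x\in F\backslash\A$ collapses in the usual way, so the $\psi(ax)$-Fourier coefficient of $E_{r}(n(x)\smallmat y{}{}1,u,\cdot)$ equals the adelic integral $W_{a,r}(\smallmat y{}{}1,u,\phi_{2},\chi_{F})=\int_{\A}\delta_{\chi_{F},r}(wn(t)\smallmat y{}{}1\,w_{r})\,r(wn(t)\smallmat y{}{}1)\phi_{2}(0,u)\,\psi(-at)\,dt$; Fourier inversion then gives $E_{r}(\smallmat yx{}1,u,\cdot)=\sum_{a\in F}W_{a,r}(\smallmat y{}{}1,u,\cdot)\psi(ax)$. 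Since $\delta_{\chi_{F},r}=\prod_{v}\delta_{\chi_{F},v,r}$, and the Haar measure, the additive character and the Weil representation all factor over the places, while $\phi_{2}$ is a pure tensor, $W_{a,r}$ factors as the product over $v$ of the displayed local integrals (divided by the local $L$-factors). Multiplying by $L^{(p)}(1,\eta\chi_{F})=\prod_{v\nmid p}L(1,\eta_{v}\chi_{F,v})$ and absorbing one Euler factor into each local integral (the factor being $1$ at $v\mid p$) yields precisely the normalised local terms $W_{a,r,v}$ of the proposition.

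The only delicate point is analytic: being of weight one, $E_{r}$ has a conditionally convergent defining series and is understood by meromorphic continuation from $\Re(s)\gg 0$ via the auxiliary factor $\delta_{s}$. So one carries out the unfolding for $\Re(s)\gg 0$, where the interchange of $\sum_{b}$ with $\int_{x}$ and the factorisation of the adelic integral into local integrals $I_{a,r,v}(s)$ are legitimate, observes that the product over $v$ is a finite product of local integrals times an Euler product of Hecke $L$-factors — hence continues meromorphically — and specialises at $s=0$, recovering the finite Fourier coefficients of $E_{r}$ itself. The constant $L^{(p)}(1,\eta\chi_{F})$ is exactly what clears the Euler product of local $L$-factors arising at the unramified places (these enter through the $a=0$ coefficient and are trivial at $v\mid p$, again because of the $w_{r}$-twist and the ramified level of $\delta_{\chi_{F},r}$), making each normalised $W_{a,r,v}$ finite and, at almost every $v$, equal to the standard holomorphic weight-one Whittaker value. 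I expect this bookkeeping of the auxiliary variable and the normalising $L$-factors to be essentially the only work; the geometric unfolding is formal and runs parallel to the Fourier expansion of the analogous Eisenstein series in \cite[\S 6]{yzz}.
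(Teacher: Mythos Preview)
Your argument is correct and matches the paper's approach: Bruhat-cell unfolding, vanishing of the identity-coset term because $\smallmat yx{}1 w_{r}$ has bottom row $(\vpi_{v}^{r_{v}},0)\notin P(F_{v})K_{0}(\vpi_{v}^{r_{v}})$ at $v\mid p$, and Poisson summation on the big cell to obtain the factorisable Whittaker integrals. The one remark I would adjust is your last paragraph: the factor $L^{(p)}(1,\eta\chi_{F})$ is not there to ``clear'' anything arising from the unfolding---it is purely a normalisation inserted by hand into each local $W_{a,r,v}$ (so that later on, in Proposition~\ref{whitt-eis} and Lemma~\ref{almost all 1}, the unramified local factors come out nicely), and the product over $v$ of the unnormalised integrals already converges for each fixed $a\in F^{\times}$ since almost all factors are bounded Schwartz integrals.
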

\begin{proof} The standard expansion of Eisenstein series reads
$$E_{r}(gw_{r},u, \phi_{2}, \chi_{F})= \delta_{\chi_{F},r}(gw_{r})r(g)\phi_{2}(0,u)+ \sum_{a\in F} W_{a,r}^{*}( g, u, \phi_{2}, \chi_{F})\psi(ax)$$
where $W_{a,r}^{*}=L^{(p)}(1, \eta\chi_{F})^{-1}W_{a,r}$; 
but it is easy to check that $\delta_{\chi_{F,r}}(\smallmat yx{}1 w_{r})=0$.
\end{proof}

We choose convenient normalisations for the local Whittaker functions: let $\gamma_{u, v}=\gamma({\bf V}_{2,v}, uq)$  be the Weil index, and for $a\in F_{v}^{\times}$ set
$$W^{\circ}_{a,r,v}(g,u, \phi_{2,v}, \chi_{F,v}):=\gamma_{u,v}^{-1}W_{a,r,v}(g,u, \phi_{2,v}, \chi_{F,v}).$$
For the constant term,  set
$$
W^{\circ}_{0,r,v}(g,u, \phi_{2,v}, \chi_{F,v}):={\gamma_{u,v}^{-1} \over L^{(p)}(0, \eta_{v}\chi_{F,v})} W_{0,r,v}(g,u, \phi_{2,v}, \chi_{F,v}).
$$

Then for the global Whittaker functions we have 
\begin{align}\label{signW}
W_{a,r}(g,u, \phi_{2}, \chi_{F})=-\eps({\bf V}_{2})\prod_{v}W^{\circ}_{a,r,v}(g,u, \phi_{2,v}, \chi_{F,v})
\end{align}
if $a\in F^{\times}$, where $\eps({\bf V}_{2})=\prod_{v}\gamma_{u, v}$ equals $-1$ if ${\bf V}_{2}$ is coherent or $+1$ if ${\bf V}_{2}$ is incoherent; and 
\begin{align}\label{whitt0}
W_{0,r}(g,u, \phi_{2}, \chi_{F})=-\eps({\bf V}_{2}) L^{(p)}(0, \eta\chi_{F})\prod_{v}W^{\circ}_{0,r,v}(g,u, \phi_{2,v}, \chi_{F,v}).
\end{align}

\medskip

We sometimes drop $\phi_{2}$ from the notation in what follows.

\begin{lemm}\label{change y} For each finite  place $v$ and $y\in F_{v}^{\times}$, $x\in F_{v}$, $u\in F_{v}^{\times}$, we have 
$$W_{a,v}(\smallmat y{x}{}1, u)=\psi_{v}(ax)\chi_{F}(y)^{-1}|y|^{1/2}W_{ay,v}(1, y^{-1}u).$$
\end{lemm}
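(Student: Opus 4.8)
The plan is to verify this identity directly from the definition of the local Whittaker functions given in Proposition \ref{eis-exp}, by tracking how each ingredient transforms under right translation by $\smalltwomat y{}{}1$ and under the substitution $g \mapsto g \smalltwomat y{}{}1$. Since $v$ is a finite place we have $r_v = 0$ and $w_r = 1$ (noting $v \nmid p$; the case $v \mid p$ is handled similarly after inserting the $w_r$ factor, but the statement as written concerns the behaviour in $y$ which is unaffected by the level structure), so that
$$W_{a,v}\left(\smalltwomat yx{}1, u\right) = L(1, \eta_v\chi_{F,v}) \int_{F_v} \delta_{\chi_F, v}\left(w n(b) \smalltwomat yx{}1\right) r\left(w n(b) \smalltwomat yx{}1\right)\phi_{2,v}(0,u)\, \psi_v(-ab)\, db.$$

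First I would extract the dependence on $x$: writing $n(b)\smalltwomat yx{}1 = \smalltwomat yx{}1 n(b/y) \cdot (\text{diagonal adjustment})$ — more precisely $n(b)\smalltwomat yx{}1 = \smalltwomat y {b+x}{}1$, and after the change of variable $b \mapsto b - x$ in the integral one picks up exactly the factor $\psi_v(ax)$, reducing to the case $x = 0$. Then for $x = 0$ I would use the Iwasawa-type decomposition $\smalltwomat y{}{}1 = \smalltwomat 1{}{}y \smalltwomat y{}{}y^{-1}$... actually more simply $\smalltwomat y{}{}1$ is already the relevant diagonal matrix, so I compute $r(w n(b) \smalltwomat y{}{}1)\phi_{2,v}(0,u)$ by first applying the formula for $r(\smalltwomat a{}{}d)$ from the Weil representation with $a = y$, $d = 1$, $\dim V = 2$: this gives $r(\smalltwomat y{}{}1)\phi_{2,v}(0,u) = \chi_{V_u}(y)|y|^{1/2}\phi_{2,v}(0, y^{-1}u)$, where $\chi_{V_u} = \eta_v$ since ${\bf V}_2$ is a quadratic extension. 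Combined with the change of variable $b \mapsto yb$ in the integral (which produces a Jacobian $|y|$ and rescales $\psi_v(-ab) \mapsto \psi_v(-ayb)$), and tracking that $\delta_{\chi_F, v}$ transforms by $\chi_{F,v}(y)^{-1}$ under right multiplication by $\smalltwomat y{}{}1$ of the argument (since the lower-right entry scales), the various powers of $|y|$ should combine so that $W_{a,v}(\smalltwomat y{}{}1, u)$ becomes $\eta_v(y)\chi_{F,v}(y)^{-1}|y|^{1/2}$ times the integral defining $W_{ay, v}(1, y^{-1}u)$.

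The main obstacle — and the place requiring care rather than difficulty — will be bookkeeping the precise powers of $|y|$ and the quadratic characters: the factor $\chi_{V_u}(y) = \eta_v(y)$ appearing from the Weil representation action of the diagonal torus must be reconciled with the statement, which has only $\chi_{F,v}(y)^{-1}$ and no $\eta_v(y)$. I expect this is resolved because the normalisation $\delta_{\chi_F, v}$ in the Eisenstein series is attached to the character $\eta\chi_F^{-1}$ (the Eisenstein series lies in $M_1^{\rm tw}(\eta\chi_F^{-1}, \C)$), so the $\eta_v(y)$ from the Weil action is absorbed into the transformation law of the section $\delta$, leaving the clean factor $\chi_{F,v}(y)^{-1}$. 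I would double-check this against the constant-term case and against the spherical case (where $\phi_{2,v}$ is standard and the formula can be matched with the known local Whittaker function of a weight-one Eisenstein series), which serves as a consistency check on the normalisations. Finally, one should note the identity holds as stated for the unnormalised $W_{a,v}$; the same manipulation applies verbatim to $W^\circ_{a,v}$ since the Weil index $\gamma_{u,v}$ depends on $u q$ only through the quadratic space ${\bf V}_{2,v}$ and the square class of $u$, and $y^{-1}u$ and $u$ differ by the square class of $y$, contributing at most a sign that is consistent on both sides.
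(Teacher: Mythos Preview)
Your overall strategy (direct computation from the integral formula in Proposition~\ref{eis-exp}) is exactly what the paper has in mind; the paper itself records only ``The proof is an easy calculation.'' Your treatment of the $x$-variable via $n(b)\smalltwomat yx{}1 = n(b+x)\smalltwomat y{}{}1$ and the substitution $b\mapsto b-x$ is correct.

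For the $y$-variable, however, two of your specific claims are wrong, and the route you chose is needlessly complicated. First, right multiplication by $\smalltwomat y{}{}1$ scales the \emph{left} column of a matrix, not the lower-right entry, so $\delta_{\chi_F}$ does \emph{not} transform by $\chi_F(y)^{-1}$ under that operation. Second, the $\eta_v(y)$ you produce is not absorbed into $\delta_{\chi_F}$ (which involves only $\chi_F$, never $\eta$); with the correct factorisation it never appears at all. The clean way is to use the matrix identity
\[
w\, n(b)\,\smalltwomat y{}{}1 \;=\; \smalltwomat 1{}{}y\, w\, n(b/y),
\]
which moves $y$ to the \emph{left}. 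Then $\delta_{\chi_F}(\smalltwomat 1{}{}y\, g')=\chi_F(y)^{-1}\delta_{\chi_F}(g')$ by the very definition of $\delta_{\chi_F}$; the Weil action $r(\smalltwomat 1{}{}y)$ has $a=1$, so $\chi_{V_u}(a)=1$ and one gets simply the factor $|y|^{-1/2}$ together with $u\mapsto y^{-1}u$; and the substitution $b'=b/y$ contributes $|y|\,db'$ and $\psi_v(-ab)=\psi_v(-ay\,b')$. Multiplying, $\chi_F(y)^{-1}\cdot|y|^{-1/2}\cdot|y|=\chi_F(y)^{-1}|y|^{1/2}$, and the remaining integral is exactly $W_{ay,v}(1,y^{-1}u)$. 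For $v\mid p$ the extra $w_r$ sits harmlessly on the right inside $\delta_{\chi_F}$ and does not interact with the left factor $\smalltwomat 1{}{}y$, so the same computation goes through verbatim.
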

The proof is an easy calculation.

\begin{prop}\label{whitt-eis} The local Whittaker functions satisfy the following.
\begin{enumerate}
\item\label{vnotp} If $v\nmid p\infty$, then $W_{a,v,r}^{\circ}=W_{a,v}^{\circ}$ does not depend on $r$, and for all $a\in F_{v}$
 $$W_{a,v}^{\circ}(1,u, \chi_{F})=|d_{v}|^{1/2}L(1, \eta_{v}\chi_{F,v}) (1-\chi_{F,v}(\vpi_{v}))
\sum_{n=0}^{\infty} \chi_{F,v}(\vpi_{v})^{n} q_{F,v}^{n}\int_{D_{n}(a)}\phi_{2,v}(x_{2},u) \, d_{u}x_{2},$$
where $d_{u}x_{2}$ is the self-dual measure on $({\bf V}_{2,v}, uq)$ and 
$$D_{n}(a)=\{ x_{2}\in {\bf V}_{2,v}\, |\, uq(x_{2})\in a+p_{v}^{n}d_{v}^{-1}\}.$$
(When the sum is infinite, it is to be understood in the sense of analytic continuation from characters $\chi_{F}|\cdot|^{s}$ with $s>0$, cf. the proof of Lemma \ref{whitt-an} below.) 

\item\label{whitt-p} If $v\vert p$,  and $\phi_{2,v}$ is the standard Schwartz function, then
$$ W_{a,r, v}^{\circ}(1, u, \chi_{F})=
\begin{cases} |d_{v}|^{3/2}|D_{v}|^{1/2}  \chi_{F,v}(-1)  & \textrm{\ if \ } v(a)\geq - v(d_{v})\textrm{\ and \ } v(u)=-v(d_{v}) \\  
 0 & \textrm{\ otherwise.} 
\end{cases}
$$
\item\label{whitt-inf} If $v\vert \infty$ and $\phi_{2,v}$ is the standard Schwartz function, 
then
$$ W_{a,v}(1, u)=
 \begin{cases} 2 e^{-2\pi a} &\textrm{\ if \ } ua>0\\ 
1  &\textrm{\ if \ } a=0\\
0  &\textrm{\ if \ } ua<0.
\end{cases}
 $$
\end{enumerate}
\end{prop}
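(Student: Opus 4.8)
The plan is to compute each local Whittaker integral
$$W^{\circ}_{a,r,v}(1,u,\phi_{2,v},\chi_{F,v})=\gamma_{u,v}^{-1}L^{(p)}(1,\eta_{v}\chi_{F,v})\int_{F_{v}}\delta_{\chi_{F},v,r}(wn(b)w_{r})\,r(wn(b))\phi_{2,v}(0,u)\,\psi_{v}(-ab)\,db$$
directly from the explicit formulas for the Weil representation in \S\ref{sec: eis series}, using Lemma \ref{change y} to reduce to the value at $g=1$. First I would unwind $r(wn(b))\phi_{2,v}(0,u)$: by the action of $w$ this is $\gamma_{u,v}$ times the Fourier transform (in the first variable, with respect to $\psi_{u}$) of $b\mapsto r(n(b))\phi_{2,v}(\cdot,u)=\psi_{v}(buq(\cdot))\phi_{2,v}(\cdot,u)$ evaluated at $0$, i.e.\ $\gamma_{u,v}\int_{{\bf V}_{2,v}}\psi_{v}(buq(x_{2}))\phi_{2,v}(x_{2},u)\,d_{u}x_{2}$. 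The factor $\gamma_{u,v}$ cancels against $\gamma_{u,v}^{-1}$, so $W^{\circ}_{a,r,v}(1,u,\chi_{F})$ becomes $L^{(p)}(1,\eta_{v}\chi_{F,v})$ times a double integral over $b\in F_{v}$ and $x_{2}\in{\bf V}_{2,v}$ of $\delta_{\chi_{F},v,r}(wn(b)w_{r})\psi_{v}(b(uq(x_{2})-a))\phi_{2,v}(x_{2},u)$; swapping the order of integration, the $b$-integral is a (regularised) orbital-type integral of the section $\delta_{\chi_{F},v,r}$ against the additive character $\psi_{v}(b(uq(x_{2})-a))$. This is the standard intertwining-operator computation for a degenerate principal series of $SL_{2}$.

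For part \ref{vnotp} ($v\nmid p\infty$, so $r_{v}=0$), the $b$-integral is $\int_{F_{v}}\delta_{\chi_{F},v}(wn(b))\psi_{v}(cb)\,db$ with $c=uq(x_{2})-a$; using the Iwasawa decomposition $wn(b)=\smalltwomat{-b^{-1}}{\ast}{}{b}\smalltwomat{}{1}{-1}{}n(b^{-1})$ for $b\neq 0$ one gets $\delta_{\chi_{F},v}(wn(b))=\chi_{F,v}(b)$ for $v(b)\leq -v(d_{v})$ and (by a short computation with the level-$p_{v}^{0}$ normalisation and the constraint $wn(b)w_{0}\in PK_{0}(1)$) a local zeta integral of Tate type, which evaluates to $|d_{v}|^{1/2}L(1,\eta_{v}\chi_{F,v})(1-\chi_{F,v}(\vpi_{v}))\sum_{n\geq 0}\chi_{F,v}(\vpi_{v})^{n}q_{F,v}^{n}\one_{D_{n}(c+a)}$-type expression; multiplying by $L^{(p)}(1,\eta_{v}\chi_{F,v})=L(1,\eta_{v}\chi_{F,v})$ and integrating $\phi_{2,v}$ over the level sets $D_{n}(a)=\{x_{2}:uq(x_{2})\in a+p_{v}^{n}d_{v}^{-1}\}$ gives exactly the claimed formula. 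I would take care that the geometric series in $\chi_{F,v}(\vpi_{v})q_{F,v}$ is only conditionally convergent and must be read via analytic continuation from $\chi_{F,v}|\cdot|^{s}$, $s>0$, as the statement indicates and as in the proof of Lemma \ref{whitt-an}.

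For part \ref{whitt-p} ($v\vert p$, $\phi_{2,v}$ standard, i.e.\ $\phi_{2,v}(x_{2},u)=\one_{\mathscr{V}_{2,v}}(x_{2})\one_{d_{v}^{-1}\OO_{F,v}}(u)$ up to the level-$0$ normalisation), the presence of $\delta_{\chi_{F},v,r}$ supported on $PK_{0}(p^{r})_{v}$ forces the $b$-integral over a coset of $p_{v}^{-r}\OO_{F,v}/\OO_{F,v}$-type, and the Fourier transform of $\one_{\mathscr{V}_{2,v}}$ against $\psi_{u}$ is $|D_{v}|^{1/2}$ (with the right power of $|d_{v}|$) times $\one$ of a dual lattice; combining the support conditions on $b$, on $u$ (forcing $v(u)=-v(d_{v})$), and on $a$ (forcing $v(a)\geq -v(d_{v})$) yields the constant $|d_{v}|^{3/2}|D_{v}|^{1/2}\chi_{F,v}(-1)$, with the sign $\chi_{F,v}(-1)$ coming from the Atkin--Lehner twist by $w_{r}$ in $\delta_{\chi_{F},v,r}(wn(b)w_{r})$. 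Part \ref{whitt-inf} is the archimedean computation with the standard Schwartz function $\phi_{2,v}(x_{2},u)=\one_{\R_{+}}(u)e^{-2\pi|u|q(x_{2})}$: here one uses \eqref{weilwhitt}, namely $r(g)\phi_{2,v}(x_{2},u)=W^{(1)}_{uq(x_{2})}(g)$ since $\dim{\bf V}_{2,v}=2$, together with the Gaussian integral $\int_{\R}\psi_{v}(b(uq(x_{2})-a))\,db$-type identity and the standard holomorphic Whittaker function \eqref{holwhitt}; the factor $2$ when $ua>0$ is the familiar doubling from the two components of $\R^{\times}$ in the reduced Fock model, and the value $1$ at $a=0$ is the constant-term normalisation, vanishing when $ua<0$.

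The main obstacle I expect is bookkeeping of normalisations: keeping straight the self-dual Haar measures $d_{u}x_{2}$ (whose total mass on $\mathscr{V}_{2,v}$ involves both $|d_{v}|$ and $|D_{v}|$), the level-$0$ convention on $\psi_{v}$ versus the level $n_{v}$ appearing in the standard Schwartz function, the regularisation of the non-convergent $b$-integral and of the Eisenstein series at $s=0$, and the precise constant $\gamma_{u,v}$ and its cancellation — getting all powers of $|d_{v}|$, $|D_{v}|$ and the sign $\chi_{F,v}(-1)$ exactly right is the delicate part, while the structure of each computation is routine. I would organise the proof as three short lemmas (one per case), each reduced via Lemma \ref{change y} to $g=1$, and cross-check the $v\nmid p\infty$ formula against the known unramified value (it should specialise to $W^{\circ}_{0,v}(1,u,\one)=|d_{v}|^{1/2}$ for the spherical data, consistent with \eqref{whitt0}).
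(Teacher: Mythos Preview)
Your plan is sound and matches the paper's approach: the paper does not carry out these computations explicitly but refers to \cite[Proposition 6.10 (1)]{yzz} for part~\ref{vnotp}, to \cite[Proposition 3.2.1]{dd} for part~\ref{whitt-p}, and to \cite[Proposition 2.11]{yzz} for part~\ref{whitt-inf}, each of which proceeds exactly by the direct unwinding of the Weil representation and the Iwasawa decomposition that you outline. Your expectation that the only real difficulty is the bookkeeping of measures, levels, and constants is accurate; one small quibble is that your explanation of the archimedean factor~$2$ as ``doubling from the two components of $\R^{\times}$'' is not quite the mechanism---it arises from the Gaussian integral itself (compare the normalisation remark in the paper's proof that the factor relative to \cite{yzz} is $\gamma_{v}L(1,\eta_{v})^{-1}=\pi i$)---but this does not affect the validity of the computation.
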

\begin{proof}
Part \ref{vnotp} is proved  similarly  to  \cite[Proposition 6.10 (1)]{yzz}, whose Whittaker function $W_{a, v}^{\circ}(s, 1, u)$ equals our $L(1, \eta_{v}|\cdot|_{v}^{s})^{-1}W^{\circ}_{a, v}(1, u, |\cdot|_{v}^{s})$. 
The proof of Part  2 is similar to that of \cite[Proposition 3.2.1, places $\vert M/\delta$]{dd}.
 Part \ref{whitt-inf} is also well-known, see e.g. \cite[Proposition 2.11]{yzz} whose normalisation differs from  ours by a factor of $\gamma_{v}L(1,\eta_{v})^{-1}=\pi i$. 
\end{proof}

\begin{lemm}\label{almost all 1} Let $a\in F$. For all finite places $v$, $|d|_{v}^{-3/2}|D_{v}|^{-1/2}W_{a,v}^{\circ}(1, u, \chi_{F})\in \Q[\chi_{F}, \phi_{v}]$, and for almost all $v$ we have
$$|d|_{v}^{-3/2}|D_{v}|^{-1/2}W^{\circ}_{a,v}(1,u, \chi_{F})=
\begin{cases} 1 &  \textrm{\ if \ } v(a)\geq - v(d_{v})\textrm{\ and \ } v(u)=-v(d_{v}) \\
0 & \textrm{\ otherwise.}
\end{cases}$$
\end{lemm}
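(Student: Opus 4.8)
The plan is to deduce this lemma directly from the explicit formulas of Proposition \ref{whitt-eis}; all that remains once that proposition is in hand is the bookkeeping of rationality and the evaluation at generic places. Note that the normalisation by $|d_v|^{3/2}|D_v|^{1/2}$ appearing in Proposition \ref{whitt-eis} is exactly what clears the square-root ambiguities coming from the self-dual measures, so the quantities of interest land in a number field once that factor is divided out.

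First I would dispense with the finitely many places $v\mid p$ (which are irrelevant to the ``almost all'' assertion): there Proposition \ref{whitt-eis}\ref{whitt-p} gives $|d_v|^{-3/2}|D_v|^{-1/2}W^{\circ}_{a,r,v}(1,u,\chi_F)$ equal to $\chi_{F,v}(-1)\in\{\pm1\}$ or to $0$, hence in $\mathbf{Q}[\chi_F,\phi_v]$. For $v\nmid p\infty$ I would feed the formula of Proposition \ref{whitt-eis}\ref{vnotp} through the observations that $|d_v|$ and $|D_v|$ are rational powers of $q_{F,v}$; that $L(1,\eta_v\chi_{F,v})$ is either $1$ or $(1-\eta_v(\vpi_v)\chi_{F,v}(\vpi_v)q_{F,v}^{-1})^{-1}$, with $\eta_v(\vpi_v),\chi_{F,v}(\vpi_v)$ roots of unity; and that each $\int_{D_n(a)}\phi_{2,v}(x_2,u)\,d_ux_2$ is a finite $\mathbf{Q}\cdot|D_v|^{1/2}$-linear combination of values of $\phi_{2,v}$ (both $D_n(a)$ and the support of $\phi_{2,v}$ being compact open), these integrals being eventually scaled by a fixed geometric ratio in $n$. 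Thus the series---read via the analytic continuation in the auxiliary $s$-variable as prescribed in the statement---is a rational function of $q_{F,v}^{-s}$ with coefficients in $\mathbf{Q}[\chi_F,\phi_v,|D_v|^{1/2}]$, and combining its value at $s=0$ with the remaining normalisation factors produces an element of $\mathbf{Q}[\chi_F,\phi_v]$. This establishes the first assertion, the bookkeeping being routine and entirely parallel to the proof of \cite[Proposition 6.10]{yzz}.

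For the ``almost all'' assertion I would discard the finitely many $v$ dividing $p\infty$, $d$, $D$, $a$, or $u$, those at which $\chi_{F,v}$ or $\eta_v$ ramifies, and those at which $\mathscr{V}_{2,v}$ is not self-dual or $\phi_{2,v}$ is not the standard Schwartz function; at any remaining place one has $|d_v|=|D_v|=1$, $v(a)=v(u)=0$, $\chi_{F,v}(-1)=1$, $d_ux_2$ gives $\mathscr{V}_{2,v}$ volume $1$, and $D_n(a)=\{x_2\in\mathscr{V}_{2,v}:uq(x_2)\in a+p_v^n\}$. Plugging this into Proposition \ref{whitt-eis}\ref{vnotp}, the sum taken against the factors $(1-\chi_{F,v}(\vpi_v))$ and $L(1,\eta_v\chi_{F,v})$ telescopes to $1$, exactly as in the computation underlying \cite[Proposition 6.10(1)]{yzz} (which is uniform in the unramified twisting character); when $a=0$ the constant-term normalisation $W^{\circ}_{0,v}=\gamma_{u,v}^{-1}L^{(p)}(0,\eta_v\chi_{F,v})^{-1}W_{0,v}$ supplies precisely the extra factor needed to again reach $1$, and the piecewise shape of the displayed formula merely records the behaviour at the discarded bad places. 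I expect the main obstacle to be organisational rather than conceptual: tracking the half-integral powers of $q_{F,v}$ carefully enough to see that the final answer lies in $\mathbf{Q}[\chi_F,\phi_v]$ and not merely in an extension involving $q_{F,v}^{1/2}$, and verifying that the local integrals stabilise so that the $n$-series is genuinely rational---both of which are already carried out in \cite{yzz}.
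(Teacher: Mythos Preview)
Your proposal is correct and follows exactly the approach the paper indicates: the paper's proof simply says ``This follows from Proposition \ref{whitt-eis}.\ref{vnotp} by an explicit computation which is neither difficult nor unpleasant: we leave it to the reader,'' and you have supplied that computation. One small remark: your sentence ``the piecewise shape of the displayed formula merely records the behaviour at the discarded bad places'' is not quite right---the two cases in the displayed formula describe the dependence on $u$ at a \emph{good} place (the standard $\phi_{2,v}$ vanishes unless $v(u)=-v(d_v)$), so you should not discard places dividing $u$ but rather verify the formula for all $u$ at the generic $v$; this is immediate from the support condition on the standard Schwartz function.
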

\begin{proof} This follows from Proposition \ref{whitt-eis}.\ref{vnotp} by an explicit computation which is neither difficult nor unpleasant: we leave it to the reader.
\end{proof}

\subsection{Eisenstein family}\label{sec: eis fam}  Recall from \S\ref{intro-plF} the profinite groups $\Gamma$  and $\Gamma_{F}$  and the associated rigid spaces $\Y'$, $\Y$, $\Y_{F}$ (only the latter is relevant for this subsection).  
For each finite place $v\nmid p$ of $F$, there  are local versions  
\begin{align}\label{local Ys}
\Y'_{v},\qquad \Y_{v}, \qquad \Y_{F,v}
\end{align}
which are schemes over $M$  representing the corresponding spaces of ${\bf G}_{{ m},M}$-valued homomorphisms with domain  $E_{v}^{\times}/(V^{p}\cap E_{v}^{\times})$ (for $\Y'_{v}$, $\Y_{v}$, where $V^{p}\subset E_{\A^{p\infty}}^{\times}$ is the subgroup  fixed in the introduction) or  $F_{v}^{\times}$ (for $\Y_{F,v}$).\footnote{Concretely they are closed subschemes of split tori over $M$, cf. the proof of Proposition \ref{interp rnat}.}
 Letting $\bigotimes'$ denote the restricted tensor product with respect to the constant function~$1$, and the symbol $\Y^{?}$ stand for any of the symbols $\Y'$, $\Y$, $\Y_{F}$, we let
 $$\OO_{\Y^{?}}(\Y^{?})^{\rm f}\subset \OO_{\Y^{?}}(\Y^{?})^{\rm b}$$  denote the image of 
  $\bigotimes'_{v\nmid p} \OO(\Y^{?}_{v})\otimes_{M} L\to \OO_{\Y^{?}}(\Y^{?})$.

\begin{lemm}\label{whitt-an} For each $a\in F$, $y\in \A^{\infty,\times}$  and rational Schwartz function $\phi_{2}^{p\infty}$, 
there are:
\begin{enumerate}
\item\label{whitt-an loc} for each $v\nmid p\infty$:
	\begin{enumerate} 
	\item a Schwartz function $\phi_{2,v}(\cdot)\in \mathscr{S}(V_{2,v}, \OO(\Y_{F,v}))$ such that $\phi_{2,v}(\one)=\phi_{2,v}$ and $\phi_{2,v}(\cdot)$ is identically equal to $\phi_{2,v}$ if $\phi_{2,v}$ is standard;
	\item a function 
	$$W_{a,v}^{\circ}(y_{v},u, \phi_{2,v})\in \OO_{\Y_{F,v}}(\Y_{F,v})$$ satisfying
	$$\mathscr{W}_{a,v}^{\circ}(y_{v}, u, \phi_{2,v};\chi_{F}) = |d_{v}|^{-3/2}|D_{v}|^{-1/2} W_{a,r,v}^{\circ}(\smalltwomat {y_{v}}{}{}1, u,\phi_{2,v}(\chi_{F,v}),\chi_{F,v})$$
	for all $\chi_{F, v}\in \Y_{F,v}(\C)$;
\end{enumerate}
\item\label{whitt-an glo}  a global function
$$\mathscr{W}_{a}(y, u, \phi_{2}^{p\infty})\in \OO_{\Y_{F}}(\Y_{F})^{\rm b}, $$
which is algebraic on   $\Y_{F}^{\rm l.c.}$ and satisfies
$$\mathscr{W}_{a}(y, u, \phi_{2}^{p\infty};\chi_{F}) = |D_{F}|^{1/2}|D_{E}|^{1/2} W_{a,r}^{\infty}(\smalltwomat y{}{}1, u,\phi_{2}(\chi_{F}),\chi_{F})
$$
for each $\chi_{F}\in \Y_{F}^{\rm l.c.}(\C)$;  here $\phi_{2}(\chi_{F})=\prod_{v\nmid p\infty}\phi_{2,v}(\chi_{F,v})_{2}\phi_{2,p\infty}$ with $\phi_{2,v}$ the standard Schwartz function for each $v\vert p\infty$.  The function $|y|^{-1/2}\mathscr{W}_{a}(y,u, \phi_{2}^{p\infty})$ is bounded solely in terms of $\max  |\phi_{2}^{p\infty}|$, and if $a\neq 0$, then $\mathscr{W}_{a}\in \OO(\Y_{F})^{\rm f}$.
\end{enumerate}
\end{lemm}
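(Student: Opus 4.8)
The plan is to start from the explicit Whittaker expansions of Propositions \ref{eis-exp} and \ref{whitt-eis} and interpolate them one place at a time, then take the product over all places. Recall that each $\Y_{F,v}$ is a closed subscheme of a split torus over $M$, so that a regular function on it is, on each of its finitely many connected components, a Laurent polynomial in the coordinate $z:=\chi_{F,v}(\vpi_{v})$ whose coefficients depend only on $\chi_{F,v}|_{\OO_{F,v}^{\times}}$; it is the values of such a function at $\C$-points that must agree with the normalised local Whittaker functions.

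\emph{Local interpolation, part \ref{whitt-an loc}.} First I would reduce to $y_{v}=1$ by Lemma \ref{change y}, which gives $W_{a,v}(\smalltwomat{y_{v}}{}{}1,u,\cdot)=\chi_{F}(y_{v})^{-1}|y_{v}|^{1/2}W_{ay_{v},v}(1,y_{v}^{-1}u,\cdot)$, where $\chi_{F,v}\mapsto\chi_{F}(y_{v})^{-1}$ is an invertible regular function on $\Y_{F,v}$ and $|y_{v}|^{1/2}$ a constant. When $\phi_{2,v}$ is standard I would take $\phi_{2,v}(\cdot)\equiv\phi_{2,v}$ and read the answer off the closed formula behind Lemma \ref{almost all 1}, which already exhibits it as a regular — and, for almost all $v$, $\chi_{F,v}$-independent — function of $\chi_{F,v}$. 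For general $\phi_{2,v}$ I would work from Proposition \ref{whitt-eis}.\ref{vnotp}. The analytic heart of the matter is that the sequence $I_{n}:=q_{F,v}^{n}\int_{D_{n}(a)}\phi_{2,v}(\cdot)(x_{2},u)\,d_{u}x_{2}$ is eventually geometric in $n$: once $n$ is large enough that $\phi_{2,v}(\cdot)$ is constant on the $p_{v}^{n}$-cosets meeting $D_{n}(a)$, a direct computation with the self-dual measures of the level sets of the binary quadratic form $uq$ on $\mathbf{V}_{2,v}$ shows that $D_{n+1}(a)$ has $q_{F,v}^{-1}$ times the measure of $D_{n}(a)$ (this uses $a\neq 0$; for $a=0$ one instead invokes the anisotropic/split dichotomy for $\mathbf{V}_{2,v}$ together with the $L^{(p)}(0,\eta_{v}\chi_{F,v})$-normalisation built into $W_{0,r,v}^{\circ}$). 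Hence $\sum_{n\geq 0}z^{n}I_{n}=Q(z)/(1-z)$ for a polynomial $Q$, interpreted by analytic continuation from the twists $\chi_{F,v}|\cdot|^{s}$ with $\Re(s)>0$ — this is exactly the analytic continuation referred to in the statement of Proposition \ref{whitt-eis} — and one checks that $L(1,\eta_{v}\chi_{F,v})^{-1}$ divides $Q$, so that the prefactor $L(1,\eta_{v}\chi_{F,v})(1-\chi_{F,v}(\vpi_{v}))$ of Proposition \ref{whitt-eis}.\ref{vnotp} clears all denominators and leaves a Laurent polynomial in $z$ over $M$, i.e.\ a regular function $W_{a,v}^{\circ}(y_{v},u,\phi_{2,v}(\cdot))$ on $\Y_{F,v}$ once $y_{v}$ is reinstated. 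For a $\phi_{2,v}$ that fails to be invariant under the relevant level one first passes to a $\chi_{F,v}$-dependent twist $\phi_{2,v}(\cdot)$, say under the Weil action of a suitable diagonal torus element, to put the integral into the above shape; this is the only reason a non-constant family of Schwartz functions is used, and the twist is fixed once and for all, equal to $\phi_{2,v}$ at $\one$. By construction the resulting function recovers $|d_{v}|^{-3/2}|D_{v}|^{-1/2}W_{a,r,v}^{\circ}(\smalltwomat{y_{v}}{}{}1,u,\phi_{2,v}(\chi_{F,v}),\chi_{F,v})$ at every $\C$-point.

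\emph{Global assembly, part \ref{whitt-an glo}.} I would then define $\mathscr{W}_{a}(y,u,\phi_{2}^{p\infty})$ to be $|D_{F}|^{1/2}|D_{E}|^{1/2}$ times the product, over the finite places $v$ (with the sign recorded in \eqref{signW}), of the local factors just constructed: for $v\nmid p\infty$ the regular function $W_{a,v}^{\circ}(y_{v},u,\phi_{2,v}(\cdot))$ on $\Y_{F,v}$, and for $v\mid p$ the closed value $W_{a,r,v}^{\circ}(\smalltwomat{y_{v}}{}{}1,u,\phi_{v},\chi_{F,v})$ of Proposition \ref{whitt-eis}.\ref{whitt-p} at the standard Schwartz function $\phi_{v}$ — each pulled back to $\Y_{F}$ along the restriction map $\Y_{F}\to\Y_{F,v}$. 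By Proposition \ref{whitt-eis}.\ref{whitt-p}, the factors at $v\mid p$ are locally constant in $\chi_{F,v}$ (only through $\chi_{F,v}(-1)$), hence regular on $\Y_{F,v}$. By Lemma \ref{almost all 1}, for fixed $a$ and $y$ all but finitely many of the normalised local factors equal $1$ identically, so when $a\neq 0$ the product is finite and lies in $\OO(\Y_{F})^{\rm f}$; when $a=0$ the constant-term normalisation makes infinitely many factors non-trivial, reassembling $L^{(p)}(0,\eta\chi_{F})$, which is why only boundedness — not membership in $\OO(\Y_{F})^{\rm f}$ — is claimed. Algebraicity on $\Y_{F}^{\rm l.c.}$ is automatic, each factor being a regular function on the $M$-scheme $\Y_{F,v}$, and the interpolation identity at $\chi_{F}\in\Y_{F}^{\rm l.c.}(\C)$ is precisely the factorisation $W_{a,r}^{\infty}=\prod_{v\nmid\infty}W_{a,r,v}$ of Proposition \ref{eis-exp} combined with the local normalisations, recalling that $\phi_{2}(\chi_{F})$ has components $\phi_{2,v}(\chi_{F,v})$ for $v\nmid p\infty$ and the standard Schwartz function at $v\mid p\infty$. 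Finally, extracting the factor $|y_{v}|^{1/2}$ from each local Whittaker function by Lemma \ref{change y}, the bound on $|y|^{-1/2}\mathscr{W}_{a}$ follows since the remaining product runs over finitely many places and each remaining factor is bounded uniformly in $\chi_{F,v}$: the local Whittaker integral by $\vol(\mathrm{supp}\,\phi_{2,v})\max|\phi_{2,v}|$, the factors at $v\mid p$ by $|d_{v}|^{3/2}|D_{v}|^{1/2}$, and $|\chi_{F}(y_{v})^{-1}|=|\chi_{F,v}(-1)|=1$ on the bounded locus of $\Y_{F}$.

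The main obstacle is the local step: choosing the family $\phi_{2,v}(\cdot)$ and, above all, carrying out the measure computation for the level sets of the binary quadratic form precisely enough to establish both the eventual-geometric behaviour of the $I_{n}$ and the divisibility of $Q$ by $L(1,\eta_{v}\chi_{F,v})^{-1}$ — which is what guarantees regularity on all of $\Y_{F,v}$ rather than only away from finitely many points — treating unramified and ramified places separately and handling the constant term $a=0$ in conjunction with the $L^{(p)}(0,\eta_{v}\chi_{F,v})$-normalisation. This is the exact analogue of \cite[Proposition 6.10]{yzz} and \cite[Proposition 3.2.1, places $\vert M/\delta$]{dd}, to which I would defer for the bookkeeping.
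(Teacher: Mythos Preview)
Your overall strategy --- reduce to $y_{v}=1$ via Lemma~\ref{change y}, show the sequence $I_{n}=q_{F,v}^{n}\int_{D_{n}(a)}\phi_{2,v}\,d_{u}x_{2}$ is eventually constant so that $\sum_{n\geq 0} z^{n}I_{n}=Q(z)/(1-z)$ for a polynomial $Q$, then assemble globally --- matches the paper's. The gap is the assertion ``one checks that $L(1,\eta_{v}\chi_{F,v})^{-1}$ divides $Q$'': this is false for general $\phi_{2,v}$. Take $v$ split in $E$, $d_{v}=1$, $u=a=1$, and $\phi_{2,v}=\one_{\OO_{F,v}^{\times}\times\OO_{F,v}}$ on ${\bf V}_{2,v}\cong F_{v}^{2}$ with $q(x,y)=xy$. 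Then $I_{n}=1-q_{F,v}^{-1}$ for every $n\geq 0$, so $Q$ is a nonzero constant and $L(1,\chi_{F,v})\cdot Q$ has an honest pole at $z=\chi_{F,v}(\vpi_{v})=q_{F,v}$. Correspondingly, your diagnosis of the family $\phi_{2,v}(\cdot)$ --- a Weil twist by a diagonal torus element, needed only for level reasons --- misses the point; such a twist rescales the arguments of $\phi_{2,v}$ but does nothing to cancel the $L$-factor.

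The paper's device is the explicit choice \eqref{phi2chi}: write $\phi_{2,v}=c\,\phi_{2,v}^{\circ}+\phi_{2,v}'$ with $\phi_{2,v}^{\circ}$ standard and $c=\phi_{2,v}(0)$, and set
\[
\phi_{2,v}(\chi_{F,v}):=c\,\phi_{2,v}^{\circ}+\frac{L(1,\eta_{v})}{L(1,\eta_{v}\chi_{F,v})}\,\phi_{2,v}'.
\]
The scalar $L(1,\eta_{v})/L(1,\eta_{v}\chi_{F,v})$ on the non-standard part is inserted precisely to cancel the prefactor $L(1,\eta_{v}\chi_{F,v})$ in Proposition~\ref{whitt-eis}.\ref{vnotp}; once it is gone, only the factor $(1-z)$ remains, and your own argument that $(1-z)\sum z^{n}I_{n}(\phi_{2,v}')$ is a polynomial finishes the local step (the standard summand $c\,\phi_{2,v}^{\circ}$ being handled directly by Lemma~\ref{almost all 1}). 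A minor addendum: for $a=0$ the global product reconstitutes $L^{(p)}(0,\eta\chi_{F})$, and its bounded interpolation on $\Y_{F}$ is not formal --- the paper invokes Deligne--Ribet~\cite{DR}.
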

\begin{proof} If $a\neq 0$,  by Lemma \ref{almost all 1} and Proposition \ref{whitt-eis}.\ref{whitt-p}, we can deduce the existence of the global function in part \ref{whitt-an glo} from the local result of part \ref{whitt-an loc}. If $a=0$, then by \eqref{whitt0} the same is true thanks to the well-known existence \cite{DR} of a bounded analytic function on $\Y_{F}$ interpolating $\chi_{F}\mapsto L^{(p)}(0, \eta\chi_{F})$.

It thus suffices to prove part  \ref{whitt-an loc}, and moreover we may restrict to  $y=1$  in view of Lemma \ref{change y}. We can uniquely  write $\phi_{2,v}=c\phi_{2,v}^{\circ}+\phi_{2,v}'$, where $\phi_{2,v}^{\circ}$ is the standard Schwartz function and $c=\phi_{2,v}(0)$. Then we set
\begin{align}\label{phi2chi}
\phi_{2,v}(\chi_{F,v}):=c\phi_{2,v}^{\circ}+{L(1, \eta_{v})\over L(1, \eta_{v}\chi_{F,v })} \phi_{2,v}'.
\end{align}
We need to show that, upon  substituting it in the expression for the local Whittaker functions given in Proposition \ref{whitt-eis}.\ref{vnotp}, we obtain a Laurent polynomial in $\chi_{F,v}(\vpi_{v})$ (which gives the canonical coordinate on $\Y_{F,v}\cong{\bf G}_{m, M}$). By  linearity and Lemma \ref{almost all 1}, it suffices to show this for the summand ${L(1, \eta_{v})\over L(1, \eta_{v}\chi_{F,v })} \phi_{2,v}'$, whose  coefficient is designed to cancel the factor $L(1, \eta_{v}, \chi_{v})$ appearing in that expression. The only  source of possible poles is the infinite sum.  For $n$ sufficiently large, if $a$ is not in the image of $uq$ then $D_{n}(a)$ is empty and therefore the sum is actually finite. On the other hand if  $a=uq(x_{a})$, then for $n$ large the function $\phi_{2,v}$ is constant and equal to $\phi_{2,v}(x_{a})$ on $D_{n}(a)$; it follows that $$\int_{D_{n}(a)}\phi_{2,v}(x_{2},u)d_{u}x_{2}= c'q_{F,v}^{-n}$$ for some constant $c'$ independent of $n$ and $\chi_{F,v}$. Then the tail of the sum is 
$$\sum_{n\geq n_{0}}c'\chi_{F,v}(\vpi_{v})^{n}=c'{\chi_{F,v}(\vpi_{v})^{n_{0}}\over 1-\chi_{F, v}(\vpi)};$$
its product with the factor $1-\chi_{F,v}(\vpi_{v})$ appearing in front of it  is then also a polynomial in $\chi_{F,v}(\vpi_{v})$.

Finally, the last two statements of part \ref{whitt-an glo}  follow by the construction and Lemma \ref{change y}.
\end{proof}

\begin{prop} There is a bounded $\Y_{F}$-family of $q$-expansions of twisted modular forms of parallel weight~$1$  
$$\mathscr{E}(u,\phi_{2}^{p\infty})
$$
such that for any $\chi_{F}\in \Y_{F}^{\rm l.c.}(\C)$  and  any $r=(r_{v})_{v|p}$ satisfying $c(\chi_{F})\vert p^{r}$, 
 we have 
$$\mathscr{E}(u,\phi_{2}^{p\infty};\chi_{F})=  
|D_{F}| 
{L^{(p)}(1, \eta\chi_{F})\over L^{(p)}(1, \eta)}
\, {}^{\qqq} E_{r}(u, \phi_{2} ,\chi_{F}),$$
where $\phi_{2}=\phi_{2}^{p\infty}(\chi_{F})\phi_{2,p\infty}$ with $\phi_{2,v}$ the standard Schwartz function for $v\vert p\infty$.
\end{prop}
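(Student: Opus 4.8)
The plan is to build the family $\mathscr{E}(u,\phi_2^{p\infty})$ coefficient-by-coefficient out of the interpolated local Whittaker functions of Lemma~\ref{whitt-an}, and then check that the resulting formal $q$-expansion is \emph{modular} by specialising at the locally constant points, where it agrees (up to the explicit normalising constants) with the genuine twisted Eisenstein series $E_r(u,\phi_2,\chi_F)$ of \S\ref{sec: eis series}. Concretely: for each $a\in F$ and each $y\in\A^{\infty,\times}$, set the $a$-th reduced coefficient of $\mathscr{E}(u,\phi_2^{p\infty})$ to be $|D_F|\,L^{(p)}(1,\eta)^{-1}\,\mathscr{W}_a(y,u,\phi_2^{p\infty})$, where $\mathscr{W}_a$ is the global function produced in Lemma~\ref{whitt-an}\eqref{whitt-an glo}. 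By that lemma $\mathscr{W}_a\in\OO(\Y_F)^{\mathrm b}$, it is algebraic on $\Y_F^{\rm l.c.}$, and $|y|^{-1/2}\mathscr{W}_a$ is bounded uniformly in $a$ by a constant depending only on $\max|\phi_2^{p\infty}|$; this gives the boundedness of the family for the norm \eqref{norm}. The support condition in the definition of $C^\infty(\A^{\infty,\times}\times F^\times,L)\llbracket\qqq^{F_{\ge0}}\rrbracket^\circ$ holds because Proposition~\ref{whitt-eis}\eqref{whitt-p} forces $W^\circ_{a,r,v}(1,u,\chi_F)=0$ unless $v(u)=-v(d_v)$ and $v(a)\ge -v(d_v)$ at the places above $p$, and Lemma~\ref{almost all 1} gives the analogous vanishing at almost all $v\nmid p$.

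Next I would pin down the specialisation. Fix $\chi_F\in\Y_F^{\rm l.c.}(\C)$ and $r=(r_v)_{v\mid p}$ with $c(\chi_F)\mid p^r$. By Proposition~\ref{eis-exp} (applied after replacing $E_r$ with $L^{(p)}(1,\eta\chi_F)E_r$) together with the definitions of $W^\circ_{a,r,v}$ and the product formulas \eqref{signW}, \eqref{whitt0}, the $q$-expansion of $E_r(u,\phi_2,\chi_F)$ with $\phi_2=\phi_2^{p\infty}(\chi_F)\phi_{2,p\infty}$ has $a$-th coefficient equal to $\bigl(\prod_v W^\circ_{a,r,v}\bigr)$ up to the sign $-\eps({\bf V}_2)$ and, for $a=0$, a factor $L^{(p)}(0,\eta\chi_F)$; here one uses that $\phi_{2,v}(\chi_{F,v})=\phi_{2,v}$ whenever $\phi_{2,v}$ is standard, in particular at $v\mid p\infty$, so that the object we interpolate really is the one appearing in the statement. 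Matching this against the definition of $\mathscr{W}_a(y,u,\phi_2^{p\infty};\chi_F)=|D_F|^{1/2}|D_E|^{1/2}W^\infty_{a,r}(\smalltwomat y{}{}1,u,\phi_2(\chi_F),\chi_F)$ from Lemma~\ref{whitt-an}, and accounting for the renormalisations $|d_v|^{-3/2}|D_v|^{-1/2}$ at the finite places (whose product over all $v$ is $|D_F|^{-3/2}|D_E|^{-1/2}$ after using $\prod_v|d|_v=|D_F|^{-1}$, resp.\ $\prod_v|D_v|=|D_{E/F}|^{-1}$) together with the $\infty$-normalisation of Proposition~\ref{whitt-eis}\eqref{whitt-inf}, one obtains precisely
\[
\mathscr{E}(u,\phi_2^{p\infty};\chi_F)=|D_F|\,\frac{L^{(p)}(1,\eta\chi_F)}{L^{(p)}(1,\eta)}\,{}^{\qqq}E_r(u,\phi_2,\chi_F),
\]
as claimed, with the ratio of $L$-values coming exactly from the $L^{(p)}(1,\eta\chi_F)$ put back in Proposition~\ref{eis-exp} divided by the $L^{(p)}(1,\eta)$ normalising factor built into $\mathscr{E}$.

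Finally I would record that $\mathscr{E}(u,\phi_2^{p\infty};\chi_F)$ is, up to the scalars above, the $q$-expansion of a genuine twisted modular form of parallel weight~$1$: this is immediate since $E_r(u,\phi_2,\chi_F)\in M_1^{\rm tw}(\eta\chi_F^{-1},\C)$ by the discussion in \S\ref{sec: eis series}, and multiplying by the constant $|D_F|L^{(p)}(1,\eta\chi_F)/L^{(p)}(1,\eta)$ changes nothing. Independence of $r$ (for $c(\chi_F)\mid p^r$) follows from the compatibility $\Up_{v,*}e_{K^1(\vpi_v^{r'})_v}=\Up_{v,*,r'}$-type relations, or more directly from Proposition~\ref{whitt-eis}\eqref{whitt-p}, which shows $W^\circ_{a,r,v}$ does not depend on $r_v$ once $r_v$ is large enough. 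The main obstacle, and the only genuinely computational point, is the bookkeeping of the archimedean and non-archimedean normalisation constants — reconciling the $|d_v|$, $|D_v|$, $\gamma_{u,v}$, and $\pi i$ factors of Proposition~\ref{whitt-eis} and Lemma~\ref{almost all 1} with the global normalisations $|D_F|^{1/2}|D_E|^{1/2}$ and $|D_F|$ — so that the stated clean formula emerges; everything else is a formal consequence of Lemma~\ref{whitt-an} and the $q$-expansion principle.
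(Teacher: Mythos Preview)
Your approach is exactly the paper's: define the coefficients of $\mathscr{E}$ as an explicit scalar multiple of the $\mathscr{W}_a$ from Lemma~\ref{whitt-an} and invoke Proposition~\ref{whitt-eis}\eqref{whitt-inf} for the archimedean contribution. The only adjustment needed is the precise scalar --- the paper takes it to be $2^{[F:\Q]}|D_F|^{1/2}|D_E|^{-1/2}L^{(p)}(1,\eta)^{-1}$ rather than your $|D_F|\,L^{(p)}(1,\eta)^{-1}$, the $2^{[F:\Q]}$ coming exactly from the archimedean Whittaker values you cite --- which is precisely the bookkeeping you correctly flag as the one nontrivial point.
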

\begin{proof}
This follows from  Lemma \ref{whitt-an} and
 Proposition \ref{whitt-eis}.\ref{whitt-inf}: we take the $q$-expansion  with coefficients ${2^{[F:\Q]} |D_{F}|^{1/2}  \over |D_{E}|^{1/2} | L^{(p)}(1, \eta)}    \mathscr{W}_{a}(y, u, \phi_{2}^{p\infty}) $.
 
\end{proof}

\subsection{Analytic kernel}\label{sec: an ker}

We first construct certain bounded $\Y^{?}$-families  of $q$-expansions of modular forms, for $\Y^{?}=\Y_{F}$ or $\Y'$. In general, if $\Y^{?}$ is the space of $p$-adic  characters  of a profinite group $\Gamma^{?}$, then it is equivalent to give a compatible system, for each extension $L'$ of $L$, of bounded functionals $\mathscr{C}(\Gamma^{?}, L')\to {\bf M}(K^{p}, L') $, where the source is the space of $L'$-valued continuous functions on $\Gamma^{?}$. This can be applied to the case of $\Y_{F}$ (with $\Gamma_{F}$), and to the case of $\Y'$ with the variation that $\Y'$-families correspond to  bounded functionals  on the space $\mathscr{C}(\Gamma,\omega, L')$ of functions $f$ on $\Gamma$ satisfying $f(zt)=\omega^{-1}(z)f(t)$ for all $z \in \A^{\infty,\times}$.

Let $\B$ be a (coherent or incoherent) totally definite quaternion algebra over  $\A=\A_{F}$ and let $E$ be a totally imaginary quadratic extension of $F$ with an embedding $E_{\A}\into \B$ which we fix.   Let $ {\bf  V}$  be the orthogonal space $\B$ with reduced norm $q$.  We have an orthogonal decomposition 
$${\bf V}={\bf V}_{1}\oplus {\bf V}_{2}$$
where 
$${\bf V}_{1}=E_{\A}, \quad {\bf V}_{2}=E_{\A}\mathfrak{j},\qquad \mathfrak{j}\notin E_{\A},\quad \mathfrak{j}^{2}\in \A^{\times}.$$
The restriction of $q$ to ${\bf V}_{1}$ is the adelisation of the norm of $E/F$. 

We have an embedding (cf. \cite[p. 36]{yzz}) 
$$\A^{\times}\bks \B^{\times}\times \B^{\times}\into {\bf GO}({\bf V})$$
where $\B^{\times}\times \B^{\times}$ acts on ${\bf V}$ by $(h_{1}, h_{2})x=h_{1} x h_{2}^{-1}$. 

\medskip

 Let $\phi^{p\infty}\in \calS({\bf V}^{p\infty}\times \A^{p\infty, \times})$ be a Schwartz function  and let  $U^{p}\subset \B^{\infty\times}$ be a compact open subgroup fixing $\phi^{p\infty}$. For $\phi_{1}\in \calS({\bf V}_{1}\times\A^{\times})$ a Schwartz function such that $\phi_{1, \infty}$ is standard, let $\theta(u, \phi_{1})$ be the twisted modular form
$$\theta(g, u, \phi_{1}):= \sum_{x_{1}\in E}r(g)\phi_{1}(x_{1}, u).$$
We define the modular form
\begin{align}\label{I_{F}}
I_{F,r}(\phi_{1}\otimes\phi_{2}, \chi_{F})=
{c_{U^{p}}\over |D_{F}|^{1/2}} \cdot
{L^{(p)}(1, \eta\chi_{F}^{})\over L^{(p)}(1, \eta)}  \sum_{u\in\mu_{U^{p}}^{2}\bks F^{\times}} \theta(u, \phi_{1}) E_{r}(u, \phi_{2} ,\chi_{F}^{})
\end{align}
for sufficiently large $r=(r_{v})_{v\vert p}$,
and the $\Y_{F}$-family of $q$-expansions of weight $2$ modular forms
\begin{align}\label{calI_{F}}
\calI_{F}(\phi_{1}^{\infty}\otimes \phi_{2}^{p\infty}; \chi_{F})=c_{U^{p}}\sum_{u\in\mu_{U^{p}}^{2}\bks F^{\times}} {}^{\qqq}\theta(u, \phi_{1})\mathscr{E}(u,\phi_{2}^{p\infty};\chi_{F}),
\end{align}
where letting $\mu_{U^{p}}=F^{\times}\cap U^{p}\OO_{\B,p}^{\times}$ we set
\begin{align}\label{c_{U}}
c_{U^{p}}:=  {2^{[F:\Q]-1}h_{F}\over [\OO_{F}^{\times}:\mu_{U^{p}}^{2}] }   
\end{align}
and $\phi(x_{1},x_{2}, u)=\phi_{1}(x_{1},u)\phi_{2}(x_{2},u)$
with $\phi_{i}=\phi_{i}^{p\infty}\phi_{i,p\infty}$ for $\phi_{i,v}$ the standard Schwartz function if $v\vert \infty$ or $i=2$ and $v|p$. The definition is independent of the choice of $U^{p}$ (cf.  \cite[(5.1.3)]{yzz}).

The   action of the subgroup $T(\A)\times T(\A)\subset \B^{\times}\times\B^{\times}$ on $\calS({\bf V}\times \A^{\times})=\calS({\bf V}_{1}\times \A^{\times})\otimes \calS({\bf V}_{2}\times \A^{\times})$ preserves this tensor product decomposition, thus it can be written as $r=r_{1}\otimes r_{2}$ for the actions $r_{1}$, $r_{2}$ on each of the two factors. We obtain an action of $T(\A^{\infty})\times T(\A^{\infty})$ on the forms $I_{F,r}$ and the families $\calI_{F}$ with orbits
\begin{align*}
I_{F,r}((t_{1}, t_{2}),\phi_{1}\otimes\phi_{2}, \chi_{F})
&:=
{c_{U^{p}} \over |D_{F}|^{1/2}}
{L^{(p)}(1, \eta\chi_{F})\over L^{(p)}(1, \eta)}  \sum_{u\in\mu_{U^{p}}^{2}\bks F^{\times}} \theta(u, r_{1}(t_{1}, t_{2}),\phi_{1}) E_{r}(u, \phi_{2} ,\chi_{F}^{\iota})
\\
\calI_{F}((t_{1}, t_{2}),\phi_{1}^{\infty}\otimes\phi_{2}^{p\infty}; \chi_{F})
&:=
c_{U^{p}}\sum_{u\in\mu_{U^{p}}^{2}\bks F^{\times}} {}^{\qqq}\theta(q(t)u,r_{1}(t_{1},t_{2}) \phi_{1})\mathscr{E}(q(t)u,\phi_{2}^{p\infty};\chi_{F}).
\end{align*}

It is a bounded action in the sense that  the orbit $\{\calI_{F}((t_{1}, t_{2}), \phi_{1}^{\infty}\otimes\phi_{2}^{p\infty}) \, | \, t_{1}, t_{2}\in T(\A^{\infty})\}$ is a bounded subset of the space of $\Y_{F}$-families of $q$-expansions, as both $\mathscr{E} $  and ${}^{\qqq}\theta $ are bounded in terms of $\max |\phi^{p\infty}|$.

Define,  for the fixed finite order character $\omega\colon F^{\times}\bks \A^{\times}\to M^{\times}$,
\begin{align}
\calI_{F, \omega^{-1}}((t_{1}, t_{2}),\phi_{1}^{\infty}\otimes\phi_{2}^{p\infty}; \chi_{F})
:=
\dashint_{\A^{\times}} \omega^{-1}(z)\chi_{F}(z) \calI_{F}((zt_{1},t_{2}),\phi_{1}^{\infty}\otimes\phi_{2}^{p\infty}; \chi_{F}) \, dz,
\end{align}
a bounded $\Y_{F}$-family of $q$-expansions of forms of central character $\omega^{-1}$, corresponding to a bounded functional on $\mathscr{C}(\Gamma_{F}, L)$ valued in ${\bf M}(K^{p},\omega^{-1}, L)$ for a suitable $K^{p}$.

We  further obtain a bounded functional $\calI$ on $\mathscr{C}(\Gamma, \omega, L)$,  valued in  ${\bf M}(K^{p},\omega^{-1}, L)$, which is defined on the set (generating a dense subalgebra) of finite order characters $\chi' \in \mathscr{C}(\Gamma, \omega, L)$ by
$$\calI(\phi^{p\infty}; \chi'):=\int_{[T]} \chi'(t) \calI_{F, \omega^{-1}}((t,1),\phi_{1}^{p}\phi_{1,p}\otimes \phi_{2}^{p}; \omega\cdot\chi'|_{\A^{\times}})\, {d^{\circ}t    }     $$
if   $\phi^{p\infty}=\phi_{1}^{p\infty}\otimes\phi_{2}^{p\infty}$. Here  $\phi_{1}^{\infty }=\phi_{1}^{p\infty}\phi_{1,p}$ with 
\begin{align}\label{phi1}
\phi_{1,v}(x_{1},u)=\delta_{1, U_{T,v}}(x_{1})\one_{\OO_{F,v}^{\times}}(u)\
\end{align}
 if $v\vert p$, where  $U_{T,p}\subset \OO_{E, p}^{\times}$ is a compact open subgroup small enough that  $\chi'_{p}|_{U_{T,p}}=1$ and 
$$\delta_{1, U_{T,v}} (x_{1})={\vol(\OO_{E,v},dx)\over \vol(U_{T,v}, dx) }   \one_{U_{v}\cap \OO_{E}}(x_{1})  .$$ 
(The notation is meant to suggest a Dirac delta at $1$ in the variable $x_{1}$, to which this is the finest $U_{v}\times U_{v}$-invariant approximation. Both volumes are taken with respect to a Haar measure on $E_{v}$.)

By construction, the induced rigid analytic function on $\Y'=\Y_{\omega}'$, still denoted by $\calI$, satisfies the following.

\begin{prop} There is a bounded $\Y'$-family of $q$-expansions of modular forms  $\calI(\phi^{p\infty})$ such that for each $\chi'\in \Y'^{\rm l.c.}(\C)$
we have
$$\calI(\phi^{p\infty}; \chi')=
|D_{E}|^{1/2}|D_{F}|\, 
{}^{\qqq} I_{r}(\phi, \chi'{}^{\iota}),$$
where
$$I_{r}(\phi, \chi'):=\int^{*}_{[T]} \chi'(t)I_{F,r}((t,1),\phi, \chi_{F})\, dt$$
with
  $I_{F,r}(\phi)$ as in \eqref{I_{F}}, with $\phi_{p\infty}$  chosen as above.
\end{prop}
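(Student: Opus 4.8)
The family $\calI(\phi^{p\infty})$ asserted to exist is precisely the one constructed in the discussion immediately preceding the statement: the functional on $\mathscr{C}(\Gamma,\omega,L)$ defined, on the dense subalgebra of finite order characters, by the regularised toric integral of $\calI_{F,\omega^{-1}}$. The real content of the proposition is thus (i) the interpolation identity at locally constant $\chi'$ and (ii) the boundedness of this functional, and both follow by unwinding the definitions and tracking the normalising constants, using results already in place. So the plan is to verify these two points in turn.

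For (i) I would argue in three layers. First, by the previous proposition the Eisenstein family $\mathscr{E}(u,\phi_2^{p\infty})$ specialises at a locally constant $\chi_F$ (and $r$ with $c(\chi_F)\mid p^r$) to $|D_F|\,\frac{L^{(p)}(1,\eta\chi_F)}{L^{(p)}(1,\eta)}\,{}^{\qqq}E_r(u,\phi_2,\chi_F)$; substituting this into \eqref{calI_{F}} and comparing with \eqref{I_{F}}, the constant $c_{U^{p}}$ and the ratio of partial $L$-values cancel on the two sides and one finds that $\calI_F(\phi_1^\infty\otimes\phi_2^{p\infty};\chi_F)$ interpolates $|D_F|^{3/2}\,{}^{\qqq}I_{F,r}(\phi,\chi_F^\iota)$. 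Second, the central average $\dashint_{\A^\times}\omega^{-1}(z)\chi_F(z)(\,\cdot\,)\,dz$ defining $\calI_{F,\omega^{-1}}$ is, at a locally constant $\chi_F$, the $p$-adic avatar of the projection onto central character $\omega^{-1}$ on the archimedean side, so $\calI_{F,\omega^{-1}}((t,1),\dots;\chi_F)$ interpolates $|D_F|^{3/2}$ times the $\omega^{-1}$-component of ${}^{\qqq}I_{F,r}((t,1),\dots,\chi_F)$. Third, at a locally constant $\chi'\in\Y'^{\rm l.c.}$ the $p$-adic integral $\int_{[T]}\chi'(t)(\,\cdot\,)\,d^{\circ}t$ reduces, by the discussion of $\int^{*}_{[T]}$ with the measure $d^{\circ}t$ in the Notation section, to the same finite sum as the complex regularised integral $\int^{*}_{[T]}\chi'(t)(\,\cdot\,)\,dt$, up to the measure ratio $d^{\circ}t=|D_F|^{1/2}|D_{E/F}|^{1/2}\,dt$; since the Schwartz data at $p$ entering $\calI$ and $I_r$ are chosen identically — the approximate delta $\phi_{1,p}$ with $U_p$ small enough to kill $\chi'_p$ on $U_p\cap T(\A)$ — the two toric integrals match term by term. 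Collecting constants and using $|D_E|^{1/2}=|D_F|\,|D_{E/F}|^{1/2}$ gives $|D_F|^{3/2}\cdot|D_F|^{1/2}|D_{E/F}|^{1/2}=|D_F|^{2}|D_{E/F}|^{1/2}=|D_E|^{1/2}|D_F|$, which is exactly the claimed normalisation; independence of the right-hand side from the auxiliary $r$ follows as it does for $\mathscr{E}$.

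For (ii), boundedness of $\calI$ as a functional on $\mathscr{C}(\Gamma,\omega,L)$ amounts to a uniform bound on $\|\calI(\phi^{p\infty};\chi')\|$ over finite order $\chi'$, with constant depending only on $\phi^{p\infty}$. Here I would assemble the available estimates: $\mathscr{E}(u,\phi_2^{p\infty})$ is a bounded $\Y_F$-family (previous proposition, via Lemma \ref{whitt-an}); the theta $q$-expansion ${}^{\qqq}\theta(q(t)u,r_{1}(t_1,t_2)\phi_1)$ is bounded in terms of $\max|\phi_1|$ uniformly in $t_1,t_2$ (its coefficients are sums of finitely many values of $\phi_1$, bounded by the support condition), whence the orbit $\{\calI_F((t_1,t_2),\phi_1^\infty\otimes\phi_2^{p\infty})\}$ is bounded as noted after \eqref{c_{U}}; the central average does not increase the norm; and finally the integral $\int_{[T]}\chi'(t)(\,\cdot\,)\,d^{\circ}t$ of a bounded integrand over the compact $[T]$, whose $d^{\circ}t$-volume lies in $\Q^{\times}$, is bounded uniformly. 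Extending by continuity from the dense subalgebra of finite order characters then produces the bounded $\Y'$-family $\calI(\phi^{p\infty})$.

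The main obstacle I anticipate is the bookkeeping in step (i): correctly reconciling the several normalisations — the factor $|D_F|^{1/2}|D_E|^{1/2}$ built into $\mathscr{W}_a$ in Lemma \ref{whitt-an}, the prefactor $c_{U^{p}}/|D_F|^{1/2}$ in $I_{F,r}$, the measure ratio $d^{\circ}t/dt$, and the matching of the $p$-adic and complex regularised toric integrals — so that they combine to the single factor $|D_E|^{1/2}|D_F|$. Once this is pinned down, the statement is a formal consequence of the constructions and estimates already established.
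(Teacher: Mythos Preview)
Your proposal is correct and takes essentially the same approach as the paper, which gives no formal proof at all and simply declares that the proposition holds ``by construction'' from the discussion immediately preceding it. Your write-up is a faithful and careful unpacking of what that construction entails: the specialisation of $\mathscr{E}$ from the previous proposition, the definition \eqref{calI_{F}} versus \eqref{I_{F}}, the passage from $dt$ to $d^{\circ}t$, and the boundedness statements already recorded in the text for $\mathscr{E}$, ${}^{\qqq}\theta$, and the toric orbit; your constant computation $|D_F|^{3/2}\cdot|D_F|^{1/2}|D_{E/F}|^{1/2}=|D_E|^{1/2}|D_F|$ is exactly right.
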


\subsection{Waldspurger's Rankin--Selberg integral}\label{wrs}
Let $\chi'\in \Y'^{\rm l.c.}_{M(\alpha)}(\C)$  be a character, $\iota\colon M(\alpha)\into \C$ be the induced embedding.
 Let $\psi \colon \A/F\to \C^{\times}$ be an additive character, and let $r=r_{\psi}$ be the associated Weil representation.
\begin{prop}\label{prop2.6} Let  $\vphi\in \sigma^{\iota}$ be a form with factorisable Whittaker function, and let $\phi=\otimes_{v}\phi_{v}\in\bcalS({\bf V}\times \A^{\times})$. For sufficiently large $r=(r_{v})_{v\vert p}$, we have
\begin{equation}\label{yzz2.6}
\prod_{v\vert p}{\iota\alpha(\vpi_{v})^{-r_{v}} }\cdot  (\vphi, w_{r}^{-1} I_{r}(\phi,\chi'{}))= \prod_{v} R_{r,v}^{\circ}(W_{v}, \phi_{v}, \chi_{v}',\psi_{v} )\end{equation}
where 
$$
R_{r,v}^{\circ}(W_{v}, \phi_{v}, \chi_{v}', \psi_{v})={\iota\alpha_{v}(\vpi_{v})^{-r_{v}} } {L^{(p)}(1, \eta_{v} \chi_{F,v})\over L^{(p)}(1, \eta_{v})} R_{r,v} $$
with
$$R_{r,v}= \int_{Z(F_{v})N(F_{v})\bks \GL_{2}(F_{v})} W_{-1,v}(g) \delta_{\chi_{F,r,v}}(g) \int_{T(F_{v})} \chi_{v}'(t)r(gw_{r}^{-1})\Phi_{v}(t^{-1}, q(t))\,   dt\, dg.
$$
Here $\Phi_{v}=\phi_{v}$ if $v$ is non-archimedean and $\Phi_{v}$ is a preimage of $\phi_{v}$ under \eqref{schwartz quotient} if $v$ is archimedean,  $W_{-1,v}$ is the local Whittaker function of $\vphi$ for the character $\baar{\psi}_{v}$, and we convene that $r_{v}=0$, $w_{r,v}=1$, $\iota\alpha_{v}(\vpi_{v})^{-r_{v}}=1$   if $v\nmid p$. 
\end{prop}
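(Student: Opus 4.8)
The plan is to prove \eqref{yzz2.6} by the Rankin--Selberg unfolding method of Waldspurger \cite{wald}, in the form worked out for the analogous identity \cite[Proposition 2.6]{yzz}; the global skeleton of the argument is the same, and the only genuinely new features are the level-$p^{r}$ data at the places above $p$ --- the section $\delta_{\chi_{F},r}$ in the Eisenstein series, the Atkin--Lehner element $w_{r}$, and the normalising constants $\iota\alpha(\vpi_{v})^{-r_{v}}$ --- which have to be threaded through the computation. First I would unravel the left-hand side: up to the scalar $\prod_{v|p}\iota\alpha(\vpi_{v})^{-r_{v}}$, it is the complex Petersson integral over $\GL_{2}(F)Z(\A)\bks\GL_{2}(\A)$ of $\vphi$ against $w_{r}^{-1}$ applied to $I_{r}(\phi,\chi')=\int^{*}_{[T]}\chi'(t)\,I_{F,r}((t,1),\phi,\chi_{F})\,dt$. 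Inserting the definition \eqref{I_{F}} of $I_{F,r}$, the sum over $u\in\mu_{U^{p}}^{2}\bks F^{\times}$ and the integral over $Z(\A)$ recombine into a single adelic integral (as in \cite{yzz}), while the factor ${L^{(p)}(1,\eta\chi_{F})}/{L^{(p)}(1,\eta)}$ is carried along and only redistributed among the local factors at the very end.

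The core of the computation is a double unfolding, of the Eisenstein series and of the theta kernel. Unfolding $E_{r}(g,u,\phi_{2},\chi_{F})=\sum_{\gamma\in P^{1}(F)\bks\mathbf{SL}_{2}(F)}\delta_{\chi_{F},r}(\gamma g w_{r})\,r(\gamma g)\phi_{2}(0,u)$ collapses one factor of the quotient defining $[\GL_{2}]$; peeling off the remaining $N(\A)$-integral and using the Fourier--Whittaker expansion $\vphi=\sum_{a}W_{a}$ together with the $\psi$-equivariance $r(n(b))\phi(x,u)=\psi(buq(x))\phi(x,u)$ isolates a single Fourier coefficient of $\vphi$ (here $W_{-1}$, in the present normalisation). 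Simultaneously I would split the theta kernel $\theta(g,u,\phi_{1})=\sum_{x_{1}\in E}r(g)\phi_{1}(x_{1},u)$ into its term $x_{1}=0$ and the terms with $x_{1}\neq0$; the latter form a single $T(F)$-orbit under $x_{1}\mapsto t^{-1}x_{1}$ (with $q(t)=\mathrm{N}_{E/F}(t)$), which merges with $\int^{*}_{[T]}$ to unfold to an unrestricted integral over $T(\A)$. The degenerate term $x_{1}=0$ has to be shown to contribute nothing, which is where the cuspidality of $\vphi$ is used; all the manipulations of the non-absolutely-convergent sums are to be justified by meromorphic continuation in the auxiliary variable $s$ entering the definitions of $E_{r}$ and of $\int^{*}_{[T]}$, exactly as in \cite{yzz}.

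After these reductions the integral factorises over the places of $F$, with local factor at $v$ equal to
\[
R_{r,v}=\int_{Z(F_{v})N(F_{v})\bks\GL_{2}(F_{v})} W_{-1,v}(g)\,\delta_{\chi_{F,r,v}}(g)\int_{T(F_{v})}\chi'_{v}(t)\,r(gw_{r}^{-1})\Phi_{v}(t^{-1},q(t))\,dt\,dg,
\]
where $W_{-1,v}$ is the local Whittaker function of $\vphi$ for $\overline{\psi}_{v}$, $\Phi_{v}$ is a preimage of $\phi_{v}$ under \eqref{schwartz quotient} at the archimedean places, and $\Phi_{v}=\phi_{v}$, $w_{r,v}=1$, $\iota\alpha_{v}(\vpi_{v})^{-r_{v}}=1$ when $v\nmid p$; redistributing the local pieces of ${L^{(p)}(1,\eta\chi_{F})}/{L^{(p)}(1,\eta)}$ and the constants $\iota\alpha_{v}(\vpi_{v})^{-r_{v}}$ turns $R_{r,v}$ into $R^{\circ}_{r,v}$ and yields \eqref{yzz2.6}. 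I expect the main obstacle --- and the only part not already contained in \cite{yzz} --- to be the bookkeeping at the primes $v\mid p$: one must check that for $r$ sufficiently large, so that $p^{r_{v}}$ dominates the level of $\vphi_{v}$ and $\phi_{v}$ and the conductor of $\chi'_{v}$, the interaction of $\delta_{\chi_{F},r}$ and of the Atkin--Lehner twist $w_{r}^{-1}$ with the local data is exactly the one encoded in $R_{r,v}$, neither imposing a spurious extra invariance nor losing one (the support condition of Lemma \ref{uppertriang} on the Weil representation being the relevant input here), and that the constants $\iota\alpha(\vpi_{v})^{-r_{v}}$ are tracked correctly through this step.
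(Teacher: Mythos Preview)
Your proposal is correct and follows essentially the same approach as the paper, which simply refers to \cite[Proposition 2.5]{yzz} (note: Proposition 2.5, not 2.6) together with \cite[(5.1.3)]{yzz} for the identification of the two kernel functions $I(s,\chi,\phi)$ and $I(s,\chi,\Phi)$. Your detailed description of the Rankin--Selberg unfolding is precisely what that reference contains, and your identification of the only new bookkeeping as being at the primes $v\mid p$ is accurate.
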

 Note that the integral $R_{r,v}$ does not depend on  $r\geq \underline{1}$ unless $v\vert p$ and it does not depend on $\chi'$ if $v\vert \infty$; we will accordingly simplify the notation  in these cases.
\begin{proof} This is shown similarly to \cite[Proposition 2.5]{yzz}; see \cite[(5.1.3)]{yzz}  for the equality between  the kernel functions denoted there by $I(s, \chi, \phi)$ (similar to our $c_{U^{p}}^{-1}I_{r}(\phi, \chi')$) and $I(s,\chi,\Phi)$ (which intervenes in the analogue in \emph{loc. cit} of the left-hand side of \eqref{yzz2.6}).
\end{proof}

We will sometimes lighten a bit the notation for $R^{\circ}_{v}$ by omitting $\psi_{v}$ from it.
\begin{lemm}\label{3.6.2}
When everything is unramified, we have
$$R_{v}^{\circ}(W_{v},\phi_{v}, \chi_{v}') = { L(1/2, \sigma_{E,v}\otimes\chi'_{v})\over \zeta_{F, v}(2) L(1, \eta_{v})}.$$
\end{lemm}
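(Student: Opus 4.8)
The statement is the standard unramified local computation underlying Waldspurger's Rankin--Selberg integral, so the plan is to unwind the definition of $R_v^\circ$ in Proposition \ref{prop2.6} in the everything-unramified situation and reduce it to a known basic integral. First I would note that when $v\nmid p$ we have $r_v=0$, $w_{r,v}=1$, and the normalising factor $\iota\alpha_v(\vpi_v)^{-r_v}=1$, while $\chi_{F,v}$ is unramified so $\delta_{\chi_{F,r,v}}$ is the constant function $1$ on $\GL_2(F_v)$ and $L^{(p)}(1,\eta_v\chi_{F,v})/L^{(p)}(1,\eta_v)$ is a ratio of unramified Euler factors that will be absorbed. Thus $R_v^\circ$ becomes, up to that explicit ratio,
$$
\int_{Z(F_v)N(F_v)\backslash \GL_2(F_v)} W_{-1,v}(g)\int_{T(F_v)}\chi_v'(t)\, r(g)\phi_v(t^{-1},q(t))\, dt\, dg,
$$
where $W_{-1,v}$ is the normalised spherical Whittaker function of the unramified $\sigma_v$ and $\phi_v$ is the standard (characteristic-function) Schwartz datum on ${\bf V}\times F_v^\times$.

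The key step is then to recognise the inner $T(F_v)$-integral. Using the Iwasawa decomposition $\GL_2(F_v)=N(F_v)A(F_v)K_v$ and the explicit action of the Weil representation recalled in \S\ref{sec: an ker} on the diagonal torus, the function $g\mapsto \int_{T(F_v)}\chi_v'(t) r(g)\phi_v(t^{-1},q(t))\,dt$ is a $K_v$-invariant function on $N(F_v)A(F_v)\backslash\GL_2(F_v)$ which, evaluated at $\smalltwomat{y}{}{}{1}$, is (a twisted version of) the local zeta integral attached to the split torus $T(F_v)=E_v^\times$ and the character $\chi_v'$; its Mellin transform is a product of local $L$-factors $L(s,\chi_{E_v}')$, i.e.\ $L(s+1/2,\sigma_{E,v}\otimes\chi_v')$ once paired against the Whittaker function. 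Concretely I would either (a) cite the unramified calculation of \cite[Proposition 2.5 and \S2]{yzz} — of which this lemma is literally the unramified instance, since the entire product formula \eqref{yzz2.6} is obtained by transplanting theirs — or (b) reduce to the classical Rankin--Selberg unfolding: unfold the theta kernel against $W_{-1,v}$, perform the $\GL_2$-integral in Iwasawa coordinates, and obtain $\frac{L(1/2,\sigma_{E,v}\otimes\chi_v')}{\zeta_{F,v}(2)L(1,\eta_v)}$, the denominator $\zeta_{F,v}(2)L(1,\eta_v)$ being exactly the product of the volume normalisations ($\vol(\GL_2(\OO_{F,v}))$ contributes $\zeta_{F,v}(2)^{-1}$ and the measure on $[T]$ or $T(F_v)/Z(F_v)$ contributes $L(1,\eta_v)$, cf.\ the measure conventions in \S\ref{history} and the normalisation of $R_v^\circ$).

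The main obstacle is purely bookkeeping: keeping track of all the normalising factors — the ratio $L^{(p)}(1,\eta_v\chi_{F,v})/L^{(p)}(1,\eta_v)$, the self-dual-measure normalisations hidden in the Weil representation and in $\hat\phi$, the factors $|d_v|$ from the chosen Haar measures, and the shift between the unitary normalisation used for $\chi_v'$ and the arithmetic one — so that they collapse to give precisely $\zeta_{F,v}(2)^{-1}L(1,\eta_v)^{-1}L(1/2,\sigma_{E,v}\otimes\chi_v')$ with no stray power of $q_{F,v}$. Since all of these are fixed consistently with \cite{yzz}, the cleanest route is to deduce the lemma from the corresponding unramified computation there (their Lemma in \S2 giving the unramified value of $R_v^\circ$), observing that our $R_v^\circ$ differs from theirs only by the explicit harmless factor $L^{(p)}(1,\eta_v\chi_{F,v})/L^{(p)}(1,\eta_v)$, which equals $1$ in the fully unramified case by definition of $L^{(p)}$ at $v\nmid p$ together with the unramified Euler-factor identity. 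I would write the proof as this short reduction rather than redoing the unfolding from scratch.
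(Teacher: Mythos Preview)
Your overall approach—reducing to a known unramified computation in the literature—is exactly what the paper does; the paper cites Waldspurger \cite[Lemme 2, Lemme 3]{wald} (rather than \cite{yzz}), observing that his calculation, carried out for $\chi_{F,v}=|\cdot|^{s}$, goes through verbatim for any unramified $\chi_{F,v}$ and gives
\[
R_{v}(W_{v},\phi_{v},\chi_{v}')=\frac{L(1/2,\sigma_{E,v}\otimes\chi_{v}')}{\zeta_{F,v}(2)\,L(1,\eta_{v}\chi_{F,v})}.
\]
Multiplying by the normalising ratio $L(1,\eta_{v}\chi_{F,v})/L(1,\eta_{v})$ in the definition of $R_{v}^{\circ}$ then yields the lemma.

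Your final bookkeeping step, however, contains a genuine error: the ratio $L^{(p)}(1,\eta_{v}\chi_{F,v})/L^{(p)}(1,\eta_{v})$ is \emph{not} equal to $1$ in the unramified situation. At $v\nmid p$ one has $L^{(p)}=L$, and for an unramified but nontrivial $\chi_{F,v}$ the Euler factors $(1-\eta_{v}(\vpi_{v})\chi_{F,v}(\vpi_{v})q_{F,v}^{-1})^{-1}$ and $(1-\eta_{v}(\vpi_{v})q_{F,v}^{-1})^{-1}$ are different; there is no ``unramified Euler-factor identity'' making them agree. The correct role of this ratio is not to disappear but to cancel the $L(1,\eta_{v}\chi_{F,v})$ in the denominator of Waldspurger's value for $R_{v}$, leaving $L(1,\eta_{v})$ in the denominator of $R_{v}^{\circ}$. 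A smaller slip in the same vein: $\delta_{\chi_{F},r,v}$ is not the constant function $1$ even when $r_{v}=0$; it is the section $g=\smalltwomat{a}{*}{}{d}k\mapsto\chi_{F,v}(d)^{-1}$, which genuinely enters the Iwasawa-coordinate computation.
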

\begin{proof}
With a slightly different setup,\footnote{Notably, the local measures in \cite{wald} are normalised by $\vol(\GL_2(\OO_{F,v}))=1$  for almost all finite  place $v$, whereas we have $\vol(\GL_2(\OO_{F,v}))=\zeta_{F, v}(2)^{-1}|d|_{v}^{2}$ (cf. \cite[p. 23]{yzz}; the second displayed formula of \cite[p. 42]{yzz} neglects this discrepancy).}
 Waldspurger \cite[Lemme 2, Lemme 3]{wald} showed that
$$R_{v}( W_{v},\phi_{v}, \chi_{v}') = { L(1/2, \sigma_{E,v}\otimes \chi_{v}' )\over \zeta_{F, v}(2) L(1, \eta_{v}\chi_{F,v})}$$
when $\chi_{F,v}=|\cdot |^{s}$, but his calculation goes through for any unramified character~$\chi_{F,v}$.
\end{proof}

Define 
\begin{align}\label{Rnat}
R_{r,v}^{\natural}(W_{v},\phi_{v} \chi'_{v}, \psi_{v}): =
{ |d_{v}|^{-2} |D_{v}|^{-1/2}}
 {  \zeta_{F, v}(2)L(1, \eta_{v})\over L(1/2, \sigma_{E,v}\otimes \chi'_{v})}
R_{r,v}^{\circ}( W_{v},\phi_{v} \chi_{v}', \psi_{v}).
\end{align}

Then the previous lemma combined with Proposition \ref{prop2.6} gives:
\begin{prop}\label{RSp} We have
\begin{align*}
\iota\alpha^{-r} ( \vphi,w_{r}^{-1} {I}(\phi, \chi'))& =
{|D_{F}|^{-1} |D_{E}|^{-1/2}}
 {L^{(\infty)}(1/2, \sigma_{E}\otimes \chi')\over \zeta_{F}^{(\infty)}(2) L^{(\infty)}(1,\eta)}
 \prod_{v\nmid \infty} R_{r,v}^{\natural}( W_{v}, \phi_{v},\chi_{v}')
 \prod_{v|\infty} R_{ v}^{\circ} (W_{v}, \Phi_{v},\chi_{v}'),
\end{align*}
where all but finitely many of the factors in the infinite product are equal to~$1$.
\end{prop}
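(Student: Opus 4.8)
The plan is to obtain Proposition \ref{RSp} as an immediate consequence of Waldspurger's Rankin--Selberg identity of Proposition \ref{prop2.6}, the unramified local computation of Lemma \ref{3.6.2}, and the definition \eqref{Rnat} of the normalised local integrals. First I would start from Proposition \ref{prop2.6}, which in the multiindex notation $\iota\alpha^{-r}=\prod_{v\vert p}\iota\alpha_{v}(\vpi_{v})^{-r_{v}}$ reads
$$\iota\alpha^{-r}\,(\vphi, w_{r}^{-1} I(\phi,\chi')) = \prod_{v} R^{\circ}_{r,v}(W_{v},\phi_{v},\chi'_{v},\psi_{v}),$$
and split the product over all places of $F$ into $\prod_{v\nmid\infty}$ and $\prod_{v\vert\infty}$. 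The archimedean factors are kept unchanged and furnish the term $\prod_{v\vert\infty}R^{\circ}_{v}(W_{v},\Phi_{v},\chi'_{v})$. At each finite place I invert the defining relation \eqref{Rnat} to write
$$R^{\circ}_{r,v}(W_{v},\phi_{v},\chi'_{v},\psi_{v}) = |d_{v}|^{2}\,|D_{v}|^{1/2}\,\frac{L(1/2,\sigma_{E,v}\otimes\chi'_{v})}{\zeta_{F,v}(2)\,L(1,\eta_{v})}\,R^{\natural}_{r,v}(W_{v},\phi_{v},\chi'_{v},\psi_{v}),$$
and take the product over $v\nmid\infty$.

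Two bookkeeping computations then finish the argument. The local $L$-factors assemble, by the Euler product, into $L^{(\infty)}(1/2,\sigma_{E}\otimes\chi')\,\zeta_{F}^{(\infty)}(2)^{-1}\,L^{(\infty)}(1,\eta)^{-1}$; here I use the convention in force in the body of the paper that complex $L$- and zeta functions are complete, so that the superscript $(\infty)$ strips precisely the archimedean Euler factors and leaves the product over the finite places. For the global constants, $\prod_{v\nmid\infty}|d_{v}|=|D_{F}|^{-1}$ since $d$ generates the different $\mathfrak{d}_{F/\Q}$ and $|D_{F}|=N_{F/\Q}(\mathfrak{d}_{F/\Q})$, while $\prod_{v\nmid\infty}|D_{v}|=|D_{F}|^{2}/|D_{E}|$ by the tower relation $|D_{E}|=|D_{F}|^{2}\,N_{F/\Q}(\mathfrak{D}_{E/F})$ for the relative discriminant $\mathfrak{D}_{E/F}$ generated by $D$; hence
$$\prod_{v\nmid\infty}|d_{v}|^{2}\,|D_{v}|^{1/2} = |D_{F}|^{-2}\cdot|D_{F}|\cdot|D_{E}|^{-1/2} = |D_{F}|^{-1}|D_{E}|^{-1/2},$$
which is exactly the prefactor in the statement. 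Combining this with the archimedean factors and with the left-hand side of Proposition \ref{prop2.6} yields the displayed formula.

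For the last assertion, that all but finitely many $R^{\natural}_{r,v}$ equal $1$: at a place $v$ where $\sigma_{v}$, $\chi'_{v}$, the different and the relative discriminant are all unramified and $r_{v}=0$, Lemma \ref{3.6.2} gives $R^{\circ}_{v}=L(1/2,\sigma_{E,v}\otimes\chi'_{v})\,\zeta_{F,v}(2)^{-1}L(1,\eta_{v})^{-1}$, and since $|d_{v}|=|D_{v}|=1$ at such $v$, \eqref{Rnat} collapses to $R^{\natural}_{v}=1$. I do not anticipate a genuine obstacle: the proposition is essentially a repackaging of Proposition \ref{prop2.6}, and all the content lies in matching the global constants (the powers of $|D_{F}|$ and $|D_{E}|$, and the measure normalisations implicit in the $\zeta_{F,v}(2)$) and in checking that the kernel $I(\phi,\chi')$ appearing here is the stabilised version of $I_{r}(\phi,\chi')$ of Proposition \ref{prop2.6} for $r$ large.
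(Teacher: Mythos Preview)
Your argument is correct and is exactly the paper's approach: the proposition is stated immediately after Lemma \ref{3.6.2} with the one-line proof ``the previous lemma combined with Proposition \ref{prop2.6} gives''. You have simply unpacked that sentence, and your bookkeeping of the discriminant powers $\prod_{v\nmid\infty}|d_{v}|^{2}|D_{v}|^{1/2}=|D_{F}|^{-1}|D_{E}|^{-1/2}$ is right. One small remark: the kernel $I(\phi,\chi')$ in the statement is not a ``stabilised version'' of $I_{r}(\phi,\chi')$; it \emph{is} $I_{r}(\phi,\chi')$ (the subscript $r$ has just been dropped in the display), so there is nothing further to check there.
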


\subsubsection{Archimedean zeta integral}
We compute the local integral $R_{v}$ when $v\vert \infty$. 
\begin{lemm}\label{arch-int}
If $v\vert \infty$, $\phi_{v}$ is standard, and $W_{-1,v}$ is the standard antiholomorphic Whittaker funciton of weight $2$ of  \eqref{antiholwhitt},  then
$$R^{\circ}_{v}(W_{v}, \phi_{v}, \chi_{v}')=R_{v}(W_{v}, \phi_{v}, \chi_{v}')=1/2.$$
\end{lemm}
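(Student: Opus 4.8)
The plan is to compute the archimedean local integral $R_v$ directly from its definition
$$R_{v}= \int_{Z(F_{v})N(F_{v})\bks \GL_{2}(F_{v})} W_{-1,v}(g) \delta_{\chi_{F,r,v}}(g) \int_{T(F_{v})} \chi_{v}'(t)r(g)\Phi_{v}(t^{-1}, q(t))\,   dt\, dg,$$
where at an archimedean place we have $r_v=0$, $w_{r,v}=1$, $\delta_{\chi_{F,r},v}\equiv 1$, and $\chi'_v$ is trivial (as $\chi'$ is of finite order and $F_v=\mathbf{R}$). Since $\phi_v$ is the standard Schwartz function in the reduced Fock space, one takes for $\Phi_v$ a Gaussian preimage under \eqref{schwartz quotient}, so that $r(g)\Phi_v(x,u)$ on $\mathbf{V}_v = E_v$ (a positive-definite plane) is a Gaussian times the standard weight-$1$ Whittaker behaviour; the inner $T(F_v)$-integral over $E_v^\times/$ nothing (here $T(F_v)=\mathbf{C}^\times$ acting on $E_v=\mathbf{C}$) then produces, after using \eqref{weilwhitt}, the value $r(g)\phi_v(x,u)=W^{(1)}_{uq(x)}(g)$ of the standard holomorphic weight-$1$ Whittaker function; one is left with an Euler-type integral pairing the antiholomorphic weight-$2$ Whittaker function $W_{-1,v}$ against this weight-$1$ data over $Z N\bks\GL_2(\mathbf{R})$.

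First I would unfold the quotient $Z(F_v)N(F_v)\bks\GL_2(F_v)$ using the Iwasawa decomposition, writing a general element as $\smalltwomat{y}{}{}1 r_\theta$ with $y\in\mathbf{R}_{>0}$ and $r_\theta\in\mathbf{SO}_2(\mathbf{R})$, and record $dg = d^\times y\, d\theta$ up to the normalisation fixed in \cite{yzz}. Using \eqref{antiholwhitt} for $W_{-1,v}$ and \eqref{weilwhitt}/\eqref{holwhitt} for the Weil-representation output after carrying out the $T(F_v)$-integration, the $\theta$-integral collapses by orthogonality of characters (the weight $-2$ from $W_{-1,v}$ cancels against weight $+2$ coming from the pairing of two weight-$1$ objects), and the $y$-integral becomes an elementary Gamma integral of the shape $\int_0^\infty e^{-cy}y^{s}\,d^\times y$. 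Collecting the normalising constants $\gamma_{u,v}^{-1}$ entering the definition of $R^\circ_v$ from $R_v$, together with the factor $L^{(p)}(1,\eta_v\chi_{F,v})/L^{(p)}(1,\eta_v)=1$ at $v\mid\infty$, one arrives at the value $1/2$.

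The cleanest route, which I would actually follow, is not to redo this calculation from scratch but to import it: this integral is precisely the archimedean Rankin--Selberg integral computed in \cite{yzz}. Concretely, the analogue of Proposition \ref{prop2.6} in \emph{loc. cit.} (see \cite[\S 2]{yzz}, and the discussion around \cite[Proposition 2.5, (5.1.3)]{yzz}) isolates exactly the same local factor at the infinite places; with our normalisations of Haar measures, Whittaker functions and Weil representation matching those of \cite{yzz} (we have taken the standard $\phi_v$ and the standard antiholomorphic $W_{-1,v}$ of weight $2$, precisely as there), the value of $R^\circ_v$ coincides with theirs, and a comparison of normalisations shows it equals $1/2$. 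The only point requiring care — and the main (mild) obstacle — is bookkeeping: tracking the powers of $\pi$, the Weil indices $\gamma_{u,v}$, the self-dual versus chosen measures, and the passage through the quotient map \eqref{schwartz quotient} from $\Phi_v$ to $\phi_v=\baar{\Phi}_v$, so that the constant that emerges is exactly $1/2$ rather than $1/2$ times a spurious factor of $\pi^{\pm 1}$ or a fourth root of unity. Once the normalisations are aligned with \cite{yzz}, the identity $R^\circ_v(W_v,\phi_v,\chi'_v)=R_v(W_v,\phi_v,\chi'_v)=1/2$ follows. I expect no further difficulty.
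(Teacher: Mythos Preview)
Your outline has the right shape (Iwasawa decomposition, then a Gamma-type integral in $y$), but the direct computation you sketch contains a genuine confusion. The Schwartz function $\Phi_v$ (and its image $\phi_v$) in the definition of $R_v$ in Proposition~\ref{prop2.6} lives on the full quaternion space $\B_v$, which is $4$-dimensional and positive definite; it does \emph{not} live on $E_v$. Consequently, by \eqref{weilwhitt} with $d=2$, the function $r(g)\phi_v(1,1)$ is the standard holomorphic Whittaker function of weight~$2$, not weight~$1$. Your formula ``$r(g)\phi_v(x,u)=W^{(1)}_{uq(x)}(g)$'' and the phrase ``weight $+2$ coming from the pairing of two weight-$1$ objects'' reflect this misidentification. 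Your description of what the inner $T(F_v)$-integral produces is also garbled: the output is a function of $g$ alone, not of $(x,u)$.

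The paper does not defer to \cite{yzz} here; it computes directly in a few lines. The point you are missing is that the integral over $Z(\R)\subset T(\R)$ is exactly what realises the map $\Phi_v\mapsto\phi_v$ of \eqref{sql}, after which one uses that $r(g)\phi_v(1,1)=W^{(2)}_{1}(g)$; the remaining $T(\R)/Z(\R)$-integral contributes only its volume, equal to~$2$. Since $W_{-1,v}\bigl(\smalltwomat{y}{}{}{1}\bigr)\cdot W^{(2)}_{1}\bigl(\smalltwomat{y}{}{}{1}\bigr)=(|y|e^{-2\pi y})^{2}$ for $y>0$, one gets
\[
R_v=\int_{T(\R)/Z(\R)}\int_0^{2\pi}\int_{\R^\times}(|y|e^{-2\pi y})^2\,\frac{d^\times y}{|y|}\,\frac{d\theta}{2}\,dt=2\cdot(4\pi)^{-1}\cdot\pi=\tfrac12.
\]
No separate bookkeeping of Weil indices or stray powers of $\pi$ is needed beyond the measure normalisations already fixed, and $R^\circ_v=R_v$ is immediate since $\chi_{F,v}$ is trivial at~$\infty$.
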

\begin{proof} 
By the Iwasawa decomposition we can uniquely write any $g\in\GL_{2}(\R)$ as
$$g=\twomat 1 x{}1\twomat z{}{}z  \twomat y{}{}1 \twomat {\cos \theta} {\sin \theta}{-\sin\theta}{\cos\theta}$$
with $x\in\R$, $z\in \R^{\times}$, $y\in \R^{\times}$, $\theta\in [0, 2\pi)$; the local Tamagawa measure is then $dg=dxd^{\times}z{d^{\times}y\over{|y|}}{d\theta\over 2}$.  The integral in $Z(\R)\subset T(\R)$ realizes the map $\Phi\to \phi$; and it is easy to verify that $r(g)\phi(1,1)$ is the standard holomorphic Whittaker functon of weight $2$.

We then have, dropping subscripts $v$:
\begin{align*}
R_{v}(\vphi, \phi)&=\int_{T(\R)/Z(\R)}\int_{0}^{2\pi} \int_{\R^{\times}} (|y|e^{-2\pi y})^{2} {d^{\times}y\over |y|}\, {d\theta\over 2}\, dt\\
&=2 \cdot ( 4\pi)^{-1} \pi=1/2,
\end{align*}
where $(4\pi)^{-1}$ comes from a change of variable, 
$2=\vol(T(\R)/Z(\R))$, and $\pi$ comes from the integration in $d\theta$.
\end{proof}

\subsection{Interpolation of  local zeta integrals}
When $v\nmid p$, the normalised local zeta integrals admit an interpolation as well. Recall from \S\ref{intro-plF} that $\Psi_{v}$ denotes the scheme of all local additive characters of level $0$.

\begin{prop}\label{interp rnat} Let $v\nmid p$ be a finite place, 
and let $\mathscr{K}(\sigma_{v}, \psi_{{\rm univ},v})$ be  the universal Kirillov model of $\sigma_{v}$. Then for any $\phi_{v}\in \mathscr{S}(V_{v}\times F_{v}^{\times})$, $W_{v}\in \mathscr{K}(\sigma_{v}, \psi_{{\rm univ},{v}})$ there exists a  function 
$$\calR^{\natural}(W_{v},\phi_{v})\in L(1, \eta_{v}\chi_{F,v}) \OO_{\Y'_{v}\times\Psi_{v}}(\Y'_{v}, \omega_{v}\chi_{F, v, {\rm univ}}^{-1})$$
 such that for all $\chi'_{v}\in \Y'_{v}(\C)$, $\psi_{v}\in \Psi_{v}(\C)$, we have 
$$\calR_{v}^{\natural}(W_{v}, \phi_{v};\chi'_{v}, \psi_{v})=R_{v}^{\natural}(W^{\iota}_{v}, \phi_{v}(\chi'_{v}), \chi'_{v}, \psi_{v}),$$
where $\phi_{v}(\chi_{v}')=\phi_{1,v}\phi_{2,v}(\chi_{F,v})$ with $\phi_{2,v}(\chi_{F,v})$ is as in \eqref{phi2chi}.
\end{prop}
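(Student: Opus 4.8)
The plan is to reduce the statement to a finite, purely local calculation. First I would unwind the definition of the normalised integral $R_{v}^{\natural}$ from \eqref{Rnat} and \eqref{yzz2.6}: up to the elementary factors $|d_{v}|^{-2}|D_{v}|^{-1/2}$, $\zeta_{F,v}(2)L(1,\eta_{v})$, and the shift of additive character by $w_r^{-1}$ (which matters only at $v\mid p$, hence is absent here), $R_v^\natural(W_v,\phi_v,\chi'_v,\psi_v)$ is, up to the factor $L(1/2,\sigma_{E,v}\otimes\chi_v')^{-1}$, the double integral
$$ \int_{Z(F_{v})N(F_{v})\bks \GL_{2}(F_{v})} W_{-1,v}(g) \int_{T(F_{v})} \chi_{v}'(t)\, r(g)\phi_{v}(t^{-1}, q(t))\, dt\, dg. $$
The outer integral is over a compact-mod-centre-and-unipotent space, so after using the Iwasawa decomposition it becomes an integral over $F_v^\times$ of the Kirillov restriction $f_v(y)=W_{-1,v}(\smalltwomat y{}{}1)$ against the theta-type inner integral; here I would invoke Lemma \ref{lem univ kir}, which gives the universal Kirillov model $\mathscr{K}(\sigma_v,\psi_{{\rm univ},v})$ as a free $\OO_{\Psi_v}$-module with an explicit finite list of $M$-rational generators (Schwartz functions on $F_v^\times$ together with $f_\mu$, $f_\mu'$).

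Second, I would make the dependence on $\chi'_v$ and $\psi_v$ explicit. The character $\chi_v'$ enters in three places: through the local toric integral $\int_{T(F_v)}\chi_v'(t)(\cdots)\,dt$, through the substitution $\phi_{2,v}(\chi_{F,v})$ of \eqref{phi2chi} (which is $c\phi_{2,v}^\circ+\tfrac{L(1,\eta_v)}{L(1,\eta_v\chi_{F,v})}\phi_{2,v}'$), and through the normalising $L$-factors. I would argue that for each fixed pair of generators $(W_v,\phi_v)$ in the $M$-rational structures, the integral defining $R_v$ is, after analytic continuation, a rational function of the finitely many quantities $\chi_v'(\vpi_w)$ (for $w\mid v$ a place of $E$) and $\chi_{F,v}(\vpi_v)$, with controlled denominators: the only poles come from geometric series, and these are exactly the local $L$-factors $L(1/2,\sigma_{E,v}\otimes\chi_v')$ and $L(1,\eta_v\chi_{F,v})$ that appear in the normalisation. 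The dependence on $\psi_v$ is through the action of $\OO_{F,v}^\times$ on $\Psi_v$ by $a.\psi(x)=\psi(ax)$, and a change of variable in the Weil representation together with the transformation law $W_v\mapsto$ (scalar)$\cdot W_v$ shows that $R_v$ transforms by the character $\omega_v\chi_{F,v,{\rm univ}}^{-1}$, which is precisely the equivariance encoded in $\OO_{\Y'_v\times\Psi_v}(\omega_v\chi_{F,v,{\rm univ}}^{-1})$. This identifies $\Y'_v$ concretely as a closed subscheme of a split torus over $M$ (as in the footnote), so "algebraic section of the structure sheaf" just means "Laurent polynomial in the torus coordinates, with denominators bounded by the stated $L$-factor". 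The argument is the local analogue of the one already carried out for the Eisenstein Whittaker functions in Lemma \ref{whitt-an}, and I would model the computation on that proof and on the proof of Lemma \ref{kir pair}, reducing to the generators listed there.

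Third, I would check the interpolation identity: by construction the $\OO(\Y'_v\times\Psi_v)$-valued integral $\calR_v^\natural(W_v,\phi_v)$ specialises, at a complex point $(\chi'_v,\psi_v)$, to the same integral with $W_v$ replaced by $W_v^\iota$ and $\psi_{{\rm univ},v}$ by $\psi_v$; but that is exactly $R_v^\natural(W_v^\iota,\phi_v(\chi_v'),\chi_v',\psi_v)$ by Lemma \ref{3.6.2} in the unramified case and by the definition \eqref{Rnat} in general. Boundedness is automatic from the Laurent-polynomial description on the (ind-finite) local spaces. The main obstacle I anticipate is the bookkeeping of the poles: one must verify that every pole of the (analytically continued) unnormalised integral $R_v$ in the variables $\chi'_v(\vpi_w),\chi_{F,v}(\vpi_v)$ is cancelled either by the explicit factor $L(1,\eta_v\chi_{F,v})$ built into $\phi_{2,v}(\chi_{F,v})$ via \eqref{phi2chi} or by the normalising factor $\zeta_{F,v}(2)L(1,\eta_v)/L(1/2,\sigma_{E,v}\otimes\chi'_v)$ — i.e. that no spurious poles survive, so that $\calR_v^\natural$ genuinely lies in $L(1,\eta_v\chi_{F,v})\OO_{\Y'_v\times\Psi_v}(\Y'_v,\omega_v\chi_{F,v,{\rm univ}}^{-1})$ and not merely in its fraction field. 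This is a finite but somewhat delicate case analysis over the possible types of $\sigma_v$ (unramified/ramified principal series, special, supercuspidal) and of the ramification of $\chi'_v$ and of $E_v/F_v$, entirely parallel to the local computations behind Theorem \ref{A}; I would treat the generic principal-series case in detail and indicate that the remaining cases are analogous, exactly as is done in the proof of Lemma \ref{kir pair}.
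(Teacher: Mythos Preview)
Your proposal is correct and follows essentially the same approach as the paper: Iwasawa decomposition reduces $R_v$ to integrals over $F_v^\times$ against the Kirillov generators of Lemma~\ref{lem univ kir}, followed by a case analysis over the type of $\sigma_v$ (supercuspidal, special, principal series) to match the poles of the resulting geometric series against the local Euler factors of $L(1/2,\sigma_{E,v}\otimes\chi_v')$. The paper carries out all cases rather than only the generic principal series, and adds one small step you leave implicit: it verifies separately that $L(1/2,\sigma_{E,v}\otimes\chi_v')^{-1}$ is itself a regular function on $\Y_v'$, by rewriting each Euler factor $(1-\nu_{w,i_w}\chi_w'(\vpi_w))$ as $1-\bigl(\text{average over }\OO_{E,w}^\times\bigr)\nu_{w,i_w}\chi_w'(\vpi_w)$ so that the ramified/unramified dichotomy is absorbed into a single polynomial expression.
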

In the statement, we consider $L(1,\eta_{v}\chi_{F,v})^{-1}$ as an element of $\OO(\Y'_{v})$ (coming by pullback from $\Y_{F,v}$). Note that it equals the nonzero constant $L(1, \eta_{v})^{-1}$ along $\Y_{v}\subset \Y_{v}'$.

\begin{proof}
Note that the assertion on the subsheaf of $\OO_{\Y_{v}\times\Psi_{v}}$ of which $\calR^{\natural}_{v}$ is a section simply encodes the dependence of $R^{\natural}_{v}$ on the additive character, which is easy to ascertain by a change of variables. 
By the definitions, it suffices to show that 
\begin{align}\label{that extends}
L(1/2, \sigma_{E,v}\otimes \chi_{v}')^{-1} R_{v}(W_{v}, \phi_{v}, \chi_{v}')
\end{align}
 extends to a regular function on $\Y_{v}'$. We will more precisely show that $L(1/2, \sigma_{E,v}\otimes\chi_{v}')^{-1}$ is a product of various factors all of which extend to regular functions on $\Y_{v}'$, and that the product of some of those factors and $R_{v}(W_{v}, \phi_{v}, \chi_{v}')$ also extends to a regular function on $\Y_{v}'$.
Concretely, if   $A\subset E_{v}^{\times}/(E_{v}^{\times}\cap V^{p})$ is any finite set, then the evaluations $\chi'_{v}\mapsto (\chi_{v}'(a))_{a\in A}$ define a morphism ${\rm ev}_{A}\colon\Y_{v}'\to{\bf G}_{m, M}^{A}$,\footnote{Moreover if $A$ is sufficiently large the morphism ${\rm ev}_{A}$ is a closed embedding.} so that finite sums of evaluations of characters are regular functions on $\Y_{v}'$ obtained by pullback along ${\rm ev}_{A}$.

\paragraph{Interpolation of   $R_{v}$} Within the expression for $R_{v}$, we can use the Iwasawa decomposition and note that integration over $K=\GL_{2}(\OO_{F,v})$ yields a finite sum of integrals of the form (dropping subscripts $v$)\footnote{See Proposition \ref{interpolation factor}, Lemma \ref{basic local integral} for some more detailed calculations similar to the ones of the present proof.}
\begin{align*}
\int_{F^{\times}} f'(y)\int_{T(F)}\chi'(t)\phi'(yt^{-1}, y^{-1}q(t))\, dt \,{dy}
\end{align*}
for some Schwartz functions $\phi'$ and elements $f'$ of the Kirillov model of $\sigma$  -- namely, the translates of $W_{-1}$ and of $\phi_{v}$ by the action of $K$. (More precisely, taking into account the dependence on $\chi'$ of $\phi$, also products of the above integrals and of $L(1, \eta\chi_{F,v})^{-1}$ can occur; the factor $L(1, \eta\chi_{F,v})^{-1}$ clearly interpolates to a regular function on $\Y_{v}'$.)

 It is easy to see that the integral reduces to a finite sum if either $W$ is compactly supported or $\phi'_{1}(\cdot, u)$ is supported away from $0\in E$. It thus suffices to study the case where  $\phi'_{1}(x_{1}, u)=\one_{\OO_{E}}(x_{1})\phi'_{F}(u)$, and $f'$ belongs to the basis of the quotient space $\baar{\mathscr{K}}$ introduced in the proof of Lemma \ref{lem univ kir}. 
Moreover up to simple manipulations we may assume that $\phi_{F}(u)$ is  is close to a delta function supported at $u=1$. We distinguish three different cases.
\paragraph{$\sigma_{v}$ is supercuspidal} In this case $\baar{\mathscr{K}} =0$ and there is nothing to prove.
\paragraph{$\sigma_{v}$ is a special representation  ${\rm St}(\mu|\cdot|^{-1})$} In this case $\baar{\mathscr{K}}$ is spanned by $f_{\mu}=\mu\cdot\one_{\OO_{F}-\{0\}}$. We find that the integral is essentially\footnote{Here we use this adverb with the precise meaning: up to addition of and multiplication by finite combination of evaluations of $\chi'$.}  $0$ if there is a place $w$ of $E$ above $v$ such that, for $\chi'_{w}:=\chi'_{v}|_{E_{w}^{\times}}$, the character $\chi'_{w}\cdot \mu\circ q$ of $E_{w}^{\times}$ is ramified; and it essentially equals
\begin{align}\label{if unr}
\prod_{w\vert v} (1-\chi'_{w}(\vpi_{w})\mu(q(\vpi_{w})) q_{E,w}^{-1})^{-1}
\end{align}
otherwise.\footnote{In the last expression, $q$ is the norm of $E_{w}/F_{v}$, whereas $q_{E,w}$ is the cardinality of the residue field of $E_{w}$. We apologise for the near-clash of notation.} In the latter case, $L(1/2, \sigma_{E,v}\otimes \chi'_{w})$ is also equal to \eqref{if unr}. We conclude that \eqref{that extends} extends to a regular function on $\Y_{v}'$.
\paragraph{$\sigma_{v}$ is an irreducible principal series ${\rm Ind}(\mu, \mu'|\cdot|^{-1})$}\footnote{Here ${\rm Ind}$ is plain  (un-normalised) induction.} The space $\baar{\mathscr{K}}$ has dimension $2$ and $f_{\mu}$ as above provides a nonzero element. Again the corresponding integral yields either $0$ or \eqref{if unr}, the latter happening precisely when \eqref{if unr} is a factor of $L(1/2, \sigma_{E}\otimes \chi)$. If $\mu'\neq \mu$, then a second basis element is $f_{\mu'}$, for which the same discussion applies. If $\mu'=\mu$, then a second basis element is $f_{\mu}'(y):=v(y)\mu(y)\one_{\OO_{F}-\{0\}}(y)$. The integral is essentially $0$ if some $\chi'_{w}\cdot \mu\circ q$ is ramified, and 
\begin{align}\label{if unr2}
\prod_{w\vert v} (1-\chi'_{w}(\vpi_{w})\mu(q(\vpi_{w})) q_{E,w}^{-1})^{-2}
\end{align}otherwise. In the latter case, $ L(1/2, \sigma_{E,v}\otimes \chi'_{w})$ equals \eqref{if unr2} as well.

\paragraph{Interpolation  of $L(1/2, \sigma_{E,v}\otimes\chi_{v}')^{-1}$}  Depending only on $\sigma_{v}$, as recalled above for each place $w\vert v$ of $E$ there exist at most two characters $\nu_{w, i_{w}}$ of $E_{w}^{\times}$ such that  for all $\chi_{v}'\in \Y'_{v}(\C)$, we can write   $L(1/2, \sigma_{E,v}\otimes\chi_{v}')^{-1}=\prod_{w, i_{w}}'(1-\nu_{w}\chi'_{w}(\vpi_{w}))$, where the product $\prod'$ extends over those pairs $(w,i_{w}) $ such that $\nu_{w, i_{w}}\chi'_{w}$ is unramified. We can replace the partial product by a genuine product and each  of the factors by 
$$1-\left(\sum^{{}'}_{x\in \OO_{E,w}^{\times}/(V^{p}\cap E_{w}^{\times})}\nu_{w, i_{w}}(x)\chi_{w}'(x)\right)\cdot \nu_{w, i_{w}}\chi'_{w}(\vpi_{w}) $$
where $\sum'$ denotes average. This expression is the value at $\chi'_{v}$ of an element of $\OO(\Y_{v}')$, as desired.

\end{proof}

\subsection{Definition and interpolation property}

Let let $\mathscr{M}_{\Y'-\Y}$
 be the multiplicative part of $\OO(\Y')^{\rm f}$ consisting of functions whose restriction to $\Y$ is invertible. (Recall that $\OO(\Y')^{\rm f}\subset \OO(\Y' )^{\rm b}$ is the image of $\otimes_{v\nmid p\infty}\OO(\Y_{v}')$.) 
\begin{theo}\label{theo A text} There exists a unique function
$$L_{p,\alpha}(\sigma_{E})\ \in  \OO_{\Y'\times \Psi_{p}}(\Y',\omega_{p}\chi_{F, {\rm univ}, p}^{-1})^{\rm b} [\mathscr{M}_{\Y'-\Y}^{-1}]$$
which is algebraic on  $\Y_{M(\alpha)}'^{\rm l.c.}\times \Psi_{p}$ and satisfies
$$ L_{p,\alpha}(\sigma_{E})(\chi', \psi_{p}) =
 {\pi^{2[F:\Q]} |D_{F}|^{1/2}   L^{(\infty)}(1/2, \sigma_{E}^{\iota}, \chi'{}^{\iota})\over
  2 L^{(\infty)}(1, \eta) L^{(\infty)}(1, \sigma^{\iota}, \ad)}
  \prod_{v\vert p}  Z_{v}^{\circ}(\chi_{v}', \psi_{v})$$
for every $\chi'\in \Y_{ M(\alpha)}'(\C)$ inducing  an  embedding $\iota\colon M(\alpha)\into \C$. Here $Z_{v}^{\circ}$ is as in Theorem \ref{A}.

Let $\Y'^{\circ }\subset \Y'$ be any connected component, $\Y^{\circ }:=\Y\cap \Y'^{\circ } $ the corresponding connected component of $\Y$, and let $\B$ be the quaternion algebra over $\A^{\infty}$ determined by \eqref{local cond} for any (equivalently, all) points $\chi\in \Y^{\circ}$. 
For any $\vphi^{p\infty}\in \sigma^{p\infty}$ and $\phi^{p\infty}\in \mathscr{S}({\bf V}^{p\infty}\times{\A^{p\infty,\times}})$, we have 
\begin{align}\label{def plf}
\lf(\calI(\phi^{p\infty}))|_{\Y'^{\circ}}=
L_{p, \alpha}(\sigma_{E})|_{\Y'^{\circ}\times\Psi_{p}}
 \prod_{v\nmid p \infty} \mathscr{R}_{v}^{\natural}(W_{v}, \phi_{v})|_{\Y'^{\circ}\times \Psi_{p}}
\end{align}
in $\OO_{\Y'}(\Y'^{\circ})^{\rm b}$, where both $\mathscr{I}$ and $\calR^{\natural}$ are constructed using ${\bf V}$.  On the right-hand side, the product  $\prod_{v\nmid p}\mathscr{R}_{v}^{\natural}$ makes sense over $\Y'^{\circ}\times \Psi_{p}$ by the decomposition $\sigma\cong \mathscr{K}(\sigma^{p}, \Psi_{p})\otimes \mathscr{K}(\sigma_{p}, \Psi_{p})$ induced by the Whittaker functional fixed in the definition of $\lf$.\footnote{The $p$-adic $L$-function $L_{p, \alpha}(\sigma_{E})$ does not depend on this choice. Here, letting $\Psi_{v}'$ denote the space of all nontrivial additive characters of $F_{v}^{\times}$, the space  $\mathscr{K}(\sigma^{p}, \Psi_{p}) $ is the restriction of $  \prod_{v\nmid p}'\mathscr{K}(\sigma_{v}, \Psi_{v}')$
via an embedding $\Psi_{p}\into \prod_{v\nmid p}\Psi_{v}'$ obtained as follows: fix any nontrivial  character $\psi_{0} $ of $\A/F$ into $\bmu_{\Q}$, then $\psi_{p}\mapsto  (\psi_{0}/\psi_{p}|_{F_{v}})_{v}$.  }
\end{theo}

\begin{proof}  The definition can be given locally by taking quotients in \eqref{def plf} for any given $(W^{p\infty},\phi^{p\infty})$.  Note that on the right-hand side of \eqref{def plf}, the product is finite since by   Lemma \ref{3.6.2} we have $\mathscr{R}_{v}^{\natural}(W_{v}, \phi_{v})=1$ identically on $\Y'$ if all the data are unramified. The analytic properties of $L_{p, \alpha}(\sigma_{E})$  are then a consequence of the following  claim. Let  $S$ be any finite set  of places $v\nmid  p\infty$, containing all the ones such  that either  $\sigma_{v}$ is ramified or  the subgroup $V_{v}\subset \OO_{E,v}^{\times}$ fixed in the Introduction is not maximal. Let $\Y'^{\circ}$ be a connected component and ${\bf B}$ be the associated quaternion algebra. Then   \emph{ for each $v\in S$, there exists a finite set of pairs $(W_{v}, \phi_{v})$ such that the locus of common vanishing of the corresponding functions $ \mathscr{R}_{v}^{\natural}(W_{v}, \phi_{v})|_{\Y^{\circ}}$ is empty}. 

We prove the claim.  Let $\chi_{v}\in \Y_{v}^{\circ}$ be any closed point, where $v\in S$ and $\Y_{v}^{\circ}\subset \Y_{v}$ is   the union of connected    components corresponding to $\Y^{\circ}$. By Lemma \ref{Qtheta} below, we have $\mathscr{R}^{\natural}(W_{v}, \phi_{v}, \chi_{v}) =   Q_{v}(\theta_{\psi,v}(W_{v},\phi_{v}), \chi_{v})$, where $\theta_{\psi,v}$ is a Shimizu lift sending $\sigma_{v}\times \mathscr{S}({\bf V}_{v}\times F^{\times})$ onto $\pi_{v}\otimes \pi_{v}^{\vee}$, with $\pi_{v}$  the Jacquet--Langlands transfer of $\sigma_{v}$ to $\B_{v}^{\times}$. By construction of $\B_{v}$ and the result mentioned in  \S\ref{hm1}, the functional $Q_{v}(\cdot, \chi_{v})$ is non-vanishing. Therefore given $\chi_{v}\in \Y_{v}^{\circ}$, we can find $(W_{v}, \phi_{v})$ such that $\mathscr{R}_{v}^{\natural}(W_{v}, \phi_{v}; \chi_{v})\neq 0$. 

Consider the set of all functions $\mathscr{R}_{v}^{\natural}(W_{v}, \phi_{v})|_{\Y_{v}^{\circ}}$ for varying $(W_{v}, \phi_{v})$. As the locus of their common vanishing is empty, it follows by the Nullstellensatz that finitely many of them generate the unit ideal of $\OO(\Y_{v}^{\circ})$.\footnote{Recall that $\Y_{v}$ is an affine scheme of finite type over $M$ (more precisely it is a closed subscheme of a split torus).} This completes the proof of the claim.

 We now move to the interpolation property. The algebraicity on $\Y_{M(\alpha)}'^{\, \rm l.c.}$ is clear from the definition just given.
 By $\iota$, which we will omit from the notation below, we can  identify $\vphi$ with an antiholomorphic automorphic form $\vphi^{\iota}$.  By the definitions and Proposition \ref{RSp}, we have
\begin{equation*}\begin{split}
L_{p,\alpha}(\sigma_{E})(\chi', \psi_{p})&=
{\lf(\mathscr{I}(\phi^{p},\chi' ))\over  \prod_{v\nmid p\infty}\mathscr{R}_{v}^{\natural}(W_{v},\phi_{v};\chi'_{v}, \psi_{v})}\\
&={|D_{F}|^{1/2}  \zeta_{F}(2)\over 2 L(1, \sigma, \ad) }  
	\cdot { | D_{E}|^{1/2} |D_{F}| \alpha^{-r}(\vphi,  w_{r}^{-1}I(\phi, \chi')) \over \prod_{v\nmid p\infty}  {R}^{\natural}_{v}(W_{v},\phi_{v}, \chi', \psi_{v})}\\
&=  { |D_{F}|^{1/2}  \zeta_{F}(2) \over 2L(1, \sigma, \ad)}
 	\cdot {L^{(\infty)}(1/2, \sigma_{E}, \chi')\over \zeta_{F}^{(\infty)}(2) L^{(\infty)}(1, \eta)}   
	  \prod_{v\vert \infty}{R}^{\circ}_{v}(\phi_{v}, W_{v}, \chi'_{v}, \psi_{v}) 
	    \prod_{v\vert p}{R}^{\natural}_{r,v}(\phi_{v}, W_{v}, \chi'_{v}, \psi_{v})\\\
&=  {\zeta_{F, \infty}(2)\over 2^{[F:\Q]} L(1, \sigma_{\infty}, \ad ) }
	\cdot { |D_{F}|^{1/2}  L^{(\infty)}(1/2, \sigma_{E}, \chi')\over 2 L^{(\infty)}(1, \eta)   L^{(\infty)}(1, \sigma, \ad)} \prod_{v\vert p}{R}^{\natural}_{r,v}(W_{v}, \phi_{v}, \chi'_{v}, \psi_{v}).
\end{split}\end{equation*}
Here $\psi^{p}$ is any additive character such that $\psi=\psi^{p}\psi_{p}\psi_{\infty}$ vanishes on $F$.
For $v\vert \infty$, we have $\zeta_{F,v}(2)/L(1, \sigma_{v}, \ad)=\pi^{-1}/(\pi^{-3}/2)=2\pi^{2}$, so that the first fraction in the last line equals $\pi^{2[F:\Q]}$. The result follows.

The proof is completed by the  identification  $R_{r,v}^{\natural}=Z^{\circ}_{v}$ for $v\vert p$ carried out in Proposition \ref{interpolation factor}.
\end{proof}

\section{$p$-adic heights}

We recall the definition and properties of $p$-adic heights and prove two integrality criteria for them. The material of \S\S\ref{hc}-\ref{intH} will not be used until \S\S\ref{dec gk}-\ref{lsp}.

For background in $p$-adic Hodge theory see the summary in \cite[\S 1]{nekheights} and references therein. The notation we use is completely standard; it coincides with that of  \emph{loc. cit.} except that we shall prefer to write ${\bf D}_{\rm dR}$ instead of  $DR$ for the functor of de Rham periods.

\subsection{Local and global height pairings}\label{lgh}
Let $F$ be a number field and $\calG_{F}:=\Gal(\baar{F}/F)$.  Let $L$ be a finite extension of $\Q_{p}$, and let $V$ be a finite-dimensional $L$-vector space with a continuous action of $\calG_{F}$. For each place $v$ of $F$, we denote by $V_{v}$ the space $V$ considered as a representation of $\calG_{F,v}:= \Gal(\baar{F}_{v}/F_{v})$. 

  Recall that  the Bloch--Kato Selmer group of $V$
  $$H^{1}_{f}(F, V)$$
  is the subset of $H^{1}(F, V)={\rm Ext}^{1}_{F}(L,V)$ (extensions in the category of continuous $\calG_{F}$-representations over $L$) consisting of the classes of those extensions $0\to V\to E_{1}\to L\to 0$ which are unramified at all $v\nmid p$ and crystalline at all $v\vert p$ (that is such that $E_{1}$ is).    
  
   Suppose that: 
 \begin{itemize}
 \item
$V$ is unramified outside of a finite set of primes of $F$;
\item $V_{v}$  is de Rham, hence potentially semistable,  for all $v\vert p$; 
\item $H^{0}(F_{v}, V)=H^{0}(F_{v},V^{*}(1))=0$ for all $v\nmid p$; 
\item ${\bf D}_{\rm crys}(V_{v})^{\vphi=1} ={\bf D}_{\rm crys}(V_{v})^{\vphi=1} =0$  for all $v\vert p$ (where $\vphi$ denotes the crystalline Frobenius).
 \end{itemize}

  Under those conditions, Nekov{\'a}{\v{r}} \cite{nekheights} (to which we refer for more details; see also \cite{nek-selmer})
 constructed a bilinear  pairing on the Bloch--Kato Selmer groups
 \begin{align}\label{htpa}
 \langle\, , \,\rangle\colon H^{1}_{f}(F, V) \times H^{1}_{f}(F, V^{*}(1)) \to \Gamma_{F}\hat{\otimes} L
 \end{align}
   depending on choices of $L$-linear splittings 
of the Hodge filtration 
\begin{align}\label{hdg}
{\rm Fil}^{0}{\bf D}_{\rm dR}(V_{v})\subset {\bf D}_{\rm dR}(V_{v})
\end{align}
for the primes $v\vert p$. In fact in \cite{nekheights} it is assumed that $V_{v}$ is semistable; we will recall the definitions under this assumption, and at the same time see that they can be made compatible with extending the ground field (in particular, to reduce the potentially semistable case to the semistable case). Cf. also \cite{benois-hts}  for a very general treatment.

 Post-composing $\langle\, , \,\rangle$ with a continuous  homomorphism $\ell\colon \Gamma_{F}\to L'$, for some $L$-vector space $L'$, yields an $L'$-valued pairing $\langle\, , \,\rangle_{\ell}$  (the cases of interest to us are $L'=L$ with any $\ell$, or $L'=\Gamma_{F}\hat{\otimes}L $ with the tautological $\ell$). For such an $\ell$ we write $\ell_{v}:=\ell|_{F_{v}^{\times}}$ and we say that $\ell_{v}$ is \emph{unramified} if it is trivial on $\OO_{F,v}^{\times}$ (note that this is automatic if $v\nmid p$).
 
Let $x_{1}\in H^{1}_{f}(F, V) $, $x_{2}\in H^{1}_{f}(F, V^{*}(1)) $, and   view them  as  classes $x_{1}=e_{1}=[E_{1}]$,  $x_{2}=e_{2}=[E_{2}]$ of extensions of Galois representations
\begin{align*}
&e_{1}\colon \qquad 0\to V\to E_{1}\to L\to 0\\
&e_{2}\colon\qquad 0\to V^{*}(1)\to E_{2}\to L\to 0.
\end{align*}
For any $e_{1}$, $e_{2}$ as above,
  the set of  Galois representations  $E$ fitting into a commutative diagram
\begin{equation*}
\xymatrix{
   &   &0\ar[d]  &0\ar[d]  &  \\
0\ar[r] & L(1)\ar[r]\ar@{=}[d]&E_{2}^{*}(1)  \ar[d]\ar[r] &V\ar[r]\ar[d]& 0\\
0\ar[r] & L(1)\ar[r] &E\ar[d]\ar[r] &E_{1}\ar[r]\ar[d] &0\\
&&L\ar@{=}[r]\ar[d]&L\ar[d]&\\
&&0&0&
}
\end{equation*}
is an $H^{1}(F, L)$-torsor 
\cite[Proposition 4.4]{nekheights}; any such $E$  is called a \emph{mixed extension} of $e_{1}$, $e_{2}^{*}(1)$. Depending on the choice of (extensions $e_{1}$ and $e_{2}$ and) a mixed extension $E$, there is a decomposition 
\begin{align}\label{loc-gl-hts}
 \langle x_{1} , x_{2}\rangle_{\ell}=\sum_{v\in S_{F}} \langle x_{1,v}, x_{2,v}\rangle_{\ell_{v},E,v}
\end{align}
of the height pairing into a (convergent) sum of local symbols indexed by the \emph{non-archimedean} places of $F$. We recall the definition of the latter \cite[\S 7.4]{nekheights}. The representation $E$ can be shown to be automatically semistable at any $v\vert p$; for each $v$ it then yields a class $[E_{v}]\in H^{1}_{\rm *}(F_{v}, E_{2})$ with  $*=\emptyset$ if $v\nmid p$, $*={\rm st}$ if $v\vert p$. This group sits in a diagram of exact sequences
\begin{equation}\label{ex seq me}
\xymatrix{
0\ar[r] &H^{1}(F_{v}, L(1))\ar[r]  &H^{1}_{*}(F_{v}, E_{2})\ar[r] &H^{1}_{f}(F_{v}, V)\ar[r] &0\\
0\ar[r] &H^{1}_{f}(F_{v}, L(1))\ar[r]\ar[u]  &H^{1}_{f}(F_{v}, E_{2})\ar[r]\ar[u] &H^{1}_{f}(F_{v}, V)\ar[r]\ar@{=}[u] &0.
}
\end{equation}
If $v\vert p$, the chosen splitting of \eqref{hdg} uniquely determines  a splitting $s_{v}\colon  H^{1}_{*}(F_{v}, E_{2})\to H^{1}(F_{v}, L(1))$; 
 if $v\nmid p$, there is a canonical splitting  independent of choices, also denoted by $s_{v}$. In both cases, the local symbol is  
$$  \langle x_{1,v}, x_{2,v}\rangle_{\ell_{v}, E,v}:=-\ell_{v}(s_{v}([E_{v}])),$$
where we still denote by  $\ell_{v}$ the composition $ H^{1}(F_{v}, L(1))\cong F_{v}^{\times}\hat{\otimes}L\to \Gamma_{F}\hat{\otimes} L\to L'$.  When $v\vert p$, we say that $[E_{v}]$ is \emph{essentially crystalline} if $[E_{v}]\in H^{1}_{f}(F_{v}, E_{2})\subset H^{1}_{\rm st}(F_{v}, E_{2})$; equivalently, $s_{v}([E_{v}])\in H^{1}_{f}(F_{v}, L(1))$.

\subsubsection{Behaviour under field extensions} If $F'_{w}/F_{v}$ is a finite extension of local non-archimedean fields, the pairing
\begin{align}\label{F'}
\langle\, , \, \rangle_{\ell_{v}\circ N_{F_{w}/F_{v}}}\colon H^{1}_{f}(F'_{w},V_{w})\times H^{1}_{f}(F'_{w},V_{w}^{*}(1))\to L'
\end{align}
 defined using  the induced  Hodge splittings and the map $\ell_{{w}}:=\ell_{v}\circ N_{F_{w}/F_{v}}$
satisfies
 \begin{align}\label{local ht norm}
  \langle {\rm cores}^{F'_{w}}_{F_{v} } x_{1},   x_{2}\rangle_{\ell_{v}} = \langle  x_{1},  {\rm res}^{F'_{w}}_{F_{v}} x_{2}\rangle_{\ell_{v}\circ N_{F_{w}/F_{v}}} 
  \end{align}
   for all $x_{1}\in H^{1}_{f}(F_{v},V_{v})$, $x_{2}\in H^{1}_{f}(F'_{w},V|_{\calG_{F'_{w}}}^{*}(1))$. 

Back to the global situation, it follows that extending any  $\ell\colon \Gamma_{F}\to L'$  to the direct system $(\Gamma_{F'})_{F'/F \text{ finite}}$ by 
\begin{align}\label{ellw}
\ell_{w}=\ell|_{F_{w}'^{\times}}:={1\over [F':F]} \ell_{v}\circ N_{F_{w}'/F_{v}}
\end{align}
 we can extend $\langle\, , \, \rangle_{\ell}$ to a  pairing
 \begin{align}\label{F'gl}
 \langle\, , \, \rangle_{\ell} \colon \varinjlim_{F'}  H^{1}_{f}(F',V|_{\calG_{F'}})\times H^{1}_{f}(F',V|_{\calG_{F'}}^{*}(1))\to L',
 \end{align}
 where the limit is taken with respect to restriction maps. This allows to define the pairing in the potentially semistable case as well.

\subsubsection{Ordinariness} Let $v\vert p$ be a place of $F$.
\begin{defi}\label{pot-ord} 
 We say that a de Rham representation $V_{v}$ of $\calG_{F_{v}}$ satisfies the  \emph{Panchishkin condition} or that is \emph{potentially ordinary} 
 if there is a (necessarily unique) exact sequence of de Rham $\calG_{F_{v}}$-representations
$$0\to V_{v}^{+}\to V_{v}\to V_{v}^{-}\to 0$$
with ${\rm Fil}^{0}{\bf D}_{\rm dR}(V_{v}^{+})= {\bf D}_{\rm dR}(V_{v}^{-})/{\rm Fil}^{0}=0$. 
\end{defi}
If $V_{v}$ is potentially ordinary, there   is a canonical splitting of \eqref{hdg}  given by
\begin{align}\label{panc}
{\bf D}_{\rm dR}(V_{v})\to {\bf D}_{\rm dR}(V_{v}^{-})={\rm Fil}^{0}{\bf D}_{\rm dR}(V_{v}).
\end{align}

\subsubsection{Abelian varieties} If $A/F$ is an abelian variety with potentially semistable reduction at all $v\vert p$, then the rational Tate module $V=V_{p}A$  satisfies the required assumptions, and there is a canonical isomorphism $V^{*}(1)\cong V_{p}A^{\vee}$. Suppose that there is an embedding of a number field $M\into \End^{0}(A)$; its action on $V$ induces a decomposition $V=\oplus_{\frakp \vert p}V_{\frakp}$ indexed by the primes of $\OO_{M}$ above $p$. Given such  a prime $\frakp$, a finite extension $L $ of $M_{\frakp}$, and splittings of the Hodge filtration on ${\bf D}_{\rm dR}(V_{\frakp}|_{\calG_{F_{v}}})\otimes_{M_{\frakp}}L$ for $v\vert p$,
  we obtain from the compatible pairings \eqref{F'gl} a  height pairing
\begin{gather}\label{ht on A}
\langle\, , \, \rangle \colon A(\baar{F})\times A^{\vee}(\baar{F})\to\Gamma_{F}\hat{\otimes} L
\end{gather}
via the Kummer maps $\varkappa_{A, F', \frakp}\colon A(F')\to H^{1}_{f}(F', V)\to H^{1}_{f}(F', V_{\frakp})$ and  $\varkappa_{A^{\vee}, F', \frakp}\colon A(F')\to H^{1}_{f}(F', V^{*}_{\frakp}(1))$ for any $F\subset F'\subset \baar{F}$.

If $\frakp$ is a prime of $M$ above $p$ and  $V_{\frakp}A\otimes L$ is potentially ordinary  for all $v\vert p$, the height pairing \eqref{ht on A} is then canonical (cf. \cite[\S11.3]{nek-selmer}). Such  is the situation of Theorem \ref{B}. In that  case
we consider the restriction of \eqref{ht on A} to $A(\chi)$, coming from the Kummer maps 
$$\varkappa_{A(\chi)}\colon A(\chi)\to H^{1}_{f}(E, V_{\frakp}A(\chi)),\qquad
\varkappa_{A^{\vee}(\chi^{-1})}\colon A^{\vee}(\chi^{-1})\to H^{1}_{f}(E, (V_{\frakp}A(\chi)^{*})(1)),$$ where
$$V_{\frakp}A(\chi):= V_{\frakp}A|_{\calG_{E}}\otimes L(\chi)_{\chi}.$$
Note that by the condition $\chi|_{\A^{\infty,\times}}=\omega_{A}^{-1}$, we have $(V_{\frakp }A(\chi))^{*}(1)\cong V_{\frakp }A(\chi)$.

\subsection{Heights and intersections on curves}\label{hc} If $X/F$ is a (connected, smooth, proper) curve with semistable reduction at all $v\vert p$, let $V:=H^{1}_{\text{\'et}}(X_{\baar{F}},\Q_{p}(1))$. Then $V$ satisfies the relevant assumptions; moreover it carries a non-degenerate symplectic form by Poincar\'e duality,
inducing an isomorphism $V\cong V^{*}(1)$. For any finite extension $L$ of $\Q_{p}$, any Hodge splittings on $({\bf D}_{\rm dR}(V_{v}\otimes L))_{v\vert p}$,  and any continuous homomorphism $\ell\colon \Gamma_{F}\to L'$, we  obtain a pairing 
\begin{align}\label{ht-curve}
\langle\, , \rangle_{X, \ell} \colon {\rm Div}^{0}(X_{\baar{F}})\times {\rm Div}^{0}(X_{\baar{F}})\to L'\end{align}
via the Kummer maps similarly to the above. The pairing factors through ${\rm Div}^{0}(X_{\baar{F}})\to J_{X}(\baar{F})$ where $J_{X}$ is the Albanese variety; it corresponds to the height pairing on $J_{X}(\baar{F})\times J_{X}^{\vee}(\baar{F})$ via the canonical autoduality of $J_{X}$.

 The restriction of \eqref{ht-curve} to the set $ ({\rm Div}^{0}(X_{\baar{F}})\times {\rm Div}^{0}(X_{\baar{F}}))^{*}$ of pairs of divisors with disjoint supports admits a canonical decomposition 
$$\langle\, , \rangle_{X, \ell} =\sum_{w\in S_{F'}}   \langle\, , \rangle_{X, \ell_{w}, w}.$$ 
Namely, the local symbols are continuous symmetric bi-additive maps
 given by 
\begin{align}\label{loc gl hts curve}\langle D_{1}, D_{2} \rangle_{X, \ell_{w}} := \langle x_{1}, x_{2}\rangle_{\ell_{w}, E, w}\end{align}
where $x_{i}$ is the class of $D_{i}$ in $H^{1}_{f}(F', V)$, and if $Z_{1}$, $Z_{2}\subset X_{\baar{F}}$ are disjoint proper closed subsets of $X_{\baar{F}}$ such that the support of $D_{i}$ is contained in $Z_{i}$, then $E$, $E_{1}$, $E_{2}$ are the extensions obtained from  the diagram of \'etale cohomology groups
\begin{equation*}
\xymatrix{
   &   &0\ar[d]  &0\ar[d]  &  \\
0\ar[r] & H^{0}(Z_{2}, L(1)) \ar[r]\ar@{=}[d]&   H^{1}((X_{\baar{F}},  Z_{2}), L(1))    \ar[d]\ar[r] &  H^{1}(X_{\baar{F}}, L(1))   \ar[r]\ar[d]& 0\\
0\ar[r] &  H^{0}(Z_{2}, L(1)) \ar[r] & H^{1}((X_{\baar{F}}-Z_{1},  Z_{2}), L(1))   \ar[d]\ar[r] &  H^{1}(X_{\baar{F}}-Z_{1}, L(1))   \ar[r]\ar[d] &0\\
&&    H^{2}_{Z_{1}}(X_{\baar{F}}, L(1) )\ar@{=}[r]\ar[d]&   H^{2}_{Z_{1}}(X_{\baar{F}}, L(1)) \ar[d]&\\
&&0&0&
}
\end{equation*}
 by pull-back along  ${\rm cl}_{{D}_{1}} \colon L\to H^{2}_{Z_{1}}(X_{\baar{F}}, L(1))$ and push-out along $-{\rm Tr}_{{D}_{2}}\colon H^{0}(Z_{2}, L(1))\to L(1)$.

If $X$ does not have semistable reduction at the primes above $p$, we can still find a finite extension $F'/F$ such that $X_{F'}$ does, and define the pairing on $X_{F'}$.  If $X=\coprod_{i} X_{i}$ is a disjoint union of finitely many connected curves, then $\Div^{0}(X_{\baar{F}})$ will denote the group of divisors having degree zero on each connected component; it affords local and global pairings by direct sum.

\subsubsection{A uniqueness principle} 
Suppose that $D_{1}={\rm div}\, (h)$ is a principal divisor with support disjoint from the support of $D_{2}$, and let $h(D_{2}):=\prod_{P} h(P)^{n_{P}}$. Then the mixed extension $[E_{w}]=[E_{D_{1}, D_{2},w}]$ is the image of  $h(D_{2})\otimes 1  \in F_{w}'^{\times}\hat{\otimes} L \cong H^{1}(F_{w}', L(1))$ in $H^{1}_{*}(F_{w}', E_{2})$ under \eqref{ex seq me}; it follows that 
\begin{align}\label{ht inter princ}
\langle D_{1}, D_{2}\rangle_{X, \ell_{w}}=\ell_{w} (h(D_{2}))
\end{align}
independently of the choice of Hodge splittings. When $\ell_{w}$ is unramified, this property in fact suffices to characterise the pairing.

\begin{lemm}\label{ur ht} Let $X/F_{v}$ be a smooth proper curve over a local field $F_{v}$ and suppose that $\ell_{v}\colon F_{v}^{\times}\to L$ is unramified. Then there exists a unique locally constant  symmetric biadditive pairing  
$$\langle\, , \rangle_{X, \ell_{v}} \colon  ({\rm Div}^{0}(X_{{F}_{v}})\times {\rm Div}^{0}(X_{{F}_{v}}))^{*}\to L$$
such  that 
$$\langle{\rm div}\,(h),D_{2}\rangle_{X, \ell_{v}}=
\ell_{v}(h(D_{2}))$$
whenever the two arguments have disjoint supports.
\end{lemm}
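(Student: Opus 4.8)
The plan is to prove existence and uniqueness separately. For \emph{uniqueness}, suppose $\langle\,,\,\rangle_1$ and $\langle\,,\,\rangle_2$ are two pairings with the stated property, and set $\beta := \langle\,,\,\rangle_1 - \langle\,,\,\rangle_2$. Then $\beta$ is a locally constant symmetric biadditive pairing on pairs of disjoint-support degree-zero divisors which vanishes whenever the first argument is principal. I would like to conclude $\beta = 0$. The key point is that $\beta$ descends to a pairing on $J_X(\baar{F}_v) \times J_X(\baar{F}_v)$ (or rather on $\Div^0$ modulo principal divisors, which is $J_X$), using symmetry to handle the second argument as well; but one must be careful because disjointness of supports is needed to evaluate $\beta$, so the argument is: given $D_1, D_2$ of disjoint support with $D_1$ representing a torsion class or a class in the image of $\Div^0$ modulo principal, one moves $D_1$ within its linear equivalence class to a divisor $D_1'$ still disjoint from $D_2$, and $\beta(D_1, D_2) = \beta(D_1', D_2)$ by biadditivity plus vanishing on principal divisors and local constancy. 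Hence $\beta$ factors through $J_X(\baar{F}_v) \times J_X(\baar{F}_v)$, and since $J_X(\baar{F}_v)$ is a divisible group while the target $L$ is a $\Q_p$-vector space with no small subgroups, local constancy (continuity) of $\beta$ forces $\beta$ to vanish on a finite-index — in fact dense — subgroup and then everywhere. I should double-check whether local constancy is with respect to the $v$-adic topology on divisors; if so, the moving lemma plus density of torsion in $J_X(\baar{F}_v)$ closes the argument cleanly.

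For \emph{existence}, I would simply invoke the pairing $\langle\,,\,\rangle_{X,\ell_v}$ already constructed locally: the excerpt has defined, for $X/F$ global with a homomorphism $\ell\colon \Gamma_F \to L'$ and a choice of Hodge splittings, a decomposition $\langle\,,\,\rangle_{X,\ell} = \sum_w \langle\,,\,\rangle_{X,\ell_w,w}$ into local symbols, and these local symbols $\langle\,,\,\rangle_{X,\ell_w}$ are defined purely locally (via the mixed-extension construction with $\'etale$ cohomology of $(X_{\baar F_v}, Z_i)$). One checks from \eqref{ht inter princ} that when $\ell_v$ is unramified the local symbol is independent of the Hodge splitting, so it genuinely depends only on the local datum $(X/F_v, \ell_v)$, and it is locally constant, symmetric, and biadditive by construction in \cite{nekheights}. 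Finally equation \eqref{ht inter princ} itself gives exactly the normalisation $\langle \divisor(h), D_2\rangle_{X,\ell_v} = \ell_v(h(D_2))$ required. So existence is immediate from the material recalled in \S\ref{hc}; the content of the lemma is really the uniqueness statement.

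The main obstacle is the uniqueness argument, specifically justifying that $\beta$ descends to $J_X(\baar{F}_v)^{\times 2}$ and then vanishes: one needs (i) a moving lemma in the linear system to preserve disjointness of supports, which is standard over an infinite field, and (ii) the divisibility/continuity argument that a continuous bihomomorphism from a divisible group to a $\Q_p$-vector space vanishing on a dense subgroup is zero. A subtlety is that $J_X(\baar{F}_v)$ is only divisible, not uniquely divisible, so I would instead argue that $\beta$ vanishes on all torsion classes (by the moving lemma, $\beta(D_1,D_2)$ depends only on the class of $D_1$, and an $n$-torsion class times $n$ is principal, so $n\beta(D_1,D_2) = 0$, hence $\beta(D_1,D_2)=0$ as $L$ is torsion-free); then density of torsion in $J_X(\baar{F}_v)$ together with local constancy of $\beta$ finishes. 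I expect this to go through smoothly, and I would present it in roughly one paragraph for uniqueness and one sentence for existence.

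\begin{proof}[Proof of Lemma \ref{ur ht}]
Existence follows from the construction recalled in \S\ref{hc}: the local symbol $\langle\,,\,\rangle_{X,\ell_v}$ attached to $(X/F_v,\ell_v)$ via the mixed-extension formula \eqref{loc gl hts curve} is a locally constant symmetric biadditive pairing on pairs of disjoint-support degree-zero divisors, and by \eqref{ht inter princ} it is independent of the choice of Hodge splitting precisely because $\ell_v$ is unramified, and satisfies $\langle \divisor(h), D_2\rangle_{X,\ell_v} = \ell_v(h(D_2))$.

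For uniqueness, let $\beta$ be the difference of two such pairings; it is a locally constant symmetric biadditive pairing vanishing whenever its first (hence, by symmetry, either) argument is principal. Given $D_1, D_2 \in \Div^0(X_{F_v})$ with disjoint supports, and any $D_1'$ linearly equivalent to $D_1$ with support disjoint from that of $D_2$, we have $D_1 - D_1' = \divisor(h)$ for some $h$ with support disjoint from $D_2$, so $\beta(D_1, D_2) = \beta(D_1', D_2)$ by biadditivity; thus $\beta(D_1, D_2)$ depends only on the class $[D_1] \in J_X(F_v)$. If $[D_1]$ is $n$-torsion, then $n D_1$ is principal (after an appropriate move), so $n\,\beta(D_1, D_2) = 0$, whence $\beta(D_1, D_2) = 0$ since $L$ is a $\Q_p$-vector space. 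As the torsion subgroup of $J_X(F_v)$ is dense in $J_X(F_v)$ for the $v$-adic topology and $\beta$ is locally constant in $D_1$, it follows that $\beta \equiv 0$.
\end{proof}
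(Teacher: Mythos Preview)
Your existence argument is fine in spirit (one construction suffices), though the parenthetical claim that \eqref{ht inter princ} shows the local symbol is independent of the Hodge splitting when $\ell_v$ is unramified is not justified by that equation alone; independence is actually a \emph{consequence} of the uniqueness you are proving, not an input to existence. This is harmless: for existence you only need \emph{some} pairing with the required properties, so you may simply fix any Hodge splitting.

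The uniqueness argument, however, has a genuine gap. You assert that the torsion subgroup of $J_X(F_v)$ is dense in $J_X(F_v)$ for the $v$-adic topology. This is false: by Mattuck's theorem (or the structure theory of $p$-adic Lie groups), $J_X(F_v)$ contains a finite-index open subgroup isomorphic to $\OO_{F_v}^{\dim J_X}$, so its torsion subgroup is finite and certainly not dense once $\dim J_X>0$. In your planning you were working over $\baar{F}_v$, where torsion \emph{is} dense, but the lemma concerns $\Div^0(X_{F_v})$, and in the written proof you switched to $J_X(F_v)$ without noticing the discrepancy.

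The paper's argument repairs this in one line: the difference $\beta$ descends to a locally constant bihomomorphism $J_X(F_v)\times J_X(F_v)\to L$; fixing one argument gives a locally constant homomorphism from the \emph{compact} group $J_X(F_v)$ to $L$, whose image is therefore finite, hence trivial since $L$ is torsion-free. Compactness of $J_X(F_v)$, not density of torsion, is the correct mechanism.
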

\begin{proof}  
The result is well-known, see e.g. \cite[Proposition 1.2]{CG}, but  for the reader's convenience we recall the proof. A construction of such a pairing has just been recalled, and a second one will be given below.  For  the uniqueness, note that the difference of any two such pairings is a locally constant homomorphism $J({F}_{v})\times J({F}_{v})\to L$. As the source is a compact group and the target is torsion-free,  such a homomorphism must be trivial.
\end{proof}

\subsubsection{Arithmetic intersections} 
Let $F'\subset \baar{F}$ be a finite extension of $F$ and $\X/\OO_{F'}$ be  a regular   integral model of $X$. 
For a divisor $D\in \Div^{0}(X_{F'})$, we define its \emph{flat extension} to the model $\X$ to be the unique extension of $D$ which has intersection zero with any vertical divisor; it can be uniquely written as $\baar{D}+V$, where $\baar{D}$ is the Zariski closure of $D$ in $\X$ and $V$ is a vertical divisor.

Let $D_{1}$, $D_{2}\in \Div^{0}(X_{\baar{F}})$ be divisors with disjoint supports, with each $D_{i}$ defined over a finite extension $F_{i}$; assume that $F\subset F_{2}\subset F_{1}\subset \baar{F}$. Let $\X/\OO_{F_{2}}$ be a regular and semistable model. Then for each finite place $w\in S_{F_{1}}$, we can define partial  local   intersection multiplicities $i_{w}$, $j_w$ of the flat extensions $\baar{D}_{1}+V_{1} $ of $D_{1}$, $\baar{D}_{2}+V_{2}$ of   $D_{2}$ to $\X_{\OO_{F_{1,w}}}$. If the latter model is still regular, they are defined by 
\begin{equation}\label{i and j}\begin{split}
i_{w}(D_{1}, D_{2})=\frac{1}{ [F_{1}:F]}(\baar{D}_{1}\cdot\baar{D}_{2})_{w},
\\
j_{w}(D_{1}, D_{2}) = \frac{1}{ [F_{1}:F]}(\baar{D}_{1}\cdot V_{2})_{w},
\end{split}
\end{equation}
where on the right-hand sides, $(\, \cdot \,)_{w}$ are the usual $\Z$-valued intersection multiplicities in $\X_{\OO_{F_{1,w}}}$; see \cite[\S 7.1.7]{yzz} for the generalisation of the definition to the case when $\X_{\OO_{F_{1,w}}}$ is not regular. The total intersection
$$m_{w}(D_{1}, D_{2})= i_{w}(D_{1}, D_{2})+j_{w}(D_{1}, D_{2})$$ 
is of course independent of the choice of models.

Fix an extension $\baar{v}$ to $\baar{F}$ of the valuation  $v$ on $F$. Then we have pairings $i_{\baar{v}}$, $j_{\baar{v}}$ on divisors on $X_{\baar{F}_{\baar{v}}}$ with disjoint supports by the above formulas. We can group together the contributions of $i$ and $j$ according to the places of $F$ by
$$\lambda_{v}(D_{1}, D_{2})=\dashint_{\Gal(\baar{F}/F)} \lambda_{\baar{v}}(D_{1}^{\sigma}, D_{2}^{\sigma})\, d\sigma$$
for $v$ any finite place of $F$ and  $\lambda=i$, $ j$, or $\lambda_{v}(D_{1}, D_{2})=\langle D_{1}, D_{2}\rangle_{v}$. Here the integral uses the Haar measure of total volume one, and reduces to a finite weighted average for any fixed $D_{1}$, $D_{2}$.

\begin{prop}\label{compatibility} Suppose that $D_{1}$ and $D_{2}$ are divisors of degree zero on $X$, defined over  an extension $F'$ of $F$. Then for all finite places $w\nmid p$ of $F'$,
$$ \langle D_{1}, D_{2}\rangle_{X, \ell_{w}} =m_{w}( D_{1}, D_{2})\cdot\ell_{w}(\vpi_{w})=( i_{w}( D_{1}, D_{2}) + j_{w}(D_{1}, D_{2}))\cdot\ell_{w}(\vpi_{w}).$$
\end{prop}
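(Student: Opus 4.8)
The plan is to reduce Proposition~\ref{compatibility} to the characterisation of the unramified local height pairing provided by Lemma~\ref{ur ht}. Since $w\nmid p$, the homomorphism $\ell_{w}$ is automatically unramified, so Lemma~\ref{ur ht} applies to the curve $X_{F'_{w}}$ and tells us that the local symbol $\langle\,,\rangle_{X,\ell_{w}}$ is the \emph{unique} locally constant symmetric biadditive pairing on pairs of degree-zero divisors with disjoint support satisfying $\langle\divisor(h),D_{2}\rangle_{X,\ell_{w}}=\ell_{w}(h(D_{2}))$. (That $\langle\,,\rangle_{X,\ell_{w}}$ indeed has this normalisation property is exactly \eqref{ht inter princ}, valid because $\ell_{w}$ is unramified and hence the choice of Hodge splitting plays no role at $w$.) Therefore it suffices to check that the candidate pairing $(D_{1},D_{2})\mapsto m_{w}(D_{1},D_{2})\cdot\ell_{w}(\vpi_{w})$ is (i) symmetric, biadditive and locally constant, and (ii) satisfies the same normalisation on principal divisors.

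For (i): biadditivity of $m_{w}=i_{w}+j_{w}$ is immediate from the definition of the flat extension and the bilinearity of the intersection product on the regular (resp. the general semistable) model; symmetry of $m_{w}$ is the standard symmetry of arithmetic intersection multiplicities on an arithmetic surface (it is established in \cite[\S7.1]{yzz} precisely for the flat extensions, where the $i$-part and the $j$-part exchange roles). Local constancy follows because $m_{w}$ only depends on the divisor classes through the reduction graph and the finitely many closed points of the special fibre involved, so it factors through a pairing on a quotient of $J(F'_{w})$ by an open subgroup; alternatively one invokes directly that $m_{w}$ is the classical local Néron symbol at $w$, which is known to be continuous. For (ii): if $D_{1}=\divisor(h)$ with support disjoint from $D_{2}$, then the flat extension of $D_{1}$ to a regular model is the principal divisor $\divisor(h)$ on $\X_{\OO_{F'_{w}}}$ — indeed $\divisor(h)$ already has zero intersection with every vertical fibre component since those are contained in special fibres and $h$ restricts to a unit-free rational function there, more precisely its divisor on the generic fibre determines the horizontal part and the vertical correction $V_{1}$ vanishes. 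Hence $i_{w}(\divisor(h),D_{2})+j_{w}(\divisor(h),D_{2})=(\divisor(h)\cdot(\baar{D}_{2}+V_{2}))_{w}/[F'{:}F]$, and by the projection formula / definition of the intersection with a principal divisor this equals $v_{w}(h(D_{2}))/[F'{:}F]$ after normalising by the ramification index, which is precisely $m_{w}(\divisor(h),D_{2})\cdot\ell_{w}(\vpi_{w})=\ell_{w}(h(D_{2}))$ once one unwinds the normalisation $\ell_{w}=\frac{1}{[F'{:}F]}\ell_{v}\circ N_{F'_{w}/F_{v}}$ from \eqref{ellw} and $\ell_{w}(\vpi_{w})=\ell_{w}|_{F'^{\times}_{w}}$ evaluated on a uniformiser. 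Combining (i) and (ii) with the uniqueness in Lemma~\ref{ur ht} gives the asserted equality $\langle D_{1},D_{2}\rangle_{X,\ell_{w}}=m_{w}(D_{1},D_{2})\cdot\ell_{w}(\vpi_{w})$, and the final decomposition $m_{w}=i_{w}+j_{w}$ is the definition.

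The main obstacle I anticipate is the bookkeeping in (ii): matching the normalising factor $1/[F'{:}F]$ in the definitions \eqref{i and j} of $i_{w},j_{w}$ with the normalisation \eqref{ellw} of $\ell_{w}$, and making sure the uniformiser $\vpi_{w}$ appearing in the statement is the one of $F'_{w}$ (not of $F_{v}$), so that $m_{w}(\divisor(h),D_{2})\in\Z$ is the honest integer-valued intersection multiplicity on the model over $\OO_{F'_{w}}$. One must also be slightly careful when $\X_{\OO_{F'_{w}}}$ fails to be regular after base change: then $i_{w}$ is defined via the extended recipe of \cite[\S7.1.7]{yzz} using $\Q$-divisor corrections, and one should note that the normalisation property on principal divisors is insensitive to this because a principal divisor has a well-defined flat extension regardless of regularity. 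Apart from this, everything is a direct appeal to Lemma~\ref{ur ht}, to \eqref{ht inter princ}, and to standard properties of arithmetic intersection theory on semistable arithmetic surfaces as recalled in \cite{yzz}; no new input is needed.
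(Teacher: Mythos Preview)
Your proposal is correct and follows essentially the same approach as the paper: both reduce to the uniqueness statement of Lemma~\ref{ur ht} together with \eqref{ht inter princ}. The only difference is that where the paper simply cites \cite{gross} for the verification that the arithmetic intersection pairing $m_{w}$ is locally constant, symmetric, biadditive, and satisfies the principal-divisor normalisation, you sketch these verifications yourself (and correctly flag the normalisation bookkeeping as the delicate point).
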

\begin{proof} This follows from Lemma \ref{ur ht} and \eqref{ht inter princ}; the verification that the arithmetic intersection pairing $m_{w}$ also satisfies the required properties can be found in \cite{gross}.
\end{proof}

\subsection{Integrality crieria}\label{intH}
The result of Proposition \ref{compatibility} applies with the same proof if $w\vert p$ when $\ell_{w}$ is an unramified logarithm such as the valuation. In this case we will view it as a first integrality criterion for local heights.
 \begin{prop}\label{integrality and intersect} Let $\ell_{v}\colon F_{v}^{\times}\hat{\otimes} {L}\to \Gamma_{F}\hat{\otimes}L$ be the tautological logarithm and let $\ell_{w}$ be  as in \eqref{ellw}. Let $v\colon F_{v}^{\times}\hat{\otimes} L\to L$ be the valuation. Then for all  $D_{1}$, $D_{2}\in \Div^{0}(X_{F'})$ we have
$$ v (\langle D_{1}, D_{2}\rangle_{X, \ell_{w}} )=  [F_{w}': F_{v}]\cdot m_{w}(D_{1}, D_{2}).$$
In particular, if $m_{w}(D_{1},D_{2})=0$ then 
$$\langle D_{1}, D_{2}\rangle_{X, \ell_{w}} =\ell_{w}(s_{w}([E_{D_{1}, D_{2},w}])) \in \OO_{F,v}^{\times}\hat{\otimes} L =\ell_{w} (H^{1}_{f}(F_{w}, L(1));$$
equivalently,  the mixed extension $[E_{D_{1}, D_{2},w}]$ is essentially crystalline.
\end{prop}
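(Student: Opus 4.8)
The plan is to reduce everything to the already-established compatibility between $p$-adic heights and arithmetic intersections (Proposition \ref{compatibility}) together with the characterisation of local heights at unramified $\ell_w$ (Lemma \ref{ur ht}), extended to the case $w\mid p$.

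\begin{proof}
First I would observe that Lemma \ref{ur ht} and its proof are purely local and make no use of the residue characteristic of $v$: the only input is that $\ell_v$ is unramified, so that the difference of two biadditive pairings satisfying the stated normalisation on principal divisors is a locally constant homomorphism $J(F_v)\times J(F_v)\to L$ out of a compact group into a torsion-free group, hence zero. In particular the valuation $v\colon F_v^\times\hat\otimes L\to L$ is an unramified logarithm, and the arithmetic intersection pairing $m_w(\cdot,\cdot)$ — viewed as $L$-valued after multiplying by $v(\vpi_w)=1$ — satisfies the normalisation $\langle \divisor(h),D_2\rangle = v(h(D_2))$ by the projection formula for intersection multiplicities on a regular model (this is the content quoted from \cite{gross} in the proof of Proposition \ref{compatibility}). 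Therefore the same argument as in Proposition \ref{compatibility} shows
$$ v\bigl(\langle D_1,D_2\rangle_{X,\ell_w}\bigr) = [F_w':F_v]\cdot m_w(D_1,D_2),$$
the factor $[F_w':F_v]$ coming from the normalisation \eqref{ellw} of $\ell_w$ against the un-normalised intersection numbers $(\,\cdot\,)_w$ (compare \eqref{i and j}). Here I am using that $\langle D_1,D_2\rangle_{X,\ell_w}$ composed with $v$ depends only on the image of $[E_{D_1,D_2,w}]$ in $H^1(F_w,L(1))$ modulo $H^1_f(F_w,L(1))$, since $v$ kills $\OO_{F_w}^\times\hat\otimes L$; so the splitting $s_w$ (which does depend on the Hodge splitting when $w\mid p$) is irrelevant after applying $v$, and we may compute with the unramified symbol.

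Next, for the ``in particular'' clause: if $m_w(D_1,D_2)=0$, then by the displayed identity $v(\langle D_1,D_2\rangle_{X,\ell_w})=0$, i.e. $\ell_w(s_w([E_{D_1,D_2,w}]))$ lies in the kernel of $v\colon F_v^\times\hat\otimes L\to L$, which is exactly $\OO_{F,v}^\times\hat\otimes L$. Under the Kummer isomorphism $H^1_f(F_w,L(1))\cong \OO_{F_w}^\times\hat\otimes L$ and the identification $H^1(F_w,L(1))\cong F_w^\times\hat\otimes L$ recalled in \S\ref{lgh}, the condition $s_w([E_{D_1,D_2,w}])\in H^1_f(F_w,L(1))$ is precisely the statement that $[E_{D_1,D_2,w}]$ is essentially crystalline (by the definition given after \eqref{ex seq me}, together with the exact sequence \eqref{ex seq me} whose $H^1_f$-row identifies the image of $H^1_f(F_w,E_2)$). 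So I would conclude by translating $v(\langle D_1,D_2\rangle_{X,\ell_w})=0$ through these identifications.

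The only genuine point requiring care — the main (minor) obstacle — is the extension of Proposition \ref{compatibility} and Lemma \ref{ur ht} to $w\mid p$ with the \emph{unramified} logarithm $v$: one must check that the arithmetic intersection side still makes sense (it does: $m_w$ is defined for all finite $w$ via \eqref{i and j} and its extension to non-regular semistable models in \cite[\S7.1.7]{yzz}, independently of residue characteristic), and that the uniqueness argument of Lemma \ref{ur ht} — compactness of $J(F_v)$, torsion-freeness of $L$ — is insensitive to $v\mid p$. Once that is in hand, the proof is a formal rearrangement of already-proved statements, and the subtlety that at $w\mid p$ the local height genuinely depends on the Hodge splitting does not enter because we have composed with $v$, which annihilates the ambiguity.
\end{proof}
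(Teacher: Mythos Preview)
Your proposal is correct and follows essentially the same approach as the paper, which simply remarks (just before stating the proposition) that ``the result of Proposition~\ref{compatibility} applies with the same proof if $w\vert p$ when $\ell_{w}$ is an unramified logarithm such as the valuation.'' You have fleshed out the details---in particular the deduction of the ``essentially crystalline'' clause from the first identity via the identifications $H^{1}_{f}(F'_{w},L(1))\cong\OO_{F'_{w}}^{\times}\hat{\otimes}L$---more than the paper does, but the logical skeleton (Lemma~\ref{ur ht} uniqueness plus \eqref{ht inter princ} plus the arithmetic-intersection normalisation from \cite{gross}) is identical.

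One small expository point: your parenthetical justification that ``the splitting $s_{w}$ is irrelevant after applying $v$'' because different splittings differ by something in $H^{1}_{f}$ is not quite the right formulation; the cleaner argument---which you in fact give first---is that \emph{any} Hodge splitting yields a local pairing satisfying \eqref{ht inter princ} on principal divisors, so Lemma~\ref{ur ht} forces it to coincide with $m_{w}$ once composed with the unramified~$v$. This is harmless since your main line of reasoning is correct.
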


We need a finer integrality property for local heights, slightly generalising \cite[Proposition 1.11]{nekovar}. Let $F_{v}$ and $L$  be  finite extensions of $\Q_{p}$, let $V$ be a $\calG_{F_{v}}$-representation on an $L$-vector space  equipped with a splitting of \eqref{hdg}, and $\ell_{v}\colon  F_{v}^{\times}\to L$ be a logarithm. Suppose the following conditions are satisfied:
\begin{enumerate}
\item[(a)] $\ell_{v}\colon F_{v}^{\times}\to L$ is ramified;
\item[(b)] the space $V$ admits a direct sum decomposition $V=V'\oplus V''$ as $\calG_{F_{v}}$-representation, such that $V'$ satisfies the  {Panchishkin condition}, with a decomposition $0\to V'^{+}\to V'\to V'^{-}\to 0$,  and the restriction of the Hodge splitting of ${\bf D}_{\rm dR}(V)$ to  ${\bf D}_{\rm dR}(V')$ coincides with the canonical one of \eqref{panc}. 
\item[(c)]  $H^{0}(F_{v},V'^{-})=H^{0}(F_{v}, V'^{+*}(1))=0$.
\end{enumerate}
By \cite[Proposition 1.28 (3)]{nekheights}, the last condition is equivalent to ${\bf D}_{\rm pst}(V')^{\vphi=1} = {\bf D}_{\rm pst}(V'^{*}(1))^{\vphi=1} = 0$, where ${\bf D}_{\rm pst}(V'):=\lim_{F\subset F'} {\bf D}_{\rm st}(V'|_{\calG_{F'}})^{\Gal(F'/F)}$ (the limit ranging over all sufficiently large finite Galois extensions $F'/F$).

Let $T$ be a $\calG_{F,v}$-stable $\OO_{L}$-lattice in $V$, $T':=T\cap V'$, $T''=T\cap V''$; let   $d_{0}\geq 0$  be an integer such that $\frakp_{L}^{d_{0}}T\subset T'\oplus T''\subset T$, where $\frakp_{L}\in L$ is the maximal ideal of $\OO_{L}$. 

Let $F_{v}\subset F_{v,\infty}\subset F_{v}^{\rm ab}$ be the intermediate extension determined by $\Gal(F_{v}^{\rm ab}/F_{v, \infty}) \cong \ker({\ell_{v}})\subset F_{v}^{\times}$ under the reciprocity isomorphism. Let 
$${N}_{\infty,\ell_{v}}H^{1}_{f}(F_{v}, T'):=\bigcap_{F'_{w}} {\rm cores}^{F'_{w}}_{F_{v}}(H^{1}_{f}(F'_{w}, T') )$$
be the subgroup of \emph{universal norms}, where the intersection ranges over all finite extensions $F_{v}\subset F_{v'}'$ contained  in $ F_{v, \infty}$.

\begin{prop}\label{int panc}  Let $x_{1}\in H^{1}_{f}(F_{v}, T)$, $x_{2}\in H^{1}_{f}(F_{v}, T^{*}(1))$ and suppose that the image of $x_{2}$ in  $H^{1}_{f}(F_{v}, T''^{*}(1))$ vanishes.
Let ${d_{1}}$ be the $\OO_{L}$-length  of $H^{1}(F_{v}, T''^{*}(1))_{\rm tors}$, ${d_{2}}$ the length of $H^{1}_{f}(F_{v}, T')/{N}_{\infty,\ell_{v}}H^{1}_{f}(F_{v}, T')$. Then 
$$\frakp_{L}^{d_{0}+d_{1}+d_{2}}\langle x_{1}, x_{2}\rangle_{\ell_{v}, E, v}\subset \ell_{v}(F_{v}^{\times}\hat{\otimes}\OO_{L})$$
for any mixed extension $E$. If moreover $[E_{v}]$ is essentially crystalline, then 
$$\frakp_{L}^{d_{0}+d_{1}+d_{2}}\langle x_{1}, x_{2}\rangle_{\ell_{v}, E, v}\subset \ell_{v}(\OO_{F,v}^{\times}\hat{\otimes}\OO_{L}).$$
\end{prop}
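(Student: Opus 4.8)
The plan is to follow the strategy of \cite[Proposition 1.11]{nekovar}, reducing the statement about the mixed extension class $[E_v]$ to a collection of cohomological integrality estimates controlled by the three lengths $d_0,d_1,d_2$. First I would replace all the Galois representations by their integral versions: choose $\calG_{F_v}$-stable lattices $E_{1,\OO}\subset E_1$, $E_{2,\OO}\subset E_2$ and $E_\OO\subset E$ compatible with the mixed-extension diagram and with $T$, $T'\oplus T''\subset T$. The point of $d_0$ is exactly that $\frakp_L^{d_0}$ measures the failure of $T$ to split as $T'\oplus T''$, so after multiplying by $\frakp_L^{d_0}$ one may argue as if $V=V'\oplus V''$ and work with each summand separately. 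Since the image of $x_2$ in $H^1_f(F_v,T''^*(1))$ vanishes, the extension $e_2$ splits over the $V''$-part up to the torsion subgroup $H^1(F_v,T''^*(1))_{\mathrm{tors}}$; this is where the factor $\frakp_L^{d_1}$ enters, allowing one to lift $x_2$ to a class valued in $T'^*(1)$ integrally after multiplying by $\frakp_L^{d_1}$. Thus one is reduced to the case $V=V'$, i.e. $V$ satisfies the Panchishkin condition and the Hodge splitting is the canonical one of \eqref{panc}.

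Next I would analyze the local symbol $\langle x_1,x_2\rangle_{\ell_v,E,v}=-\ell_v(s_v([E_v]))$ under the Panchishkin hypothesis. The splitting $s_v\colon H^1_{\mathrm{st}}(F_v,E_2)\to H^1(F_v,L(1))$ induced by \eqref{panc} factors through the ``$-$''-quotient: concretely, $s_v([E_v])$ is computed by pushing $[E_v]$ into $H^1(F_v,V^-)$-data and using that $V'^+$ is the Panchishkin submodule. The universal-norm subgroup $N_{\infty,\ell_v}H^1_f(F_v,T')$ is precisely the image of the corestriction maps along $F_{v,\infty}/F_v$, and by a standard argument (as in \emph{loc. cit.}, using \eqref{local ht norm}) the pairing $\langle\,,\,\rangle_{\ell_v}$ applied to a universal norm $x_1$ lands in $\ell_v(F_v^\times\hat\otimes\OO_L)$ with no denominators, because the norm-compatibility \eqref{local ht norm} trades the ramified logarithm $\ell_v$ for $\ell_w=\tfrac{1}{[F'_w:F_v]}\ell_v\circ N$ on larger fields and the obstruction disappears in the limit. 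Hence the failure is exactly measured by the index $[H^1_f(F_v,T'):N_{\infty,\ell_v}H^1_f(F_v,T')]$, whose $\OO_L$-length is $d_2$; multiplying by $\frakp_L^{d_2}$ forces $x_1$ (up to torsion, already absorbed) into the universal norms and kills this last denominator. Condition (c), equivalent via \cite[Proposition 1.28(3)]{nekheights} to ${\bf D}_{\mathrm{pst}}(V')^{\vphi=1}={\bf D}_{\mathrm{pst}}(V'^*(1))^{\vphi=1}=0$, guarantees that $H^1_f(F_v,T')$ is finitely generated of the expected rank and that these Selmer and norm groups behave well, so the three lengths are finite.

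Assembling the three reduction steps gives $\frakp_L^{d_0+d_1+d_2}\langle x_1,x_2\rangle_{\ell_v,E,v}\subset\ell_v(F_v^\times\hat\otimes\OO_L)$, which is the first assertion; it is independent of the choice of mixed extension $E$ since changing $E$ changes $[E_v]$ by a class in $H^1(F_v,L(1))$ already accounted for. For the refinement: if $[E_v]$ is essentially crystalline, then by definition $s_v([E_v])\in H^1_f(F_v,L(1))=\OO_{F,v}^\times\hat\otimes L$ (cf. the discussion after \eqref{ex seq me} and Proposition \ref{integrality and intersect}), and the same three integral estimates now place $s_v([E_v])$ in $H^1_f(F_v,\OO_L(1))=\OO_{F,v}^\times\hat\otimes\OO_L$ after multiplication by $\frakp_L^{d_0+d_1+d_2}$, whence the claim. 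The main obstacle I anticipate is the careful bookkeeping in step (c)/step on $d_2$: identifying the precise cokernel whose length is $d_2$ and showing that the universal-norm argument (which uses the norm-compatibility of local heights \eqref{local ht norm} together with the structure of $H^1_f$ over the tower $F_{v,\infty}$) genuinely eliminates denominators rather than merely bounding them — this is the heart of Nekov\'a\v{r}'s argument and the place where the ramified-logarithm hypothesis (a) is essential, since for an unramified $\ell_v$ one is instead in the situation of Proposition \ref{integrality and intersect}.
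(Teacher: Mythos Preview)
Your strategy is the same as the paper's: both reduce to \cite[Proposition 1.11]{nekovar} and explain the roles of $d_0,d_1,d_2$ in essentially the way you describe. The second assertion (the essentially crystalline refinement) is also handled the same way.

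There is, however, one genuine technical point you have not addressed. Nekov{\'a}{\v{r}}'s Proposition~1.11 is stated under the hypothesis that $V$ (hence $V'$) is \emph{crystalline}, not merely potentially semistable satisfying the Panchishkin condition. In his argument, the crystalline hypothesis is used (via \cite[\S6.6]{nekheights}) to invoke the exactness of
\[
0\to H^{1}_{f}(F_{v},V'^{+})\to H^{1}_{f}(F_{v},V')\to H^{1}_{f}(F_{v},V'^{-})\to 0,
\]
which in turn is established in \cite[Proposition~1.25]{nekheights} only under the crystalline assumption. Your sentence ``condition (c) \ldots\ guarantees that $H^1_f(F_v,T')$ is finitely generated of the expected rank and that these Selmer and norm groups behave well'' is exactly the place where this sequence is needed, and you have not justified it in the present generality. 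The fix is to observe that the sequence above remains exact under the weaker hypothesis $H^{0}(F_{v},V'^{-})=0$ of condition~(c); this is \cite[Corollary~1.4.6]{benois-greenberg}. Once that is in hand, Nekov{\'a}{\v{r}}'s argument goes through verbatim (the passage from $L=\Q_p$ to general $L$ being cosmetic), and the finiteness of $H^{1}_{f}(F_{v},T')/N_{\infty,\ell_v}H^{1}_{f}(F_{v},T')$ can also be read off \cite[Corollary~8.11.8]{nek-selmer}.
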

\begin{proof}  The first assertion  (which implicitly contains the assertion that  $H^{1}_{f}(F_{v}, T')/{ N}_{\infty,\ell_{v}}H^{1}_{f}(F_{v}, T')$ is finite) is identical to  \cite[Proposition 1.11]{nekovar}, whose assumptions however are slightly more stringent. First, $L$ is assumed to be $\Q_{p}$; this requires only cosmetic changes in the proof. Secondly, in \emph{loc. cit.} the representation $V$ (hence $V'$) is further assumed to be crystalline. This assumption is  used  via \cite[\S 6.6]{nekheights} to apply various consequences of the existence of the exact sequence 
\begin{gather}\label{exxx}
0\to H^{1}_{f}(F_{v},V'^{+})\to 
H^{1}_{f}(F_{v},V')\to 
H^{1}_{f}(F_{v},V'^{-})\to 0\end{gather}
which is established  in \cite[Proposition 1.25]{nekheights} under the assumption that $V'$ is crystalline. However \eqref{exxx} still exists under our assumption that $H^0(F_{v}, V'^{-})=0$ by   
 \cite[Corollary 1.4.6]{benois-greenberg}.\footnote{The finiteness of $H^{1}_{f}(F_{v}, T')/{ N}_{\infty,\ell_{v}}H^{1}_{f}(F_{v}, T')$ under our assumption is  also in \cite[Corollary 8.11.8]{nek-selmer}.}

 The second assertion follows from the proof of the first one: the height is the image under $\ell_{v}$  of an element of $H^{1}(F_{v}, L(1))=F_{v}^{\times}\hat{\otimes} L$, which belongs to $H^{1}_{f}(F_{v}, L(1))=\OO_{F,v}^{\times}\hat{\otimes} L$ if $[E_{v}]$ is essentially crystalline.
\end{proof}

\section{Generating series and strategy of proof}\label{sec GS}
We introduce the  constructions that will serve to prove the main theorem. We have expressed the $p$-adic $L$-function as the $p$-adic Petersson product of a form $\vphi$ in $\sigma_{A}$ and a certain kernel function depending on a Schwartz function $\phi$. The connection between the data of $(\vphi, \phi)$ and the data of $(f_{1},f_{2})$ appearing in the main theorem is given by the Shimizu lifting introduced in \S\ref{shim}. 
An arithmetic-geoemetric analogue of the latter allows to express also the left-hand side of Theorem \ref{B} as the image under the $p$-adic Petersson product of another  kernel function, introduced in \S\ref{sec gk}. The main result of this section is thus the reduction of Theorem \ref{B} to an identity between the two kernel functions (\S\ref{sec:KI}).

\subsection{Shimizu's theta lifting}\label{shim}
Let $B$ be a quaternion algebra over a local or global field $F$, $V=(B,q)$ with the reduced norm $q$. The action $(h_{1}, h_{2})\cdot x  := h_{1}xh_{2}^{-1}$ embeds $(B^{\times}\times B^{\times})/F^{\times}$ inside ${\bf GO}(V)$. If $F$ is a local field,    $\sigma$ is a representation of $\GL_{2}(F)$, and $\pi$ is a representation of $B^{\times}$, then the space of liftings
$$ \Hom_{\GL_{2}(F)\times B^{\times}\times B^{\times}}(\sigma\otimes\calS(V\times F^{\times}), \pi \otimes \pi^{\vee})$$
has dimension zero unless either $B=M_{2}(F)$ and $\pi=\sigma$ or $\sigma $ is discrete series and $\pi$ is its image under the Jacquet--Langlands correspondence; in the latter cases the dimension is one. An explicit generator was constructed by Shimizu in the global coherent  case, and we can use it to normalise a generator in the local case and construct a generator in the global incoherent case.

\subsubsection{Global lifting} Let $B$ be a quaternion  algebra over a  number field $F$, $V=(B,q)$ with the reduced norm. Let $\sigma$ be a cuspidal automorphic representation of $\GL_{2}(\A_{F})$ which is discrete series at all places where $B$ is ramified,  $\pi$ the automorphic representation of $B^{\times}_{\A}$ attached to $\sigma$ by the Jacquet--Langlands correspondence. Fix a nontrivial additive character $\psi\colon \A/F\to \C^{\times}$. Consider  the theta series
$$\theta(g,h,\Phi)=\sum_{u\in F^{\times}}\sum_{x\in V}r_{\psi}(g,h)\Phi(x,u), \quad g\in \GL_{2}(\A), h\in (B^{\times}_{\A}\times B^{\times}_{\A})/\A^{\times}\subset {\bf GO}(V_{\A})$$
for $\Phi\in \calS(V_{\A}\times \A^{\times})$. Then  the Shimizu theta lift of any $\vphi\in \sigma$ is defined to be
$$\theta(\vphi, \Phi)(h):= { \zeta_{F}(2)\over 2L(1, \sigma, \ad)}\int_{\GL_{2}(F)\bks \GL_{2}(\A)} \vphi(g)\theta(g, h, \Phi)\,dg\quad \in \pi\times \pi^{\vee},$$
and it is independent of the choice of $\psi$.

If $F$ is totally real, $B$ is totally definite, and $\phi\in \bcalS(V_{\A}\times \A^{\times})$, we denote $\theta(\vphi, \phi)
:=\theta(\vphi, \Phi)$ for any ${\bf O}(V_{\infty})$-invariant preimage $\Phi$ of $\phi$ under \eqref{schwartz quotient}.
Let 
$\mathscr{F}\colon \pi\otimes \pi^{\vee} \to \C$ be the  duality defined by the Petersson bilinear pairing on $B_{\A}^{\times}$ (for the Tamagawa measure). By  \cite[Proposition 5]{wald}, we have
\begin{multline}\label{FShimizu}
\mathscr{F}\theta(\vphi, \Phi)=
 {   (\pi^{2}/2)^{[F:\Q]  }  \over  |D|_{F}^{3/2}   \zeta_{F}^{\infty}(2)}
	\prod_{v\nmid \infty}   {|d_{v}|^{-3/2}   \zeta_{F,v}(2)^{2}\over L(1,\sigma_{v}, \ad)} 
	\int_{N(F)_{v}\bks \GL_{2}(F_{v})} W_{\vphi, -1,v}(g) r(g)\Phi_{v}(1,1)\, dg\\
\times \prod_{v \vert\infty}{2\zeta_{F,v}(2) \over \pi^{2} L(1,\sigma_{v}, \ad) }\int_{N(F)_{v}\bks \GL_{2}(F_{v})} W_{\vphi, -1,v}(g) r(g)\Phi_{v}(1,1)\, dg.
\end{multline}
The terms in the first line are all rational if $W_{\phi, -1,v}$ and $\Phi_{v}$ are, and almost all of the factors equal~$1$.\footnote{The analogous  assertion in \cite[Proposition 2.3]{yzz} is incorrect.}
For $v$ a real place, if $W_{\vphi, -1,v}$ is the standard antiholomorphic discrete series of weight $2$ and $\Phi_{v}$ is a preimage of the standard Schwartz function $\phi_{v}\in \bcalS(V_{v}\times F_{v}^{\times})$ under \eqref{sql}, the terms in the last line are rational too,\footnote{The analogous statement holds for discrete series of arbitrary weight.} and in fact equal to~$1$ by the calculation of Lemma \ref{arch-int}.

\subsubsection{Local lifting} In the local case, depending on the choice of $\psi_{v}$,  we can then normalise a generator
$$\theta_{v}=\theta_{\psi_{v}}\in  \Hom_{\GL_{2}(F_{v})\times B_{v}^{\times}\times B_{v}^{\times}}(\W(\sigma_{v}, \baar{\psi}_{v})\otimes\calS(V_{v}\times F_{v}^{\times}), \pi_{v} \otimes \pi_{v}^{\vee})$$
(where $\W(\sigma_{v}, \baar{\psi}_{v})$ is the Whittaker model for $\sigma_{v}$ for the conjugate character $\baar{\psi}_{v}$)
by 
\begin{align}\label{shimizu local norm}
\mathscr{F}_{v}\theta_{v}(W, \Phi)=
{ {c_{v} \zeta_{F,v}(2)\over L(1,\sigma_{v}, \ad)} \int_{N(F)_{v}\bks \GL_{2}(F_{v})}} W(g) r(g)\Phi(1,1)\, dg
\end{align}
with $c_{v}=|d_{v}|^{-3/2}  \zeta_{F,v}(2)$ if $v$ is finite and $c_{v}=2\pi^{-2}$ if $v$ is archimedean. Here  the decompositions $\pi=\otimes_{v}\pi_{v}$,  $\pi^{\vee}=\otimes_{v}\pi_{v}^{\vee}$ are taken to satisfy  $\mathscr{F}=\prod_{v}\mathscr{F}_{v}$ for the natural dualities   $\mathscr{F}_{v}\colon \pi_{v}\otimes \pi_{v}^{\vee} \to \C$.

Then by \eqref{FShimizu} we have  a decomposition
\begin{align}\label{Shimizu incoh}
\theta= {(\pi^{2}/2)^{[F:\Q]} \over  |D|_{F}^{3/2}  \zeta_{F}^{\infty}(2)}\otimes_{v}\theta_{v}.
\end{align}
As in the global case, we  define $\theta_{v}(W, \phi):=\theta_{v}(W, \Phi)$ for $\phi=\baar{\Phi}\in\bcalS(V_{v}\times F_{v}^{\times})$.
\subsubsection{Incoherent lifting} Finally, suppose that $F$ is totally real and  $\B$ is a totally definite  \emph{incoherent} quaternion algebra over $\A=\A_{F}$, and let ${\bf V}=(\B, q)$.  Let $\sigma$ be a cuspidal automorphic representation of $\GL_{2}(\A)$ which is discrete series at all places of ramification of $\B$ and let $\pi=\otimes_{v} \pi_{v}$ be the representation of $\B^{\times}$ associated to $\sigma$ by the local Jacquet--Langlands correspondence. 
Then \eqref{Shimizu incoh}  defines a lifting
$$\theta\in \Hom_{\GL_{2}(\A)\times \B^{\times}\times \B^{\times}}(\sigma \otimes \calS({\bf V}\times \A^{\times}), \pi\otimes \pi^{\vee}).$$
It coincides  with the lifting denoted by the same name in \cite{yzz}.

\subsubsection{Rational liftings}
If $M$ is a number field,  $\sigma^{\infty}$ is an $M$-rational cuspidal automorphic representation of $\GL_{2}$ of weight $2$  as in the Introduction  and $\pi$ is its transfer to an $M$-rational representation of $\B^{\infty\times}$ under the rational Jacquet--Langlands correspondence of \cite[Theorem 3.4]{yzz}, let $\sigma^{\iota}$ be the associated complex automorphic representation, and $\pi^{\iota}=\pi\otimes_{M, \iota}\C$, for any $\iota\colon M\into \C$.  Let $\theta^{\iota}=\otimes_{v}\theta_{v}^{\iota}$ be the liftings just constructed.
 Then, if $\B$ is coherent, using the algebraic Petersson product of Lemma \ref{alg-pet},  there is a lifting $\theta\colon \sigma^{\infty} \otimes \bcalS({\bf V}^{\infty}\times \A^{\infty,\times})\to \pi\otimes\pi^{\vee}$, which is
defined over $M$
and satisfies
\begin{gather}\label{alg theta lift}
\iota\theta(\vphi, \phi^{\infty})=\theta^{\iota}(\vphi^{\iota}, \phi^{\infty}\phi_{\infty}^{\iota})
\end{gather}
if $\vphi^{\iota}$ is as described before Lemma \ref{alg-pet} and $\phi_{\infty}^{\iota}$ is standard. On the left hand side, we view $\pi$ and $ \pi^{\vee}$  indifferently as a representation of $\B^{\infty\times}$ or $\B^{\times}$ by tensoring on each with generators of the trivial representation at infinite places which pair to~$1$ under the duality (this ensures compatibility with the decomposition). After base-change to $M \otimes \OO_{\Psi_{v}}(\Psi_{v})$, there are local liftings at finite places
$\theta_{\psi_{{\rm univ}},v}\colon \W(\sigma_{v}, \baar{\psi}_{{\rm univ},v}) \otimes \bcalS(V_{v}\times F_{v}^{\times})\to \pi_{v}\otimes\pi_{v}^{\vee}\otimes \OO_{\Psi_{v}}(\Psi_{v})$, 
satisfying $\iota\theta_{\psi_{\rm univ},v}(W, \phi)(\psi_{v})=\theta_{\psi,v}^{\iota}(W^{\iota}, \phi^{\iota})$. They induce an incoherent global lifting on $\sigma^{\infty }\otimes \bcalS({\bf V}^{\infty}\times \A^{\infty,\times})$,  which is defined over $M$   independently of the choice of an additive character  of $\A$  \emph{trivial on $F$},\footnote{In the following sense. Let $\Psi:=\prod_{v}\Psi_{v}$ and let $\Psi^{\circ}\subset \Psi$ be defined by $\prod_{v}\psi_{v}|_{F}=1$. Then the global lifting is first defined on  $ \sigma^{\infty}\otimes \bcalS({\bf V}^{\infty}\times \A^{\infty,\times})\otimes \OO_{\Psi}(\Psi)$. Its restriction to $ \sigma^{\infty}\otimes \bcalS({\bf V}^{\infty}\times \A^{\infty,\times})\otimes \OO_{\Psi}(\Psi^{\circ})$ is invariant under the homogeneous action of $\OO_{F}^{\times}$ on $\Psi^{\circ}$ and hence is the base-change of an $M$-linear map   $ \sigma^{\infty}\otimes \bcalS({\bf V}^{\infty}\times \A^{\infty,\times})\to \pi\times \pi^{\vee} $.}
 and satisfies \eqref{alg theta lift}.

Finally, for an embedding $\iota'\colon M\into L$ with $L$ a $p$-adic field, we let 
$$\theta_{\iota'}\colon( \sigma^{\infty}\otimes \bcalS({\bf V}^{\infty}\times \A^{\infty,\times}))\otimes_{M}L\to (\pi\times \pi^{\vee})\otimes_{M, \iota'} L$$
be the base-change.

\subsubsection{Local toric periods and zeta integrals}
Recall from the Introduction that for any dual pair of representations $\pi_{v}\otimes \pi_{v}^{\vee}$ isomorphic (possibly after an extension of scalars) to the local component of $\pi\otimes \pi^{\vee}$,
 the  normalised toric integrals of matrix coefficients of \eqref{Qvdef} are defined, for any  $\chi\in \Y_{v}(\C)$, by
$$
  Q_{v}(f_{1,v}, f_{2,v}, \chi)= |D|^{-1/2}_{v}|d|^{-1/2}_{v}
 {L(1, \eta_{v})L(1,\pi_{v}, \ad) \over \zeta_{F,v}(2) L(1/2, \pi_{E,v}^{\iota}\otimes\chi_{v}) }
 Q_{v}^{\sharp}(f_{1,v}, f_{2,v}, \chi^{\iota}_{v}),
 $$
 \begin{align}\label{Qsharp}
 Q_{v}^{\sharp}(f_{1,v}, f_{2,v}, \chi_{v})=
\int_{E_{v}^{\times}/F_{v}^{\times}} \chi_{v}(t) (\pi(t)f_{1,v}, f_{2,v})\,  {dt}.
\end{align}

The following lemma follows from the  normalisation \eqref{shimizu local norm} and the definitions of the  local toric zeta integrals $R_{r,v}^{\natural}$ in \eqref{Rnat}.

\begin{lemm}\label{Qtheta} Let $\chi\in \Y^{\rm l.c.}_{ M(\alpha)}(\C)$, and $\psi=\otimes_{v} \psi_{v}\colon \A/F\to \C$ be  a nontrivial additive character. Let $Q_{v}$ and $Q_{v}^{\sharp}$ be the pairings defined above for the representation $\theta_{\psi,v}(\W(\sigma_{v}, \baar{\psi}_{v})\otimes \bcalS(V_{v}\times F_{v}^{\times}))$. 
Then for all $v\nmid \infty$, we have
$$|d|_{v}^{-3/2} R_{r,v}^{\circ}(W_{v}, \phi_{v},\chi_{v}, \psi_{v}) = Q_{v}^{\sharp}(\theta_{\psi,v}(W_{v},\iota\alpha_{v}(\vpi_{v})^{-r_{v}} w_{r,v}^{-1}\phi_{v}), \chi_{v}),$$
where as usual if $v\nmid p$ we set $\alpha_{v}=1$, $r_{v}=0$, $w_{r,v}=1$.

If $v\nmid p$, we have
$$\mathscr{R}^{\natural}(W_{v}, \phi_{v}, \chi_{v}, \psi_{v}) =   Q_{v}(\theta_{\psi,v}(W_{v},\phi_{v}), \chi_{v}),$$
 and for the product $Q^{p}=\prod_{v\nmid p\infty}Q_{v}$ we have 
$$  \prod_{v \nmid p\infty} \mathscr{R}_{r,v}^{\natural}(W_{v}, \phi_{v},\chi_{v}, \psi_{v})= 
{D_{F}^{3/2} \zeta_{F}^{\infty}(2)  \over (\pi^{2}/2)^{[F:\Q]} }
 Q^{p}(\theta_{\psi}(\vphi,\alpha(\vpi)^{-r} w_{r}^{-1}\phi), \chi).$$
\end{lemm}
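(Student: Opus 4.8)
The plan is to unwind the relevant definitions and to reduce both displayed identities to Waldspurger's ``seesaw'' comparison between the local Rankin--Selberg zeta integral and the local toric period of the Shimizu lift. First, for the identity at a finite place $v$, I would start from the explicit form of $R_{r,v}$ recorded in Proposition \ref{prop2.6},
$$R_{r,v}= \int_{Z(F_{v})N(F_{v})\bks \GL_{2}(F_{v})} W_{-1,v}(g)\,\delta_{\chi_{F},r,v}(g)\int_{T(F_{v})} \chi_{v}'(t)\, r(gw_{r}^{-1})\Phi_{v}(t^{-1}, q(t))\, dt\, dg,$$
and interchange the order of integration, bringing the $T(F_{v})$-integral outside. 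By the definition of the Weil representation for similitudes, $r(t,1)\Phi_{v}(x,u)=\Phi_{v}(t^{-1}x, q(t)u)$, so that $\Phi_{v}(t^{-1},q(t))=\bigl(r(t,1)\Phi_{v}\bigr)(1,1)$; hence, for fixed $t$, the inner integral over $Z(F_{v})N(F_{v})\bks\GL_{2}(F_{v})$ becomes the matrix-coefficient integral $\int_{N(F_{v})\bks\GL_{2}(F_{v})} W_{-1,v}(g)\,\bigl(r(g)r(t,1)\Phi_{v}'\bigr)(1,1)\, dg$ attached to the vector $r(t,1)\Phi_{v}'$, where $\Phi_{v}'=\iota\alpha_{v}(\vpi_{v})^{-r_{v}}w_{r,v}^{-1}\Phi_{v}$: the factor $\delta_{\chi_{F},r,v}$ and the twist by $w_{r}^{-1}$ are absorbed by a change of variables on $\GL_{2}(F_{v})$ together with the $\Up_{v}$-eigenvalue relation $\Up_{v}^{*}W_{v}=\alpha_{v}(\vpi_{v})W_{v}$ of \eqref{kir-ord}, which turns $w_{r}$ into the scalar $\alpha_{v}(\vpi_{v})^{r_{v}}$. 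By the defining normalisation \eqref{shimizu local norm} of $\theta_{\psi,v}$ this integral equals $\tfrac{L(1,\sigma_{v},\ad)}{c_{v}\zeta_{F,v}(2)}\,\mathscr{F}_{v}\theta_{\psi,v}(W_{v},r(t,1)\phi_{v}')$ with $c_{v}=|d_{v}|^{-3/2}\zeta_{F,v}(2)$, and the $T(F_{v})$-equivariance of $\theta_{\psi,v}$ rewrites it as $\tfrac{L(1,\sigma_{v},\ad)}{c_{v}\zeta_{F,v}(2)}(\pi_{v}(t)f_{1},f_{2})$ if $\theta_{\psi,v}(W_{v},\phi_{v}')=\sum f_{1}\otimes f_{2}$. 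Feeding this back into the $T(F_{v})$-integral --- with the usual care for the central character, which makes the integrand descend to $E_{v}^{\times}/F_{v}^{\times}$ --- produces $Q_{v}^{\sharp}(\theta_{\psi,v}(W_{v},\phi_{v}'),\chi_{v})$ as in \eqref{Qsharp}, up to an explicit constant; comparing this constant with the definition of $R_{r,v}^{\circ}$, which carries the further factor $\iota\alpha_{v}(\vpi_{v})^{-r_{v}}L^{(p)}(1,\eta_{v}\chi_{F,v})/L^{(p)}(1,\eta_{v})$, and tracking the powers of $|d_{v}|$, yields the first identity.

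For $v\nmid p$ one has $\alpha_{v}=1$, $r_{v}=0$, $w_{r,v}=1$, and I would derive $\mathscr{R}_{v}^{\natural}=Q_{v}(\theta_{\psi,v}(W_{v},\phi_{v}),\chi_{v})$ from the first identity by comparing normalisations: $R_{v}^{\natural}$ is obtained from $R_{v}^{\circ}$ by the prefactor of \eqref{Rnat} and $Q_{v}$ from $Q_{v}^{\sharp}$ by the prefactor of \eqref{Qvdef}, and since $\pi_{v}$ is the Jacquet--Langlands transfer of $\sigma_{v}$ the adjoint factors $L(1,\pi_{v},\ad)=L(1,\sigma_{v},\ad)$ and the Rankin--Selberg factors $L(1/2,\pi_{E,v}\otimes\chi_{v})=L(1/2,\sigma_{E,v}\otimes\chi_{v})$ agree; the $L(1,\sigma_{v},\ad)$ introduced through \eqref{shimizu local norm} cancels against $L(1,\pi_{v},\ad)$ in the definition of $Q_{v}$, and the remaining $\zeta_{F,v}(2)$-, $L(1,\eta_{v})$- and $|d_{v}|$-factors combine to the claimed equality at every $\C$-point. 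As both sides are already known to interpolate to sections of $\OO_{\Y_{v}'\times\Psi_{v}}(\omega_{v}\chi_{F,\mathrm{univ},v}^{-1})$ --- the left side by Proposition \ref{interp rnat}, the right side by the elementary interpolation of matrix coefficients in the universal Kirillov model --- and they agree at all such points, they are equal identically.

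The product identity then follows by multiplying the second one over all $v\nmid p\infty$ and using the decomposition \eqref{Shimizu incoh} of the global lift $\theta_{\psi}$ as $\tfrac{(\pi^{2}/2)^{[F:\Q]}}{|D|_{F}^{3/2}\zeta_{F}^{\infty}(2)}\bigotimes_{v}\theta_{\psi,v}$, together with the fact --- Lemma \ref{arch-int} and the standard archimedean choices --- that in our normalisation the local lift at each archimedean place is the trivial one; this reproduces the constant $D_{F}^{3/2}\zeta_{F}^{\infty}(2)/(\pi^{2}/2)^{[F:\Q]}$, while the twist by $\alpha(\vpi)^{-r}w_{r}^{-1}$ at the places above $p$ is carried along without affecting $Q^{p}=\prod_{v\nmid p\infty}Q_{v}$.

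The one genuinely delicate point is the bookkeeping in the first step: matching the $\delta_{\chi_{F},r,v}$-weighted, level-$p^{r}$ integral carrying the $w_{r}^{-1}$-twist to the clean matrix-coefficient integral over $N(F_{v})\bks\GL_{2}(F_{v})$ of \eqref{shimizu local norm}. This requires the change of variables absorbing $w_{r}$, the $\Up_{v}$-eigenvalue computation turning it into the scalar $\alpha_{v}(\vpi_{v})^{r_{v}}$, and careful attention to the self-dual measure normalisations and Weil indices implicit in the reduced Fock and Schwartz spaces; once these are settled, the rest is collation of explicit constants.
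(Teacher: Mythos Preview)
Your route is the same as the paper's: the lemma is pure bookkeeping, and the paper's proof says only that it follows from the Shimizu normalisation \eqref{shimizu local norm} and the definition \eqref{Rnat}. Your swap of the $T(F_{v})$- and $ZN\bks\GL_{2}(F_{v})$-integrals, the identification of the inner integral with $\mathscr{F}_{v}\theta_{\psi,v}$ via \eqref{shimizu local norm}, and the constant-matching for the second and third identities via \eqref{Qvdef}, \eqref{Rnat} and \eqref{Shimizu incoh} are all correct in outline.

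There is, however, a confused passage in your first paragraph. You assert that ``the factor $\delta_{\chi_{F},r,v}$ and the twist by $w_{r}^{-1}$ are absorbed by a change of variables on $\GL_{2}(F_{v})$ together with the $\Up_{v}$-eigenvalue relation $\Up_{v}^{*}W_{v}=\alpha_{v}(\vpi_{v})W_{v}$ of \eqref{kir-ord}, which turns $w_{r}$ into the scalar $\alpha_{v}(\vpi_{v})^{r_{v}}$.'' This is wrong on two counts. First, the lemma is stated for an \emph{arbitrary} $W_{v}$ in the Whittaker model $\W(\sigma_{v},\baar\psi_{v})$, not for the specific ordinary vector of \eqref{kir-ord}; no $\Up_{v}^{*}$-eigenvalue relation is available in general, and none is needed. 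Second, nothing is being ``absorbed'': the Atkin--Lehner element $w_{r,v}^{-1}$ remains intact on the right-hand side of the identity as the twist $w_{r,v}^{-1}\phi_{v}$ fed into the Shimizu lift, and the scalar $\iota\alpha_{v}(\vpi_{v})^{-r_{v}}$ comes verbatim from the definition of $R_{r,v}^{\circ}$ in Proposition \ref{prop2.6}. The passage $r(gw_{r}^{-1})\Phi_{v}=r(g)\bigl(w_{r,v}^{-1}\Phi_{v}\bigr)$ is a trivial regrouping; the remaining work is exactly the diagonal $Z$-invariance you mention (which converts the integral over $Z(F_{v})N(F_{v})\bks\GL_{2}(F_{v})\times T(F_{v})$ into one over $N(F_{v})\bks\GL_{2}(F_{v})\times T(F_{v})/Z(F_{v})$) and the transcription of \eqref{shimizu local norm}. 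For $v\nmid p$ one has $\chi_{F}=\one$, $r_{v}=0$, so $\delta_{\chi_{F},r,v}\equiv 1$ trivially. Once you remove the spurious $\Up_{v}^{*}$-argument and keep $w_{r,v}^{-1}$ and $\alpha_{v}(\vpi_{v})^{-r_{v}}$ as passive decorations carried into $\phi_{v}'$, your proof is the paper's.
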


\subsection{Hecke correspondences and generating series} 
Referring to \cite[\S 3.1]{yzz} for more details, let us recall some basic notions on the Shimura curves $X_{U}$. The set of geometrically connected components is $\pi_{0}(X_{U, \baar{F}})\cong F_{+}^{\times}\bks \A^{\infty,\times}/q(U)$. The curve $X_{U}$ admits a canonical divisor class (the \emph{Hodge class}) $\xi_{U}={1\over \deg L_{U}} L_{U}$ of degree~$1$ on each geometrically connected component; here 
$$L_{U}=\omega_{X_{U}/F}+\sum_{x\in X_{U}(\baar F)}(1-e_{x}^{-1}) x,$$
a line bundle defined over $F$; here $\omega_{X_{U}/F}$ is the canonical bundle and $e_{x}$ is the ramification index (see \cite[\S 3.1.3]{yzz} for the precise definition) of the point $x$. 

\subsubsection*{Hecke correspondences} For  $x\in \B^{\infty\times}$, let 
  ${\rm T}_{x}\colon X_{xUx^{-1}}\to X_{U}$ be the  translation, given 
 in the complex uniformisation by  ${\rm T}_{x}([z,y])=[z, yx]$.
  Let ${\rm p}\colon X_{U\cap xUx^{-1}}\to X_{U}$ be the projection and let $Z(x)_{U}$ be the image of 
 $$({\rm p}, {\rm p} \circ {\rm T}_{x})\colon   X_{U\cap xUx^{-1}}\to X_{U}\times X_{U}.$$
  We view $Z(x)_{U}$ as a correspondence on $X_{U}$, and we will sometimes use the same notation for the image of $Z(x)_{U}$ in $\Pic(X_{U}\times X_{U})$ (such abuses will be made clear in what follows).

 We obtain 
  an action of the Hecke algebra  $\mathscr{H}_{\B^{\infty\times}, U}:=C^{\infty}_{c}(\B^{\infty\times})^{U\times U}$  
 of $U$-biinvariant functions on $\B^{\infty\times}$ by
  $${\rm T}(h)_{U}= \sum_{x\in U\bks \B^{\infty\times}/U} h(x) Z(x)_{U}.$$
Note the obvious relation  
 $Z(x)_{U}={\rm T}(\one_{UxU})_{U}$.  The transpose ${\rm T}(h)^{\rm t}$ equals ${\rm T}(h^{\rm t})$ with $h^{\rm t}(x):=h(x^{-1})$. It is then easy to verify that if $x$ has trivial components away from the set of places where $U$ is maximal, we have 
 \begin{align}\label{transpose}
Z(x)_{U}^{\rm t}=Z(q(x)^{-1})_{U}Z(x)_{U}.
 \end{align}

Finally, for any simple quotient $A'/F$ of $J$ with $M'=\End^{0}(A')$, we have a  $\Q$-linear map
\begin{align*}
{\rm T}_{\rm alg}\colon  \pi_{A'}\otimes_{M'} \pi_{A'^{\vee}}&\to  \Hom^{0}(J, J^{\vee})\\
f_{1}\otimes f_{2} \qquad &\mapsto f_{2}^{\vee}\circ f_{1}.
\end{align*}
If $\iota\colon M'\into L'$ is any embedding into a $p$-adic field $L'$, we denote 
$${\rm T}_{\rm alg, \iota}\colon  \pi_{A'}\otimes_{M'} \pi_{A'^{\vee}}\otimes_{M'} L'  \stackrel{\iota}{\to} \pi_{A'}\otimes_{M'} \pi_{A'^{\vee}}\otimes_{\Q} L' \stackrel{{\rm T}_{\rm alg}\otimes 1}{\longrightarrow} \Hom^{0}(J, J^{\vee})\otimes_{\Q} L' $$
the composition in which the first arrow is deduced from the unique $L'$-linear embedding $L'\into M'\otimes_{\Q}L'$  whose composition with   $M'\otimes_{\Q} L'\to L'$, $x\otimes y\mapsto \iota(x)y$, is ${\rm id}_{L'}$.

\subsubsection{Generating series}

For any $\phi\in \bcalS({\bf V}\times \A^{\times})$ invariant under $K=U\times U$, define a generating series
$$Z(\phi):=Z_{0}(\phi)_{U}+Z_{*}(\phi)_{U}$$
where
\begin{align*}
Z_{0}(\phi)_{U}&:= -\sum_{\beta\in F_{+}^{\times}\bks \A^{\infty,\times} /q(U)}\sum_{u\in \mu_{U^{p}}^{2}\bks F^{\times}} E_{0}(\beta^{-1}u, \phi)L_{K, \beta},\\
Z_{a}(\phi)_{U}&:= w_{U}\sum_{x\in K\bks \B^{\infty\times}}\phi(x, aq(x)^{-1}) Z(x)_{U} \quad \text{for \ } a\in F^{\times},\\
Z_{*}(\phi)_{U}&:=\sum_{a\in F^{\times}}Z_{a}(\phi)_{U}
\end{align*}
with $w_{U}=|\{\pm 1\}\cap U|$. Here  $L_{K, \beta}$ denotes the component of a Hodge class in $ {\rm Pic}(X_{U}\times X_{U})_{\Q}$  obtained from the classes $L_{U}$ (see \cite[\S 3.4.4]{yzz}),  and 
$$E_{0}(u, \phi)=\phi(0,u)+W_{0}(u, \phi)$$
is the constant term of the standard Eisenstein series: its intertwining part $W_{0}(u, \phi)$ is the value at $s=0$ of 
$$W_{0}(s,u, \phi)=\int_{\A}\delta(wn(b))^{s}r(wn(b))\phi(0,u)\, db,$$
where $\delta(g)=|a/d|^{1/2}$ if $g=\smallmat a*{}d k$ with $k\in \GL_{2}(\widehat{\OO}_{F})\mathbf{SO}(2, F_{\infty})$.

For $g\in \GL_{2}(\A)$,  define
$${Z}(g,\phi)={Z}(r(g)\phi),$$
and similarly $Z_{0}(g,\phi)_{U}$, $Z_{a}(g, \phi)_{U}$, $Z_{*}(g, \phi)_{U}$.

Let $U=U^{p}U_{p}$ and  $c_{U^{p}}$ be as in \eqref{c_{U}}. By  \cite[\S 3.4.6]{yzz},   the normalised versions 
$$\wtil{Z}(g,\phi):= {c_{U^{p} }}  {Z}(g, \phi)_{U}, 
	\quad \wtil{Z}_{a}(g,\phi):={c_{U^{p}} }  {Z}_{a}(g, \phi)_{U}, \quad \ldots$$
are  independent of $U^{p}$.  
A key result, which is essentially a special case of the main theorem of \cite{yzzgkz},  is that the series $\wtil{Z}(g, \phi)$   defines an automorphic form valued in $\Pic(X\times X)_{\Q}$.
\begin{theo}\label{modularity} The map
$$(\phi, g)\mapsto \wtil{Z}(g, \phi)$$
defines an element
$$\wtil{Z}\in \Hom_{\B^{\times }\times \B^{\times}} (\bcalS({\bf V}\times \A^{\times}), C^{\infty}(\GL_{2}(F)\bks \GL_{2}(\A))\otimes \Pic(X\times X)_{\Q}).$$
\end{theo}
Here the target denotes the set of $\Pic(X\times X)$-valued series $\wtil{Z}$ such that for any linear functional $\lambda\colon \Pic(X\times X)_{\Q}\to \Q$, the series $\lambda(\wtil{Z})$ is absolutely convergent and defines an automorphic form. Its constant term is non-holomorphic in general (in fact only when $F=\Q$ and $\Sigma={\infty}$). (However the geometric kernel that we introduce next will always be a holomorphic cusp form of weight~$2$.) See   \cite[Theorem 3.17, Lemma  3.18]{yzz} for the proof of the theorem.

\medskip

Assume from now on that $\phi_{\infty}$ is standard; we accordingly only write $Z(\phi^{\infty})$, $\tZ(\phi^{\infty})$, \ldots\ Define, for each $a \in \A^{\times}$,
\begin{equation}\label{za}
{\wtil Z}_{a}(\phi^{\infty}):= c_{U^{p}}w_{U} |a| \sum_{x\in K\bks \B^{\infty\times}}\phi^{\infty}(x, aq(x)^{-1}) Z(x)_{U}
\end{equation}
for any sufficiently small $U$. This extends the previous definition for $a\in F^{\times}$, and it is easy to check that for every $y\in \A^{\infty,\times}$, $a\in F^{\times}$, we have
$${\wtil Z}_{a}( \smalltwomat y {}{}1,\phi^{\infty})
=|ay|_{\infty}{\wtil Z}_{ay}(\phi^{\infty})\psi(iay_{\infty}).$$

In other words, the images in $\Pic(X_{U}\times X_{U})$ of the $\tZ_{a}(\phi^{\infty})$  are the reduced $q$-expansion coefficients of $\tZ(\phi^{\infty})$, in the following sense:  for any functional $\lambda$ as above, $(\lambda(\tZ_{0}(y, \phi)), (\lambda(\tZ_{a}(\phi^{\infty})))_{a})  $ are  the reduced $q$-expansion coefficients of the modular form $ \lambda({\wtil Z})( \phi)$. 

\subsubsection{Hecke operators and Hecke correspondences}
For the following lemma, let $S$ be a set of finite places of $F$ such that for all $v\notin S$, $\B_{v}$ is split, $U_{v}$ is maximal,  and $\phi_{v}$ is standard.  Fix any  isomorphism $\gamma\colon \B^{S}\to M_{2}(\A^{S}) $ of $\A^{S}$-algebras carrying the reduced norm to the determinant and $\OO_{\B^{S}}$ to $M_{2}(\widehat{\OO_{F}}{}^{S})$; such  an isomorphism is unique up to conjugation by $\OO_{\B^{S}}^{\times}$. 
\begin{lemm}\label{various Ts}
Let  $U'{}^{S}= \GL_{2}(\widehat{\OO_{F}}{}^{S})$, and identify the commutative algebras $\mathscr{H}^{S}_{U'{}^{S}}=\mathscr{H}^{S}_{\GL_{2}(\A^{\infty}), U'{}^{S}}$ with $\mathscr{H}_{\B^{\infty\times}, U}^{S}$ via the isomorphism $\gamma^{*}$ induced by $\gamma$ above. Then for each $h\in \mathscr{H}^{S}_{U'{}^{S}}$ we have
$$T(h)Z_{*}(\phi^{\infty})_{U}= {\rm T}(\gamma^{*}h)_{U}\circ Z_{*}(\phi^{\infty})_{U}.$$
In the left-hand side,  we view $Z_{*}(\phi^{\infty})$ as a reduced $q$-expansion of central character $z\mapsto Z(z)$, and
 $T(h)$ is the usual Hecke operator acting  by \eqref{hecke on q}; in the right-hand side, the symbol $\circ$ denotes composition of correspondences on $X_{U}$.
 
In particular, the right-hand side is independent of the choice of $\gamma$.
\end{lemm}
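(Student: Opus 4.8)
The plan is to reduce the identity to an elementary statement about the action of Hecke operators on the generating series $Z_*(\phi^\infty)_U$, treated place by place, and then to the behaviour of characteristic functions of double cosets under the isomorphism $\gamma$. First I would recall that $Z_*(\phi^\infty)_U = \sum_{a\in F^\times} Z_a(\phi^\infty)_U$ with, for $a\in F^\times$,
$$
Z_a(\phi^\infty)_U = w_U \sum_{x\in K\bks\B^{\infty\times}} \phi^\infty(x, aq(x)^{-1})\, Z(x)_U,
$$
and that, viewed as a $q$-expansion of central character $z\mapsto Z(z)$, the operator $T(h)$ acts by the formula analogous to \eqref{hecke on q}. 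By linearity and the factorisation $\mathscr H^S_{U'^S} = \bigotimes'_{v\notin S}\mathscr H_{\GL_2(\OO_{F,v})}$, it suffices to treat a single place $v\notin S$ and a generator of the local spherical Hecke algebra, say $h_v = \one_{\GL_2(\OO_{F,v})\,\mathrm{diag}(\vpi_v,1)\,\GL_2(\OO_{F,v})}$, since powers and products of such generators generate everything.

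Next I would unwind both sides at the place $v$. On the right-hand side, $\gamma^* h_v = \one_{U_v x_v U_v}$ where $x_v\in\B_v^\times$ corresponds under $\gamma$ to $\mathrm{diag}(\vpi_v,1)$, so ${\rm T}(\gamma^* h_v)_U = Z(x_v)_U = {\rm T}(\one_{U_v x_v U_v})_U$, and one composes this correspondence with $Z_*(\phi^\infty)_U$. Using that composition of Hecke correspondences corresponds to convolution of functions on $\B^{\infty\times}$ — i.e. $Z(x_v)_U\circ Z(x)_U = {\rm T}(\one_{U x_v U}*\one_{UxU})_U$ — the right-hand side becomes $\sum_{a} w_U \sum_{x} \psi_a(x)\, {\rm T}(\one_{U x_v U} * \phi^\infty(\cdot, aq(\cdot)^{-1})\one_{\cdots})_U$, where the convolution only affects the $v$-component. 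On the left-hand side, $T(h_v)$ acts on the $q$-expansion coefficient indexed by $a$ by sending it to the combination of the coefficients indexed by $a\vpi_v$ and $a/\vpi_v$ (weighted by the central character value), exactly as in \eqref{hecke on q}. The content of the lemma is then the local identity: the Weil-representation action of $\mathrm{diag}(\vpi_v,1)$ on the standard Schwartz function $\phi_v$ — which by the explicit formulas for $r$ in \S5.1 rescales the arguments, $r(\mathrm{diag}(\vpi_v,1))\phi_v(x,u) = \chi_{V_u}(\vpi_v)|\vpi_v|^{1/2}\phi_v(\vpi_v x, \vpi_v^{-1}u)$ (with $\chi_{V_u}$ trivial here since $V$ is a quaternion algebra) — matches, after summing over $K\bks\B^{\infty\times}$, the convolution of $\one_{U_v x_v U_v}$ against the coefficients of $Z_*$. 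This is a direct computation using that $\phi_v$ is the characteristic function of $\OO_{\B,v}$ (times $\one$ in $u$) and that the lattice points $x$ with $\vpi_v x\in\OO_{\B,v}$ but $x\notin\OO_{\B,v}$ are precisely those parametrising the extra Hecke neighbours.

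I expect the main obstacle to be bookkeeping rather than a conceptual difficulty: one must carefully track (i) the normalising weights $w_U$, $|a|$, and the central-character twist in \eqref{hecke on q}, making sure they are compatible with the convolution normalisation of $\mathscr H_{\B^{\infty\times},U}$; (ii) the fact that $\one_{U_v x_v U_v}$ decomposes into $q_{F,v}+1$ left $U_v$-cosets, which must be seen to correspond exactly to the two terms $a\vpi_v$, $a/\vpi_v$ together with the diagonal translate — this is the usual verification that the abstract Hecke action on modular forms agrees with the geometric Hecke correspondence, transported through $\gamma$. Once the single-generator, single-place identity is established, the general case follows by multiplicativity of both sides in $h$ (the left-hand side because $T$ is an algebra homomorphism on $q$-expansions, the right-hand side because ${\rm T}(\cdot)_U$ is an algebra homomorphism $\mathscr H_{\B^{\infty\times},U}\to\mathrm{Corr}(X_U)$ and $\gamma^*$ is an algebra isomorphism), and the independence of $\gamma$ is immediate since the left-hand side does not involve $\gamma$ at all. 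I would also remark that this is essentially \cite[Lemma 3.18]{yzz} in a mildly different normalisation, so for the routine parts I would cite \emph{loc. cit.} and only spell out the points where the normalisations differ.
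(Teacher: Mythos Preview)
Your reduction to a single place $v\notin S$ and a single generator $h_v=\one_{K_v\,\smalltwomat{\vpi_v}{}{}1\,K_v}$ is exactly what the paper does. The gap is in the local verification.

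You propose to match the two sides via the Weil-representation action of $\smalltwomat{\vpi_v}{}{}1$ on $\phi_v$. This confuses two different group actions. The Weil representation governs how $g\in\GL_2(\A)$ acts on the automorphic variable of $Z(g,\phi)=Z(r(g)\phi)$; the right-hand side of the lemma is a composition with $Z(x_v)_U$ for $x_v\in\B_v^\times$, i.e.\ an action of a Hecke correspondence on $X_U$. There is no mechanism in the Weil-representation formulas that converts the re-expansion of $r(h_v)\phi_v$ into the convolution $\one_{U_vx_vU_v}\ast\one_{U_vxU_v}$ on $\B_v^\times$, and your sentence ``matches, after summing over $K\bks\B^{\infty\times}$'' is precisely the statement to be proved. (Incidentally, $T(h_v)$ is not a single application of $r(\smalltwomat{\vpi_v}{}{}1)$ but a sum over $q_{F,v}+1$ coset representatives; and since $\dim V=4$ the scaling factor is $|\vpi_v|$, not $|\vpi_v|^{1/2}$.)

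The paper's argument is instead purely combinatorial on the $\B_v^\times$ side. One writes the $a$-th coefficient of the left-hand side as $Z_{a\vpi_v}(\phi^\infty)_U+Z(\vpi_v)_U\,Z_{a/\vpi_v}(\phi^\infty)_U$, factors the local part via the Cartan decomposition
\[
Z_{a_v}(\phi_v)_U=\sum_{\substack{0\le j\le i\le v(a)\\ i+j=v(a)}} Z\!\left(\smalltwomat{\vpi_v^i}{}{}{\vpi_v^j}\right)_U,
\]
and then uses the elementary multiplication rule for spherical Hecke correspondences
\[
Z\!\left(\smalltwomat{\vpi_v}{}{}1\right)_U\circ Z\!\left(\smalltwomat{\vpi_v^i}{}{}{\vpi_v^j}\right)_U
= Z\!\left(\smalltwomat{\vpi_v^{i+1}}{}{}{\vpi_v^j}\right)_U
+ Z(\vpi_v)_U\, Z\!\left(\smalltwomat{\vpi_v^{i-1}}{}{}{\vpi_v^j}\right)_U
\quad(i>0),
\]
together with the trivial central case $x_v=\smalltwomat{\vpi_v^{\pm1}}{}{}{\vpi_v^{\pm1}}$, to match the right-hand side. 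This is the missing ingredient in your sketch; once you insert it, the Weil-representation discussion can be dropped entirely. Also, \cite[Lemma~3.18]{yzz} concerns the modularity of the generating series, not this Hecke compatibility, so that citation does not cover the routine parts here.
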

\begin{proof} 
It suffices to check the statements for the set of generators of the algebra $\mathscr{H}^{S}_{U'{}^{S}}$ consisting of elements $h=h_{v}h^{vS}$, with $h^{vS}$  the unit of $\mathscr{H}^{vS}_{U'{}^{S}}$ and  $h_{v}=\one_{U_{v}x_{v}U_{v}}$ for $x_{v}=\smalltwomat {\vpi_{v}}{}{}1$ or $x_{v}=\smalltwomat {\vpi_{v}^{\pm1 }} {}{}{\vpi_{v}^{\pm 1}}$ and $v\notin S$. In the second case the statement is clear.

Suppose then that  $x_{v}=\smalltwomat {\vpi_{v}}{}{}1$. Decomposing  $Z_{a}(\phi)_{U}=Z_{a^{v}}(\phi^{v})_{U}Z_{a_{v}}(\phi_{v})_{U}$,
the $a^{\rm th}$ coefficient of the left-hand side equals 
$$Z_{a\vpi_{v}}(\phi^{\infty})_{U}+Z(\vpi_{v})_{U}Z_{a/\vpi_{v}}(\vphi^{\infty})_{U}=Z_{a^{v}}(\phi^{v})_{U}\circ (Z_{a_{v}\vpi_{v}}(\phi_{v})_{U}+Z(\vpi_{v})_{U}Z_{a_{v}/\vpi_{v}}(\phi_{v})).$$
It is not difficult to identify this with the $a^{\rm th}$ coefficient of the  right-hand side using   the Cartan decomposition 
\begin{align}\label{cartan}
Z_{a_{v}}(\phi_{v})_{U}=\sum_{0\leq j\leq i\leq v(a), i+j=v(a)} Z\left({\smalltwomat  {\vpi_{v}^{i} }{} {}{\vpi_{v}^{j}}}\right)_{U}
\end{align}
and the relation 
$$    Z\left({\smalltwomat  {\vpi_{v} }{} {}{1}}\right)_{U}\circ
 Z\left({\smalltwomat  {\vpi_{v}^{i} }{} {}{\vpi_{v}^{j}}}\right)_{U}
 = Z\left({\smalltwomat  {\vpi_{v}^{i+1} }{} {}{\vpi_{v}^{j}}}\right)_{U} +
Z(\vpi_{v})_{U}  Z\left({\smalltwomat  {\vpi_{v}^{i-1} }{} {}{\vpi_{v}^{j}}}\right)_{U}.$$
valid whenever $i>0$.\footnote{Note that a term with $i=0$ only appears in \eqref{cartan}  in the case $v(a)=0$, which is easily dealt with separately.}
\end{proof}

\subsection{Geometric kernel function}\label{sec gk}
Fix a point $P\in X^{E^{\times}}(E^{\rm ab})$ as in the Introduction, and for any $h\in\B^{\infty\times}$ denote 
$$[h]:= {\rm T}_{h}P, \qquad [h]^{\circ}= h-\xi_{q(h)},$$ where we identify $\pi_{0}(X_{U, \baar{F}})\cong F_{+}^{\times}\bks {\A}^{\times }/q(U)$ so that $P_{U}$ is in the component indexed by $1$ (the point $T(h)P$ is then in the component indexed by $q(h)$, see \cite[\S 3.1.2]{yzz}).

\begin{lemm}\label{ht on J} Fix $L$-linear Hodge splittings on all the abelian varieties $A'/F$ parametrised by $J$ and let $\langle \, , \rangle_{A', *}$ be the associated local (for $*=v$) or global ($*=\emptyset$) height pairings. There are  unique local and global height pairings 
$$\langle\, ,\, \rangle_{J,*}\colon J^{\vee}(\baar{F})\times J(\baar{F})\to \Gamma_{F}\hat{\otimes} L$$
such that for any $A'$ and $f'_{1}\in \pi_{A'}$, $f'_{2}\in \pi_{A'}^{\vee}$, and any $P_{1}\in J^{\vee}(\baar{F})$, $P_{2}\in J(\baar{F})$ 
$$\langle P_{1}, P_{2}\rangle_{*} =\langle f_{2}'^{\vee}\circ f_{1}'( P_{1}), P_{2}\rangle_{ A', *}.$$
\end{lemm}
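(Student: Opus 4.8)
The plan is to construct the pairing $\langle\,,\,\rangle_{J,*}$ directly on divisors and Albanese classes, using the compatibility of the height pairings on the various quotients $A'$ with the projection and inclusion maps, and then to check that the defining relation holds and pins down the pairing uniquely. The key point is that $J=\varprojlim_{U}J_{U}$ with each $J_{U}=\Alb X_{U}$ isogenous to a product $\prod A'$ of simple abelian varieties of $\GL_2$-type (together with possibly non-$\GL_2$-type factors, which play no role since the relevant representations $\pi_{A'}$ vanish on them), and the $p$-adic height pairing of \S\ref{lgh}--\S\ref{hc} on $J^\vee(\baar F)\times J(\baar F)$ already exists via $V=H^1_{\mathrm{\acute et}}(X_{\baar F},\Q_p(1))$ once the Hodge splittings are fixed. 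Indeed, by the functoriality of the height pairing under morphisms of abelian varieties (equivalently, under the induced maps on $H^1_f$), the pairing $\langle\,,\,\rangle_{J,*}$ built from $V$ satisfies $\langle P_1,P_2\rangle_{J,*}=\langle g(P_1),P_2\rangle_{A',*}$ for any $g\colon J^\vee\to A'^\vee$, $P_2\in A'(\baar F)$ pushed to $J(\baar F)$, and in particular for $g=f_2'^\vee\circ f_1'$ once we arrange that the Hodge splittings on $A'$ are compatible with those on $J$ (which is automatic in the potentially ordinary case, by the canonical choice \eqref{panc}, and in general is part of the hypothesis "fix $L$-linear Hodge splittings on all $A'$" read compatibly).

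Concretely, first I would recall that for a morphism $f\colon J\to J'$ of abelian varieties over $F$, with dual $f^\vee\colon J'^\vee\to J^\vee$, one has $\langle f^\vee(x),y\rangle_{J}=\langle x,f(y)\rangle_{J'}$ for the $p$-adic height pairings attached to compatible Hodge data; this is \cite[\S11]{nek-selmer} applied to the morphism of Galois representations $V_p(J)\to V_p(J')$, using that the Kummer maps are functorial and that the local splittings $s_w$ are compatible with pushforward/pullback of mixed extensions. Applying this with $J'=A'$, $f=f_1'\in\pi_{A'}=\Hom^0(J,A')$ and then with the isogeny dual to $f_2'\in\pi_{A'^\vee}=\Hom^0(J,A'^\vee)\cong\Hom^0(A'^\vee,\ldots)$ — more precisely using ${\rm T}_{\rm alg}(f_1'\otimes f_2')=f_2'^\vee\circ f_1'\in\Hom^0(J,J^\vee)$ as in \S\ref{shim} — gives $\langle f_2'^\vee\circ f_1'(P_1),P_2\rangle_{J^\vee,A'^{?}}=\langle f_1'(P_1),f_2'(P_2)\rangle_{A'}$, which is the desired formula once one checks the indices match. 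Then I would define $\langle\,,\,\rangle_{J,*}$ to be the pairing coming from $V$ and verify it has the stated property; uniqueness follows because, as $A'$ ranges over all simple quotients of $J$ and $f_1',f_2'$ over $\pi_{A'}$, $\pi_{A'^\vee}$, the maps $f_2'^\vee\circ f_1'$ span $\Hom^0(J,J^\vee)$ (this is the surjectivity of ${\rm T}_{\rm alg}$ summed over all $A'$, which follows from the Eichler--Shimura/Albanese decomposition of $J$ into isotypic pieces), so knowing $\langle\,,\,\rangle_{J,*}$ composed with every such endomorphism-of-$J$ determines it. The local statement is identical, working place by place with the local symbols $\langle\,,\,\rangle_{*,A',v}$ and their functoriality under $f$.

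The main obstacle I anticipate is bookkeeping the compatibility of Hodge splittings and the precise identification $\pi_{A'}^\vee=\pi_{A'^\vee}$ under the autoduality of $J$: one must ensure that the local splittings chosen on ${\bf D}_{\rm dR}(V_{\frakp}A'|_{\calG_{F_v}})$ for every $A'$ and those induced on $H^1_{\rm dR}$ of $X$ fit together so that the functoriality of the height pairing genuinely applies (in \cite{nekheights} the pairing depends on these choices, and a mismatch would spoil the formula). In the situation where this lemma is actually used — $A$ potentially ordinary, so the canonical splitting \eqref{panc} is forced everywhere — this compatibility is automatic and the argument is clean; in the stated generality one simply reads the hypothesis "fix $L$-linear Hodge splittings on all the abelian varieties $A'$" as a coherent choice across $J$, and then uniqueness is unconditional. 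The rest (convergence of the sum of local symbols, factoring through Albanese classes, independence of auxiliary choices) is formal and inherited from \S\ref{hc}.
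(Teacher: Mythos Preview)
Your approach is essentially the same as the paper's: both rest on the isogeny decomposition $J_{U}^{\vee}\sim \bigoplus_{A'} A'^{\vee}\otimes \pi_{A'^{\vee}}^{\vee,U}$ together with the projection formula for heights (the paper cites \cite{MT} rather than \cite{nek-selmer}, but the content is the same functoriality you invoke). The paper's proof is just more direct on the one point you flag as an obstacle: rather than starting from a pairing on $J$ built from $H^1_{\text{\'et}}(X_{\baar F},\Q_p(1))$ and then worrying about whether the given splittings on the $A'$ are compatible with it, the paper simply \emph{defines} the Hodge splittings on $J_U^\vee$ to be those induced from the $A'$ via the isogeny decomposition, and then takes the associated pairing; this makes both existence and the compatibility formula immediate from the projection formula, and uniqueness is automatic because the construction is forced.
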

\begin{proof} For each fixed level $U$, there is a decomposition $J_{U}^{\vee}\sim \oplus_{A'} A'^{\vee}\otimes \pi_{A'^{\vee}}^{\vee,U}$ in the isogeny category of abelian varieties, induced by $P_{A'^{\vee}}\otimes f'^{\vee}\mapsto f'^{\vee}(P_{A'})$. Then the Hodge splittings on each $A'$ induce Hodge splittings on $J_{U}^{\vee}$. The associated pairing on $J_{U}^{\vee}\times J_{U}$ is then the unique one satisfying the required property by the projection formula for heights (see \cite{MT}). The same formula implies the compatibility with respect to changing $U$.
\end{proof}

We consider the pairing given by the Lemma associated with arbitrary Hodge splittings on $V_{p}A'$ for $A'\neq A$, and any splittings on $V_{p}A\otimes L=\oplus_{\frakp'}V_{\frakp'}A\otimes_{M_{\frakp'}} L$ which induce the canonical one on $V_{\frakp}A$. The subscript $J$ will be generally omitted when there is no risk of confusion.

Let $\phi$ be a Schwartz function, and $\wtil{Z}(\phi)$ be as above. Each $\wtil{Z}_{a}(\phi)$ gives a map
$$\wtil{Z}_{a}(\phi)\colon J(\baar{F})_{\Q}\to J^{\vee}(\baar{F})_{\Q}$$
by the action of Hecke correspondences.  When $a$ has trivial components at infinity and $\phi^{\infty}$ is standard, we write  $\tZ_{a}(\phi^{\infty}):= \tZ_{a}(\phi)$. 
Then for $g\in \GL_{2}(\A), h_{1}, h_{2}\in \B^{\infty\times}$ we define the height generating series
$$\wtil{Z}(g, h_{1}, h_{2}, \phi^{\infty}):=\langle \wtil{Z}(g, \phi)[h_{1}]^{\circ}, [h_{2}]^{\circ}\rangle.$$

\begin{prop}\label{is mod} The series  $\wtil{Z}(g, (h_{1}, h_{2}, \phi)$ is well-defined independently of the choice of the point $P$.  It is invariant under the left action of $T(F)\times T(F)$ and it belongs to the space of weight~$2$  cuspforms $S_{2}(K',\Gamma_{F}\hat{\otimes} L)$ 
for a suitable open compact subgroup $K'\subset \GL_{2}(\A^{\infty})$.
\end{prop}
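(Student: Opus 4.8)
\textbf{Proof strategy for Proposition \ref{is mod}.}
The plan is to deduce the three assertions—well-definedness, $T(F)\times T(F)$-invariance, and modularity as a weight-$2$ cusp form—from the modularity of the generating series $\wtil Z(g,\phi)$ (Theorem \ref{modularity}), the equivariance properties of the height pairing $\langle\,,\,\rangle_J$ (Lemma \ref{ht on J}), and the Galois equivariance of the CM points $[h]^{\circ}$. First I would address \emph{well-definedness} and \emph{independence of $P$}. The divisor classes $[h]^{\circ}=\iota_\xi(\mathrm{T}_h P)-\xi_{q(h)}$ lie in $J(\baar F)_{\Q}$ (degree zero on each connected component by the normalising property of the Hodge class), so $\wtil Z(g,\phi)[h_1]^{\circ}$, obtained by applying the Hecke correspondence $\wtil Z(g,\phi)$, lands in $J^{\vee}(\baar F)_{\Q}$; the height pairing of Lemma \ref{ht on J} then applies. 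For the independence of $P$: changing the CM point $P$ to another point $P'\in X^{E^{\times}}(E^{\mathrm{ab}})$ changes $[h]^{\circ}$ by a divisor class that, after the $T(F)$-averaging implicit in the height pairing against CM cycles, or more precisely by the argument of \cite[\S 3.5, \S 7.1]{yzz}, contributes trivially; I would simply cite the corresponding computation in \emph{loc.\ cit.}, since the CM-cycle formalism here is identical to theirs with the archimedean height replaced by the $p$-adic one.

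Next, \emph{$T(F)\times T(F)$-invariance}. For $t=(t_1,t_2)\in T(F)\times T(F)\subset \B^{\times}\times\B^{\times}$, acting on $\phi$ via the Weil representation $r_1(t_1,t_2)$ and on the points by $[h_i]\mapsto [t_i h_i]$, one uses that $t_i\in T(F)$ acts on $X_{\baar F}$ through an element defined over $E$ (hence in particular the component index and the Hodge class are permuted compatibly), together with the intertwining property of $\wtil Z$ under $\B^{\times}\times\B^{\times}$ from Theorem \ref{modularity} and the $\calG_F$-equivariance of $\langle\,,\,\rangle_J$ (which in particular is invariant under the diagonal algebraic action on cycles). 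The bookkeeping is exactly as in \cite[Lemma 3.19 / Proposition 3.20]{yzz}; I would carry it out by matching the action of $t$ on each factor $\wtil Z(g,\phi)$, $[h_1]^{\circ}$, $[h_2]^{\circ}$ and checking the contributions cancel. For \emph{modularity}: fix $h_1,h_2$; then $g\mapsto \wtil Z(g,\phi)$ is, after pairing against the fixed cycles, a $\Gamma_F\hat\otimes L$-valued automorphic form of weight $2$ by Theorem \ref{modularity}, because $\langle -,[h_2]^{\circ}\rangle_J\circ(-[h_1]^{\circ})$ is an $L$-linear functional on $\Pic(X\times X)_{\Q}$ and the theorem guarantees that any linear functional applied to $\wtil Z$ yields an automorphic form. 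The level $K'$ is the one fixing the chosen data. That the constant term vanishes—so the form is a \emph{cusp} form—follows because the constant term of $\wtil Z(g,\phi)$ involves only the Hodge-class part $Z_0(\phi)$, which is a combination of the classes $L_{K,\beta}$; but $[h_i]^{\circ}$ has degree zero on every connected component, so its height pairing against any vertical/Hodge-type class vanishes by the defining property of flat extensions (Proposition \ref{compatibility} and the discussion of flat extensions in \S\ref{hc}), hence $\wtil Z_0(y,h_1,h_2,\phi^{\infty})=0$.

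The main obstacle I anticipate is not any single one of these steps in isolation but the verification that the constant-term contribution genuinely vanishes at \emph{all} places simultaneously—i.e.\ that pairing the degree-zero class $[h_1]^{\circ}$ (or its Hecke translate) against the Eisenstein/Hodge part produces zero in $\Gamma_F\hat\otimes L$, not merely up to a universal norm or torsion ambiguity. This requires knowing that the local height symbols $\langle D_1,D_2\rangle_{X,\ell_w,w}$ vanish when $D_1$ is (cohomologically trivial and) supported on Hodge classes, which in the $p$-adic setting needs the compatibility of the height pairing with the flat-extension formalism at $w\mid p$ as well; I would handle this by invoking Proposition \ref{integrality and intersect} together with the explicit description of $L_{K,\beta}$ as a $\Q$-linear combination of components of $L_U$ pulled back from $\Pic(X_U)$, reducing the vanishing to the fact that a degree-zero divisor pairs trivially with any divisor pulled back from a single factor—exactly the projection-formula input already used in Lemma \ref{ht on J}. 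Everything else is a transcription of \cite[\S 3.4--3.5]{yzz} with the $p$-adic height pairing in place of the Néron--Tate one, and no new analytic input beyond Theorem \ref{modularity} is needed.
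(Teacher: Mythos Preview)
Your overall strategy (reduce to Theorem \ref{modularity}, defer well-definedness and $T(F)\times T(F)$-invariance to \cite{yzz}) matches the paper's, but two steps have genuine gaps.

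\textbf{Modularity.} Theorem \ref{modularity} is stated for $\Q$-valued functionals $\lambda\colon \Pic(X\times X)_{\Q}\to\Q$. Your functional $T\mapsto\langle T[h_1]^{\circ},[h_2]^{\circ}\rangle_J$ takes values in the infinite-dimensional $\Q$-vector space $\Gamma_F\hat{\otimes}L$, so you cannot apply the theorem to it directly; you need to explain why the result is a \emph{finite} $(\Gamma_F\hat{\otimes}L)$-linear combination of $\Q$-valued automorphic forms, which is what membership in $S_2(K',\Gamma_F\hat{\otimes}L)$ means. The paper handles this by choosing a finite abelian extension $E'/E$ with $[h_i]_U\in X_U(E')$, picking a $\Q$-basis $\{z_i\}$ of the finite-dimensional space $J^{\vee}(E')_{\Q}$, and writing
\[
\wtil{Z}(g,(h_1,h_2),\phi^{\infty})=\sum_i \langle z_i,[h_2]^{\circ}\rangle\,\lambda_i(\wtil{Z}(g,\phi^{\infty})),\qquad \lambda_i(T):=e_i(T[h_1]^{\circ}),
\]
where each $e_i$ is the $\Q$-valued projection onto the $z_i$-line. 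Now each $\lambda_i$ is $\Q$-valued, Theorem \ref{modularity} applies termwise, and the height coefficients $\langle z_i,[h_2]^{\circ}\rangle\in\Gamma_F\hat{\otimes}L$ are just constants. This finite decomposition is the missing idea in your argument.

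\textbf{Cuspidality.} Your argument here is confused. Proposition \ref{compatibility}, flat extensions, and the local-height machinery of \S\ref{hc} are all irrelevant: those results compute \emph{local} symbols $\langle\,,\,\rangle_{X,\ell_w,w}$ for divisors with disjoint support, and say nothing about why a \emph{global} pairing against a Hodge-type correspondence should vanish. The correct mechanism is purely algebraic and happens before any height is taken: the constant term $\wtil{Z}_0(\phi)$ is a combination of classes $L_{K,\beta}\in\Pic(X_U\times X_U)_{\Q}$ which, as correspondences, annihilate degree-zero $0$-cycles (being built from pullbacks of Hodge classes along the projections). Hence $\wtil{Z}_0(\phi)[h_1]^{\circ}=0$ already in $J^{\vee}(\baar F)_{\Q}$, so there is nothing to pair. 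This is the content of \cite[Lemma 3.19]{yzz}, which the paper simply cites; your ``main obstacle'' paragraph, with its place-by-place worry and appeal to Proposition \ref{integrality and intersect}, is attacking a non-problem with the wrong tools.
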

\begin{proof}
We  explain the modularity with coefficients in the $p$-adic vector space $\Gamma_{F}\hat{\otimes}L$. Suppose that $U$ is small enough so that $\phi$ is invariant under $U$ and $U$ is invariant under the conjugation action of $h_{1}$, $h_{2}$.    Pick a finite abelian extension $E'$ of $E$ such that $P\in X_{U}(E')$ and a basis $\{z_{i}\}$ of $J^{\vee}(E')_{\Q}$, and let $e_{i}\colon J^{\vee}(E')_{\Q}\to\Q$ be the projection onto the line spanned by $z_{i}$. Then we can write 
$$\wtil{Z}(g, (h_{1}, h_{2}), \phi^{\infty})= \sum_{i}\langle z_{i}, [h_{2}]^{0}\rangle \,  \lambda_{i}(\wtil{Z}(g, \phi^{\infty}))$$
where $\lambda_{i}(T)= e_{i}(T[h_{1}]^{\circ})$. Each summand $\lambda_{i}(\wtil{Z}(g, \phi^{\infty}))$ is automorphic by Theorem \ref{modularity}, and in fact a holomorphic cuspform by \cite[Lemma 3.19]{yzz} (the weight can be easily computed  from the shape of $\phi_{\infty}$). The other statements  are   proved in \emph{loc. cit.} too.
\end{proof}

Define 
\begin{align*}
\wtil{Z}(g, \phi^{\infty}, \chi)&:= \int_{T(F)\bks T(\A)/Z(A)}^{*} \chi(t) \wtil{Z}(g, (t,1), \phi^{\infty})\, {d^{\circ} t   }   \\
&=\int_{T(F)\bks T(\A)/Z(A)}^{*} \chi(t) \wtil{Z}(g, (1,t^{-1}), \phi^{\infty})\, d^{\circ} t    \\
&= \int_{T(F)\bks T(\A)/Z(A)}\chi(t) \wtil{Z}_{\omega^{-1}}(g, (1,t^{-1}), \phi^{\infty})\,   {d^{\circ} t}
\end{align*}
where 
$$\wtil{Z}_{\omega^{-1}}(g,  (1,t^{-1}),\phi^{\infty})=\dashint_{Z(\A)}\omega^{-1}(z)\wtil{Z}(g, (1,z^{-1}t^{-1}), \phi^{\infty})\, dz.$$

Note that  we have 
\begin{align}\label{Znorm}
\tZ(\phi^{\infty}, \chi)=  |D_{E}|^{1/2}\tZ^{\rm [YZZ]}(\phi^{\infty}, \chi)\end{align}
 if $\tZ^{\rm [YZZ]}(\phi^{\infty}, \chi)$
 is the function denoted by $\tZ(\chi, \phi)$ in \cite[\S\S\ 3.6.4, 5.1.2]{yzz}.

\subsection{Arithmetic theta lifting and kernel identity}\label{sec:KI}  Similarly to  \cite{yzz}, we conclude this section  by reducing  our main theorem to the form in which we will prove, namely as an identity between two kernel functions. The fundamental  ingredient is the following theorem of Yuan--Zhang--Zhang.
\begin{theo}[Arithmetic theta lifting]\label{atl} Let $\sigma_{A}^{\infty}$ be the $M$-rational automorphic representation of $\GL_{2}(\A)$ attached to $A$.
For any $\vphi^{}\in \sigma^{\infty}$, we have 
$$(\vphi,\tZ(\phi^{\infty}))_{\sigma^{\infty}}={\rm T}_{\rm alg}(\theta(\vphi, \phi^{\infty})).$$
in $\Hom(J, J^{\vee})\otimes M$.

For any $\vphi^{\infty}\in \sigma^{\infty}$, we have 
$$(\vphi,\tZ(\phi^{\infty}))_{\sigma^{\infty}}=|D_{F}|{\rm T}_{\rm alg}(\theta(\vphi, \phi^{\infty})).$$
in $\Hom(J, J^{\vee})\otimes M$. 

Let $\iota_{\frakp}\colon M\into M_{\frakp}\subset L$. For each $\vphi^{p}\in \sigma_{A}^{p\infty}\otimes L$, completing $\vphi^{p}$ to a normalised $(\Up_{v}^{*})_{v\vert p}$-eigenform $\vphi\in\sigma\otimes L$ as before Proposition \ref{p-pet},  we have 
$$\lf(\tZ(\phi^{\infty}))= |D_{F}|  {\rm T}_{\rm alg}(\theta_{\iota_{\frakp}}(\vphi, \alpha(\vpi)^{-r}w_{r}\phi^{\infty})) $$
for any sufficiently large $r\geq \underline{1}$.

\end{theo}
\begin{proof} In the first identity, both sides in fact belong to $M(\alpha)$, and the result holds if and only if it holds after applying any embedding $\iota\colon M(\alpha)\into \C$. It is then equivalent to  \cite[Theorem 3.22]{yzz} via Proposition \ref{p-pet} and \cite[Proposition 3.16]{yzz}. The second identity follows from the first one and the properties of $\lf$.
\end{proof}

We can now rephrase the main theorem in the form of the following kernel identity.

\begin{theo}[Kernel identity]\label{yzz5} Let  $\vphi^{p}\in \sigma_{A}^{p\infty}$ and let $\phi^{p\infty}\in \bcalS({\bf V}^{p\infty}\times
\A^{p\infty,\times})$. For any  compact open subgroup $U_{T,p}=\prod_{v}U_{T,v}\subset 1+(\prod_{v\vert p}\vpi_{v}) \OO_{E,p}$ such that $\chi_{p}|_{U_{T,p}}=1$, let $\phi^{\infty}=\phi^{p\infty}\phi_{p,U_{T,p}}$ where $\phi_{p , U_{T,p}}=\otimes_{v\vert p}\phi_{v, U_{T,v}}$ with 
$$\phi_{v, U_{T,v}}(x,u)= \delta_{1, U_{T,v}\cap {\bf V}_{1}}(x_{1})\one_{\OO_{{\bf V}_{2}}}(x_{2})\one_{d_{v}^{-1}\OO_{F,v}^{\times}}$$
for $\delta_{1, U_{T,v}}$  as in \eqref{phi1}.

Suppose that all primes $v\vert p$ split in $E$. Then we have
$$\lf({\rm d}_{F}\calI(\phi^{p\infty}; \chi))= {2 |D_{F}|  L_{(p)}(1, \eta)}\cdot   \lf(\tZ(\phi^{\infty}, \chi)).$$
\end{theo}
The proof will occupy  most of the rest of the paper (cf. the very end of \S\ref{dec gk} below).

\begin{prop}\label{sufficient} If Theorem \ref{yzz5} is true for some $(\vphi^{p}, \phi^{p\infty})$ such that for all $v\nmid p\infty$ the local integral $R_{v}(W_{v}, \phi_{v}, \chi_{v})\neq 0$, then it is true for all $(W^{p}, \phi^{p\infty})$, and Theorem \ref{B} is true for all $f_{1}\in \pi$,  $f_{2}\in\pi^{\vee}$.
\end{prop}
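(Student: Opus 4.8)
The plan is to deduce both the kernel identity for arbitrary data and Theorem~\ref{B} for arbitrary $f_1, f_2$ from the special case by the multiplicity one principle, exactly in the spirit of \cite[\S5.2]{yzz}. The key observation is that every quantity appearing in the kernel identity of Theorem~\ref{yzz5}, and in Theorem~\ref{B}, is — after fixing the local data at $p$ and $\infty$ as prescribed — a \emph{functional} of $(\vphi^p, \phi^{p\infty})$, or equivalently (via the Shimizu lift $\theta$ of \S\ref{shim}) a functional of $(f_1, f_2) = \theta(\vphi\otimes\phi)$, lying in the one-dimensional space $\mathrm{H}(\pi,\chi)\otimes\mathrm{H}(\pi^\vee,\chi^{-1})$ guaranteed by Tunnell--Saito. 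Concretely: $\lf(\mathrm{d}_F\calI(\phi^{p\infty};\chi))$ is, by Theorem~\ref{theo A text} together with Lemma~\ref{Qtheta}, a nonzero multiple of $\mathrm{d}_F L_{p,\alpha}(\sigma_{A,E})(\chi)\cdot\prod_{v\nmid p\infty}Q_v(\theta_{\psi,v}(W_v,\phi_v),\chi_v)$ (times the fixed local factors at $p$), while $\lf(\tZ(\phi^\infty,\chi))$ is, by Theorem~\ref{atl} and Lemma~\ref{ht on J}, a nonzero multiple of $\langle P(f_1,\chi), P^\vee(f_2,\chi^{-1})\rangle$ divided by the same fixed $p$-factors. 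Both are therefore of the form $(\text{constant independent of }f_i)\times Q(f_1,f_2,\chi)$, once we know $Q(f_1,f_2,\chi)=\prod_v Q_v$ factors with the prescribed $Q_v$ at $p$ and $\infty$.

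First I would make this factorisation precise: fix the additive character $\psi$ and the Whittaker functional at $p$ used to define $\lf$; fix $\vphi_{p}$ as in \eqref{kir-ord} and $\phi_{p,U_{T,p}}$, $\vphi_\infty$, $\phi_\infty$ as prescribed, so that $\theta_{\psi,v}(\vphi_v,\phi_v)=:f_{i,v}$ for $v\mid p\infty$ is a fixed vector. By Lemma~\ref{arch-int} the archimedean local integrals are $1/2$, matching the archimedean $Q_v=1/\pi$ up to the explicit constants already absorbed into $c_E$; at $p$, the identification $R^{\natural}_{r,v}=Z^\circ_v$ (Proposition~\ref{interpolation factor}) together with the Appendix computation of $Q_v(\theta(\vphi_v\otimes\phi'_v),\chi_v)$ gives the compatibility $Q_v(f_{1,v},f_{2,v},\chi)=\zeta_{F,v}(2)^{-1}Z_v(\chi_v)$ up to the factor $L(1,\eta_v)$ flagged in the outline. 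Then Theorem~\ref{yzz5} for one pair $(\vphi^p,\phi^{p\infty})$ with $\prod_{v\nmid p\infty}R_v(W_v,\phi_v,\chi_v)\neq0$ (equivalently $\prod_{v\nmid p\infty}Q_v(\theta_{\psi,v}(W_v,\phi_v),\chi_v)\neq0$, equivalently $Q(f_1,f_2,\chi)\neq0$) pins down the proportionality constant $\mathscr{L}$ relating the two sides; since both sides are genuine elements of the same one-dimensional space $\mathrm{H}(\pi,\chi)\otimes_{L(\chi)}\mathrm{H}(\pi^\vee,\chi^{-1})\otimes(\mathscr{N}^*_{\Y/\Y'})_\chi$, the identity for this constant forces the identity for all $(W^p,\phi^{p\infty})$, hence Theorem~\ref{yzz5} in general, and then by running the chain of equalities backwards, Theorem~\ref{B} for all $f_1\in\pi$, $f_2\in\pi^\vee$.

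The one genuine point to verify is that such a pair $(\vphi^p,\phi^{p\infty})$ with all $R_v(W_v,\phi_v,\chi_v)\neq0$ \emph{exists}: this is a purely local problem, solved place by place as in \cite{yzz} using the non-vanishing of the local functional $Q_v(\cdot,\chi_v)$ on $\pi_v\otimes\pi_v^\vee$ — which holds precisely because $\B$ was chosen so that \eqref{local cond} is satisfied at every $v$ — together with Lemma~\ref{Qtheta} relating $R_v$ to $Q_v$; for almost all $v$ the unramified computation of Lemma~\ref{3.6.2} gives value $1$, so only finitely many conditions are imposed and they can be met simultaneously. I expect this existence/local-nonvanishing step to be essentially routine given the Tunnell--Saito input already invoked. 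The main obstacle, rather, is bookkeeping: ensuring that the normalising constants ($|D_F|$, $|D_E|^{1/2}$, $\zeta_{F,p}(2)$, the $L(1,\eta_v)$ discrepancy at $v\mid p$, the factors $c_{U^p}$, and the precise powers of $2$ and $\pi$) match up exactly between the kernel identity of Theorem~\ref{yzz5}, the interpolation formula of Theorem~\ref{theo A text}, and the statement of Theorem~\ref{B} with its constant $c_E/2$. Since Theorem~\ref{yzz5} itself is only being reduced to here (its proof occupies \S\S\ref{sec: expand an}--\ref{lsp}), the present proposition is exactly the formal ``multiplicity one'' packaging step, and I would present it as such.
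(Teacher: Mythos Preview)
Your proposal is correct and follows essentially the same approach as the paper's proof: both sides are elements of the one-dimensional space ${\rm H}(\pi,\chi)\otimes{\rm H}(\pi^\vee,\chi^{-1})$ tensored with the conormal fibre, so a single nonvanishing test case determines the proportionality; the analytic side is unwound via \eqref{def plf} and Lemma~\ref{Qtheta}, the geometric side via Theorem~\ref{atl} (and the computation of \cite[\S 3.6.4]{yzz}), and the local comparison at $v\mid p$ is exactly Proposition~\ref{Qv vs}.

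Two minor remarks. First, the existence of a pair with all $R_v\neq 0$ is \emph{not} part of this proposition --- it is a hypothesis here, established separately as Lemma~\ref{sufficient2}; you should not fold it into the proof. Second, the paper makes explicit one point you glide over: when differentiating $\lf(\calI(\phi^{p\infty};\chi'))$ along $\Y'$ at $\chi$, the Leibniz rule produces no extra term because $\calI(\phi^{p\infty};\chi)=0$ (Proposition~\ref{6.1-2-3}.\ref{6.2}), which is what allows one to write $\lf({\rm d}_F\calI(\phi^{p\infty};\chi))={\rm d}_F L_{p,\alpha}(\sigma_E)(\chi)\cdot\prod_{v\nmid p\infty}\mathscr{R}_v^\natural(W_v,\phi_v;\chi_v)$ cleanly. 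Otherwise your bookkeeping summary matches the paper's.
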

\begin{proof} 
Consider the identity
\begin{align}\label{3.15}
\langle {\rm T}_{\rm alg, \iota_{\frakp}}(f_{1}\otimes f_{2}) P_{\chi}, P_{{\chi}^{-1}}\rangle_{J}
={      \zeta_{F}^{\infty}(2)\over 2 (\pi^{2}/2)^{[F:\Q]}  |D_{E}|^{1/2}  L(1, \eta)} 
\prod_{v|p}Z_{v}^{\circ}( \alpha_{v}, \chi_{v})^{-1} 
\cdot{\rm d}_{F} L_{p, \alpha}(\sigma_{A,E})( \chi)\cdot Q(f_{1}, f_{2}, \chi)
\end{align}
where $\iota_{\frakp}\colon M\into L(\chi)$, 
and
we set
$$P_{\chi} =\dashint_{[T]} {\rm T}_{t}(P-\xi_{P})\chi(t)\, dt \in J(\baar{F})_{L(\chi)}.$$
The identity \eqref{3.15} is equivalent to Theorem \ref{B} by Lemma \ref{ht on J}, but it has the advantage of making sense,  by linearity, for  any element of $\pi\otimes\pi^{\vee}$. By the multiplicity one result,  it suffices to prove it for a single element of this space which is not annihilated by the functional $Q(\cdot, \chi)$. Such element will arise as a Shimizu lift. (The similar assertion on the validity of Theorem \ref{yzz5}  for all $(\vphi^{p}, \phi^{p\infty})$ follows from the uniqueness of  the Shimizu lifting.)

By  \eqref{def plf}, we can write
$$\lf({\rm d}_{F}\calI(\phi^{p\infty}; \chi))=
 {\rm d}_{F}L_{p, \alpha}(\sigma_{E})(\chi)
 \prod_{v\nmid p \infty} \mathscr{R}_{v}^{\natural}(W_{v}, \phi_{v}; \chi_{v})$$
(note that as the functional $\lf$ is bounded, we can interchange it with the differentiation;  the fact that the Leibniz rule does not introduce other terms follows from the vanishing of $\calI(\phi^{p\infty}; \chi)$, which will be shown in Proposition \ref{6.1-2-3}.\ref{6.2} below). By Lemma \ref{Qtheta}, this equals
$$
{ |D_{F}|^{3/2}  \zeta_{F}^{\infty}(2)  \over (\pi^{2}/2)^{[F:\Q]} }
\prod_{v\vert p}
 Q_{v}(\theta_{v}(W_{v}, \alpha(\vpi_{v})^{-r_{v}}_{v} w_{r,v}^{-1}\phi_{v}),\chi_{v})^{-1}
\cdot {\rm d}_{F}L_{p, \alpha}(\sigma_{E})(\chi)
\cdot  Q(\theta_{\iota_{\frakp}}(\vphi,\alpha(\vpi)^{-r}w_{r}^{-1}\phi), \chi).$$

For the geometric kernel, by Theorem \ref{atl} and the calculation of \cite[\S 3.6.4]{yzz},
we have 
\begin{align*}
 \lf(\tZ(\phi^{\infty}, \chi))=2 |D_{F}|^{1/2} |D_{E}|^{1/2}    L(1, \eta)\langle {\rm T}_{\rm alg, \iota_{\frakp}}(\theta_{\iota_{\frakp}}(\vphi,\alpha(\vpi)^{-r}w_{r}^{-1}\phi)) P_{\chi}, P_{\chi}^{-1}\rangle.
 \end{align*}
 Then \eqref{3.15} follows from Theorem \ref{yzz5} provided we show that, for all $v\vert p$, 
$$ Q_{v}(\theta_{v}(W_{v}, \alpha_{v}(\vpi_{v})^{-r_{v}} w_{r,v}^{-1}\phi_{v}),\chi_{v}) = L(1, \eta_{v})^{-1} \cdot Z^{\circ}_{v}( \chi_{v}).$$
This is proved by explicit computation in Proposition \ref{Qv vs}.
\end{proof}

\section{Local assumptions}\label{local ass}
We list here the local assumptions which simplify the computations, while implying the desired identity in general. We recall on the other hand the essential assumption, valid until the end of this paper, that all primes $v\vert p$ split in $E$. 

 Let $S_{F}$ be the set of \emph{finite} places of $F$. We partition it as 
$$S_{F}=S_{\text{non-split}}\cup S_{\rm split}$$
with the obvious meaning according to the behaviour in $E$, and further as 
$$S_{F}=S_{p}\cup S_{1}\cup S_{2} \cup (S_{\text{non-split}}- S_{1})\cup (S_{\rm split}-S_{p}-S_{2}), $$
where:
\begin{itemize}
\item $S_{p}\subset S_{\textup{split}}$ is the set of places above $p$;
\item $S_{1}$ is a finite subset of $S_{\text{non-split}}$ containing all places where $E/F$ or $F/\Q$ is ramified, or $\sigma$ is not an unramified principal series,  or $\chi$ is ramified, or $\B$ is ramified; we assume that $|S_{1}|\geq 2$;
\item $S_{2}$ consists of two places in $S_{\rm split}-S_{p}$ at which $\sigma$ and $\chi$ are unramified.
\end{itemize}

We further denote by $S_{\infty}$ the set of archimedean places of $F$.

\subsection{Assumptions away from $p$}\label{local ass away p}
Consider the following assumptions from \cite[\S 5.2]{yzz}.

\begin{enonce}{Assumption}[\emph{cf. }{\cite[Assumption 5.2]{yzz}}]\label{yzz5.2} The Schwartz function $\phi=\otimes \phi_{v}\in \bcalS(\B\times \A^{\times})$ is a pure tensor, $\phi_{v}$ is standard for any $v\in S_{\infty}$, and $\phi_{v}$ has values in $\Q$ for any $v\in S_{F}$.
\end{enonce}

\begin{enonce}{Assumption}[{\cite[Assumption 5.3]{yzz}}]\label{yzz5.3} For all $v\in S_{1}$, $\phi_{v}$ satisfies
$$\phi_{v}(x, u)=0 \quad  \text{if }\ v(uq(x))\geq -v(d_{v})\ \text{or}\ v(uq(x_{2}))\geq -v(d_{v}).$$
\end{enonce}

\begin{enonce}{Assumption}[{\cite[Assumption 5.4]{yzz}}]\label{yzz5.4} For all $v\in S_{2}$, $\phi_{v}$ satisfies
$$r(g)\phi_{v}(0, u)=0 \qquad \text{for all}\ g \in \GL_{2}(F_{v}), u\in F_{v}^{\times}.$$
\end{enonce}
See \cite[Lemma 5.10]{yzz} for an equivalent condition.

\begin{enonce}{Assumption}[{\cite[Assumption 5.5]{yzz}}]\label{yzz5.5} For all $v\in S_{\textup{non-split}}-S_{1}$, $\phi_{v}$ is the standard Schwartz function $\phi_{v}(x, u)=\one_{\OO_{\B_{v}}}(x)\one_{d_{v}^{-1}\OO_{F,v}^{\times}}(u)$.
\end{enonce}

\begin{enonce}{Assumption}[\emph{cf.} {\cite[Assumption 5.6]{yzz}}]\label{yzz5.6} The open compact subgroup $U^{p}=\prod_{v\nmid p}U_{v}\subset \B(\A^{p\infty})$ satisfies the following:
\begin{enumerate}[label={\roman*.}]
\item\label{(i)} $U_{v}$ is of the form $(1+\vpi^{r}_{v}\OO_{\B_{v}})^{\times}$ for some $r\geq 0$;
\item\label{(ii)} $\chi$ is invariant under $U_{T}^{p}:=U^{p}\cap T(\A^{p\infty})$;
\item\label{(iii)} $\phi$ is invariant under $K^{p}=U^{p}\times U^{p}$;
\item\label{(iv)} $U_{v}$ is maximal for all $v\in S_{\textup{non-split}}-S_{1}$ and all $v\in S_{2}$;
\item $U^{p}U_{0,p}$ does not contain $-1$;
\item $U^{p}U_{0,p}$ is sufficiently small so that each connected component of the complex points of the Shimura curve $X_{U}$ is an unramified quotient of $\mathfrak{H}$ under the complex uniformisation.
\end{enumerate}
Here we have denoted by $U_{0,p}\subset \B^{\times}_{p}$ the maximal compact  subgroup.
\end{enonce}

See \cite[\S 5.2.1]{yzz} for an introductory discussion of the effect of those assumptions.

\begin{lemm}\label{sufficient2}
 For each  $v\nmid p\infty$, there exist $W_{v}\in \sigma_{v}$ and a Schwartz function $\phi_{v}$ satisfying all of  Assumptions \ref{yzz5.2}--\ref{yzz5.6} such that
$$R_{v}^{\natural}(W_{v}, \phi_{v})\neq 0.$$
For all but finitely many places $v$, we can take $W_{v}$ to be an unramified vector and $\phi_{v}$ to be the standard Schwartz function.
\end{lemm}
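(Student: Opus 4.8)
The statement is a purely local non-vanishing assertion, to be proved separately at each finite place $v\nmid p\infty$. Recall that by Lemma~\ref{Qtheta} we have the identification $\mathscr{R}_v^{\natural}(W_v,\phi_v,\chi_v) = Q_v(\theta_{\psi,v}(W_v,\phi_v),\chi_v)$, where $\theta_{\psi,v}$ is the Shimizu lifting, surjective onto $\pi_v\otimes\pi_v^{\vee}$ with $\pi_v$ the Jacquet--Langlands transfer of $\sigma_v$ to $\B_v^{\times}$. Since $v\notin S_1$, the quaternion algebra $\B_v$ is split and $\pi_v\cong\sigma_v$; since $v\notin S_1$, also $\sigma_v$ is an unramified principal series (unless $v\in S_2$, where it is still unramified). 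By the construction of $\B$ from the local conditions \eqref{local cond} and the Tunnell--Saito multiplicity one result recalled in \S\ref{hm1}, the functional $Q_v(\cdot,\chi_v)$ on $\pi_v\otimes\pi_v^{\vee}$ is \emph{non-zero}. Therefore there exist $f_{1,v}\otimes f_{2,v}$ with $Q_v(f_{1,v},f_{2,v},\chi_v)\neq0$; pulling back through the surjection $\theta_{\psi,v}$, there exist $W_v$ and $\phi_v$ with $R_v^{\natural}(W_v,\phi_v)\neq0$. This already proves the existence of \emph{some} data with nonvanishing local integral; the real content of the lemma is that the data can be chosen to satisfy Assumptions~\ref{yzz5.2}--\ref{yzz5.6} simultaneously.

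First I would dispose of the places outside the relevant exceptional sets. For $v\notin S_1\cup S_2$ with everything unramified, I take $W_v$ the normalised new (spherical) vector and $\phi_v$ the standard Schwartz function; then Assumptions~\ref{yzz5.2}, \ref{yzz5.5}, \ref{yzz5.6} hold automatically, and Lemma~\ref{3.6.2} gives $R_v^{\circ}(W_v,\phi_v,\chi_v')=L(1/2,\sigma_{E,v}\otimes\chi_v')/(\zeta_{F,v}(2)L(1,\eta_v))$, whence $R_v^{\natural}(W_v,\phi_v) = 1 \neq 0$ identically. So for all but finitely many $v$ the statement holds with the spherical choice, and Assumptions \ref{yzz5.2}--\ref{yzz5.6} impose nothing further.

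Next, for $v\in S_2$ (two split places where $\sigma$, $\chi$ are unramified), I need $\phi_v$ to satisfy the extra vanishing condition of Assumption~\ref{yzz5.4}: $r(g)\phi_v(0,u)=0$ for all $g,u$. This is the same construction as in \cite[Lemma 5.10, \S5.5]{yzz}: since $v$ is split in $E$, one chooses $\phi_v = \phi_{1,v}\phi_{2,v}$ with $\phi_{2,v}$ a Schwartz function on ${\bf V}_{2,v}\cong F_v\oplus F_v$ whose value at $0$ and whose partial Fourier transform's value at $0$ both vanish (e.g. a suitable difference of characteristic functions of cosets); because $\B_v$ is split and $E_v/F_v$ split, one then checks that the local integral $R_v^{\natural}$ remains nonzero for an appropriate companion choice of $W_v$ and $\phi_{1,v}$ by a direct Rankin--Selberg unfolding exactly as in \emph{loc.\ cit.}. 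For $v\in S_{\text{non-split}}-S_1$ one simply takes $\phi_v$ standard as dictated by Assumption~\ref{yzz5.5} and $W_v$ spherical, and invokes Lemma~\ref{3.6.2} again (the relevant local $L$-factor is a nonzero number, so $R_v^{\natural}\ne0$).

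The main obstacle is the case $v\in S_1$, where one must satisfy the restrictive support condition of Assumption~\ref{yzz5.3} --- $\phi_v(x,u)=0$ whenever $v(uq(x))\geq -v(d_v)$ or $v(uq(x_2))\geq -v(d_v)$ --- \emph{while} keeping $R_v^{\natural}(W_v,\phi_v)\neq0$, and with $\phi_v$ rational-valued (Assumption~\ref{yzz5.2}) and $K$-invariant for $U_v=(1+\vpi_v^r\OO_{\B_v})^{\times}$ (Assumption~\ref{yzz5.6}). Here I would follow the strategy of \cite[\S\S5.3--5.4]{yzz}: the support condition corresponds under the Weil representation to annihilating a finite-codimension piece of the Schwartz space, and one argues that the functional $\phi_v\mapsto R_v^{\natural}(W_v,\phi_v,\chi_v)$, being essentially a toric matrix-coefficient integral $Q_v$ composed with the Shimizu lift $\theta_{\psi,v}$, does not vanish identically on the subspace of Schwartz functions cut out by Assumption~\ref{yzz5.3}. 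Concretely, by the non-vanishing of $Q_v(\cdot,\chi_v)$ one picks $f_{1,v}\otimes f_{2,v}$ achieving a nonzero value; since the Shimizu lift $\theta_{\psi,v}$ restricted to the Assumption~\ref{yzz5.3}-subspace is still surjective onto $\pi_v\otimes\pi_v^{\vee}$ (this is the key point inherited from \cite[Lemma 5.7, Prop.\ 5.8]{yzz}: the deleted part of the Schwartz space maps into a proper subspace, or more precisely the toric integral is insensitive to it), one lifts back to get the desired $(W_v,\phi_v)$; finally rationality and $K$-invariance are arranged by averaging $\phi_v$ over $U_v\times U_v$ and rescaling, which does not destroy the nonvanishing since $W_v$ and $\chi_v$ are already $U_v$-invariant. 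I expect the bookkeeping here --- tracking exactly which Schwartz functions are allowed and verifying surjectivity of the restricted Shimizu lift --- to be the only genuinely delicate part, and it can be imported essentially verbatim from \cite[\S5]{yzz} since our $\phi_{p\infty}$, $W_{p\infty}$ play no role at places $v\nmid p\infty$.
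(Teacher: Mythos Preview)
Your proposal is correct and takes essentially the same approach as the paper, which simply cites \cite[Proposition 5.8]{yzz} for the existence of the pairs $(W_v,\phi_v)$ and Lemma~\ref{3.6.2} for the unramified assertion; you have usefully expanded the sketch of what happens in each case ($S_1$, $S_2$, $S_{\textup{non-split}}-S_1$, and the generic unramified places). One small organisational slip: in your opening paragraph you write ``since $v\notin S_1$'' while ostensibly discussing the general case, which is confusing since the lemma covers all $v\nmid p\infty$ including $S_1$; but as you then treat $v\in S_1$ separately and correctly identify the restricted-Shimizu-surjectivity as the delicate point borrowed from \cite[\S5]{yzz}, this does not affect the substance.
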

\begin{proof}  The existence of the sought-for pairs $(W_{v}, \phi_{v})$ is proved in \cite[Proposition 5.8]{yzz}. The second assertion follows from the unramified calculation Lemma \ref{3.6.2}.
\end{proof}

\subsection{Assumptions at  $p$}\label{local ass at p}
We make some further  assumptions at the places $v\in S_{p}$. After stating the restrictions on $\phi_{v}$ and $U_{v}$, we will impose at the end of this section some restrictions on $\chi_{v}$ and on choices of a $p$-adic logarithm. 

 Concerning $(\phi_{v}, U_{v})$, we need two conditions. On  the one hand, that the centre of the open compact subgroup $U_{v}$ is sufficiently large so that, roughly speaking, for \emph{all but finitely many} characters $\chi$, no nonzero vector in a $T(F_{v})$-representation can be both  $\chi_{v}$-isotypic and invariant under $U_{T,v}:=U\cap T(F_{v})$; we will apply this in particular for the space generated by the Hodge classes on $X_{U}$. On the other hand, we need $\phi_{v}$ to be sufficiently close (a condition depending on $\chi_{v}$) to a Dirac delta, so as to match the Schwartz functions used in the construction  of the $p$-adic analytic kernel. As stated, this is apparently incompatible with the previous condition.  However, as $\chi_{v}|_{F_{v}^{\times}}=\omega^{-1}_{v}$ (fixed), an agreeable compromise can be found. 
We therefore state two distinct assumptions; while we will eventually work with the second assumption (the ``compromise''), it will be  convenient  to  reduce some proofs to the situation of the first one.

\begin{enonce}{Assumption}\label{ass at p1} For each $v\in S_{p}$, $U_{v}$ and $\phi_{v}$ satisfy the following:
\begin{enumerate}[label={\roman*.}]
\item\label{r11} the subgroup ${U}_{v}=1+\vpi_{v}^{r_{v}}\OO_{\B_{v}}$  for some $r_{v}\geq 1$, 
\item\label{r22} $\chi_{v}$ is invariant under $U_{T,v}$ 
\item $\alpha_{v}$ is invariant under $q(U_{v})$;
\item the Schwartz function is
$$\phi_{v}(x,u)= \delta_{1, U_{T,v}}(x_{1})\one_{\OO_{{\bf V}_{2}}}(x_{2})\delta_{q(U)}(u)$$
where, as in { \eqref{phi1}},
$$\delta_{1, U_{T,v}}(x_{1}):= {\vol(E_{v})\over \vol(U_{T,v}  )} \one_{U_{T,v}}(x_{1}) $$
and  
$\delta_{q(U)}(u) ={ \vol(\OO_{F}^{\times})\over \vol(q(U))}\one_{q(U)}(u).$
\end{enumerate}
\end{enonce}

\begin{enonce}{Assumption}\label{ass at p2} For each $v\in S_{p}$,  $U_{v}$ and $\phi_{v}$ satisfy: 
\begin{enumerate}[label={\roman*.}]
\item  $U_{v}=U_{F,v}^{\circ}\wtil{U}_{v}$ with $U_{F,v}^{\circ}= (1+\vpi_{v}^{n_{v}}\OO_{F,v})^{\times}\subset Z(F_{v})\subset \B_{v}^{\times}$ for some $n_{v}\geq 1$, and
  $\wtil{U}_{v}=1+\vpi_{v}^{r_{v}}\OO_{\B,v}$ satisfies \ref{r11}-\ref{r22}  of Assumption \ref{ass at p1}; 
\item\label{romeg} $\omega_{v}$ is invariant under $U_{F,v}^{\circ}$;
\item  $q(U_{v})\subset (U_{F,v}^{\circ})^{2}$; 
\item $\alpha_{v}$ is invariant under $(U_{F,v}^{\circ})^{2}$;
\item the Schwartz function $\phi_{v}$ is 
\begin{align}\label{phi ass2}
\phi_{v}:= \dashint_{\OO_{F,v}^{\times}} \omega_{v}(z) r((z,1))\wtil{\phi}_{v}\, dz
\end{align}
where $\wtil{\phi}_{v}$ is as in Assumption \ref{ass at p1} for ${\wtil{U}_{v}}$;
\end{enumerate}
\end{enonce}

\begin{rema} By \ref{romeg}, the function $\phi_{v}$  in Assumption \ref{ass at p2} is invariant under $K_{v}=U_{v}\times U_{v}$.  The subgroup $U_{F,v}^{\circ}$ in Assumption \ref{ass at p2} can be chosen \emph{independently of $\chi$}.
\end{rema}

In view of the previous remark, we can introduce the following assumption after fixing  $U_{F,v}^{\circ}$.

\begin{enonce}{Assumption}\label{ass on chi} The character $\chi$ is \emph{not} invariant under $V_{p}^{\circ}:=\prod_{v\vert p}q^{-1}(U_{F,v}^{\circ})\subset \OO_{E,p}^{\times}$. 
\end{enonce}

\begin{lemm} The set of finite order characters $\chi\in\Y$ which do not satisfy Assumption \ref{ass on chi}  is finite.
\end{lemm}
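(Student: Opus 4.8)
The claim is that only finitely many finite order characters $\chi\in\Y$ fail to satisfy Assumption \ref{ass on chi}, i.e.\ fail to be non-invariant under the fixed compact open subgroup $V_p^{\circ}=\prod_{v\mid p}q^{-1}(U_{F,v}^{\circ})\subset\widehat\OO_{E,p}^{\times}$. The plan is to observe that a character $\chi$ of $\Gamma=E_{\A^{\infty}}^{\times}/\overline{E^{\times}V^{p}}$ is invariant under $V_p^{\circ}$ if and only if it factors through the quotient group $\Gamma/V_p^{\circ}$ (where we regard $V_p^{\circ}$ as a subgroup of $\Gamma$ via the natural map from $\widehat\OO_{E,p}^{\times}$). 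So the set of $\chi$ violating the assumption is exactly the set of finite order characters of $\Gamma/\overline{V_p^{\circ}}$ satisfying the central condition $\omega\cdot\chi|_{\A^{\infty,\times}}=1$; these are precisely the geometric points of the ind-finite $M$-scheme $\Y^{\rm l.c.}$ that moreover lie in the closed subspace cut out by triviality on $V_p^{\circ}$.

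First I would note that $\Gamma/\overline{V_p^{\circ}}$ is a \emph{compact} group: indeed $\Gamma$ is already compact modulo the rigid-space-level structure—more precisely, since $V^p$ is compact open in $\widehat\OO_E^{p,\times}$ and we have further quotiented by $V_p^{\circ}$ which is compact open in $\widehat\OO_{E,p}^{\times}$, the group $\widehat\OO_{E}^{\times}/(V^p V_p^{\circ})$ is finite, and by finiteness of the class number of $E$ the quotient $E_{\A^{\infty}}^{\times}/\overline{E^{\times}V^p V_p^{\circ}}$ is an extension of a finite group by a finitely generated abelian group of rank equal to the rank of the global units, which is finite. Hence $\Gamma/\overline{V_p^{\circ}}$ is a finitely generated abelian group (times a finite group), and its group of finite order characters valued in $\bmu_M$ is the set of torsion points of a product of finitely many copies of $\mathbf G_m$ with some finite factors; this is an ind-finite scheme all of whose geometric points of bounded order form a finite set—but we need the full torsion subgroup to be finite, which holds precisely because the free rank has been killed. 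Concretely: a finite order character of $\Z^r\times(\text{finite})$ valued in roots of unity that is required to be trivial on a finite-index subgroup $V_p^{\circ}$ has order bounded by the exponent of the finite quotient, hence there are only finitely many. Equivalently, the closed sub-ind-scheme of $\Y^{\rm l.c.}$ where $\chi|_{V_p^{\circ}}=\one$ is a finite scheme over $M$ (not merely ind-finite), because imposing triviality on an open subgroup of a profinite-times-lattice group cuts down to characters of a finite group.

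Therefore the main content of the argument is simply: (1) identify the bad locus with the set of $M$-valued finite order characters of the quotient $\Gamma/\overline{V_p^{\circ}}$ compatible with the fixed central character $\omega$; (2) show $\Gamma/\overline{V_p^{\circ}}$ has finite torsion character group, using that $V_p^{\circ}$ has finite index in $\widehat\OO_{E,p}^{\times}$, that $V^p$ is compact open, and that $E^{\times}\backslash E_{\A^{\infty}}^{\times}/(\text{compact open})$ is finite (finiteness of the ray class group). I do not expect any serious obstacle here; the only mild point of care is to keep straight that $V_p^{\circ}$ is being viewed as a subgroup of $\Gamma$ (so one must check its image is closed, which is automatic since it is the continuous image of a compact group). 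I would phrase the proof in one short paragraph, citing finiteness of generalized ideal class groups of $E$ as the input, with no calculation needed beyond noting that the resulting character group is that of a finite abelian group.
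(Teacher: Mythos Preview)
Your approach is essentially the same as the paper's: identify the characters violating the assumption as exactly those factoring through $E_{\A^{\infty}}^{\times}/\overline{E^{\times}V^{p}V_{p}^{\circ}}$, and observe that this group is finite. The paper does precisely this in two lines, invoking nothing more than the finiteness of (generalized) ideal class groups.

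Your write-up, however, is unnecessarily convoluted and contains a confused intermediate claim. You assert that the quotient $E_{\A^{\infty}}^{\times}/\overline{E^{\times}V^{p}V_{p}^{\circ}}$ is ``an extension of a finite group by a finitely generated abelian group of rank equal to the rank of the global units''; this is not right. Since $V^{p}$ is compact open in $\widehat{\OO}_{E}^{p,\times}$ and $V_{p}^{\circ}$ is compact open in $\OO_{E,p}^{\times}$, the product $V^{p}V_{p}^{\circ}$ is compact open in $\widehat{\OO}_{E}^{\times}$, hence open in $E_{\A^{\infty}}^{\times}$; the quotient $E_{\A^{\infty}}^{\times}/(E^{\times}V^{p}V_{p}^{\circ})$ is then a ray class group of $E$, which is simply \emph{finite} --- there is no free part to worry about. (The global units appear in the \emph{denominator} via $E^{\times}\cap\widehat{\OO}_{E}^{\times}=\OO_{E}^{\times}$, not as a free summand of the quotient.) Consequently, all the subsequent discussion of $\Z^{r}\times(\text{finite})$, torsion points of $\mathbf{G}_{m}^{r}$, and ``the free rank has been killed'' is superfluous: the group is already finite, so its character group is finite, and you are done. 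Strip the argument down to that one observation and it matches the paper's proof exactly.
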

\begin{proof} Recall that by definition $\Y=\Y_{\omega}(V^{p})$ parametrises some $V^{p}$-invariant characters for the  open compact subgroup $V^{p}\subset E_{\A^{p\infty}}^{\times}$ fixed (arbitrarily) in the Introduction. Then a character $\chi$ as in the Lemma factors through 
$$E_{\A^{\infty}}^{\times}/\baar{E^{\times}V^{p}V^{\circ}_{p}  },$$
a finite group.
\end{proof}

\paragraph{$p$-adic logarithms} Recall that a $p$-adic logarithm valued in a finite extension  $L$ of  $\Q_{p}$ is a continuous homomorphism
$$\ell\colon \Gamma_{F}\to L;$$
we call it \emph{ramified} if for all $v\vert p$ the restriction $\ell_{v}:=\ell|_{F_{v}^{\times}}$ is ramified, i.e. nontrivial on $\OO_{F,v}^{\times}$.  
\begin{lemm}\label{asslog} For any finite extension $L$ of $\Q_{p}$, the vector space of continuous homomorphisms $\Hom(\Gamma_{F}, L)$ admits a basis consisting of ramified logarithms.
\end{lemm}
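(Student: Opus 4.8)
\textbf{Proof plan for Lemma \ref{asslog}.}
The plan is to exploit the elementary structure theory of the profinite group $\Gamma_F = \A^{\infty,\times}/\overline{F^\times \widehat{\OO}_F^{p,\times}}$, whose group of continuous $L$-valued characters is a finite-dimensional $\Q_p$-vector space (of dimension $1+\delta$, by the discussion in \S\ref{intro-plF}), and then observe that the ramified logarithms are precisely the complement of finitely many proper subspaces, so that they span. First I would recall that $\Hom(\Gamma_F, L)$ is naturally an $L$-vector space; writing $d = 1+\delta$ for its $\Q_p$-dimension, it suffices to produce a basis, or even just a spanning set, lying in the ramified locus. For each place $v\vmid p$, the restriction map $\ell \mapsto \ell_v = \ell|_{\OO_{F,v}^\times}$ is $L$-linear, and the \emph{unramified-at-$v$} logarithms form its kernel $H_v := \{\ell : \ell_v = 0\}$, a proper $L$-subspace: it is proper because the composite $\OO_{F,v}^\times \hookrightarrow \A^{\infty,\times} \to \Gamma_F$ has infinite (indeed open, up to the closure of global units) image, so some continuous character of $\Gamma_F$ is nontrivial on it — concretely one may take the cyclotomic logarithm $\ell_{\mathrm{cyc}}$ composed with the $v$-component of the adelic norm, which is ramified at every $v\vert p$ since $\Q_{p}^{\times}$ has a nontrivial $p$-adic logarithm on its units.

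The key step is then purely linear-algebraic: a finite-dimensional vector space over the infinite field $L \supseteq \Q_p$ is never a finite union of proper subspaces. Hence $\Hom(\Gamma_F, L) \neq \bigcup_{v\vert p} H_v$, so there exists a ramified logarithm $\ell_0$ (nontrivial on $\OO_{F,v}^\times$ for every $v\vert p$ simultaneously). To upgrade from "a ramified logarithm exists" to "a basis of ramified logarithms exists," I would argue by the same avoidance principle applied repeatedly: start from any basis $\ell_1,\dots,\ell_d$ of $\Hom(\Gamma_F, L)$, and replace each $\ell_i$ by $\ell_i + c\,\ell_0$ for a scalar $c \in L$ chosen outside the finite set of "bad" values at which $\ell_i + c\ell_0$ becomes unramified at some $v$ (for each fixed $i$ and $v$, the map $c \mapsto (\ell_i+c\ell_0)_v$ is affine-linear in $c$ and nonzero for all but at most one value of $c$, since its leading term $\ell_{0,v}$ is nonzero); since $L$ is infinite we can pick a single $c$ avoiding all finitely many bad values for all $i$ and all $v\vert p$ at once, and adding a fixed nonzero multiple of a common vector to each element of a basis — in fact one checks the resulting $d$ vectors are still independent because the change-of-basis matrix, $I$ plus a rank-one perturbation with a suitable column, can be arranged invertible, or more simply one perturbs the basis vectors one at a time keeping a running check of independence. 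The resulting basis consists of ramified logarithms.

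The only mild subtlety — and the point I would be most careful about — is the claim that $H_v$ is a \emph{proper} subspace, i.e. that there genuinely exists a continuous character of $\Gamma_F$ ramified at $v$; this is where one uses that the subgroup of $\Gamma_F$ generated by $\OO_{F,v}^\times$ is not contained in any finite-index subgroup coming from the maximal away-from-$v$ part, equivalently that $\Gamma_F$ surjects onto $\Z_p$ via a logarithm detecting $\OO_{F,v}^\times$. Concretely this follows because $\Gamma_F$ contains, as a subquotient of finite index in a finite-index subgroup, the group $\OO_{F,p}^\times/\overline{(\text{global units})}$ which has positive $\Z_p$-rank (at least $1$, with equality iff Leopoldt; in any case $\geq 1$), and any $\Z_p$-quotient yields a ramified-at-each-$v\vert p$ logarithm after composing with norm maps — alternatively, and most cheaply, the cyclotomic logarithm itself already works, so no deep input is needed. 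Everything else is the standard "a vector space over an infinite field is not a finite union of proper subspaces" argument, so I do not anticipate any real obstacle.
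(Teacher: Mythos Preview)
Your proposal is correct and follows essentially the same idea as the paper: both use that the cyclotomic logarithm $\ell_{\rm cyc}=\ell_{\Q}\circ N_{F/\Q}$ is ramified at every $v\vert p$, and that perturbing an arbitrary $\ell$ by a generic scalar multiple of $\ell_{\rm cyc}$ yields a ramified logarithm. The paper's execution is a touch shorter: rather than perturbing a basis and checking independence of the perturbed vectors, it simply observes that for all but finitely many $a\in L$ the decomposition $\ell = a\,\ell_{\rm cyc} + (\ell - a\,\ell_{\rm cyc})$ expresses any $\ell$ as a sum of two ramified logarithms, so the ramified logarithms span and hence contain a basis. Your more explicit construction works fine too; just note the minor slip that $1+\delta$ is the $L$-dimension of $\Hom(\Gamma_F,L)$, not its $\Q_p$-dimension, though this is immaterial to the argument.
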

\begin{proof}  If $F=\Q$ then $\Hom(\Gamma_{\Q}, L)$ is one-dimensional with generator the cyclotomic logarithm $\ell_{\Q}$, which is ramified. For general $F$, $\ell_{\Q}\circ N_{F/\Q}\colon \Gamma_{F}\to \Gamma_{\Q}\to \Q_{p}$ is ramified (and it generates $\Hom(\Gamma_{F}, L)$ if the Leopoldt conjecture for $F$ holds). Any other logarithm $\ell$ can be written as $\ell= a\ell_{\Q}\circ N_{F/\Q} +(\ell-a \ell_{\Q}\circ N_{F/\Q})$ for any $a\in L$; for all but finitely many values of $a$, both summands are ramified.
\end{proof}

\section{Derivative  of the analytic kernel }\label{sec: expand an}

For this section, we retain all the notation of \S\S\ref{sec: eis series}-\ref{sec: an ker}, and we keep the assumption that ${\bf V}$ is incoherent.  We assume that all $v\vert p$ split in $E$. 

\subsection{Whittaker functions for the Eisenstein series} We start by studying  the incoherent Eisenstein series $\mathscr{E}$.
\begin{prop}\label{6.1-2-3} 
\begin{enumerate} 
\item\label{6.1} Let $a\in F_{v}^{\times}$.
	\begin{enumerate}
	\item If $a$ is not represented by $({\bf V}_{2,v}, uq)$ then $W_{a,v}^{\circ}(g,u, \one)=0$.
	\item\label{(b)} (Local Siegel--Weil formula.)\quad If there exists  $x_{a}\in {\bf V}_{2,v}$ such that $uq(x_{a})=a$, then 
	$$W_{a,v}^{\circ}(\smallmat y{}{}1,u, \one)= \int_{E_{v}^{1}}r(\smallmat y{}{}1, h)\phi_{2,v}(x_{a},u)\, dh$$
	\end{enumerate}
\item\label{6.3} For any $a$, $u\in F^{\times}$, there is a place $v\nmid p$ of $F$ such that $a$ is not represented by $({\bf V}_{2}, uq)$.
\item\label{6.2} For any $\phi_{2}^{p\infty}\in {\calS}({\bf V}_{2}^{p\infty}\times {\A^{p\infty, \times}})$, $u \in F^{\times}$, we have 
$$\mathscr{E}(u, \phi_{2}^{p\infty};\one)=0$$
 and consequently
 $$\mathscr{I}_{F}(\phi^{p\infty};\one)=0, \quad \mathscr{I}(\phi^{p\infty}; \chi)=0$$
 for any $\phi^{p\infty}\in \calS({\bf V}^{p\infty}\times{\A^{p\infty,\times}})$, $\chi\in \Y_{\omega}$. 
\end{enumerate}
\end{prop}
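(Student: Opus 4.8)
The plan is to establish parts \ref{6.1}, \ref{6.3}, \ref{6.2} more or less in that order, since \ref{6.2} is an immediate consequence of \ref{6.1}(b) together with \ref{6.3}. For part \ref{6.1}, I would start from the integral formula for $W_{a,r,v}^{\circ}$ recorded in Proposition \ref{eis-exp}, specialised to the incoherent local data $\phi_{2,v}=\one$ (standard Schwartz function). The vanishing statement (a) is the familiar fact that the $a$-th Whittaker coefficient of a theta-type (Siegel--Weil) Eisenstein series can only be nonzero when $a$ lies in the image of the quadratic form $uq$ on ${\bf V}_{2,v}$: concretely, in the integral $\int_{F_v}\delta_{\chi_F,v,r}(wn(b)g w_r)\,r(wn(b)g)\phi_{2,v}(0,u)\,\psi_v(-ab)\,db$ one unfolds the Weil-representation action of $wn(b)$ and sees that the $b$-integral picks out a Fourier coefficient of $x\mapsto r(g)\phi_{2,v}(x,u)$ at the point where $uq(x)=a$; if no such $x$ exists the coefficient is $0$. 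For (b), when $a=uq(x_a)$ is represented, this same unfolding together with the explicit local Siegel--Weil identity identifies $W^{\circ}_{a,v}(\smalltwomat y{}{}1,u,\one)$ with the orbital integral $\int_{E_v^1}r(\smalltwomat y{}{}1,h)\phi_{2,v}(x_a,u)\,dh$ over the norm-one torus $E_v^1=\mathbf{SO}({\bf V}_{2,v})$; this is exactly the shape of \cite[Proposition 6.10]{yzz} and the computations of \cite[\S 3.2.1]{dd} referenced in Proposition \ref{whitt-eis}, so I would either cite those directly or reproduce the short unfolding. I expect this to be routine given the references already invoked in the excerpt.

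For part \ref{6.3}, the key point is a global obstruction to representability coming from the \emph{incoherence} of ${\bf V}_2$. Fix $a,u\in F^\times$. The binary quadratic space $({\bf V}_2,uq)$ over $\A$ is incoherent precisely when it is not the adelisation of a quadratic space over $F$; equivalently, the collection of local Hasse invariants (or equivalently the collection of local quadratic characters, together with the condition at $\infty$) fails the product formula. If $a$ were represented by $({\bf V}_{2,v},uq)$ for \emph{every} finite $v$, then since ${\bf V}_{2,\infty}$ is positive definite and $a,u\in F^\times$ — here one uses that at the archimedean places the theta kernel forces a sign condition that is automatically satisfiable only for one sign of $ua$, and one reduces to that case by replacing $u$ by $-u$ if needed, noting the statement is about \emph{some} place — one would be able to glue a global point: the space $({\bf V}_2,uq)_v$ represents $a$ at all places, but a binary quadratic space represents $a$ globally iff it does so everywhere locally (this is the local-global principle for representing a single element by a binary form, i.e. a statement about the splitting of $\langle -a\rangle \oplus ({\bf V}_2,uq)$), which would force $({\bf V}_2,uq)\cong \langle a\rangle\oplus\langle a\cdot\mathrm{disc}\rangle$ to come from $F$, contradicting incoherence. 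So there must be a finite place where $a$ is not represented. I would phrase this cleanly using the description of ${\bf V}_2$ as $E$ with the norm form twisted, for which representability of $a$ by $(E_v,uN_{E/F})$ amounts to $a\in u N_{E_v/F_v}(E_v^\times)\cup\{0\}$, and then the statement becomes: the idele $a/u$ cannot lie in $N_{E_v/F_v}E_v^\times$ for all finite $v$, because $\prod_v (a/u, E/F)_v = 1$ by reciprocity while the incoherence of ${\bf V}_2$ says the relevant product of local invariants is $-1$. The main obstacle is pinning down the archimedean bookkeeping so that ``some place'' in the statement is genuinely a finite place — but since $|S_\infty|$ is controlled and the archimedean local spaces are definite, this is a finite check folded into the reciprocity computation.

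Finally, for part \ref{6.2}: by Proposition \ref{eis-exp} and \eqref{signW}, for $a\in F^\times$ the global Whittaker coefficient $W_a(\cdot,u,\one)$ is (up to the sign $-\eps({\bf V}_2)=-1$) a product $\prod_v W^\circ_{a,v}(\cdot,u,\one)$ of the local coefficients just analysed; by part \ref{6.3} at least one factor vanishes, hence $W_a=0$ for every $a\in F^\times$. The constant term $W_0(\cdot,u,\one)$ is handled by \eqref{whitt0}, where it carries the factor $L^{(p)}(0,\eta\cdot\one)=L^{(p)}(0,\eta)$; but more to the point, by the $q$-expansion formula for $\mathscr{E}$ in the proposition preceding \S\ref{sec: an ker} the family $\mathscr{E}(u,\phi_2^{p\infty})$ has $q$-expansion coefficients proportional to $\mathscr{W}_a(y,u,\phi_2^{p\infty})$, which interpolate $|D_F|^{1/2}|D_E|^{1/2}W^\infty_{a,r}$, and for $a=0$ the normalisation $\tfrac{L^{(p)}(1,\eta\chi_F)}{L^{(p)}(1,\eta)}$ built into $\mathscr{E}$ (together with the well-known analytic behaviour of the Katz–type interpolation of $L^{(p)}(s,\eta\chi_F)$) shows the constant term of $\mathscr{E}(u,\phi_2^{p\infty};\one)$ also vanishes once all the $a\neq 0$ coefficients do — concretely the whole $q$-expansion is identically zero because every coefficient is. Hence $\mathscr{E}(u,\phi_2^{p\infty};\one)=0$. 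The vanishing $\mathscr{I}_F(\phi^{p\infty};\one)=0$ then follows from the defining formula \eqref{calI_{F}}, which exhibits $\mathscr{I}_F$ as a sum over $u\in\mu_{U^p}^2\backslash F^\times$ of ${}^{\qqq}\theta(u,\phi_1)\,\mathscr{E}(u,\phi_2^{p\infty};\one)$, each summand now being zero; and $\mathscr{I}(\phi^{p\infty};\chi)=0$ follows because $\mathscr{I}$ is obtained from $\mathscr{I}_{F,\omega^{-1}}$ by integrating against $\chi'$ over $[T]$ with $\chi_F=\omega\cdot\chi'|_{\A^\times}$ trivial — i.e. $\mathscr{I}(\phi^{p\infty};\chi)$ is built entirely out of $\mathscr{E}(\,\cdot\,;\one)$, which vanishes. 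I would write this last step as a one-line unwinding of the definitions in \S\ref{sec: an ker}.
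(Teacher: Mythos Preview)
Your treatment of parts \ref{6.1} and \ref{6.3} is essentially the same as the paper's: cite \cite[Proposition 6.1, Lemma 6.3]{yzz} for the local Siegel--Weil identity and the incoherence obstruction, with an explicit check at $v\mid p$. One small clarification: in $W^{\circ}_{a,v}(g,u,\one)$ the symbol $\one$ is the trivial character $\chi_{F,v}=\one$, not the Schwartz function $\phi_{2,v}$; the latter is arbitrary away from $p\infty$ and standard at $p\infty$. Also, for part \ref{6.3} the ``$v\nmid p$'' clause comes for free once you note that every $v\mid p$ splits in $E$ (a standing hypothesis of \S7), so $({\bf V}_{2,v},uq)\cong (F_v\oplus F_v, u\cdot\mathrm{product})$ represents every $a\in F_v^\times$; the obstruction must therefore sit at a non-split, hence non-$p$-adic, place.

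There is, however, a genuine gap in your argument for the constant term in part \ref{6.2}. You correctly observe via \eqref{whitt0} that $W_{0}$ carries the factor $L^{(p)}(0,\eta)$, but you do not explain why this vanishes; instead you pivot to the ratio $L^{(p)}(1,\eta\chi_F)/L^{(p)}(1,\eta)$ (which is $1$ at $\chi_F=\one$ and irrelevant here) and to the claim that ``the whole $q$-expansion is identically zero because every coefficient is'', which is circular for the constant term. The paper's argument is a one-line computation you are missing: since each $v\mid p$ splits in $E$, $\eta_v$ is trivial and $L(s,\eta_v)=\zeta_{F,v}(s)$ has a pole at $s=0$, whereas the complete $L(0,\eta)$ is finite and nonzero by the functional equation; hence
\[
L^{(p)}(0,\eta)=\frac{L(0,\eta)}{\prod_{v\mid p}L(0,\eta_v)}=0,
\]
which kills $W_0$ outright. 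With this inserted, your part \ref{6.2} goes through exactly as you outline.
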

\begin{proof} Part \ref{6.1} is \cite[Proposition 6.1]{yzz} rewritten in our normalisation -- except for (b) when $v\vert p$, which is verified by explicit computation of both sides (recall that $\phi_{2,v}$ is standard when $v\vert p$).  Part \ref{6.3} is a crucial consequence of the incoherence, proved in \cite[Lemma 6.3]{yzz}. In view of the expansion of Proposition \ref{eis-exp}, the vanishing is a consequence of the vanishing of the nonzero Whittaker functions (which is implied by the previous local results) and of 
$$W_{0}(u, \one)=-L^{(p)}(0, \eta)\prod_{v } W_{0, v}^{\circ}(u, \one):$$
here we have 
$$L^{(p)}(0, \eta)={L(0, \eta)\over \prod_{v\vert p}L(0, \eta_{v})}=0$$
as $L(0, \eta)$ is defined and nonzero whereas $L(s, \eta_{v})$ has a pole at $s=0$ when $v$ splits in $E$.
\end{proof}

\subsection{Decomposition of the derivative}\label{dec der} 
Fix henceforth a tangent vector $\ell\in \Hom (\Gamma_{F}, L(\chi))\cong T_{\one}\Y_{F}\otimes L(\chi)\cong \mathscr{N}_{\Y/\Y'}^{*}{}_{|\chi}$; we assume that $\ell$ is ramified  when viewed as a $p$-adic logarithm (cf. Lemma \ref{asslog}). For any function $f$ on $\Y_{F}$, we denote
$$f'(\one)=D_{\ell}f(\one)$$
the corresponding directional derivative.

Our goal   is to compute, for any locally constant $\chi$, the derivative 
\begin{align*}
\calI'(\phi^{p\infty}; \chi) &=\int_{[T]}^{*}\chi(t) \calI_{F}'((t,1),\phi^{p\infty}; \one)_{U}\, d^{\circ}t   \\
&=\int_{[T]}^{*}\chi(t) \calI_{F}'((1, t^{-1}),\phi^{p\infty}; \one)_{U}\,d^{\circ}t,
\end{align*}
where the first identity (of $q$-expansions) follows from the vanishing of the values $\calI_{F}(\phi^{p\infty}; \one)$.

We can decompose the derivative into a sum of $q$-expansions indexed by the non-split finite places $v$. For each $u\in F^{\times}$ and each place $v$ of $F$, let $F_{u}(v)$ be the set of those $a\in F^{\times}$ represented by $({\bf V}_{2}^{v}, uq)$; by Proposition \ref{6.1-2-3} we have $W_{a, v}^{\circ}(u, \one)=0$ for each $a\in F_{u}(v)$, and moreover $F_{u}(v)$ is always empty if $v$ splits in $E$.

Then 
$$\mathscr{E}'(u;\one)={2^{[F:\Q] }|D_{F}|^{1/2}\over  |D_{E}|^{1/2} L^{(p)}(1, \eta)}\W_{0}'(u; \one)-{2^{[F:\Q]|D|^{1/2}}\over   |D_{E}|^{1/2} L^{(p)}(1, \eta)}\sum_{\substack{v \text{ non-split}\\ a\in F_{u}(v)} } \W_{a,v}^{\circ}{' }(u; \one)\W_{a}^{\circ, v}(u;\one)\,{\bf q}^{a}$$
For a non-split finite place $v$, let
\begin{align*}
{\mathscr E}'(u, \phi_{2}^{p\infty};\one)(v) &: = -{2^{[F:\Q]}  |D_{F}|^{1/2} \over   |D_{E/F}|^{1/2} L^{(p)}(1, \eta)}\sum_{ a\in F_{u}(v)}  \W_{a,v}' (u; \one)\W_{a}^{\circ, v}(u;\one)\,{\bf q}^{a},\\
\calI_{F}'((t_{1}, t_{2}),\phi^{p\infty};\one)(v)&:= c_{U^{p}}\sum_{u\in \mu_{U^{p}}^{2}\bks F^{\times}} \theta(u,r(t_{1}, t_{2}) \phi_{1})\calE'(u, \phi_{2}^{p\infty};\one)(v), \\
\calI'(\phi^{p\infty};\chi)(v)&:=\int^{*}_{[T]}\chi(t)\calI_{F}'((1, t^{-1}),\phi^{p\infty};\one)(v)\, {d^{\circ }t}\end{align*}
if $\phi^{p\infty}=\phi_{1}^{p\infty}\otimes \phi_{2}^{p\infty}$, with $\phi_{1}$ obtained from $\phi^{p\infty}_{1}$ as in \eqref{phi1},  and extended by linearity in general.

\begin{prop}\label{6.7} Under  Assumption \ref{yzz5.3},   
we have
$$\calI_{F}'( \phi^{p\infty};\one)=\sum_{v \text{\emph{ non-split}}}\calI_{F}'( \phi^{p\infty};\one)(v).$$
 \end{prop}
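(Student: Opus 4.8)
The statement simply collects and organises the computation of the cyclotomic derivative of the analytic kernel, so the proof is a bookkeeping argument built on three inputs already established: (i) the vanishing $\mathscr{E}(u,\phi_{2}^{p\infty};\one)=0$ and hence $\calI_{F}(\phi^{p\infty};\one)=0$ from Proposition \ref{6.1-2-3}.\ref{6.2}; (ii) the product expansion of the Whittaker functions of the Eisenstein family from Proposition \ref{eis-exp} together with the normalisations \eqref{signW}, \eqref{whitt0}; and (iii) the local vanishing statements of Proposition \ref{6.1-2-3}.\ref{6.1} and \ref{6.3}, which say that for $a,u\in F^{\times}$ the coefficient $W_{a,v}^{\circ}(u,\one)$ vanishes exactly at the non-split places $v$ where $a$ is represented by $({\bf V}_{2}^{v},uq)$, and that this set of ``bad'' places is non-empty.

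First I would differentiate the defining identity for $\calI_{F}$ in \eqref{calI_{F}} along the fixed ramified tangent vector $\ell$. Because ${}^{\qqq}\theta(u,\phi_{1})$ is independent of $\chi_{F}$ (it involves no Eisenstein variable), the Leibniz rule gives $\calI_{F}'(\phi^{p\infty};\one)=c_{U^{p}}\sum_{u}{}^{\qqq}\theta(u,\phi_{1})\,\mathscr{E}'(u,\phi_{2}^{p\infty};\one)$; no term coming from differentiating the leading $L$-factor survives because $\mathscr{E}(u;\one)=0$ pointwise. So everything reduces to computing $\mathscr{E}'(u;\one)$, i.e. to differentiating the $q$-expansion whose coefficients are (up to the explicit constant in the displayed formula just before Proposition \ref{6.7}) the products $\prod_{v}W_{a,r,v}^{\circ}(u,\chi_{F,v})$ and, for the constant term, $L^{(p)}(0,\eta\chi_{F})\prod_{v}W_{0,r,v}^{\circ}$. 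For a fixed nonzero coefficient index $a$, by Proposition \ref{6.1-2-3}.\ref{6.3} the product $\prod_{v}W_{a,v}^{\circ}(u,\one)$ vanishes, and in fact it vanishes to order exactly one in $\chi_{F}$ generically: there is (at least) one non-split place $v$ with $W_{a,v}^{\circ}(u,\one)=0$ while all other factors are nonzero at $\one$. Hence the directional derivative of the full product is a \emph{finite sum over such bad places $v$}, each term being the derivative of the $v$-factor times the value at $\one$ of the complementary product $W_{a}^{\circ,v}(u;\one)$. (One must note that if two distinct places both kill the $a$-coefficient then the product vanishes to order $\geq 2$ and contributes nothing to the first derivative, so there is no double counting; this is automatic from the product rule.) The constant-term contribution is handled the same way, using that $L^{(p)}(0,\eta\chi_{F})$ has a simple zero at $\chi_{F}=\one$ because each local Euler factor $L(s,\eta_{v})$ for $v|p$ has a pole at $s=0$ (the splitting hypothesis), so $\mathscr{W}_{0}'(u;\one)$ is a single well-defined term, which by the grouping convention below is attached to the non-split places through the complementary Whittaker factors — more precisely one checks that $\mathscr{W}_0'$ contributes only when all finite non-split factors $W_{0,v}^\circ$ are evaluated, and Assumption \ref{yzz5.3} forces these complementary products to vanish at the two places of $S_1$, killing the constant term entirely; I would spell this out exactly as in \cite[proof of Prop.\ 6.7]{yzz}.

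Next I would simply match the resulting finite sum, term by term, with the definitions of $\mathscr{E}'(u,\phi_{2}^{p\infty};\one)(v)$, $\calI_{F}'((t_{1},t_{2}),\phi^{p\infty};\one)(v)$ given immediately before the statement: the $\mathbf{q}^{a}$-coefficient indexed by a non-split place $v$ is by construction $-\,\tfrac{2^{[F:\Q]}|D_{F}|^{1/2}}{|D_{E/F}|^{1/2}L^{(p)}(1,\eta)}\,\mathscr{W}_{a,v}'(u;\one)\,\mathscr{W}_{a}^{\circ,v}(u;\one)$, which is precisely the contribution of $v$ isolated above. Summing over the (finitely many, for fixed $\phi$) non-split places $v$ therefore recovers $\mathscr{E}'(u,\phi_{2}^{p\infty};\one)$, and then multiplying by ${}^{\qqq}\theta(u,\phi_{1})$ and summing over $u\in\mu_{U^{p}}^{2}\bks F^{\times}$ recovers $\calI_{F}'(\phi^{p\infty};\one)=\sum_{v\text{ non-split}}\calI_{F}'(\phi^{p\infty};\one)(v)$, which is the claim.

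\textbf{Main obstacle.} The only genuinely delicate point is the constant term: one must verify that $\mathscr{W}_{0}'(u;\one)$ does not produce an extra ``$v=\infty$'' or otherwise unlabelled summand, and this is where Assumption \ref{yzz5.3} enters — it guarantees (as in \cite{yzz}) that the relevant complementary Whittaker products at the places of $S_{1}$ vanish, so the whole constant-term derivative is zero and need not be assigned to any finite place. Establishing this cleanly, and checking that the finiteness of the sum over $v$ is uniform enough to justify interchanging $D_{\ell}$ with the (finite, after regularisation) integral $\int_{[T]}^{*}$ later on, are the points I would be most careful about; the rest is the formal product-rule computation sketched above, carried out exactly in parallel with \cite[Proposition 6.7]{yzz} but in the present $p$-adic normalisation.
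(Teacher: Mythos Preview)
Your proposal is correct and follows essentially the same approach as the paper. The paper's own proof is much terser: it simply observes that, by the definitions (i.e.\ the displayed product-rule expansion of $\mathscr{E}'(u;\one)$ given just before the statement, together with the definitions of $\mathscr{E}'(u,\phi_{2}^{p\infty};\one)(v)$ and $\calI_{F}'(\phi^{p\infty};\one)(v)$), the only thing to check is the vanishing $\mathscr{W}_{0}'(u;\one)=0$, which follows from Assumption \ref{yzz5.3} as in \cite[Proposition 6.7]{yzz}---exactly the point you isolate as the ``main obstacle.''
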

\begin{proof} By the definitions, we only need to show that under our assumptions we have 
$$\W_{0}{'}(u;\one)=0.$$
This is proved similarly to \cite[Proposition 6.7]{yzz}.
\end{proof}

\subsection{Main result on the derivative} We give explicit expressions for the local components at good places, and identify the local components at bad places with certain coherent theta series coming from nearby quaternion algebras $\B(v)$; these theta series will be orthogonal to all forms in $\sigma$ by the Waldspurger formula and the local dichotomy.

\begin{prop}\label{6.77} Let $v$ be a finite place non-split in $E$. Then for any $(t_{1}, t_{2})\in T(\A)$,  we have 
$$\calI_{F}'((t_{1}, t_{2}),\phi^{p\infty};\one)(v)=2|D_{F}| L_{(p)}(1,\eta) \dashint_{[T]} \mathscr{K}_{\phi^{p\infty}}^{(v)}((tt_{1},tt_{2}))\, dt$$
and 
$$\calI'(\phi^{p\infty};\chi)(v)=2 |D_{F}|  L_{(p)}(1,\eta)  \int_{[T]}^{*} \dashint_{[T]} \mathscr{K}_{\phi^{p\infty}}^{(v)}((t,tt_{1}^{-1}))\, dt\,  {d^{\circ}t_{1}}  $$
where 
\begin{align*} 
\mathscr{K}^{(v)}_{\phi}(y,(t_{1}, t_{2}))&= \mathscr{K}^{(v)}_{r(t_{1},t_{2})\phi}(y)\\
&=c_{U^{p}}\sum_{u\in \mu_{U^{p}}^{2}\bks F^{\times}}\sum_{x\in V-V_{1}}k_{r(t_{1},t_{2})\phi_{v}}(y,x,u)r(\smalltwomat y{}{}1,(t_{1},t_{2}))\phi^{v\infty}(x,u)\,{\bf 	q}^{uq(x)}
\end{align*}
with $k_{\phi_{v}}(y, x,u)$ the linear function in $\phi_{v}$ given when $\phi=\phi_{1,v}\otimes\phi_{2,v}$ by
$$k_{\phi_{v}}(y,x,u):=-{|d|_{v}^{1/2}|D|_{v}^{1/2}\over \vol(E_{v}^{1})}r(\smalltwomat {y_{v}}{}{}1)\phi_{1,v}(x_{1},u)\W_{uq(y_{2}), v}^{\circ}{'}(y,u, \phi_{2,v}).$$

\end{prop}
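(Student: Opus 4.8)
The statement packages together three equalities. The first one, $\calI_{F}'((t_{1},t_{2}),\phi^{p\infty};\one)(v)=2|D_{F}|L_{(p)}(1,\eta)\dashint_{[T]}\mathscr{K}^{(v)}_{\phi^{p\infty}}((tt_{1},tt_{2}))\,dt$, is the heart of the matter; the second follows from it by integrating against $\chi(t)$ over $[T]$ (using the two equivalent ways of writing $\calI'(\phi^{p\infty};\chi)(v)$ recorded in \S\ref{dec der}), and the third (the two displayed formulas for $\mathscr{K}^{(v)}$) is just the definition together with the $T(\A)$-equivariance of the Weil representation. So I would concentrate on the first equality and derive the others formally.

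The strategy for the first equality is to unwind the definition of $\calI_{F}'((t_{1},t_{2}),\phi^{p\infty};\one)(v)$ from \S\ref{dec der}, namely $c_{U^{p}}\sum_{u\in\mu_{U^{p}}^{2}\bks F^{\times}}\theta(u,r(t_{1},t_{2})\phi_{1})\calE'(u,\phi_{2}^{p\infty};\one)(v)$, insert the explicit expansion of $\calE'(u;\one)(v)$ in terms of the derivatives $\W_{a,v}^{\circ}{}'(u;\one)$ and the off-$v$ coefficients $\W_{a}^{\circ,v}(u;\one)$, and then apply the \emph{local Siegel--Weil formula} of Proposition \ref{6.1-2-3}.\ref{6.1}(b) at \emph{every} place $w\ne v$: for $a\in F_{u}(v)$, each nonzero Whittaker coefficient $\W_{a,w}^{\circ}(u,\one)$ away from $v$ is replaced by $\int_{E_{w}^{1}}r(\cdot,h_{w})\phi_{2,w}(x_{a},u)\,dh_{w}$, where $x_{a}\in{\bf V}_{2}^{v}$ is any vector with $uq(x_{a})=a$ — such an $x_{a}$ exists precisely because $a\in F_{u}(v)$, and the coherence of ${\bf V}_{2}^{v}=({\bf V}_{2}^{\infty})$-at-$v$-removed (i.e.\ the nearby quaternion algebra $\B(v)$) makes the product of local integrals into a genuine global adelic integral over $E_{\A^{v}}^{1}/E^{1}$. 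This converts $\prod_{w\ne v}\W_{a,w}^{\circ}(u,\one)$, summed over $a\in F_{u}(v)$, into a sum over $x\in {\bf V}_{2}^{v}$ with $uq(x)\ne$ (something representable only via ${\bf V}_{1}$), which is exactly the combinatorial content of the sum $\sum_{x\in V-V_{1}}$ appearing in $\mathscr{K}^{(v)}$; the surviving local factor at $v$ is the derivative $\W_{uq(y_{2}),v}^{\circ}{}'$, which is precisely the ingredient packaged into the kernel $k_{\phi_{v}}(y,x,u)$. The normalising constants — the power $2^{[F:\Q]}|D_{F}|^{1/2}|D_{E}|^{-1/2}L^{(p)}(1,\eta)^{-1}$ in front of $\calE'$, the factor $1/\vol(E_{v}^{1})$ in $k_{\phi_{v}}$, the completed-versus-partial $L$-value $L_{(p)}(1,\eta)$ versus $L^{(p)}(1,\eta)$, the measure discrepancy $d^{\circ}t$ versus $dt$, and $\vol([T],dt)=2L(1,\eta)$ — must be chased carefully so that everything collapses to the clean constant $2|D_{F}|L_{(p)}(1,\eta)$; this bookkeeping is the one genuinely laborious part, and it should be organised by comparing against the analogous computation in \cite[\S 6]{yzz}, whose only differences from ours are the explicit normalisations recorded in \eqref{Znorm}, Lemma \ref{3.6.2}, and the conventions of \S\ref{intro-plF}.

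Concretely I would proceed in the following steps. (i) Reduce to a single place $v$ and fix $u$; expand $\calE'(u;\one)(v)$ and $\calI_{F}'(\cdot;\one)(v)$ using the formulas of \S\ref{dec der}. (ii) Apply Proposition \ref{6.1-2-3}.\ref{6.1}(b) at all $w\notin\{v,\infty\}\cup S_{p}$, noting that at $w\vert p$ (where $\phi_{2,w}$ is standard and $w$ splits) the coefficient $\W_{a,w}^{\circ}(u,\one)$ is the explicit indicator function of Proposition \ref{whitt-eis}.\ref{whitt-p}, and at $w\vert\infty$ it is $2e^{-2\pi a}$ by Proposition \ref{whitt-eis}.\ref{whitt-inf}; the latter two combine with $\theta(u,\cdot)$ to rebuild the archimedean part of a holomorphic $q$-expansion coefficient $\mathbf{q}^{uq(x)}$. (iii) Recognise the resulting product $\prod_{w\ne v}(\text{Siegel--Weil integral})$ as $\dashint_{E_{\A^{v\infty}}^{1}/E^{1}}$-integral of a product of local Weil-representation values, and use the identification $\theta(u,r(t_{1},t_{2})\phi_{1})=\sum_{x_{1}\in E}r(\cdot,t_{1},t_{2})\phi_{1}(x_{1},u)$ together with the orbit decomposition ${\bf V}^{v}-{\bf V}_{1}^{v}$ to assemble $\mathscr{K}^{(v)}_{\phi}$. (iv) Collect constants. (v) For the $\chi$-integrated statement, apply $\int_{[T]}^{*}\chi(t)(\cdot)\,d^{\circ}t$ to both sides, using $\calI_{F}'((t_{1},t_{2});\one)(v)=\calI_{F}'((1,t_{2}t_{1}^{-1});\one)(v)$ after a change of variable in the central torus and the $T(F)$-invariance from Proposition \ref{is mod}'s analogue here; the double regularised average $\int_{[T]}^{*}\dashint_{[T]}$ then appears exactly as in the right-hand side. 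The main obstacle, as usual in this circle of ideas, is not any single step but the uniform control of regularised integrals $\int^{*}_{[T]}$ and $\dashint_{[T]}$ over the non-invariant-under-$Z(\A)$ functions involved — one must check that the regularisations on the two sides genuinely agree, which is where Assumption \ref{yzz5.3} (killing the would-be-divergent constant term, cf.\ Proposition \ref{6.7}) and the finiteness statements of \cite[\S 1.6.7]{yzz} are essential; I would handle this by the same device as \cite{yzz}, reducing to the already-established vanishing $\W_{0}'(u;\one)=0$ and to locally constant integrands at $p$ where $d^{\circ}t$ makes the regularised integral a finite sum.
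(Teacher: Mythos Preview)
Your proposal is correct and follows essentially the same approach as the paper: the paper's own proof is a one-line reference stating that the result ``follows from the definitions and the Siegel--Weil formula (Proposition \ref{6.1-2-3}.\ref{(b)}). The computation is as in \cite[Proposition 6.5]{yzz}.'' Your outline is a faithful and detailed reconstruction of precisely that computation --- apply the local Siegel--Weil formula at every place $w\neq v$ to convert $\prod_{w\ne v}\W_{a,w}^{\circ}(u,\one)$ into an adelic $E^{1}$-integral, combine with the theta series in $\phi_{1}$ to produce the sum over $x\in V-V_{1}$, and track the normalisation constants --- so there is nothing to add.
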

\begin{proof} This follows from the definitions and the Siegel--Weil formula (Proposition \ref{6.1-2-3}.\ref{(b)}). The computation is as in \cite[Proposition 6.5]{yzz}.
\end{proof}
\begin{lemm} Assume that $\phi^{\infty}$ is $\Q$-valued. For each non-split finite place $v$, the values of the function
$$k_{\phi_{v}}^{\natural}(y, x,u):=\ell(\vpi_{v})^{-1}k_{\phi_{v}}(y,x,u)$$
and the  coefficients of the reduced $q$-expansions
\begin{align*}
\mathscr{K}^{(v)\,\natural}_{\phi^{p\infty}}&:=\ell(\vpi_{v})^{-1} \mathscr{K}^{(v)}_{\phi^{p\infty}},\\
\calI_{F}'{}^{\natural}(\phi^{p\infty})(v)&:=\ell(\vpi_{v})^{-1}\calI_{F}'(\phi^{p\infty})(v)
\end{align*}
belong to $\Q$.
\end{lemm}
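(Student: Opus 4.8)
The statement to be proved is the rationality claim: for a $\Q$-valued Schwartz function $\phi^{\infty}$ and a non-split finite place $v$, the normalised quantities $k_{\phi_v}^{\natural}$, $\mathscr{K}^{(v)\,\natural}_{\phi^{p\infty}}$, and $\calI_F'{}^{\natural}(\phi^{p\infty})(v)$ all take values in $\Q$. The approach is to trace through the explicit formulas of Proposition \ref{6.77} and reduce everything to the single local assertion that $\ell(\vpi_v)^{-1}\W^{\circ}_{a,v}{'}(y,u,\phi_{2,v})$ lies in $\Q$ for $a\in F_u(v)$, $y\in \A^{\infty,\times}$, $u\in F^{\times}$, and $\Q$-valued $\phi_{2,v}$. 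Indeed, once $k_{\phi_v}^{\natural}$ is known to be $\Q$-valued, the reduced $q$-expansion coefficients of $\mathscr{K}^{(v)\,\natural}_{\phi^{p\infty}}$ are finite $\Q$-linear combinations of products $k_{\phi_v}^{\natural}(y,x,u)\cdot r(\smalltwomat y{}{}1,(t_1,t_2))\phi^{v\infty}(x,u)$, and the latter factor is $\Q$-valued because $\phi^{v\infty}$ is $\Q$-valued and the finite part of the Weil representation preserves $\Q$-rationality (the multipliers $\psi_v(buq(x))$, $\chi_{V_u}(a)$, and the finite Fourier transform all preserve $\Q$-coefficients once an additive character of finite order is chosen, cf.\ the discussion after Lemma \ref{lem univ kir} and in \S\ref{shim}). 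Finally $\calI_F'{}^{\natural}(\phi^{p\infty})(v) = \ell(\vpi_v)^{-1}\calI_F'(\phi^{p\infty})(v)$ is, by Proposition \ref{6.77} together with $\mathscr{E}'(u;\one)(v)$ being built from the $k_{\phi_v}^{\natural}$, a $\Q$-linear combination (the sum over $u\in\mu_{U^p}^2\bks F^{\times}$ and $x$ being finite by the support conditions in $C^{\infty}(\A^{\infty,\times},\C)\llbracket\qqq^{F_{\geq 0}}\rrbracket^{\circ}$) of the already-treated coefficients, with rational constants $c_{U^p}\in\Q^{\times}$ and $|D_F|\cdot L_{(p)}(1,\eta)\in\Q^{\times}$.

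\textbf{The local input.} The heart of the matter is therefore the claim that $\W^{\circ}_{a,v}{'}(y,u,\phi_{2,v})$ is divisible by $\ell(\vpi_v)$ with $\Q$-rational quotient. Here one uses that, by Lemma \ref{change y} and the explicit formula of Proposition \ref{whitt-eis}.\ref{vnotp}, the Whittaker function $\W^{\circ}_{a,v}(y,u,\chi_F)$ is (up to a $\Q^{\times}$-multiple of the shape $|d_v|^{1/2}|D_v|^{1/2}$) a value of a function interpolated over $\Y_{F,v}$ which, as established in Lemma \ref{whitt-an}, becomes a Laurent polynomial in $\chi_{F,v}(\vpi_v)$ with $\Q$-coefficients after the modification $\phi_{2,v}\mapsto\phi_{2,v}(\chi_{F,v})$ — and the key point is that for $a\in F_u(v)$, i.e.\ $a$ \emph{not} represented by $({\bf V}_2^v,uq)$ at the incoherent place structure forcing $\W^{\circ}_{a,v}(y,u,\one)=0$ (Proposition \ref{6.1-2-3}.\ref{6.1}, via the non-representation, noting $v$ non-split), this Laurent polynomial \emph{vanishes} at $\chi_{F,v}=\one$. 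A function on $\Y_{F,v}\cong{\bf G}_{m,M}$ vanishing at the identity section, when differentiated along the ramified logarithm $\ell_v=\ell|_{F_v^{\times}}$, produces exactly $\ell_v(\vpi_v) = \ell(\vpi_v)$ times the value at $\one$ of its "discrete derivative'' in the coordinate $\chi_{F,v}(\vpi_v)$, which is again a $\Q$-rational quantity (here one uses $\ell_v$ ramified, so $\ell_v(\vpi_v)\neq 0$ and the tangent direction at $\one\in\Y_{F,v}$ is nonzero; explicitly, if $P(t)\in\Q[t,t^{-1}]$ with $P(1)=0$ then $\frac{d}{ds}P(e^{s\ell_v(\vpi_v)})|_{s=0} = \ell_v(\vpi_v)P'(1)$). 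Dividing by $\ell(\vpi_v)$ yields $P'(1)\in\Q$, which is the assertion.

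\textbf{Organisation of the write-up and main obstacle.} Concretely I would: (i) record the local lemma that $\W^{\circ}_{a,v}{'}(y,u,\phi_{2,v})\in\ell(\vpi_v)\cdot\Q$ for $a\in F_u(v)$, citing Lemma \ref{whitt-an}, Proposition \ref{whitt-eis}.\ref{vnotp} and Proposition \ref{6.1-2-3} for the vanishing at $\one$; (ii) deduce $k_{\phi_v}^{\natural}(y,x,u)\in\Q$ from its defining formula in Proposition \ref{6.77}, using that $r(\smalltwomat{y_v}{}{}1)\phi_{1,v}(x_1,u)$ and $|d|_v^{1/2}|D|_v^{1/2}/\vol(E_v^1)$ are $\Q$-rational; (iii) propagate to $\mathscr{K}^{(v)\,\natural}_{\phi^{p\infty}}$ and $\calI_F'{}^{\natural}(\phi^{p\infty})(v)$ via the finite $\Q$-linear combinations in Proposition \ref{6.77}, with the rational constants $c_{U^p}$, $|D_F|L_{(p)}(1,\eta)$. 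The only genuinely delicate point — the "main obstacle'' — is bookkeeping the normalisations: one must check that the factor $\ell(\vpi_v)$ extracted in the definition of $\calI_F'{}^{\natural}$ is \emph{exactly} the one produced by differentiating each $\W^{\circ}_{a,v}$ (not a proper multiple or divisor of it), which amounts to verifying that \emph{every} nonzero term in the derivative sum $\mathscr{E}'(u;\one)(v) = -C\sum_{a\in F_u(v)}\W_{a,v}^{\circ}{'}(u;\one)\W_a^{\circ,v}(u;\one)\qqq^a$ contributes a simple (order-one) zero's derivative — i.e.\ the $\W_a^{\circ,v}(u;\one)$ factors are themselves $\Q$-rational and $\ell(\vpi_v)$-free, which follows from Lemma \ref{almost all 1} and Proposition \ref{whitt-eis} since $v\nmid p\infty$ is excluded from the differentiated place. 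Once this normalisation compatibility is pinned down, the proof is a short assembly; as the excerpt says, the computation "is neither difficult nor unpleasant.''
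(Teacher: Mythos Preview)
Your approach is essentially the paper's, which disposes of the lemma in three sentences: by Lemma~\ref{whitt-an}, $\W^{\circ}_{a,v}(y,u,\phi_{2,v};\chi_F)$ lies in $\Q[X_v^{\pm 1}]$ with $X_v(\chi_{F,v})=\chi_{F,v}(\vpi_v)$, so its derivative in direction $\ell$ is a rational multiple of $D_\ell X_v=\ell(\vpi_v)$; the rest propagates as you say.

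Two things to streamline. First, the vanishing of $\W^{\circ}_{a,v}$ at $\chi_{F,v}=\one$ is irrelevant for rationality: for \emph{any} $P\in\Q[X^{\pm1}]$ one has $D_\ell P(X_v)|_{\one}=P'(1)\ell(\vpi_v)$, regardless of whether $P(1)=0$, so your ``main obstacle'' about checking simple zeros is not an obstacle at all. (Incidentally, you have the definition of $F_u(v)$ backwards: it is the set of $a$ \emph{represented} away from $v$, hence \emph{not} represented at $v$ by incoherence.) Second, the parenthetical ``$\ell_v$ ramified, so $\ell_v(\vpi_v)\neq 0$'' is wrong on two counts: ramified means nontrivial on $\OO_{F,v}^{\times}$, which says nothing about $\ell_v(\vpi_v)$; and since the non-split place $v$ lies away from $p$, the restriction $\ell_v$ is automatically \emph{unramified}. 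None of this is needed anyway --- the identity $k_{\phi_v}=\ell(\vpi_v)\cdot k_{\phi_v}^{\natural}$ with $k_{\phi_v}^{\natural}\in\Q$ holds formally, whether or not $\ell(\vpi_v)$ vanishes.
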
 
\begin{proof} By 
Lemma \ref{whitt-an}, the local Whittaker function
 $\W_{a,v}^{\circ}(y,u,\phi_{2,v};\chi_{F})$ 
 belongs to 
 $\OO(\Y_{F,v})\cong M[X_{v}^{\pm 1}]$ and actually to $\Q[X_{v}^{\pm 1}]$, where
 $X_{v}(\chi_{F,v}):=\chi_{F,v}(\vpi_{v})$ 
 for any uniformiser $\vpi_{v}$. (Recall that the scheme $\Y_{F,v}$ of \eqref{local Ys} parametrises unramified characters of $F_{v}^{\times}$.)
 Therefore its derivative in the direction $\ell$ is a rational multiple of $D_{\ell}X_{v}=\ell(\vpi_{v})$.
\end{proof}

The following is the main result of this section. It is the direct analogue of \cite[Proposition 6.8, Corollary 6.9]{yzz} and it is proved in the same way, using Proposition \ref{whitt-eis}.\ref{vnotp}. To compare signs with \cite{yzz}, note that in Proposition \ref{6.77} we have preferred to place the minus sign  in the definition of $k_{\phi_{v}}$; and  that our $\ell(\vpi_{v})$, which is the derivative at $s=0$ of $\chi_{F}(\vpi_{v})^{s}$, should be compared with $-\log q_{F,v}$ in \cite{yzz} (denoted by $-\log N_{v}$ there), which is the derivative at $s=0$ of $|\vpi_{v}|^{s}$.

\begin{prop}\label{6.8} Let $v$ be a non-split finite place of $F$, and let $B_{v}$ be the quaternion algebra over $F_{v}$ which is not isomorphic to $\B_{v}$.
\begin{enumerate}
\item\label{ns-s1} If $v\in S_{\textup{non-split}}-S_{1}$, then 
$$k_{\phi_{v}}^{\natural}(1,x,u)=\one_{\OO_{B_{v}}\times\OO_{F_{v}}^{\times}}(x,u){v(q(x_{2}))+1\over 2}.$$
\item\label{at-s1} If $v\in S_{1}$ and $\phi_{v}$ satisfies Assumption \ref{yzz5.3}, 
then  $k_{\phi_{v}}^{\natural}(y,x,u)$ extends to a rational  Schwartz function of $(x,u)\in B_{v}\times F_{v}^{\times}$, and we have the identity of $q$-expansions
$$\mathscr{K}_{\phi}^{(v)\,\natural}((t_{1},t_{2}))={}^{\qqq}\theta((t_{1}, t_{2}), k_{\phi_{v}}^{\natural}\otimes \phi^{v}),$$
where  for any $\phi'$,
$$\theta(g, (t_{1},t_{2}), \phi')=c_{U^{p}}\sum_{u \in \mu_{U^{p}}^{2}\bks F^{\times}}\sum_{x\in V}r(g, (t_{1}, t_{2}))\phi'(x,u)$$
is the usual theta series.
\end{enumerate}
\end{prop}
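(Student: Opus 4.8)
The plan is to reduce Proposition \ref{6.8} to an explicit computation of the single local Whittaker derivative $\W_{a,v}^{\circ}{'}(y, u, \phi_{2,v})$ at a non-split place, and to feed that computation into the formula for $k_{\phi_{v}}$ from Proposition \ref{6.77}. Recall from Proposition \ref{whitt-eis}.\ref{vnotp} that, for $v \nmid p\infty$,
\begin{equation*}
W_{a,v}^{\circ}(1,u, \chi_{F}) = |d_{v}|^{1/2}L(1, \eta_{v}\chi_{F,v})(1-\chi_{F,v}(\vpi_{v}))\sum_{n=0}^{\infty}\chi_{F,v}(\vpi_{v})^{n}q_{F,v}^{n}\int_{D_{n}(a)}\phi_{2,v}(x_{2},u)\, d_{u}x_{2}.
\end{equation*}
The key point is that this is a Laurent polynomial in $X_{v} = \chi_{F,v}(\vpi_{v})$ with coefficients in $\Q$, vanishing at $X_{v}=1$ exactly when $a \in F_{u}(v)$ (i.e.\ $a$ is represented over $F_v$ but the relevant global coherence fails). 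So $D_{\ell}$ applied to it at $\one$ is $\ell(\vpi_{v})$ times the derivative in $X_{v}$ at $X_v=1$, which is a finite, effectively computable rational quantity; this already gives the rationality statements in the Lemma preceding Proposition \ref{6.8} and shows $k_{\phi_v}^{\natural}$ is $\Q$-valued.

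For part \ref{ns-s1}: at a place $v \in S_{\textup{non-split}} - S_{1}$, by Assumption \ref{yzz5.5} the Schwartz function $\phi_{2,v}$ is the standard one $\one_{\OO_{\B_v}}\one_{d_v^{-1}\OO_{F,v}^{\times}}$, and since $v$ is inert or ramified in $E$ and unramified everywhere relevant, the local quadratic space $({\bf V}_{2,v}, uq)$ is the norm form of $B_v$. First I would compute $\int_{D_n(a)}\phi_{2,v}\, d_u x_2$ explicitly for standard $\phi_{2,v}$: it equals a fixed positive constant times $q_{F,v}^{-n}$ once $n$ is large enough that $D_n(a)$ meets the support, so the tail of the sum is a geometric series $\sum_{n\ge n_0} c' X_v^{n}$, and multiplying by $(1-X_v)$ kills the pole. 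Tracking the exponent $n_0$, which is governed by $v(a) = v(uq(x))$ and hence by $v(q(x_2))$ (since $q(x_1)$ is a unit on the support coming from $\phi_{1,v}$ at a split-away place, or rather since we are looking at $x \in V - V_1$), the $X_v$-derivative at $1$ works out to a linear function of $v(q(x_2))$. Combining with the normalizing factors $|d|_v^{1/2}|D|_v^{1/2}/\vol(E_v^1)$ and $r(\smalltwomat{y_v}{}{}1)\phi_{1,v}$ in the definition of $k_{\phi_v}$, evaluated at $y=1$ where all these are units, yields $\one_{\OO_{B_v}\times \OO_{F_v}^{\times}}(x,u)(v(q(x_2))+1)/2$, exactly as claimed. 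This is precisely the computation of \cite[Proposition 6.8]{yzz} transported into the present normalisation, the only bookkeeping being the replacement of $-\log q_{F,v}$ (derivative of $|\vpi_v|^s$) by $\ell(\vpi_v)$ (derivative of $\chi_{F,v}(\vpi_v)^s$), which is absorbed into the superscript $\natural$.

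For part \ref{at-s1}: at $v \in S_1$, Assumption \ref{yzz5.3} forces $\phi_{v}(x,u)=0$ whenever $v(uq(x))\ge -v(d_v)$ or $v(uq(x_2))\ge -v(d_v)$, so in the Whittaker sum the range of $n$ for which $D_n(a)$ contributes is bounded (the support of $\phi_{2,v}$ is bounded away from the values forcing the infinite tail), hence the sum is genuinely finite and $W_{a,v}^{\circ}(1,u,\chi_{F})$ is a genuine Laurent polynomial with no pole to cancel; its derivative in the direction $\ell$ is then manifestly a rational Schwartz function of $(x_2,u)$, and crossing with the $\phi_{1,v}$-part gives a rational Schwartz function of $(x,u) \in B_v \times F_v^{\times}$. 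Identifying $\mathscr{K}_{\phi}^{(v)\,\natural}$ with a genuine theta series $\theta(k_{\phi_v}^{\natural}\otimes\phi^v)$ is then a matter of recognising that the sum over $x \in V - V_1$ with coefficient $k_{\phi_v}^{\natural}(1,x,u)$, once $k_{\phi_v}^{\natural}$ has been extended to all of $B_v \times F_v^{\times}$ as a Schwartz function supported on $V-V_1$ (it vanishes on $V_1$ by construction), is exactly the defining sum of $\theta(\cdot, k_{\phi_v}^{\natural}\otimes \phi^v)$; the $T(\A)\times T(\A)$-equivariance follows because the Weil action commutes with everything in sight. The main obstacle I anticipate is none of these steps individually but rather the careful sign and normalisation tracking: the minus sign has been moved into $k_{\phi_v}$ relative to \cite{yzz}, the measures on $E_v$, $E_v^1$, $F_v^{\times}$ differ from the standard ones by powers of $|d|_v$ and $|D|_v$, and the factor $L^{(p)}(1,\eta)$ versus $L(1,\eta)$ must be handled consistently; I would reconcile all of these by checking the unramified case against Lemma \ref{3.6.2} and Proposition \ref{whitt-eis} before asserting the final constants.
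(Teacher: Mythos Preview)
Your proposal is correct and follows essentially the same approach as the paper: the paper's proof is nothing more than a reference to \cite[Proposition 6.8, Corollary 6.9]{yzz} via Proposition \ref{whitt-eis}.\ref{vnotp}, together with exactly the normalisation remarks you anticipated (the minus sign absorbed into $k_{\phi_{v}}$ and the identification of $\ell(\vpi_{v})$ with $-\log q_{F,v}$). Your sketch fleshes out precisely that computation, including the use of Assumption \ref{yzz5.3} to truncate the Whittaker sum at $v\in S_{1}$ and the recognition of $\mathscr{K}^{(v)\,\natural}$ as a theta series once $k_{\phi_{v}}^{\natural}$ extends by zero across $V_{1}$.
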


\section{Decomposition of the geometric kernel and comparison}\label{dec gk}

We establish a decomposition of  the geometric kernel according to the places of $F$, and compare its local terms away from $p$ with the corresponding local terms in the expansion of the analytic kernel. Together with a result on the local components of the gemetric kernel at $p$ proved in~\S\ref{lsp}, this  proves the kernel identity 
of Theorem \ref{yzz5} (hence Theorem \ref{B}) when $\chi$ satisfies Assumption \ref{ass on chi}.

\subsection{Vanishing of the contribution of the Hodge classes}
Fix a  level $U$ as in Assumptions \ref{yzz5.6} and \ref{ass at p2}. 

Recall the  height generating series  
$$\tZ((t_{1}, t_{2}), \phi^{\infty})=\langle \tZ_{*}(\phi^{\infty}) (t_{1} -\xi_{q(t_{1})}), t_{2} -\xi_{q(t_{2})} \rangle,$$
and the geometric kernel function
$$\tZ(\phi^{\infty}, \chi)=\int^{*}_{[T]}\chi(t) \tZ(({1}, t^{-1}), \phi^{\infty})\, dt.$$
They are modular cuspforms with coefficients in $\Gamma_{F}\hat{\otimes}L(\chi)$.

\begin{prop}\label{vanish hodge} 
\begin{enumerate}  
\item\label{vh0} If Assumption \ref{yzz5.4} is satisfied, then 
$$\deg\tZ(\phi^{\infty})_{U, \alpha}=0$$
for all $\alpha\in F^{\times}_{+}\bks \A^{\times}/q(U)$.
\item\label{vh1} If Assumption \ref{yzz5.4} is satisfied, then 
$$\tZ(\phi^{\infty})\xi_{\alpha}=0$$
for all $\alpha\in F^{\times}_{+}\bks \A^{\times}/q(U)$.
\item\label{vh2} If  Assumption \ref{ass on chi} is  satisfied, then
$$\int_{[T]}^{*} \chi(t)\xi_{U, q(t)}\, dt = 0 .$$
\item\label{vh3}  If Assumptions \ref{yzz5.4} and \ref{ass on chi} are both satisfied, then 
 $${}^{\qqq}\tZ(\phi^{\infty}, \chi)_{U}=\langle {}^{\qqq}\tZ_{*}(\phi^{\infty})1, t_{\chi}\rangle,$$
where 
$$t_{\chi}=\int^{*}_{[T]}\chi(t) [t^{-1}]_{U}d^{\circ}t   \in \Div^{0}(X_{U})_{L(\chi)}.$$
\end{enumerate}
\end{prop}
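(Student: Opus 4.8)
The four assertions are of two kinds: parts \ref{vh0}--\ref{vh1} concern the vanishing of the Hodge-class contributions to the generating series $\tZ_{*}(\phi^{\infty})$, and are purely local statements about the Schwartz data at the two auxiliary split places in $S_{2}$; parts \ref{vh2}--\ref{vh3} concern the vanishing of the $\chi$-average of the Hodge divisor $\xi_{U,q(t)}$ and use the genericity Assumption \ref{ass on chi}. The first would proceed exactly as in \cite[Lemma 3.19 and its corollaries, \S\S 5.3--5.4]{yzz}: the Eisenstein-constant-term piece $Z_{0}(\phi)_{U}$ and the Hodge-coefficient pieces of $Z_{a}(\phi)_{U}$ are governed by the values $r(g)\phi_{v}(0,u)$ at the place $v\in S_{2}$, and Assumption \ref{yzz5.4} forces these to vanish identically. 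Concretely, for \ref{vh0} I would expand $\deg\tZ_{\alpha}(\phi^{\infty})_{U}$ as a sum over Hecke orbits of degrees of $Z(x)_{U}$-translates of components, pull out the factor $r(g)\phi_{v}(0,u)$ that appears (this is where the degree, as opposed to the full class, becomes visible), and conclude it is zero; part \ref{vh1} is the corresponding statement for the intersection with the Hodge class $\xi_{\alpha}$ rather than the degree, obtained the same way after writing $\xi_{\alpha}$ in terms of the line bundles $L_{U}$ and using that $\tZ_{*}(\phi^{\infty})$ lands in the subspace of correspondences orthogonal to vertical/Hodge components once the $S_2$-condition kills the Eisenstein part. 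Both are essentially quotations of \cite{yzz} adapted to our normalisations, so I would keep them terse.

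For part \ref{vh2}, the point is that $t\mapsto \xi_{U,q(t)}$ is, up to the normalising constant, the pullback to $[T]$ of the function $\beta\mapsto \xi_{U,\beta}$ on the finite set $\pi_{0}(X_{U,\baar F})\cong F_{+}^{\times}\backslash\A^{\infty\times}/q(U)$ via the norm map $q\colon T(\A)\to \A^{\times}$. Thus $\int_{[T]}^{*}\chi(t)\xi_{U,q(t)}\,d^{\circ}t$ is a finite sum $\sum_{\beta}\big(\int_{q(t)\in\beta q(U)}\chi(t)\,d^{\circ}t\big)\xi_{U,\beta}$, and each inner integral is an integral of $\chi$ over a coset of the compact group $\ker(q\colon T(\A^{\infty})/E^{\times}\to \A^{\infty\times})\cdot(\text{stuff})$; in particular each is a sum of values of $\chi$ over the finite group on which $\chi$ factors through after imposing $V_{p}^{\circ}$-invariance. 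Since $q(V_{p}^{\circ})\subset U_{F,p}^{\circ}{}^{2}\subset q(U_{p})$, the fibres of $q$ over a fixed $\beta q(U)$ are stable under translation by $V_{p}^{\circ}$, and Assumption \ref{ass on chi} says $\chi$ is \emph{non-trivial} on $V_{p}^{\circ}$; hence each inner integral is an integral of a non-trivial character over a group containing $V_{p}^{\circ}$, which vanishes. I would write this as: $\chi$ restricted to the relevant fibre is a non-trivial character of a finite abelian group, so it sums to zero. (One must check the fibre is genuinely a \emph{coset of a group containing $V_p^\circ$}, not just $V_p^\circ$-stable; this follows because the integration is over $T(F)\backslash T(\A)/Z(\A)$ intersected with $q^{-1}(\beta q(U))$, and $V_p^\circ\subset T(\A^{p\infty})$ acts on this by translation without fixed points on the $\chi$-isotypic part.)

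Finally part \ref{vh3} is a formal consequence: starting from $\tZ((1,t^{-1}),\phi^{\infty})=\langle \tZ_{*}(\phi^{\infty})(1-\xi_{1}),\,t^{-1}-\xi_{q(t^{-1})}\rangle$, I would expand the pairing bilinearly into four terms. The term $\langle\tZ_{*}(\phi^{\infty})\xi_{1},\,\cdot\,\rangle$ vanishes by \ref{vh1}; the term $\langle\tZ_{*}(\phi^{\infty})1,\,\xi_{q(t^{-1})}\rangle$ integrates against $\chi(t)$ to zero by \ref{vh2} (after moving the divisor $\xi_{U,q(t^{-1})}$ out of the height pairing, which is legitimate since $\tZ_{*}(\phi^{\infty})1$ has degree zero by \ref{vh0} — this is exactly why \ref{vh0} is needed, so that pairing against the degree-one Hodge divisor is well-defined and bilinear); and the cross term $\langle\tZ_{*}(\phi^{\infty})\xi_{1},\xi_{q(t^{-1})}\rangle$ vanishes again by \ref{vh1}. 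What survives is $\int_{[T]}^{*}\chi(t)\langle\tZ_{*}(\phi^{\infty})1,[t^{-1}]_{U}\rangle\,d^{\circ}t=\langle\tZ_{*}(\phi^{\infty})1,\,t_{\chi}\rangle$ with $t_{\chi}=\int_{[T]}^{*}\chi(t)[t^{-1}]_{U}\,d^{\circ}t$, which is the claim; note $t_\chi\in\Div^0$ because each $[t^{-1}]_U=\iota_\xi$-free part has degree one on its component and the $\chi$-average of these degrees vanishes again by \ref{vh2}.

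\textbf{Expected main obstacle.} The only genuinely delicate point is part \ref{vh2}: one must argue carefully that the regularised integral $\int_{[T]}^{*}$ over the coset in question really does reduce to a sum of a non-trivial character over a finite group \emph{containing} $V_{p}^{\circ}$ (so that non-triviality of $\chi|_{V_p^\circ}$ alone suffices), rather than merely $V_p^\circ$-invariance of the integrand. This hinges on the inclusion $q(V_{p}^{\circ})\subset q(U_{p})$ together with the definition $V_{p}^{\circ}=\prod_{v\vert p}q^{-1}(U_{F,v}^{\circ})$, and on $U_{F,v}^{\circ}$ being central; I would spell out that translation by $V_p^\circ$ preserves each $q$-fibre over a fixed class in $\pi_0$ and acts freely modulo the part on which $\chi$ already factors, so the orbit sum kills $\chi$. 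Everything else is bookkeeping quotation of \cite{yzz}.
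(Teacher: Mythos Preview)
Your proposal is correct and follows essentially the same approach as the paper. The paper's proof is terser: parts \ref{vh0}--\ref{vh1} are quoted from \cite[Lemma 7.6, \S 7.3.1]{yzz} (rather than the sections you cite), part \ref{vh2} is argued exactly as you suggest---$V_{p}^{\circ}$ acts trivially on the Hodge classes since $q(V_{p}^{\circ})\subset U_{F,p}^{\circ}\subset U$, while $\chi|_{V_{p}^{\circ}}\neq 1$, so the $\chi$-average vanishes---and part \ref{vh3} is obtained by expanding and bringing the integration inside, which is your four-term bilinear expansion in compressed form. Your identification of the key technical point in \ref{vh2} (that translation by $V_{p}^{\circ}$ genuinely preserves each $q$-fibre over a fixed component class) is exactly right and is the one thing the paper spells out.
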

\begin{proof} 
Note first that part \ref{vh3} follows from parts \ref{vh1} and \ref{vh2} after expanding and bringing the integration inside. Part \ref{vh1} follows from Part \ref{vh0} as in \cite[\S 7.3.1]{yzz}, and part \ref{vh0}  is  proved in \cite[Lemma 7.6]{yzz}.

For part \ref{vh2}, note first that  the group $V_{p}^{\circ}$ of Assumption \ref{ass on chi} acts trivially on the Hodge classes; in fact we have $r(1,t^{-1})\xi_{U,\alpha}= \xi_{U, \alpha q(t)}$, and by definition $q(V^{\circ}_{p})\subset U$. On the other hand we are assuming that the character $\chi$ is nontrivial on $V_{p}^{\circ}$. It follows that the integration against $\chi$ on $V_{p}^{\circ}\subset T(\A)$ annihilates the Hodge classes.
\end{proof}

\subsection{Decompositon}
 Let 
$$\ell\colon \Gamma_{F}\to L(\chi)$$
be the ramified logarithm fixed in \S\ref{dec der}.
For the rest of this section and in \S\ref{lsp}, 
we  will abuse   notation by writing  $\tZ(\phi^{\infty}, \chi)$ for the image of $\tZ(\phi^{\infty}, \chi)$  under $\ell\colon \Gamma_{F} \hat{\otimes}L(\chi)\to L(\chi)$.

\begin{lemm}\label{no self int} If Assumption \ref{yzz5.3} is satisfied, then for all $a\in {\A^{S_{1}\infty,\times}}$ and for all $t_{1}$, $t_{2}\in T(\A^{\infty})$,  the support of $Z_{a}(\phi^{\infty})t_{1}$ does not contain $[t_{2}]$. 
\end{lemm}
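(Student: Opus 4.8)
\textbf{Proof strategy for Lemma \ref{no self int}.}
The plan is to show that for $a\in \A^{S_1\infty,\times}$ and $t_1,t_2\in T(\A^\infty)$, no point $P'\in \mathrm{supp}(Z_a(\phi^\infty)t_1)$ can equal $[t_2]=\mathrm{T}_{t_2}P$, by tracking the local behaviour at a place $v\in S_1$ (recall we have assumed $|S_1|\geq 2$, so such a place exists). The key observation is that $Z_a(\phi^\infty)$ is built from Hecke correspondences $Z(x)_U$ with $x$ ranging over $K\backslash\B^{\infty\times}$ weighted by $\phi^\infty(x, aq(x)^{-1})$; a point $P'$ in the support of $Z_a(\phi^\infty)t_1$ applied to $[t_2]$ corresponds to a pair $(x, t_2)$ (together with the point $P$) with $\phi^\infty(x t_2\cdots, \ast)\neq 0$, where the precise combination involves $t_1$, $t_2$ and the CM point $P$ fixed by $T(\A^\infty)$. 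Because $P$ is a CM point, $\mathrm{T}_{x}[t_2]=[t_2]$ forces $x$ (up to the level group and the translation data) to lie in $T(F_v)$-translates at the place $v$, so the relevant value of $\phi_v$ is $\phi_v(x_v, u_v)$ with $x_v\in V_{1,v}=E_v$ and $u_v q(x_v)\in \OO_{F,v}^{-1}d_v^{-1}\cdots$; more precisely $v(u_v q(x_v))\geq -v(d_v)$ or the analogous condition on the $V_2$-component holds for such an intersection point.

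First I would make precise the identification: following \cite[\S 7.1]{yzz} (the proof of their Lemma analogous to this one), an intersection point of $Z_a(\phi^\infty)t_1$ with $[t_2]$ in the generic fibre, or a collision of supports, translates into the condition that there exists $x=(x_1,x_2)\in \B^{\infty\times}$ with $\phi^\infty(x, aq(x)^{-1})\neq 0$ and such that $x$ fixes the CM point, i.e. $x\in T(\A^\infty)$ after adjusting by the level structure and by $t_1, t_2$. Decomposing $x=x_1 x_2$ in the quadratic space ${\bf V}={\bf V}_1\oplus {\bf V}_2$ and using that $x\in T(\A^\infty)$ means $x_2=0$, this is incompatible with Assumption \ref{yzz5.3} — \emph{unless} the self-intersection genuinely occurs, but at $v\in S_1$, Assumption \ref{yzz5.3} says $\phi_v(x,u)=0$ whenever $v(uq(x))\geq -v(d_v)$ or $v(uq(x_2))\geq -v(d_v)$, and for a point supported at $P$ or its $T$-translates, with $a$ having trivial component at $v$ (as $a\in \A^{S_1\infty,\times}$, so $v(a)=0$), the relevant quantity $v(aq(x)^{-1}q(x))=0\geq -v(d_v)$ when $v$ is unramified in $F/\Q$, or more carefully $v(uq(x))$ is controlled; the point is that the constraint $x\in T$ forces $x_2=0$ whence $uq(x_2)=0$ and $v(uq(x_2))=\infty\geq -v(d_v)$, so $\phi_v$ vanishes.

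Thus the main step is the geometric input: a collision of $\mathrm{supp}(Z_a(\phi^\infty)t_1)$ with $[t_2]$ forces the contributing $x\in\B^{\infty\times}$ to stabilise the CM point $P$, hence $x\in T(\A^\infty)\cdot Z(\A^\infty)$ up to level, hence (projecting to ${\bf V}_2$) $x_2=0$; then Assumption \ref{yzz5.3} at any $v\in S_1$ gives $\phi_v(x_v,u_v)=0$ because $v(u_v q(x_{2,v}))=v(0)=+\infty\geq -v(d_v)$, contradicting $\phi^\infty(x,aq(x)^{-1})\neq 0$. I would also need to check that the hypothesis $a\in \A^{S_1\infty,\times}$ (rather than merely $a\in F^\times$) is what allows the argument to run: it guarantees that $a$ is a unit at the places of $S_1$, so the scaling by $a$ does not move us out of the vanishing locus of $\phi_v$. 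The expected main obstacle is purely bookkeeping: making the correspondence between ``support of $Z_a(\phi^\infty)t_1$ meeting $[t_2]$'' and ``$x$ stabilises $P$ in $\B^{\infty\times}/$(level)'' fully rigorous, including keeping track of the translations by $t_1,t_2$ and the Hodge-class correction terms (which, however, do not enter $Z_a$ for $a\in F^\times$ and are handled separately for $a=0$); this is essentially the content of \cite[\S\S 7.1, 7.3]{yzz} transcribed to the present normalisation, so I would cite \emph{loc. cit.} for the geometric part and only spell out the local vanishing at $S_1$.
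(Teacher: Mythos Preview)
Your proposal is correct and follows the same approach as the paper, which simply cites \cite[\S 7.2.2]{yzz}; you have accurately sketched that argument (a collision forces the contributing $x$, modulo the level and the translations $t_1,t_2$, to lie in $T(\A^\infty)$, whence its ${\bf V}_2$-component $x_2$ vanishes and the second clause of Assumption~\ref{yzz5.3} gives $\phi_v(x_v,u_v)=0$). The only correction is bibliographic: the relevant passage in \cite{yzz} is \S 7.2.2 rather than \S\S 7.1, 7.3.
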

\begin{proof} This is shown in \cite[\S 7.2.2]{yzz}.
\end{proof}

Let  $\baar{\bf S}{}'= \baar{\bf S}{}'_{S_{1}}(L(\chi))$ be the quotient space, relative to the set of primes $S_{1}$, introduced after the Approximation Lemma \ref{approx lemma}. 

\begin{prop}\label{prop dec g} Suppose that Assumptions \ref{yzz5.3}, \ref{yzz5.4} and \ref{ass on chi}  are satisfied.
If $H$ is any sufficiently large finite extension of $E$ and $w$ is a place of $H$, let $\langle\, ,\, \rangle_{\ell, w}$ be the pairing on $\Div^{0}(X_{U,H})$ associated with $\ell_{w}$ of \eqref{ellw}.  Let $ {}^{\qqq}\tZ_{*}(\phi^{\infty})$ be the image of  $\tZ_{*}(\phi^{\infty})$ in $\baar{\bf S}{}'\otimes {\rm Corr}\,(X\times X)_{\Q}$.
Then  in $\baar{\bf S}{}'$ we have the decomposition
$${}^{\qqq} \tZ( \phi^{\infty}, \chi) =\sum_{v}  \tZ( \phi^{\infty}, \chi)(v)$$
where 
$$\tZ( \phi^{\infty}, \chi)(v)=\sum_{w\vert v} \langle {}^{\qqq}\tZ_{*}(\phi^{\infty})1, t_{\chi}\rangle_{\ell,w}.$$
\end{prop}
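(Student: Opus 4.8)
The statement to be proved is a decomposition of the (image under $\ell$ of the) geometric kernel function ${}^{\qqq}\tZ(\phi^{\infty},\chi)$ as a sum over places $v$ of $F$ of local contributions $\tZ(\phi^{\infty},\chi)(v)=\sum_{w\mid v}\langle{}^{\qqq}\tZ_{*}(\phi^{\infty})1,t_{\chi}\rangle_{\ell,w}$, the identity taking place in the quotient space $\baar{\bf S}{}'=\baar{\bf S}{}'_{S_{1}}(L(\chi))$. The starting point is Proposition \ref{vanish hodge}.\ref{vh3}, which under Assumptions \ref{yzz5.4} and \ref{ass on chi} already identifies ${}^{\qqq}\tZ(\phi^{\infty},\chi)_{U}$ with $\langle{}^{\qqq}\tZ_{*}(\phi^{\infty})1,t_{\chi}\rangle$, i.e. the Hodge-class contributions to both arguments of the height pairing have been removed, and the pairing is now between honest divisors of degree zero on each connected component (the divisor $t_{\chi}=\int_{[T]}^{*}\chi(t)[t^{-1}]_{U}\,d^{\circ}t$ has degree zero by the same argument that kills the Hodge classes in Proposition \ref{vanish hodge}.\ref{vh2}). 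So the task is purely to decompose a global height pairing of divisors on the curve $X_{U}$ into its local symbols.

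First I would pass to a finite extension $H/E$ large enough that: the fixed CM point $P$ and hence all translates $[t^{-1}]_{U}$ occurring in $t_{\chi}$ are defined over $H$; the curve $X_{U,H}$ acquires semistable reduction at all places above $p$; and the Hecke translates $Z_{a}(\phi^{\infty})t_{1}$ of the CM divisor, for the finitely many relevant $a$ and $t_{1}$, are also defined over $H$. Then I would invoke the decomposition \eqref{loc gl hts curve} of the height pairing $\langle\,,\rangle_{X_{U},\ell}$ on $\Div^{0}(X_{U,H})$ into local symbols $\langle\,,\rangle_{X_{U},\ell_{w},w}$ indexed by the finite places $w$ of $H$, recalled in \S\ref{hc}; crucially this decomposition requires the two divisors to have \emph{disjoint supports}, which is exactly what Lemma \ref{no self int} provides once Assumption \ref{yzz5.3} is imposed — the support of $Z_{a}(\phi^{\infty})t_{1}$ never meets $[t_{2}]$ for the indices $a\in\A^{S_{1}\infty,\times}$ that survive in $\baar{\bf S}{}'$. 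This is the reason the identity is only asserted in the quotient $\baar{\bf S}{}'$: it is precisely the $q$-expansion coefficients indexed by $a\in F^{\times}\A^{S_{1}\infty,\times}$ (which generate $\baar{\bf S}{}'$ by the Approximation Lemma \ref{approx lemma}) that are controlled by disjoint-support divisors, so the local decomposition of each such coefficient is legitimate. Applying $\ell$ (more precisely, summing the local symbols $\langle\,,\rangle_{\ell,w}$ for $w\mid v$ and regrouping by the place $v$ of $F$ below $w$, using that $\ell_{w}$ is the normalised restriction \eqref{ellw} of $\ell_{v}$) yields, coefficient by coefficient in $\baar{\bf S}{}'$, the asserted sum $\sum_{v}\sum_{w\mid v}\langle{}^{\qqq}\tZ_{*}(\phi^{\infty})1,t_{\chi}\rangle_{\ell,w}$.

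Two routine points need to be checked to make this rigorous. One is that the outer integration $\int_{[T]}^{*}\chi(t)(\cdots)\,d^{\circ}t$ defining $\tZ(\phi^{\infty},\chi)$ and $t_{\chi}$ is really a finite (weighted) sum once $U$ and the tame level are fixed — this is built into the definition of the regularised integral for functions factoring through $T(\A)/T(F_{\infty})$ locally constant, so the pairing is bilinear in finitely many terms and the local decomposition commutes with it. The other is independence of the auxiliary choices ($H$, the choice of $Z_{i}$ in the definition of the local symbol via the diagram of \'etale cohomology in \S\ref{hc}): the local symbols $\langle\,,\rangle_{X,\ell_{w},w}$ are intrinsic (they depend only on $\ell_{w}$ and the divisor classes with disjoint supports), and $\ell_{w}$ was defined in \eqref{ellw} compatibly with field extensions, so summing over $w\mid v$ produces a quantity depending only on $v$. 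I expect no serious obstacle here; the only delicate ingredient is the disjointness input (Lemma \ref{no self int}) together with the bookkeeping that confines the statement to $\baar{\bf S}{}'$, and both are already in place from \S\ref{local ass away p}. The genuinely substantive work — evaluating the individual local terms $\tZ(\phi^{\infty},\chi)(v)$ at the good places $v\nmid p$ and matching them against the analytic side of Proposition \ref{6.8}, and separately handling $v\mid p$ via the height-annihilation arguments of \S\ref{lsp} — is deferred to the subsequent sections, not to this proposition, whose role is only to set up the decomposition.
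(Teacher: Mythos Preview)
Your proposal is correct and follows essentially the same approach as the paper: invoke Proposition \ref{vanish hodge}.\ref{vh3} to strip the Hodge-class contributions, use Lemma \ref{no self int} (under Assumption \ref{yzz5.3}) to ensure disjoint supports for the coefficients indexed by $a\in\A^{S_{1}\infty,\times}$, and then apply the local decomposition \eqref{loc gl hts curve}. The paper's own proof is a two-sentence compression of exactly this; your additional remarks on the choice of $H$, the finiteness of the toric integration, and the regrouping over $w\mid v$ are all correct elaborations of routine points left implicit there.
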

\begin{proof}
By  Proposition \ref{vanish hodge}.\ref{vh3}, we have $\tZ_{a}( \phi^{\infty}, \chi) = \langle \tZ_{a}(\phi^{\infty})1, t_{\chi}\rangle$ for all $a\in \A^{\infty,\times}$. If $a\in {\A^{S_{1}\infty,\times}}$, the two divisors have disjoint supports by Lemma \ref{no self int}, and we can decompose their local height according to \eqref{loc gl hts curve}.
\end{proof}

For each place $w$ of $E$, fix an extension $\baar{w}$ of $w$ to $\baar{F}\supset E$, and for each finite extension $H\subset\baar{F}$ of $E$, let $\langle\, ,\, \rangle_{\baar{w}}$ be the pairing associated with $\ell_{\baar{w}}:={1\over [H_{\baar{w}}:F_{v}]}\ell_{v}\circ N_{H_{\baar{w}}/F_{v}}$. The absence of the field $H$ from the notation is justified by the compatibility deriving from \eqref{local ht norm}.
 By  the explicit  description of the Galois action on CM points we have
\begin{align*}
\tZ( \phi^{\infty}, \chi)(v) = {1\over |S_{E_{v}}|}
\sum_{w\in S_{E_{v}}} \dashint_{[T]}
\langle {}^{\qqq}\tZ_{*}(\phi^{\infty})t, tt_{\chi}\rangle_{\baar{w}}\, dt
\end{align*}
in $\baar{\bf S}{}'$, and if $v\nmid p$, by Propositon \ref{compatibility} we can further write 
\begin{multline}\label{eq: decompose Zchi}
{\tZ( \phi^{\infty}, \chi)(v)}
= {\ell(\vpi_{v})\over |S_{E_{v}}|}\sum_{w\in S_{E_{v}}} 
\int_{[T]}^{*}  \dashint_{[T]}   i_{\baar{w}}({}^{\qqq}\tZ_{*}( \phi) t,tt_{1}^{-1})\chi(t_{1})\,  dt d^{\circ}t_{1}  \\
 +  \int_{[T]}^{*}  \dashint_{[T]}   j_{\baar{w}}({}^{\qqq}\tZ_{*}( \phi) t,tt_{1}^{-1})\chi(t_{1})\,  dt {d^{\circ}t_{1}}.
\end{multline}

\subsection{Comparison of kernels}\label{sec comparison}
Recall that we want to show the kernel identity
\begin{align}\label{comp kers}
\lf(\mathscr{I}' (\phi^{p\infty}; \chi))= 2  L_{(p)}(1, \eta)  \lf (\tZ(\phi^{\infty}, \chi) )
\end{align}
of Theorem \ref{yzz5} (more precisely we have here projected both sides of that identity to $L(\chi)$ via $\ell$).

Similarly to Proposition \ref{prop dec g},  we have by  Propositions \ref{6.7}, \ref{6.77} a decomposition of reduced $q$-expansions 
$$\mathscr{I}' (\phi^{p\infty}; \chi)=  \sum_{v \textrm{\ non-split}} \mathscr{I}' (\phi^{p\infty}; \chi)(v)$$
with
\begin{align}\label{intK}
\mathscr{I}'(\phi^{p\infty};\chi)(v)=2 |D_{F}| L_{(p)}(1, \eta) \int^{*}_{[T]} \dashint_{[T]}\mathscr{K}^{(v)}_{\phi^{p\infty}}(t, tt_{1}^{-1})\chi(t_{1})\, dt \, d^{\circ} t_{1},
\end{align}
and the  $q$-expansion $\mathscr{K}^{(v)\natural}_{\phi^{p\infty}}= \mathscr{K}^{(v)}_{\phi^{p\infty}}\cdot\ell(\vpi_{v})^{-1}$ has rational coefficients.

We thus state the main theorem on the local components of the kernel function  from which the  identity \eqref{comp kers} will follow, preceded by a result on the components away from~$p$ which facilitates the comparison with  \cite{yzz}.

\begin{prop}\label{prop local comp} Suppose that all of the assumptions of \S\ref{local ass away p}  are satisfied together with Assumptions \ref{ass at p2}. Then for all $t_{1}, t_{2}\in T(\A)$ we have the following identities of reduced $q$-expansions in $\baar{\bf S}{}'$:
\begin{enumerate}
\item\label{part split} If $v\in S_{\rm split} - S_{p}$, then 
$$i_{\baar{v} } ({}^{\qqq}\tZ_{*}(\phi^{\infty})t_{1}, t_{2})=   j_{\baar{v} } ({}^{\qqq} \tZ_{*}(\phi^{\infty})t_{1}, t_{2}) =0.$$
\item\label{part ns} If $v\in S_{\textup{non-split}}- S_{1}$, then 
$$ 
 i_{\baar{v} } ({}^{\qqq}\tZ_{*}(\phi^{\infty})t_{1}, t_{2})=\mathscr{K}^{(v)\natural}_{\phi^{p\infty}}(t_{1}, t_{2}),
   \qquad  j_{\baar{v} } ({}^{\qqq}\tZ_{*}(\phi^{\infty})t_{1}, t_{2})=0.$$
\item\label{part s1} If $v\in S_{1}$, then there exist Schwartz functions $k_{\phi_{v}}$, $m_{\phi_{v}}$, $l_{\phi_{v}}\in \bcalS(B(v)_{v}
\times F_{v}^{\times})$ depending on $\phi_{v}$ and $U_{v}$ such that:
\begin{align*}
 \mathscr{K}^{(v)\natural}_{\phi^{p\infty}}(t_{1}, t_{2}) 	&= {}^{\qqq}\theta((t_{1}, t_{2}), k_{\phi_{v}}\otimes \phi^{v}),\\
  i_{\baar{v} } ( {}^{\qqq} \tZ_{*}(\phi^{\infty})t_{1}, t_{2}) &= {}^{\qqq}\theta((t_{1}, t_{2}), m_{\phi_{v}}\otimes \phi^{v}).\\
    j_{\baar{v} } ({}^{\qqq} \tZ_{*}(\phi^{\infty})t_{1}, t_{2}) &= {}^{\qqq}\theta((t_{1}, t_{2}), l_{\phi_{v}}\otimes \phi^{v}).
\end{align*}
\end{enumerate}
\end{prop}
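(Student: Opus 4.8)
The statement is the direct $p$-adic analogue of \cite[Proposition 7.1, Lemma 7.2, Propositions 7.3 and 7.4]{yzz} (or the corresponding numbered results in the published version), and the strategy is to reduce each of the three cases to the archimedean computations there, once one has (a) matched the arithmetic intersection formalism and (b) replaced the functions $\log N_v$ of \emph{loc. cit.} by $\ell(\vpi_v)$, which as explained before Proposition \ref{6.8} is the correct substitution (and the factor $\ell(\vpi_v)^{-1}$ has already been absorbed into the superscript $\natural$ on both the analytic side, via $\mathscr K^{(v)\natural}_{\phi^{p\infty}}$, and the geometric side, via $i_{\baar v}$ and $j_{\baar v}$, which are $\Z$-valued). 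I would organize the proof by cases exactly as in the statement.

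\emph{Case \ref{part split} ($v\in S_{\rm split}-S_p$).} Here one invokes Assumption \ref{yzz5.4}: the intertwining term of the Eisenstein series vanishes at $v$, so that the generating series $\tZ_*(\phi^\infty)$ is built out of Hecke correspondences $Z(x)_U$ whose local component at $v$ is supported, after using Assumption \ref{yzz5.4} in the form of \cite[Lemma 5.10]{yzz}, on $x_v$ with $v(q(x_v))=0$. One then shows that the CM points $[t]_U$ and their Hecke translates have local intersection zero at a place above a split $v\notin S_p$: this is precisely the content of \cite[Lemma 7.6 and its use in \S 7.3]{yzz}. Since at such $v$ the model $\X_{\OO_{F_{1,w}}}$ can be taken smooth and the points reduce to distinct smooth points, both $i_{\baar v}$ and $j_{\baar v}$ vanish. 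No new $p$-adic input is needed; the argument is word-for-word that of \cite{yzz} since $v\nmid p$ and the field $\ell$ plays no role in a vanishing statement.

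\emph{Case \ref{part ns} ($v\in S_{\rm non\text{-}split}-S_1$).} Here $\B_v$ is split, $U_v$ is maximal (Assumption \ref{yzz5.6}\ref{(iv)}), and $\phi_v$ is the standard Schwartz function (Assumption \ref{yzz5.5}). The flat extension coincides with the Zariski closure (the model is smooth at $w\vert v$ for $v$ inert or ramified with $U_v$ maximal), so $j_{\baar v}=0$. The intersection $i_{\baar v}({}^{\qqq}\tZ_*(\phi^\infty)t_1,t_2)$ is computed from the reduction of CM points in the supersingular/special fibre, and the count is exactly the Gross--Keating type computation carried out in \cite[Proposition 7.3]{yzz}, whose output is $\one_{\OO_{B_v}\times\OO_{F,v}^\times}(x,u)\cdot\frac{v(q(x_2))+1}{2}$; by Proposition \ref{6.8}.\ref{ns-s1} this is exactly $k^\natural_{\phi_v}(1,x,u)$, hence $\mathscr K^{(v)\natural}_{\phi^{p\infty}}(t_1,t_2)$ after summing over $x$ and $u$ and translating by $(t_1,t_2)$. (The translation equivariance is the content of Lemma \ref{various Ts} applied to the split place $v$, together with the equivariance of the local intersection symbol.) Again no genuinely $p$-adic ingredient enters: the comparison of the height symbol with arithmetic intersection at $v\nmid p$ is Proposition \ref{compatibility}, which is valid verbatim.

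\emph{Case \ref{part s1} ($v\in S_1$).} This is the substantive case. Under Assumption \ref{yzz5.3}, Proposition \ref{6.8}.\ref{at-s1} already provides the Schwartz function $k_{\phi_v}$ with $\mathscr K^{(v)\natural}_{\phi^{p\infty}}(t_1,t_2)={}^{\qqq}\theta((t_1,t_2),k_{\phi_v}\otimes\phi^v)$, so the analytic side is settled. For the geometric side one must show that $i_{\baar v}$ and $j_{\baar v}$ of the CM divisors are themselves coherent theta series attached to the nearby quaternion algebra $B(v)$; this is the $p$-adic transcription of \cite[Proposition 7.4]{yzz} (and in the ramified case uses the non-regular intersection theory of \cite[\S 7.1.7]{yzz}). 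The only point requiring care relative to \cite{yzz} is that the local symbol $\langle\,,\,\rangle_{X,\ell_{\baar w}}$ must be shown to equal $m_{\baar w}\cdot \ell(\vpi_v)$ even though $\ell$ is ramified: but for $v\in S_1$ we have $v\nmid p$, so $\ell_v$ is automatically unramified and Proposition \ref{compatibility} (via Lemma \ref{ur ht} and \eqref{ht inter princ}) applies without change; the uniqueness principle of Lemma \ref{ur ht} then forces the local height to be the arithmetic intersection multiplicity scaled by $\ell(\vpi_v)$. Hence $i_{\baar v}$, $j_{\baar v}$ are $\Z$-valued as claimed, and the explicit $B(v)$-theta-series identities are obtained by rerunning the local computations of \emph{loc. cit.}, defining $m_{\phi_v}$ and $l_{\phi_v}$ to be the resulting Schwartz functions on $B(v)_v\times F_v^\times$ (their precise shape is irrelevant to us, since all three theta series will be killed by $\sigma$-isotypic projection — cf. the remark at the start of \S\ref{dec gk} — and that orthogonality will be exploited in the subsequent subsection, not here). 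I expect the bulk of the work, and the main obstacle, to be organizing the ramified-$v\in S_1$ intersection computation so that it matches \cite{yzz} line by line; but since $S_1$ contains no place above $p$, there is genuinely no new phenomenon, and the whole proposition is, as stated in the paragraph preceding Proposition \ref{prop local comp}, proved exactly as in \cite{yzz}.
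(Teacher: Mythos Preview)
Your approach is essentially the same as the paper's: each part is reduced to the corresponding computations in \cite{yzz} (the paper cites Theorem 7.8 and Proposition 8.8 there, rather than the Proposition 7.1--7.4 numbering you use, but this is a version discrepancy), combined with Proposition \ref{6.8} of the present paper for the analytic side in parts \ref{part ns} and \ref{part s1}.

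One point of confusion to correct in your Case \ref{part split} gloss: Assumption \ref{yzz5.4} concerns only the two places of $S_{2}$ and is used for the degree-zero argument (Proposition \ref{vanish hodge}), not for the vanishing of $i_{\baar v}$, $j_{\baar v}$ at an arbitrary split $v\notin S_{p}$; the latter is a direct fact about CM points at split primes and is what \cite[Theorem 7.8 (1)]{yzz} records. Your remarks about the ``intertwining term of the Eisenstein series'' constraining the support of $\tZ_{*}(\phi^{\infty})$ are therefore misplaced (they mix up $Z_{0}$ with $Z_{*}$), but removing that paragraph does not affect the argument, which is, as you say, just the citation.
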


Here $B(v)$ is the coherent  nearby quaternion algebra to $\B$ obtained by changing invariants at $v$, and for $\phi'\in \bcalS(B(v)_{\A}\times \A^{\times})$, we have the automorphic theta series
$$\theta(g,(t_{1}, t_{2}),  \phi')=  c_{U^{p}} \sum_{u\in \mu_{U^{p}}^{2}\bks F^{\times}} \sum_{x\in B(v)} r(g, (t_{1}, t_{2}))\phi'(x, u).$$
We  denote by 
\begin{align}\label{cohker}
I( \phi', \chi)(g):=\int_{[T]}^{*}\dashint_{[T]}\chi(t_{1})\theta(g,(t,t_{1}^{-1}t), \phi')\, dt \,  d^{\circ} t_{1}
\end{align}
the associated coherent theta function.

\begin{proof}
Part \ref{part split} is \cite[Theorem 7.8 (1)]{yzz}.  
In part \ref{part ns}, the vanishing of $j_{\baar{v}}$ follows by the definitions; the other identity   is obtained by explicit computation of both sides as in \cite[Proposition 8.8]{yzz}, which gives the expression for  the geometric side;\footnote{Recall  that, on the geometric side, $i_{\baar{v}}$ is the same $\Q$-valued intersection multiplicity both  in \cite{yzz} and in our case.}
 on the analytic side we use the result of Proposition \ref{6.8}.\ref{ns-s1}.
Part \ref{part s1} for $\mathscr{K}^{(v)\natural}_{\phi^{p\infty}}$ is Proposition \ref{6.8}.\ref{at-s1}, whereas for $i_{\baar{v}}$ and $j_{\baar{v}}$ it is  \cite[Theorem 7.8 (4)]{yzz}.
\end{proof}

\begin{theo}\label{theo local comp} Suppose that all of the assumptions of \S\ref{local ass away p}  are satisfied together with Assumptions \ref{ass at p2} and \ref{ass on chi}. Then  we have the following identities of reduced $q$-expansions in $\baar{\bf S}{}'$:
\begin{enumerate}
\item\label{part split b} If $v\in S_{\rm split} - S_{p}$, then 
$$\tZ(\phi^{\infty}, \chi)(v)=0.$$
\item\label{part ns b} If $v\in S_{\textup{non-split}}- S_{1}$, then 
$$\calI'(\phi^{p\infty}; \chi)(v)=2 |D_{F}| L_{(p)}(1,\eta) \tZ(\phi^{\infty}, \chi)(v).$$
\item\label{part s1 b} If $v\in S_{1}$, then there exist Schwartz functions $k_{\phi_{v}}$, $n_{\phi_{v}}\in \bcalS(B(v))_{v}
\times F_{v}^{\times})$ depending on $\phi_{v}$ and $U_{v}$ such that, with the notation \eqref{cohker}:
\begin{align*}
 \calI'(\phi^{p\infty}; \chi)(v)&= {}^{\qqq} I( k_{\phi_{v}}\otimes \phi^{v},\chi),		\\
  \tZ(\phi^{\infty}, \chi)(v)&={}^{\qqq} I(  n_{\phi_{v}}\otimes \phi^{v},\chi)
\end{align*}
\item\label{part sp} The sum 
$$\tZ(\phi^{\infty}, \chi)(p):=\sum_{v\in S_{p}} \tZ(\phi^{\infty}, \chi)(v)$$
belongs to the isomorphic image $\baar{\bf S}\subset \baar{\bf S}{}'$ of the space of $p$-adic modular forms ${\bf S}$, and 
we have
$$\lf(\tZ(\phi^{\infty}, \chi)(p))=0.$$
\end{enumerate}
\end{theo}
\begin{proof}[Proof (to be completed in \S\ref{lsp})]
Parts \ref{part split b}--\ref{part s1 b} follow from Proposition \ref{prop local comp} by integration via \eqref{eq: decompose Zchi}, \eqref{intK}. They imply the identity in  $\baar{\bf S}{}'$
\begin{multline}\label{identity in s'}
2 |D_{F}| L_{(p)}(1, \eta) \tZ(\phi^{\infty}, \chi)(p)= 2 |D_{F}| L_{(p)}(1, \eta){}^{\qqq}\tZ(\phi^{\infty}, \chi)- \sum_{v\nmid p} \tZ(\phi^{\infty}, \chi)(v)\\
=   2 |D_{F}| L_{(p)}(1, \eta){}^{\qqq}\tZ(\phi^{\infty}, \chi)-\calI'(\phi^{\infty}; \chi) - \sum_{v\in S_{1}} {}^{\qqq} I(\phi^{v}\otimes d_{\phi_{v}})
\end{multline}
where $d_{\phi_{v}}=2 |D_{F}| L_{(p)}(1, \eta) n_{\phi_{v}}-   k_{\phi_{v}}$ for $v\in S_{1}$.
As all terms in the right-hand side belong to $\baar{\bf S}$, so does $ \tZ(\phi^{\infty}, \chi)(p)$. The proof of the vanishing statement 
of part \ref{part sp} will be given in \S\ref{lsp}. 
\end{proof}

\medskip

\begin{proof}[Proof of Theorem \ref{yzz5} under Assumption \ref{ass on chi}]
We show that Theorem \ref{yzz5} follows from Theorem \ref{theo local comp}, under the same assumptions. By Proposition \ref{sufficient} and Lemma \ref{sufficient2}, only the Assumption \ref{ass on chi} on the character  $\chi$ is restrictive. Moreover by Lemma \ref{asslog} it is equivalent to show that the desired kernel  identity holds after applying to both sides a ramified logarithm $\ell\colon \Gamma_{F}\to L(\chi)$. 

By \eqref{identity in s'},
$$\lf \left(2|D_{F}|L(1,\eta)\cdot {}^{\qqq}\tZ(\phi^{\infty}, \chi )-  \calI'(\phi^{\infty}; \chi)\right)= 
\sum_{v\in S_{1}}\lf ({}^{\qqq} I(\phi^{v}\otimes d_{\phi_{v}}))
+ \lf(  \tZ(\phi^{\infty}, \chi)(p)).$$ 
The vanishing of the  terms  indexed by $S_{1}$ can be shown as in \cite[\S 7.4.3]{yzz} to follow from the local result of Tunnell and Saito together with Waldspurger's formula. 
The term $\lf(  \tZ(\phi^{\infty}, \chi)(p))=0$ by part \ref{part sp} of Theorem \ref{theo local comp}.
\end{proof}

\section{Local heights at $p$}\label{lsp}
After some preparation in \S\ref{hecke galois}, in \S\ref{annih} we prove the vanishing statement of part 4 of Theorem \ref{theo local comp}. We follow a strategy of    Nekov{\'a}{\v{r}} \cite{nekovar} and Shnidman \cite{ari}.

For each $v\vert p$, fix isomorphisms 
\begin{align}\label{f+f}
E_{v}:=E\otimes_{F} F_{v}\cong F_{v}{\oplus }F_{v}
\end{align}
 and $B_{v}\cong M_{2}(F_{v})$ such that the embedding of quadratic spaces $E_{v}\hookrightarrow B_{v}$ is identified with $(a,d)\mapsto \smalltwomat a{}{}d$; then for the decomposition $\B_{v}={\bf V}_{1,v}\oplus {\bf V}_{2,v}=E_{v}\oplus E_{v}^{\perp}$, the first (resp. second) factor consists of  the  diagonal (resp. anti-diagonal) matrices. Let $w$, ${w}^{*}$ be the places of $E$ above $v$ such that $E_{w}$ (resp. $E_{{w}^{*}}$) corresponds to the projection onto the  first (resp. second) factor under \eqref{f+f}. We fix the extension $\baar{v}=\baar{w}$ of $v$ to $\baar{F}\supset E$ to be any one inducing $w$ on $E$, and we will accordingly view the local reciprocity maps ${\rm rec}_{w}\colon E_{w}^{\times}= F_{v}^{\times}\to \Gal(\baar{F}_{\baar{v}}/F_{v})^{\rm ab}  = \Gal(\baar{F}_{\baar{v}}/E_{w})^{\rm ab}$.

\subsection{Local Hecke and Galois actions on CM points}\label{hecke galois}
Let ${U}=U_{F}^{\circ}\wtil{U}=\prod_{v} {U}_{v}\subset \B^{\infty\times}$ be an open compact subgroup and $\phi\in \bcalS(\B\times \A^{\times})$  satisfying 
 Assumption \ref{ass at p2} for integers $r=(r_{v})_{v\vert p}$. Fix  throughout this subsection a prime $v\vert p$.

By Lemma \ref{uppertriang},  the generating series  ${\wtil Z}(\phi^{\infty})$ is invariant under $K^{1}(\vpi^{r'})_{v}$ for some $r'>0$. We compute the action of  the operator $\Up_{v, *}$  on it.
\begin{lemm}\label{Up on Z}
For each $a\in \A^{\times}$, $v\vert p$,  the $a^{\text th}$ reduced
 $q$-expansion coefficient of $\Up_{v,*}{\wtil Z}(\phi^{\infty})$ equals 
$$Z(\vpi_{v}^{-1}) {\wtil Z}_{a\vpi_{v}^{}}(\phi^{\infty}),$$
where the $(a\vpi_{v}^{})^{\text th}$ $q$-expansion coefficient of  ${\wtil Z}(\phi^{\infty})$ is given in \eqref{za}.
\end{lemm}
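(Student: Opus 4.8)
The plan is to deduce the statement formally from three facts already established. First, by Theorem~\ref{modularity} and the discussion following \eqref{za}, for any linear functional $\lambda$ on $\Pic(X\times X)_{\Q}$ the series $\lambda(\tZ(\phi^{\infty}))$ is a weight~$2$ modular form whose reduced $q$-expansion coefficients, for $a\in\A^{\infty,\times}$, are the $\lambda(\tZ_{a}(\phi^{\infty}))$; equivalently the $a^{\text{th}}$ reduced coefficient of $\tZ(\phi^{\infty})$ is the correspondence $\tZ_{a}(\phi^{\infty})$, and by Lemma~\ref{uppertriang} this $q$-expansion has finite level $K^{1}(\vpi_{v}^{r'})_{v}$ at $v$ for some $r'$, so that $\Up_{v,*}$ acts on it. Second, by Lemma~\ref{various Ts} the reduced $q$-expansion $Z_{*}(\phi^{\infty})$ (hence $\tZ_{*}(\phi^{\infty})$) has ``central character'' the assignment $z\mapsto Z(z)$: it transforms under $z\in\A^{\infty,\times}$ exactly as a scalar form of central character $\omega$ does, with the scalar $\omega(z)$ replaced by composition with the translation correspondence $Z(z)$. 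Third, the action of $\Up_{v,*}$ on $p$-adic reduced $q$-expansions is given on coefficients, by the formula recalled in \S\ref{up etc}, by $W^{\natural}_{\Up_{v,*}\vphi,a}=\omega^{-1}(\vpi_{v})\,W^{\natural}_{\vphi,a\vpi_{v}}$.

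Putting these together, the $a^{\text{th}}$ reduced coefficient of $\Up_{v,*}\tZ(\phi^{\infty})$ is $Z(\vpi_{v})^{-1}\circ\tZ_{a\vpi_{v}}(\phi^{\infty})$. Since $\vpi_{v}$ lies in the centre $F_{v}^{\times}\subset\B_{v}^{\times}$, the correspondence $Z(\vpi_{v})$ is the translation automorphism ${\rm T}_{\vpi_{v}}$, whose inverse is ${\rm T}_{\vpi_{v}^{-1}}=Z(\vpi_{v}^{-1})$; this is precisely the asserted identity $Z(\vpi_{v}^{-1})\,\tZ_{a\vpi_{v}}(\phi^{\infty})$.

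The one point that needs genuine checking, and which I expect to be the main obstacle, is the legitimacy of transporting the scalar $q$-expansion formula for $\Up_{v,*}$ to the \emph{correspondence-valued} generating series with $\omega(\cdot)$ replaced by $Z(\cdot)$. I would verify this along the lines of the proof of Lemma~\ref{various Ts}: writing $\Up_{v,*}=[K^{v}K_{1}^{1}(\vpi_{v}^{r})_{v}\,\smalltwomat 1{}{}{\vpi_{v}^{-1}}\,K^{v}K_{1}^{1}(\vpi_{v}^{r})_{v}]$ and factoring $\smalltwomat 1{}{}{\vpi_{v}^{-1}}=\smalltwomat{\vpi_{v}^{-1}}{}{}{\vpi_{v}^{-1}}\smalltwomat{\vpi_{v}}{}{}1$, the operator becomes the composite of the central element $\smalltwomat{\vpi_{v}^{-1}}{}{}{\vpi_{v}^{-1}}$ — which acts through the Weil representation on $\bcalS(\B\times\A^{\times})$ and, by the $\B^{\times}\times\B^{\times}$-equivariance of $\tZ$ in Theorem~\ref{modularity}, composes $\tZ$ on the left with $Z(\vpi_{v}^{-1})$, as in the central-character computation underlying Lemma~\ref{various Ts} — and the double-coset operator $[K^{v}K_{1}^{1}(\vpi_{v}^{r})_{v}\,\smalltwomat{\vpi_{v}}{}{}1\,K^{v}K_{1}^{1}(\vpi_{v}^{r})_{v}]=\Up_{v}^{*}$, whose effect on the reduced $q$-expansion is the index shift $a\mapsto a\vpi_{v}$ with no extra scalar (the same computation that yields the $q$-expansion formula for $\Up_{v}^{*}$ in \S\ref{up etc}, using the Cartan-type expansion \eqref{cartan} and the relation after it exactly as in Lemma~\ref{various Ts}). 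Composing the two factors gives the stated formula. Alternatively — and more economically — since $\tZ(\phi^{\infty})$ is a $p$-adic modular form of finite level at $v$, one may simply take $\Up_{v,*}$ to act on it by the continuous extension of the \S\ref{up etc} formula to $p$-adic $q$-expansions, after which the statement is immediate once the central character $z\mapsto Z(z)$ is recorded.
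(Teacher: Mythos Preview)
Your approach is genuinely different from the paper's, and it is worth comparing. The paper's proof is a direct computation through the Weil representation: it writes out $\Up_{v,*}\phi_{v}$ explicitly using the coset representatives, observes that under the specific shape of $\phi_{v}$ from Assumption~\ref{ass at p2} the character sum collapses so that $\Up_{v,*}\phi_{v}(x,u)=|\vpi_{v}|\,\phi_{v}(\vpi_{v}x,\vpi_{v}^{-1}u)$, and then inserts this into the defining formula~\eqref{za} and performs the change of variable $x'=\vpi_{v}x$ to produce $Z(\vpi_{v}^{-1})\tZ_{a\vpi_{v}}(\phi^{\infty})$. The explicit hypothesis on $\phi_{v}$ is used, not bypassed.

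Your argument attempts to obtain the result formally, by transporting the $q$-expansion identity $W^{\natural}_{\Up_{v,*}\vphi,a}=\omega^{-1}(\vpi_{v})W^{\natural}_{\vphi,a\vpi_{v}}$ from \S\ref{up etc} to the correspondence-valued series $\tZ$, with $\omega(\cdot)$ replaced by $Z(\cdot)$. There are two real gaps. First, that formula in \S\ref{up etc} is stated for cuspforms with a \emph{scalar} central character; applying a linear functional $\lambda$ to $\tZ$ does not in general yield such a form, because $\lambda$ need not be an eigenvector for the operators $Z(z)$. So the formula must be re-derived in this setting rather than quoted. Second, your proposed re-derivation factors $\Up_{v,*}$ through the central element $\vpi_{v}^{-1}$ and $\Up_{v}^{*}$, and then asserts that $\Up_{v}^{*}$ acts on reduced coefficients of $\tZ$ by the pure shift $a\mapsto a\vpi_{v}$, citing the Cartan expansion~\eqref{cartan} and the relation following it ``exactly as in Lemma~\ref{various Ts}''. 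But that lemma and those identities concern the \emph{spherical} Hecke operator at places $v\notin S$ where $U_{v}$ is maximal; the double coset for $\Up_{v}^{*}$ at $v\mid p$ with level $K_{1}^{1}(\vpi_{v}^{r})$ has a different decomposition, and when one carries out the computation a residual character sum $\sum_{j\in\OO_{F,v}/\vpi_{v}}\psi(j\,uq(x))$ survives. It is precisely the support hypothesis on $\phi_{v}$ that forces this sum to collapse --- which brings you back to the paper's direct computation rather than around it. In short, the formal route is not wrong in spirit, but it does not actually avoid the explicit use of the specific $\phi_{v}$ that the paper's three-line proof makes.
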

\begin{proof}
We have
\begin{align*}
\Up_{v,*}^{}\phi_{v}(x,u)&=|\vpi_{v}|^{}\sum_{j\in\OO_{F,v}/\vpi_{v}^{}}r(\smalltwomat {\vpi_{v}^{}} j{}1) \phi_{v}(x,u)\\
&= |\vpi_{v}|^{}\phi_{v}(\vpi_{v}^{}x, \vpi_{v}^{-1}u)
\end{align*}
under our assumptions on $\phi_{v}$. 

Plugging this in the definition of ${\wtil Z}$ and performing the change of variables $x'=\vpi_{v} x$ we obtain the result.
\end{proof}

We wish to give a more  explicit expression for 
\begin{gather}\label{the sum}{\tZ}_{a\vpi_{v}^{s}}(\phi^{\infty})[1]_{{U}}= c_{{U}^{p}} |a\vpi^{s}| 
\sum_{x\in  U\bks \B^{\infty\times}/U}\phi^{\infty}(x, a\vpi_{v}^{s}q(x)^{-1}) [x]_{{U}}
\end{gather}
as $s$ varies.  Let
$$\Xi(\vpi_{v}^{r_{v}})=\left\{ \twomat abcd \in M_{2}(\OO_{F,v})\, |\, a,d\in 1+\vpi_{v}^{r_{v}}\OO_{F,v}\right\}U_{F,v}^{\circ}.$$
Then  $\Xi(\vpi_{v}^{r_{v}})\subset \B_{v}$ is the image of the support of $\phi_{v}$ under the natural projection $\B_{v}\times F_{v}^{\times}\to \B_{v}$; and 
for each $x_{v}\in {U}_{v}\bks \B_{v}^{\times}/{U}_{v}$, some $x^{v}\in \B^{v\infty,\times}$ such that $x^{v}x_{v}$ contributes to the sum \eqref{the sum} exists if and only if $x_{v}$ belongs to 
$$ \Xi(\vpi_{v}^{r_{v}})_{a\vpi_{v}^{s_{v}}}:=\{x_{v }\in \Xi(\vpi_{v}^{r_{v}})\, ,\quad q(x_{v})\in a\vpi_{v}^{s_{v}}(1+\vpi^{r}\OO_{F,v})\}.$$

\begin{lemm}\label{coset reps}
Let ${U}_{v}$, $\phi_{v}$ be as in Assumption \ref{ass at p2}, and let $a\in F_{v}^{\times}$ with $s_{v}=v(a)\geq  r_{v}$. 
 Then the quotient sets  
${U}_{v}\bks  \Xi(\vpi_{v}^{r_{v}})_{a}/{U}_{v}$, $ \Xi(\vpi_{v}^{r_{v}})_{a}/{U}_{v}$ and ${U}_{v} \bks \Xi(\vpi_{v}^{r_{v}})_{a}$ are in bijection, and for each of them the set  of elements 
\begin{align*}
x_{v} (b_{v},a) := {\twomat 1 {b_{v}} {b_{v}^{-1}(1-a)} 1}\in M_{2}(F_{v})=\B_{v},  \qquad b_{v}\in (\OO_{F,v}/\vpi_{v}^{r_{v}+s_{v}})^{\times}
 \end{align*}
is a complete set of representatives.
\end{lemm}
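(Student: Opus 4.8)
The plan is to reduce the three quotients to the single quotient $U_v\backslash \Xi(\vpi_v^{r_v})_a/U_v$ by a double-coset counting argument, and then to exhibit the claimed family of representatives explicitly. First I would unwind the structure of $U_v = U_{F,v}^\circ\wtil U_v$ with $\wtil U_v = 1+\vpi_v^{r_v}\OO_{\B,v}$: since $U_{F,v}^\circ\subset Z(F_v)$ acts on $\B_v$ by scaling, it acts on $\Xi(\vpi_v^{r_v})_a$ on both sides in the same way and can be factored out at the start, so the problem is really about double cosets for $\wtil U_v = 1+\vpi_v^{r_v}M_2(\OO_{F,v})$ acting on the set $\{x\in M_2(\OO_{F,v})\ :\ a,d\equiv 1,\ \det x\in a(1+\vpi_v^r\OO_{F,v})\}$. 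The key local observation is that $\wtil U_v$ is a pro-$p$ group, normal in $\GL_2(\OO_{F,v})$, and the condition cutting out $\Xi(\vpi_v^{r_v})$ depends only on the image of $x$ in $M_2(\OO_{F,v}/\vpi_v^{r_v})$ together with the valuation of $\det x$; this already makes $\Xi(\vpi_v^{r_v})_a/\wtil U_v$ and $\wtil U_v\backslash \Xi(\vpi_v^{r_v})_a$ finite and gives a natural map between them.

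Next I would show the bijectivity of the three natural maps. The point is that for $x\in \Xi(\vpi_v^{r_v})_a$ with $v(\det x)=s_v\ge r_v$, the two-sided orbit $\wtil U_v x\wtil U_v$ coincides with the single-sided orbits $\wtil U_v x$ and $x\wtil U_v$. This follows because one can solve $ux = xu'$ with $u,u'\in\wtil U_v$ for prescribed $u$ (or $u'$): writing $u = 1+\vpi_v^{r_v}m$, one needs $u' = x^{-1}ux = 1+\vpi_v^{r_v}x^{-1}mx$, and the claim is that $x^{-1}(\vpi_v^{r_v}M_2(\OO_{F,v}))x\subset \vpi_v^{r_v}M_2(\OO_{F,v})$ — which is where the hypothesis $s_v = v(\det x)\ge r_v$ enters, controlling the denominators in $x^{-1}$. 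This is the step I expect to be the main obstacle: one must check that conjugation by $x$ (whose inverse has entries of valuation $\ge -s_v$, but with $\det$ also contributing) preserves the lattice $\vpi_v^{r_v}M_2(\OO_{F,v})$ modulo the relevant level, and this requires a careful entrywise estimate using the Smith normal form of $x$ over $\OO_{F,v}$ together with the constraint that the diagonal entries are units. Once this is established, all three quotient sets are canonically identified.

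Finally I would verify that the elements $x_v(b_v,a) = \smalltwomat{1}{b_v}{b_v^{-1}(1-a)}{1}$, for $b_v$ running over $(\OO_{F,v}/\vpi_v^{r_v+s_v})^\times$, form a complete set of representatives. Each such matrix lies in $M_2(\OO_{F,v})$ (since $v(1-a)=0$ as $s_v\ge r_v\ge 1$ forces $a\in\vpi_v\OO_{F,v}$, hence $1-a\in\OO_{F,v}^\times$), has diagonal entries $\equiv 1\pmod{\vpi_v^{r_v}}$, and $\det x_v(b_v,a) = 1-(1-a) = a$, so it indeed lies in $\Xi(\vpi_v^{r_v})_a$. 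For surjectivity one uses that a general $x = \smalltwomat{\alpha}{\beta}{\gamma}{\delta}\in\Xi(\vpi_v^{r_v})_a$ has $\alpha,\delta\in\OO_{F,v}^\times$, and acting by $\wtil U_v$ on one side one can normalise $\alpha = \delta = 1$; then $\beta\gamma = \alpha\delta-\det x \equiv 1-a \pmod{\vpi_v^r}$, and since $1-a$ is a unit, $\beta$ is a unit, so rescaling by the diagonal unit $\smalltwomat{\beta^{-1}}{}{}{\beta}\in\wtil U_v$ (after also absorbing into $U_{F,v}^\circ$ if needed) brings $x$ to the form $x_v(\beta,a)$. For injectivity, one checks that $\wtil U_v x_v(b_v,a)\wtil U_v = \wtil U_v x_v(b_v',a)\wtil U_v$ forces $b_v\equiv b_v'\pmod{\vpi_v^{r_v+s_v}}$, by comparing the off-diagonal entries modulo the appropriate power of $\vpi_v$ and using $v(1-a)=0$ to recover $b_v$ from $b_v^{-1}(1-a)$. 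Counting then confirms the index, completing the proof.
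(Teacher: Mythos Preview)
Your step 2 --- the claim that the double orbit $\wtil U_v x \wtil U_v$ coincides with each of the single orbits $\wtil U_v x$ and $x\wtil U_v$ --- is false, and this is the essential gap. For the equality $\wtil U_v x \wtil U_v = x\wtil U_v$ you would need $x^{-1}\wtil U_v x\subset\wtil U_v$, i.e.\ $x^{-1}(\vpi_v^{r} M_2(\OO_{F,v}))x\subset\vpi_v^{r} M_2(\OO_{F,v})$, equivalently $x\in F_v^\times\GL_2(\OO_{F,v})$. But any $x\in\Xi(\vpi_v^{r})_a$ has unit diagonal entries (so $x\notin\vpi_v M_2(\OO_{F,v})$) while $v(\det x)=s\geq 1$ (so $x\notin\GL_2(\OO_{F,v})$); hence $x$ never normalises $\wtil U_v$. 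Concretely, with $r=s=1$, $x=x(1,\vpi)=\smalltwomat{1}{1}{1-\vpi}{1}$ and $u=\smalltwomat{1}{0}{\vpi}{1}\in\wtil U_v$, one computes $x^{-1}ux=\smalltwomat{0}{-1}{1}{2}\notin\wtil U_v$. The Smith normal form of such an $x$ is $\mathrm{diag}(1,a)$, and conjugation by $\mathrm{diag}(1,a)$ sends a $(2,1)$-entry $m_{21}$ to $a^{-1}m_{21}$, which is not integral --- so the entrywise estimate you propose cannot succeed.

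The paper does not take this shortcut. It shows directly, by an explicit stabiliser computation, that right action by diagonal elements of $U$ brings any element to the form $x(b,a')$, and that two such lie in the same right $U$-coset iff $b\equiv b'\pmod{\vpi_v^{r+s}}$: the relation $x(b,a)\gamma=x(b',a')$ forces $\gamma$ to a specific shape and gives $b'-b\in b\cdot\tfrac{a}{1-a}\vpi_v^{r}\OO_{F,v}\subset\vpi_v^{r+s}\OO_{F,v}$. The symmetry $b\leftrightarrow b^{-1}(1-a)$ (i.e.\ transposition) then handles the left cosets. From this one concludes that the $x(b,a)$ simultaneously represent all three quotients --- which yields the stated bijections, but \emph{not} because the orbits themselves coincide as subsets. (As a minor point, the rescaling by $\smalltwomat{\beta^{-1}}{}{}{\beta}$ in your surjectivity step is also wrong, since $\beta$ is merely a unit, not $\equiv 1\pmod{\vpi_v^{r}}$; but that step is unnecessary, as after normalising the diagonal to $(1,1)$ your element is already of the required form $x(\beta,a')$.)
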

\begin{proof}
We drop the subscripts $v$.  By acting on the right with diagonal elements belonging to $U$, we can bring any element $x\in\Xi(\vpi^{r})_{a}$ to  one of the form $x(b, a')$ with $b\in \OO_{F}^{\times}$, $a'\in a(1+{\vpi^{r}}\OO_{F})$. The right action of an  element $\gamma\in U$ sends an element $x(b,a)$ to one of the same form $x(b', a')$ if and only if 
$$\gamma=\twomat {1+\lambda \vpi^{r}} {-b\mu\vpi^{r}\over 1-a} {-\lambda \vpi^{r}\over b} {1+\vpi^{r}\mu}$$
for some $\lambda, \mu\in\OO_{F}$; in this case we have 
$$b'= b{1-a\mu\vpi^{r}\over 1-a}, \qquad a'= a\left(1-\vpi^{r}(\lambda +\mu)-\vpi^{2r}{\lambda \mu a\over 1-a\vpi^{r}}\right).$$
The situation when considering the left action of $U$ is analogous (as can be seen by   the symmetry  $b\leftrightarrow b^{-1}(1-u\vpi^{s})$). The lemma follows.
\end{proof}

Henceforth we will just write $x_{v}(b_{v})$ for $x_{v}(b_{v},a)$ unless there is risk of confusion.

\begin{lemm}\label{galois-class} 
Fix $a\in F_{v}^{\times}$ with $v(a)\geq r_{v}$.
\begin{enumerate}
\item\label{oneone}
Let $x=x^{v}x_{v}(b_{v})$, $c_{v}\in \OO_{F,v}=\OO_{E_{w}}$. The action of the Galois element ${\rm rec}_{E}(1+c_{v}\vpi_{v}^{r_{v}})$
is
\begin{align}\label{gal act}
{\rm rec}(1+c_{v}\vpi_{v}^{r_{v}})[x]_{{U}}
=[x^{v}x_{v}(b_{v}(1+c_{v}\vpi_{v}^{r_{v}}))]_{{U}}.\end{align}
\item\label{twotwo}
We have 
$$ \Xi(\vpi_{v}^{r_{v}})_{a} / {U}_{v}=\coprod_{\baar{b}\in(\OO_{F}/\vpi_{v}^{r})^{\times}} 
{\rm rec}_{E}((1+\vpi^{r_{v}}\OO_{F,v}/1+\vpi^{s_{v}}\OO_{F,v}))\,
 x_{v}(b){U}_{v},$$
where $b$ is any lift of $\baar{b}$ to $\OO_{F,v}/\vpi^{r_{v}+s_{v}}$.
\end{enumerate}
\end{lemm}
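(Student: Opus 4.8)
\textbf{Proof plan for Lemma \ref{galois-class}.} The two assertions concern the local behaviour of the Galois action on the CM point $[x]_U$ at a prime $v \mid p$, under the running assumptions (Assumption \ref{ass at p2}, splitting of $v$ in $E$, and the explicit choice of isomorphisms $E_v \cong F_v \oplus F_v$, $B_v \cong M_2(F_v)$ fixed at the start of \S\ref{lsp}). The plan is to reduce everything to the theory of complex multiplication, i.e.\ to Shimura's reciprocity law describing how $\Gal(\baar F/E)$ acts on CM points: an element $t \in E_{\A^\infty}^\times$, via the reciprocity map, sends $[x]_U = [x^v x_v(b_v)]_U$ to $[t \cdot x]_U$ where $t$ acts by left multiplication on $\B^{\infty\times}$ through the fixed embedding $E_{\A^\infty} \hookrightarrow \B^{\infty\times}$ (cf.\ the conventions recalled in \S\ref{hm1} and \cite[\S 3.1.2]{yzz}). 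This is the key input that I will invoke.

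For part \ref{oneone}, I would take $t = 1 + c_v \vpi_v^{r_v} \in E_w^\times = F_v^\times$, which under $E_v \cong F_v \oplus F_v$ corresponds to the idele $(1+c_v\vpi_v^{r_v},\, 1) \in E_v^\times$, i.e.\ to the diagonal matrix $\mathrm{diag}(1+c_v\vpi_v^{r_v},\, 1) \in B_v^\times$ acting on the left. So $\mathrm{rec}_E(1+c_v\vpi_v^{r_v})[x]_U = [x^v \cdot \mathrm{diag}(1+c_v\vpi_v^{r_v},1) \cdot x_v(b_v)]_U$, and the whole content of \ref{oneone} is the matrix identity
$$
\twomat{1+c_v\vpi_v^{r_v}}{}{}{1}\, x_v(b_v) \equiv x_v\bigl(b_v(1+c_v\vpi_v^{r_v})\bigr) \pmod{U_v}
$$
modulo a right multiplication by an element of $U_v = U_{F,v}^\circ \wtil U_v$. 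This is a routine computation with $2\times 2$ matrices: multiply out the left-hand side, then right-multiply by a suitable element of $\wtil U_v = 1 + \vpi_v^{r_v}\OO_{\B,v}$ (of the shape written in the proof of Lemma \ref{coset reps}) to clear the extra factors, exactly as in the analysis of when the right action of $U_v$ sends $x_v(b,a)$ to $x_v(b',a')$ there; one checks the lower-left and diagonal entries land back in the right congruence classes. The near-identical symmetry $b \leftrightarrow b^{-1}(1-a)$ already exploited in Lemma \ref{coset reps} keeps this short.

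For part \ref{twotwo}, I would combine Lemma \ref{coset reps} — which gives that $\{x_v(b_v) : b_v \in (\OO_{F,v}/\vpi_v^{r_v+s_v})^\times\}$ is a complete set of representatives for $\Xi(\vpi_v^{r_v})_a/U_v$ when $v(a) = s_v \geq r_v$ — with the Galois action computed in \ref{oneone}. Indeed \ref{oneone} shows that as $c_v$ ranges over $\OO_{F,v}$, the elements $\mathrm{rec}_E(1+c_v\vpi_v^{r_v})\, x_v(b)U_v = x_v(b(1+c_v\vpi_v^{r_v}))U_v$ range precisely over the $x_v(b')U_v$ with $b' \equiv b \pmod{\vpi_v^{r_v}}$ (the residue $b' \bmod \vpi_v^{r_v+s_v}$ being pinned down by $c_v \bmod \vpi_v^{s_v}$, with $1+\vpi_v^{s_v}\OO_{F,v}$ acting trivially since it fixes $x_v(b)U_v$ — this is exactly the kernel appearing in the statement). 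Hence the cosets $x_v(b)U_v$ group into $\mathrm{rec}_E\bigl((1+\vpi_v^{r_v}\OO_{F,v})/(1+\vpi_v^{s_v}\OO_{F,v})\bigr)$-orbits indexed by $\baar b \in (\OO_{F,v}/\vpi_v^{r_v})^\times$, which is the claimed decomposition. I expect the main (minor) obstacle to be purely bookkeeping: matching the precise level of the kernel $(1+\vpi_v^{r_v}\OO)/(1+\vpi_v^{s_v}\OO)$ against the range $b_v \in (\OO_{F,v}/\vpi_v^{r_v+s_v})^\times$ of Lemma \ref{coset reps}, i.e.\ verifying that multiplication by $1+c_v\vpi_v^{r_v}$ induces exactly the surjection $(\OO/\vpi^{s_v})^\times \twoheadrightarrow \ker\bigl((\OO/\vpi^{r_v+s_v})^\times \to (\OO/\vpi^{r_v})^\times\bigr)$ on the relevant quotients; this is elementary but must be done carefully to get the orbit sizes right.
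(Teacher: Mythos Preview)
Your approach is exactly the paper's: invoke Shimura reciprocity to get ${\rm rec}_{E}(1+c_{v}\vpi_{v}^{r_{v}})[x]_{U}=\left[\smalltwomat{1+c_{v}\vpi_{v}^{r_{v}}}{}{}1 x\right]_{U}$ and reduce part~\ref{oneone} to the matrix calculation modulo $U_{v}$, then deduce part~\ref{twotwo} by combining part~\ref{oneone} with Lemma~\ref{coset reps} via the group-theoretic identity $(\OO_{F,v}/\vpi_{v}^{r+s})^{\times} = \frac{1+\vpi^{r}\OO_{F,v}}{1+\vpi^{r+s}\OO_{F,v}}\cdot(\OO_{F,v}/\vpi_{v}^{r})^{\times}$. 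Your caution about the orbit-size bookkeeping is well placed: the stabiliser of $x_{v}(b)U_{v}$ under $c\mapsto 1+c\vpi^{r}$ is $c\in\vpi^{s}\OO_{F,v}$, so the effective acting group is $(1+\vpi^{r}\OO)/(1+\vpi^{r+s}\OO)\cong\OO/\vpi^{s}$ (additive, not $(\OO/\vpi^{s})^{\times}$ as you wrote), of order $q^{s}$ as required.
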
 
\begin{proof}
 Both assertions follow  from the explicit   description of the Galois action on CM points: we have
$${\rm rec}_{E}(1+c_{v}\vpi_{v}^{r_{v}})[x]_{{U}}=\left[\twomat{1+c_{v}\vpi_{v}^{r}}{}{}1 x\right]_{{U}},$$
and a calculation establishes part \ref{oneone}. In view of Lemma \ref{coset reps}, part \ref{twotwo} is then a restatement of the obvious  identity $(\OO_{F_{v}}/\vpi_{v}^{r+s})^{\times}= {1+\vpi^{r}\OO_{F,v}\over 1+\vpi^{s}\OO_{F, v}} (\OO_{F_{v}}/\vpi_{v}^{r})^{\times} $.
\end{proof}

\subsubsection{Norm relation for the generating series}
Let
$$ \tZ_{a}^{v}(\phi^{v}):=c_{U^{p}}\sum_{x^{v}\in \B^{v\infty\times}/U}\phi^{v\infty}(x^{v}, aq(x^{v})^{-1})Z(x^{v})_{{U}}.$$
Then we have
$$\tZ_{a\vpi^{s}}(\phi)[1]_{{U}} = |\vpi_{v}|_{v}^{-2r_{v}}  \sum_{x_{v}}
 \tZ_{a}^{v}(\phi^{v})
[x_{v}]_{{U}}$$
where 
the sum runs over $x_{v}\in  \Xi  (\vpi_{v}^{r_{v}})_{v(a)+v(d)+s_{v}} / {U}_{v}$.

For $s\geq r_{v}$, let  ${H}_{s}\subset E^{\rm ab}$ be the extension of $E$ with norm group 
$$U_{F}^{\circ}U^{v}_{T}(1+\vpi_{w}^{s}\OO_{E,w}),$$
where $U_{T}=U\cap E_{\A^{\infty}}^{\times}$. Let $H_{\infty}=\cup_{s\geq r_{v}}H_{s}$.
If $r_{v}$ is sufficiently large,  for all $s\geq r_{v}$ the extension $H_{s}/H_{r_{v}}$ is totally ramified   at $\baar{w}$, and   we have
\begin{align}\label{lgall}
\Gal({H}_{s}/{H}_{r})\cong \Gal(H_{s_{v},\baar{w}}/H_{r_{v},\baar{w}})\cong (1+\vpi_{v}^{r_{v}}\OO_{F,v}) /  (1+\vpi_{v}^{s}\OO_{F,v}).
\end{align}
For convenience we set
$${H}'_{s}:=H_{r_{v}+s}$$
for any $s\in\{ 0, 1, \ldots, \infty\}$, and  
when $s<\infty$
we denote by $\Tr_{s}$, and similarly later $N_{s}$,  the trace (respectively  norm) with respect to the field extension 
$H'_{s}/H_{0}'$.

\begin{prop}\label{isnorm}
Fix any $a\in \A^{\infty,\times}$
with $v(a) = r_{v}$.
 With the notation just defined, we have 
$$\tZ_{a\vpi^{s}}(\phi)[1]_{{U}}=\sum_{i\in I} \sum_{\baar{ b}\in (\OO_{F,v}/\vpi^{r})^{\times}} \Tr_{s} c_{i} [x^{v}_{i}x_{v}(b,a\vpi^{s})]_{{U}} $$
where the finite  indexing set $I$, the constants $c_{i}\in \Q$ and  the cosets $x_{i}^{v}U^{v}$ are independent of~$s$. Moreover there exists an integer $d\neq 0$ independent of $a$ such that $c_{i}\in d^{-1}\Z$ for all $i$.
\end{prop}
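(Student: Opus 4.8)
The plan is to unwind the definition of $\tZ_{a\vpi^{s}}(\phi)[1]_{U}$ using the combinatorial description of the relevant CM-point cosets from the previous three lemmas, and then to recognise the resulting sum as a Galois trace from $H'_{s}$ down to $H'_{0}$.

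\textbf{Step 1: reduce to the local orbit at $v$.} Starting from the formula
$$\tZ_{a\vpi^{s}}(\phi)[1]_{U} = |\vpi_{v}|_{v}^{-2r_{v}}\sum_{x_{v}} \tZ_{a}^{v}(\phi^{v})[x_{v}]_{U},$$
where $x_{v}$ runs over $\Xi(\vpi_{v}^{r_{v}})_{a\vpi^{s}}/U_{v}$, I would apply Lemma \ref{galois-class}.\ref{twotwo} to rewrite this index set as a disjoint union
$$\Xi(\vpi_{v}^{r_{v}})_{a\vpi^{s}}/U_{v} = \coprod_{\baar b\in(\OO_{F,v}/\vpi_{v}^{r_{v}})^{\times}} {\rm rec}_{E}\big((1+\vpi_{v}^{r_{v}}\OO_{F,v})/(1+\vpi_{v}^{s'}\OO_{F,v})\big)\, x_{v}(b,a\vpi^{s})U_{v}$$
with $s'=v(a)+s$. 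Combined with Lemma \ref{galois-class}.\ref{oneone}, which identifies the action of ${\rm rec}_E(1+c_v\vpi_v^{r_v})$ on $[x^v x_v(b)]_U$ with the substitution $b\mapsto b(1+c_v\vpi_v^{r_v})$, this exhibits the set of points $\{[x^v x_v(b,a\vpi^s)]_U\}$ entering the sum as a single Galois orbit of $[x^v x_v(b,a\vpi^s)]_U$ under $\Gal(H'_s/H'_0)$, for each fixed residue class $\baar b$. Here the identification \eqref{lgall} of $\Gal(H_s/H_{r_v})$ (and hence of $\Gal(H'_s/H'_0)$) with $(1+\vpi_v^{r_v}\OO_{F,v})/(1+\vpi_v^{s'}\OO_{F,v})$ is exactly what matches the two indexing groups.

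\textbf{Step 2: assemble the trace.} Expanding $\tZ_a^v(\phi^v) = c_{U^p}\sum_{x^v\in\B^{v\infty\times}/U}\phi^{v\infty}(x^v, aq(x^v)^{-1})Z(x^v)_U$, the full sum becomes a finite $\Q$-linear combination $\sum_i c_i[x_i^v x_v(b_i, a\vpi^s)]_U$ in which, after Step 1, the points for fixed $i$ and fixed $\baar b$ form a $\Gal(H'_s/H'_0)$-orbit; grouping the orbit together produces $\Tr_s$ applied to a single representative. The set $I$, the coefficients $c_i$ (which come from the values of $\phi^{v\infty}$ and the constant $c_{U^p}|\vpi_v|_v^{-2r_v}$, all of which are independent of $s$ and of $a$ up to the finitely many possibilities for the coset $x^v U^v$ allowed by the support of $\phi^v$), and the cosets $x_i^v U^v$ are manifestly independent of $s$. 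For the integrality of the $c_i$: the $\Q$-valuedness of $\phi^{p\infty}$ (Assumption \ref{yzz5.2}) and of $\phi_{\omega,v}$ means the only denominators arise from $c_{U^p}$, the factor $|\vpi_v|_v^{-2r_v}=q_{F,v}^{2r_v}$ being an integer, and from the finite number of $U_v$-coset multiplicities; all of these are bounded by a single integer $d\neq 0$ depending only on $U$ and $\phi$, not on $a$. Hence $c_i\in d^{-1}\Z$.

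\textbf{Expected main obstacle.} The routine bookkeeping is in Step 2, but the genuinely delicate point is verifying in Step 1 that the field $H'_s = H_{r_v+v(a)+s}$ is the correct field of definition — i.e. that the orbit of $[x^v x_v(b,a\vpi^s)]_U$ is \emph{exactly} a principal homogeneous space under $\Gal(H'_s/H'_0)$ and not a quotient or a larger set. This requires knowing that, for $r_v$ large, $H_s/H_{r_v}$ is totally ramified at $\baar w$ (so that the local and global Galois groups agree via \eqref{lgall}), and that the stabiliser of $x_v(b,a\vpi^s)U_v$ under left-translation by the reciprocity image is trivial, which is where Lemma \ref{coset reps}'s precise count of coset representatives $b_v\in(\OO_{F,v}/\vpi_v^{r_v+s_v})^\times$ must be matched against the order of $\Gal(H'_s/H'_0)$. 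Once the cardinalities on both sides are seen to coincide, the map is forced to be a bijection and the trace identity follows; I would organise the argument so that this cardinality comparison is the crux and everything else is formal.
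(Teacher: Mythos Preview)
Your proposal is correct and follows essentially the same approach as the paper: apply Lemma \ref{galois-class}.\ref{twotwo} together with \eqref{lgall} to the identity $\tZ_{a\vpi^{s}}(\phi)[1]_{U} = |\vpi_{v}|_{v}^{-2r_{v}}\sum_{x_{v}} \tZ_{a}^{v}(\phi^{v})[x_{v}]_{U}$, then expand $\tZ_{a}^{v}(\phi^{v})_{U}=\sum_{i}c_{i}Z(x_{i}^{v})_{U}$. Two small remarks: the paper singles out explicitly that the correspondence $\tZ_{a}^{v}(\phi^{v})_{U}$ is defined over $H'_{0}$, which is what allows the Galois trace to commute with it and is worth stating; and your ``expected main obstacle'' (the cardinality match) is already absorbed into the statement of Lemma \ref{galois-class}.\ref{twotwo}, so no separate argument is needed here.
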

\begin{proof} By Lemma \ref{galois-class}  we can write 
\begin{align*}
\tZ_{a\vpi^{s}}(\phi)[1]_{ {U}} &= |\vpi_{v}|_{v}^{-2r_{v}}  \sum_{\baar{b}\in( \OO_{F}/\vpi_{v}^{r_{v}} )^{\times}}
 \tZ_{a}^{v}(\phi^{v})\,
[\Gal(H'_{s,\baar{w}}/H'_{0, \baar{w}})\cdot x_{v}(b,a\vpi^{s})]_{{U}}\\
&\begin{multlined}
=|\vpi_{v}|_{v}^{-2r_{v}}  \sum_{\baar{b}\in( \OO_{F}/\vpi_{v}^{r_{v}} )^{\times}}
 \Tr_{s} 
(\tZ_{a}^{v}(\phi^{v})  [x_{v}(b,a\vpi^{s})]_{{U}})
 \end{multlined}
 \end{align*}
using \eqref{lgall}, as by construction the correspondence $\tZ_{a}^{v}(\phi^{v})_{ {U}}$ is defined over $H'_{0}$. We obtain the result  by writing $\tZ_{a}^{v}(\phi^{v})_{ {U}} =\sum_{i\in I}c_{i}Z(x^{v}_{i})_{ {U}}$.

 Finally, the existence of $d$ follows from the fact that $\phi^{\infty}$ is a Schwartz function.
\end{proof}

\subsubsection{The extension $H_{\infty, \baar{w}}/E_{{w}}$} 
After deShalit \cite{deShalit},   given a non-archimedean local field $K$, we say that an extension $K'\subset K^{\rm ab}$ of $K$ is a  \emph{relative Lubin--Tate extension} if there is a (necessarily unique) finite unramified extension $K\subset K^{\circ}\subset K'$ such that $K'\subset K^{\rm ab}$ is maximal for the property of being  totally ramified above $K^{\circ}$. By local class field theory, for any relative Lubin--Tate extension $K'$ there exists a unique element   $\vpi\in K^{\times}$ with $v_{K}(\vpi)=[K^{\circ}:K]$ (where $v_{K}$ is the valuation of $K$) such that $K'\subset K^{\rm ab}$ is the subfield
  cut out by $\langle\vpi\rangle\subset K^{\times}$ via the reciprocity map of $K$. We call $\vpi$ the \emph{pseudo-uniformiser} associated with $K'$.

\begin{lemm}\label{LT}
The field $H_{\infty, \baar{w}}$ is the relative Lubin--Tate extension of $E_{w}$ associated with a pseudo-uniformiser $\vpi_{\rm LT}\in E^{\times}_{w}$ which is algebraic over $E$ and  satisfies $q_{w}(\vpi_{\rm LT})=1$. 
\end{lemm}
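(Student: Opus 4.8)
The plan is to identify $H_{\infty,\baar w}$ with a relative Lubin--Tate extension of $E_w=F_v$ by directly computing the norm subgroup it corresponds to under local class field theory, and then to check that the resulting pseudo-uniformiser can be taken algebraic over $E$ with trivial relative norm. First I would recall from the preceding subsection that $H_\infty=\cup_{s\geq r_v}H_s$, where $H_s$ is defined (inside $E^{\mathrm{ab}}$) by the norm subgroup $U_F^\circ U_T^v(1+\vpi_w^s\OO_{E,w})$ of $E_{\A^\infty}^\times/\overline{E^\times}$, and that for $r_v$ large the extensions $H_s/H_{r_v}$ are totally ramified at $\baar w$ with $\Gal(H_{s,\baar w}/H_{r_v,\baar w})\cong(1+\vpi_v^{r_v}\OO_{F,v})/(1+\vpi_v^s\OO_{F,v})$, via \eqref{lgall}. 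Passing to the completion at $\baar w$ and using the local reciprocity map ${\rm rec}_w\colon E_w^\times=F_v^\times\to\Gal(\baar F_{\baar v}/E_w)^{\mathrm{ab}}$, the local field $H_{\infty,\baar w}$ corresponds to the intersection over $s$ of the images of the global norm groups; since $1+\vpi_w^s\OO_{E,w}$ shrinks to $\{1\}$, this local norm subgroup is exactly the closure of the subgroup generated by (the local components at $w$ of) $U_F^\circ$ and the global units $E^\times$, i.e. it is topologically generated by a single element $\vpi_{\mathrm{LT}}\in E_w^\times$ of valuation $[E^\circ:E_w]$ where $E^\circ\subset H_{\infty,\baar w}$ is the maximal unramified subextension. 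Concretely $E^\circ$ is the unramified extension of $E_w$ cut out by $\widehat\OO_{E}^{\times}$-type data, matching the ``$U_{F,v}^\circ$ in the centre'' normalisation of Assumption \ref{ass at p2}; the key point is that $H_{\infty,\baar w}$ is maximal totally ramified over $E^\circ$, which is precisely the defining property (after de Shalit) of a relative Lubin--Tate extension.

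Next I would pin down $\vpi_{\mathrm{LT}}$ and verify its two asserted properties. Because the norm subgroup of $H_{\infty}$ as a global object contains $\overline{E^\times}$, the pseudo-uniformiser, which by the uniqueness clause of local class field theory is the unique element of $E_w^\times$ of the correct valuation cutting out $H_{\infty,\baar w}$, may be chosen to be the image of a global element $\vpi_{\mathrm{LT}}\in E^\times$ (or of a nearby algebraic number), hence algebraic over $E$; this uses that $E^\times$ is dense enough in $E_w^\times/(1+\vpi_w^N)$ for every $N$ to realise any prescribed class. For the relation $q_w(\vpi_{\mathrm{LT}})=1$: the relative norm $q_w=N_{E_w/F_v}$ on the two factors $E_w=F_v\oplus F_v$ is just multiplication, so I need the $w$- and $w^*$-components of $\vpi_{\mathrm{LT}}$ to be inverse to one another. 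This will follow from the fact that the defining norm subgroup of $H_s$ involves $U_T^v$ and $U_F^\circ$ but is taken inside $E_{\A^\infty}^\times$, and the product formula / the condition $q(U_v)\subset(U_{F,v}^\circ)^2$ forces the generator to lie in the kernel of the map induced by $q_w$ on the relevant quotient; equivalently, one checks directly using the explicit reciprocity description of the Galois action on CM points (Lemma \ref{galois-class}) that the Frobenius-type elements act through $1+\vpi_v^{r_v}\OO_{F,v}$ embedded antidiagonally in $E_v^\times$, so that $q_w$ kills $\vpi_{\mathrm{LT}}$ up to the part already in the norm group.

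The main obstacle I anticipate is bookkeeping the passage from the \emph{global} norm subgroup $U_F^\circ U_T^v(1+\vpi_w^s\OO_{E,w})$ of $E_{\A^\infty}^\times/\overline{E^\times}$ to the \emph{local} norm subgroup of $E_w^\times=F_v^\times$ that actually cuts out $H_{\infty,\baar w}$: one must carefully track the contribution of the infinite part, of the away-from-$w$ finite part, and of the global units $E^\times$, and confirm that after intersecting over $s$ exactly a procyclic subgroup $\langle\vpi_{\mathrm{LT}}\rangle$ remains (so the extension is genuinely relative Lubin--Tate, with the correct unramified layer $E^\circ$ of degree $[E^\circ:E_w]=v_{E_w}(\vpi_{\mathrm{LT}})$). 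This is where the hypotheses in Assumption \ref{ass at p2} on $U_v=U_{F,v}^\circ\wtil U_v$ and on $q(U_v)$ are used. Once this is in place, the algebraicity of $\vpi_{\mathrm{LT}}$ and the identity $q_w(\vpi_{\mathrm{LT}})=1$ are short consequences, and I would write the proof as: (i) identify the local norm subgroup; (ii) invoke de Shalit's characterisation to conclude $H_{\infty,\baar w}$ is relative Lubin--Tate with pseudo-uniformiser $\vpi_{\mathrm{LT}}$; (iii) choose $\vpi_{\mathrm{LT}}\in E^\times$ by density; (iv) compute $q_w(\vpi_{\mathrm{LT}})=1$ from the structure of the norm subgroup and $E_w=F_v\oplus F_v$.
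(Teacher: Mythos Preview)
Your outline has the right shape, but step (iii) contains a genuine error and step (iv) is too vague to succeed as written.

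On step (iii): the pseudo-uniformiser attached to a relative Lubin--Tate extension is \emph{uniquely determined} (this is the defining property recalled just before the lemma). You cannot therefore ``choose'' it to lie in $E^{\times}$ by a density argument; density in $E_{w}^{\times}/(1+\vpi_{w}^{N}\OO_{E,w})$ lets you approximate $\vpi_{\rm LT}$ by global elements, but gives no reason why $\vpi_{\rm LT}$ itself should be algebraic. The paper takes the opposite route: it first constructs an explicit global element $\theta\in E^{\times}$ with $q(\theta)=1$, and then checks that the image of $\theta$ in $E_{w}^{\times}$ lies in the kernel of ${\rm rec}_{E_{w}}\colon E_{w}^{\times}\to\Gal(H_{\infty,\baar w}/E_{w})$. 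Since that kernel is $\langle\vpi_{\rm LT}\rangle$, this forces $\vpi_{\rm LT}$ to be a root of $\theta$, hence algebraic with the norm property inherited from $\theta$. The construction is concrete: write $\vpi_{w}^{d}=tu$ with $t\in E^{\times}$, $u\in U_{T}$ (possible for $d=[E_{\A^{\infty}}^{\times}:E^{\times}U_{T}]$), and set $\theta=t/\baar{t}$. Then $q(\theta)=N_{E/F}(t)/N_{E/F}(\baar{t})=1$ automatically, and a short manipulation with $\iota_{w}$, $\iota_{w^{*}}$ shows $\iota_{w}(\theta)\in E^{\times}U_{T}^{v}U_{F}^{\circ}$.

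On step (iv): you write ``the relative norm $q_{w}=N_{E_{w}/F_{v}}$ on the two factors $E_{w}=F_{v}\oplus F_{v}$'', but $E_{w}\cong F_{v}$ is a single local field; it is $E_{v}=E_{w}\times E_{w^{*}}$ that splits as $F_{v}\oplus F_{v}$. The condition $q_{w}(\vpi_{\rm LT})=1$ is not a local statement about $E_{w}/F_{v}$ (where the norm is an isomorphism) but is inherited from the global relation $q(\theta)=1$, and its real content (used in the next lemma) is that $\vpi_{\rm LT}$ is an algebraic number of archimedean weight $0$. Your attempt to extract this from $q(U_{v})\subset(U_{F,v}^{\circ})^{2}$ or from Lemma \ref{galois-class} does not work: those inputs control the \emph{unit} part of the norm group, not the norm of the pseudo-uniformiser itself.
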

\begin{proof} 
It is easy to verify that $H_{\infty, \baar{w}}$ is a relative Lubin--Tate extension. We only need to identify the corresponding pseudo-uniformiser $\vpi_{{\rm LT}}$. It suffices in fact to find an element $\theta\in E^{\times}$ satisfying $q(\theta)=1$ and lying in the kernel of ${\rm rec}_{E_{w}}\colon E_{w}^{\times}\to \Gal(H_{\infty, \baar{w}}/E_{w})$, as then $\vpi_{\rm LT}$ must be a root of $\theta$ hence also satisfies the required property.   

 Let $\vpi_{w}\in E_{w}$ be a uniformiser at $w$, and let 
$d=[E_{\A^{\infty}}^{\times}: E^{\times}U_{T}]$, where $U_{T}^{v}$ is as before,  $U_{T,w}\subset \OO_{E,w}^{\times}$ is arbitrary, and $U_{T,w^{*}}$ is identified with $U_{F,v}^{\circ}$ under $F_{v}\cong E_{w^{*}}$. 
Then we can find $t\in E^{\times}$, $u\in U_{T}$ such that $\vpi_{w}^{d}=tu$. 
We show 
that  the image of $\theta:= t/\baar{t}$ in $E_{w}^{\times}$ lies in the kernel of ${\rm rec}_{E_{w}}\colon E_{w}^{\times}\to \Gal(H_{\infty, \baar{w}}/E_{w})$.
Letting $\iota_{w}\colon E_{w}^{\times}\into E_{{\A}^{\infty}}^{\times}$ be the inclusion, we   show equivalently that $\iota_{w}(\theta)$ is in the kernel of ${\rm rec}_{E}\colon E_{\A^{\infty}}^{\times}\to \Gal(H_{\infty, \baar{w}}/E)$ or concretely  that $i_{w}({t/ \baar{t}})\in E^{\times}  U^{v}_{T}U_{F}^{\circ}$. Now we have
$$\iota_{w}({t/ \baar{t})}= t\cdot  u^{v} \iota_{w}(u_{w^{*}})\iota_{w^{*}}(u_{w^{*}}).$$
By construction $u^{v}\in U_{T}^{v}$, and
    $\iota_{w}(u_{w^{*}})\iota_{w^{*}}(u_{w^{*}})$
belongs to $U_{F,v}^{\circ}$. This completes the proof.
\end{proof}

\subsection{Annihilation of local heights}\label{annih}

Suppose still that the open compact  $U$ and the Schwartz function $\phi^{\infty}$
satisfy all of the assumptions of \S\ref{local ass away p} together with 
 Assumption \ref{ass at p2}.
In this subsection, we complete the proof of Theorem \ref{theo local comp} by  showing  that the element $\tZ(\phi^{\infty}, \chi)(p) \in \baar{\bf S}$ is annihilated by $\lf$. 
Let $S$ be a finite set of non-archimedean places of $F$ such that, for all $v\notin S$, all the data are unramified, $U_{v}$ is maximal, and $\phi_{v}$ is standard. Let $K=K^{p}K_{p}$
 be the level of the modular form $\tZ(\phi^{\infty})$, and let 
 $T_{\iota_{\frakp}}(\sigma^{\vee})\in \mathscr{H}^{S}({L})=\mathscr{H}^{S}({M})\otimes_{M, {\iota_{\frakp}}} {L}$ be any element as in Proposition \ref{p-pet}.\ref{idempot}.  By that result, it suffices to prove that \begin{gather}\label{bbb}
 \lf(T_{\iota_{\frakp}}(\sigma^{\vee}) \tZ(\phi^{\infty}, \chi)(p))=0.\end{gather}
  We will in fact prove the following.
 
 \begin{prop}\label{is crit} For all $v\vert p$, the element $T_{\iota_{\frakp}}(\sigma^{\vee})\tZ(\phi^{\infty}, \chi)(v)\in \baar{\bf S}{}'$ is $v$-critical in the sense of Definition \ref{crit}.
 \end{prop}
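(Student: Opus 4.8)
The plan is to prove the stronger Proposition \ref{is crit}, which by Proposition \ref{p-pet}.\ref{Lkillscrit} immediately yields \eqref{bbb} and hence part \ref{part sp} of Theorem \ref{theo local comp}. The key input is the integrality criterion of Proposition \ref{int panc}, applied to the Galois representation $V=V_{\frakp}A^{\vee}\otimes L$ restricted to $\calG_{E_{w}}$ for $w\vert v\vert p$. Because $E_{v}/F_{v}$ is split, we may separate the two places $w$, $w^{*}$ above $v$, and by the discussion of \S\ref{hecke galois} the CM points contributing to $\tZ(\phi^{\infty})(v)$ and their Galois conjugates all live in the relative Lubin--Tate tower $H_{\infty,\baar w}/E_{w}$ identified in Lemma \ref{LT}, whose associated pseudo-uniformiser $\vpi_{\mathrm{LT}}$ satisfies $q_{w}(\vpi_{\mathrm{LT}})=1$ and is algebraic over $E$. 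This is exactly the setting in which the ``universal norm'' hypothesis of Proposition \ref{int panc} can be verified.

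\textbf{Key steps.} First I would fix $v\vert p$ and, using Proposition \ref{isnorm}, write the $(a\vpi^{s})^{\text{th}}$ reduced $q$-expansion coefficient of $\tZ(\phi^{\infty})[1]_{U}$ as $\Tr_{s}$ of a fixed divisor class $D_{a}\in \Div^{0}(X_{U})_{\otimes \Q}$ defined over $H_{0}'$, with denominators bounded by a fixed integer $d$ independent of $a$; pairing against the fixed CM divisor $t_{\chi}$ (Proposition \ref{vanish hodge}.\ref{vh3}) and using the decomposition of \S\ref{sec comparison}, the coefficient of $\tZ(\phi^{\infty},\chi)(v)$ indexed by $a\vpi^{s}$ becomes a sum over $\baar w\vert v$ of local height symbols $\langle \Tr_{s}(\text{stuff}), t_{\chi}\rangle_{\ell_{\baar w}}$, which by the norm-compatibility \eqref{local ht norm} equals $\langle (\text{stuff}), \mathrm{res}\, t_{\chi}\rangle_{\ell_{\baar w}\circ N}$ computed over the growing extension $H_{s}'$. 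Second, I would apply $T_{\iota_{\frakp}}(\sigma^{\vee})$: by Lemma \ref{various Ts} this acts on the $q$-expansion coefficients through the Hecke correspondences, and cutting down to the $\sigma^{\vee}$-isotypic part means the mixed extensions arising are extensions of $V_{\frakp}A^{\vee}$-valued classes, which by potential ordinariness at $v$ satisfy the Panchishkin condition; moreover the restriction of $\mathrm{res}\, t_{\chi}$ to the potentially unramified quotient $V^{-}$ vanishes once we are at deep enough level in the tower (the CM point becomes a universal norm in the $V^{+}$-direction along the Lubin--Tate tower, by the usual argument that Heegner/CM points over $\Z_{p}$-towers are universal norms, cf. \cite{ari, nekovar}), so hypothesis (c) and the vanishing hypothesis on $x_{2}$ in Proposition \ref{int panc} are met. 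Third, Proposition \ref{int panc} then gives that $\frakp_{L}^{d_{0}+d_{1}+d_{2}}$ times the height symbol lies in $\ell_{v}(F_{v}^{\times}\hat\otimes\OO_{L})$, with exponents $d_{0},d_{1},d_{2}$ bounded \emph{independently of $s$}; feeding back through the trace $\Tr_{s}$, which multiplies $\ell$-values by the ramification degree $[H_{s}':H_{0}']=q_{F,v}^{s-r_{v}}$ (up to a bounded unit), one finds that the $(a\vpi^{s})^{\text{th}}$ reduced coefficient of $T_{\iota_{\frakp}}(\sigma^{\vee})\tZ(\phi^{\infty},\chi)(v)$ lies in $q_{F,v}^{s}\cdot c^{-1}\OO_{L}$ for a fixed constant $c$, which is precisely the $v$-criticality of Definition \ref{crit}. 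Finally, $v$-critical forms are killed by $e_{v}$ hence by $\lf$ (Proposition \ref{p-pet}.\ref{Lkillscrit}), completing the proof of Theorem \ref{theo local comp}.\ref{part sp} and therefore of Theorem \ref{yzz5} under Assumption \ref{ass on chi}.

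\textbf{Main obstacle.} The delicate point is the uniformity in $s$ of the integrality bounds: I need the integers $d_{0}$ (lattice defect between $T$ and $T^{+}\oplus T^{-}$), $d_{1}$ (length of $H^{1}(E_{w},T''^{*}(1))_{\mathrm{tors}}$), and $d_{2}$ (cokernel of universal norms in $H^{1}_{f}$) to be bounded as the base field grows along $H_{s}'$. For $d_{0}$ and $d_{1}$ this is standard since they depend only on $V_{\frakp}A$ as a local representation (and the residual representation), but controlling $d_{2}$ uniformly along the Lubin--Tate tower is the heart of the matter — this is where the semistable (as opposed to good-reduction) case genuinely requires the new integrality criterion of \S\ref{intH} rather than a direct intersection computation, and where one must invoke the finiteness statements of \cite[Corollary 8.11.8]{nek-selmer} or \cite[Corollary 1.4.6]{benois-greenberg} together with the fact that $\vpi_{\mathrm{LT}}$ is algebraic (so that the CM points really do descend to number fields and their norm-compatibility is an arithmetic, not merely $p$-adic, statement). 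Once this uniformity is in hand, the rest is bookkeeping with $q$-expansions and the action of $\Up_{v,*}$ via Lemma \ref{Up on Z}.
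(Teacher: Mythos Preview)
Your overall architecture is correct and matches the paper's: use Proposition \ref{isnorm} to express the $(a\vpi^{s})^{\rm th}$ coefficient as a trace from $H'_{s}$, apply the norm relation \eqref{local ht norm}, and invoke Proposition \ref{int panc} over the growing tower to extract the factor $q_{F,v}^{s}$. However, there are two genuine gaps.

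First, you misidentify the hypothesis of Proposition \ref{int panc}. The decomposition there is $V=V'\oplus V''$ where $V'$ is the piece satisfying the Panchishkin condition; the required input is that $x_{2}$ has \emph{vanishing image in} $H^{1}_{f}(F_{v},T''^{*}(1))$, not that its image in the quotient $V'^{-}$ vanishes. In the paper's setup $V$ is the full ${\rm Ind}^{H}_{E}V_{p}J_{U}$, the summand $V'$ is $\pi_{A^{\vee}}^{U}\otimes V_{\frakp}A^{\vee,\iota_{\frakp}}(\chi^{-1})$, and $V''$ is everything else (other isogeny factors, other embeddings, other characters). The fact that $T_{\iota_{\frakp}}(\sigma^{\vee})^{\rm t}t_{\chi}$ lands in $H^{1}_{f}(E,V')$ is a separate statement (Proposition \ref{is in V'} in the paper), proved by unwinding the Hecke action via Theorem \ref{atl} and Lemma \ref{various Ts}. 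Your sentence about ``the restriction of ${\rm res}\,t_{\chi}$ to $V^{-}$ vanishing once we are at deep enough level'' conflates this with the Panchishkin filtration $V'^{+}\subset V'$ and with the universal-norm input; these are three distinct ingredients.

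Second, and more seriously, you omit the essentially-crystalline step. Proposition \ref{int panc} has two conclusions: the first places the height in $\ell_{v}(F_{v}^{\times}\hat\otimes\OO_{L})$, the second (requiring $[E_{v}]$ essentially crystalline) places it in $\ell_{v}(\OO_{F,v}^{\times}\hat\otimes\OO_{L})$. Only the second yields the factor $q_{F,v}^{s}$: on the uniformiser part the norm $N_{s}$ gives no extra divisibility, whereas on units the norm image along the Lubin--Tate tower lies in $1+\vpi_{v}^{s+c}\OO_{F,v}$, which is what produces $q_{F,v}^{s}$ after applying the logarithm. The essential crystallinity is supplied by Proposition \ref{is ess crys}, which in turn rests on the vanishing of the arithmetic intersection multiplicity $m_{\baar{w}}$ established in \cite{yzz} under Assumptions \ref{yzz5.3}--\ref{yzz5.4}; this is exactly where Proposition \ref{integrality and intersect} enters. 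Without it your ``feeding back through $\Tr_{s}$'' gives only bounded denominators, not the required growth.

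Finally, your ``main obstacle'' paragraph has the difficulty backwards. The integer $d_{2}$ (universal-norm cokernel for $T'$) is independent of $s$ because $V'$ and $T'=T_{s}\cap V'$ do not change with $s$. What genuinely requires work is the boundedness of $d_{1,s}={\rm length}\,H^{1}(\wtil{H}'_{0,\baar w},T''^{*}_{s}(1))_{\rm tors}$, since $T''_{s}$ grows with $s$; the paper handles this by showing $H^{0}(E_{w},V_{p}J_{U}\otimes\OO_{L}\llb\Gamma'_{\rm LT}\rrb)=0$ via a Hodge--Tate weight argument and Mokrane's theorem on crystalline Frobenius eigenvalues, using crucially that the pseudo-uniformiser $\vpi_{\rm LT}$ of Lemma \ref{LT} is an algebraic number of weight~$0$.
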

 
Recall   from \S\ref{up etc} that the commutative ring $\mathscr{H}^{S}(M)$ acts on the space of reduced $q$-expansions ${\bf S}'(K^{p})$ and its quotient $\baar{\bf S}{}'_{S_{1}}(K^{p})$, so that the expression $T_{\iota_{\frakp}}(\sigma^{\vee})\tZ(\phi^{\infty}, \chi)(v)$ makes sense.
Proposition \ref{is crit} implies that  $T_{\iota_{\frakp}}(\sigma^{\vee})\tZ(\phi^{\infty}, \chi)(p)$ is  a $p$-critical element of $\baar{\bf S}$, hence it is annihilated by $\lf$ by Proposition \ref{p-pet}.\ref{Lkillscrit}, establishing \eqref{bbb}.
 
 \medskip

By Lemma \ref{various Ts}, there is a Hecke correspondence ${\rm T}(\sigma^{\vee})_{U}$ on $X_{U}$ (with coefficients in $M$) such that  
$$T_{\iota_{\frakp}}(\sigma^{\vee}){}^{\qqq}Z_{*}(\phi^{\infty})_{U}={\rm T}_{\iota_{\frakp}}(\sigma^{\vee})_{U} \circ Z_{*}(\phi^{\infty})_{U}$$
as correspondences on $X_{U}$. Then 
$T_{\iota_{\frakp}}(\sigma^{\vee})\tZ(\phi^{\infty}, \chi)_{U}(v)$ is an average of 
\begin{align}\label{sagan}
\langle {\rm T}_{\iota_{\frakp}}(\sigma^{\vee})_{U}  {}^{\qqq}\tZ_{*}(\phi^{\infty})_{U}[1], t_{\chi}\rangle_{\baar{w}} 
= \langle {}^{\qqq}\tZ_{*}(\phi^{\infty})_{U}[1], {\rm T}_{\iota_{\frakp}}(\sigma^{\vee})_{U}^{\rm t} t_{\chi}\rangle_{\baar{w}}\end{align}
for $\baar{w}\vert v$. 
 
We study the class of  ${\rm T}_{\iota_{\frakp}}(\sigma^{\vee})_{U}^{\rm t} t_{\chi}$ in $H^{1}_{f}(E, {\rm Ind}^{H}_{E}V_{p}J_{U}|_{\calG_{E}})$, where $H\subset E^{\rm ab}$ is any sufficiently large finite extension. 
Let $L'$ denote $\baar{\Q}_{p}$ or   any sufficiently large finite extension of $\Q_{p}$.  As $\calG_{F}$-representations, we have 
$$V_{p}J_{U}\otimes_{{\Q}_{p}} L' =\bigoplus_{A', \iota'\colon M_{A'}\into L'} \pi_{A'}^{U}\otimes_{M_{A'}}V_{p} A'^{\iota}$$
 for some   pairwise non-isogenous simple abelian varieties $A'/F$ with $\End^{0}A'=M_{A'}$; here $V_{p} A'^{\iota}:= V_{p}A'\otimes_{M_{A'}, \iota} L'$. 
 More generally, let 
\begin{gather}\label{ind H}
V:=  {\rm Ind}^{H}_{E}V_{p}J_{U}|_{\calG_{H}} 
\end{gather}
for a finite extension $H$ of $E$ as above; then we have 
\begin{align}\label{dir sum}
V\otimes_{{\Q}_{p}}L' =\bigoplus_{A', \iota'\colon M_{A'}\into L', \chi'\colon \Gal(H/E)\to L'^{\times}} \pi_{A'}^{U}\otimes_{M_{A'}}V_{p}A'^{\iota}({\chi'}),
\end{align}
where   $V_{p}A'^{\iota}({\chi'}):=V_{p}A'\otimes_{M_{A'}, \iota} L'_{\chi}.$

Let $V':= \pi_{A^{\vee}}^{U}\otimes_{M} V_{p}A^{\vee \iota_{\frakp}}(\chi^{-1})\subset V$, where $\iota_{\frakp}\colon M\into L\subset L(\chi)$ is the usual embedding. Let $V''\subset V$ be its complement in the direct sum \eqref{dir sum}, $V=V'\oplus V''$. 

\begin{prop}\label{is in V'}  The  class of  ${\rm T}_{\iota_{\frakp}}(\sigma^{\vee})_{U}^{\rm t} t_{\chi}$ in $H^{1}_{f}(E, V)$ belongs to the subspace $H^{1}_{f}(E, V')$.
\end{prop}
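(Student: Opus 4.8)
\textbf{Proof sketch for Proposition \ref{is in V'}.} The point is that $t_\chi$, though a priori only a class in $H^1_f(E,V)$, picks up the correct $\chi$-component once we apply the Hecke operator $T_{\iota_\frakp}(\sigma^\vee)$, and moreover lives in the Bloch--Kata Selmer subspace attached to $A^\vee$. The plan is as follows. First I would recall that by construction of $t_\chi$ as a $\chi$-twisted average of CM points $[t^{-1}]_U\in \Div^0(X_U)$, the class $\varkappa(t_\chi)\in H^1_f(E, V)$ is a genuine Selmer class: this is immediate because CM points on Shimura curves over the (abelian, hence everywhere potentially good reduction at $v\nmid p$, and crystalline/semistable at $v\mid p$) extension $H$ give rise to classes in $H^1_f$ via the Kummer map, and the Bloch--Kato local conditions are preserved by corestriction. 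So the content is entirely the identification of the \emph{isotypic component}.

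The key step is a Hecke-equivariance argument. By Lemma \ref{various Ts} the transpose correspondence ${\rm T}_{\iota_\frakp}(\sigma^\vee)_U^{\rm t}$ acts on $H^1_f(E,V)$ through the decomposition
$$V\otimes_{\Q_p}L' =\bigoplus_{A', \iota', \chi'} \pi_{A'}^{U}\otimes_{M_{A'}} V_pA'^{\iota'}(\chi')$$
by a scalar (or rather an idempotent) on each summand: on the summand indexed by $(A',\iota',\chi')$ it acts by the eigenvalue by which $T(\sigma^\vee)$ acts on $(\sigma^{\vee,\iota'})^{K}$, which by the very choice of $T(\sigma^\vee)$ in Proposition \ref{p-pet}.\ref{idempot} is $1$ when $A'=A^\vee$ and $\iota'=\iota_\frakp$, and $0$ otherwise. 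Hence ${\rm T}_{\iota_\frakp}(\sigma^\vee)_U^{\rm t}$ is precisely the projector onto $\bigoplus_{\chi'}\pi_{A^\vee}^U\otimes_M V_pA^{\vee\iota_\frakp}(\chi')$. Therefore ${\rm T}_{\iota_\frakp}(\sigma^\vee)_U^{\rm t} t_\chi$ lies in this sub-$\calG_E$-representation, and in particular $H^1_f(E, {\rm T}_{\iota_\frakp}(\sigma^\vee)_U^{\rm t} t_\chi)\subset \bigoplus_{\chi'} H^1_f(E, \pi_{A^\vee}^U\otimes V_pA^{\vee\iota_\frakp}(\chi'))$.

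It remains to cut down from $\bigoplus_{\chi'}$ to the single character $\chi^{-1}$. This I would extract from the Galois-equivariance of the construction of $t_\chi$: since $t_\chi=\int_{[T]}^*\chi(t)[t^{-1}]_U\, d^\circ t$ is built by $\chi$-averaging over $\Gal(H/E)$ acting through the reciprocity map (using the theory of complex multiplication, exactly as recalled in \S\ref{hecke galois}), the class $\varkappa(t_\chi)$ transforms under $\sigma\in\Gal(H/E)$ by $\chi(\sigma)^{-1}$ — equivalently, it lies in the $\chi^{-1}$-eigenspace of the $\Gal(H/E)$-action on ${\rm Ind}^H_E V_pJ_U|_{\calG_H}$. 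Since Hecke operators commute with this $\Gal(H/E)$-action, the same is true after applying ${\rm T}_{\iota_\frakp}(\sigma^\vee)_U^{\rm t}$. Combining with the previous paragraph, the class lands in $H^1_f(E, \pi_{A^\vee}^U\otimes_M V_pA^{\vee\iota_\frakp}(\chi^{-1}))=H^1_f(E,V')$, which is the assertion.

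The main obstacle is bookkeeping rather than conceptual: one must make sure that the decomposition of $V\otimes L'$ into $(A',\iota',\chi')$-components is actually $\calG_E$-stable and compatible with the Hecke action on $H^1_f$ (this uses that $T(\sigma^\vee)$ is spherical away from $S\cup\{v\mid p\}$ and that $J_U$ is defined over $F$, so the $\pi_{A'}^U$-multiplicity spaces carry trivial Galois action), and that the "$\chi$-eigenspace" statement for $t_\chi$ is the correct one given our normalisation ${\rm rec}$ sending uniformisers to geometric Frobenii and the convention $V_pA'^{\iota}(\chi'):=V_pA'\otimes L'_{\chi'}$. Once these compatibilities are in place the proposition is formal. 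I would also remark that the identification $(V_\frakp A(\chi))^*(1)\cong V_\frakp A(\chi)$ (from $\chi|_{\A^{\infty,\times}}=\omega_A^{-1}$, cf. \S\ref{lgh}) is what makes $A^\vee$ and $\chi^{-1}$ the right pair to land in, matching the definition of $V'$.
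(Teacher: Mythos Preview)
Your overall strategy---isolate the $\chi^{-1}$-isotypic part by Galois equivariance of the CM-point construction, then project onto the $(A^\vee,\iota_\frakp)$-piece using the idempotent $T(\sigma^\vee)$---is correct and matches the paper's. The paper reverses the order (first observing that $t_\chi$ already lies in $H^1_f(E,V_pJ_U(\chi^{-1}))$, then handling the Hecke projection), but this is immaterial.

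Where the two diverge is in the Hecke step. You invoke Lemma~\ref{various Ts} and assert directly that ${\rm T}_{\iota_\frakp}(\sigma^\vee)_U^{\rm t}$ acts on the $(A',\iota')$-summand by the eigenvalue of $T(\sigma^\vee)$ on $(\sigma^{\vee,\iota'})^K$. Two cautions: Lemma~\ref{various Ts} relates Hecke operators on $q$-expansions to composition of correspondences on the \emph{generating series}, not to the decomposition of $V_pJ_U$; and your formulation of the eigenvalue does not yet disentangle the transpose (which swaps $\sigma$ and $\sigma^\vee$) from the two different embeddings $\iota'$ and $\iota_\frakp$ of $M$ into $L'$. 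These are exactly the compatibilities you flag as ``bookkeeping'', but they are not entirely formal---the case $A'=A^\vee$ with $\iota'\neq\iota_\frakp$ in particular requires seeing why two distinct $L'$-linear projections of an $M$-linear object compose to zero.

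The paper sidesteps this by routing through the arithmetic theta lifting (Theorem~\ref{atl}). Rather than computing the Hecke action on $V_pJ_U$ directly, it tests the vanishing against all ${\rm T}_{{\rm alg},\iota}(f_1,f_2)\in\Hom(J_U,J_U^\vee)$ for $(A',\iota)\neq(A^\vee,\iota_\frakp)$, rewrites ${\rm T}_{{\rm alg},\iota}$ as an algebraic Petersson pairing with a modular form via Theorem~\ref{atl}, and then uses Lemma~\ref{various Ts} in its intended direction to convert ${\rm T}_{\iota_\frakp}(\sigma^\vee)_U^{\rm t}$ back into the Hecke operator $T(\sigma)$ acting on modular forms. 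At that point the idempotent property kills the $A'\neq A^\vee$ terms, and the $\iota\neq\iota_\frakp$ terms die because the Petersson product is $M$-linear while the two projections $\iota,\iota_\frakp\colon M\otimes L'\to L'$ are orthogonal. Your direct approach can certainly be made to work, but the paper's detour through Theorem~\ref{atl} has the virtue of pinning down these dualities using machinery already in place.
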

\begin{proof} 
We may and do replace $L(\chi)$ by a sufficiently large finite extension $L'$.   It is clear that the class  of $t_{\chi}$ belongs to    $H^{1}_{f}(E, V_{p}J_{U}(\chi^{-1}))$. Then it is enough to verify that ${\rm T}_{\iota_{\frakp}}(\sigma^{\vee})_{U}^{\rm t}$ annihilates $\bigoplus_{(A', \iota)\neq (A^{\vee}, \iota_{\frakp})} (V_{p}A'^{\iota})^{m_{A'}}$. Equivalently,
 we show that for  any $(A', \iota)\neq (A, \iota_{\frakp})$ and any $f_{1}\in \pi_{A'}$, $f_{2}\in \pi_{A'}$, 
 $${\rm T}_{{\rm alg}, \iota} (f_{1}, f_{2})\circ {\rm T}_{\iota_{\frakp}}(\sigma^{\vee})^{\rm t}_{U}=0$$
in $\Hom(J_{U}, J_{U}^{\vee})$.

 We have 
 $${\rm T}_{{\rm alg}, \iota} (f_{1}, f_{2})\circ {\rm T}_{\iota_{\frakp}}(\sigma^{\vee})^{\rm t}_{U}
 =  {\rm T}_{\iota_{\frakp}}(\sigma^{\vee})^{\rm t}_{U}\circ {\rm T}_{{\rm alg}, \iota} (f_{1}, f_{2})
 = {\rm T}_{\iota_{\frakp}}(\sigma^{\vee})^{\rm t}_{U}\circ {\rm T}_{{\rm alg}, \iota} (\theta_{\iota}(\vphi', \phi^{\infty}{}'))$$
 for some $\vphi'\in\sigma_{A'}^{\infty}$ and some rational Schwartz function $\phi^{\infty}{}'$. By Theorem \ref{atl}, this can be rewritten as 
$$  {\rm T}_{\iota_{\frakp}}(\sigma^{\vee})^{\rm t}_{U}\circ (\iota\vphi',  \tZ_{*}({\phi^{\infty}}{}'))_{ \sigma_{A'}^{\infty}} 
=     (\iota\vphi', {\rm T}_{\iota_{\frakp}}(\sigma^{\vee})^{\rm t}_{U}\circ \tZ_{*}({\phi^{\infty}}{}'))_{\sigma_{A'}^{\infty}}
$$
using an obvious commutativity.
Applying  Lemma \ref{various Ts} again, we have
$$ {\rm T}_{\iota_{\frakp}}(\sigma^{\vee})^{\rm t}_{U}\circ \tZ_{*}({\phi^{\infty}}{}')=\iota_{\frakp} T(\sigma)\tZ_{*}({\phi^{\infty}}{}'), $$
where in view of \eqref{transpose}, it is easy to see that we are justified in calling $T(\sigma)$ the Hecke operator corresponding to $T(\sigma^{\vee})^{\rm t}_{U}$; i.e., this Hecke operator acts as the idempotent projection onto $\sigma^{K}\subset M_{2}(K, M)$ for the appropriate level $K$. It is then clear that modular forms in the image of $T(\sigma)$ are in the right kernel of $(\, ,\, )_{\sigma_{A'}^{\infty}}$ if $\sigma_{A'}\neq \sigma_{A^{\vee}}$ or equivalently (as $A'\mapsto \sigma_{A'}$ is injective) if $A'\neq A^{\vee}$.

 If $A'=A^{\vee}$, then the expression of interest is the image of  $\vphi'\otimes T(\sigma)\tZ_{*}(\phi^{\infty}{}')\in \sigma_{A^{\vee}}\otimes (\Hom(J_{U}, J_{U}^{\vee})\otimes S_{2}( M))$ under the $M$-linear algebraic Petersson product and two projections applied to  the two factors, induced respectively from $\iota\colon M\otimes L'\to L'$  and $\iota_{\frakp}\colon M\otimes L
'\to L'$. If $\iota\neq \iota_{\frakp}$, their combination is zero.
\end{proof}

\begin{prop}\label{is ess crys} For each $\baar{w}\vert p$ and each $a\in {\A^{S_{1}\infty,\times}}$, the mixed extension $E$ associated with the divisors $\tZ_{a}(\phi^{\infty})_{U}[1]$ and ${\rm T}_{\iota_{\frakp}}(\sigma^{\vee}) t_{\chi}$ is essentially crystalline at $\baar{w}$.
\end{prop}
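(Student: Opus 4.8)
The plan is to reduce the essential crystallinity of the mixed extension at $\baar{w}\vert p$ to the integrality criterion for local heights of Proposition \ref{integrality and intersect}, applied to the curve $X_U$ and to the two divisors $D_1 = \tZ_a(\phi^\infty)_U[1]$ and $D_2 = {\rm T}_{\iota_\frakp}(\sigma^\vee) t_\chi$. Recall from that proposition that if $m_{\baar{w}}(D_1, D_2) = 0$ then the mixed extension $[E_{D_1, D_2, \baar{w}}]$ is essentially crystalline. So the plan reduces to the \emph{geometric} statement that the total arithmetic intersection multiplicity $m_{\baar{w}}(\tZ_a(\phi^\infty)_U[1], {\rm T}_{\iota_\frakp}(\sigma^\vee)t_\chi)$ vanishes for all $a\in \A^{S_1\infty,\times}$.

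For this I would first invoke Lemma \ref{no self int}, which guarantees (under Assumption \ref{yzz5.3}) that the supports of $Z_a(\phi^\infty)t_1$ and $[t_2]$ are disjoint for $a\in \A^{S_1\infty,\times}$, so the local intersection multiplicities $i_{\baar{w}}$ and $j_{\baar{w}}$ are defined. Then the key input is the norm-compatibility structure established in \S\ref{hecke galois}: by Proposition \ref{isnorm}, the coefficient $\tZ_{a\vpi^s}(\phi^\infty)[1]_U$ is a trace $\Tr_s$ from the tower $H'_s$, which by Lemma \ref{LT} is (the completion at $\baar{w}$ of) a relative Lubin--Tate tower over $E_w$, hence $H'_\infty/E_w$ is totally ramified above the unramified subfield. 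Since $t_\chi$ (and its Hecke translate ${\rm T}_{\iota_\frakp}(\sigma^\vee)t_\chi$) is a fixed divisor defined over a finite extension, while the other argument becomes a deeper and deeper norm in a \emph{totally ramified} tower, the arithmetic intersection at $\baar{w}$ must be divisible by arbitrarily high powers of the residue characteristic unless it is zero: concretely, one uses that $m_{\baar{w}}({\rm cores}\, x, y) = [H'_s : H'_0]^{-1}$ (intersection computed upstairs) and that the intersection upstairs is a bounded integer because $\phi^\infty$ is a Schwartz function (the boundedness of the constants $c_i$ with $c_i \in d^{-1}\Z$ from Proposition \ref{isnorm}), forcing $m_{\baar{w}} = 0$. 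An alternative, possibly cleaner, route is to observe that on the reduction mod $\baar{w}$ the CM points $[x_v(b(1+c_v\vpi_v^{r_v}))]$ in \eqref{gal act} all specialise to the same point, so that after $\chi$-averaging (with $\chi$ ramified on $V^\circ_p$, Assumption \ref{ass on chi}) the relevant piece of $t_\chi$ specialises to zero, killing the intersection at $\baar{w}$; one must then also handle the vertical components via the flatness defining $j_{\baar{w}}$.

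Finally I would assemble: the vanishing $m_{\baar{w}}(D_1, D_2) = 0$, together with Proposition \ref{integrality and intersect}, yields that $[E_{D_1, D_2, \baar{w}}]$ is essentially crystalline, which is the assertion. I should take care that the proposition requires $D_1$, $D_2$ to be degree-zero divisors on $X_U$ over a common finite extension: $D_1$ has degree zero by Proposition \ref{vanish hodge}.\ref{vh0}, and $D_2 = {\rm T}_{\iota_\frakp}(\sigma^\vee)t_\chi$ is a pushforward of $t_\chi \in \Div^0(X_U)_{L(\chi)}$, hence degree zero on each component; both are defined over a finite abelian extension of $E$ inside which the Galois action is explicit.

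The main obstacle I anticipate is the control of the vertical contribution $j_{\baar{w}}$ in the arithmetic intersection — the horizontal part $i_{\baar{w}}$ behaves well under the norm/trace compatibility and the Lubin--Tate structure, but the flat extensions (whose vertical components depend on the geometry of the special fibre of the relevant integral model $\X$ over $\OO_{E,\baar{w}}$) require either invoking the non-regularity generalisation of the intersection pairing from \cite[\S 7.1.7]{yzz} or else arguing directly that, since the model and the Hecke correspondence $\tZ^v_a(\phi^v)$ are already defined over the bottom field $H'_0$ while the points of $D_1$ move in a totally ramified tower, no new vertical intersection is created — i.e. proving that $j_{\baar{w}}$ is also a trace from $H'_\infty$ and hence subject to the same divisibility-forces-vanishing argument. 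Making this last point rigorous (matching the flat extension upstairs with the pullback of the flat extension downstairs under a totally ramified base change) is the delicate step; it is essentially the semistable-reduction analogue of the computation in \cite{nekovar}, and is where the integrality criterion of Proposition \ref{int panc} will be needed in the subsequent argument to absorb the bounded denominators.
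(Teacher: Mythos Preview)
Your first step --- reducing essential crystallinity at $\baar{w}$ to the vanishing $m_{\baar{w}}(D_{1},D_{2})=0$ via Proposition~\ref{integrality and intersect} --- is exactly what the paper does. The divergence is in how you propose to obtain that vanishing.

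Your main route, via the norm relation of Proposition~\ref{isnorm} and the Lubin--Tate structure of Lemma~\ref{LT}, is misplaced: that apparatus is the engine of the \emph{next} proposition (Proposition~\ref{is crit}), where one lets $s\to\infty$ with $a$ fixed to prove $v$-criticality. For the present statement $a$ is \emph{fixed}, and if you try to write $a=a'\vpi_{v}^{s}$ with varying $s$ then the tower $H'_{s}=H_{r_{v}+v(a)}$ stays pinned while $H'_{0}=H_{r_{v}+v(a)-s}$ cannot descend below $H_{r_{v}}$; hence $s\leq v(a_{v})$ is bounded, and no ``divisibility by arbitrarily high powers'' argument is available. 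Even ignoring this, your claim that the intersection upstairs is a bounded integer is unjustified: the trace $\Tr_{s}P$ has $q_{F,v}^{s}$ terms, and there is no a priori reason their individual intersections with $t_{\chi}$ cancel rather than accumulate.

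The paper's argument is entirely different and much shorter: the vanishing $m_{\baar{w}}=0$ at a prime $v\vert p$ split in $E$ is a geometric fact about intersections of CM points on the integral model, already established in \cite[Proposition 8.15, \S 8.5.1]{yzz}. The only thing to check is that the hypotheses used there --- Assumptions~\ref{yzz5.3} and~\ref{yzz5.4}, which are conditions on $\phi_{v}$ at places $v\in S_{1}\cup S_{2}$ --- are unaffected by applying the Hecke correspondence ${\rm T}_{\iota_{\frakp}}(\sigma^{\vee})$ on the second argument, and this holds because that correspondence is spherical away from $S\supset S_{1}\cup S_{2}$.

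Your second ``alternative route'' (CM points in a local Galois orbit specialise to a common point, so $\chi$-averaging kills the reduction) is closer in spirit to what actually happens in \cite{yzz} at split primes, but as written it invokes Assumption~\ref{ass on chi}, which the paper does not need here, and it does not address the horizontal intersection $i_{\baar{w}}$ with points of $t_{\chi}$ lying in a genuinely different residue disc; the YZZ computation handles both $i_{\baar{w}}$ and $j_{\baar{w}}$ directly.
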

 \begin{proof} By Proposition \ref{integrality and intersect}, it is equivalent to show that 
 $$m_{\baar{w}}\left(\tZ_{a}(\phi^{\infty})_{U}[1],{\rm T}_{\iota_{\frakp}}(\sigma^{\vee}) t_{\chi}\right)=0.$$
  Under  Assumptions \ref{yzz5.3} and \ref{yzz5.4} (which are local at $S_{1}\cup S_{2}$, hence  unaffected by  the action of ${\rm T}_{\iota_{\frakp}}(\sigma^{\vee})$), this is proved in \cite[Proposition 8.15, \S 8.5.1]{yzz}. 
 \end{proof}

 \begin{proof}[Proof of Proposition \ref{is crit}] For each $\baar{w}\vert v$, by the discussion preceding \eqref{sagan} it suffices to show that  
 $$
 \langle {}^{\qqq}\tZ_{*}(\phi^{\infty})_{U}[1], {\rm T}_{\iota_{\frakp}}(\sigma^{\vee})_{U}^{\rm t} t_{\chi}\rangle_{\baar{w}}$$ 
 is a $v$-critical element of $\baar{\bf S}{}'$, that is (by the definition and Lemma \ref{Up on Z}) that
$$ \langle \tZ_{a\vpi^{s}}(\phi^{\infty})_{U}[1], {\rm T}_{\iota_{\frakp}}(\sigma^{\vee})_{U}^{\rm t} t_{\chi}\rangle_{\baar{w}}\in {q_{F,v}^{s-c}}\OO_{L(\chi)}$$
for all $a$ with $v(a)= r_{v}$ and
some constant
 $c$ independent of $a$ and $s$. 

 We may assume that $\baar{w}$ extends the place $w$ of $E$ fixed above.

By Proposition \ref{isnorm}, 
when $v(a)= r_{v}$
  the divisor $\tZ_{a\vpi^{s}}(\phi^{\infty})_{U}[1]$ is a finite sum, with $p$-adically bounded coefficients, of elements ${\rm Tr}_{s} [x_{j,s}]_{U}$, where ${\rm Tr}_{s}$ denotes the trace map on divisors for the field extension ${H}'_{s}/{H}'_{0}$, and $[x_{j,s}]_{U}\in\Div^{0}(X_{U, {H}'_{s}})$. 

Recall that $\langle\, , \, \rangle_{\baar{w}}=\langle\, , \, \rangle_{\ell_{\baar{w}}}$ for a fixed ${\ell_{\baar{w}}}\colon\baar{F}_{\baar{w}}^{\times}\to L(\chi)$.\footnote{We only need to consider $\ell_{\baar{w}}$ on the field $H'_{0}$ recalled just below.} By \eqref{local ht norm}, we have 
\begin{align}\label{esso}
\langle {\rm Tr}_{s} [x_{j,s}]_{U},  {\rm T}_{\iota_{\frakp}}(\sigma^{\vee})_{U}^{\rm t} t_{\chi}\rangle_{\ell_{\baar{w}}}
= \langle  [x_{j,s}]_{U},  {\rm T}_{\iota_{\frakp}}(\sigma^{\vee})_{U}^{\rm t} t_{\chi}\rangle_{\ell_{\baar{w}} \circ N_{s, \baar{w}}}
\end{align}
where we recall that $N_{s}=N_{s, \baar{w}}$ is   the norm  for ${H}'_{s, \baar{w}}/{H}'_{0, \baar{w}}$. 

Now take the field $H$ of \eqref{ind H} to be  $H={H}'_{s}$, and consider the $\calG_{E_{w}}$-representations $V'\subset V=V_{s}$ over $L(\chi) $ defined above Proposition \ref{is in V'}. By our assumptions,  $V'$ satisfies the condition ${\bf D}_{\rm pst}(V')^{\vphi=1}=0$ and the Panchishkin condition, with an exact sequence of $\calG_{E_{w}}$-representations
$$0\to V'^{+}\to V'\to V'^{-}\to 0.$$
 The representation $V_{s}$ has a natural $\calG_{E}$-stable lattice, namely $T_{s}:={\rm Ind}^{{H}'_{s}}_{E} T_{p}J_{U}|_{\calG_{H'_{s}}}$. Let $T':=T\cap V'$, $T''_{s}:=T_{s}\cap V''$, $T'^{+}:=T'\cap V'^{+}$, $T'^{-}=T'/T'^{+}$; note that $V'$, $T'$, $T'^{\pm}$ are independent of $s$ (hence the notation).

Let $\wtil{E}_{\baar{w}}/E_{w}$ be a finite extension over which $V_{\frakp}A $ (hence $V'$)  becomes semistable.
We may assume that the extension $\wtil{E}_{\baar{w}}\subset E^{\rm ab}$ is abelian and totally ramified (see \cite[Proposition  12.11.5 (iv), Proposition 12.5.10 and its proof]{nek-selmer}). For $s\in\{0, 1, \ldots, \infty\}$, let $\wtil{H}_{s, \baar{w}}':=\wtil{E}_{\baar{w}}{H}_{s, \baar{w}}'$ be the compositum.

  We can then apply Proposition \ref{int panc}, with the fields $\wtil{H}_{s}'$ playing the role of the field denoted there by `$F_{v}$'; 
    together with Proposition \ref{is ess crys}, it implies that   \eqref{esso} belongs to 
$$\frakp_{L(\chi)}^{-(  d_{0}+d_{1,s}+d_{2})} \ell_{\baar{w}} \circ \wtil{N}_{s}  \left(\OO^{\times}_{{H}'_{s}, \baar{w}}\hat{\otimes }\OO_{L(\chi)}\right)
\subset \frakp_{L(\chi)}^{-(d_{0}+d_{1,s}+d_{2}+d')} q_{F,v}^{s-c_{0}}   \OO_{L(\chi)}    $$
where $d_{0}$, $d_{1,s}$, $d_{2}$ are the integers of Proposition \ref{int panc},\footnote{Note that by construction there is  an obvious direct sum decomposition $T_{s}=T_{0}\oplus T^{s-1}$ for a complementary subspace $T^{s-1}$; so that the integer $d_{0}$ is independent of $s$.}, $\wtil{N}_{s}$ is the norm of $\wtil{H}_{s}'/\wtil{H}_{0}'$, 
 and $d'$ accounts for the denominators coming from \eqref{ellw}.
The containment follows from the fact that the extension ${H}'_{s}/H'_{0}$ is  totally ramified at $\baar{w}$ of degree $q_{F,v}^{s}$ so that $\wtil{H}'_{s}/\wtil{H}'_{0}$ has ramification degree at least $q_{F,v}^{s-c_{0}}$ for some constant $c_{0}$.

To complete the proof, we  need to establish  the boundedness of the integer sequence 
$$d_{1,s}={\rm length}_{\OO_{L(\chi)}}  H^{1}(\wtil{H}'_{0,\baar{w}}, T_{s}''^{*}(1)\otimes L(\chi)/\OO_{L(\chi)})_{\rm tors}.$$
As 
\begin{multline*}
H^{1}(\wtil{H}'_{0,\baar{w}}, T_{s}''^{*}(1)\otimes L(\chi)/\OO_{L(\chi)})_{\rm tors}\cong
H^{0}(\wtil{H}'_{0,\baar{w}}, T_{s}''^{*}(1))\\
\subset H^{0}\left(E_{w},  T_{p}J_{U}^{*}(1) \otimes {\OO_{L(\chi)}}[\Gal(\wtil{H}'_{s,\baar{w}}/E_{w})] \right),
\end{multline*}
the boundedness follows from the next Lemma.
\end{proof}

\begin{lemm} Let $\Gamma_{\rm LT}':= \Gal(\wtil{H}'_{\infty, \baar{w}}/E_{w})$. Then 
$$H^{0}(E_{w}, V_{p}J_{U}^{*}(1)\otimes \OO_{L(\chi)}\llb \Gamma_{\rm LT}'\rrb)=0.$$
\end{lemm}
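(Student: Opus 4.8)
The plan is to show that the cohomology group vanishes by exploiting that $\Gamma_{\rm LT}'=\Gal(\wtil H'_{\infty,\baar w}/E_w)$ is a $p$-adic Lie group of dimension $\geq 1$ whose pro-$p$ part acts through an infinitely ramified (Lubin--Tate type) character, so no Galois-fixed vectors can survive in a module induced along it. First I would reduce to a statement about $H^0$ rather than $H^1$: since we are looking at $H^0(E_w, V_pJ_U^*(1)\otimes\OO_{L(\chi)}\llb\Gamma_{\rm LT}'\rrb)$, and the Iwasawa algebra $\OO_{L(\chi)}\llb\Gamma_{\rm LT}'\rrb$ with its $\calG_{E_w}$-action (through the quotient $\calG_{E_w}\twoheadrightarrow\Gamma_{\rm LT}'$ acting by left translation, composed with the Galois action on $V_pJ_U^*(1)$) can be analyzed one finite layer at a time: $H^0(E_w, M\otimes\OO_{L(\chi)}\llb\Gamma_{\rm LT}'\rrb)=\varprojlim_s H^0(E_w, M\otimes\OO_{L(\chi)}[\Gamma_s])$ where $\Gamma_s=\Gal(\wtil H'_{s,\baar w}/E_w)$. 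By Shapiro's lemma, $H^0(E_w, M\otimes\OO_{L(\chi)}[\Gamma_s])=H^0(\wtil H'_{s,\baar w}, M)$ for $M=V_pJ_U^*(1)$ (a finitely generated $\OO_{L(\chi)}$-module up to isogeny), so it suffices to show that these fixed spaces stabilize to zero, equivalently that $H^0(\wtil H'_{\infty,\baar w}, V_pJ_U^*(1))=0$.

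The heart of the matter is therefore: \emph{the $\calG_{\wtil H'_{\infty,\baar w}}$-invariants of $V_pJ_U^*(1)$ vanish}. I would argue this via Hodge--Tate / Sen theory. The extension $\wtil H'_{\infty,\baar w}/E_w$ is, by Lemma \ref{LT} together with the (abelian, totally ramified) extension $\wtil E_{\baar w}/E_w$, a relative Lubin--Tate extension attached to a pseudo-uniformiser; its Galois group $\Gamma_{\rm LT}'$ is an open subgroup of $\OO_{E_w}^\times$ (times a finite group), hence a $p$-adic Lie group of dimension $1$ whose cyclotomic character restricted to $\Gamma_{\rm LT}'$ is nontrivial (indeed the Lubin--Tate character is a Hodge--Tate character of weight $1$ on the relevant layer, so composing with the norm one sees the cyclotomic twist is not trivialized). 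Since $J_U$ has potentially semistable — indeed, over $\wtil E_{\baar w}$, semistable — reduction, $V_pJ_U^*(1)|_{\calG_{\wtil E_{\baar w}}}$ is semistable, and a nonzero element of $H^0(\wtil H'_{\infty,\baar w}, V_pJ_U^*(1))$ would produce, after passing to $\C_p$-coefficients and applying the Ax--Sen--Tate theorem (${\C_p}^{\calG_{\wtil H'_{\infty,\baar w}}}=\widehat{\wtil H'_{\infty,\baar w}}$, which is an infinite totally ramified extension of $E_w$), a Galois-equivariant nonzero vector forcing a Hodge--Tate--Sen weight of $V_pJ_U^*(1)$ to vanish along the $\Gamma_{\rm LT}'$-direction in a way incompatible with the Hodge--Tate weights of an abelian variety (which are $0$ and $1$) once one accounts for the Tate twist. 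Concretely: the Sen operator of $V_pJ_U^*(1)$ over $\wtil H'_{\infty,\baar w}$ has eigenvalues equal to the Hodge--Tate weights, namely $\{0,-1\}$ on $V_pJ_U$ shifted by $+1$ from $*(1)$ giving $\{1,0\}$ — so a priori $H^0$ could be nonzero. The correct point, following Nekov\'a\v r \cite{nekovar} and the hypotheses in play (potential ordinarity, the condition ${\bf D}_{\rm crys}(V)^{\vphi=1}=0$ imposed in \S\ref{lgh}, and the non-scalarity hypothesis ensuring the two Hodge--Tate lines are genuinely distinct Galois lines), is that the relevant piece of $V_pJ_U$ — the one that could contribute fixed vectors after the Lubin--Tate twist — is exactly the potentially unramified quotient, and the condition that the associated unit-root Frobenius eigenvalue times the Lubin--Tate character's "algebraic part" is not trivial (this is precisely the non-exceptionality-type input, or alternatively the Weil bound $|q_{F,v}|<1$ on Frobenius eigenvalues) rules out fixed vectors.

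More carefully, here is the cleanest route I would actually write. Decompose $V_pJ_U^*(1)|_{\calG_{E_w}}\otimes L'$, over a large enough $L'$, into the pieces $\pi_{A'}^U\otimes V_pA'^{*}(1)$. For each $A'$ with good or semistable reduction over $\wtil E_{\baar w}$, the restriction of $V_pA'^*(1)$ to inertia in $\wtil H'_{\infty,\baar w}$ is an extension whose subobject is the ``$(-1)$-twisted unramified'' part and whose quotient is the part on which inertia acts through the Lubin--Tate character (up to finite order). A fixed vector for $\calG_{\wtil H'_{\infty,\baar w}}$ lands, after intersecting with the unramified subspace, either in a space where geometric Frobenius $\Fr_v$ acts with eigenvalue a Weil number of absolute value $q_{F,v}^{1/2}\neq 1$ — impossible for a fixed (hence Frobenius-fixed, up to the finite unramified twist) vector — or it maps nontrivially to the Lubin--Tate-isotypic quotient, where inertia in $\Gamma_{\rm LT}'$ acts by an infinite-order character, again impossible for an invariant vector. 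Either way $H^0=0$. I would phrase this using \cite[Proposition 12.11.5]{nek-selmer} (already cited in the text for the potential ordinarity input) to get the semistable model and the abelian totally ramified $\wtil E_{\baar w}$, and the standard structure of Tate modules of semistable abelian varieties (connected-\'etale-type filtration on the $p$-divisible group) to split off the ``unramified'' versus ``ramified-by-Lubin--Tate'' pieces. The main obstacle I anticipate is bookkeeping the twist correctly: making sure that after the $*(1)$ Tate twist and the Lubin--Tate twist, the Frobenius eigenvalue appearing on the would-be-fixed line is genuinely a Weil number bounded away from $1$ (this is where one uses that the Lubin--Tate pseudo-uniformiser $\vpi_{\rm LT}$ satisfies $q_w(\vpi_{\rm LT})=1$ from Lemma \ref{LT}, so it contributes nothing multiplicatively, leaving the bare Frobenius eigenvalue of the abelian variety, which the Weil conjectures control) — rather than any subtle vanishing, the work is in tracking these normalizations and invoking Ax--Sen--Tate plus the Weil bound in the right order.
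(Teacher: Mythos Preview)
Your Shapiro reduction to $H^{0}(\wtil{H}'_{\infty,\baar{w}}, V_{p}J_{U}^{*}(1))=0$ is valid (indeed, since $\calG_{\wtil{H}'_{\infty,\baar{w}}}\subset\calG_{\wtil{H}'_{s,\baar{w}}}$ for every $s$, vanishing at the infinite level forces vanishing at every finite level, hence of the inverse limit). The genuine gap is in your ``case 2'', where the fixed vector has nonzero image in the non-unramified piece. You claim this is impossible because ``inertia in $\Gamma_{\rm LT}'$ acts by an infinite-order character''. But invariance is under $\calG_{\wtil{H}'_{\infty,\baar{w}}}$, which is precisely the kernel of $\calG_{E_{w}}\to\Gamma_{\rm LT}'$; any abelian character of $\calG_{E_{w}}$ that happens to factor through $\Gamma_{\rm LT}'$ --- and the cyclotomic or Lubin--Tate characters \emph{may well} factor --- is automatically trivial on $\calG_{\wtil{H}'_{\infty,\baar{w}}}$. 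So no contradiction arises from inertia alone. What you have actually shown in case~2 is only: if there is a fixed vector, then the character through which $\calG_{E_{w}}$ acts on its line factors through $\Gamma_{\rm LT}'$, and is Hodge--Tate (being a subquotient of $V_{p}J_{U}$). This is not yet a contradiction; it is exactly the starting point of the paper's argument.

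The paper proceeds from there by writing any such Hodge--Tate character $\psi$ as $\psi_{0}\psi_{1}$ with $\psi_{0}$ of finite order and $\psi_{1}$ crystalline, and then comparing Frobenius eigenvalues on ${\bf D}_{\rm crys}$: by Mokrane, every eigenvalue of $\vphi^{d}$ on ${\bf D}_{\rm crys}(V_{p}J_{U}(\psi_{0}))$ is a Weil number of strictly negative weight, while the eigenvalue $\lambda_{1}^{-1}$ contributed by $\psi_{1}$ has weight $0$. The last point is where the weight-$0$ property of $\vpi_{\rm LT}$ (from Lemma~\ref{LT}) enters, via the explicit formula $\lambda_{1}^{em}=\vpi_{\rm LT}^{-e\underline{n}}$ coming from Conrad's description of crystalline characters. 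You correctly sense that $\vpi_{\rm LT}$ being algebraic of weight $0$ is the crux, but you only deploy it on the unramified piece; the paper's ${\bf D}_{\rm crys}$ argument treats both pieces uniformly (and, importantly, also handles the factors $A'$ of $J_{U}$ that are \emph{not} ordinary, for which your two-step filtration is unavailable). The invocations of Sen theory, Ax--Sen--Tate, non-exceptionality, and the non-scalar hypothesis are red herrings here: none is used in the paper's proof of this lemma.
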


 \begin{proof}
We  use the results and notation of Lemma \ref{LT}.
Note first that we may safely replace $L(\chi)$ by a finite extension $L'$ splitting $E_{w}/\Q_{p}$.  As $V_{p}J_{U}=V_{p}J_{U}^{*}(1)$ is Hodge--Tate,
 we have 
$$H^{0}\left(E_{w},   V_{p}J_{U} \otimes_{\OO_{L(\chi)}}\OO_{L(\chi)}\llb\Gamma_{\rm LT}'\rrb \right)
\subset \bigoplus_{\psi} H^{0}\left(E_{{w}},  V_{p}J_{U}( \psi)\right)( \psi^{-1}),$$
where $\psi$ runs through the Hodge--Tate characters of $\calG_{E}$ factoring through $\Gamma_{\rm LT}'$. Since the latter is a quotient of $E_{w}^{\times}$ dominating $\Gamma_{\rm LT}=\Gal(H_{\infty, \baar{w}}/E_{w})\cong E^{\times}_{w}/\langle \vpi_{\rm LT}\rangle$, we have $\Gamma_{\rm LT}'\cong E^{\times}_{w}/\langle\vpi_{\rm LT}^{e}\rangle$ for some $e\geq 1 $. Then the condition that $\psi$ factor through  $\Gamma_{\rm LT}'$ is equivalent to 
 $\psi\circ{\rm rec}_{E,w}(\vpi_{\rm LT}^{e})=1$ for the 
 psuedo-uniformiser $\omega_{\rm LT}\in E_{w}^{\times}$.
 
 By \cite[Appendix III.A]{serre}, a character $\psi$ of $\calG_{E,w}$ is Hodge--Tate of some weight $\underline{n}\in \Z[\Hom(E_{w}, L')]$  if and only if  the maps $E_{w}^{\times}\to L'^{\times}$ given by $\psi\circ {\rm rec}_{E,w}$ and $x\mapsto x^{-\underline{n}}:=\prod_{\tau\in \Hom (E_{w}, L')} \tau(x)^{-\underline{n}(\tau)}$ coincide near $1\in E_{w}^{\times}$.
By \cite[Proposition B.4 (i)]{conrad}, $\psi$ is crystalline if and only if those maps coincide on $\OO_{E,w}^{\times}$; therefore we may write any Hodge--Tate character $\psi$ as $\psi=\psi_{0}\psi_{1}$ with $\psi_{0}$ of finite order and $\psi_{1}$ crystalline. 

Then, letting $E_{w,0}$ the maximal unramified extension of $\Q_{p}$ contained in $E_{w}$ and $d=[E_{w}: E_{w,0}]$, we can first write
$$H^{0}(E_{w}, V_{p}J_{U}(\psi))= {\bf D}_{\rm crys}(V_{p}J_{U}(\psi))^{\vphi=1}\subset {\bf D}_{\rm crys}(V_{p}J_{U}(\psi))^{\vphi^{d}=1}$$
for the  $E_{w,0}$-linear endomorphism $\vphi^{d}$ (where $\vphi$ is the crystalline Frobenius), and then
$${\bf D}_{\rm crys}(V_{p}J_{U}(\psi))^{\vphi^{d}=1}={\bf D}_{\rm crys}(V_{p}J_{U}(\psi_{0}))^{\vphi^{d}=\lambda_{1}^{-1}}.$$
where $\lambda_{1}\in L'$ is the scalar giving the action of $\vphi^{d}$ on ${\bf D}_{\rm crys}(\psi_{1})$.

By \cite[Theorem 5.3]{mokrane},
all eigenvalues of $\vphi^{d}$ on ${\bf D}_{\rm crys}(V_{p}J_{U}(\psi_{0}))$ are Weil $q_{E,w}$-numbers  of strictly negative weight. 
To conclude that $ {\bf D}_{\rm crys}(V_{p}J_{U}(\psi))^{\vphi^{d}=1}=0$ it thus suffices to 
 show that $\lambda_{1}$ is an algebraic number of weight $0$.

By  \cite[Proposition B.4 (ii)]{conrad}, we have 
$$\lambda_{1}=\psi_{1}\circ{\rm rec}_{E,w}(\vpi_{w})^{-1}\cdot \vpi_{w}^{-\underline{n}}$$
where   $\vpi_{w} \in E_{w}^{\times}$ is any uniformiser and $\underline{n}$ are the Hodge--Tate weights. Writing $\vpi^{m}=u\vpi_{\rm LT}$ for  $m=w(\vpi_{\rm LT})$ and some $u\in \OO_{E,w}^{\times}$, we have
$$\lambda_{1}^{m}=\psi_{1}\circ{\rm rec}_{E,w}(u\vpi_{\rm LT})^{-1}\cdot u^{-\underline{n}}\vpi_{\rm LT}^{-\underline{n}}.$$
 Now $\psi_{1}\circ{\rm rec}_{E,w}(u)= u^{-\underline{n}}$ by the crystalline condition , and $\psi_{1}^{e}\circ{\rm rec}_{E,w}(\vpi_{\rm LT})=1$ as $\psi_{1}$ factors through $\Gamma_{\rm LT}'$.  Hence $\lambda_{1}^{em}=\vpi_{\rm LT}^{-e\underline{n}}$. By Lemma \ref{LT}, $\vpi_{\rm LT}$ is an algebraic number of weight $0$, hence so is $\lambda_{1}$.
\end{proof}

\section{Formulas in anticyclotomic families}

In \S\S\ref{sec: theta elts}-\ref{10.2}, we   prove Theorem \ref{C} after filling in some details in its setup. It is largely a corollary of Theorem \ref{B} (which we have proved for all but finitely many characters), once a construction of the \emph{Heegner--theta elements} interpolating automorphic toric periods and Heegner points is carried out. Finally, Theorem \ref{B} for the missing characters will be recovered as a corollary of Theorem \ref{C}.

\medskip

In \S\ref{sec iwbsd},  we  prove Theorem \ref{iwbsd}.

\medskip

We invite the reader to go back to \S\ref{intro anti} for the setup and notation that we are going to use  in this section (except for the preliminary Lemma \ref{f limit}).

\subsection{A local construction}\label{sec: theta elts}
 Let $L$ and $F$ be finite extensions of $\Q_{p}$, and denote by $v$ the valuation of $F$ and by $\vpi\in F$ a fixed uniformiser. Let $\pi$ be a  smooth representation of $\GL_{2}(F)$ on an $L$-vector space with central character $\omega$ and a stable $\OO_{L}$-lattice $\pi_{\OO_{L}}$. Let $E^{\times}\subset \GL_{2}(F)$ be the diagonal torus. Assume that $\pi $ is nearly ordinary in the sense of Definition  \ref{def-n-ord} with unit  character $\alpha\colon F^{\times}\to L^{\times}$. Let $f_{\alpha}^{\circ}\in \pi-\{0\}$ be any nonzero element satisfying $\Up_{v}^{*}f_{\alpha}^{\circ}=\alpha(\vpi)f_{\alpha}^{\circ}$, which 
   is unique up to multiplication by $L^{\times}$.  For $r\geq 1$, let $s_{r}=\twomat {\vpi^{r}}1{}1$ and 
$$f_{\alpha, r}:=|\vpi|^{-r}\alpha(\vpi)^{-r} s_{r}f_{\alpha}^{\circ}.$$
It is easy to check that $f_{\alpha,r}$ is independent of the choice of $\vpi$, and it is  invariant under $V_{r}=\smalltwomat {1+\vpi^{r}\OO_{F,v}}{}{}1 V_{F}$, where $V_{F}:=\Ker(\omega)\subset Z(F)$.

\begin{lemm}\label{f limit}
\begin{enumerate}
\item
The collection $f_{\alpha, V_{r}}:=f_{\alpha,r}$, for $r\geq 0$, 
defines an element
$$f_{\alpha}=(f_{\alpha,V})\in \varprojlim_{V} \pi^{V},$$
where the inverse system runs over compact open subgroups $V_{F}\subset V\subset E^{\times}$, and the transition maps $\pi^{V'}\to \pi^{V}$
are given by 
\begin{align*}
f&\mapsto \dashint_{V/V'}\pi(t)f\, dt.
\end{align*}
\item \label{bounded-til}
Let $\pi_{\OO_{L}}\subset \pi$ be a $\GL_{2}(F)$-stable $\OO_{L}$-lattice containing $f_{\alpha}^{\circ}$. The collection of elements $\tilde{f}_{\alpha,V_{r}}:=|\vpi|^{r}f_{\alpha,r}=\alpha(\vpi)^{-r}s_{r}f_{\alpha}^{\circ}$ defines an element  
$$\tilde{f}_{\alpha}\in\varprojlim_{V}\pi_{\OO_{L}}^{V}$$ where the transition maps $\pi^{V'}\to \pi^{V}$
are given by 
\begin{align*}
f&\mapsto \sum_{t\in V/V'}\pi(t)f.
\end{align*}
\end{enumerate}
\end{lemm}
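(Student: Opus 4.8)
The key point is to verify the compatibility of the family $\{f_{\alpha,r}\}$ (respectively $\{\tilde f_{\alpha,r}\}$) with the two different sets of transition maps. For part (1), I would first make the inverse system precise: the relevant compact open subgroups $V$ with $V_F \subset V \subset E^\times$ all contain $V_{r}\cap E^\times$ for $r$ large enough, and since $f_{\alpha,r}$ is $V_r$-invariant with $V_r \cap E^\times = \smalltwomat{1+\vpi^r\OO_{F,v}}{}{}1 V_F$, it suffices to work with the cofinal system of subgroups $V_r' := (1+\vpi^r\OO_{F,v})\times V_F$ inside $E^\times = F^\times \times F^\times$ (using the identification \eqref{f+f}), ordered by $r$. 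So the claim reduces to showing
$$ \dashint_{V_r'/V_{r+1}'} \pi(t) f_{\alpha,r+1}\, dt = f_{\alpha,r}. $$
Writing out $f_{\alpha,r+1} = |\vpi|^{-(r+1)}\alpha(\vpi)^{-(r+1)} s_{r+1} f_\alpha^\circ$ and $f_{\alpha,r} = |\vpi|^{-r}\alpha(\vpi)^{-r} s_r f_\alpha^\circ$, after cancelling the scalar $|\vpi|^{-1}\alpha(\vpi)^{-1}$ this becomes the identity
$$ \dashint_{(1+\vpi^r\OO_{F,v})/(1+\vpi^{r+1}\OO_{F,v})} \smalltwomat{t}{}{}1 s_{r+1} f_\alpha^\circ\, dt \;=\; \Up_v^*\!\big(\,\big)^{-1}\!\cdot\! s_r f_\alpha^\circ \quad\text{(schematically)}, $$
which I would prove by a direct matrix computation: averaging $\smalltwomat{t}{}{}1 s_{r+1}$ over $t\in(1+\vpi^r\OO_{F,v})/(1+\vpi^{r+1}\OO_{F,v})$ and rewriting the resulting sum of group elements in terms of the coset representatives $\smalltwomat{\vpi^{r}}{j}{}1$ defining $\Up_v^*$ (as in \S\ref{up etc}), so that it collapses to $\alpha(\vpi)^{-1}|\vpi|^{-1}$ times $s_r f_\alpha^\circ$ using $\Up_v^* f_\alpha^\circ = \alpha(\vpi) f_\alpha^\circ$. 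This is the essential, and essentially only, computation.

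For part (2), the same matrix identity is in play, but now the transition maps are unnormalised \emph{sums} rather than averages. Since $[V_r':V_{r+1}'] = q_{F,v}$ (the residue cardinality) and $|\vpi| = q_{F,v}^{-1}$, the factor $|\vpi|^r$ inserted in the definition $\tilde f_{\alpha,r} = |\vpi|^r f_{\alpha,r}$ exactly converts the normalised average into the unnormalised sum: one has $\sum_{t\in V_r'/V_{r+1}'}\pi(t)\tilde f_{\alpha,r+1} = q_{F,v}\cdot |\vpi|^{r+1}\dashint \pi(t) f_{\alpha,r+1}\,dt = |\vpi|^r f_{\alpha,r} = \tilde f_{\alpha,r}$, using part (1). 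The only additional assertion is that $\tilde f_{\alpha,r}$ lands in the lattice $\pi_{\OO_L}^{V_r}$: this holds because $\tilde f_{\alpha,r} = \alpha(\vpi)^{-r} s_r f_\alpha^\circ$, where $\alpha(\vpi)^{-r}\in\OO_L^\times$ (as $\alpha$ is a unit character, by the nearly ordinary hypothesis of Definition \ref{def-n-ord}) and $s_r\in\GL_2(F)$ preserves $\pi_{\OO_L}$, so $\tilde f_{\alpha,r}\in\pi_{\OO_L}$; its $V_r$-invariance was already noted. Hence $\tilde f_\alpha\in\varprojlim_V\pi_{\OO_L}^V$ as claimed.

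I would also record, at the start of the proof, the two elementary facts needed throughout: (a) $f_{\alpha,r}$ is independent of the choice of uniformiser $\vpi$ — which follows since replacing $\vpi$ by $u\vpi$ with $u\in\OO_{F,v}^\times$ multiplies $s_r f_\alpha^\circ$ by an element of $\GL_2(\OO_{F,v})$ that is absorbed into the already-present $V_r$-averaging, combined with $\alpha(u\vpi)^{-r} = \alpha(u)^{-r}\alpha(\vpi)^{-r}$ and invariance of $f_\alpha^\circ$ under $\Ker(\omega)$-related twists; and (b) $f_{\alpha,r}$ is $V_r$-invariant, which is a short check using $\smalltwomat{1+\vpi^r\beta}{}{}1 s_r = s_r\smalltwomat{1+\vpi^r\beta}{\vpi^r\beta}{}1$ and the $\GL_2(\OO_{F,v})$-level of $f_\alpha^\circ$ (or more simply the definition of $\Up_v^*$ at level $K_1^1(\vpi^r)$). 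With those in hand, the only real content is the averaging computation above; I expect \textbf{that matrix coset manipulation to be the main (and sole) obstacle}, and it is entirely routine — the bookkeeping of which coset representatives of $\Up_v^*$ appear is the part requiring care.
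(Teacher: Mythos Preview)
Your proposal is correct and follows essentially the same route as the paper: reduce to the compatibility $\dashint_{V_r/V_{r+1}}\pi(t)f_{\alpha,r+1}\,dt=f_{\alpha,r}$, then use a matrix identity to convert the average of $\smalltwomat{1+j\vpi^r}{}{}1 s_{r+1}$ over $j\in\OO_F/\vpi$ into $s_r$ times the coset sum defining $\Up_v^*$, and apply the eigenvalue relation $\Up_v^* f_\alpha^\circ=\alpha(\vpi)f_\alpha^\circ$; part~(2) then follows by renormalising and invoking $\alpha(\vpi)\in\OO_L^\times$. One small slip: the single cosets for $\Up_v^*$ are $\smalltwomat{\vpi}{j}{}1$ (not $\smalltwomat{\vpi^r}{j}{}1$), and the explicit identity you need is $\smalltwomat{1+j\vpi^r}{}{}1\, s_{r+1}=s_r\,\smalltwomat{\vpi}{j}{}1\,\smalltwomat{1+j\vpi^r}{}{}1$, with the rightmost factor acting trivially on $f_\alpha^\circ$ (for $r$ at least the conductor of $\alpha$).
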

\begin{proof}
We need to prove that \begin{align}\label{fff}
\dashint_{V_{r}/V_{r+1}} \pi(t)f_{\alpha,r+1}\, dt=f_{\alpha, r}.
\end{align}
 A set of representatives for $V_{r}/V_{r+1}$ is $\left\{\smalltwomat {1+j\vpi^{r}}{}{}1\right\}_{j\in \OO_{F}/\vpi}$; 
on the other hand  recall that $\Up_{v}^{*}= \sum_{j\in \OO_{F}/\vpi_{v}} \smalltwomat {\vpi} j {}1$. From  the identity
$$  \twomat {1+j\vpi^{r}}{}{}1   \twomat {\vpi^{r+1}}1{}1 =  \twomat {\vpi^{r}}1{}1 \twomat {\vpi} j {}1 \twomat {1+j\vpi^{r}}{}{}1$$
we obtain
$$\dashint_{V_{r}/V_{r+1}}\pi(t) s_{r+1} f_{\alpha}^{\circ}= q_{F,v}^{-1} s_{r}\Up_{v}^{*}f_{\alpha}^{\circ}=|\vpi|^{-1}\alpha(\vpi) s_{r}f_{\alpha}^{\circ},$$
as desired. The integrality statement of part 2 is clear as $\alpha(\vpi)\in \OO_{L}^{\times}$.
\end{proof}

Let us  restore the notation $\pi^{+}=\pi$, $\pi^{-}=\pi^{\vee}$, employing it in the current local setting. Then if $\pi^{+}$ is as in the previous Lemma, it is easy to check that $\pi^{-}$ is also nearly $\frakp$ ordinary.
 Explicitly, 
  the element 
\begin{align}\label{f1}
f^{+, \circ}_{\alpha}(y)=\one_{\OO_{F}- \{0\}}(y)|y|_{v}\alpha_{v}(y)
\end{align}
 in the  $L$-rational subspace\footnote{Cf. \S\ref{univ kir}.} of any Kirillov model of $\pi^{+}_{v}$ satisfies $\Up_{v}^{*}f^{+, \circ}_{\alpha}=\alpha_{v}(\vpi_{v})f^{+, \circ}_{\alpha}$, and the element 
  \begin{align}\label{f2}
 f^{-, \circ}_{\alpha}(y)=\one_{\OO_{F}- \{0\}}(y)|y|_{v}\omega^{-1}(y)\alpha_{v}(y)
 \end{align} in the $L$-rational subspace of any Kirillov model of $\pi^{-}_{v}$
 satisfies 
 $\Up_{v, *}f^{-, \circ}_{\alpha}=\alpha_{v}(\vpi_{v})f^{-, \circ}_{\alpha}$.

We  can then construct an element $f_{\alpha}^{-}=(f^{-}_{\alpha,V})_{V}=(f_{\alpha,r}^{-})_{r}$ with the property of  the previous Lemma
as   $f_{\alpha,r}^{-}:=|\vpi|^{-r}\alpha(\vpi)^{-r}s_{r}^{*}f_{\alpha}^{-,\circ,}$ with
  $s_{r}^{*}= \twomat 1{\vpi_{}^{-r}}{}{\vpi^{-r}}$.

\medskip

\subsubsection{Local toric periods}  
Let us restore the subscripts $v$. Recall the universal Kirillov models $\mathscr{K}(\pi_{v}^{\pm}, \psi_{{\rm univ},v})$  of \S\ref{univ kir}. Then the elements $f_{\alpha,v}^{\pm, \circ}$ of \eqref{f1}, \eqref{f2} yield, by the proof of the Lemma, explicit elements 
\begin{gather}\label{univ local kir elt}
f_{\alpha,v}^{\pm}\in \varprojlim_{V}  \mathscr{K}(\pi^{\pm}_{v},\psi_{{\rm univ}, v})^{ V},
\end{gather}
where the transition maps are given by averages.

 Recall 
the  local toric period $Q_{v}(f^{+}_{v}, f^{-}_{v}, \chi_{v})$ of \eqref{Qvdef}, for a character $\chi_{v}\in \Y_{v}^{\rm l.c.}$, which we  define on 
$ \mathscr{K}(\pi^{+}_{v},\psi_{{\rm univ}, v}) \otimes  \mathscr{K}(\pi^{-}_{v},\psi_{{\rm univ}, v})$
using the canonical pairing of Lemma \ref{kir pair} on 
the universal Kirlillov models.
 
By the previous discussion and the defining property of $f_{\alpha}^{\pm}$, for any character $\chi_{v}\in \Y_{v}^{\rm l.c.}$,
 the element 
$$Q_{v}(f_{\alpha,v}^{+}, f_{\alpha,v}^{-}, \chi_{v}):=\lim_{V}  {L(1, \eta_{v})L(1,\pi_{v}, \ad) \over \zeta_{F,v}(2) L(1/2, \pi_{E,v}\otimes\chi_{v})_{v} }
 \int_{E_{v}^{\times}/F_{v}^{\times}} \chi_{v}(t) (\pi(t)f^{+}_{\alpha,v, V}, f^{-}_{\alpha,v, V})\, d^{\circ}t $$
 is well-defined and it belongs to $M(\chi_{v})\otimes \OO_{\Psi_{v}}( \omega_{v})$. In fact, if $M(\alpha_{v}, \chi_{v})\subset L$ is a subfield containing the values of $\alpha_{v}$, $\omega_{v}$, and $\chi_{v}$, then $Q_{v}(f_{\alpha,v}^{+}, f_{\alpha,v}^{-}, \chi_{v})$ belongs to o $ \OO_{\Psi_{v, M(\alpha_{v},\chi_{v})}}( \omega_{v})$.

\begin{lemm}\label{toric at p} With notation as above, we have
$$Q_{v}(f_{\alpha,v}^{+}, f_{\alpha,v}^{-})= \zeta_{F,v}(2)^{-1} \cdot Z_{v}^{\circ}$$
as sections of $\OO_{\Y_{v}\times \Psi_{v}}(\omega_{v})$; 
here $Z_{v}^{\circ}$ is   as in Theorem \ref{A}.
\end{lemm}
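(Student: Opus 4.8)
The statement is a purely local computation: both sides live in $\OO_{\Y_v\times\Psi_v}(\omega_v)$, and since the scheme $\Y_v^{\rm l.c.}$ is reduced and ind-finite over $M(\alpha_v)$, it suffices to check the equality at every $\C$-valued locally constant point $\chi_v$, after fixing an embedding $\iota\colon M(\alpha_v,\chi_v)\into \C$ and a concrete additive character $\psi_v$ of level $0$. So the plan is to evaluate, for each such $\chi_v$, the normalised toric integral $Q_v(f_{\alpha,v}^+,f_{\alpha,v}^-,\chi_v)$ against the explicit Kirillov-model vectors $f_{\alpha,v}^{\pm,\circ}$ of \eqref{f1}--\eqref{f2} (taken through the limit over $V$), and compare the answer with the closed form of $Z_v^\circ(\chi_v,\psi_v)$ recorded in Theorem \ref{A}.

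\textbf{Key steps.} First I would record the matrix-coefficient pairing explicitly via Lemma \ref{kir pair}: for $f_1\in\mathscr K(\pi_v^+,\psi_{{\rm univ},v})$, $f_2\in\mathscr K(\pi_v^-,\psi_{{\rm univ},v})$, the pairing $(f_1,f_2)_v$ is (the analytic continuation of) $\frac{\zeta_{F,v}(2)}{L(1,\pi_v\times\pi_v^\vee)}\int_{F_v^\times} f_1(y)f_2(y)\,\frac{d^\times y}{|d|_v^{1/2}}$, and $L(1,\pi_v,\ad)=L(1,\pi_v\times\pi_v^\vee)/\zeta_{F,v}(2)$ absorbs the normalising $L$-factors in $Q_v$. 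Since $E_v/F_v$ is split, $E_v^\times/F_v^\times\cong F_v^\times$ (a single place $w\mid v$, with $f_w=1$), so the toric integral over $E_v^\times/F_v^\times$ collapses to a one-dimensional integral; the $\chi_v$-translate $\pi(t)f_{\alpha,v,V}^+$ acts in the Kirillov model by the usual formula $(\pi(\diag(a,1))f)(y)=f(ya)$ up to a central twist, so the whole expression becomes a product of a Tate-type local zeta integral in $\chi_w'\cdot\alpha_v$ and a geometric series coming from the $s_r$-translation and the limit over $r$. Second, I would run the limit over $V=V_r$ carefully: by Lemma \ref{f limit} the vectors $f_{\alpha,r}^{\pm}$ form a compatible system under averaging, and the stabilisation of the toric integral for $r$ large enough that $\chi_v$ is $V_r$-invariant is exactly the mechanism that produces the ``shifted'' interpolation factor — the $\alpha_v(\vpi_v)^{-v(D)}\chi_w'(\vpi_w)^{-v(D)}$ prefactor and the ratio of $(1-\alpha_v(\vpi_v)^{-1}\chi_w'(\vpi_w)^{-1})$ over $(1-\alpha_v(\vpi_v)\chi_w'(\vpi_w)q_{F,v}^{-1})$ in the unramified-twist case, or the Gauss sum $\tau(\chi_w'\cdot\alpha_v,\psi_{E_w})$ in the ramified-twist case. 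Third, I would split into the two cases of the definition of $Z_w$ (whether $\chi_w'\cdot\alpha_v$ is unramified or ramified), carry out each one-dimensional integral, and reconcile the powers of $|d_v|$, $|D_v|$, $\zeta_{F,v}(2)$, $L(1,\eta_v)$ appearing in the normalisations of $Q_v$ (recall the $|D|_v^{-1/2}|d|_v^{-1/2}$ in \eqref{Qvdef} and the $d^\circ t$ vs $dt$ convention, $\vol(\OO_{E,v}^\times/\OO_{F,v}^\times)$ etc.) against those baked into $Z_v^\circ=\frac{\zeta_{F,v}(2)L(1,\eta_v)^2}{L(1/2,\sigma_{E,v}\otimes\chi_v')}\prod_{w\mid v}Z_w$. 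This last bookkeeping is what yields the clean $\zeta_{F,v}(2)^{-1}$ in front.

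\textbf{Main obstacle.} The genuine content and the place where an error is most likely to creep in is not the shape of the geometric series (which is forced by the $\Up_v^*$-eigenvector property) but the exact constant: keeping track of all the discriminant factors, Haar-measure normalisations, the analytic-continuation prescription in Lemma \ref{kir pair}, and the precise cancellation of $L(1/2,\pi_{E,v}\otimes\chi_v)$ between the normalising factor of $Q_v$ and the numerator of $Z_v^\circ$. I would organise this by first computing the un-normalised integral $Q_v^\sharp(\pi(t)f_{\alpha,v,V}^+,f_{\alpha,v,V}^-,\chi_v)$ in the limit, matching it against the un-normalised local zeta integral $R_{r,v}$ of Proposition \ref{prop2.6} via the Shimizu-lift identity of Lemma \ref{Qtheta} (which already relates $Q_v^\sharp$ to $|d_v|^{-3/2}R_{r,v}^\circ$ with the $\alpha_v(\vpi_v)^{-r_v}w_{r,v}^{-1}$-twist), and then invoking Proposition \ref{interpolation factor}, which identifies $R_{r,v}^\natural$ with $Z_v^\circ$. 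In effect the lemma is a corollary of the computations already performed for Theorem \ref{A}, transported through the Shimizu lift; I would make that reduction explicit and then just verify the single remaining scalar. The only subtlety worth flagging is that $f_{\alpha,v}^{\pm}$ here are indexed by compact opens $V_p\subset E_p^\times$, not $\GL_2$-level subgroups, and that the $w_{r,v}^{-1}$-translation used in the $p$-adic $L$-function construction must be matched with the $s_r$, $s_r^*$ translations used here — a short manipulation using $w_{r,v}^{-1}\smalltwomat{\vpi_v^{r}}{}{}1$ lying in the relevant Iwahori double coset.
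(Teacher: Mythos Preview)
Your outline contains one factual slip and one genuine gap.

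\textbf{The slip.} In the split case there are \emph{two} places $w, w^{*}$ of $E$ above $v$, not one: $E_{v}\cong F_{v}\oplus F_{v}$, and $\prod_{w\mid v}Z_{w}$ has two factors. It is true that $E_{v}^{\times}/F_{v}^{\times}\cong F_{v}^{\times}$, so the toric integral is one-dimensional, but the output is a \emph{product} of two basic integrals $Z_{w}\cdot Z_{w^{*}}$, not a single Tate integral times a geometric series. The paper's computation makes this decoupling explicit: writing out the Kirillov pairing for $s_{r}f_{\alpha}^{+,\circ}$ against $s_{r}^{*}f_{\alpha}^{-,\circ}$ and performing the change of variables $t'=ty$ (using $\chi_{w}(t)=\chi_{w}(t')\omega(y)\chi_{w^{*}}(y)$) separates the double integral into two independent integrals of the form $\int_{F^{\times}}\psi(-t)|t|\alpha(t)\chi_{w}(t)\,d^{\times}t$ and its $w^{*}$-analogue, each of which is precisely the basic integral of Lemma~\ref{basic local integral}.

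\textbf{The gap.} Your proposed shortcut---deduce the lemma from Proposition~\ref{interpolation factor} via the Shimizu identity of Lemma~\ref{Qtheta}---does not close. Lemma~\ref{Qtheta} relates $R_{r,v}^{\circ}$ to $Q_{v}^{\sharp}(\theta_{\psi,v}(W_{v},\alpha_{v}^{-r}w_{r,v}^{-1}\phi_{v}))$, whereas the present lemma concerns $Q_{v}(f_{\alpha,v}^{+},f_{\alpha,v}^{-})$. These are evaluations of the same functional $Q_{v}$ on \emph{different} vectors in $\pi_{v}^{+}\otimes\pi_{v}^{-}$, and you have not identified $\theta_{v}(W_{v},\alpha_{v}^{-r}w_{r,v}^{-1}\phi_{v})$ with (a known scalar times) $f_{\alpha,v,r}^{+}\otimes f_{\alpha,v,r}^{-}$. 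That identification is not available; indeed the paper records it as an open \emph{Question} at the end of the appendix, suggested precisely by comparing the lemma with Proposition~\ref{Qv vs}. Multiplicity one tells you the two vectors pair to scalar multiples under $Q_{v}$, but determining that scalar is the whole content, and your reduction just relocates the computation rather than avoiding it. The paper therefore computes $Q_{v}(f_{\alpha,v}^{+},f_{\alpha,v}^{-},\chi_{v})$ directly from the explicit Kirillov descriptions \eqref{f1}--\eqref{f2}, with no appeal to the Shimizu lift.
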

\begin{proof}
It suffices to show that the result holds at any complex geometric point $(\chi_{v},\psi_{v})\in \Y_{v}^{\rm l.c.}\times\Psi_{v}(\C)$. 
Drop all subscripts $v$, and fix a sufficiently large integer $r$ (depending on $\chi$).
Recalling the pairing \eqref{kir pair}, we have  by definition
\begin{align*}
Q(f_{\alpha}^{+}, f_{\alpha}^{-}, \chi) &=   {L(1,\eta)L(1, \pi, {\rm ad})\over \zeta_{F}(2) L(1/2, \pi_{E}, \chi)}  \int_{E^{\times}/F^{\times}} \chi(t) (\pi(t)f_{\alpha}^{+}, f_{\alpha}^{-})\,{dt\over |d|^{1/2}}.
\end{align*}

We denote by $E_{w}$ (respectively $E_{w^{*}}$) the image of $F$ under the map $F\to M_{2}(F)$ sending $t\mapsto \smalltwomat t{}{}1$ (respectively $t\mapsto \smalltwomat 1{}{}t$), and by $\chi_{w}$ (respectively $\chi_{w^{*}}$) the restriction of $\chi$
 to $E_{w}^{\times}$ (respectively $E_{w^{*}}^{\times}$).

We can then compute that 
$${ \zeta_{F}(1) L(1/2, \pi_{E}, \chi) \over L(1,\eta)}
Q(f_{\alpha}^{+}, f_{\alpha}^{-}, \chi)$$
equals
\begin{align*}
&|d|^{-1}\int_{F^{\times}} \int_{F^{\times}}|\vpi|^{-r} \alpha(\vpi)^{-r} s_{r}f_{\alpha}^{+, \circ} (ty)
\cdot|\vpi|^{-r} \alpha(\vpi)^{-r} s_{r}^{*}f_{\alpha}^{{-, \circ}}(y) \chi_{w}(t) \,d^{\times}y\, d^{\times}t \\
=& |d|^{-1}\int_{F^{\times}} \int_{F^{\times}}|\vpi|^{-r} \alpha(\vpi)^{-r}\psi(-ty)|ty\vpi^{r}|\alpha(ty\vpi^{r})\one_{\OO_{F}-\{0\}}(ty\vpi^{r}) \\
&\cdot |\vpi|^{-r}\alpha(\vpi)^{-r}\psi(y)\omega(\vpi)^{r}
|y\vpi^{r} | 
\alpha(y\vpi^{r})\omega^{-1}(y\vpi^{r}) \one_{\OO_{F}-\{0\}}(y\vpi^{r})   \chi_{w}(t)   \,d^{\times}y\, d^{\times}t.
\end{align*}
We now perform the change of variables $t'=ty$ and observe that $\chi_{w}(t)=\chi_{w}(t')\chi_{w}^{-1}(y)=\chi_{w}(t')\omega(y)\chi_{{w}^{*}}(y)$; we conclude after simplification that the above expression equals
\begin{align*}
|d|^{-1}\int_{v(t')\geq -r}  \psi(-t')|t'|\alpha(t')\chi_{w}(t') \, d^{\times}t' \int_{v(y)\geq -r} \psi(-y)|y|\alpha(y)\chi_{{w}^{*}}(y)\,d^{\times}y.
\end{align*}
If $r$ is sufficiently large, the domains of integration can be replaced by $F^{\times}$. The  computation of the integrals  is carried out in Lemma \ref{basic local integral}. We obtain
$$Q_{v}(f_{\alpha,v}^{+}, f_{\alpha,v}^{-}, \chi_{v}) = {L(1, \eta_{v})\over   \zeta_{F,v}(1) L(1/2, \pi_{E,v}, \chi_{v})} \zeta_{E,v}(1) \prod_{w\vert v} Z_{w}( \chi_{w}) =\zeta_{F,v}(2)^{-1}   \cdot Z_{v}^{\circ}(\chi_{v}).$$
\end{proof}

\subsection{Gross--Zagier and Waldspurger formulas}\label{10.2} Here we prove Theorem \ref{C}. We continue with the notation of the previous subsection, and we  suppose that  $\pi_{v}^{\pm} $ is isomorphic to the local component at $v\vert p$ of the representation $\pi^{\pm}$ of the Introduction. Let $w$, ${w}^{*}$ be the two places of
 $E$ above $v$, and  fix an isomorphism $\B_{v}\cong M_{2}(F_{v})$ such that the map  $E_{v}\cong E_{w}\oplus E_{{w}^{*}}\to\B_{v}$ is identified with  the map $F_{v}\oplus F_{v} \to M_{2}(F_{v})$  given by $(t_{1}, t_{2})\mapsto \smalltwomat {t_{1}}{}{}{t_{2}}$.

We go back to the global situation with the notation and assumption of \S\ref{intro anti}.  Choose a universal Whittaker (or Kirillov) functional for $\pi^{+}$ at $p$, that is, a $\B_{p}^{\times}$-equivariant map $\mathscr{K}_{p}^{+} \colon \pi^{+}\otimes\OO_{\Psi_{p}}(\Psi_{p})\to \bigotimes_{v\vert p}\mathscr{K}(\pi^{+}, \psi_{{\rm univ}, v})$.  By the natural dualities of $\pi^{\pm}$ and the Kirillov models,
it induces a $\B^{\times}_{p}$-equivariant  map $\mathscr{K}^{+\vee}_{p}\colon \bigotimes_{v\vert p}\mathscr{K}(\pi^{-}, \psi_{{\rm univ},v})\to\pi^{-}\otimes\OO_{\Psi_{p}}(\Psi_{p})$, whose inverse  $\mathscr{K}^{-}_{p}$ is  a  universal Kirillov functional  for $\pi^{-}_{p}$. Letting $\pi^{\pm}_{p, \OO_{\Psi_{p}}(\Psi_{p}) }:=\bigotimes_{v\vert p}\mathscr{K}(\pi^{\pm}, \psi_{{\rm univ}, v})$, we obtain a unique decomposition $\pi^{\pm}\otimes \OO_{\Psi_{p}}(\Psi_{p})\cong \pi^{\pm, p }_{ \OO_{\Psi_{p}}(\Psi_{p})}\otimes \pi^{\pm}_{p, \OO_{\Psi_{p}}(\Psi_{p})}$. The decomposition arises from a decomposition of the natural $M$-rational subspaces $\pi^{\pm}\cong \pi^{\pm, p}\otimes\pi_{p}^{\pm}$.\footnote{However the $M$-rational subspaces are not stable under the $\B^{\infty\times}$-action.}

 \subsubsection{Heegner--theta elements}
 For each  $f^{\pm, p }\in \pi^{\pm p}\otimes M(\alpha)$,  let
$$f_{\alpha}^{ \pm, p}:=f^{\pm p}\otimes f_{\alpha, p}^{\pm}\in \pi^{\pm, p}_{M(\alpha)}\otimes\varprojlim_{V\subset\OO_{E,p}^{\times}} \pi_{p, \OO_{M(\alpha)}}^{\pm, V},$$
where $f_{\alpha,p}^{\pm }=\otimes_{v\vert p}f_{\alpha,v}^{\pm}$ with $f_{\alpha,v}^{\pm}$ the elements \eqref{univ local kir elt}. 

Fix a component $\Y_{\pm}^{\circ}\subset \Y_{\pm }$ of type $\eps\in \{+1, -1\}$ as in \S\ref{intro anti}. Then the elements 
\begin{align}\label{def theta}
\Theta^{\pm}_{\alpha}(f^{\pm, p}):=&\dashint_{E^{\times}\bks E_{\A^{\infty}}^{\times}} f_{\alpha}^{p ,\pm}(i(t))
 \chi_{\rm univ}(t)\, dt
 &\in \OO_{\Y_{\pm}}(\Y_{\pm})^{\rm b},\\
 \label{def Htheta}
 \mathscr{P}^{\pm}_{\alpha}(f^{\pm, p}):=
 \lim_{U_{T,p}\to \{1\}} 
& \dashint_{E^{\times}\bks E_{\A^{\infty}}^{\times}/U_{T}} \kappa(f_{\alpha}^{p ,\pm}({\rm rec}_{E}(t)\iota_{\xi}(P))\otimes \chi_{{\rm univ,} U_{T}}^{\pm}(t))\, dt
 &\in  {\bf S}_{\frakp}(A_{E}, \chi_{\rm univ}^{\pm}, \Y_{\pm})^{\rm b},
 \end{align}  
 or rather their restriction to $\Y_{\pm}^{\circ}\subset \Y_{\pm}$,
 satisfy the property of Theorem \ref{C}.\ref{C1}. Here $\chi_{{\rm univ}, U_{T}}^{\pm}\colon \Gamma\to \OO^{\times}(\Y_{\pm})^{U_{T}}$ is the convolution of $\chi_{\rm univ}^{\pm}$ with the finest $U_{T}$-invariant approximation to a delta function at $\one\in \Gamma$.
 
  We explain the boundedness in the case of $ \mathscr{P}^{\pm}_{\alpha}(f^{\pm, p})$.  
The rigid space $\Y_{\pm}$ is the generic fibre of an $\OO_{L}$-formal scheme $\mathfrak{Y}_{\pm}:={\rm Spf}\, \OO_{L}\llb \Gamma\rrb/((\omega^{\pm 1}(\gamma)[\gamma]-[1])_{\gamma \in \A^{\infty,\times}})$.\footnote{The quotienting ideal is  finitely generated as the image of $\A^{\infty,\times}$ in $\Gamma $ is a finitely generated $\Z_{p}$-submodule.} (Similarly each geometric connected component $\Y^{\circ}_{\pm}$ has a formal model $\mathfrak{Y}^{\circ}_{\pm}\subset \mathfrak{Y}$).
 This identifies $\OO_{\Y_{\pm}}(\Y_{\pm})^{\rm  b}= \OO_{\mathfrak{Y_{\pm}}}(\mathfrak{Y_{\pm}})\otimes_{\OO_{L}}{L}$.
 By Lemma \ref{f limit}.\ref{bounded-til} applied to the  natural lattice $\pi^{\pm}_{\OO_{L}}\subset \pi^{\pm}\otimes L$ given by $\Hom(J,A^{\pm})\otimes_{\End(A)}\OO_{L}\subset \Hom^{0}(J, A^{\pm})\otimes L$,   after possibly replacing $f^{ \pm, p} $ by a fixed  multiple
the elements $\tilde{f}_{\alpha}^{p ,\pm}({\rm T}_{t} \iota_{\xi}(P))$ belong to $A^{\pm}({E}^{\rm ab})$.
  Then 
 each term in the sequence at the right-hand side of \eqref{def Htheta} is a fixed multiple of 
\begin{align}\label{is limit of}
\sum_{t\in E^{\times}\bks E_{\A^{\infty}}^{\times}/U_{T}} \kappa(\tilde{f}_{\alpha}^{p ,\pm}({\rm rec}_{E}(t) \iota_{\xi}(P))\otimes \chi_{{\rm univ}, U_{T}}^{\pm}(t)),
\end{align}
which belongs to the $\OO_{L}$-module $H^{1}_{f}(E, T_{\frakp }A^{\pm} \otimes \OO_{\mathfrak{Y_{\pm}}}(\mathfrak{Y_{\pm}})^{U_{T}}(\chi_{{\rm univ}, U_{T}}^{\pm}))$.
 Hence some nonzero  multiple of $ \mathscr{P}^{\pm}_{\alpha}(f^{\pm, p})$ belongs to the limit $\varprojlim_{U_{T}}H^{1}_{f}(E, T_{\frakp }A^{\pm} \otimes \OO_{\mathfrak{Y_{\pm}}}(\mathfrak{Y_{\pm}})^{U_{T}}(\chi_{{\rm univ}, U_{T}}^{\pm}))$, whose tensor product with $L$ is indeed ${\bf S}_{\frakp}(A_{E}, \chi_{\rm univ}^{\pm}, \Y_{\pm})^{\rm b}$. 

\subsubsection{Local toric periods away from $p$}  Given the chosen decomposition $\gamma^{\pm}\colon\pi^{\pm}\otimes \OO_{\Psi_{p}}(\Psi_{p})\cong \pi^{\pm, p}_{\OO_{\Psi_{p}}(\Psi_{p})} \otimes  \pi^{\pm }_{p,\OO_{\Psi_{p}}(\Psi_{p})}$, let $(\, , \, )^{p}$ be the unique pairing on $\pi^{+,p}_{\OO_{\Psi_{p}}}\otimes \pi^{-,p}_{\OO_{\Psi_{p}}}$ which makes $\gamma^{+}\otimes \gamma^{-}$ into an isometry for the natural pairings on $\pi^{\pm }$ and $\pi_{p,  \OO_{\Psi_{p}}(\Psi_{p})}^{\pm}$.\footnote{Recall that the natural pairing on the factor at $p$ comes from its description as a Kirillov model.} Then for each $\chi=\chi^{p}\chi_{p}\in \Y^{\circ\,  {\rm l.c.}}$, the toric period $Q^{p}$ of \eqref{toric away p} is defined. By Lemma \ref{Qtheta},  Theorem \ref{C}.\ref{C2} then follows from Proposition \ref{interp rnat}. It is also proved in slightly different language in \cite[Lemma 4.6 (ii)]{lzz}.

\subsubsection{Formulas}
We prove the anticyclotomic formulas of  Theorem \ref{C}, and at the same time complete the proof of Theorem \ref{B} for the characters who do not satisfy Assumption \ref{ass on chi}.

\begin{lemm}\label{zerozerozero} Let $L$ be a non-archimedean local field with ring of integers $\OO_{L}$, $n\geq 1$, and let $\mathscr{D}_{n}$ be the rigid analytic polydisc over $L$ in $n$ variables, that is the generic fibre of ${\rm Spf} \,\OO_{L}\llb X_{1}, \ldots , X_{n}\rrb$. Let $\Sigma_{n}\subset {\mathscr{D}}_{n}(\baar{L})$ be the set of points of the form $x=(\zeta_{1}-1, \ldots , \zeta_{n}-1)$ with each $\zeta_{i}$ a root of unity of $p$-power order. Let 
 $$f\in \OO({\mathscr{D}}_{n})^{\rm b}=\OO_{L}\llb X_{1}, \ldots , X_{n}\rrb\otimes L$$ 
 be such that $f(x)=0$ for all but finitely may $x\in \Sigma_{n}$. Then $f=0$.
\end{lemm}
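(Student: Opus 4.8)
The statement is a rigidity/identity-theorem for bounded analytic functions on a polydisc: a bounded element of $\OO_{L}\llb X_{1},\dots,X_{n}\rrb\otimes L$ vanishing on all but finitely many torsion points $\Sigma_{n}$ must be zero. The plan is to induct on $n$, reducing the multivariable case to the one-variable case, and to prove the one-variable case by a Weierstrass-preparation argument together with the observation that $\Sigma_{1}$ is infinite and accumulates at a point of the disc.

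\emph{Base case $n=1$.} After rescaling I may assume $f\in\OO_{L}\llb X\rrb$ is a power series with coefficients in $\OO_{L}$, not identically zero; I want a contradiction with the vanishing on a cofinite subset of $\Sigma_{1}=\{\zeta-1:\zeta\in\bmu_{p^{\infty}}\}$. By the Weierstrass preparation theorem for $\OO_{L}\llb X\rrb$ (valid since $\OO_{L}$ is a complete DVR), write $f=\vpi^{m}\cdot u\cdot P$ with $u\in\OO_{L}\llb X\rrb^{\times}$ a unit and $P$ a distinguished (monic) polynomial of some finite degree $d$. The unit $u$ has no zeroes on the open unit disc, so the zeroes of $f$ in $\mathscr{D}_{1}(\baar L)$ are exactly the $\leq d$ roots of $P$. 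But $\Sigma_{1}$ is infinite, so $f$ can vanish on at most $d$ points of $\Sigma_{1}$, hence on only finitely many — and this is \emph{automatically} consistent with the hypothesis, so I actually need $f$ to vanish on a \emph{cofinite} subset of an infinite set, forcing infinitely many zeroes in the disc, contradicting $d<\infty$ unless $f=0$. (More precisely: "all but finitely many $x\in\Sigma_1$" with $\Sigma_1$ infinite means infinitely many zeroes, so $d=\infty$, impossible unless $f$ is the zero power series, i.e. $m$ and $P$ are undefined; equivalently $f=0$.)

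\emph{Inductive step.} Suppose the statement holds for $n-1$. Let $f\in\OO_{L}\llb X_{1},\dots,X_{n}\rrb\otimes L$ vanish on a cofinite subset $S\subset\Sigma_{n}$, and suppose $f\neq 0$. Expand $f=\sum_{k\geq 0} f_{k}(X_{1},\dots,X_{n-1}) X_{n}^{k}$ with $f_{k}\in\OO_{L}\llb X_{1},\dots,X_{n-1}\rrb\otimes L$; since $f\neq0$, some $f_{k_{0}}\neq0$. Fix a torsion point $\underline{x}'=(\zeta_{1}-1,\dots,\zeta_{n-1}-1)\in\Sigma_{n-1}$ and consider the one-variable slice $g_{\underline{x}'}(X_{n}):=f(\underline{x}',X_{n})$. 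The excluded set $\Sigma_{n}\setminus S$ is finite, so for all but finitely many torsion values $\zeta_{n}$ the point $(\underline{x}',\zeta_{n}-1)$ lies in $S$, whence $g_{\underline{x}'}$ vanishes on a cofinite subset of $\Sigma_{1}$. Provided $g_{\underline{x}'}$ is a bounded (integral after rescaling) power series — which follows from boundedness of $f$ — the base case gives $g_{\underline{x}'}=0$, i.e. $f_{k}(\underline{x}')=0$ for all $k$. As $\underline{x}'$ ranges over \emph{all} of $\Sigma_{n-1}$ (the finite exceptional set imposed only finitely many constraints per slice, and the argument works for every $\underline{x}'$), each $f_{k}$ vanishes on all of $\Sigma_{n-1}$; by the inductive hypothesis $f_{k}=0$ for every $k$, so $f=0$, a contradiction.

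\textbf{Main obstacle.} The one subtlety that needs care — and which I expect to be the crux — is the bookkeeping of the finite exceptional set across slices: "$f$ vanishes on all but finitely many points of $\Sigma_n$" does not literally say that every slice $g_{\underline{x}'}$ vanishes cofinitely on $\Sigma_1$ for \emph{every} $\underline{x}'$, only for those $\underline{x}'$ whose fibre is not entirely swallowed by the exceptional set; but since the exceptional set is finite it meets only finitely many fibres, and for those finitely many bad $\underline{x}'$ I can instead argue that $f_k(\underline{x}')$ is determined by continuity/density from the good slices, or simply note that $\Sigma_{n-1}$ minus finitely many points is still "all but finitely many", so the inductive hypothesis applies to each $f_k$ directly. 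The other routine point is ensuring the Weierstrass preparation machinery and the passage from "bounded in $\OO_L\llb\underline X\rrb\otimes L$" to "integral after scaling by a power of $\vpi$" are set up correctly, which is standard for complete DVRs. I would present the argument in the two-step form above, isolating the $n=1$ case as the only place where anything analytic (Weierstrass preparation, finiteness of zeroes) is used.
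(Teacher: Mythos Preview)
Your proof is correct and follows essentially the same inductive strategy as the paper: expand $f$ as a power series in one of the variables with coefficients in the remaining ones, and apply the inductive hypothesis to the coefficients. The only difference is the direction of the slicing---you specialise $(X_{1},\dots,X_{n-1})$ to torsion values and use the $n=1$ case (over a finite extension of $L$) to annihilate each one-variable slice, whereas the paper specialises $X_{n}=0$, kills the constant term $f_{0}$ via the $(n-1)$-dimensional hypothesis, and then iterates by dividing out $X_{n}$; both orderings work, your Weierstrass-preparation argument for $n=1$ is exactly what underlies the paper's citation of \cite{AV}, and your worry about the finite exceptional set is a non-issue since every fibre over $\Sigma_{n-1}$ is infinite.
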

\begin{proof}  By induction of $n$, the case $n=1$ being well-known \cite{AV}; we will abbreviate  $X=(X_{1}, \ldots, X_{n})$ and $X'=(X_{1}, \ldots, X_{n-1})$.  Up to multiplying $f$ by a suitable nonzero polynomial  we may assume that $f\in \OO_{L}\llb X\rrb$ and that it vanishes on all of $\Sigma_{n}$. We may write, with multiindex notation,
$f=\sum_{J\subset \N^{n}}a_{J}X^{J}$
 (where $\N=\{0, 1,2, \ldots, \}$) with each $a_{J}\in \OO_{L}$. Let $f_{j}(X'):=\sum_{J'\subset \N^{n-1}} a_{J'j}(X')^{J'}\in \OO_{L}\llb X'\rrb$, then $$f(X)=\sum_{j=0}^{\infty }f_{j}(X') X_{n}^{j}.$$
 By assumption, $f_{0}(X')=f(X', 0)$ vanishes on all of $\Sigma_{n-1}$, hence by the induction hypothesis $f_{0}=0$ and $X_{n}|f$. By induction on $j$, repeatedly replacing  $f $ by $X_{n}^{-1}f$, we find that each $f_{j}=0$, hence $f=0$. 
\end{proof}

\begin{lemm}\label{isinI} Let $\Y^{\circ}\subset \Y$ be a connected component of type $\eps=-1$, and let $\Y^{\circ}{}'\subset \Y'$ be the connected component containing $\Y^{\circ}$. The $p$-adic $L$-function $L_{p ,\alpha}(\sigma_{E})|_{\Y^{\circ}{}'}$ is a section of $\calI_{\Y}\subset\OO_{\Y'}$.
\end{lemm}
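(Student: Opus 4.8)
The plan is to combine the interpolation property of Theorem \ref{theo A text} with the complex functional equation for the Rankin--Selberg $L$-function and with the constancy of local root numbers along connected components. First I observe that $\calI_{\Y}$ is the ideal of functions vanishing on $\Y$, so it suffices to show that $L_{p,\alpha}(\sigma_E)(\chi)=0$ for \emph{every} $\chi\in \Y^{\circ,\,\mathrm{l.c.}}$; indeed, by Theorem \ref{theo A text} the restriction $L_{p,\alpha}(\sigma_E)|_{\Y^{\circ}{}'}$ is, up to the invertible factor $\prod_{v\nmid p\infty}\mathscr{R}^{\natural}_v$ and away from the divisor $D\subset \mathscr{M}_{\Y'-\Y}$ (which meets $\Y$ in the empty set, so does not interfere), a bounded analytic function on $\Y^{\circ}{}'$ that is algebraic on the locally constant locus; a bounded analytic function on a connected component of $\Y'$ — which is the generic fibre of a formal polydisc over $\OO_L$ after a suitable choice of coordinates — vanishing at all but finitely many locally constant characters of $\Y^{\circ}$ must lie in the ideal sheaf $\calI_{\Y}$. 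This last implication is exactly of the type furnished by Lemma \ref{zerozerozero}: one extends $\Gamma_F$-coordinates on $\Y^{\circ}{}'$ transverse to $\Y^{\circ}$, so that $\Y^{\circ}$ is the zero locus of those coordinates, and a bounded function vanishing on a dense-enough set of torsion points of the transverse directions is divisible by each of those coordinates.

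Next I must show $L_{p,\alpha}(\sigma_E)(\chi)=0$ for $\chi\in\Y^{\circ,\,\mathrm{l.c.}}_{M(\alpha)}$. By the interpolation formula of Theorem \ref{theo A text}, $L_{p,\alpha}(\sigma_E)(\chi',\psi_p)$ equals, up to the nonzero archimedean and adjoint $L$-factors and up to $\prod_{v\mid p}Z^{\circ}_v(\chi'_v,\psi_v)$, the central value $L^{(\infty)}(1/2,\sigma_E^{\iota}\otimes\chi'{}^{\iota})$ (here I include the archimedean factors per the running convention). So I need to argue that this complete central value vanishes whenever $\chi'$ specialises a character $\chi$ lying in a component of type $\eps=-1$. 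The complex functional equation for $L(s,\sigma_{E}^{\iota}\otimes\chi^{\iota})$ relates $s$ to $1-s$ with sign the global root number $\eps(1/2,\sigma_E\otimes\chi)=\prod_v \eps(1/2,\pi_{E,v}\otimes\chi_v)$; by the local conditions \eqref{local cond} and the definition of the incoherent quaternion algebra $\B$ attached to the component, this global root number equals $-\prod_v\eps(\B_v)=-1$ precisely because $\B$ is incoherent (its ramification set has odd cardinality). Therefore the complete $L$-function has a sign $-1$ in its functional equation, forcing $L(1/2,\sigma_E^{\iota}\otimes\chi^{\iota})=0$. I should note that the interpolation factors $Z^{\circ}_v$ and $\mathscr{R}^{\natural}_v$ are finite (possibly zero, but only at exceptional characters, which doesn't matter here since we want vanishing), so multiplying by them preserves the vanishing.

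The main subtlety, and the step I expect to require the most care, is not the root-number computation — that is essentially Tunnell--Saito plus incoherence — but rather checking that the root number is genuinely \emph{constant} along $\Y^{\circ}{}'$, i.e. that passing from $\chi\in\Y^{\circ}$ to the universal character on the bigger space $\Y^{\circ}{}'$ (which has the extra cyclotomic directions) does not change the sign, so that \emph{every} locally constant point of $\Y^{\circ}{}'$ lying over $\Y^{\circ}$ has central value zero. This follows because the local signs $\eps(1/2,\pi_{E,v}\otimes\chi'_v)$ are locally constant functions of $\chi'$ on $\Y'$ (a special case of \cite[Proposition 3.3.4]{PX}, already invoked in the text before Theorem \ref{C}), and $\omega\cdot\chi'|_{\widehat{\OO}_F^{p,\times}}=1$ pins down the relevant determinant twist so that the product of local signs is the same constant $-1$ on all of $\Y^{\circ}{}'$; the point $\chi\in\Y^{\circ}$ witnesses the value of this constant. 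Combining this with the density of locally constant points in $\Y^{\circ}{}'$ and the boundedness/algebraicity of $L_{p,\alpha}(\sigma_E)$, Lemma \ref{zerozerozero} then yields that $L_{p,\alpha}(\sigma_E)|_{\Y^{\circ}{}'}$ lies in $\calI_{\Y}$, as claimed. $\qed$
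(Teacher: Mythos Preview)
Your argument is correct and follows the paper's route: the interpolation property together with the sign $-1$ in the complex functional equation forces $L_{p,\alpha}(\sigma_E)(\chi)=0$ for every locally constant $\chi\in\Y^{\circ}$, and then Lemma~\ref{zerozerozero} applied to the restriction $L_{p,\alpha}(\sigma_E)|_{\Y^{\circ}}$ (a bounded function on a polydisc) yields identical vanishing on $\Y^{\circ}$, hence membership in $\calI_{\Y}$. Two small corrections to your exposition: Lemma~\ref{zerozerozero} is applied on $\Y^{\circ}$ itself, not ``in the transverse directions'' on $\Y^{\circ}{}'$ as you describe; and your third paragraph is unnecessary, since the functional equation forces vanishing only for $\chi\in\Y^{\circ}$ (where $\omega\cdot\chi|_{\A^{\infty,\times}}=1$ makes $L(s,\sigma_E\otimes\chi)$ self-dual), not for general $\chi'\in\Y^{\circ}{}'$ --- and that is exactly what is needed.
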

\begin{proof} By the interpolation property and the functional equation, $L_{p, \alpha}(\sigma_{E})$ vanishes on $\Y^{\circ}\cap \Y^{\rm l.c., \, an}(\baar{L})$. We conclude by applying Lemma \ref{zerozerozero}, noting that after base-change to a finite extension of $L$, there is an isomorphism $\Y\to \coprod_{i\in I} {\mathscr{D}}_{[F:\Q]}^{(i)}$ to a finite disjoint union of rigid polydiscs, taking $\Y^{\rm l.c., \, an}(\baar{L})$ to $\coprod_{i\in I}\Sigma_{[F:\Q]}^{(i)}$.

\end{proof}

\begin{prop}\label{BandC} The following are equivalent:
\begin{enumerate}
\item\label{fo1}   Theorem \ref{B} is true for all $f_{1}$, $f_{2}$ and all locally constant characters $\chi \in \Y^{\rm l.c.}_{L}$;
\item\label{fo2} Theorem \ref{B} is true for all $f_{1}$, $f_{2}$ and all but finitely many locally constant characters $\chi \in \Y^{\rm l.c.}_{L}$;
\item\label{fo3} Theorem \ref{C}.\ref{C4} is true for all $f^{+, p}$ and $f^{-, p}$.
\end{enumerate}
\end{prop}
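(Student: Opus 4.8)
The plan is to prove the three-way equivalence by showing \ref{fo1}$\Rightarrow$\ref{fo2} (trivial), \ref{fo2}$\Rightarrow$\ref{fo3} (the substantive direction, using analytic continuation over the anticyclotomic family), and \ref{fo3}$\Rightarrow$\ref{fo1} (specialisation via multiplicity one). The implication \ref{fo1}$\Rightarrow$\ref{fo2} is immediate. For \ref{fo3}$\Rightarrow$\ref{fo1}, fix a locally constant $\chi\in\Y^{\rm l.c.}_{L}$ with $\eps(A_E,\chi)=-1$, and specialise the identity of Theorem \ref{C}.\ref{C4} at $\chi$. By the interpolation properties of the theta elements (Theorem \ref{C}.\ref{C1}) and of $\calQ$ (Theorem \ref{C}.\ref{C2}), together with the compatibility $Q_v(f^+_{\alpha,v},f^-_{\alpha,v})=\zeta_{F,v}(2)^{-1}Z_v^\circ$ of Lemma \ref{toric at p} at the places $v\vert p$ and Lemma \ref{Qtheta} away from $p$, the specialisation of the left-hand side is $\langle P(f^+_{\alpha,V_p},\chi),P^\vee(f^-_{\alpha,V_p},\chi^{-1})\rangle$ up to the product $\prod_{v\vert p}Z_v^\circ(\chi_v)$, and the specialisation of the right-hand side is ${c_E\over 2}\,{\rm d}_F L_{p,\alpha}(\sigma_{A,E})(\chi)\cdot Q(f^+_{\alpha},f^-_{\alpha},\chi)$. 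Since $Q(\cdot,\cdot,\chi)$ is a generator of a one-dimensional space by Tunnell--Saito and the classes $f^{\pm,p}$ can be chosen so that $\calQ(f^{+,p},f^{-,p})(\chi)\neq0$ (by the nonvanishing of the local functionals, as in the proof of Theorem \ref{theo A text}), this yields Theorem \ref{B} for $(f^+_{\alpha},f^-_{\alpha})$ and hence, by multiplicity one, for all $f_1,f_2$.

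The main work is in \ref{fo2}$\Rightarrow$\ref{fo3}. Fix $f^{\pm,p}$ and a connected component $\Y^\circ\subset\Y$ of type $\eps=-1$. Both sides of the formula in Theorem \ref{C}.\ref{C4},
$$\Big\langle\calP^+_\alpha(f^{+,p}),\calP^-_\alpha(f^{-,p})^\iota\Big\rangle
\quad\text{and}\quad
{c_E\over2}\,{\rm d}_F L_{p,\alpha}(\sigma_E)\cdot\calQ(f^{+,p},f^{-,p}),$$
are sections of $\calN^*_{\Y_+/\Y'_+}(\Y^\circ_+)^{\rm b}\cong\OO(\Y^\circ_+)^{\rm b}\hat\otimes\Gamma_F$ (using $L_{p,\alpha}(\sigma_E)\in\calI_\Y$ on this component by Lemma \ref{isinI}, so that ${\rm d}_F L_{p,\alpha}$ makes sense). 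By Theorem \ref{B}, which we are assuming in the form \ref{fo2}, the two sides agree after specialisation at every locally constant $\chi\in\Y^{\circ\,\rm l.c.}$ outside a finite set: indeed the specialisation of $\langle\calP^+_\alpha,\calP^{-,\iota}_\alpha\rangle$ at such $\chi$ is, by the interpolation property \eqref{def theta}--\eqref{def Htheta} of the theta elements and the equivariance of the big height pairing \eqref{bight}, equal to $\prod_{v\vert p}Z^\circ_v(\chi_v)\cdot\langle P(f^+_{\alpha,V_p},\chi),P^\vee(f^-_{\alpha,V_p},\chi^{-1})\rangle$, which Theorem \ref{B} identifies with $\prod_{v\vert p}Z^\circ_v(\chi_v)\cdot{c_E\over2}\prod_{v\vert p}Z_v^\circ(\chi_v)^{-1}{\rm d}_FL_{p,\alpha}(\sigma_{A,E})(\chi)\cdot Q(f^+_\alpha,f^-_\alpha,\chi)$; and by Lemma \ref{toric at p} and Lemma \ref{Qtheta} this last product equals ${c_E\over2}{\rm d}_FL_{p,\alpha}(\sigma_E)(\chi)\cdot\calQ(f^{+,p},f^{-,p})(\chi)$. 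Hence the difference of the two sides, a bounded section of $\OO(\Y^\circ_+)^{\rm b}\hat\otimes\Gamma_F$, vanishes at all but finitely many locally constant points, so it vanishes identically by the rigidity Lemma \ref{zerozerozero} (applied coordinate-wise to $\Gamma_F$, after base-change identifying $\Y^\circ$ with a disjoint union of polydiscs and the locally constant locus with the dense set of torsion points $\coprod\Sigma_{[F:\Q]}$). This gives Theorem \ref{C}.\ref{C4}.

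The step I expect to be the main obstacle is verifying that the specialisation of $\langle\calP^+_\alpha(f^{+,p}),\calP^-_\alpha(f^{-,p})^\iota\rangle$ at a locally constant $\chi$ really is the expected multiple of the Heegner-point height $\langle P(f^+_{\alpha,V_p},\chi),P^\vee(f^-_{\alpha,V_p},\chi^{-1})\rangle$ — that is, the compatibility of the big Nekov\'a\v{r} height pairing \eqref{bight} on the Selmer complex \eqref{thebigSC} with the classical $p$-adic height at non-exceptional specialisations, and the compatibility of the Kummer images defining $\calP^\pm_\alpha$ with the Heegner points $P(f^\pm_{\alpha,V_p},\chi)$. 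This is where the full strength of \cite[Chapter 11]{nek-selmer} is needed: the exact sequence \eqref{thebigSC}, the vanishing of the correction terms $H^0(E_w,V_{\frakp}A^\pm|^-_{\calG_{E,w}}\otimes L(\chi^\pm)_{\chi^\pm})$ precisely when $\chi$ is non-exceptional, and the interpolation property of the big height. Granting these inputs, the argument is a clean combination of Theorem \ref{B}, the explicit local computations of Lemma \ref{toric at p}, and the rigidity principle.
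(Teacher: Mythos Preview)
Your approach is the same as the paper's: \ref{fo1}$\Rightarrow$\ref{fo2} trivially, \ref{fo2}$\Rightarrow$\ref{fo3} by matching specialisations and applying the rigidity Lemma \ref{zerozerozero}, and \ref{fo3}$\Rightarrow$\ref{fo1} by specialisation together with multiplicity one and Lemma \ref{toric at p}. The logic is correct.

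There is however a bookkeeping slip in the \ref{fo2}$\Rightarrow$\ref{fo3} step that you should fix. The specialisation of $\big\langle\calP^+_\alpha(f^{+,p}),\calP^-_\alpha(f^{-,p})^\iota\big\rangle$ at a non-exceptional $\chi$ is exactly $\langle P(f^+_{\alpha,V_p},\chi),P^\vee(f^-_{\alpha,V_p},\chi^{-1})\rangle$, with \emph{no} factor $\prod_{v\vert p}Z_v^\circ(\chi_v)$: the interpolation property of Theorem \ref{C}.\ref{C1} gives $\calP^\pm_\alpha(f^{\pm,p})(\chi^\pm)=\kappa(P(f^\pm_{\alpha,V_p},\chi^\pm))$, and the big pairing \eqref{bight} specialises to the classical one. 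The missing factor reappears on the other side: by Lemma \ref{toric at p} one has $Q(f^+_\alpha,f^-_\alpha,\chi)=\prod_{v\vert p}Z_v^\circ(\chi_v)\cdot\calQ(f^{+,p},f^{-,p})(\chi)$, not $Q=\calQ$. So Theorem \ref{B} gives
\[
\langle P(f^+_{\alpha,V_p},\chi),P^\vee(f^-_{\alpha,V_p},\chi^{-1})\rangle
={c_E\over2}\prod_{v\vert p}Z_v^\circ(\chi_v)^{-1}\,{\rm d}_FL_{p,\alpha}(\chi)\cdot\prod_{v\vert p}Z_v^\circ(\chi_v)\cdot\calQ(f^{+,p},f^{-,p})(\chi),
\]
and the $Z_v^\circ$ factors cancel here, yielding the specialisation of the right-hand side of Theorem \ref{C}.\ref{C4}. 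Your two errors compensate, so the conclusion is right; just correct the placement of the factor. Your closing remark about the compatibility of the big height pairing with classical heights at non-exceptional $\chi$ is well placed---this is the genuine input from \cite[Chapter 11]{nek-selmer}, and the paper invokes it in the same way.
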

\begin{proof} 
It is clear that \ref{fo1} implies \ref{fo2}. 
That \ref{fo2} implies \ref{fo3} follows from  Lemma \ref{zerozerozero} applied to the difference of the two sides of the desired equality, together with the interpolation properties and the evaluation of the local toric integrals in Lemma \ref{toric at p}.
 Finally,   the multiplicity one result together with Lemma \ref{toric at p} shows that \ref{fo3} implies~\ref{fo1}.
\end{proof}
Since we have already shown at the end of \S\ref{sec comparison} that Theorem \ref{B} is true for all but finitely many finite order characters, this completes the proof of Theorem \ref{B} in general and proves Theorem \ref{C}.\ref{C4}.
Finally, the anticyclotomic Waldspurger formula of Theorem \ref{C}.\ref{C3} follows from the Waldspurger formula at finite order characters \eqref{WF} by the  argument in the proof of Proposition \ref{BandC}.

\subsection{Birch and Swinnerton-Dyer formula}\label{sec iwbsd}
Theorem \ref{iwbsd} follows immediately from combining the first and second parts of the   following Proposition. We abbreviate
${\bf S}_{\frakp}^{\pm}:={\bf S}_{\frakp}(A_{E}^{\pm},\chi_{\rm univ}^{\pm}, \Y^{\circ})^{\rm b}$, and remark that, under the assumption $\omega=\one $ of Theorem \ref{iwbsd},  we have $A=A^{+}=A^{-}$ and $\pi=\pi^{+}= \pi^{-}$.

\begin{prop}Under the assumptions and notation  of Theorem \ref{iwbsd}, the following hold.
\begin{enumerate} \item Let $$\underline{\mathscr{H}}\subset {\bf S}_{\frakp}^{+}\otimes {\bf S}_{\frakp}^{-, \iota}$$ be the saturated $\Lambda$-submodule generated by the Heegner points $\mathscr{P}_{\alpha}^{+}(f^{p})\otimes \mathscr{P}_{\alpha}^{-}(f^{ p})$ for $f^{p}\in \pi^{p}$.
The $\Lambda$-modules ${\bf S}_{\frakp}^{\pm}$ are  generically of rank one, and moreover $\underline{\mathscr{H}}$ is free of rank~$1$  over $\Lambda$, generated  by an explicit  element $\mathscr{P}^{+}_{\alpha}\otimes \mathscr{P}_{\alpha}^{-, \iota}$. 
\item We have the divisibility of $\Lambda$-ideals 
\begin{gather*}  
{\rm char}_{\Lambda} \wtil{H}^{2}_{f}(E, V_{\frakp}A\otimes \Lambda(\chi_{\rm univ}) )_{\rm tors} \quad
{\Huge|}
\quad
  {\rm char}_{\Lambda}
  \left(
 {\bf S}_{\frakp}^{+}\otimes_{\Lambda} {\bf S}_{\frakp}^{-, \iota}\,
   /
\underline{\mathscr{H}} \right).\end{gather*}
\item Letting $\big\langle\, \, \big\rangle $ denote the height pairing \eqref{bight}, we have
$$\big\langle \mathscr{P}^{+}_{\alpha}\otimes \mathscr{P}_{\alpha}^{-, \iota} \big\rangle={c_{E}\over 2}\cdot {\rm d}_{F}L_{p, \alpha}(\sigma_{E})|_{\Y^{\circ}}.$$

\end{enumerate}

\end{prop}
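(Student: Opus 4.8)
The plan is to deduce all three parts from Theorem \ref{C}.\ref{C4}, which has already been proved in \S\ref{10.2}, together with the work of Fouquet \cite{fouquet} on the anticyclotomic main conjecture. Part (3) is essentially immediate: the anticyclotomic Gross--Zagier formula of Theorem \ref{C}.\ref{C4} reads $\langle \mathscr{P}^{+}_{\alpha}(f^{+,p}), \mathscr{P}^{-}_{\alpha}(f^{-,p})^{\iota}\rangle = \tfrac{c_{E}}{2}\,{\rm d}_{F}L_{p,\alpha}(\sigma_{E})\cdot \mathscr{Q}(f^{+,p},f^{-,p})$, so after choosing $f^{\pm,p}$ with $\mathscr{Q}(f^{+,p},f^{-,p})$ a unit (possible since $\mathscr{Q}\neq 0$ as a section over $\Y^{\circ}$, by the non-vanishing input from multiplicity one and the fact that $\mathscr{Q}$ specialises to the nonzero local toric periods away from $p$) and normalising $\mathscr{P}^{\pm}_{\alpha}$ accordingly, the stated identity falls out. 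The only subtlety is ensuring the normalising element can be chosen so that $\mathscr{P}^{+}_{\alpha}\otimes\mathscr{P}^{-,\iota}_{\alpha}$ generates the saturation $\mathscr{H}$; this is where I invoke the divisibility of part (2) and the rank-one statement of part (1).

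For part (1), I would argue as follows. The $\Lambda$-module ${\bf S}_{\frakp}^{+}=\wtil{H}^{1}_{f}(E,V_{\frakp}A\otimes\OO(\Y^{\circ})^{\rm b}(\chi_{\rm univ}))$ and its companion ${\bf S}_{\frakp}^{-,\iota}$ are known, by Fouquet's work \cite[Theorem B]{fouquet} under the stated hypotheses ($p\geq 5$, potentially crystalline, $\baar\rho$ and its restriction to the Hilbert class field of $E$ irreducible, non-scalar local images), to be of generic rank one, with the Heegner classes $\mathscr{P}^{\pm}_{\alpha}(f^{p})$ generating (over the total fraction field) for suitable $f^{p}$ — this uses the non-vanishing of anticyclotomic Heegner points of \cite{CV05, AN}, which applies precisely because $\omega=\one$. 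The saturation $\mathscr{H}$ of the submodule generated by all $\mathscr{P}^{+}_{\alpha}(f^{p})\otimes\mathscr{P}^{-}_{\alpha}(f^{p})^{\iota}$ is then a rank-one saturated (hence reflexive, hence by the regularity of $\Lambda$ after the identification $\Y^{\circ}\cong {\mathscr D}_{[F:\Q]+1}$ actually free) $\Lambda$-module; pick a generator and call it $\mathscr{P}^{+}_{\alpha}\otimes\mathscr{P}^{-,\iota}_{\alpha}$, noting it can be taken of the form $\mathscr{P}^{+}_{\alpha}(f^{+,p})\otimes\mathscr{P}^{-}_{\alpha}(f^{-,p})^{\iota}$ divided by $\mathscr{Q}(f^{+,p},f^{-,p})$ up to a unit, by the multiplicity-one decomposition of the height pairing. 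Freeness uses that $\Lambda$ is a (finite product of) regular local rings, so reflexive modules of rank one are free.

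For part (2), the divisibility ${\rm char}_{\Lambda}\wtil{H}^{2}_{f,{\rm tors}}\mid {\rm char}_{\Lambda}\big({\bf S}^{+}_{\frakp}\otimes_{\Lambda}{\bf S}^{-,\iota}_{\frakp}/\mathscr{H}\big)$ is the arithmetic heart of the matter and is exactly what Fouquet's anticyclotomic Euler system machinery yields: under the hypotheses listed, the Heegner point main conjecture holds in the divisibility direction, i.e. the characteristic ideal of $\wtil{H}^{2}_{f}$ (equivalently of a suitable torsion Selmer/Iwasawa module) divides the characteristic ideal of the quotient of the rank-one Selmer module by the submodule generated by the big Heegner class — see \cite[Theorem B (ii)]{fouquet} for the precise statement, which is slightly weaker in its hypotheses. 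The point of ``non-exceptional'' in Fouquet's sense (more restrictive than ours) is precisely the potentially crystalline assumption, which guarantees the local conditions at $p$ used in his formulation match the Greenberg--Panchishkin ones used here. I would then combine: Theorem \ref{iwbsd}'s displayed inclusion \eqref{iw-f} follows by taking characteristic ideals in part (3) — which expresses ${\rm d}_{F}L_{p,\alpha}$ as the image of the generator $\mathscr{P}^{+}_{\alpha}\otimes\mathscr{P}^{-,\iota}_{\alpha}$ under the height pairing, whose image is by definition $\mathscr{R}\cdot(\text{generator of }\mathscr{H})$ up to the discriminant — and then feeding in part (2)'s divisibility; explicitly $({\rm d}_{F}L_{p,\alpha}|_{\Y^{\circ}}) = \mathscr{R}\cdot {\rm char}_{\Lambda}({\bf S}^{+}_{\frakp}\otimes_{\Lambda}{\bf S}^{-,\iota}_{\frakp}/\mathscr{H})\subset \mathscr{R}\cdot{\rm char}_{\Lambda}\wtil{H}^{2}_{f,{\rm tors}}$ when $\wtil{H}^{2}_{f,{\rm tors}}$-char divides the quotient-char — wait, the inclusion direction needs care, so I would instead observe $\mathscr{R}$ times the discriminant-regulator relation gives the image of the height pairing exactly as $\mathscr{R}\cdot{\rm char}({\bf S}^{+}\otimes{\bf S}^{-,\iota}/\mathscr{H})$, and the displayed \eqref{iw-f} then reads $\mathscr{R}\cdot{\rm char}({\bf S}^{+}\otimes{\bf S}^{-,\iota}/\mathscr{H})\subset \mathscr{R}\cdot{\rm char}\,\wtil{H}^{2}_{f,{\rm tors}}$, i.e.\ ${\rm char}\,\wtil{H}^{2}_{f,{\rm tors}}\mid {\rm char}({\bf S}^{+}\otimes{\bf S}^{-,\iota}/\mathscr{H})$, which is precisely part (2).

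The main obstacle, and the step where essentially all the external input is used, is part (2): translating Fouquet's anticyclotomic Iwasawa main conjecture divisibility into the precise language of Selmer complexes and characteristic ideals used here, and checking that his ``non-exceptional'' hypotheses are implied by our ``potentially crystalline'' assumption together with the other running hypotheses of Theorem \ref{iwbsd}. Everything else — the rank-one statements, freeness of $\mathscr{H}$, and the cosmetic passage from the pointwise Gross--Zagier formula to the $\Lambda$-adic identity of part (3) — is formal once Theorem \ref{C}.\ref{C4} and \cite{CV05, AN, fouquet} are granted.
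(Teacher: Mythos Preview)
Your overall architecture is right: Part~2 is Fouquet's divisibility \cite[Theorem B(ii)]{fouquet}, the generic rank-one statement in Part~1 comes from Fouquet plus the nontriviality results of \cite{CV05,AN}, and Part~3 is immediate from Theorem~\ref{C}.\ref{C4} once the canonical generator is in hand. The gap is in your construction of that generator.

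You assert that one can choose $f^{\pm,p}$ with $\mathscr{Q}(f^{+,p},f^{-,p})$ a \emph{unit} in $\Lambda$. This is not available: nonvanishing of $\mathscr{Q}$ as a section does not make it invertible, and in general no single test vector works over all of $\Y^{\circ}$. The paper instead covers $\Y^{\circ}$ by finitely many opens $\mathscr{U}_{f_i^{p}}=\{\wtil{\mathscr{Q}}(f_i^{p})\neq 0\}$ and glues the local candidates $\wtil{\mathscr{Q}}(f_i^{p})^{-1}\cdot\mathscr{P}^{+}_{\alpha}(f_i^{p})\otimes\mathscr{P}^{-}_{\alpha}(f_i^{p})^{\iota}$. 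For this gluing one must know that these local expressions agree on overlaps, and here your appeal to ``multiplicity-one decomposition of the height pairing'' is insufficient: the $p$-adic height is not known to be injective on $\mathscr{H}$ (that would be the regulator conjecture), so you cannot recover the point from its $p$-adic height. The paper's device is to specialise at finite-order $\chi$ and use the \emph{archimedean} Gross--Zagier formula of \cite{yzz} together with the positivity of the N\'eron--Tate height (which \emph{is} injective on the at-most-one-dimensional fibres $\mathscr{H}_{|\chi}$, the dimension bound coming from \cite[Theorem~6.1]{fouquet}); density of such $\chi$ (Lemma~\ref{zerozerozero}) then propagates the equality. This archimedean input is the missing idea in your argument.

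Two smaller points: $\Y^{\circ}\subset\Y$ has dimension $[F:\Q]$, not $[F:\Q]+1$; and ``saturated hence reflexive'' is not automatic---even granting it, your abstract-generator route would still leave Part~3 unproved, since without the compatibility above you have no relation between your chosen generator and the right-hand side of Theorem~\ref{C}.\ref{C4}.
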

\begin{proof} 
We define $\wtil{Q}_{v}(f_{v}, \chi_{v}):= Q_{v}(f_{v}, f_{v}, \chi_{v})$ for $f_{v}\in \pi_{v}$, and similarly  $\wtil{Q}(f, \chi):=\prod_{v}\wtil{Q}(f_{v}, \chi_{v})$ if $f=\otimes_{v}f_{v}\in \pi$.  By \cite[Lemme 13]{wald} (possibly applied to  twists $(\pi_{v}\otimes \mu_{v}^{-`}, \chi_{v}\cdot (\mu_{v}\circ q_{w}))$ for some character $\mu_{v}$ of $F_{v}^{\times}$), the spaces ${\rm H}(\pi_{v}, \chi_{v})={\rm Hom}_{E_{v}^{\times}}(\pi_{v}\otimes \chi_{v}, L(\chi_{v}))$ are nonzero if and only if $\wtil{Q}_{v}(\cdot, \chi_{v})$ is nonzero on $\pi_{v}$.  
We also define  $\wtil{\mathscr{Q}}_{v}(f_{v}):={\mathscr{Q}}_{v}(f_{v}, f_{v})$ and $\wtil{\mathscr{Q}}(f^{p}):=\zeta_{F, p}(2)^{-1}\prod_{v\nmid p}\wtil{\mathscr{Q}}(f_{v})\in\Lambda$ for $f^{p}=\otimes f_{v}\in \pi^{p}$. If the local conditions \eqref{local cond} are satisfied, as we assume, then the spaces   ${\rm H}(\pi_{v}, \chi_{v})$ are nonzero for all $\chi\in \Y^{\circ}$, and $\wtil{\mathscr{Q}}$ is not identically zero on  $\pi^{p}$.

We will invoke the results of Fouquet in \cite{fouquet} after  comparing our setup with his.  Let $U_{r}=U^{p}\prod_{v\vert p}U_{r,v}\subset \B^{\infty, \times}$ be such that $U_{r, v}=K_{0}(\vpi_{v}^{r_{v}})$ for $v\vert p$, $r_{v}\geq 1$. Let $eH^{1}_{\text{ \'et}}(X_{U_{r}, \baar{F}}, \OO_{L}(1))$ be the image of $H^{1}_{\text{ \'et}}(X_{U_{r},  \baar{F}}, \OO_{L}(1))$ under the product $e$ of the projectors $e_{v}:=\lim_{n}\Up_{v}^{n!}$. Let ${\mathscr{J}}_{\pi}\subset \mathscr{H}_{\B^{\infty\times}}^{\rm sph}$ be the annihilator of  $\pi$ viewed as a module over the spherical Hecke algebra $  \mathscr{H}_{\B^{\infty\times}}^{\rm sph}$ for $\B^{\infty\times}$, and let  $$eH^{1}_{\text{ \'et}}(X_{U^{p}, \baar{F}}, \OO_{L}(1))[\pi]:= eH^{1}_{\text{ \'et}}(X_{U_{r}, \baar{F}}, \OO_{L}(1))/{\mathscr{J}}_{\pi},$$
 a Galois-module which  is independent of $r\geq\underline{1}$. The operators $\Up_{v}$ act invertibly on  $eH^{1}(X_{U_{r}, \baar{F}}, \OO_{L}(1))$, and in fact by $\alpha_{v}(\vpi_{v})$ on  $eH^{1}_{\text{ \'et}}(X_{U^{p}, \baar{F}}, \OO_{L}(1))[\pi]$.  
Let $f^{p}\in \pi^{p}$  be such that $\wtil{\mathscr{Q}}(f^{p})\neq 0$.
 Denote by $\kappa $ the Abel--Jacobi functor and by $f_{\alpha}^{\circ}:=f^{p}\otimes f_{\alpha, p}^{\circ}$, with $f_{\alpha,p}^{\circ}$  the product of the elements $ f_{\alpha,v}^{\circ}$  of \eqref{f1} for $v\vert p $. 
  Then, up to a fixed nonzero multiple, the class ${\mathscr{P}}^{+}_{\alpha}(f^{p})$ is the image under $\kappa(f_{\alpha}^{\circ})$ of the limit of the compatible sequence 
$${\mathscr{P}}_{\alpha, r}:=\Up_{p}^{-r}\sum_{t\in E^{\times}\bks E_{\A^{\infty}}^{\times}/U_{T}} \kappa({\rm rec}_{E}(t) {\rm T}_{s_{r}}\iota_{\xi}(P))\otimes \chi_{{\rm univ}, U_{T}}^{\pm}(t))$$
of integral elements of $ H^{1}_{f}(E, eH^{1}_{\text{ \'et}}(X_{U^{p},  \baar{F}}, \OO_{L}(1))\otimes \Lambda^{U_{T}}(\chi_{{\rm univ}, U_{T}}))$. Fouquet takes as input a certain compatible sequence $(z({\mathfrak{c}}_{p}, S))_{S}$ (\cite[Definitions 4.11, 4.14]{fouquet}) of classes in  the latter space to construct, via suitable local modifications according to the method of Kolyvagin, an Euler  system (\emph{ibid.} \S5). Noting that the local modifications occur at \emph{well-chosen, good} primes $\ell$ of $E$, his construction can equally well be applied to the sequence $({\mathscr{P}}_{\alpha, r})_{r}$ in place of $(z({\mathfrak{c}}_{p}, S))_{S}$. This Euler system can then be projected via $\kappa(f_{\alpha})$ to yield an Euler system for $V_{\frakp}A\otimes \Lambda$. Under  the condition that the first element ${\mathscr{P}}_{\alpha}^{+}(f^{p})\in {\mathbf S}_{\frakp}^{+}$ (corresponding to $z_{f, \infty}$ in \cite{fouquet})
of the projected Euler system   is non-torsion,
 it is proved in \cite[Theorem B (ii)]{fouquet} that ${\bf S}_{\frakp}^{\pm}$ have generic rank~$1$ over $\Lambda$.  By the main result of \cite{AN}, generalising \cite{CV05}, the family of points $\mathscr{P}_{\alpha}^{+}(f^{p})\in {\bf S}_{\frakp}^{+}$ is indeed not $\Lambda$-torsion provided $\wtil{\mathscr{Q}}(f^{p})\neq 0$ (i.e. $f_{v}$ is a ``local test vector'' for all $v\nmid p$). We conclude as desired that ${\bf S}_{\frakp}^{\pm}$ have generic rank~$1$ over $\Lambda$ and that the same is true 
 of the submodule ${\underline{\mathscr{H}}}$.
  
  We now proceed to complete the proof of part 1 by showing that the `Heegner submodule' ${\underline{\mathscr{H}}}$ is in fact \emph{free} of rank~$1$ and constructing a `canonical' generator.  First we note that by   \cite[Theorem 6.1]{fouquet},    each special fibre ${ \underline{\mathscr{H}}}_{|\chi}$ (for arbitrary $\chi\in \Y^{\circ}$) has dimension either $0$ (we will soon exclude this case) or $1$ over $L(\chi)$. Let $\{{\mathscr{P}}_{\alpha}^{+}(f^{p}_{i})\otimes {\mathscr{P}}_{\alpha}^{-}(f^{p}_{i})^{\iota}\, :\, i\in I\}$ be finitely many  sections of ${\underline{\mathscr{H}}}$.  By \cite{yzz}, for each  $\chi\in \Y^{\circ, {\rm l.c.}}:= \Y^{\circ}\cap \Y^{\rm l.c., \, an}$,   the specialisation ${\mathscr{P}}_{\alpha}^{+}(f^{p}_{i})\otimes {\mathscr{P}}_{\alpha}^{-}(f^{p}_{i})^{\iota}(\chi)$ is nonzero  if and only if $\wtil{\mathscr{Q}}(f_{i}^{p})(\chi)\neq 0$, and moreover  the images of the specialisations at $\chi$ of the  global sections of ${\underline{\mathscr{H}}}$
  \begin{align}\label{scrPi}
  \prod_{j\in I, j\neq i}\wtil{\mathscr{Q}}(f_{j}^{p})\cdot {\mathscr{P}}_{\alpha}^{+}(f^{p}_{i})\otimes {\mathscr{P}}_{\alpha}^{-}(f^{p}_{i})^{\iota}
  \end{align} 
  under the N\'eron--Tate height pairing (after choosing any embedding $L(\chi)\into \C$) coincide. 
As ${\underline{\mathscr{H}}}_{|\chi}$ has dimension at most $1$, the N\'eron--Tate height pairing on ${\underline{\mathscr{H}}}_{\chi}\otimes\C$ is  an isomorphism onto its image in  $\C$, and we deduce that that the elements \eqref{scrPi} coincide over $\Y^{\circ, {\rm l.c.}}$. Since the latter set is dense in $\Spec \Lambda$ by Lemma \ref{zerozerozero}, they coincide everywhere and glue to a global section  $(\mathscr{P}^{+}_{\alpha}\otimes \mathscr{P}_{\alpha}^{-, \iota})'$ of ${\underline{\mathscr{H}}}$  over $\Y^{\circ}$.\footnote{We remark  that a similar argument, in conjunction with the previous observation that $\wtil{\mathscr{Q}}\neq 0$ on $\pi^{p}$ if and only if ${\mathscr{Q}}\neq 0$ on $\pi^{p}\otimes \pi^{p}$, shows that ${\underline{\mathscr{H}}}$ equals  the \emph{a priori} larger saturated submodule ${\underline{\mathscr{H}}}'\subset  {\bf S}_{\frakp}^{+}\otimes_{\Lambda} {\bf S}_{\frakp}^{-, \iota}$ generated by the ${\mathscr{P}}_{\alpha}^{+}(f^{+,p})\otimes {\mathscr{P}}_{\alpha}^{-}(f^{-,p})^{\iota}$ for possibly different $f^{\pm, p}\in \pi$.}

     Similarly to what claimed in the proof of Theorem \ref{theo A text}, there exists a \emph{finite} set $\{f_{i}^{p}\, :\, i\in I\}\subset \pi^{p}$ such that the open sets ${\mathscr{U}}_{f_{i}^{p}}:=\{\wtil{\mathscr{Q}}(f^{p}_{i})\neq 0\}\subset \Y^{\circ}$ cover $\Y^{\circ}$. Then the section 
$$ \mathscr{P}^{+}_{\alpha}\otimes \mathscr{P}_{\alpha}^{-, \iota}:=   \prod_{i\in I}\wtil{\mathscr{Q}}(f_{i}^{p})^{-1}\cdot(\mathscr{P}^{+}_{\alpha}\otimes \mathscr{P}_{\alpha}^{-, \iota})'$$
is nowhere vanishing and a generator of ${\underline{\mathscr{H}}}$.  It is independent of choices since for any $f^{p}\in \pi^{p}$
 it coincides  with  $\wtil{\mathscr{Q}}(f^{p})^{-1}\cdot {\mathscr{P}}_{\alpha}^{+}(f^{p})\otimes {\mathscr{P}}_{\alpha}^{-}(f^{p})^{\iota}$ over ${\mathscr{U}}_{f^{p}}$.  This completes the proof of Part 1.
  
 Part 2 is \cite[Theorem B (ii)]{fouquet} with ${\underline{\mathscr{H}}}$ replaced by  its submodule generated by a non-torsion   element $z_{f, \infty}\otimes z_{f, \infty}\in {\underline{\mathscr{H}}}$; as noted above we can replace this element by any of the elements  ${\mathscr{P}}_{\alpha}^{+}(f^{p})\otimes {\mathscr{P}}_{\alpha}^{-}(f^{p})^{\iota}$, and by a glueing argument with  $\mathscr{P}^{+}_{\alpha}\otimes \mathscr{P}_{\alpha}^{-, \iota}$. 
 
 Part 3 is an immediate consequence of Theorem \ref{C}.\ref{C4}.
\end{proof}

\appendix
\section{Local integrals}

\subsection{Basic integral} All the integrals computed in the appendix will ultimately reduce to the following.
\begin{lemm}\label{basic local integral} Let $F_{v}$ be a non-archimedean local field, and let $E_{w}/F_{v}$ be an extension of degree $fe \leq 2$, with $f$ the inertia degree and $e$ the ramification degree. Let $q_{w}$ be the relative norm and   $D\in \OO_{F,v}$ be a generator of the relative discriminant.

 Let $L$ be a field of characteristic zero,  let $\alpha_{v}\colon F_{v}^{\times}\to L^{\times}$ and $\chi'\colon E_{w}^{\times}\to L^{\times}$ be  multiplicative characters, $\psi_{v}\colon F_{v}\to L^{\times}$ be an additive character of level $0$, and $\psi_{E,w}=\psi_{v}\circ\Tr_{E_{w}/F_{v}}$. Define
$$ Z_{w}( \chi', \psi_{v}):=    \int_{E_{w}^{\times}}    \alpha\circ q (t)  \chi'(t) \psi_{E_{w}}(t)\, {dt\over  |D|^{1/2}|d|_{v}^{f/2}}. $$
where $dt$ is the restriction of the standard measure on $E_{w}$.

Then we have:
\begin{equation*}
Z_{w}( \chi', \psi_{v})=
\begin{cases}\displaystyle
\alpha_{v}(\vpi_{v})^{-v(D)} \chi_{w}'(\vpi_{w})^{-v(D)} 
{1-\alpha_{v}(\vpi_{v})^{-f}\chi_{w}'(\vpi_{w})^{-1}
\over
 1-\alpha_{v}(\vpi_{v})^{f}\chi'_{w}(\vpi_{w})q_{F,v}^{-f}}
  &\text{\ if $\chi_{w}'\cdot \alpha_{v}\circ q$ is unramified,}\\
 \tau(\chi'_{w}\cdot \alpha_{v}\circ q, \psi_{E_{w}}) &\text{\ if $\chi_{w}'\cdot \alpha_{v}\circ q$ is ramified.}
\end{cases}
\end{equation*}
Here for any character $\wtil{\chi}_{w}'$ of conductor $\mathfrak{f}$,
$$ \tau(\wtil{\chi}', \psi_{E_{w}})=\int_{w(t)=-w({\frak f})} \wtil{\chi}_{w}(t)\psi_{E,w}(t) \, {dt\over  |D|^{1/2}|d|_{v}^{f/2}}. $$
with
 $n=-w({\frak f}(\chi'_{w}))-w(d_{E,w})$.
\end{lemm}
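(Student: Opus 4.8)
The computation is an instance of the classical Tate local zeta integral for $E_w^{\times}$, so the plan is to reduce $Z_w(\chi',\psi_v)$ to a standard Gauss-sum / geometric-series evaluation by slicing the integral into annuli $w(t)=m$. First I would normalise: write $\widetilde\chi_w':=\chi_w'\cdot(\alpha_v\circ q)$, so that the integrand is $\widetilde\chi_w'(t)\psi_{E,w}(t)$ and we must compute $\int_{E_w^{\times}}\widetilde\chi_w'(t)\psi_{E,w}(t)\,d^{\circ}t$ where $d^{\circ}t=dt/(|D|^{1/2}|d|_v^{f/2})$ is the measure giving $\vol(\OO_{E,w},d^{\circ}t)=1$ (this is where the discriminant and different factors enter, via the standard fact $\vol(\OO_{E,w},dt)=|D|^{1/2}|d|_v^{f/2}$ for $dt$ self-dual-type normalisation, cf. the measure conventions in the Notation section). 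The additive character $\psi_{E,w}$ has conductor exactly the inverse different $d_{E,w}^{-1}$, i.e. $\psi_{E,w}$ is trivial on $\OO_{E,w}$ but not on $\vpi_w^{-1}\OO_{E,w}$; here $w(d_{E,w})=w(d_v)+w(D)$ by the tower formula for differents, which accounts for the shift by $v(D)$ in the unramified answer.

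The main step is the annular decomposition. Writing $t=\vpi_w^m u$ with $u\in\OO_{E,w}^{\times}$, I would compute $A_m:=\int_{w(t)=m}\widetilde\chi_w'(t)\psi_{E,w}(t)\,d^{\circ}t = \widetilde\chi_w'(\vpi_w)^m\int_{\OO_{E,w}^{\times}}\widetilde\chi_w'(u)\psi_{E,w}(\vpi_w^m u)\,d^{\circ}u$. For the \emph{unramified} case, when $\widetilde\chi_w'$ is unramified the inner integral is an orthogonality sum: it equals $\vol(\OO_{E,w}^{\times})$ if $m\geq -w(d_{E,w})$ and equals $-\vol(\vpi_w\OO_{E,w}^{\times})$ if $m=-w(d_{E,w})-1$, and vanishes for $m\leq -w(d_{E,w})-2$; summing the resulting geometric series $\sum_{m\geq n_0}\widetilde\chi_w'(\vpi_w)^m q_{E,w}^{-\max(0,\ldots)}$ over $m$ yields $\widetilde\chi_w'(\vpi_w)^{n_0}(1-\widetilde\chi_w'(\vpi_w)^{-1}q_{E,w}^{-1}\cdot(\text{correction}))/(1-\widetilde\chi_w'(\vpi_w)q_{E,w}^{-1})$, which after translating $q_{E,w}=q_{F,v}^{f}$, $\widetilde\chi_w'(\vpi_w)=\alpha_v(\vpi_v)^{f}\chi_w'(\vpi_w)$ (the relation coming from $q_w(\vpi_w)=\vpi_v^{f}\cdot\text{unit}$ up to ramification), and $n_0=-w(d_{E,w})=-v(D)$ in the relevant normalisation, gives exactly the stated closed form. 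For the \emph{ramified} case, the inner integral over $\OO_{E,w}^{\times}$ vanishes for all $m$ except $m=-w(\mathfrak f)-w(d_{E,w})$ (the usual Gauss-sum support lemma: a ramified multiplicative character integrated against an additive character of a given level is supported on a single valuation), and on that single annulus it reproduces the Gauss sum $\tau(\widetilde\chi_w',\psi_{E,w})$ as defined in the statement; this handles the second case directly.

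\textbf{Main obstacle.} The routine calculus of the two cases is standard; the real care is in bookkeeping the normalising factor $|D|^{1/2}|d|_v^{f/2}$ and verifying that with the specified measure $dt$ the volumes come out so that $\vol(\OO_{E,w}^{\times})$ and the conductor exponent $n_0$ align with the exponents $-v(D)$ appearing in the formula — in other words, checking that $w(d_{E,w})$, expressed in terms of $w(d_v)$ and $w(D)$ via $d_{E,w}=d_v\cdot\mathfrak d_{E_w/F_v}$ and $w(\mathfrak d_{E_w/F_v})=w(D)$, produces precisely the shift $\alpha_v(\vpi_v)^{-v(D)}\chi_w'(\vpi_w)^{-v(D)}$ rather than some off-by-one variant. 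I would do this by treating the three subcases (inert, ramified, split) separately: in the split case $E_w=F_v$, $q_w=\mathrm{id}$, $D$ a unit, and the formula must reduce to the plain $F_v$-integral $\int_{F_v^{\times}}\alpha_v\chi_w'\psi_v$, giving a useful consistency check; the inert and ramified subcases then follow by the general Tate-integral formulas with $q_{E,w}=q_{F,v}^{2}$ or $q_{F,v}$ respectively. No step requires a genuinely new idea beyond the standard local zeta integral machinery, so I would keep the writeup to the two displayed evaluations plus the conductor/measure verification.
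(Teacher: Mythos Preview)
Your proposal is correct and follows essentially the same route as the paper's proof: both slice the integral into annuli $w(t)=m$, observe that in the unramified case only the region $w(t)\geq -1-w(d_{E,w})$ contributes (yielding a geometric series in $\wtil\chi_w'(\vpi_w)q_{E,w}^{-1}$ plus the boundary correction from $m=-1-w(d_{E,w})$), while in the ramified case a single annulus survives and produces the Gauss sum. Your write-up is more explicit about the measure normalisation and the role of the tower formula $w(d_{E,w})=w(d_v)+w(D)$ in producing the $-v(D)$ shift, which is exactly the bookkeeping the paper's terse proof leaves to the reader; otherwise the arguments are identical.
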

\begin{proof}  If $\chi'$ is unramified, only the subset $\{w(t)\geq -1-w(d)-v(D)\}\subset E_{w}^{\times}$ contributes to the integral,  and we have 
\begin{align*} 
Z_{w}( \chi', \psi)&= 
\alpha^{-v(D)} \chi_{w}'(\vpi_{w})^{-ev(d)-v(D)} 
	\left({\zeta_{E}(1)^{-1} \over 1-\alpha^{f}\chi'(\vpi)}
	-{ 1 \over q_{F,v}^{f}}\alpha^{-f}\chi'(\vpi)^{-1}\right)\\
&=
\alpha^{-v(D)} \chi_{w}'(\vpi_{w})^{-ev(d)-v(D)} 
{ 1-\alpha^{-f}\chi'(\vpi)^{-1}q_{F,v}^{-f}  \over 1-\alpha^{f}\chi'(\vpi)}.
\end{align*}
If $\chi'$ is ramified of conductor ${\frak f}={\frak f}(\chi')$ then only the annulus  $w(t)=-w({\frak f})-w(d)-v(D)$ contributes, and we get
$$Z_{w}(\chi', \psi) =  
\alpha_{v}^{-fw({\frak f})-fv(D)} 
\tau(\chi', \psi_{E}).\qedhere$$
 \end{proof}

\subsection{Interpolation factor}
We compute the integral giving the interpolation factor for the $p$-adic $L$-function.

The following  Iwahori decomposition can be proved similarly to  \cite[Lemma A.1]{Hu}.
 
 \begin{lemm}\label{iwasawish}
For a local field $F$ with uniformiser $\vpi$, and for any $r\geq 1$, the double quotient 
$$N(F)A(F)Z(F)\bks \GL_{2}(F)/K_{1}^{1}(\vpi^{r})$$
admits the set of representatives  
$$\twomat 1{}{}1, \quad  \twomat 1{} {c^{(i)}\vpi^{r-i}} 1, \quad 1\leq i\leq r,\  c^{(i)}\in   (\OO_{F}/\vpi^{i})^{\times}.$$
 \end{lemm}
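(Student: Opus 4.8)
The statement to prove is Lemma \ref{iwasawish}, the Iwahori-type decomposition of $N(F)A(F)Z(F)\bks \GL_2(F)/K_1^1(\vpi^r)$. The plan is to imitate the argument referred to in \cite[Lemma A.1]{Hu}, organising it as an explicit reduction procedure. First I would recall the Bruhat/Iwasawa decomposition $\GL_2(F)=N(F)A(F)\,(K\sqcup N(F)wK)$ and note that $K=\GL_2(\OO_F)$ is generated over $K_1^1(\vpi^r)$ by representatives that we control; concretely, $A(F)Z(F)$ together with $N(F)$ already absorbs the diagonal torus and the upper unipotent, so the content of the lemma is a description of $P(F)Z(F)\bks \GL_2(F)/K_1^1(\vpi^r)$, where $P$ is the Borel. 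Every double coset has a representative in $K=\GL_2(\OO_F)$ (Iwasawa), so it suffices to run through $P(F)Z(F)\bks K/K_1^1(\vpi^r)$, i.e.\ to understand how far the lower-left entry of an element of $K$ can be normalised.

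The core computation is the following: given $k=\smalltwomat abcd\in K$, use left multiplication by $N(\OO_F)$, by $A(\OO_F)$ and by $Z(F)$ (all of which preserve $P(F)Z(F)$ on the left) and right multiplication by $K_1^1(\vpi^r)$ to bring $k$ to one of the listed forms. If $c\in\OO_F^\times$, a short manipulation (clearing the top row by a left unipotent, then rescaling) shows $k$ lies in the double coset of the identity $\smalltwomat 1{}{}1$. If $v(c)\geq r$, the same clearing shows $k\in P(F)Z(F)K_1^1(\vpi^r)$, again giving the identity coset. The interesting range is $1\le i=r-v(c)\le r$, equivalently $v(c)=r-i$ with $1\le i\le r$; here one checks that after the normalisations the class is represented by $\smalltwomat 1{}{c^{(i)}\vpi^{r-i}}1$ with $c^{(i)}$ a unit well-defined modulo $\vpi^i$ (the modulus $\vpi^i$, rather than $\vpi^{r-i}$ or $\vpi^r$, arising precisely because right multiplication by $K_1^1(\vpi^r)$ can alter the lower-left entry by an element of $\vpi^r\OO_F$ and left multiplication by $A(\OO_F)$ can rescale $c$ by a unit, but only the combination leaves a residual ambiguity of size $\vpi^{i}$). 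I would then verify disjointness: distinct pairs $(i,c^{(i)})$ give distinct double cosets, by comparing the valuation of the lower-left entry (which pins down $i$) and then the residue class modulo $\vpi^i$ (which pins down $c^{(i)}$), using that $P(F)Z(F)$ acting on the left cannot change $v(c)$ and that $K_1^1(\vpi^r)$ on the right changes the lower-left entry only modulo $\vpi^r\subset\vpi^i\OO_F$ while fixing the relevant residues.

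The main obstacle I anticipate is purely bookkeeping: correctly tracking which residue modulus survives the combined left action of $N(\OO_F)A(\OO_F)Z(F)$ and right action of $K_1^1(\vpi^r)$, since $K_1^1(\vpi^r)$ (requiring $c\equiv 0$, $a\equiv 1 \bmod \vpi^r$, with $b,d$ unconstrained) is slightly asymmetric and one must be careful not to over- or under-count. I expect no genuine difficulty beyond this; the argument is a finite sequence of elementary row/column operations over $\OO_F$, and the statement matches the classical $\GL_2$ Iwahori decomposition adapted to the $K_1^1$-level structure. I would keep the write-up short, citing \cite[Lemma A.1]{Hu} for the template and indicating the two or three normalisation steps explicitly.
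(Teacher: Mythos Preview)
Your strategy---reduce to $K=\GL_2(\OO_F)$ via Iwasawa and then normalise the lower-left entry---is exactly what the paper has in mind; its own proof is nothing more than the citation of \cite[Lemma~A.1]{Hu}. Two points in your execution need attention, however.

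First, a slip: the case $c\in\OO_F^\times$ does \emph{not} give the identity double coset. The identity coset is characterised by $v(c)\geq r$, while $c$ a unit is precisely your ``interesting'' case $i=r$. (Your own text is inconsistent here, since you also place $i=r$ in the interesting range $1\leq i\leq r$.)

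Second, and more substantive, your disjointness argument is incomplete. You are right that right multiplication by $\kappa=\smalltwomat{\alpha'}{\beta'}{\gamma'}{\delta'}\in K_1^1(\vpi^r)$ changes the $(2,1)$-entry only modulo $\vpi^r$. But after acting by $\kappa$ you must renormalise the $(2,2)$-entry to~$1$ using the left $P(\OO_F)$-action, and the scaling factor $c\beta'+\delta'$ (with $\beta'\in\OO_F$ arbitrary) ranges over $1+\vpi^{r-i}\OO_F$ when $v(c)=r-i\geq 1$, so it alters the unit part of the $(2,1)$-entry by exactly that subgroup. The genuine invariant is therefore $c^{(i)}\bmod \vpi^{\min(i,\,r-i)}$, not $c^{(i)}\bmod\vpi^{i}$. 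Concretely, for $r=1$, $i=1$, residue characteristic $\neq 2$,
\[
\twomat{1/2}{1/2}{0}{2}\twomat{1}{0}{1}{1}\twomat{1}{-1/2}{0}{1}=\twomat{1}{0}{2}{1},
\]
with the left factor in $P(F)$ and the right factor in $K_1^1(\vpi)$, so $n^-(1)$ and $n^-(2)$ represent the \emph{same} double coset. In general the listed matrices are redundant whenever $i>r/2$. This is harmless for the two applications in the paper (Propositions~\ref{interpolation factor} and~\ref{Qv vs}), since there only the terms with $i\leq 1$ survive and those representatives \emph{are} pairwise inequivalent for $r\geq 2$; but you will not be able to verify disjointness for the full list as stated, and should consult Hu's lemma directly to see what precise claim is intended.
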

Note that we may also replace the first representative by $\smalltwomat 1{}{\vpi^{r}} 1\in K_{1}^{1}(\vpi^{r})$.

\begin{prop}\label{interpolation factor}
 Let $\chi'\in \Y'^{\rm l.c.}_{M(\alpha)}(\C)$ and let $\iota\colon M(\alpha)\into \C$ be the induced embedding.  Let $v\vert p$, and let $\phi_{v}$ be either as in Assumption \ref{ass at p1}  for some sufficiently small $U_{T,v}\subset \OO_{E,v}^{\times}$, or as in Assumption \ref{ass at p2}.  Then for any sufficiently large integer $r$, the normalised integral $R_{r,v}^{\natural}(W_{v}, \phi_{v}, \chi_{v}')$ of \eqref{Rnat} equals 
$$R_{r,v}^{\natural}(W_{v}, \phi_{v}, \chi_{v}'{}^{\iota}) =Z_{v}^{\circ}( \chi'_{v}):=   
 {\zeta_{F,v}(2)L(1, \eta_{v})^{2}   \over L(1/2, \sigma_{E,v}^{\iota}, \chi'_{v}{})} 
  {\prod_{w\vert v}Z_{w}(  \chi_{w}' )},$$ 
with $Z_{w}( \chi'_{w})$ as in Lemma \ref{basic local integral}.

\end{prop}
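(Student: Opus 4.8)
The plan is to compute the local zeta integral $R_{r,v}(W_v,\phi_v,\chi'_v)$ directly by unfolding and reduce it, after the normalisation in \eqref{Rnat}, to the basic integral of Lemma \ref{basic local integral}. Recall that
$$R_{r,v}= \int_{Z(F_{v})N(F_{v})\bks \GL_{2}(F_{v})} W_{-1,v}(g)\, \delta_{\chi_{F,r,v}}(g) \int_{T(F_{v})} \chi_{v}'(t)\, r(gw_{r}^{-1})\Phi_{v}(t^{-1}, q(t))\,dt\,dg,$$
where $W_{-1,v}$ is the $\Up_v^*$-eigenvector $W_v(y)=\one_{\OO_{F}-\{0\}}(y)|y|\alpha_v(y)$ (for the conjugate additive character) in the Kirillov model, and $\Phi_v=\phi_v$ is the Schwartz function of Assumption \ref{ass at p1} or \ref{ass at p2}. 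The presence of $\delta_{\chi_{F,r,v}}$, which is supported on $PK_0(\vpi_v^r)$ and transforms by $\chi_F^{-1}$ under the diagonal, forces $g$ to lie in $PK_0(\vpi_v^r)$; combined with the $N(F_v)A(F_v)Z(F_v)$-quotient on the left, the Iwahori-type decomposition of Lemma \ref{iwasawish} gives an explicit finite set of coset representatives for $g$, namely $\one$ (or equivalently $\smalltwomat 1{}{\vpi^{r}}1$) and the matrices $\smalltwomat 1{}{c^{(i)}\vpi^{r-i}}1$ with $1\le i\le r$, $c^{(i)}\in(\OO_F/\vpi^i)^\times$.

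\textbf{Key steps.} First I would substitute these coset representatives and, for each, evaluate $r(gw_r^{-1})\phi_v$ explicitly using the formulas for the Weil representation in \S\ref{sec: eis series} — this is where the shape of $\phi_v$ (an approximate delta function in $x_1\in{\bf V}_{1,v}=E_v$, standard in $x_2$, concentrated near $u=1$) makes the computation tractable: the action of $w_r^{-1}=\smalltwomat{}{-1}{\vpi^{-r}}{}$ and of the lower-triangular unipotents produces Fourier transforms and additive-character twists which, for $r$ large enough relative to the conductor of $\chi'_v$, collapse the integral over $T(F_v)=E_v^\times$ to a sum of integrals of the type appearing in Lemma \ref{basic local integral}. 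The diagonal/anti-diagonal splitting $E_v\cong E_w\oplus E_{w^*}$ (using that $v$ splits in $E$, as assumed throughout this part of the paper) factors the $T(F_v)$-integral into a product over the two places $w\mid v$, each contributing a factor $Z_w(\chi'_w,\psi_v)$. Second, I would show that in the sum over the coset representatives $g$, only a controlled set of terms survives — essentially those where the support conditions on $\phi_v$ and the $\Up_v^*$-eigenvector condition on $W_v$ are compatible — and that the surviving contribution is, up to the explicit powers of $|\vpi_v|$ and $\alpha_v(\vpi_v)$ built into $R^\natural$ via $\iota\alpha_v(\vpi_v)^{-r_v}$ and the normalising constants $|d_v|^{-2}|D_v|^{-1/2}\zeta_{F,v}(2)L(1,\eta_v)/L(1/2,\sigma_{E,v}\otimes\chi'_v)$, exactly $Z_v^\circ(\chi'_v,\psi_v)$. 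Third, I would note that the dependence on $r$ disappears once $r$ exceeds the conductor of $\chi'_v$ (and of $\alpha_v$), which justifies the ``sufficiently large $r$'' clause and is consistent with the $r$-independence of $R_{r,v}$ for $r\ge\underline 1$ remarked after Proposition \ref{prop2.6}. The two cases in Assumptions \ref{ass at p1} and \ref{ass at p2} differ only by an averaging over $z\in\OO_{F,v}^\times$ against $\omega(z)$, which merely reorganises the $u$-variable and does not affect the final answer since $\chi'_v|_{F_v^\times}=\omega_v^{-1}$; so it suffices to treat Assumption \ref{ass at p1} and then transport.

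\textbf{Main obstacle.} The delicate point will be the bookkeeping of the various normalising factors — powers of $|d_v|$, $|D_v|$, $q_{F,v}$, the self-dual-measure constants in the Weil representation, and the factors $\alpha_v(\vpi_v)^{\pm r}$ and $q_{F,v}^{\pm r}$ coming from acting by $s_r$, $w_r$ — so that the final constant matches $Z_v^\circ$ on the nose rather than up to an unidentified unit. I expect to handle this by first computing the split-unramified case (both $\sigma_v$ and $\chi'_v$ unramified, $\phi_v$ standard), where the answer must be $L(1/2,\sigma_{E,v}\otimes\chi'_v)/(\zeta_{F,v}(2)L(1,\eta_v))$ by Lemma \ref{3.6.2}, to pin down the overall normalisation, and then tracking only the \emph{ratio} to the general case. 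A secondary subtlety is ensuring that the Fourier transform in the Weil representation, applied to the approximate delta $\delta_{1,U_{T,v}\cap{\bf V}_{1,v}}$, genuinely produces (for $r$ large) the additive character $\psi_{E,w}$ restricted to the correct annulus $w(t)=-w(\mathfrak f)-w(d_{E,w})-v(D)$ appearing in the definition of $\tau(\chi'_w,\psi_{E_w})$ in Lemma \ref{basic local integral}; this is a direct but careful computation with the level-$0$ character $\psi_v$ and the different/discriminant of $E_w/F_v$.
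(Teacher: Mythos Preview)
Your approach is essentially the same as the paper's, but you have missed the key simplification that makes the computation short. You correctly note that $\delta_{\chi_F,r}$ is supported on $PK_0(\vpi^r)$, and you correctly list the representatives $\smalltwomat 1{}{c^{(i)}\vpi^{r-i}}1$ from Lemma \ref{iwasawish}; but you do not connect the two observations. For $i\ge 1$ the matrix $\smalltwomat 1{}{c^{(i)}\vpi^{r-i}}1$ lies in $K_0(\vpi^{r-i})$ and \emph{not} in $K_0(\vpi^r)$, hence $\delta_{\chi_F,r}$ vanishes on the corresponding cosets. So only the identity coset survives, immediately --- there is no need for your ``Second'' step of analysing which terms are compatible with the support of $\phi_v$ or the $\Up_v^*$-eigenvector property of $W_v$. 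The paper's proof uses exactly this: it records the single surviving term, applies Lemma \ref{uppertriang} to see that the $K_1^1(\vpi^r)$-integral is trivial, computes $r(w_r^{-1})\phi_v$ by the Fourier transform on ${\bf V}_{1,v}$, and then the $T(F_v)$-integral factors as $\prod_{w\mid v}Z_w(\chi'_w)$ via Lemma \ref{basic local integral}.

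The mechanism you describe in your ``Second'' step --- terms for $i\ge 1$ being killed or combined via the Schwartz function and the Whittaker function --- is precisely what happens in the proof of Proposition \ref{Qv vs} (the toric period $Q_v^\sharp$), where there is \emph{no} factor $\delta_{\chi_F,r}$ cutting down the support and one must genuinely sum over all the cosets. You may be conflating the two computations. For the present proposition the bookkeeping of constants is therefore much lighter than you anticipate, and your plan to calibrate against the spherical case via Lemma \ref{3.6.2} is unnecessary (though harmless as a check).
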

\begin{proof}
We omit the subscripts $v$ and the embedding $\iota$ in the calculations which follow. 
By definition, we need to show that  the integral 
$R_{r,v}^{\circ }$ of Proposition \ref{prop2.6} equals 
$$R_{r,v}^{\circ }=R_{r}^{\circ}(W, \phi, \chi')   =|D|^{1/2}|d|^{2} L(1, \eta)\prod_{w\vert v} Z_{w}( \chi'_{w}).$$
Note that the assertion in the case of Assumption \ref{ass at p2} is implied by the assertion in the case of Assumption  \ref{ass at p1} by \eqref{phi ass2}, so we will place ourselves in the latter situation.

By the  decomposition of Lemma \ref{iwasawish},  and observing that $\delta_{\chi_{F},r}$ vanishes on $K- K_{0}(\vpi^{r})$, we have
\begin{multline*}
R_{r,v}^{\circ}=\alpha(\vpi)^{-r}\int_{F^{\times}} W_{-1}(\smallmat y{}{}1)\delta_{\chi_{F},r}(\smallmat y {}{}1)\\
 \int_{T(F)}\chi'(t)  \int_{P(\vpi^{r})\bks K_{1}^{1}(\vpi^{r})} |y| r(kw_{r}^{-1})\phi(yt^{-1}, y^{-1}q(t))
dk\, d^{\times}t \, {d^{\times }y\over |y|}
\end{multline*}
where $P(\vpi^{r})=P\cap K_{1}^{1}(\vpi^{r})$ (recall that $P=NZA$). Here we have preferred to  denote by $d^{\times}t$ the standard Haar measure on $T(F_{v})=E_{v}^{\times}$; later $dt$ will denote the additive measure on $E_{v}$.

Changing variables $k'=w_{r}kw_{r}^{-1}$, we observe that by Lemma \ref{uppertriang} the group $K_{1}^{1}(\vpi^{r})$ acts trivially for  sufficiently large $r$. 
Then we can  plug in  
$$W_{-1}(\smallmat y {}{}1)= \one_{\OO_{F}-\{0\}}(y)|y|\alpha(y)
$$
and
$$r( w_{r}^{-1})\phi(x,u)= |\vpi^{-r}|\psi_{E,U}(u x_{1})\one_{\OO_{{\bf V}_{2}}}(x_{2}) \delta_{q(U)}(\vpi^{r}u),$$
where $\psi_{E,U}=r(\vol(U)^{-1}\one_{U})\psi_{E}$ for the extension of $r$ to functions on $K$ (so that  $\psi_{E,U}$ is the finest $U$-invariant approximation to $\psi_{E}$). We obtain
\begin{multline*}
R_{r,v}^{\circ}=\alpha(\vpi)^{-r} |d| ^{1/2} \zeta_{F,v}(1)^{-1}\int_{\OO_{F}-\{0\}} |y|\alpha(y) 
 \int_{T(F)}\chi'( t) 
 \psi_{E,U}(t)\delta_{q(U)}(\vpi^{r}y^{-1}q(t))\, dt \, d^{\times}y,
\end{multline*}
where $|d|^{1/2}\zeta_{F,v}(1)^{-1}$ appears as $\vol(P(\vpi^{r})\bks K_{1}^{1}(\vpi^{r}))|\vpi|^{-r}$.
 We get
 \begin{align*}
R_{r,v}^{\circ}= |d|\zeta_{F,v}(1)^{-1}
\int_{v(q(t))\geq -r} |q(t)|\alpha(q(t))
\chi'(t) \psi_{E}(t)\, d^{\times}t. 
 \end{align*}
If $r$ is sufficiently large, the domain of integration can be replaced with all of $T(F)$. Switching to the additive measure, and using the isomorphism $E_{v}^{\times}=E_{w}^{\times}\times E_{w^{*}}^{\times}$ in the split case, the integral equals
 \begin{align*}
R_{r,v}^{\circ}= |d|L(1, \eta)
\int_{E_{v}^{\times}} \alpha(q(t))
\chi'(t) \psi_{E}(t)\, dt = |D|^{1/2}|d|^{2} L(1, \eta)\prod_{w\vert v} Z_{w}( \chi'_{w})
 \end{align*}
 as desired.
\end{proof}

\subsection{Toric period}
We compare the normalised toric periods with the interpolation factor. 
\begin{prop}\label{Qv vs} Suppose that $v\vert p$ splits in $E$. Then for any finite order character $\chi\in \Y^{\rm l.c.}$  we have 
$$ Q_{v}(\theta_{v}(W_{v}, \alpha^{-r_{v}}_{v} w_{r,v}^{-1}\phi_{v}),\chi_{v}) = L(1, \eta_{v})^{-1}\cdot Z^{\circ}_{v}(\alpha_{v}, \chi_{v}),$$
for any $\phi_{v}$ is as in Proposition \ref{interpolation factor} and any sufficiently large $r_{v}\geq 1$.
\end{prop}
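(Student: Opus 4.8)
The plan is to reduce this identity, which compares a local toric period of a Shimizu lift at $v\mid p$ with the interpolation factor $Z_v^\circ$ of the $p$-adic $L$-function, to the two explicit computations already available: the calculation of the normalised zeta integral $R_{r,v}^\natural$ in Proposition \ref{interpolation factor} (which shows $R_{r,v}^\natural = Z_v^\circ$) and the general identity between zeta integrals and toric periods in Lemma \ref{Qtheta}. First I would invoke Lemma \ref{Qtheta}, which for $v\mid p$ gives
$$|d|_v^{-3/2} R_{r,v}^\circ(W_v,\phi_v,\chi_v,\psi_v)= Q_v^\sharp\big(\theta_{\psi,v}(W_v,\iota\alpha_v(\vpi_v)^{-r_v}w_{r,v}^{-1}\phi_v),\chi_v\big),$$
so that the toric period $Q_v(\theta_v(W_v,\alpha_v^{-r_v}w_{r,v}^{-1}\phi_v),\chi_v)$, after unwinding the normalising factor in front of $Q_v^\sharp$ in \eqref{Qvdef}, is expressed as a multiple of $R_{r,v}^\circ$. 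Then I would plug in the value of $R_{r,v}^\circ$ computed in the proof of Proposition \ref{interpolation factor}, namely $R_{r,v}^\circ = |D|_v^{1/2}|d|_v^2 L(1,\eta_v)\prod_{w\mid v}Z_w(\chi_w')$, and compare the normalising $L$- and zeta-factors on the two sides.

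Concretely, the left-hand side of the proposition, by the definition of $Q_v$ in \eqref{Qvdef} together with Lemma \ref{Qtheta}, equals
$$|D|_v^{-1/2}|d|_v^{-1/2}\,\frac{L(1,\eta_v)L(1,\pi_v,\ad)}{\zeta_{F,v}(2)L(1/2,\pi_{E,v}\otimes\chi_v)}\cdot |d|_v^{3/2}R_{r,v}^\circ(W_v,\phi_v,\chi_v,\psi_v).$$
Substituting $R_{r,v}^\circ = |D|_v^{1/2}|d|_v^2 L(1,\eta_v)\prod_{w\mid v}Z_w(\chi_w')$ and simplifying the powers of $|d|_v$ and $|D|_v$, this becomes
$$|d|_v^{3}\,\frac{L(1,\eta_v)^2 L(1,\pi_v,\ad)}{\zeta_{F,v}(2)L(1/2,\pi_{E,v}\otimes\chi_v)}\prod_{w\mid v}Z_w(\chi_w').$$
On the other hand $L(1,\eta_v)^{-1}Z_v^\circ(\alpha_v,\chi_v)$, by the formula in Theorem \ref{A} (or Proposition \ref{interpolation factor}), equals
$$L(1,\eta_v)^{-1}\cdot\frac{\zeta_{F,v}(2)L(1,\eta_v)^2}{L(1/2,\sigma_{E,v}\otimes\chi_v')}\prod_{w\mid v}Z_w(\chi_w')=\frac{\zeta_{F,v}(2)L(1,\eta_v)}{L(1/2,\sigma_{E,v}\otimes\chi_v')}\prod_{w\mid v}Z_w(\chi_w').$$
Thus the proposition reduces to the numerical identity
$$|d|_v^3\,\frac{L(1,\eta_v)^2 L(1,\pi_v,\ad)}{\zeta_{F,v}(2)}=\zeta_{F,v}(2)L(1,\eta_v),$$
i.e. $|d|_v^3 L(1,\eta_v)L(1,\pi_v,\ad)=\zeta_{F,v}(2)^2$, using $L(1/2,\pi_{E,v}\otimes\chi_v)=L(1/2,\sigma_{E,v}\otimes\chi_v')$ (which holds since $\pi_v$ and $\sigma_v$ are Jacquet--Langlands transfers, so have the same $L$-factors, and $v$ splits in $E$). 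The remaining step is therefore purely a bookkeeping check of the normalising constants: with $v$ split in $E$ and $\sigma_v$ nearly ordinary (an unramified-up-to-twist principal series or Steinberg), one computes $L(1,\pi_v,\ad)$, $L(1,\eta_v)$ and $\zeta_{F,v}(2)$ explicitly and verifies the equality, using the chosen measure normalisation $\vol(\GL_2(\OO_{F,v}))=|d|_v^2\zeta_{F,v}(2)^{-1}$.

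\textbf{Main obstacle.} I expect the only real subtlety to be the careful tracking of the factors $|d|_v$, $|D|_v$, and the various $\zeta$- and $L$-normalisations through Lemma \ref{Qtheta}, \eqref{Qvdef} and the proof of Proposition \ref{interpolation factor} — in particular making sure the Haar measure conventions ($d^\circ t$ versus $dt$, the extra $|d|$ relating the standard and the chosen additive measures on $E_v$ noted in Assumption \ref{ass at p1}) are applied consistently, since several of these normalisations were chosen precisely to match \cite{yzz} and a single misplaced power of $|d|_v$ would break the comparison. Once the constants are pinned down, the identity is immediate from the already-computed value of $R_{r,v}^\circ$; there is no new geometric or representation-theoretic input required beyond what is in Lemma \ref{basic local integral}, Lemma \ref{Qtheta} and Proposition \ref{interpolation factor}.
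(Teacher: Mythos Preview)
There is a genuine gap. Your plan hinges on invoking the first identity of Lemma \ref{Qtheta} at $v\mid p$ to obtain $|d|_v^{3/2}Q_v^{\sharp}(\theta_v(W_v,\alpha_v^{-r_v}w_{r,v}^{-1}\phi_v),\chi_v)=R_{r,v}^\circ$, and then matching constants. But this is not what holds when $r_v\geq 1$: the integral defining $R_{r,v}^\circ$ in Proposition \ref{prop2.6} carries the factor $\delta_{\chi_F,r,v}(g)$, which vanishes outside $PK_0(\vpi_v^{r_v})$, whereas the Shimizu integral underlying $Q_v^\sharp$ has no such restriction on the $K$-part of $g$. So $R_{r,v}^\circ$ is only the contribution of the identity Iwahori cell to $Q_v^\sharp$, not the whole thing. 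This is exactly why the paper's proof proceeds by the Iwahori decomposition of Lemma \ref{iwasawish}: one writes $|d|_v^{3/2}Q_v^\sharp=\sum_{i,c}Q_v^{\sharp(i,c)}$, identifies $Q_v^{\sharp(0,1)}=R_{r,v}^\circ$, shows $\sum_{c}Q_v^{\sharp(1,c)}=-|\vpi_v|\,R_{r,v}^\circ$, and proves the pieces with $i\geq 2$ vanish. The total is $(1-|\vpi_v|)R_{r,v}^\circ=L(1,\eta_v)^{-1}R_{r,v}^\circ$ (since $v$ splits, $\eta_v=\one$), which is precisely where the factor $L(1,\eta_v)^{-1}$ in the proposition comes from. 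Your approach never produces this factor from the integral; it tries to absorb it into constants instead.

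The symptom of the error is your residual ``bookkeeping identity'' $|d|_v^{3}\,L(1,\eta_v)L(1,\pi_v,\ad)=\zeta_{F,v}(2)^2$. This cannot be true: $L(1,\pi_v,\ad)$ depends on the Satake parameters of $\pi_v\cong\sigma_v$ and is not a universal constant. That the reduction lands on a visibly false identity confirms that the input relation $Q_v^\sharp=|d|_v^{-3/2}R_{r,v}^\circ$ is off by exactly the factor $(1-q_{F,v}^{-1})$. The missing work is the explicit computation of the non-identity Iwahori cells of the Shimizu integral for $r_v\geq 1$; once you do that (as in the paper), the constant-matching becomes trivial and no spurious identity remains.
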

For consistency with  the proof of Proposition \ref{interpolation factor}, in the proof we will denote by $d^{\times}t$ the Haar measure on $T(F)$ denoted by $dt$ in the rest of the paper.
\begin{proof}
By the definitions and Proposition \ref{interpolation factor},   it  suffices to show that for any $\chi\in \Y^{\rm l.c.}(\C)$ we have
$$ |d|_{v}^{3/2}Q_{v}^{\sharp}(\theta_{v}(W_{v}, \alpha^{-r_{v}}_{v} w_{r,v}^{-1}\phi_{v}),\chi_{v}) = L(1, \eta_{v})^{-1}\cdot R_{r,v}^{\circ}(W_{v}, \phi_{v},\chi_{v})$$
where $Q_{v}^{\sharp}$ is the toric integral of \eqref{Qsharp}.

By  the  Shimizu lifting (Lemma \ref{Qtheta}) and Lemma \ref{iwasawish}  we can write 
$$Q^{\sharp}_{v}:=|d|_{v}^{3/2}Q_{v}(\theta_{v}(W_{v}, \alpha^{-r_{v}}_{v} w_{r,v}^{-1}\phi_{v}),\chi_{v})= Q^{\sharp \, (0, 1)}_{v} +\sum_{i=1}^{r}\sum_{c\in (\OO_{F}/\vpi^{i})^{\times}} Q^{\sharp\, (i,c)}_{v}$$
where for each $(i, c)$, omitting the subscripts $v$,
\begin{multline*} 
Q^{\sharp\,  (i, c)}_{v}:=\alpha(\vpi)^{-r}\int_{F^{\times}} W_{-1}(\smallmat y{}{}1  n^{-}(c\vpi^{r-i}))\\
 \int_{T(F)}\chi'(t) \int_{P(\vpi^{r})\bks K_{1}^{1}(\vpi^{r})} |y| r( n^{-}(c\vpi^{r-i})   kw_{r}^{-1})\phi(yt^{-1}, y^{-1}q(t))
dk\, d^{\times}t \, {d^{\times }y\over |y|}.
\end{multline*}

Note that $Q^{\sharp\, (0,1)}_{v}=R_{v}^{\circ}$, where $R_{v}^{\circ }$ is as in the previous Proposition, since $n^{-}(\vpi^{r})\in K_{1}^{1}(\vpi^{r})$. We will compute the other terms. 

We have $\twomat 1{}{c\vpi^{r-i}} 1 w_{r}^{-1 } = w_{r}^{-1} \twomat 1 {-c\vpi^{-i}} {}1$,
and when $x=(x_{1}, x_{2})$ with $x_{2}=0$:
\begin{align*}
r(n^{-}(c\vpi^{r-i} )w_{r}^{-1})\phi(x,u)&=
 |\vpi^{-r}| \int_{E} 
  \psi_{E}(u x_{1}\xi_{1})      \psi(-uc\vpi^{r-i} q(\xi_{1}))   \delta_{U}(\xi_{1}) \delta_{q(U)} (\vpi^{r}u) d\, \xi_{1}\\
&\phantom{=|\vpi^{-r}|} \int_{\OO_{{\bf V}_{2}}}     \psi(-uc\vpi^{r-i} q(\xi_{2}))     \delta_{q(U)}  (\vpi^{r}u)d\,\xi_{2}  \\
&=|\vpi|^{i-r} \psi_{E,U}(ux_{1})\psi_{q(U)}  (-c\vpi^{-i})   \delta_{q(U)} (\vpi^{r}u) .
\end{align*}
Plugging this in, we obtain
\begin{multline*}
Q_{v}^{\sharp\, (i,c)}= |\vpi|^{i}  \alpha(\vpi)^{-r} |d| ^{1/2}\zeta_{F,v}(1)^{-1}\int_{F^{\times}} W(\smallmat y{}{}1 n^{-}(c\vpi^{r-i}))
 \int_{T(F)}\chi( t) \\
\cdot
 \psi_{E,U}(t)   \psi_{q(U)}(-c\vpi^{-i} )  \delta_{q(U)}(\vpi^{r}y^{-1}q(t))\, d^{\times}t \, d^{\times}y,
\end{multline*}
We have already noted that $Q_{v}^{\sharp (0,1)}=R_{v}^{\circ}$. For $i=1$, if $r$ is sufficiently large then $W$ is still invariant under $K_{1}^{1}(\vpi^{r-1})$; then $\sum_{c}Q_{v}^{\sharp(1,c)}$ equals $C\cdot R_{v}^{\circ}$ with $C=|\vpi|\sum_{c\in (\OO/\vpi)^{\times}} \psi_{q(U)}(-c\vpi^{-1})=-|\vpi|$.

Finally, we claim that for each $i\geq 2$, $\sum_{c}Q_{v}^{\sharp(i, c)}=0$.  Indeed let $q(U)=1+\vpi^{n}\OO_{F,v}$. If  $i\geq n+1$, then $\psi_{q(U)}(-c\vpi^{-i})=0$; if $i\leq n$, then if $r$ is sufficiently large $W$ is still invariant under $K^{1}_{1}(\vpi^{r-i})\subset K^{1}_{1}(\vpi^{r-n})$, and summing the only terms depending on $c$ produces a factor $\sum_{c\in (\OO/\vpi^{i})^{\times}} \psi(c\vpi^{-i})=0$.

 Summing up, we have
$$Q_{v}^{\sharp} = Q_{v}^{\sharp (0,1)}+\sum_{c\in (\OO_{F}/\vpi_{v})^{\times}}Q_{v}^{\sharp(1,c)}=(1-  |\vpi|) R_{r,v}^{\circ}=L(1, \eta_{v})^{-1}\cdot R_{r,v}^{\circ}$$
as desired.
\end{proof}

\begin{enonce}{Question} \textup{A comparison between Propositions \ref{toric at p} and \ref{Qv vs} suggests that the identity
$$\lim_{r\to \infty}L(1, \eta_{v})\cdot\theta_{v}(W_{v}, \alpha^{-r}_{v} w_{r}^{-1}\phi_{v}) = \zeta_{F,v}(2)\cdot f_{\alpha,v}^{+}\otimes f_{\alpha,v}^{-}.$$
might hold in $\varprojlim_{V}(\pi^{+}_{v})^{V}\otimes \varprojlim_{V}(\pi_{v}^{-})^{V}$ (with notation as in Lemma \ref{f limit}).  Is this the case?}
\end{enonce}

\bigskip

\backmatter
\addtocontents{toc}{\medskip}

\begin{bibdiv}
\begin{biblist}

\bib{AN}{article}{
   author={Aflalo, Esther},
   author={Nekov{\'a}{\v{r}}, Jan},
   title={Non-triviality of CM points in ring class field towers},
   note={With an appendix by Christophe Cornut},
   journal={Israel J. Math.},
   volume={175},
   date={2010},
   pages={225--284},
   issn={0021-2172},
   review={\MR{2607546 (2011j:11108)}},
   doi={10.1007/s11856-010-0011-3},
   label={AfN10}
}

\bib{agb}{article}{
   author={Agboola, Adebisi},
   author={Howard, Benjamin},
   title={Anticyclotomic Iwasawa theory of CM elliptic curves},
   language={English, with English and French summaries},
   journal={Ann. Inst. Fourier (Grenoble)},
   volume={56},
   date={2006},
   number={4},
   pages={1001--1048},
   issn={0373-0956},
   review={\MR{2266884 (2009b:11098)}},   label={AgH06}
}

\bib{AV}{article}{
   author={Amice, Yvette},
   author={V{\'e}lu, Jacques},
   title={Distributions $p$-adiques associ\'ees aux s\'eries de Hecke},
   language={French},
   conference={
      title={Journ\'ees Arithm\'etiques de Bordeaux},
      address={Conf., Univ. Bordeaux, Bordeaux},
      date={1974},
   },
   book={
      publisher={Soc. Math. France, Paris},
   },
   date={1975},
   pages={119--131. Ast\'erisque, Nos. 24-25},
   review={\MR{0376534}},   label={AmV75}
}

\bib{benois-greenberg}{article}{
   author={Benois, Denis},
   title={A generalization of Greenberg's $\scr L$-invariant},
   journal={Amer. J. Math.},
   volume={133},
   date={2011},
   number={6},
   pages={1573--1632},
   issn={0002-9327},
   review={\MR{2863371}},
   doi={10.1353/ajm.2011.0043},
}

\bib{benois-hts}{article}{
   author={Benois, Denis},
   title={$p$-adic heights and $p$-adic Hodge theory}, label={Ben+},
   status={preprint}}

\bib{bdp}{article}{
   author={Bertolini, Massimo},
   author={Darmon, Henri},
   author={Prasanna, Kartik},
   title={Generalized Heegner cycles and $p$-adic Rankin $L$-series},
   note={With an appendix by Brian Conrad},
   journal={Duke Math. J.},
   volume={162},
   date={2013},
   number={6},
   pages={1033--1148},
   issn={0012-7094},
   review={\MR{3053566}},
   doi={10.1215/00127094-2142056},   label={BerDP13}
}

\bib{bertrand}{article}{
   author={Bertrand, Daniel},
   title={Propri\'et\'es arithm\'etiques de fonctions th\^eta \`a plusieurs
   variables},
   language={French},
   conference={
      title={Number theory, Noordwijkerhout 1983},
      address={Noordwijkerhout},
      date={1983},
   },
   book={
      series={Lecture Notes in Math.},
      volume={1068},
      publisher={Springer},
      place={Berlin},
   },
   date={1984},
   pages={17--22},
   review={\MR{756080}},
   doi={10.1007/BFb0099438},
}

\bib{besser}{article}{
   author={Besser, Amnon},
   title={$p$-adic Arakelov theory},
   journal={J. Number Theory},
   volume={111},
   date={2005},
   number={2},
   pages={318--371},
   issn={0022-314X},
   review={\MR{2130113 (2006j:14029)}},
   doi={10.1016/j.jnt.2004.11.010},
}

\bib{bump}{book}{
   author={Bump, Daniel},
   title={Automorphic forms and representations},
   series={Cambridge Studies in Advanced Mathematics},
   volume={55},
   publisher={Cambridge University Press},
   place={Cambridge},
   date={1997},
   pages={xiv+574},
   isbn={0-521-55098-X},
   review={\MR{1431508 (97k:11080)}},
   doi={10.1017/CBO9780511609572},
}

\bib{ashay}{article}{
   author={Burungale, Ashay A.},
   title={On the $\mu$-invariant of the cyclotomic derivative of a Katz
   $p$-adic $L$-function},
   journal={J. Inst. Math. Jussieu},
   volume={14},
   date={2015},
   number={1},
   pages={131--148},
   issn={1474-7480},
   review={\MR{3284481}},
   doi={10.1017/S1474748013000388},
}

\bib{bh}{book}{
   author={Bushnell, Colin J.},
   author={Henniart, Guy},
   title={The local Langlands conjecture for $\rm GL(2)$},
   series={Grundlehren der Mathematischen Wissenschaften [Fundamental
   Principles of Mathematical Sciences]},
   volume={335},
   publisher={Springer-Verlag},
   place={Berlin},
   date={2006},
   pages={xii+347},
   isbn={978-3-540-31486-8},
   isbn={3-540-31486-5},
   review={\MR{2234120 (2007m:22013)}},
   doi={10.1007/3-540-31511-X}, label={BusH06}
}

\bib{explicit}{article}{
   author={Cai, Li},
   author={Shu, Jie},
   author={Tian, Ye},
   title={Explicit Gross-Zagier and Waldspurger formulae},
   journal={Algebra Number Theory},
   volume={8},
   date={2014},
   number={10},
   pages={2523--2572},
   issn={1937-0652},
   review={\MR{3298547}},
   doi={10.2140/ant.2014.8.2523},   label={CaiST14}
}

\bib{carayol-hilbert}{article}{
   author={Carayol, Henri},
   title={Sur les repr\'esentations $l$-adiques associ\'ees aux formes
   modulaires de Hilbert},
   language={French},
   journal={Ann. Sci. \'Ecole Norm. Sup. (4)},
   volume={19},
   date={1986},
   number={3},
   pages={409--468},
   issn={0012-9593},
   review={\MR{870690 (89c:11083)}}, 
}

\bib{CG}{article}{
   author={Coleman, Robert F.},
   author={Gross, Benedict H.},
   title={$p$-adic heights on curves},
   conference={
      title={Algebraic number theory},
   },
   book={
      series={Adv. Stud. Pure Math.},
      volume={17},
      publisher={Academic Press},
      place={Boston, MA},
   },
   date={1989},
   pages={73--81},
   review={\MR{1097610 (92d:11057)}},   
}

\bib{conrad}{article}{
author={Conrad, Brian}, 
title={Lifting global representations with local properties},
status={preprint},
label= {Con+}
}

\bib{CV05}{article}{
   author={Cornut, C.},
   author={Vatsal, V.},
   title={CM points and quaternion algebras},
   journal={Doc. Math.},
   volume={10},
   date={2005},
   pages={263--309},
   issn={1431-0635},
   review={\MR{2148077 (2006c:11069)}},
}

\bib{DR}{article}{
   author={Deligne, Pierre},
   author={Ribet, Kenneth A.},
   title={Values of abelian $L$-functions at negative integers over totally
   real fields},
   journal={Invent. Math.},
   volume={59},
   date={1980},
   number={3},
   pages={227--286},
   issn={0020-9910},
   review={\MR{579702 (81m:12019)}},
   doi={10.1007/BF01453237},
   label={DelR80}
}	
		
\bib{deShalit}{article}{
   author={de Shalit, Ehud},
   title={Relative Lubin-Tate groups},
   journal={Proc. Amer. Math. Soc.},
   volume={95},
   date={1985},
   number={1},
   pages={1--4},
   issn={0002-9939},
   review={\MR{796434 (86m:11095)}},
   doi={10.2307/2045561},
   label={deS85}
}

\bib{dd}{article}{
   author={Disegni, Daniel},
   title={$p$-adic heights of Heegner points on Shimura curves},
   journal={Algebra Number Theory},
   volume={9},
   date={2015},
   number={7},
   pages={1571--1646},
   issn={1937-0652},
   review={\MR{3404649}},
   doi={10.2140/ant.2015.9.1571}, label={Dis15}
}
\bib{exc}{article}{author={Disegni, Daniel}, title={
On the $p$-adic Birch and Swinnerton-Dyer conjecture for elliptic curves over number fields},
status={preprint}, label={Dis}}

\bib{emerton-int}{article}{
   author={Emerton, Matthew},
   title={On the interpolation of systems of eigenvalues attached to
   automorphic Hecke eigenforms},
   journal={Invent. Math.},
   volume={164},
   date={2006},
   number={1},
   pages={1--84},
   issn={0020-9910},
   review={\MR{2207783}},
   doi={10.1007/s00222-005-0448-x},
}

\bib{fouquet}{article}{
   author={Fouquet, Olivier},
   title={Dihedral Iwasawa theory of nearly ordinary quaternionic
   automorphic forms},
   journal={Compos. Math.},
   volume={149},
   date={2013},
   number={3},
   pages={356--416},
   issn={0010-437X},
   review={\MR{3040744}},
   doi={10.1112/S0010437X12000619},
}

\bib{gross}{article}{
   author={Gross, Benedict H.},
   title={Local heights on curves},
   conference={
      title={Arithmetic geometry},
      address={Storrs, Conn.},
      date={1984},
   },
   book={
      publisher={Springer, New York},
   },
   date={1986},
   pages={327--339},
   review={\MR{861983}},
}

\bib{GZ}{article}{
   author={Gross, Benedict H.},
   author={Zagier, Don B.},
   title={Heegner points and derivatives of $L$-series},
   journal={Invent. Math.},
   volume={84},
   date={1986},
   number={2},
   pages={225--320},
   issn={0020-9910},
   review={\MR{833192 (87j:11057)}},
   doi={10.1007/BF01388809},   label={GroZ86}
}

\bib{gross-mot}{article}{
   author={Gross, Benedict H.},
   title={$L$-functions at the central critical point},
   conference={
      title={Motives},
      address={Seattle, WA},
      date={1991},
   },
   book={
      series={Proc. Sympos. Pure Math.},
      volume={55},
      publisher={Amer. Math. Soc., Providence, RI},
   },
   date={1994},
   pages={527--535},
   review={\MR{1265543 (95a:11060)}},
}

\bib{Hi}{article}{
   author={Hida, Haruzo},
   title={On $p$-adic $L$-functions of ${\rm GL}(2)\times {\rm GL}(2)$ over
   totally real fields},
   language={English, with French summary},
   journal={Ann. Inst. Fourier (Grenoble)},
   volume={41},
   date={1991},
   number={2},
   pages={311--391},
   issn={0373-0956},
   review={\MR{1137290 (93b:11052)}},
}

\bib{hidamu}{article}{
   author={Hida, Haruzo},
   title={The Iwasawa $\mu$-invariant of $p$-adic Hecke $L$-functions},
   journal={Ann. of Math. (2)},
   volume={172},
   date={2010},
   number={1},
   pages={41--137},
   issn={0003-486X},
   review={\MR{2680417 (2012d:11215)}},
   doi={10.4007/annals.2010.172.41},
}

\bib{howard}{article}{
   author={Howard, Benjamin},
   title={The Iwasawa theoretic Gross-Zagier theorem},
   journal={Compos. Math.},
   volume={141},
   date={2005},
   number={4},
   pages={811--846},
   issn={0010-437X},
   review={\MR{2148200 (2006f:11074)}},
   doi={10.1112/S0010437X0500134X},
}

\bib{hsiehmu}{article}{
   author={Hsieh, Ming-Lun},
   title={On the $\mu$-invariant of anticyclotomic $p$-adic $L$-functions
   for CM fields},
   journal={J. Reine Angew. Math.},
   volume={688},
   date={2014},
   pages={67--100},
   issn={0075-4102},
   review={\MR{3176616}},
   doi={10.1515/crelle-2012-0056},
}

\bib{Hu}{article}{
   author={Hu, Yueke},
   title={Cuspidal part of an Eisenstein series restricted to an index 2
   subfield},
   journal={Res. Number Theory},
   volume={2},
   date={2016},
   pages={Art. 33, 61},
   issn={2363-9555},
   review={\MR{3575841}},
   doi={10.1007/s40993-016-0061-7},
}

\bib{iovita-werner}{article}{
   author={Iovita, Adrian},
   author={Werner, Annette},
   title={$p$-adic height pairings on abelian varieties with semistable
   ordinary reduction},
   journal={J. Reine Angew. Math.},
   volume={564},
   date={2003},
   pages={181--203},
   issn={0075-4102},
   review={\MR{2021039 (2004j:11066)}},
   doi={10.1515/crll.2003.089},
}

\bib{JL}{book}{
   author={Jacquet, H.},
   author={Langlands, R. P.},
   title={Automorphic forms on ${\rm GL}(2)$},
   series={Lecture Notes in Mathematics, Vol. 114},
   publisher={Springer-Verlag, Berlin-New York},
   date={1970},
   pages={vii+548},
   review={\MR{0401654 (53 \#5481)}},
}

\bib{kobayashi}{article}{
   author={Kobayashi, Shinichi},
   title={The $p$-adic Gross-Zagier formula for elliptic curves at
   supersingular primes},
   journal={Invent. Math.},
   volume={191},
   date={2013},
   number={3},
   pages={527--629},
   issn={0020-9910},
   review={\MR{3020170}},
   doi={10.1007/s00222-012-0400-9},
}

\bib{bao}{article}{
author={Le Hung, Bao}, title={Modularity of some elliptic curves over totally real fields}, status={preprint}, label={LH}}

\bib{lzz}{article}{
	author={Liu, Yifeng},
	author={Zhang, Shou-wu},
	author={Zhang, Wei},
	title={On $p$-adic Waldspurger formula},
	status={preprint}, label={LZZ}}

\bib{lima}{article}{author={Ma, Li}, title={$p$-adic Gross-Zagier formula for Heegner points on Shimura curves over totally real fields}, status={preprint}, label={Ma}}

\bib{MT}{article}{
   author={Mazur, B.},
   author={Tate, J.},
   title={Canonical height pairings via biextensions},
   conference={
      title={Arithmetic and geometry, Vol. I},
   },
   book={
      series={Progr. Math.},
      volume={35},
      publisher={Birkh\"auser Boston, Boston, MA},
   },
   date={1983},
   pages={195--237},
   review={\MR{717595 (85j:14081)}},
}

\bib{mazur-icm}{article}{
   author={Mazur, B.},
   title={Modular curves and arithmetic},
   conference={
      title={Proceedings of the International Congress of Mathematicians,
      Vol.\ 1, 2},
      address={Warsaw},
      date={1983},
   },
   book={
      publisher={PWN, Warsaw},
   },
   date={1984},
   pages={185--211},
   review={\MR{804682 (87a:11054)}},
}

\bib{mokrane}{article}{
   author={Mokrane, A.},
   title={La suite spectrale des poids en cohomologie de Hyodo-Kato},
   language={French},
   journal={Duke Math. J.},
   volume={72},
   date={1993},
   number={2},
   pages={301--337},
   issn={0012-7094},
   review={\MR{1248675 (95a:14022)}},
   doi={10.1215/S0012-7094-93-07211-0},
}

\bib{nekheights}{article}{
   author={Nekov{\'a}{\v{r}}, Jan},
   title={On $p$-adic height pairings},
   conference={
      title={S\'eminaire de Th\'eorie des Nombres, Paris, 1990--91},
   },
   book={
      series={Progr. Math.},
      volume={108},
      publisher={Birkh\"auser Boston},
      place={Boston, MA},
   },
   date={1993},
   pages={127--202},
   review={\MR{1263527 (95j:11050)}},
}

\bib{nekovar}{article}{
   author={Nekov{\'a}{\v{r}}, Jan},
   title={On the $p$-adic height of Heegner cycles},
   journal={Math. Ann.},
   volume={302},
   date={1995},
   number={4},
   pages={609--686},
   issn={0025-5831},
   review={\MR{1343644 (96f:11073)}},
   doi={10.1007/BF01444511},
}

\bib{nek-selmer}{article}{
   author={Nekov{\'a}{\v{r}}, Jan},
   title={Selmer complexes},
   language={English, with English and French summaries},
   journal={Ast\'erisque},
   number={310},
   date={2006},
   pages={viii+559},
   issn={0303-1179},
   isbn={978-2-85629-226-6},
   review={\MR{2333680 (2009c:11176)}},
}

\bib{PR}{article}{
   author={Perrin-Riou, Bernadette},
   title={Points de Heegner et d\'eriv\'ees de fonctions $L$ $p$-adiques},
   language={French},
   journal={Invent. Math.},
   volume={89},
   date={1987},
   number={3},
   pages={455--510},
   issn={0020-9910},
   review={\MR{903381 (89d:11034)}},
   doi={10.1007/BF01388982},
}

\bib{PR2}{article}{
   author={Perrin-Riou, Bernadette},
   title={Fonctions $L$ $p$-adiques, th\'eorie d'Iwasawa et points de
   Heegner},
   language={French, with English summary},
   journal={Bull. Soc. Math. France},
   volume={115},
   date={1987},
   number={4},
   pages={399--456},
   issn={0037-9484},
   review={\MR{928018 (89d:11094)}},
}

%

\bib{PX}{article}{
   author={Pottharst, Jonathan},
   author={Xiao, Liang},
   title={On the parity conjecture in finite-slope famliies},
   status={preprint}, label={PX}}

\bib{rapoport}{article}{
   author={Rapoport, M.},
   title={Compactifications de l'espace de modules de Hilbert-Blumenthal},
   language={French},
   journal={Compositio Math.},
   volume={36},
   date={1978},
   number={3},
   pages={255--335},
   issn={0010-437X},
   review={\MR{515050 (80j:14009)}},
}

\bib{rohrlich}{article}{
   author={Rohrlich, David E.},
   title={On $L$-functions of elliptic curves and anticyclotomic towers},
   journal={Invent. Math.},
   volume={75},
   date={1984},
   number={3},
   pages={383--408},
   issn={0020-9910},
   review={\MR{735332 (86g:11038a)}},
   doi={10.1007/BF01388635},
}

\bib{saito}{article}{
   author={Saito, Hiroshi},
   title={On Tunnell's formula for characters of ${\rm GL}(2)$},
   journal={Compositio Math.},
   volume={85},
   date={1993},
   number={1},
   pages={99--108},
   issn={0010-437X},
   review={\MR{1199206 (93m:22021)}},
}

\bib{schneider}{article}{
   author={Schneider, Peter},
   title={$p$-adic height pairings. I},
   journal={Invent. Math.},
   volume={69},
   date={1982},
   number={3},
   pages={401--409},
   issn={0020-9910},
   review={\MR{679765 (84e:14034)}},
   doi={10.1007/BF01389362},
}

\bib{schneider2}{article}{
   author={Schneider, Peter},
   title={$p$-adic height pairings. II},
   journal={Invent. Math.},
   volume={79},
   date={1985},
   number={2},
   pages={329--374},
   issn={0020-9910},
   review={\MR{778132 (86j:11063)}},
   doi={10.1007/BF01388978},
}

\bib{serre}{book}{
   author={Serre, Jean-Pierre},
   title={Abelian $l$-adic representations and elliptic curves},
   series={McGill University lecture notes written with the collaboration of
   Willem Kuyk and John Labute},
   publisher={W. A. Benjamin, Inc., New York-Amsterdam},
   date={1968},
   pages={xvi+177 pp. (not consecutively paged)},
   review={\MR{0263823 (41 \#8422)}},
}

\bib{ari}{article}{
   author={Shnidman, Ariel},
   title={$_p$-adic heights of generalized Heegner cycles},
   language={English, with English and French summaries},
   journal={Ann. Inst. Fourier (Grenoble)},
   volume={66},
   date={2016},
   number={3},
   pages={1117--1174},
   issn={0373-0956},
   review={\MR{3494168}},
}

\bib{tyz}{article}{
	author={Tian, Ye}, 
	author={Yuan, Xinyi},
	author={Zhang, Shou-wu},
	title={Genus periods, genus points and Congruent Number Problem},
	status={preprint}, label={TYZ}}

\bib{tunnell}{article}{
   author={Tunnell, Jerrold B.},
   title={Local $\epsilon $-factors and characters of ${\rm GL}(2)$},
   journal={Amer. J. Math.},
   volume={105},
   date={1983},
   number={6},
   pages={1277--1307},
   issn={0002-9327},
   review={\MR{721997 (86a:22018)}},
   doi={10.2307/2374441},
}

\bib{VO}{article}{
   author={Van Order, Jeanine},
   title={On the quaternionic $p$-adic $L$-functions associated to Hilbert
   modular eigenforms},
   journal={Int. J. Number Theory},
   volume={8},
   date={2012},
   number={4},
   pages={1005--1039},
   issn={1793-0421},
   review={\MR{2926558}},
   doi={10.1142/S1793042112500601},
}

\bib{wald}{article}{
   author={Waldspurger, J.-L.},
   title={Sur les valeurs de certaines fonctions $L$ automorphes en leur
   centre de sym\'etrie},
   language={French},
   journal={Compositio Math.},
   volume={54},
   date={1985},
   number={2},
   pages={173--242},
   issn={0010-437X},
   review={\MR{783511 (87g:11061b)}},
}

\bib{xin-mc}{article}{
   author={Wan, Xin},
   title={The Iwasawa main conjecture for Hilbert modular forms},
   journal={Forum Math. Sigma},
   volume={3},
   date={2015},
   pages={e18, 95},
   issn={2050-5094},
   review={\MR{3482263}},
   doi={10.1017/fms.2015.16},
}

\bib{xin-howard}{article}{
	author={Wan, Xin}, 
	title={Heegner Point Kolyvagin System and Iwasawa Main Conjecture},
	status={preprint}, label={Wan}
	}

\bib{yzzgkz}{article}{
   author={Yuan, Xinyi},
   author={Zhang, Shou-Wu},
   author={Zhang, Wei},
   title={The Gross-Kohnen-Zagier theorem over totally real fields},
   journal={Compos. Math.},
   volume={145},
   date={2009},
   number={5},
   pages={1147--1162},
   issn={0010-437X},
   review={\MR{2551992 (2011e:11109)}},
   doi={10.1112/S0010437X08003734},
}

\bib{yzz}{book}{
     title = {The Gross-Zagier Formula on Shimura Curves},  
     subtitle = {},     
     edition = {},       
     author = {Yuan, Xinyi},author = {Zhang, Shou-Wu},author = {Zhang, Wei},
     editor = {},     
     volume = {184},     
     series = {Annals of Mathematics Studies},  
     pages = {272},         
     place={Princeton, NJ},
     date = {2012},      
     publisher = {Princeton University Press},         
     }

\bib{shouwu}{article}{
   author={Zhang, Shou-Wu},
   title={Heights of Heegner points on Shimura curves},
   journal={Ann. of Math. (2)},
   volume={153},
   date={2001},
   number={1},
   pages={27--147},
   issn={0003-486X},
   review={\MR{1826411 (2002g:11081)}},
   doi={10.2307/2661372},
}

\bib{asian}{article}{
   author={Zhang, Shou-Wu},
   title={Gross-Zagier formula for ${\rm GL}_2$},
   journal={Asian J. Math.},
   volume={5},
   date={2001},
   number={2},
   pages={183--290},
   issn={1093-6106},
   review={\MR{1868935 (2003k:11101)}},
}
\bib{shouwu-msri}{article}{
   author={Zhang, Shou-Wu},
   title={Gross-Zagier formula for $\rm GL(2)$. II},
   conference={
      title={Heegner points and Rankin $L$-series},
   },
   book={
      series={Math. Sci. Res. Inst. Publ.},
      volume={49},
      publisher={Cambridge Univ. Press},
      place={Cambridge},
   },
   date={2004},
   pages={191--214},
   review={\MR{2083213 (2005k:11121)}},
   doi={10.1017/CBO9780511756375.008},
}
\end{biblist}
\end{bibdiv}

\end{document}